\newcommand{\black}[1]{\textcolor{black}{#1}}
\newcommand{\brown}[1]{\textcolor{brown}{#1}}
\newcommand{\green}[1]{\textcolor{darkgreen}{#1}}
\definecolor{darkgreen}{rgb}{0,.5,0}
\definecolor{brown}{rgb}{0.5,0.3,0}
\newtheorem{theorem}{Theorem}[section]
\newtheorem{proposition}[theorem]{Proposition}
\newtheorem{lemma}[theorem]{Lemma}
\newtheorem{corollary}[theorem]{Corollary}
\newtheorem*{theorem*}{Theorem}
\newtheorem{problem}[theorem]{Problem}
\theoremstyle{definition}
\newtheorem{definition}[theorem]{Definition}
\newtheorem{example}[theorem]{Example}
\newtheorem{remark}[theorem]{Remark}
\theoremstyle{remark}
\newtheorem*{associativity}{Associativity}
\newtheorem*{unitality}{Unitality}
\newtheorem*{naturality}{Naturality}
\newtheorem*{compatibility}{Compatibility}
\newtheorem*{multiplicativity}{Multiplicativity}
\newtheorem*{warning}{Warning}
\renewcommand{\1}{\mathbf{1}}
\newcommand{\Set}{\mathsf{Set}}
\newcommand{\Vect}{\mathsf{Vec}} 
\newcommand{\Kc}{\mathcal{K}}
\newcommand{\Kcb}{\overline{\Kc}}
\newcommand{\rA}{\mathrm{A}} 
\newcommand{\rbA}{\overline{\mathrm{A}}} 
\newcommand{\rBS}{\mathrm{BS}} 
\newcommand{\rWBS}{\mathrm{WBS}} 
\newcommand{\rF}{\mathrm{F}} 
\newcommand{\rG}{\mathrm{G}} 
\newcommand{\rSC}{\mathrm{SC}} 
\newcommand{\rSG}{\mathrm{SG}} 
\newcommand{\rHG}{\mathrm{HG}} 
\newcommand{\rSHG}{\mathrm{SHG}} 
\newcommand{\rGamma}{\mathrm{\Gamma}} 
\newcommand{\rH}{\mathrm{H}} 
\newcommand{\rK}{\mathrm{K}}
\newcommand{\rL}{\mathrm{L}} 
\newcommand{\rM}{\mathrm{M}} 
\newcommand{\rPi}{\mathrm{\Pi}}
\newcommand{\rbPi}{\overline{\mathrm{\Pi}}}
\newcommand{\rP}{\mathrm{P}}
\newcommand{\rQ}{\mathrm{Q}}
\newcommand{\rW}{\mathrm{W}}
\newcommand{\rGP}{\mathrm{GP}} 
\newcommand{\rHGP}{\mathrm{HGP}} 
\newcommand{\rBF}{\mathrm{BF}} 
\newcommand{\rSF}{\mathrm{SF}} 
\newcommand{\rbGP}{\overline{\rGP}} 
\newcommand{\rbHGP}{\overline{\rHGP}} 
\newcommand{\rbbGP}{\overline{\overline{\rGP}}} 
\newcommand{\rbbA}{\overline{\overline{\rA}}} 
\newcommand{\wbbA}{\overline{\overline{\wA}}} 
\newcommand{\wP}{\mathbf{P}} 
\newcommand{\wQ}{\mathbf{Q}}
\newcommand{\wH}{\mathbf{H}} 
\newcommand{\wK}{\mathbf{K}}
\newcommand{\wL}{\mathbf{L}} 
\newcommand{\wM}{\mathbf{M}} 
\newcommand{\wG}{\mathbf{G}} 
\newcommand{\wPi}{\mathbf{\Pi}} 
\newcommand{\wA}{\mathbf{A}} 
\newcommand{\wSC}{\mathbf{SC}} 
\newcommand{\wGP}{\mathbf{GP}} 
\newcommand{\wbGP}{\overline{\mathbf{GP}}} 
\newcommand{\wbbGP}{\overline{\overline{\mathbf{GP}}}} 
\newcommand{\wbA}{\overline{\mathbf{A}}} 
\newcommand{\wbPi}{\overline{\mathbf{\Pi}}} 
\newcommand{\wBS}{\mathbf{BS}} 
\newcommand{\wWBS}{\mathbf{WBS}} 
\newcommand{\wHGP}{\mathbf{HGP}} 
\newcommand{\wHG}{\mathbf{HG}} 
\newcommand{\wSHG}{\mathbf{SHG}} 
\newcommand{\wSF}{\mathbf{SF}} 
\newcommand{\wW}{\mathbf{W}} 
\newcommand{\wF}{\mathbf{F}} 
\DeclareMathOperator{\apode}{\textsc{s}} 
\DeclareMathOperator{\sw}{\mathrm{sw}} 
\DeclareMathOperator{\cone}{\mathrm{cone}} 
\DeclareMathOperator{\tubes}{\mathrm{tubes}} 
\DeclareMathOperator{\ess}{\mathrm{ess}} 
\newcommand{\vanish}[1]{}
\let\onto=\twoheadrightarrow
\let\into=\hookrightarrow
\let\map=\xrightarrow
\newcommand{\xyinc}{\ar@{{ (}->}}
\newcommand{\xyrinc}{\ar@{{|(}->}}
\newcommand{\xyonto}{\ar@{->>}}
\newcommand{\xytwo}{\ar@{<->}}
\newcommand{\unsim}{\mathord{\sim}}
\newcommand{\abs}[1]{\lvert#1\rvert} 
\newcommand{\id}{\mathrm{id}}
\newcommand{\inc}{\mathrm{inc}} 
\newcommand{\cut}{\mathrm{cut}}
\newcommand{\low}{\mathrm{low}}
\newcommand{\rank}{\mathrm{rank}}
\newcommand{\espan}{\mathrm{span}}
\newcommand{\simple}[1]{#1^{\prime}}
\newcommand{\irred}{\mathrm{irred}} 
\newcommand{\supp}{\mathrm{supp}} 
\newcommand{\qand}{\quad\text{and}\quad}
\let\Kb=\Bbbk
\newcommand{\Xb}{\mathbb{X}}
\newcommand{\Rb}{\mathbb{R}}
\newcommand{\Zb}{\mathbb{Z}}
\newcommand{\Nb}{\mathbb{N}}
\newcommand{\Cb}{\mathbb{C}}
\newcommand{\wa}{\mathfrak{a}}
\renewcommand{\wp}{\mathfrak{p}}
\newcommand{\wq}{\mathfrak{q}}
\renewcommand{\wr}{\mathfrak{r}}
\newcommand{\Nc}{\mathcal{N}}
\newcommand{\Bc}{\mathcal{B}}
\newcommand{\Fc}{\mathcal{F}}
\newcommand{\Pc}{\mathcal{P}}
\newcommand{\B}{\mathcal{B}}
\newcommand{\C}{\mathcal{C}}
\newcommand{\G}{\mathcal{G}}
\renewcommand{\H}{\mathcal{H}}
\newcommand{\N}{\mathcal{N}}
\newcommand{\adjustheight}{23pt}
\newcommand{\gzero}{
\raisebox{-3pt}{
\rule{0pt}{\adjustheight}
{\color{blue}
\xymatrix@C-10pt{
*{\bullet} \ar@{-}[r]_(-0.07){\black{a}}_(1.0){\black{b}} & 
*{\bullet} \ar@{-}[r]_(1.07){\black{c}} & *{\bullet} \ar@(ul,ur)@{-} 
}
}
}
}
\newcommand{\gone}{
\raisebox{-3pt}{
\rule{0pt}{\adjustheight}
{\color{blue}
\xymatrix@C-10pt{
*{\bullet} \ar@{-}[r]_(-0.07){\black{a}}_(1.0){\black{b}} & 
*{\bullet} \ar@(ul,ur)@{-}  \ar@{}[r]_(1.07){\black{c}} & *{\bullet} \ar@(ul,ur)@{-} 
}
}
}
}
\newcommand{\gtwo}{
\raisebox{-3pt}{
\rule{0pt}{\adjustheight}
{\color{blue}
\xymatrix@C-10pt{
*{\bullet}  \ar@{}[r]_(-0.07){\black{a}}_(1.0){\black{b}} & 
*{\bullet} \ar@(ul,ur)@{-} \ar@{-}[r]_(1.07){\black{c}} & *{\bullet} \ar@(ul,ur)@{-} 
}
}
}
}
\newcommand{\gthree}{
\raisebox{-3pt}{
\rule{0pt}{\adjustheight}
{\color{blue}
\xymatrix@C-10pt{
*{\bullet} \ar@(ul,ur)@{-} \ar@{}[r]_(-0.07){\black{a}}_(1.0){\black{b}} & 
*{\bullet} \ar@{-}[r]_(1.07){\black{c}} & *{\bullet} \ar@(ul,ur)@{-} 
}
}
}
}
\newcommand{\gfour}{
\raisebox{-3pt}{
\rule{0pt}{\adjustheight}
{\color{blue}
\xymatrix@C-10pt{
*{\bullet} \ar@{-}[r]_(-0.07){\black{a}}_(1.0){\black{b}} & 
*{\bullet}   \ar@{}[r]_(1.07){\black{c}} & *{\bullet} \ar@(ul,u)@{-} \ar@(u,ur)@{-}
}
}
}
}
\newcommand{\gfive}{
\raisebox{-3pt}{
\rule{0pt}{\adjustheight}
{\color{blue}
\xymatrix@C-10pt{
*{\bullet} \ar@(ul,ur)@{-} \ar@{}[r]_(-0.07){\black{a}}_(1.0){\black{b}} & 
*{\bullet} \ar@(ul,ur)@{-} \ar@{}[r]_(1.07){\black{c}} & *{\bullet} \ar@(ul,ur)@{-} 
}
}
}
}
\newcommand{\gsix}{
\raisebox{-3pt}{
\rule{0pt}{\adjustheight}
{\color{blue}
\xymatrix@C-10pt{
*{\bullet}  \ar@{}[r]_(-0.07){\black{a}}_(1.0){\black{b}} & 
*{\bullet} \ar@(ul,u)@{-} \ar@(u,ur)@{-} \ar@{}[r]_(1.07){\black{c}} & *{\bullet} \ar@(ul,ur)@{-} 
}
}
}
}
\newcommand{\gseven}{
\raisebox{-3pt}{
\rule{0pt}{20pt}
{\color{blue}
\xymatrix@C-10pt{
*{\bullet}  \ar@{}[r]_(-0.07){\black{a}}_(1.0){\black{b}} & 
*{\bullet}  \ar@(ul,ur)@{-}  \ar@{}[r]_(1.07){\black{c}} & *{\bullet} \ar@(ul,u)@{-} \ar@(u,ur)@{-}
}
}
}
}
\newcommand{\geight}{
\raisebox{-3pt}{
\rule{0pt}{20pt}
{\color{blue}
\xymatrix@C-10pt{
*{\bullet}  \ar@(ul,ur)@{-} \ar@{}[r]_(-0.07){\black{a}}_(1.0){\black{b}} & 
*{\bullet}    \ar@{}[r]_(1.07){\black{c}} & *{\bullet} \ar@(ul,u)@{-} \ar@(u,ur)@{-}
}
}
}
}
\newcommand{\wzero}{
\raisebox{+6pt}{
\rule{0pt}{\adjustheight}
{\color{blue}
\xymatrix@C-10pt{
*{\bullet} \ar@{-}[r]_(-0.07){\black{1}}_(1.0){\black{2}} & 
*{\bullet} \ar@{-}[r]_(1.07){\black{3}} & *{\bullet} 
}
}
}
}
\newcommand{\wone}{
\raisebox{-3pt}{
\rule{0pt}{\adjustheight}
{\color{blue}
\xymatrix@C-10pt{
*{\bullet} \ar@{-}[r]_(-0.07){\black{1}}_(1.0){\black{2}} & 
*{\bullet}[r]_(1.07){\black{3}} & *{\bullet}
}
}
}
}
\newcommand{\wtwo}{
\raisebox{-3pt}{
\rule{0pt}{\adjustheight}
{\color{blue}
\xymatrix@C-10pt{
*{\bullet} \ar@{-}[r]_(-0.07){\black{1}}_(1.0){\black{2}} & 
*{\bullet}[r]_(1.07){\black{3}} & *{\bullet}
}
}
}
}
\newcommand{\wthree}{
\raisebox{-3pt}{
\rule{0pt}{\adjustheight}
{\color{blue}
\xymatrix@C-10pt{
*{\bullet} \ar@{-}[r]_(-0.07){\black{1}}_(1.0){\black{2}} & 
*{\bullet}[r]_(1.07){\black{3}} & *{\bullet}
}
}
}
}
\newcommand{\wfour}{
\raisebox{-3pt}{
\rule{0pt}{\adjustheight}
{\color{blue}
\xymatrix@C-10pt{
*{\bullet} \ar@{-}[r]_(-0.07){\black{1}}_(1.0){\black{2}} & 
*{\bullet}[r]_(1.07){\black{3}} & *{\bullet}
}
}
}
}
\begin{document}
\title{Hopf monoids and generalized permutahedra}
\author{
\textsf{Marcelo Aguiar\footnote{\noindent \textsf{Cornell University; maguiar@math.cornell.edu}}}
\and
\textsf{Federico Ardila\footnote{\noindent \textsf{San Francisco State University, Mathematical Sciences Research Institute, U. de Los Andes; federico@sfsu.edu.
\newline 
Aguiar was supported by NSF grants DMS-0600973 and DMS-1001935. Ardila was supported by NSF CAREER grant DMS-0956178, research grants DMS-0801075 and DMS-1600609, and grant 
DMS-1440140 (MSRI).}}}
}\date{}
\maketitle

\begin{abstract}
Generalized permutahedra are a family of polytopes with a rich combinatorial structure and strong connections to optimization. 
We prove that they are the universal family of polyhedra with a certain Hopf algebraic structure. Their antipode is remarkably simple: the antipode of a polytope is the alternating sum of its faces. 
Our construction provides a unifying framework to organize numerous combinatorial structures, including graphs, matroids, posets, set partitions, linear graphs, hypergraphs, simplicial complexes, building sets, and simple graphs. 
We highlight three applications: \\
1.  We obtain uniform proofs of numerous old and new results about the Hopf algebraic and combinatorial structures of these families. In particular, we give the optimal formula for the antipode of graphs, posets, matroids, hypergraphs, and building sets, and we answer questions of Humpert--Martin and Rota.\\
2. We show that the reciprocity theorems of Stanley and Billera--Jia--Reiner on chromatic polynomials of graphs, order polynomials of posets, and BJR-polynomials of matroids are instances of the same reciprocity theorem for generalized permutahedra. \\
3.  We explain why the formulas for the multiplicative and compositional inverses of power series are governed by the face structure of permutahedra and associahedra, respectively, answering a question of Loday.\\
Along the way, we offer a combinatorial user's guide to  Hopf monoids.
\end{abstract}

\addtocontents{toc}{\protect\setcounter{tocdepth}{1}}
\tableofcontents

\section{{Introduction}}\label{s:intro}

%


\noindent \textbf{\textsf{Hopf monoids and generalized permutahedra.}} Joyal \cite{joyal98}, Joni and Rota \cite{joni82:_coalg}, Schmitt \cite{schmitt93:_hopf}, Stanley \cite{stanley07cca} and others taught us that to study combinatorial objects, it is often useful to endow them with algebraic structures. 
Aguiar and Mahajan's \emph{Hopf monoids in species} \cite{am} provide a particularly useful framework to study many important combinatorial families. 

Edmonds \cite{edmonds70}, Lov\'asz \cite{lovasz09}, Postnikov \cite{postnikov09}, Stanley \cite{stanley72:_order}, and others taught us that to study combinatorial objects, it is often useful to build a polyhedral model for them. Generalized permutahedra (also known as polymatroids and equivalent to submodular functions) are a particularly useful family of polytopes which model many combinatorial families. 

The main idea of this article is to bring together these two points of view. We endow the family of generalized permutahedra with a Hopf algebraic structure: the \emph{Hopf monoid of generalized permutahedra} $\rGP$. In fact we show that, in a precise sense, generalized permutahedra are the only family of polytopes which supports such a Hopf structure. We then use this framework to unify classical results, discover new results, and answer open questions about numerous combinatorial families of interest. 
We highlight three areas of application.

\bigskip

\noindent \textbf{\textsf{Application A. Antipodes and combinatorial formulas.}}
Many families of combinatorial objects have natural operations of \emph{merging} two disjoint objects into one, and \emph{breaking} an object into two disjoint parts. Under some hypotheses, these operations give a product and coproduct in a Hopf monoid or algebra. In this paper we will consider many such Hopf structures: graphs, matroids, posets, set partitions, simplicial complexes, building sets, and simple graphs, to name a few.

Any Hopf monoid has an \emph{antipode map} $\apode$, which is analogous to the inverse map in a group. The antipode is given by a very large alternating sum, generally involving lots of cancellation. A fundamental and highly nontrivial question is to give a cancellation-free formula for this antipode. Let's see a few examples of antipodes:


\bigskip

\noindent 
Graphs $\wG$:
 \begin{figure}[H]
\flushleft
\includegraphics[scale=.5]{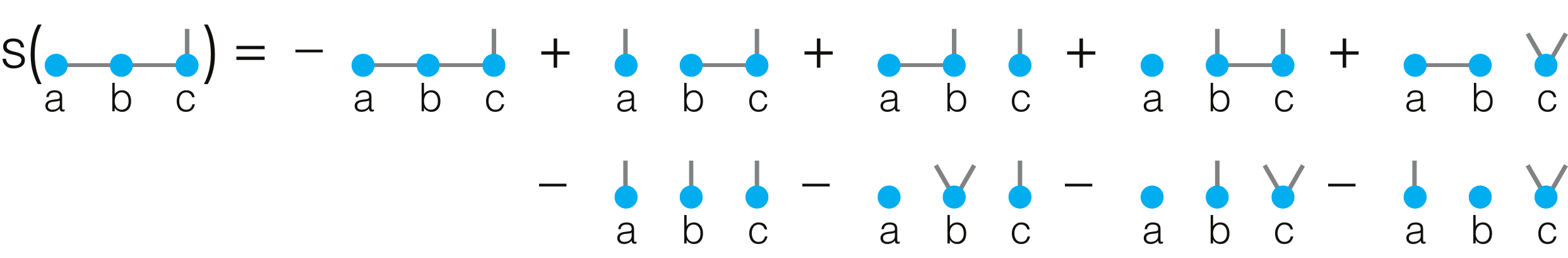} \qquad
\end{figure}

\noindent 
Matroids $M$:

 \begin{figure}[H]
\flushleft
\includegraphics[scale=.5]{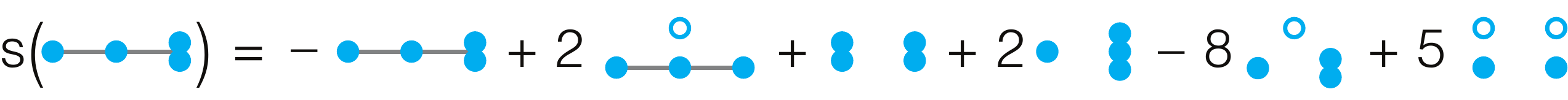} \qquad
\end{figure}

\noindent 
Posets $P$:

 \begin{figure}[H]
\flushleft
\includegraphics[scale=.5]{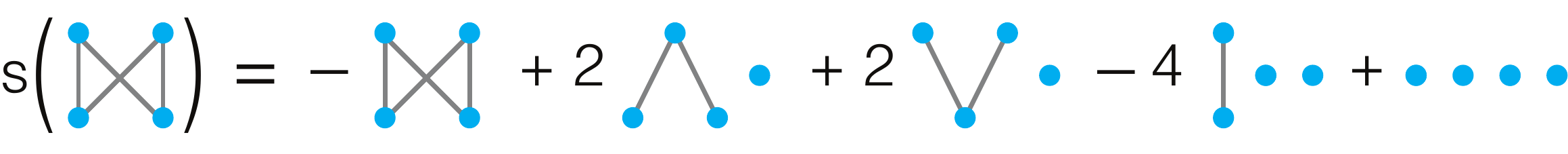} \qquad
\end{figure}

\noindent 
Partitions $\wPi$:

 \begin{figure}[H]
\flushleft
\includegraphics[scale=.5]{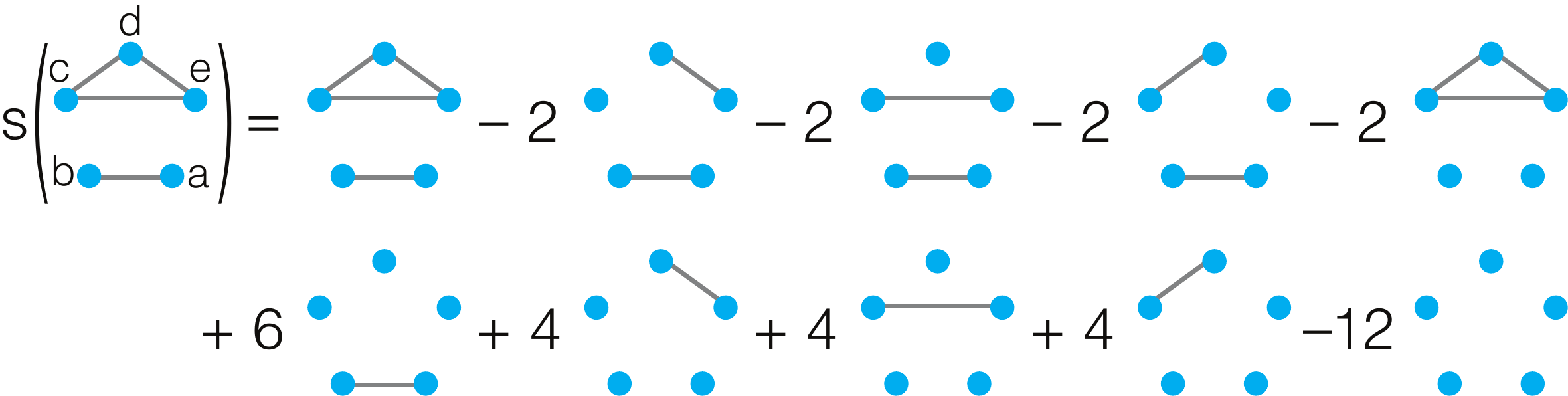} \qquad
\end{figure}

\bigskip

\noindent 
Paths $\wF$:

 \begin{figure}[H]
\flushleft
\includegraphics[width=\textwidth]{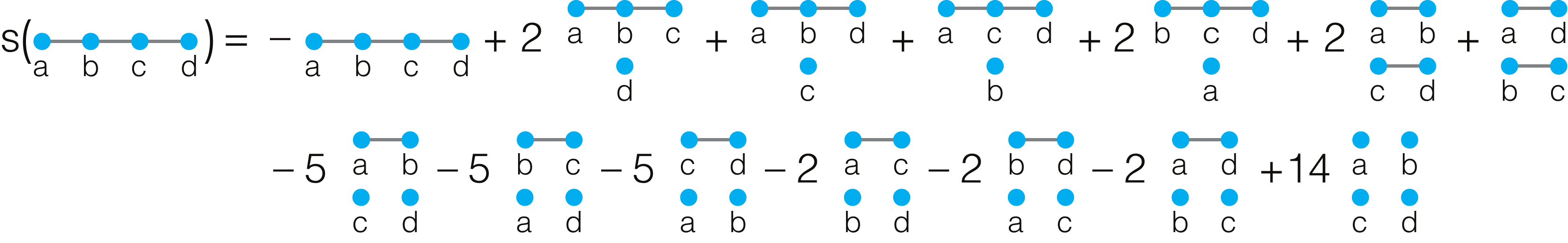} \qquad
\end{figure}



These formulas result from simplifying alternating sums of 13, 75, 75, 541, and 75 terms, respectively. We provide a uniform explanation, showing that each one of these formulas has a  polyhedron controlling it.

We prove that the Hopf monoids of these five structures $\rG, \rM, \rP, \rPi, \rF$ are related to the Hopf monoid $\rGP$ of generalized permutahedra -- a setting that is ideal for the question at hand, because it provides the geometric and topological structure necessary to understand the cancellation completely. The following is one of our main theorems.

\begin{theorem}\label{t:antipode}
The antipode of the Hopf monoid $\rGP$ is given by the following \textbf{cancellation-free} and \textbf{grouping-free} formula: If $\wp \in \Rb I$ is a generalized permutahedron, then
\[
\apode_I(\wp) = (-1)^{|I|}\sum_{\wq \textrm{ face of } \wp} (-1)^{\dim \wq} \, \wq.
\]
\end{theorem}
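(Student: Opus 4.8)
The plan is to start from Takeuchi's universal formula for the antipode and then trade the abstract alternating sum for a geometric sum over faces, where all the cancellation is absorbed into a single Euler-characteristic computation.

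First I would invoke Takeuchi's formula for the antipode of a connected Hopf monoid in species, which expresses $\apode_I$ as the alternating sum
\[
\apode_I = \sum_{(S_1,\dots,S_k)\models I} (-1)^k\,\mu_{S_1,\dots,S_k}\circ\Delta_{S_1,\dots,S_k}
\]
over all compositions (ordered set partitions into nonempty blocks) of $I$, with $k$ the number of blocks. The first genuine step is to interpret a single term applied to $\wp$. Using the structure of $\rGP$, the coproduct $\Delta_{S_1,\dots,S_k}$ selects the face of $\wp$ maximizing any functional that is constant on each block with values strictly decreasing from $S_1$ to $S_k$, and this face splits canonically as a product of generalized permutahedra supported on the blocks; the product $\mu_{S_1,\dots,S_k}$ then reassembles exactly that face. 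I would isolate this as a lemma: each Takeuchi term equals $(-1)^{|F|}\,\wp_F$, where $F=(S_1,\dots,S_k)$ and $\wp_F$ is the face of $\wp$ cut out in the direction prescribed by $F$. Thus $\apode_I(\wp)=\sum_F(-1)^{|F|}\,\wp_F$.

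Next I would group the compositions according to the face they select. Since $\wp$ is a generalized permutahedron, its normal fan coarsens the braid fan, so the relatively open braid cone $C_F$ attached to each composition $F$ lies inside a single relatively open normal cone of $\wp$; hence $F\mapsto\wp_F$ is well defined and every face $\wq$ occurs. Grouping gives
\[
\apode_I(\wp)=\sum_{\wq\text{ face of }\wp}\Big(\sum_{F:\,\wp_F=\wq}(-1)^{|F|}\Big)\,\wq ,
\]
and since distinct faces are distinct polytopes, no further cancellation or grouping is possible; it remains only to evaluate the inner coefficient. To do so I would work in $\Rb I/\Rb\1$ (quotient by the line of the all-ones vector $\1$), where both the braid fan and the normal fan of $\wp$ are complete pointed fans. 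The compositions with $\wp_F=\wq$ are precisely those whose open cone $C_F$ lies in the relative interior $N^\circ(\wq)$ of the normal cone of $\wq$, and these cones decompose $N^\circ(\wq)$ into relatively open cells with $\dim C_F=|F|-1$. Because $N^\circ(\wq)$ is an open convex cone of dimension $(|I|-1)-\dim\wq$, additivity of the compactly supported Euler characteristic together with $\chi_c(\Rb^j)=(-1)^j$ yields $\sum_{F:\,\wp_F=\wq}(-1)^{\dim C_F}=\chi_c(N^\circ(\wq))=(-1)^{(|I|-1)-\dim\wq}$. Since $(-1)^{|F|}=-(-1)^{\dim C_F}$, the coefficient is $-(-1)^{(|I|-1)-\dim\wq}=(-1)^{|I|}(-1)^{\dim\wq}$, which is exactly the claimed formula.

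The main obstacle is the lemma in the first step: showing that the abstract term $\mu\circ\Delta$ of Takeuchi's formula returns the honest geometric face $\wp_F$. This rests on the structural fact that a face of a generalized permutahedron taken in a block-constant direction factors as a product of generalized permutahedra on the blocks, compatibly with the product and coproduct of $\rGP$; once this is established, the remainder is bookkeeping. I expect the only other delicate points to be verifying that the braid cones genuinely subdivide the open normal cone (an immediate consequence of the coarsening property) and tracking the signs and the open-versus-closed-cone distinction carefully through the quotient by the lineality $\Rb\1$.
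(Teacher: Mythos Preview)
Your proposal is correct and follows essentially the same architecture as the paper: apply Takeuchi's formula, identify each term $\mu_F\circ\Delta_F(\wp)$ with the face $\wp_F$ (this is Proposition~5.2 in the paper), group by faces, and then compute the coefficient of each face $\wq$ as an Euler-characteristic calculation on the braid cones subdividing the normal cone $\Nc_\wp(\wq)$.

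The only genuine difference is in how that Euler characteristic is computed. You work in the quotient $\Rb I/\Rb\1$ and invoke the compactly supported Euler characteristic $\chi_c$ directly on the open normal cone, using additivity over the open-cell decomposition by braid cones and $\chi_c(\Rb^j)=(-1)^j$. The paper instead stays in $\Rb^I$, writes the coefficient as a difference of two honest polyhedral-complex sums (over $\overline{\C_\wq}$ and over its boundary $\overline{\C_\wq}\setminus\C_\wq$), intersects with a sphere to make everything compact, and then reads off the reduced Euler characteristics of a ball and of its bounding sphere. Your route is a bit slicker and avoids the explicit ball/sphere bookkeeping (including the paper's footnoted edge case $\dim\Nc_\wp(\wq)=1$, which your $\chi_c(\{0\})=1$ handles uniformly); the paper's route is more elementary in that it only uses the classical Euler characteristic of CW complexes rather than $\chi_c$ and its additivity over locally closed strata. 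Both arrive at the same sign $(-1)^{|I|-\dim\wq}$.
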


From this formula, it becomes straightforward to interpret the five formulas above, and many others. In fact, the polyhedral point of view leads to the construction of other natural Hopf monoids. This framework allows us to compute for the first time the antipode of many Hopf structures of interest. 

Our results in this direction and other earlier results are summarized in the table below. Each row mentions a combinatorial family of objects, the polyhedra modeling them, the first construction of a Hopf structure (algebra or monoid) on this family, and the first cancellation-free computation of the antipode. Some entries indicate that other researchers obtained similar results independently and essentially simultaneously.

\begin{table}[h]
\centering
\begin{tabular}{|l|l|l|l|}
\hline
objects & polytopes & Hopf structure & antipode  \\
\hline
\hline
set partitions & permutahedra & \brown{Joni-Rota} & \brown{Joni-Rota}
 \\
\hline
paths & associahedra & \brown{Joni-Rota}, \green{new} & \brown{Haiman-Schmitt}, \green{new} \\
\hline
graphs & graphic zonotopes & \brown{Schmitt} & \green{new}+\brown{Humpert-Martin} \\
\hline
matroids & matroid polytopes & \brown{Schmitt} & \green{new} \\
\hline
posets & braid cones & \brown{Schmitt} & \green{new} \\
\hline
submodular fns & generalized permutahedra & \brown{Derksen-Fink} & \green{new} \\
\hline
hypergraphs & hypergraphic polytopes &  \green{new} & \green{new} \\
\hline
simplicial cxes & \green{ simplicial cx polytopes} &\brown{Benedetti et. al.} & \brown{Benedetti et. al.} \\
\hline
building sets & nestohedra & \green{new}+\brown{Gruji\'c} & \green{new} \\ 
\hline
simple graphs & graph associahedra & \green{new} & \green{new} \\
\hline
\end{tabular}
\caption{The main combinatorial Hopf structures and antipode theorems in this paper.\label{table:antipodes}}
\end{table}

\noindent All the earlier Hopf structures listed above are Hopf algebras. They all  have Hopf monoids that specialize to them; for details, see Section \ref{ss:Fock} or \cite[Part III]{am}.

\bigskip

\noindent \textbf{\textsf{Application B. Reciprocity theorems.}}
Consider the following polynomials: the \emph{chromatic polynomial} $\chi_g$ of a graph $g$,  the \emph{Billera--Jia--Reiner polynomial} $\chi_m$ of a matroid $m$, and
the \emph{strict order polynomial} $\chi_p$ of a poset $p$. 
These polynomials are determined by the following properties which hold for $n \in \Nb$: \\
\noindent $\bullet$ $\chi_g(n)$ = number of proper vertex $n$-colorings of $g$.\\
\noindent $\bullet$ $\chi_p(n)$ = number of strictly order preserving $n$-labellings of $p$.\\
\noindent $\bullet$ $\chi_m(n)$ = number of $n$-weightings of $m$ under which $m$ has a unique maximum basis. \\
In each case, it is true -- but not clear a priori -- that a polynomial exists with those properties. 

There is no reason to expect that plugging in negative values into these polynomials should have any combinatorial significance. However, these polynomials satisfy the following \emph{combinatorial reciprocity theorems}. For $n \in \Nb$:

\noindent $\bullet$ $|\chi_g(-n)|$ = number of compatible pairs of an $n$-coloring and an acyclic orientation of $g$.\\
\noindent $\bullet$ $|\chi_p(-n)|$ = number of weakly order preserving $n$-labellings of $p$.\\
\noindent $\bullet$ $|\chi_m(-n)|$ = number of pairs of an $n$-weighting $w$ of $m$ and a $w$-maximum basis. 

We will see that these are three instances of the same general result: Any character $\zeta$ in a Hopf monoid gives rise to a polynomial invariant $\chi_e(n)$ for each element $e$ of the monoid. Furthermore, this polynomial satisfies a reciprocity rule that gives a combinatorial interpretation of $|\chi_e(-n)|$ for $n \in \Nb$. The three statements above are straightforward consequences of this general theory. In fact, they are special cases of the same theorem for generalized permutahedra under the inclusions of $\wG, \wM,$ and $\wP$ into $\wGP$.
Closely related results were obtained by Billera, Jia, and Reiner in \cite{billera06}.

\bigskip

\noindent \textbf{\textsf{Application C. Inversion of formal power series.}}
The left panel of
Figure \ref{f:permassoc} shows the first few \emph{permutahedra}: a point $\pi_1$, a segment $\pi_2$, a hexagon $\pi_3$, and a truncated octahedron $\pi_4$. There is one permutahedron in each dimension, and every face of a permutahedron is a product of permutahedra.
The right panel shows the first few \emph{associahedra}: a point $\wa_1$, a segment $\wa_2$, a hexagon $\wa_3$, and a three-dimensional associahedron $\wa_4$. There is one associahedron in each dimension, and every face of an associahedron is a product of associahedra.

\begin{figure}[h]
\begin{center}
\includegraphics[scale=.1]{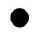} \quad
\includegraphics[scale=.3]{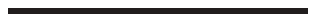}  \quad
\includegraphics[scale=.3]{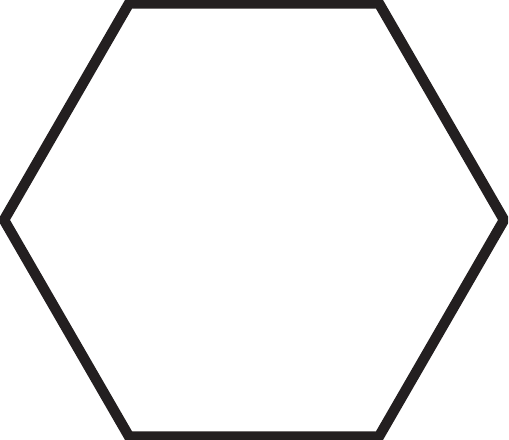} \quad
\includegraphics[scale=.35]{Figures/perm} \qquad \qquad \quad
\includegraphics[scale=.1]{Figures/point} \quad
\includegraphics[scale=.3]{Figures/line}  \quad
\includegraphics[scale=.3]{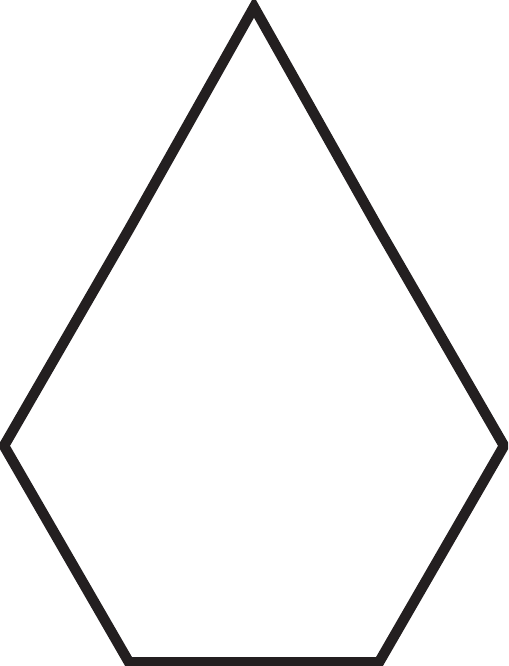} \quad
\includegraphics[scale=.2]{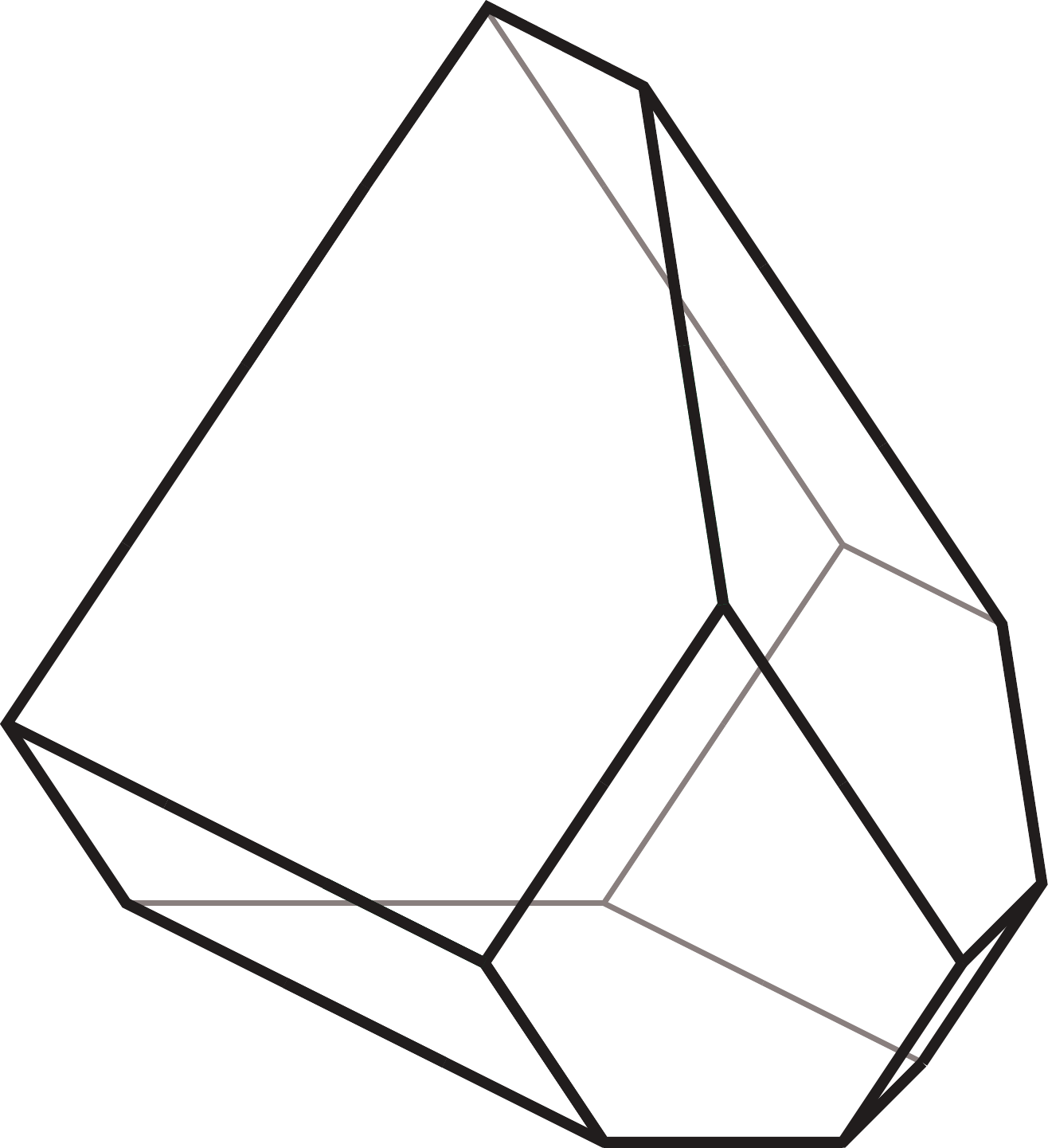} 
\caption{Left: The permutahedra $\pi_1, \pi_2, \pi_3, \pi_4$. Right: 
the associahedra $\wa_1, \wa_2, \wa_3 ,\wa_4$.\label{f:permassoc}}
\end{center} 
\end{figure}

\vspace{-.2cm}

\noindent C1. \textsf{Multiplicative Inversion.} 
Consider formal power series
\[
A(x) = \sum_{n \geq 0} a_n \frac{x^n}{n!} \quad \textrm{ and } \quad B(x) = \sum_{n \geq 0} b_n \frac{x^n}{n!} \quad \textrm{ such that } \quad A(x)B(x)=1,
\]
assuming for simplicity $a_0=1$. The first few coefficients of $B(x)=1/A(x)$ are:
\begin{eqnarray*}
b_1 &=& -a_1 \\
b_2 &=& -a_2+2a_1^2 \\
b_3 &=& -a_3+6a_2a_1-6a_1^3 \\
b_4 &=& -a_4+8a_3a_1+6a_2^2-36a_2a_1^2+24a_1^4  
\end{eqnarray*}

What do these numbers count? The face structure of permutahedra tells the full story: for example, the formula for $b_4$ comes from  the faces of the permutahedron $\pi_4$:
1 truncated octahedron {$\pi_4$}, 
 {8} hexagons {$\pi_3 \times \pi_1$} and {6} squares {$\pi_2 \times \pi_2$}, 
 {36} segments {$\pi_2 \times \pi_1 \times \pi_1$}, and 
 {24} points {$\pi_1 \times \pi_1 \times \pi_1 \times \pi_1$}. The signs in the formula are given by the dimensions of the faces.

\bigskip

\noindent  C2. \textsf{Compositional Inversion.} 
Consider formal power series
\[
C(x) = \sum_{n \geq 1} c_{n-1} x^n \quad \textrm{ and } \quad D(x) = \sum_{n \geq 1} d_{n-1}x^n \textrm{ such that } \quad C(D(x))=x,
\]
assuming for simplicity $c_0=1$. The first few coefficients of $D(x)=C(x)^{\langle -1 \rangle}$ are:
{\begin{eqnarray*}
d_1 &=& -c_1 \\
d_2 &=& -c_2+2c_1^2 \\
d_3 &=& -c_3+5c_2c_1-5c_1^3 \\
d_4 &=& -c_4+6c_3c_1+3c_2^2-21c_2c_1^2+14c_1^4
\end{eqnarray*}
}
What do these numbers count? Now it is the face structure of associahedra that tells the full story: for example, the formula for $d_4$ comes from  the faces of the associahedron $\wa_4$:
 {1} three-dimensional associahedron {$\wa_4$}, 
 {6} pentagons {$\wa_3 \times \wa_1$} and {3} squares {$\wa_2 \times \wa_2$}, 
 {21} segments {$\wa_2 \times \wa_1 \times \wa_1$}, and 
 {14} points {$\wa_1 \times \wa_1 \times \wa_1 \times \wa_1$}. The signs in the formula are given by the dimensions of the faces.
 
\bigskip
 
The problem of inverting power series is classical. Combinatorial formulas for the coefficients of $b_n$ and $d_n$ above and combinatorial formulas for the face enumeration of permutahedra and associahedra have been known for a long time; and these formulas do coincide. 
However, our treatment seems to be the first to truly explain the geometric  connection. We derive these inversion formulas in a unified fashion, directly from the combinatorial and topological structure of these polytopes. In the case of Lagrange inversion and associahedra, this answers a 2005 question of Loday \cite{loday05}.

\subsection{Outline}

The paper is roughly divided into four parts. Part 1 and the sections labeled \emph{Preliminaries} (Sections 2, 4, 8, and 16) contain general results that are prerequisites to the other parts. Parts 2, 3, and 4 are also interconnected in several ways, but we attempted to make the exposition of each one of them as self-contained as possible, so they can mostly be read independently of each other.

\subsubsection{Part 1: The Hopf monoid $\rGP$ and its antipode. (Sections 2-7) } The first part establishes the foundational definitions, examples, and results. 
In Section \ref{s:hopf} we define Hopf monoids and state some key general results. To illustrate the ubiquity of Hopf monoids in combinatorics, Section \ref{s:G,M,P}  provides five examples which we will return to throughout the paper: set partitions, paths, graphs, matroids, and posets. Section \ref{s:prelimGP} defines generalized permutahedra and collects the basic discrete geometric facts that will be important to us. Section \ref{s:GP} shows that generalized permutahedra $\rGP$ have the structure of a Hopf monoid, and Section \ref{s:universality} shows that, in a precise sense, generalized permutahedra are the universal family of polytopes supporting such an algebraic structure. Finally, in Section \ref{s:antipode} we prove one of our main results: that the antipode of a polytope $\wp$ is the alternating sum of its faces.

\subsubsection{Part 2: Permutahedra, associahedra, and inversion (Sections 8-11) } 
The second part reveals the relationship between Hopf monoids and algebraic operations on power series. In Section \ref{s:prelimcharacters} we introduce the characters of a Hopf monoid and show how they assemble into a group of characters. 
Section \ref{s:Pi} shows that the group of characters of a Hopf monoid of permutahedra $\rbPi$ is isomorphic to the group of invertible power series under multiplication.
Section \ref{s:F} shows that the group of characters of a Hopf monoid of associahedra $\rF$ is isomorphic to the group of invertible power series under composition.
Section \ref{s:inversion} then uses these results to and the antipode of $\wGP$ to give a unified geometric topological explanation for the formulas to compute the inverse of a power series under multiplication and composition.

\subsubsection{Part 3: Characters, polynomial invariants, and reciprocity (Sections 12-18) } 
The third part is more combinatorial in nature. We begin with Section \ref{s:SF} which shows a bijection between generalized permutahedra and submodular functions. This partially explains why generalized permutahedra appear in so many different settings: submodular functions model situations where a very natural diminishing property holds. This allows us to realize graphs, matroids, and posets as submonoids $\rG, \rM, \rP$ of the Hopf monoid of generalized permutahedra in the next three sections. In Sections \ref{s:G}, \ref{s:M}, and \ref{s:P} respectively we recall how graphs, matroids, and posets are modeled in $\rSF$ via their cut functions, rank functions, and order ideal indicator functions, and in $\rGP$ via their graphic zonotopes, matroid polytopes, and poset cones. This allows us to compute the antipodes of $\rG, \rM,$ and $\rP$ for the first time. The antipode of $\rG$ was also computed by Humpert and Martin, and we prove their conjectures on characters of complete graphs.

We then turn to combinatorial invariants and reciprocity theorems. Section \ref{s:pol-inv} shows that any character on a Hopf monoid gives rise to a polynomial invariant of the objects of study, and that invariant satisfies a reciprocity theorem. Section \ref{s:basic} carries out this construction for the simplest non-trivial character of $\rGP$. 
This allows us to explain in Section \ref{s:reciprocity} how two classical theorems of Stanley on graphs and posets and a theorem of Billera-Jia-Reiner on matroids are really the same theorem.

\subsubsection{Part 4: Hypergraphs and hypergraphic polytopes (Sections 19-24) } 

The fourth and final part focuses on a subfamily of generalized permutahedra which inherits the Hopf algebraic structure from $\rGP$ while containing additional combinatorial structure; we call them hypergraphic polytopes. 
We introduce and characterize this family of polytopes $\rHGP$ in Section \ref{s:HGP}, answering a question of Rota.  
Section \ref{s:HG} introduces a Hopf monoid structure on hypergraphs $\rHG$ which generalizes $\rG$ and is isomorphic to $\rHGP$. We then use generalized permutahedra to study several interesting submonoids of $\rHG$. 
Section \ref{s:SC} recasts Benedetti et. al.'s Hopf structure on simplicial complex $\rSC$ geometrically, thus explaining the mysterious similarity between the antipodes for $\wG$ and $\wSC$. 
In Section \ref{s:BS}, nestohedra give building sets the structure of a Hopf monoid $\rBS$ and control its antipode. 
This gives rise to a new Hopf monoid of graphs $\rW$ in Section \ref{s:W}.
Finally, Sections \ref{s:Pirevisited} and \ref{s:Frevisited} show how $\rW$ contains the Hopf monoids $\rPi$ and $\rF$ of set partitions (and permutahedra) and paths (and associahedra) respectively, giving rise to some new enumerative consequences.

\subsection{Conventions}
We work over  a field $\Kb$ of characteristic $0$. We use the font $\rH$ for Hopf monoids in set species,  $\wH$ for Hopf monoids in vector species, and $H$ for Hopf algebras.

\bigskip
\bigskip

\begin{LARGE}
\noindent 
\textsf{PART 1: The Hopf monoid $\wGP$ and its antipode. 
}
\end{LARGE}

\section{{Preliminaries 1: A user's guide to Hopf monoids in species}}\label{s:hopf}

The theory of Hopf monoids in species developed by Aguiar and Mahajan in \cite{am} provides a useful algebraic setting to study many families of combinatorial objects of interest, as follows. The families under study have operations of \emph{merging} two disjoint objects into one, and \emph{breaking} an object into two disjoint parts. Under some hypotheses, these operations define a product and coproduct in a Hopf monoid. One can then use the general theory to obtain numerous combinatorial consequences. In Sections \ref{s:hopf}, \ref{s:prelimcharacters}, and \ref{s:pol-inv} we outline the most relevant combinatorial features of this theory. 
Our exposition is self-contained;  the interested reader may find
more details on some of these constructions in~\cite{am}.

\subsection{{Set species}} 

We begin by reviewing Joyal's notion of set species~\cite{bergeron98:_combin,joyal81:_une}. This is a  framework, rooted in category theory, used to systematically study combinatorial families and the relationships between them.

\begin{definition}\label{d:setsp}
A \emph{set species} $\rP$ consists of the following data.
\begin{itemize}
\item For each finite set $I$, a set $\rP[I]$.
\item For each bijection $\sigma:I\to J$, a map $\rP[\sigma]:\rP[I]\to \rP[J]$. These should be such that $\rP[\sigma\circ\tau]=\rP[\sigma]\circ\rP[\tau]$ and $\rP[\id]=\id$.
\end{itemize}
\end{definition}

\noindent It follows that each map $\rP[\sigma]$ is invertible, with inverse $\rP[\sigma^{-1}]$.
Sometimes we refer to an element $x\in\rP[I]$ as a \emph{structure} (of species 
$\rP$) on the set $I$.

In the examples that interest us, $\rP[I]$ is the set of all combinatorial structures of a certain kind that can be constructed on the ground set $I$. For each bijection $\sigma:I\to J$, the map $\rP[\sigma]$ takes each structure on $I$ and relabels its ground set to $J$ according to $\sigma$.

\begin{example}\label{eg:speciesL}
Define a set species $\rL$ as follows. For any finite set $I$,
$\rL[I]$ is the set of all linear orders on $I$. If $\ell$ is a linear order on $I$
and $\sigma:I\to J$ is a bijection, then $\rL[\sigma](\ell)$ is the linear order on $J$ for which $j_1<j_2$ if $\sigma^{-1}(j_1)<\sigma^{-1}(j_2)$ in $\ell$. If we regard $\ell$ as a list of the elements of $I$, then 
$\rL[\sigma](\ell)$ is the list obtained by replacing each $i\in I$ for $\sigma(i)\in J$.

For instance, $\rL[\{a,b,c\}]=\{abc,\,bac,\,acb,\,bca,\,cab,\,cba\}$
and if $\sigma:\{a,b,c\}\to\{1,2,3\}$ is given by $\sigma(a)=1, \sigma(b)=2, \sigma(c)=3$,  
then $\rL[\sigma]:\rL[\{a,b,c\}]\to\rL[\{1,2,3\}]$ is given by $\sigma(abc)=123, \sigma(acb)=132, \sigma(bac)=213, \sigma(bca)=231,\sigma(cab)=312, \sigma(cba)=321$.
\end{example}

\begin{definition}\label{d:morsetsp}
A \emph{morphism} $f:\rP\to\rQ$ between set species $\rP$ and $\rQ$ is a collection of maps
$
f_I:\rP[I]\to\rQ[I]
$
which satisfy the following \emph{naturality} axiom: for each bijection $\sigma:I\to J$,
$
f_J\circ \rP[\sigma] = \rQ[\sigma]\circ f_I.
$
\end{definition}

\begin{example}
An automorphism of the Hopf monoid $\rL$ of linear orders is given by the reversal maps $\mathrm{rev}_I: \rL[I] \to \rL[I]$ defined by $\mathrm{rev}_I(a_1a_2\ldots a_i) = a_i\ldots a_2a_1$ for each linear order on $I$ written as a list $a_1a_2\ldots a_i$. 
\end{example}

\subsection{{Hopf monoids in set species}}\label{ss:hopf-set}

A \emph{decomposition} of a finite set $I$ is a finite sequence $(S_1,\ldots,S_k)$
of pairwise disjoint subsets of $I$ whose union is $I$. In this situation, we write
\[
I=S_1\sqcup\cdots\sqcup S_k.
\]
Note that $I=S\sqcup T$ and $I=T\sqcup S$ are distinct decompositions of $I$
(unless $I=S=T=\emptyset$). A \emph{composition} is a decomposition where all parts
are non-empty.


\begin{definition}\label{d:hopfset}
A \emph{connected Hopf monoid in set species} consists of the following data.
\begin{itemize}
\item A set species $\rH$ such that the set $\rH[\emptyset]$ is a singleton.
\item For each finite set $I$ and each decomposition $I=S\sqcup T$, product and coproduct maps 
\[
\rH[S]\times\rH[T] \map{\mu_{S,T}}\rH[I]
\qand
\rH[I]\map{\Delta_{S,T}} \rH[S]\times\rH[T]
\]
\end{itemize}
satisfying the naturality, unitality, associativity, and compatibility axioms below.

Before stating those axioms, we discuss some terminology and notation. The collection of maps $\mu$ (resp.\ $\Delta$) is called the \emph{product} (resp.\ the \emph{coproduct}) of the Hopf monoid $\rH$. 
Fix a decomposition $I=S\sqcup T$. For $x\in \rH[S]$, $y\in \rH[T]$, and $z \in \rH[I]$ we write
\[
(x,y) \map{\mu_{S,T}} x \cdot y
\qand
z \map{\Delta_{S,T}} (z|_S,  z/_S).
\]
We call $x \cdot y \in \rH[I]$ the \emph{product} of $x$ and $y$, 
$z|_S\in\rH[S]$  the \emph{restriction of $z$ to $S$}
and $z/_S\in\rH[T]$ the \emph{contraction of $S$ from $z$}.
%
%
Finally, we call the element $1 \in \rH[\emptyset]$ 
the \emph{unit} of $\rH$.

In most combinatorial applications, the product keeps track of how we merge two disjoint structures $x$ on $S$ and $y$ on $T$ into a single structure $x \cdot y$ on $I$, according to a suitable combinatorial rule. The coproduct keeps track of how we break up a structure $z$ on $I$ into a structure $z|_S$ on $S$ and a structure $z/_S$ on $T$. Section \ref{s:G,M,P} features five important examples.

The axioms are as follows.

\begin{naturality}
For each decomposition $I=S\sqcup T$ each bijection $\sigma:I\to J$, and any choice of 
 $x\in\rH[S]$, $y\in\rH[T]$, and $z\in\rH[I]$, we have
\begin{gather*}
\rH[\sigma](x\cdot y) = \rH[\sigma|_S](x)\cdot \rH[\sigma|_T](y),\\ 
\rH[\sigma](z)|_S = \rH[\sigma|_S](z|_S), \qquad 
\rH[\sigma](z)/_S = \rH[\sigma|_T](z/_S).
\end{gather*}
For combinatorial families, this means that the relabelling maps respect the merging and breaking operations.
\end{naturality}

\begin{unitality}
For each $I$ and $x\in\rH[I]$, we must have
\[
x\cdot 1 = x = 1\cdot x,
\qquad
x|_I = x = x/_\emptyset.
\]
Combinatorially this means that the merging and breaking of structures is trivial when the decomposition of the underlying set $I$ is trivial; here $1$ represents the unique (and usually trivial) structure on the empty set.
\end{unitality}

\begin{associativity}
For each decomposition $I=R\sqcup S\sqcup T$, and any
$x\in\rH[R]$, $y\in\rH[S]$, $z\in\rH[T]$, and $w\in\rH[I]$, we must have 
\begin{gather*}
x\cdot(y\cdot z) = (x\cdot y)\cdot z, \\
(w|_{R\sqcup S})|_R=w|_R,
\qquad
(w|_{R\sqcup S})/_R=(w/_R)|_S,
\qquad
w/_{R\sqcup S}=(w/_R)/_S.
\end{gather*}
Combinatorially, this means that the merging of three combinatorial structures on $R,S,T$ into one structure on $I$ is well defined, and the breaking up of a single structure on $I$ into three structures on $R,S,T$ is well-defined. By induction, merging and breaking up are then also well-defined for decompositions of $I$ into more than three parts.
\end{associativity}

\begin{compatibility}
Fix decompositions $S\sqcup T=I=S'\sqcup T'$,
and consider the pairwise intersections $A:=S\cap S',\ B:=S\cap T',\ C:=T\cap S',\ D:=T\cap T'$
as illustrated below. In this situation, for any $x\in\rH[S]$ and $y\in\rH[T]$, we must have
\[
 (x\cdot y)|_{S'}  = x|_A\cdot y|_C 
 \qand
 (x\cdot y)/_{S'} = x/_A\cdot y/_C.
\]
\begin{equation}\label{e:4sets}
\begin{gathered}
\begin{picture}(100,90)(20,0)
  \put(50,40){\oval(100,80)}
  \put(0,40){\dashbox{2}(100,0){}}
  \put(45,55){$S$}
  \put(45,15){$T$}
\end{picture}
\quad
\begin{picture}(100,90)(10,0)
  \put(50,40){\oval(100,80)}
  \put(50,0){\dashbox{2}(0,80){}}
  \put(20,35){$S'$}
  \put(70,35){$T'$}
\end{picture}
\quad
\begin{picture}(100,90)(0,0)
\put(50,40){\oval(100,80)}
  \put(0,40){\dashbox{2}(100,0){}}
  \put(50,0){\dashbox{2}(0,80){}}
  \put(20,55){$A$}
  \put(70,55){$B$}
  \put(20,15){$C$}
  \put(70,15){$D$}
\end{picture}
\end{gathered}
\end{equation}
Combinatorially, this says that ``merging then breaking" is the same as "breaking then merging". If we start with structures $x$ and $y$ on $S$ and $T$, we can merge them into a structure $x \cdot y$ on $I$, and then break the result into structures on $S'$ and $T'$. We can also break $x$ (resp. $y$) into two structures  on $A$ and $B$ (resp. $C$ and $D$), and merge the resulting pieces into structures on $S'$ and $T'$. These two procedures should give the same answer.

\end{compatibility}
This completes the definition of connected Hopf monoid
in set species. In the cases that interest us, naturality and unitality are immediate and associativity is very easy; usually the only non-trivial condition to be checked is compatibility.
\end{definition}

%

\begin{definition}\label{d:morhopf}
A \emph{morphism} $f:\rH\to\rK$ between Hopf monoids $\rH$ and $\rK$ is a morphism of species which preserves products, restrictions and contractions; that is, we have 
\begin{tabular}{ll}
\quad $f_J\bigl(\rH[\sigma](x)\bigr) = \rK[\sigma]\bigl(f_I(x)\bigr)$ &
for all bijections $\sigma:I\to J$ and all $x\in\rH[I]$, \\ 
\quad $f_I(x\cdot y) = f_S(x)\cdot f_T(y)$  & for all $I = S \sqcup T$ and all $x\in\rH[S]$, $y\in\rH[T]$,\\
\quad $f_S(z|_S)=f_I(z)|_S$, \,\,  $f_T(z/_S)=f_I(z)/_S$  & for all $I = S \sqcup T$ and all $z\in\rH[I]$.
\end{tabular}\\
Units are preserved because of connectedness.
\end{definition}

Suppose $\rH$ is a Hopf monoid.
Note that if $I=S\sqcup T$ is a decomposition, then $I=T\sqcup S$ is another.
Therefore, any $x\in\rH[S]$ and $y\in\rH[T]$ give rise to two products
 $x\cdot y$ and $y\cdot x$. Similarly, any $z\in\rH[I]$ gives rise to two 
 coproducts $(z|_S, z/_S)$ and $(z|_T, z/_T)$.  

\begin{definition}\label{d:comm}
A Hopf monoid $\rH$ 
is \emph{commutative} if $x\cdot y =y\cdot x$ for any $I=S\sqcup T$,
$x\in\rH[S]$ and $y\in\rH[T]$. 
It is \emph{cocommutative} if $(z|_S, z/_S) = (z/_T, z|_T)$ for any $I=S\sqcup T$ and $z\in\rH[I]$; it is enough to check that $z/_S=z|_T$  for any $I=S\sqcup T$ and $z\in\rH[I]$.
\end{definition}

\begin{example}\label{eg:hopfL}
We now define a Hopf monoid on the species $\rL$  of linear orders of Example~\ref{eg:speciesL}. To this end, we define the operations of
\emph{concatenation} and \emph{restriction}. Let $I=S\sqcup T$.
If $\ell_1=s_1\ldots s_i$
is a linear order on $S$ and $\ell_2=t_1\ldots t_j$ is a linear order on $T$,
their concatenation is the following linear order on $I$:
\[
\ell_1\cdot \ell_2:=s_1\ldots s_i\, t_1\ldots t_j.
\]
Given a linear order $\ell$ on $I$, the restriction 
 $\ell\,|_S$ is the list consisting of the elements of $S$ written in the order
 in which they appear in $\ell$.

The product (merging) and coproduct (breaking) of the Hopf monoid $\rL$ are defined by
\begin{align*}
\rL[S]\times\rL[T] & \map{\mu_{S,T}} \rL[I] & \rL[I]  & \map{\Delta_{S,T}} \rL[S]\times\rL[T]\\
(\ell_1,\ell_2) & \map{\phantom{\mu_{S,T}}} \ell_1\cdot\ell_2
& \ell  & \map{\phantom{\Delta_{S,T}}} (\ell |_S,\ell |_T).
\end{align*}

Given linear orders $\ell_1$ on $S$ and $\ell_2$ on $T$, the compatibility axiom in Definition~\ref{d:hopfset} comes from the fact that the concatenation of $\ell_1|_A$ and $\ell_2|_C$ agrees with the restriction to $S'$ of $\ell_1\cdot \ell_2$.
The verification of the remaining axioms is similar.

By definition, $\ell/_S=\ell\,|_T$, so $\rL$ is cocommutative. 
\end{example}

Given a Hopf monoid $\rH$ we define the \emph{co-opposite} Hopf monoid $\rH^{cop}$ by preserving the product and reversing the coproduct:
if $\Delta_{S,T}(z) = (z|_S, z/_S)$ in $\rH$, then $\Delta_{S,T}(z) = (z/_T, z|_T)$ in $\rH^{cop}$. One easily verifies this is also a Hopf monoid.

The main characters of this paper are the Hopf monoids of generalized permutahedra discussed in Section \ref{s:GP}. We consider many other Hopf monoids throughout the paper; additional examples are given in~\cite[Chapters~11--13]{am}.

\subsection{{Vector species}}

All vector spaces and tensor products below are over a fixed field $\Kb$.

A \emph{vector species} $\wP$ consists of  the following data.
\begin{itemize}
\item For each finite set $I$, a vector space $\wP[I]$.
\item For each bijection $\sigma:I\to J$, a linear map $\wP[\sigma]:\wP[I]\to \wP[J]$. 
\end{itemize}
These are subject to the same axioms as in Definition~\ref{d:setsp}.
Again, these axioms imply that every such map $\wP[\sigma]$ is invertible. 
A morphism of vector species $f:\wP\to\wQ$ is a collection of linear maps
$f_I:\wP[I]\to\wQ[I]$ satisfying the naturality axiom of Definition~\ref{d:morsetsp}.

\subsection{{Hopf monoids in vector species}}\label{ss:Hopfmonoidinvectorspecies}

\begin{definition}
A \emph{connected Hopf monoid in vector species} is a vector species $\wH$
with $\wH[\emptyset]=\Kb$ that is
equipped with linear maps
\[
\wH[S]\otimes\wH[T] \map{\mu_{S,T}}\wH[I]
\qand
\wH[I]\map{\Delta_{S,T}} \wH[S]\otimes\wH[T]
\]
for each decomposition $I=S\sqcup T$, subject to the same axioms 
as in  Definition~\ref{d:hopfset}.
\end{definition}


We employ similar notations as for Hopf monoids in set species; namely,
\[
\mu_{S,T}(x\otimes y)=x\cdot y
\qand
\Delta_{S,T}(z)=\displaystyle\sum z|_S\otimes z/_S,
\]
the latter being a variant of \emph{Sweedler's notation} for Hopf algebras.
In general, $\sum z|_S\otimes z/_S$ stands for a
tensor in $\wH[S]\otimes\wH[T]$; individual elements $z|_S$ and $z/_S$
may not be defined. However, in most combinatorial situations that interest us, $\Delta_{S,T}(z) = z|_S \otimes z/_S$ is a pure tensor defined combinatorially by the breaking operation.

A \emph{morphism of Hopf monoids in vector species} is a morphism of vector species
which preserves products, coproducts, and the unit, as in Definition \ref{d:morhopf}.

\subsection{{Linearization}}

Consider the \emph{linearization functor}
\[
\Set \longrightarrow \Vect,
\]
which sends a set to the vector space with basis the given set.
Composing a set species $\rP$ with the linearization functor
gives a vector species, which we denote $\wP$. 
If $\rH$ is a Hopf monoid in set species, then its linearization $\wH$ is a Hopf monoid
in vector species.

Most, but not all, of the Hopf monoids considered in this paper are in set species.
The linearization functor allows us to regard them as Hopf monoids in vector species also.
%

\begin{remark}
The category of vector species carries a symmetric monoidal structure.
In any symmetric monoidal category one may consider the notion of \emph{Hopf monoid}. A Hopf monoid in vector species is a Hopf monoid in this categorical sense. For more details about this point of view, and a discussion of set
species versus vector species, see~\cite[Chapter 8]{am}.
\end{remark}

\subsection{{Higher products and coproducts}}\label{ss:high}

Let $\wH$ be a Hopf monoid in vector species. The following is a consequence of the associativity axiom. For any
decomposition $I=S_1\sqcup\cdots\sqcup S_k$ with $k\geq 2$, there are unique maps
\begin{equation}\label{e:iterated-mu}
\wH[S_1]\otimes\cdots\otimes\wH[S_k] \map{\mu_{S_1,\ldots,S_k}} \wH[I], \qquad \wH[I] \map{\Delta_{S_1,\ldots,S_k}} \wH[S_1]\otimes\cdots\otimes\wH[S_k]
\end{equation}
obtained by respectively iterating the product maps $\mu$ or the coproduct maps $\Delta$ in any meaningful way. 
As we mentioned when discussing the associativity axiom in Section \ref{ss:hopf-set}, these maps are well-defined; we refer to them as the \emph{higher products and coproducts} of $\wH$. 

For $k=1$, we define $\mu_I$ and $\Delta_I$ to be the identity map $\id: \wH[I] \to \wH[I]$.
For $k=0$, the only set with a decomposition into $0$ parts is the empty set, and in that case we let
$\mu_{(\,)}: \Kb \map{} \wH[\emptyset]$ and 
$\Delta_{(\,)}: \wH[\emptyset] \map{} \Kb$ 
 be the linear maps that send $1$ to $1$.

\medskip

When $\wH$ is the linearization of a Hopf monoid $\rH$ in set species,
we have higher (co)products 
\[
\mu_{S_1,\ldots,S_k}(x_1,\ldots,x_k)
= x_1\cdot \ldots \cdot x_k \in \rH[I] , \qquad \Delta_{S_1,\ldots,S_k}(z) = (z_1,\ldots,z_k)
\]
whenever $x_i\in\rH[S_i]$ for $i=1,\ldots,k$,
and $z\in\rH[I]$, respectively. We refer to $z_i\in\rH[S_i]$ as the \emph{$i$-th minor}
of $z$ corresponding to the decomposition $I=S_1\sqcup\cdots\sqcup S_k$; it is obtained from $z$ by combining restrictions and contractions in any
 meaningful way.

\subsection{{The antipode and the antipode problem}}\label{ss:antipode}

A \emph{composition} of a finite set $I \neq \emptyset$ is an ordered decomposition $I=S_1\sqcup\cdots\sqcup S_k$ in which each subset $S_i$ is nonempty;  we write
\[
(S_1,\ldots,S_k) \vDash I.
\]
In other words, a composition is a decomposition whose parts are non-empty.

\begin{definition}\label{d:antipode}
Let $\wH$ be a connected Hopf monoid in vector species.
The \emph{antipode} of $\wH$ is the collection of maps 
\[
\apode_I:\wH[I]\to \wH[I],
\]
one for each finite set $I$, given by $\apode_\emptyset=\id$ and
\begin{equation}\label{e:takeuchi}
\apode_I\ =\ \sum_{(S_1,\ldots,S_k) \vDash I \atop k\geq 1} (-1)^k\, \mu_{S_1,\ldots,S_k}\circ\Delta_{S_1,\ldots,S_k} \qquad \textrm{ for } I \neq \emptyset.
\end{equation}
\end{definition}

The right hand side of~\eqref{e:takeuchi} involves the higher (co)products of~\eqref{e:iterated-mu}. 
Since a composition of $I$ can have at most $\abs{I}$ parts, the sum is finite.
We refer to~\eqref{e:takeuchi} as \emph{Takeuchi's formula}. 
For alternate formulas and axioms
defining the antipode of a Hopf monoid, see~\cite[Section 8.4]{am}.

As explained, for example, in \cite{am, montgomery93:_hopf}, Hopf monoids may be regarded as a generalization of groups. In this context, the antipode of a Hopf monoid generalizes the inverse function $g \mapsto g^{-1}$ of a group. For this reason, the antipode is a central part of the structure of a Hopf monoid, and the following is a fundamental problem. 

\begin{problem}\label{prob:antipode}\cite[Section 8.4.2]{am}
Find an explicit, cancellation-free formula for the antipode of a given Hopf monoid.
\end{problem}

If $\wH$ is the linearization of a Hopf monoid in set species $\rH$, the sum in~\eqref{e:takeuchi} takes place in the vector space $\wH[I]$ with basis $\rH[I]$.
The antipode problem asks for an understanding of the structure constants
of $\apode_I$ on this basis. 

\begin{remark}
The number of terms in Takeuchi's formula~\eqref{e:takeuchi} is the \emph{ordered Bell number} $\omega(n) \approx n!/2(\log 2)^{n+1}$; the first few terms in this sequence are $1, 1, 3, 13, 75, 541, 4683, 47293,$ $545835$. \cite{good75} Their rapid growth makes this equation impractical, even for moderate values of $n$. To solve the Antipode Problem \ref{prob:antipode}, one needs further insight into the Hopf monoid in question.
\end{remark}

\subsection{{Properties of the antipode}}\label{ss:antipodeproperties}

The following properties of the antipode follow from general results for Hopf monoids in monoidal categories; see \cite[Prop.~1.22.(iii), Cor.~1.24, Prop.~1.16]{am} for proofs. 
\begin{proposition}\label{p:antipode}\emph{(The antipode reverses products and coproducts)}
Let $\wH$ be a Hopf monoid and $I=S\sqcup T$ a decomposition. Then
\begin{gather}
\label{e:apode-mu}
\apode_I(x\cdot y) = \apode_{T}(y)\cdot\apode_{S}(x)
\text{ whenever $x\in\wH[S]$ and $y\in\wH[T]$,}\\
\label{e:apode-delta}
\Delta_{S,T}\bigl(\apode_I(z)\bigr) = 
\sum \apode_S(z/_T)\otimes \apode_T(z|_T)
\text{ whenever $z\in\wH[I]$.}
\end{gather}
\end{proposition}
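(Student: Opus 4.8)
The plan is to prove both identities by the classical convolution-inverse argument, transported to the setting of species. Recall that the space of species endomorphisms $\Hom(\wH,\wH)$ carries a \emph{convolution} product
\[
(f * g)_I = \sum_{I = S \sqcup T} \mu_{S,T}\circ(f_S\otimes g_T)\circ\Delta_{S,T},
\]
whose unit is the composite $u\epsilon$ of the counit followed by the unit. The first step is to record that, because $\wH$ is connected, the identity $\id$ is convolution-invertible and its inverse is precisely Takeuchi's map~\eqref{e:takeuchi}; equivalently, the antipode is characterized by the two-sided relation
\[
\apode * \id = \id * \apode = u\epsilon.
\]
One obtains $\apode$ as the geometric-series inverse of $\id$, the sum being finite by connectedness, exactly as in the discussion following~\eqref{e:takeuchi}.

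For the product reversal~\eqref{e:apode-mu}, I would work in the convolution monoid $\Hom(\wH\otimes\wH,\wH)$, where $\wH\otimes\wH$ is given the tensor-product coalgebra structure and $\wH$ its algebra structure. Writing $\mu$ for the product viewed as an element of this monoid, the goal is to show that the two candidate inverses
\[
\rho := \apode\circ\mu \qand \lambda := \mu\circ\tau\circ(\apode\otimes\apode),
\]
with $\tau$ the braiding of species, coincide. Using that $\Delta$ is an algebra morphism (this is exactly the compatibility axiom) together with $\id*\apode = u\epsilon$, a short calculation gives $\mu * \rho = u\epsilon$, so $\rho$ is a right inverse of $\mu$; dually, invoking $\apode*\id = u\epsilon$ gives $\lambda * \mu = u\epsilon$, so $\lambda$ is a left inverse. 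Associativity of convolution then forces
\[
\lambda = \lambda*(\mu*\rho) = (\lambda*\mu)*\rho = \rho,
\]
which unwinds, upon evaluating at $x\otimes y$, to $\apode_I(x\cdot y) = \apode_T(y)\cdot\apode_S(x)$.

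For the coproduct reversal~\eqref{e:apode-delta}, I would run the formally dual argument in $\Hom(\wH,\wH\otimes\wH)$, with $\wH$ now regarded as a coalgebra and $\wH\otimes\wH$ as an algebra. The roles of the compatibility axiom and of the two one-sided antipode relations are interchanged, and the braiding $\tau$ accounts for the fact that the $S$-component on the right of~\eqref{e:apode-delta} is built from the contraction $z/_T$ while the $T$-component is built from the restriction $z|_T$.

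The main obstacle I anticipate is not conceptual but organizational: one must set up the tensor-square structures on $\wH\otimes\wH$ correctly inside the symmetric monoidal category of species --- in particular getting the braiding $\tau$ and the sum over decompositions right in each convolution monoid --- and one must first establish cleanly that Takeuchi's formula~\eqref{e:takeuchi} really produces the two-sided convolution inverse of $\id$. Once these foundations are in place, the remainder is the standard Sweedler-style uniqueness-of-inverses computation; indeed, as noted in the statement, all of this is a specialization of the general monoidal-category results of~\cite{am}.
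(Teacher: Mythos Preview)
Your proposal is correct and is precisely the standard convolution-inverse argument; the paper itself does not give a proof of this proposition but simply cites \cite[Prop.~1.22.(iii), Cor.~1.24, Prop.~1.16]{am}, where exactly this argument is carried out in the general monoidal setting. You have essentially reconstructed that proof, as you yourself note in your final sentence.
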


Iterating Proposition \ref{p:antipode}, we obtain the analogous result for higher products and coproducts, which we now state. 

Consider a decomposition $I = S_1 \sqcup \cdots \sqcup S_k$, and write $F=(S_1, \ldots, S_k)$. 
Let 
the \emph{switch map} $\sw_F: \wH[S_1] \otimes \cdots \otimes \wH[S_k] \rightarrow \wH[S_k] \otimes \cdots \otimes \wH[S_1]$ reverse the entries; 
that is, $\sw_F(x_1 \otimes \cdots \otimes  x_k) = x_k \otimes \cdots \otimes x_1$ whenever $x_i \in \wH[S_i]$ for $1 \leq i \leq k$. Let 
\[
\mu_F = \mu_{S_1,\ldots, S_k}, \qquad
\Delta_F = \Delta_{S_1,\ldots, S_k}, \qquad
\apode_F = \apode_{S_1} \otimes \cdots \otimes \apode_{S_k},
\]
where $\apode_{S_i}$ is the antipode map on $S_i$ for $1 \leq i \leq k$.

\begin{proposition}\emph{(The antipode reverses higher products and coproducts)} \label{p:antipode1.5}
Let $\wH$ be a Hopf monoid and let $F=(S_1, \ldots, S_k)$ be a decomposition, so $I=S_1 \sqcup  \cdots \sqcup S_k$. 
Let the reverse decomposition be $-F = (S_k, \ldots, S_1)$.
Then
\begin{equation}\label{e:apodereverses}
\apode_I \mu_F = \mu_{-F} \apode_{-F} \sw_F , \qquad 
\Delta_F \apode_I = \apode_F \sw_{-F}  \Delta_{-F}. 
\end{equation}
\end{proposition}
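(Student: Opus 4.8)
The plan is to prove both identities by induction on the number of parts $k$ of the decomposition $F=(S_1,\ldots,S_k)$, using Proposition~\ref{p:antipode} as the engine of the inductive step. For $k=0$ and $k=1$ both sides reduce to the identity map (respectively, the unit map), so there is nothing to prove. The case $k=2$ is precisely Proposition~\ref{p:antipode}: rewriting \eqref{e:apode-mu} and \eqref{e:apode-delta} in operator form gives exactly the two asserted identities for $F=(S_1,S_2)$, with $-F=(S_2,S_1)$, $\apode_{-F}=\apode_{S_2}\otimes\apode_{S_1}$, and the switch maps $\sw_F,\sw_{-F}$ being the ordinary tensor swap.

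For the inductive step of the product identity I would peel off the first factor. Set $S=S_1$ and $T=S_2\sqcup\cdots\sqcup S_k$, and write $F'=(S_2,\ldots,S_k)$, a decomposition of $T$ into $k-1$ parts; associativity gives $\mu_F=\mu_{S,T}\circ(\id\otimes\mu_{F'})$. Evaluating on $x_1\otimes\cdots\otimes x_k$ and applying \eqref{e:apode-mu} to the product $x_1\cdot(x_2\cdots x_k)$, one finds $\apode_I\mu_F(x_1\otimes\cdots\otimes x_k)=\apode_T(x_2\cdots x_k)\cdot\apode_{S_1}(x_1)$. The inductive hypothesis applied to $\apode_T\mu_{F'}$ rewrites $\apode_T(x_2\cdots x_k)$ as $\apode_{S_k}(x_k)\cdots\apode_{S_2}(x_2)$, and one final use of associativity assembles this into $\apode_{S_k}(x_k)\cdots\apode_{S_1}(x_1)=\mu_{-F}\apode_{-F}\sw_F(x_1\otimes\cdots\otimes x_k)$, which is the claim.

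The coproduct identity is handled by the dual induction: coassociativity gives $\Delta_F=(\id\otimes\Delta_{F'})\circ\Delta_{S,T}$, and applying \eqref{e:apode-delta} to $\Delta_{S,T}\apode_I$ splits off the first tensor factor; invoking the inductive hypothesis for $\Delta_{F'}\apode_T$ on the ground set $T$ then reverses the remaining factors. Alternatively, one can deduce the coproduct identity from the already-proved product identity by passing to the dual Hopf monoid $\wH^*$, whose product is the transpose of $\Delta$ and whose antipode is the transpose of $\apode$, and then transposing back. I would present the direct symmetric induction, as it keeps the argument self-contained.

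The conceptual content is entirely carried by Proposition~\ref{p:antipode}; the remaining work is bookkeeping. The step I expect to be most error-prone is tracking the switch maps and the reversal $F\mapsto -F$ through each iteration, i.e. confirming that peeling off $S_1$ and reversing the remaining $k-1$ factors reassembles into the full reversal $(S_k,\ldots,S_1)$. On the coproduct side there is the additional subtlety that the higher coproduct $\Delta_F$ is in general a genuine Sweedler sum of tensors rather than a pure tensor, so one must thread that sum consistently through each application of \eqref{e:apode-delta}. Once the index conventions for $\sw_F$, $\apode_{-F}$, and $\Delta_{-F}$ are fixed, all of these checks are routine.
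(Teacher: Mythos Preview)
Your proposal is correct and follows the same approach as the paper: the paper's proof simply says that the case $k=2$ is a restatement of Proposition~\ref{p:antipode} and that for $k\geq 2$ the result follows by iterating Proposition~\ref{p:antipode}. You have supplied the details of that iteration, which the paper omits.
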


\begin{proof}
For $k=2$, this is a restatement of Proposition \ref{p:antipode}. For $k \geq 2$, this is the result of iterating Proposition \ref{p:antipode}.
\end{proof}

\begin{proposition}\label{p:antipode2}
Let $\wH$ be a Hopf monoid, $I$ a finite set, and $x\in\wH[I]$. 
If $\wH$ is either commutative or cocommutative, then 
\begin{equation}\label{e:apode-inv}
\apode_I^2(x)=x.
\end{equation}
If $\wH$ is commutative, then $\wH$ and its co-opposite $\wH^{cop}$ share the same antipode.

If $f:\wH\to\wK$ is a morphism of Hopf monoids, then
\begin{equation}\label{e:apode-mor}
f_I\bigl(\apode_I(x)\bigr) = \apode_I\bigl(f_I(x)\bigr).
\end{equation}
\end{proposition}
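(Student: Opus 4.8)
The three assertions have quite different character, so the plan is to treat them separately, in increasing order of difficulty.

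The morphism identity \eqref{e:apode-mor} is the easiest, and I would dispatch it first, directly from Takeuchi's formula \eqref{e:takeuchi}. Since $f$ is a morphism of Hopf monoids it commutes with the structure maps $\mu_{S,T}$ and $\Delta_{S,T}$, and hence, by iterating the identities of Definition~\ref{d:morhopf}, with every higher product $\mu_F$ and higher coproduct $\Delta_F$ of \eqref{e:iterated-mu}; concretely, $f_I\circ\mu_F = \mu_F\circ(f_{S_1}\otimes\cdots\otimes f_{S_k})$ and $(f_{S_1}\otimes\cdots\otimes f_{S_k})\circ\Delta_F = \Delta_F\circ f_I$. Applying $f_I$ termwise to \eqref{e:takeuchi} and sliding it past each $\mu_F\Delta_F$ then yields $f_I\circ\apode_I = \apode_I\circ f_I$ verbatim, with no appeal to (co)commutativity.

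For the co-opposite assertion I would again argue from Takeuchi's formula, comparing the sum for $\apode$ with the sum computing the antipode $\apode^{cop}$ of $\wH^{cop}$. The two Hopf monoids differ only in the coproduct, and the higher coproduct of $\wH^{cop}$ along a composition $F=(S_1,\dots,S_k)$ is the higher coproduct of $\wH$ along the reversed composition $-F=(S_k,\dots,S_1)$ followed by the switch map, i.e. $\Delta^{cop}_F = \sw_{-F}\,\Delta_{-F}$ in the notation of Proposition~\ref{p:antipode1.5}. The key point is that when $\wH$ is commutative the product ignores the order of its factors, so $\mu_F\circ\sw_{-F}=\mu_{-F}$ and therefore $\mu_F\,\Delta^{cop}_F = \mu_{-F}\,\Delta_{-F}$. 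Summing over all compositions $F$ and reindexing by the length-preserving bijection $F\mapsto -F$ turns the Takeuchi sum for $\apode^{cop}_I$ into the Takeuchi sum for $\apode_I$, giving $\apode^{cop}=\apode$.

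The involutivity \eqref{e:apode-inv} is the substantive statement. The clean route is the classical convolution argument, transported to species. On the space of species endomorphisms of $\wH$ one has the convolution product $(f*g)_I = \sum_{I=S\sqcup T}\mu_{S,T}(f_S\otimes g_T)\Delta_{S,T}$, with two-sided unit $\iota = u\epsilon$ (the projection onto $\wH[\emptyset]=\Kb$); since $*$ is associative and unital, two-sided inverses are unique, and the antipode is precisely the two-sided $*$-inverse of $\id_{\wH}$, that is $\id*\apode = \apode*\id = \iota$ (equivalent to Takeuchi's formula). Granting this, it suffices to show $\apode^2$ is a one-sided $*$-inverse of $\apode$. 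Assume $\wH$ is commutative. Using anti-multiplicativity \eqref{e:apode-mu} of Proposition~\ref{p:antipode} in the form $\apode(a)\cdot\apode(b)=\apode(b\cdot a)$ for $a,b$ supported on complementary blocks, together with commutativity, I would compute for $x\in\wH[I]$ (Sweedler notation $\Delta_{S,T}(x)=x|_S\otimes x/_S$):
\[
(\apode*\apode^2)(x)=\sum_{I=S\sqcup T}\apode(x|_S)\cdot\apode^2(x/_S)=\apode\Bigl(\sum_{I=S\sqcup T}x|_S\cdot\apode(x/_S)\Bigr)=\apode\bigl((\id*\apode)(x)\bigr)=\apode(\iota(x))=\iota(x),
\]
the last step because $\apode_\emptyset=\id$. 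Thus $\apode*\apode^2=\iota$, and associativity gives $\apode^2=(\id*\apode)*\apode^2=\id*(\apode*\apode^2)=\id$. If instead $\wH$ is cocommutative, I would run the mirror computation on the coproduct side: cocommutativity ($x|_S=x/_T$, $x/_S=x|_T$) lets one rewrite $(\apode^2*\apode)(x)=\apode\bigl((\id*\apode)(x)\bigr)=\iota(x)$, whence $\apode^2=\id$ again.

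The genuinely delicate point, and the main obstacle, is this third part. Two things need care: first, making the convolution framework and the relation $\id*\apode=\apode*\id=\iota$ available, since they are not set up in the text above, so I would either import them from \cite{am} or derive them from Takeuchi's formula \eqref{e:takeuchi} by the standard sign-reversing cancellation; and second, the bookkeeping of \emph{which block each factor is supported on} when invoking \eqref{e:apode-mu}, so that the reindexing of the Sweedler sums is legitimate. By contrast, the morphism and co-opposite assertions are formal consequences of Takeuchi's formula and present no real difficulty.
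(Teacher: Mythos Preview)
Your proposal is correct, and it actually goes well beyond what the paper does: the paper's own ``proof'' is a one-line citation to \cite[Prop.~1.16, Prop.~1.22, Cor.~1.24]{am} with no argument whatsoever. So there is nothing to compare your approach against within the paper itself.

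That said, the arguments you outline are exactly the standard ones one finds in the cited reference (and in the classical Hopf algebra literature, e.g.\ Sweedler). The morphism identity \eqref{e:apode-mor} and the co-opposite claim are, as you say, formal consequences of Takeuchi's formula and present no difficulty. For the involutivity \eqref{e:apode-inv}, your convolution argument is the classical one: the only real prerequisite is the characterization of $\apode$ as the two-sided convolution inverse of $\id$, which you correctly flag as needing to be imported from \cite{am} since the paper does not set it up. Your handling of the cocommutative case is slightly terse---the key step is that anti-multiplicativity gives $\apode^2(x|_S)\cdot\apode(x/_S)=\apode\bigl(x/_S\cdot\apode(x|_S)\bigr)$, and then cocommutativity lets you swap the Sweedler legs to recognize $(\id*\apode)(x)$ inside---but the computation goes through exactly as you indicate.
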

\begin{proof} See~\cite[Prop.~1.16, Prop.~1.22, Cor.~1.24]{am}.
\end{proof}

\begin{example}\label{eg:antipodeL}
Consider the Hopf monoid $\wL$ of linear orders in vector species. Problem \ref{prob:antipode} asks for an explicit expression for $\apode_I(\ell)$, where $\ell$ is a linear order on a finite set $I$. Takeuchi's formula~\eqref{e:takeuchi} yields a very large alternating sum of linear orders, but
many cancellations take place. It turns out that only one term survives:
\[
\apode_I(i_1 i_2\ldots i_n) \ =\  (-1)^{n}\, i_n\ldots i_2 i_1.
\]
In other words, up to a sign, the antipode simply reverses the linear order.

Here is a simple proof. When $I$ is a singleton, this follows
readily from~\eqref{e:takeuchi}. When $\abs{I}\geq 2$, 
Proposition \ref{p:antipode1.5} tells us that the antipode reverses products, which implies that
$\apode_I(i_1 i_2\ldots i_n) = \apode_{\{i_n\}}(i_n)\cdot \cdots \cdot \apode_{\{i_1\}}(i_1) \ =\  (-i_n) \cdots (-i_1) =  (-1)^{n}\, i_n\ldots i_2 i_1,$, as desired.
\end{example}

For more complicated Hopf monoids, obtaining such an explicit description for the antipode is often difficult. It requires understanding the cancellations that occur in a large alternating sum indexed by combinatorial objects; the antipode problem is therefore of a clear combinatorial nature. 

Several instances of the antipode problem
are solved in~\cite[Chapters 11--12]{am}. In Section~\ref{s:antipode} of
this paper we offer a unified framework that solves this problem for many other Hopf monoids of interest, as outlined in Table \ref{table:antipodes}. We describe a few consequences of these formulas in Sections~\ref{s:inversion},~\ref{s:pol-inv}, ~\ref{s:reciprocity}, and \ref{s:Frevisited}.

\subsection{From Hopf monoids to Hopf algebras.}\label{ss:Fock}

All our results on Hopf monoids have counterparts at the level of Hopf algebras, thanks to the \emph{Fock functor}\footnote{In fact this is only one of four Fock functors; see \cite[Sections 15.2, 17]{am}.} $\Kcb$ that takes a Hopf monoid on vector species $\wH$ to a graded Hopf algebra $H$ while preserving most of the structures that interest us. We will not use Hopf algebras in this paper, but we include a brief discussion for the benefit of readers who are used to working with them; we  assume familiarity with Hopf algebras in this subsection.

Given a connected Hopf monoid on set species $\rH$, let $\rH[n] = \rH[\{1, \ldots, n\}]$ for $n \in \Nb$ and consider the coinvariant vector space
\[
H = \bigoplus_{n \geq 0} \wH[n]_{S_n} := \bigoplus_{n \geq 0} \textrm{ span} \{\textrm{isomorphism classes of elements of } \rH[I] \textrm{ for } |I|=n \}
\]
where objects $h_1 \in \rH[I_1]$ and $h_2 \in \rH[I_2]$ are said to be \emph{isomorphic} if there exists a bijection $\sigma: I_1 \rightarrow I_2$ such that $\sigma(h_1)=h_2$. 
We denote the isomorphism class of $h \in \rH[I]$ by $[h]$.

The product and coproduct of $\rH$ endow the graded vector space $H$ with the structure of a graded Hopf algebra. It has the natural unit and counit. The product and coproduct are
\[
[h_1] \cdot [h_2] = [h_1 \cdot h_2^{+k_1}], \qquad 
\Delta([h]) = \sum_{[n] = S \sqcup T} [h|_S] \otimes [h/_S]
\]
for $h_1 \in \rH[k_1]$, $h_2 \in \rH[k_2]$, and $h \in \rH[n]$, where $h_2^{+k_1}=\sigma^{+k_1}(h_2) \in \rH[\{k_1+1, \ldots, k_1+k_2\}]$ is the image of $h_2$ under the order-preserving bijection $\sigma^{+k_1}: [k_2] \to \{k_1+1, \ldots, k_1+k_2\}$.

\begin{theorem}\cite[Proposition 3.50, Theorem 15.12]{am}
If $\rH$ is a Hopf monoid in species then $\Kcb(\rH)$ is a graded Hopf algebra. Furthermore, the Fock functor $\Kcb$ maps the antipode of $\rH$ to the antipode of $\Kcb(\rH)$.
\end{theorem}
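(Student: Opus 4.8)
The plan is to recognize both assertions as instances of general monoidal-functor yoga and reduce them to a single structural fact: the Fock functor $\Kcb$ is a braided bilax (indeed bistrong) monoidal functor from the category of vector species under the Cauchy (Day convolution) product to the category of graded vector spaces under the graded tensor product. A connected Hopf monoid in species is precisely a connected bimonoid object in the former category, and since a bilax monoidal functor carries bimonoids to bimonoids, $\Kcb(\rH)$ is automatically a graded connected bialgebra; a connected graded bialgebra always admits an antipode, so it is a Hopf algebra. With this framing the first assertion is essentially formal, and the second reduces to the interaction of $\Kcb$ with the antipode, which I treat last.

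First I would make the monoidal structure of $\Kcb$ explicit on the formulas already recorded in the excerpt: the lax structure map recovers the product $[h_1]\cdot[h_2]=[h_1\cdot h_2^{+k_1}]$ and the colax structure map recovers $\Delta([h])=\sum_{[n]=S\sqcup T}[h|_S]\otimes[h/_S]$. The first genuine task is well-definedness on isomorphism classes, i.e.\ that these expressions do not depend on the chosen representatives. This is exactly the $S_n$-equivariance of $\mu$ and $\Delta$, which is the naturality axiom of Definition~\ref{d:hopfset}; it is precisely what makes passage to coinvariants compatible with the structure maps. I would then check the bialgebra axioms. Associativity and coassociativity of $H$ descend directly from the well-definedness of the higher products and coproducts of $\rH$ recorded in Section~\ref{ss:high}; the grading by $|I|$ is respected because $\mu$ merges and $\Delta$ splits ground sets, so products add degrees and coproducts split them, and connectedness is the condition $\wH[\emptyset]=\Kb$.

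The one substantive computation is the bialgebra compatibility, namely that $\Delta$ is an algebra morphism. Unwinding both sides, $\Delta([h_1]\cdot[h_2])$ is a sum over splittings $[k_1+k_2]=S\sqcup T$ of the restriction and contraction of $h_1\cdot h_2^{+k_1}$, whereas $\Delta([h_1])\cdot\Delta([h_2])$ is a sum over splittings of each factor followed by merging. Matching these two sums is the four-set compatibility axiom of Definition~\ref{d:hopfset} applied to the decompositions $S\sqcup T$ and $[k_1]\sqcup\{k_1+1,\dots,k_1+k_2\}$, now bookkept over all shuffles of the splitting across the two tensor factors. This shuffle-indexed compatibility is the technical heart of the argument and the step I expect to be the main obstacle: it requires careful tracking of how the four pairwise intersections $A,B,C,D$ of diagram~\eqref{e:4sets} distribute under the shift $(\cdot)^{+k_1}$ and under the sum over $S\sqcup T$, and of the fact that the coinvariant quotient collapses the corresponding shuffle orbits in exactly the way that makes the two sides agree.

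For the final assertion I would invoke uniqueness of the antipode together with functoriality. In any connected graded bialgebra the antipode is the unique two-sided convolution inverse of the identity, and it is computed by Takeuchi's formula. Since $\Kcb$ intertwines the higher products and coproducts of $\rH$ (Section~\ref{ss:high}) with the iterated products and coproducts of $H$ and preserves the indexing sum over compositions, applying $\Kcb$ to Takeuchi's formula~\eqref{e:takeuchi} for $\apode_\rH$ yields, term by term, Takeuchi's formula for the antipode of $\Kcb(\rH)$. Equivalently, $\Kcb$ induces a morphism of convolution monoids $\Hom(\rH,\rH)\to\Hom(H,H)$ sending $\id_\rH$ to $\id_H$, hence it carries the convolution inverse of $\id_\rH$ to that of $\id_H$. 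Either phrasing gives $\Kcb(\apode_\rH)=\apode_{\Kcb(\rH)}$, as claimed.
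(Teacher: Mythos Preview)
Your proposal is a correct outline of the standard argument, and it is essentially the approach taken in the cited reference~\cite{am}. Note, however, that the paper itself does not give a proof of this theorem at all: it is stated as a known result and attributed to~\cite[Proposition~3.50, Theorem~15.12]{am}, so there is no ``paper's own proof'' to compare against. Your sketch---bilax monoidality of $\Kcb$, the shuffle-indexed verification of bialgebra compatibility via the four-set axiom~\eqref{e:4sets}, and the antipode preservation via uniqueness of the convolution inverse (or equivalently via Takeuchi's formula term by term)---is exactly the content of those results in~\cite{am}.
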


%
%

\section{$\rG, \rM, \rP, \rPi, \rF$: {Graphs, matroids, posets, set partitions, paths}}\label{s:G,M,P}

In this section, we illustrate the previous definitions with five examples of Hopf monoids built from graphs, matroids, posets, set partitions, and paths. Some of these and many others appear in~\cite[Chapter~13]{am}.
Important ideas leading to
these constructions are due to Joni and Rota~\cite{joni82:_coalg}, Schmitt~\cite{schmitt93:_hopf}, and many others; additional references are given below.

\subsection{$\rG$: {Graphs}}\label{ss:graphs}

A \emph{graph with vertex set $I$} consists of a multiset of edges.
Each edge is a subset of $I$ of cardinality $1$ or $2$; in the former case
we call it a \emph{half-edge}. 

Let $\rG[I]$ denote the set of all graphs with vertex set $I$. One may use
a bijection $\sigma:I\to J$ to relabel the vertices of a graph $g\in\rG[I]$
and turn it into a graph $\rG[\sigma](g)\in\rG[J]$. Thus, $\rG$ is a species, which is a Hopf monoid with the following structure. 

Let $I=S\sqcup T$ be a decomposition. 

\noindent $\bullet$ 
The product of two graphs $g_1\in\rG[S]$ and $g_2\in\rG[T]$ is
simply the disjoint union of the two. Thus, an edge of $g_1\cdot g_2$ is an edge
of either $g_1$ or $g_2$. 

\noindent $\bullet$ 
To define the coproduct of a graph  $g\in\rG[I]$, we let 
the restriction $g|_S\in\rG[S]$ be the \emph{induced subgraph} on $S$, which consists of the edges and half-edges whose ends are in $S$. 
The contraction $g/_S\in\rG[T]$ is the graph on $T$ given by
 all edges incident to $T$, where an edge $\{t,s\}$ in $g$ joining $t \in T$ and $s \in S$ becomes a half-edge $\{t\}$ in $g/_S$.

\noindent The Hopf monoid axioms are easily verified.

%


An example follows. 
Let
$I=\{a,b,c,x,y\}, S=\{x,y\},$ and $T=\{a,b,c\}$.

 \begin{figure}[h]
\centering
If $g = $
\includegraphics[scale=.5]{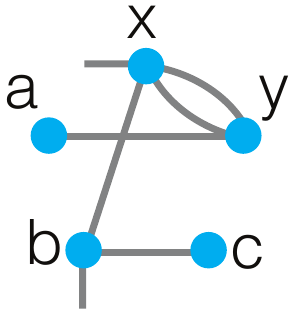} 
\qquad then \qquad $g|_S = $ 
\includegraphics[scale=.5]{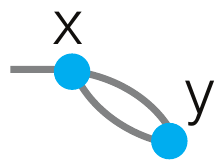}
\qquad and \qquad $g/_S = $
\includegraphics[scale=.5]{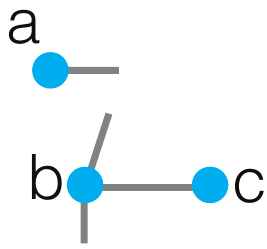}
\,\, .
\end{figure}


 \begin{example}\label{eg:antipode-graph} 
 Consider  the antipode of the Hopf monoid of graphs $ \apode_I:\wG[I]\to\wG[I]$, 
 where $\wG$ is the linearization of $\rG$. For a graph on $3$ vertices,
Takeuchi's formula~\eqref{e:takeuchi} returns an alternating sum of $13$ graphs
on the same vertex set, corresponding to the $13$ compositions of a $3$-element set. An explicit but cumbersome calculation shows the following:

 \begin{figure}[h]
\centering
\includegraphics[scale=.5]{Figures/antipodegraphs.pdf} 
\end{figure}

Cancellations took place which resulted in a cancellation-free and grouping-free sum of only $9$ graphs. 
The antipode problem for the Hopf monoid $\wG$ (Section~\ref{ss:antipode}) asks for to understand this cancellation and interpret the terms of the resulting formula. This problem is solved and its consequences are explored in Section~\ref{s:G}. It was also solved by Humpert and Martin \cite{humpert12} for the corresponding Hopf algebra, and we prove their related conjectures in Theorem \ref{conj:hm}.
\end{example}

 \subsection{$\rM$: {Matroids}}\label{ss:matroid}

A \emph{matroid with ground set $I$} is a non-empty collection of subsets of $I$, called \emph{bases}, which satisfy the \emph{basis exchange axiom}: if $A$ and $B$ are bases and $a \in A - B$, there exists $b \in B-A$ such that $A - a \cup b$ is a basis.

Subsets of bases are called \emph{independent sets}. Matroids abstract the notion of independence, and arise naturally in many fields of mathematics. Three key examples are the following.
\begin{itemize}
\item 
\emph{(Linear matroids)} If $I$ is a set of vectors spanning a vector space $V$, the collection of subsets of $I$ which are bases of $V$ is a matroid. 
\item 
\emph{(Graphical matroids)} 
If $I$ is the set of edges of a connected graph $g$, the collection of (edge sets of) spanning trees of $g$ is a matroid.
\item
\emph{(Algebraic matroids)}
If $I$ is a set of elements that generate a field extension $\mathbb{K}$ over $\mathbb{F}$, the collection of subsets of $I$ which are transcendence bases of $\mathbb{K}$ over $\mathbb{F}$  is a matroid. 
\end{itemize} 

Matroids have important notions of restriction (or deletion) and contraction, which simultaneously generalize natural geometric and graph-theoretic operations. We now provide the basic definitions; for details  on these and other notions related to matroids, we refer the reader to~\cite{oxley92:_matroid,welsh76:_matroid}. 

Consider a matroid $m$ on $I$ and an element $e \in I$. Then the \emph{deletion} $m \backslash_e$ and \emph{contraction} $m/_e$ are matroids on $I - e$ defined by:
\begin{eqnarray*}
m \backslash_e &=& \{B \in m \, : \, e \notin B\} \\
m/_e &=& \{B-e \, : \, B \in m, e \in B\} 
\end{eqnarray*}
These operations commute when they are well defined; that is, we have  $m/_e/_f = m/_f/_e, \newline m\backslash_e \backslash_f = m \backslash_f \backslash_e$, and $m/_e \backslash_f = m \backslash_f /_e$ for $e \neq f$ in $I$.

Therefore, if $J=\{j_1, \ldots, j_k\} \subseteq I$, we may define the \emph{deletion} and \emph{contraction} of $J$ in $m$ are defined iteratively: $m \backslash_J = m \backslash_{j_1}\backslash_{j_2} \cdots \backslash_{j_k}$ and $m /_J = m /_{j_1}/_{j_2} \cdots /_{j_k}$, respectively. Finally, we define the \emph{restriction} of $m$ to $S$ to be the deletion of $E-S$ in $m$; that is, $m|_S = m\backslash_{I-S}$

 Let $\rM[I]$ be the set of matroids with ground set $I$. Again, $\rM$ is a species, which we now turn into
 a Hopf monoid.

 Let $I=S\sqcup T$ be a decomposition.

\noindent $\bullet$ For any matroids $m_1\in\rM[S]$ and $m_2\in\rM[T]$ we define their product $m_1\cdot m_2\in\rM[I]$ to be their direct sum $m_1 \oplus m_2 = \{B_1 \sqcup B_2 \, : \, B_1 \in m_1, B_2 \in m_2\}$, which is indeed a matroid. \cite{oxley92:_matroid}

\noindent $\bullet$  
To define the coproduct, we employ the notions of restriction and contraction of matroids, and define $\Delta_{S,T}(m) = (m|_S, m/_S)$.

\noindent 
The Hopf monoid axioms
 boil down to familiar properties relating direct sums, restriction, and contraction of matroids. \cite{oxley92:_matroid}

The (linearization of the) Hopf monoid $\rM$ is discussed in~\cite[Section~13.2]{am}. 
 The crucial idea of assembling these matroid operations into an algebraic structure goes back to Joni and Rota~\cite[Section~XVII]{joni82:_coalg}
and Schmitt~\cite[Section~15]{schmitt94:_incid_hopf}. In fact the terms restriction, contraction, and minor, which we employ for an arbitrary Hopf monoid in set species, originate in this example.

 \begin{example}\label{eg:antipode-matroid} 
Following Schmitt \cite{schmitt94:_incid_hopf}, we consider the antipode of the Hopf algebra of matroids ${M}$, where isomorphic matroids are identified.\footnote{The identification of isomorphic matroids is not essential; we only do it to follow the convention of \cite{schmitt94:_incid_hopf}.}
 Let $m$ be the matroid of rank 2 on $\{a,b,c,d\}$  whose only non-basis is $\{c,d\}$. Takeuchi's formula~\eqref{e:takeuchi} expresses the antipode $\apode(m)$ as an alternating sum of $73$ matroids, but after extensive cancellation, one obtains:
 
 \begin{figure}[h]
\centering
\includegraphics[scale=.5]{Figures/antipodematroids2.pdf} 
\end{figure}
\noindent where we are representing matroids by affine diagrams \cite{stanley11:_ec1};   points represent elements and the following represent dependent sets: three points on a line, two points above each other, and one hollow point.
The antipode problem \ref{prob:antipode} for $M$, or more strongly for the Hopf monoid $\wM$,  asks for an understanding of this cancellation. This problem is solved in Section~\ref{s:M}.
\end{example}

 \subsection{$\rP$: {Posets}}\label{ss:poset}

A \emph{poset} $p$ on a finite set $I$ is a relation $p \subseteq I \times I $, denoted $\leq$, which is 

\noindent $\bullet$
\emph{(Reflexive)}: $x \leq x$ for all $x \in p$,

\noindent $\bullet$
\emph{(Antisymmetric)}: $x \leq y$ and $y \leq x$ imply $x = y$  for all $x,y \in p$,

\noindent $\bullet$
\emph{(Transitive)}: $x \leq y$ and $y \leq z$ imply $x \leq z$  for all $x,y,z\in p$.
Let $p$ be a poset on $I=S\sqcup T$.
The restriction of $p$ to $S$ is the induced poset on $S$:
\[
p|_S := p\cap(S\times S).
\]
We say $S$ is a \emph{lower set} or \emph{order ideal} of $p$ if no element of $T$ is less than an element of $S$.


Let $\rP[I]$ denote the set of all posets on $I$
and $\wP[I]$ its linearization; that is, the vector space with basis $\rP[I]$. 
We turn $\wP$ into a Hopf monoid in
vector species as follows.

Let $I=S\sqcup T$ be a decomposition.
 
\noindent $\bullet$  
The product $\mu_{S,T}: \wP[S] \otimes \wP[T]  \to  \wP[I]$ is given by
\[
\mu_{S,T}(p_1\otimes p_2) =  p_1 \sqcup p_2.
\]
where $p_1 \sqcup p_2$ is the disjoint union of the sets $p_1\subseteq S\times S$ and
$p_2\subseteq T\times T$; this poset has no relations
between the elements of $S$ and $T$.

\noindent $\bullet$  
The coproduct $\Delta_{S,T}:\wP[I] \to  \wP[S] \otimes \wP[T]$ is given by 
\[
\Delta_{S,T}(p) = \begin{cases}
p|_S\otimes p|_T & \text{if $S$ is a lower set of $p$,}\\
0 & \text{otherwise.}
\end{cases}
\]
\noindent One easily verifies the axioms. 

The Hopf monoid $\wP$ is commutative and cocommutative.
This Hopf monoid is discussed in~\cite[Section~13.1]{am}.
Important work of Malvenuto~\cite{malvenuto94:_produit}
and of Schmitt~\cite[Section~16]{schmitt94:_incid_hopf}
are at the root of this construction.
Gessel~\cite{gessel84:_multip_p_schur} is also relevant.

\begin{warning}
The vector species $\wP$ is the linearization of the set species $\rP$.
Note, however, that  $\Delta_{S,T}:\wP[I]\to\wP[S]\otimes\wP[T]$ does not 
always send the basis $\rP[I]$ to $\rP[S]\times\rP[T]$. The Hopf monoid
structure of $\wP$ is not linearized.
\end{warning}

 \begin{example}\label{eg:antipode-poset} 
Following Schmitt \cite{schmitt94:_incid_hopf} again, we consider the antipode of the Hopf algebra of posets ${P}$, where isomorphic posets are identified.\footnote{As with matroids, this identification of isomorphic posets is not essential.} For the poset $p$ of four elements shown,  Takeuchi's alternating sum of $73$ posets simplifies to:

 \begin{figure}[h]
\centering
\includegraphics[scale=.5]{Figures/antipodeposets.pdf} \qquad
\label{f:poset} 
\end{figure}
The antipode problem \ref{prob:antipode} for posets is solved in Section~\ref{s:P}.
\end{example}

\subsection{$\rPi$: {Set partitions}}\label{ss:Pi}

A \emph{set partition} $\pi$ of a finite set $I$ is an unordered collection $\pi = \{\pi_1, \ldots, \pi_k\}$ of pairwise disjoint non-empty subsets of $I$ that partition $I$; that is, 
\[
\pi_1 \cup \cdots \cup \pi_k = I, \qquad \pi_i \cap \pi_j = \emptyset \, \textrm{ for } \, i \neq j.
\]
The sets $\pi_1, \ldots, \pi_k$ are called the \emph{parts} of $\pi$. To simplify the notation, when we write down a set partition, we remove the brackets around each one of its parts. For instance, we will write the partition $\{\{a,b\}, \{c,d,e\}\}$ as $\{ab, cde\}$.

Let $\rPi[I]$ denote the set of all set partitions on $I$. One may use a bijection $\sigma: I \to J$ to relabel the parts of a partition of $I$, thus turning it into a partition of $J$. Therefore $\rPi$ is a set species, which becomes a Hopf monoid with product and coproduct operations given by the following rules for merging and breaking set partitions.

Let $I = S \sqcup T$ be a decomposition. 

\noindent $\bullet$
The product of two set partitions $\pi \in\rPi[S]$ and $\rho \in\rPi[T]$ is their disjoint union $\pi \cup \rho \in \rPi[I]$. 

\noindent $\bullet$
The coproduct of a set partition $\pi \in \rPi[I]$ is $(\pi|_S, \pi|_T) \in \rPi[S] \times \rPi[T]$ where $\pi|_J$ denotes the restriction of $\pi$ to a subset $J \subseteq I$, namely,  $\pi|_J = \{\pi^i \cap J \, : \, \pi^i \in \pi\}$.

\noindent 
One easily checks the axioms of a Hopf monoid. 
Since $\pi/_S = \pi|_T$ by definition, $\rPi$ is cocommutative. 

 For example, if $I = \{a,b,c,d,e\},$ $S = \{a,b,d\}$, $T=\{c,e\}$ and
\[
\pi=\{ab, cde\} \quad \textrm{ then } \quad  \pi|_S = \{ab, d\} \quad \textrm{ and } \quad \pi/_S =  \{ce\},
\]
so $\Delta_{S,T}(\{ab, cde\}) = (\{ab, d\}, \{ce\})$, whereas $\mu_{S,T}(\{ab, d\}, \{ce\}) = \{ab, ce, d\}.$

\begin{example}\label{ex:antipodePi}
For the antipode $\apode(\{ab, cde\})$, Takeuchi's formula~\eqref{e:takeuchi} returns an alternating sum of $530$ set partitions, which simplifies to:
 \begin{figure}[h]
\centering
\includegraphics[scale=.5]{Figures/antipodepartitions.pdf} \qquad
\end{figure}

\noindent
where we represent a partition of $I$ by a graph on $I$ whose edges are the pairs of elements in the same block.

In Sections \ref{s:inversion} and \ref{s:Pirevisited} we explain this cancellation, and see that $\rPi$ is closely related to the permutahedron, symmetric functions,  and the problem of computing the multiplicative inverse of a formal power series.
\end{example}

\subsection{$\rF$: {Paths and Fa\`a di Bruno}}\label{ss:F}

We now consider \emph{paths} whose vertex set is a given finite set $I$. We will write a path $s$ as a word, listing the vertices in the order they appear in the path. 
A \emph{set of paths} $\alpha$ on $I$ is a decomposition $I = I_1 \sqcup \cdots \sqcup I_k$ together with a choice of a path $s_i$ on each $I_i$ with $1 \leq i \leq k$. We denote this set of paths by ${s_1 | \cdots | s_k}$. We must keep in mind that a word and its reverse represent the same path, and the order of the paths is irrelevant. For instance, ${ac|bde} = {ca|bde} = {bde|ac} = {bde|ca}$. Figure \ref{f:bundle} shows an example. 

\begin{figure}[h]
\centering
\includegraphics[scale=.5]{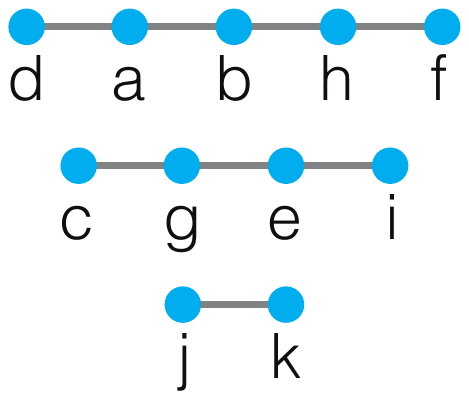}
\caption{The set of paths ${dabhf|cgei|jk}$.\label{f:bundle}}
\end{figure}

Let $\rF[I]$ denote the collection of sets of paths on $I$. We turn the species $\rF$ into a Hopf monoid with the following additional structure. 

\newpage

Let $I = S \sqcup T$ be a decomposition. 

\noindent $\bullet$ The product of two sets of paths $\alpha_1\in\rF[S]$ and $\alpha_2\in\rF[T]$ is simply their disjoint union, which we denote $\alpha_1 \sqcup \alpha_2 \in \rF[I]$. 

\noindent $\bullet$ The coproduct of a set of paths $\alpha = {s_1 | \cdots | s_k} \in \rF[I]$ is $(\alpha|_S, \alpha/_S) \in \rF[S] \times \rF[T]$, defined as follows. 
In the restriction $\alpha|_S$, each path $s_i$ that intersects $S$ gives rise to a single path $s_i \cap S$ on the set $I_i \cap S$, whose order is inherited from $s_i$. 
In the contraction $\alpha/_S$, each path $s_i$ that intersects $T$ gives rise to paths which partition $I_i \cap T$, corresponding to the connected components of $s_i$ when restricted to $T$. 

\noindent The Hopf monoid axioms are easily verified. We call this the \emph{Fa\`a di Bruno Hopf monoid}.


For example, if $I = \{a,b,c,d,e,f,g,h,i,j,k\}$, $S=\{b,e,f\}$, $T =\{a,c,d,g,h,i,j,k\}$ and
\[
\alpha = {dabhf | gcei | jk}, \quad  \textrm{ then } \quad 
\alpha|_S = {bf | e} \quad \textrm{ and } \quad \alpha/_S = {da | h | gc | i | jk}
\]
so $\Delta_{S,T}({dabhf | gcei | jk}) = ({bf | e}\, , \, {da | h | gc | i | jk})$ and 
$\mu_{S,T}({bf | e}, {da | h | gc | i | jk}) = {bf | e | da | h | gc | i | jk}$.

\begin{example}
For the antipode $\apode({abcd})$, Takeuchi's formula~\eqref{e:takeuchi} returns an alternating sum of $73$ sets of paths which, after cancellation, gives
 \begin{figure}[h]
\centering
\includegraphics[width=\textwidth]{Figures/antipodefaa.pdf} \qquad
\end{figure}

In Section \ref{s:Frevisited} we solve the antipode problem \ref{prob:antipode} for $\rF$. In particular, we explain why every coefficient in this formula is a Catalan number, and the number of terms is also a Catalan number. 
We will also see that $\rF$ is closely related to the associahedron, the Fa\`a di Bruno Hopf algebra, parenthesizations, and the problem of computing the compositional inverse of a formal power series.
\end{example}

\section{{Preliminaries 2: Generalized permutahedra}}\label{s:prelimGP}

The permutahedron is a natural polytopal model of the set of permutations of a finite set. We are interested in its deformations, known as \emph{generalized permutahedra} or \emph{polymatroids}. This family of polytopes is special enough to welcome combinatorial analysis, and general enough to model many  combinatorial families of interest. It is also precisely the family of polytopes which are amenable to the algebraic techniques of this paper, as Section \ref{s:universality} will show.

We now recall some basic facts about permutahedra and their deformations. These results and other background on polytopes can be found in~\cite{edmonds70, fujishige05, postnikov09,schrijver03, ziegler}.

 \subsection{{Permutahedra}}\label{ss:p}

Given a finite set $I$, let  $\Rb I$ be the real vector space with distinguished basis $I$. For each element $i \in I$, we will let $e_i$ denote $i$ when we wish to distinguish the element $i \in I$ from the vector $e_i \in \Rb I$. In $\Rb I$ we identify the vector $\sum_{i\in I} a_ie_i$ with the $I$-tuple $(a_i)_{i \in I}$ for $a_i \in \Rb$, so 
\[
\Rb I = \{(a_i)_{i \in I}\, : \, a_i \in \Rb\} = \left\{\sum_{i \in I} a_i e_i \, : \, a_i \in \Rb\right\}.
\]

Throughout this section, let $n := |I|$. The \emph{standard permutahedron} $\Rb I$ is the convex hull of the $n!$ permutations of $[n]:= \{1, \ldots, n\}$ in $\Rb I$; that is, 
\[
\pi_I:= \text{conv} \left\{\, (a_i)_{i \in I} \, : \, \{a_i\}_{i \in I} = [n] \, \right\} \subseteq\Rb I
\]
where $n = \abs{I}$. We let $\pi_n := \pi_{[n]}$ denote the standard permutahedron in $\Rb[n]$.

For example, $\pi_{\{a,b,c\}}$ is a regular hexagon lying
on the plane 
$
x_a+x_b+x_c=6
$, and $\pi_{\{a,b,c,d\}}$ is a truncated octahedron on the hyperplane
$
x_a+x_b+x_c+x_d = 10
$ 
in $\Rb{\{a,b,c,d\}},$ as shown in Figure \ref{f:permutahedra}. It is a useful exercise to work out how the 24 vertices of $\pi_{\{a,b,c,d\}}$ match the permutations of $[4]$.

\begin{figure}[h]
\centering
\includegraphics[scale=.5] {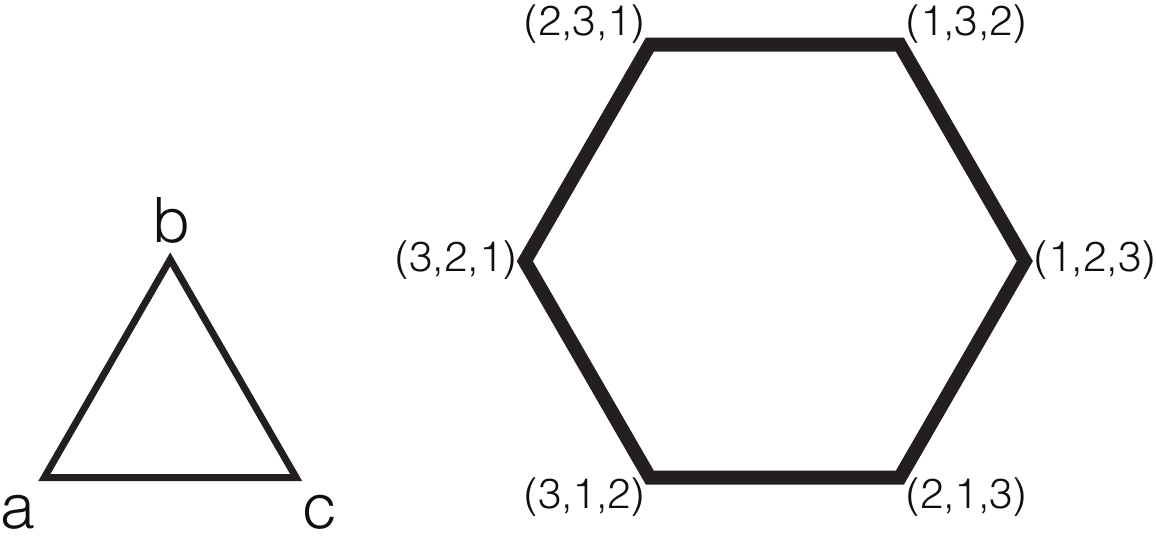} \qquad \qquad  \qquad
 \includegraphics[scale=.4]{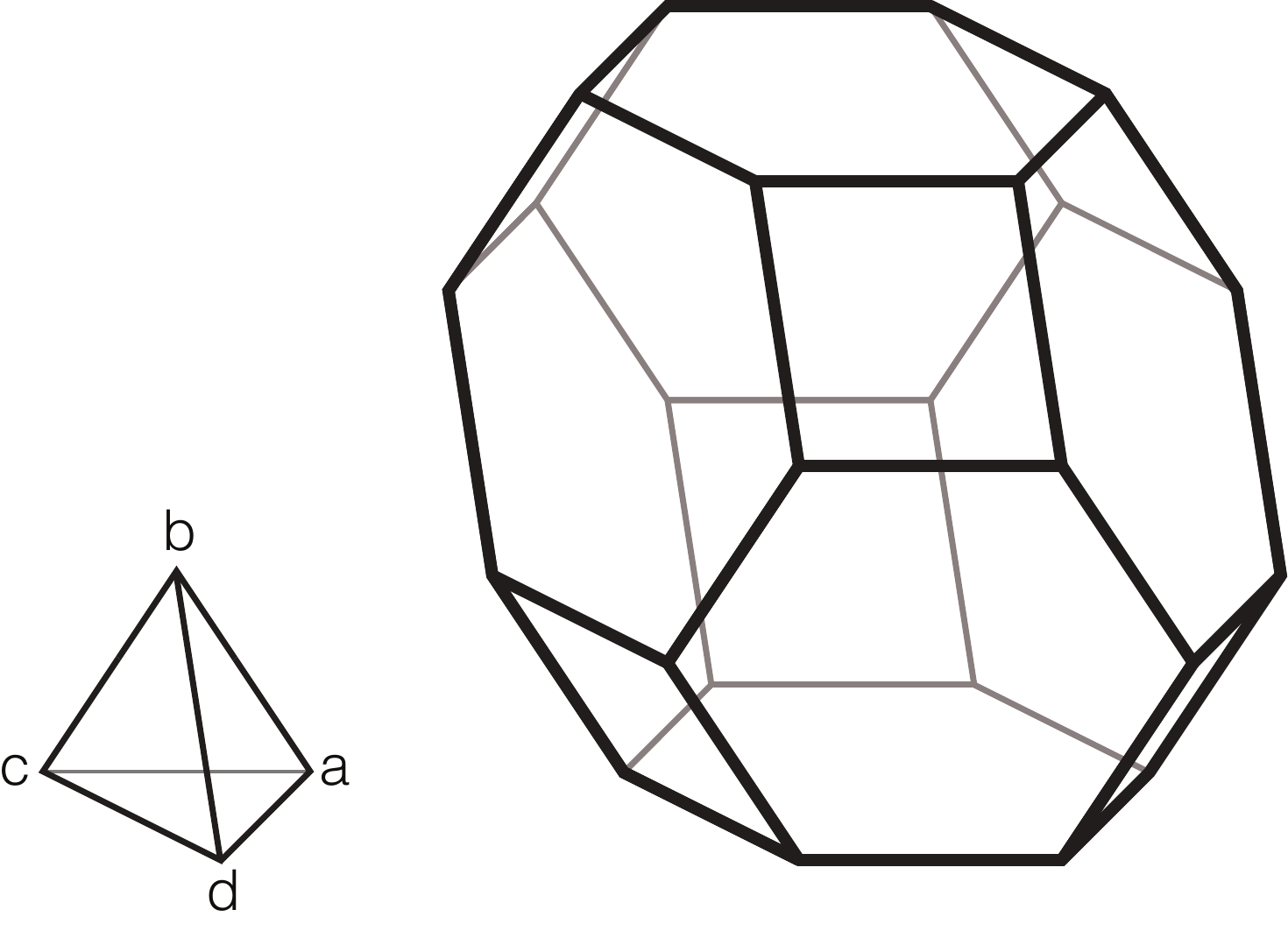} 
\caption{The standard simplex and the standard permutahedron in dimensions 3 and 4.} \label{f:permutahedra}
 \end{figure}

More generally, the permutahedron $\pi_I$ is a convex polytope of dimension $n-1$.
It can be described as the set of solutions
$(x_i)_{i\in I}\in \Rb I$ to the following system of (in)equalities:
\begin{align*}
 \sum_{i\in I}x_i &= \abs{I} + (\abs{I}-1) + \cdots + 2 + 1, \\
  \sum_{i\in A}x_i & \leq \abs{I} + (\abs{I}-1) + \cdots + (\abs{I}-\abs{A}+1)\,\,\text{ for all } \emptyset \subsetneq A \subsetneq I.
\end{align*}

The permutahedron has a very nice facial structure. We write $\wq \leq \wp$ when $\wq$ is a non-empty face of $\wp$.

\begin{itemize}
\item  (Dimension 0)
The vertices of $\pi_I$ are the $n!$ permutations of $[n]$ in $\Rb I$. 
\item  (Dimension 1)
There is an edge between two vertices $x$ and $y$ if and only if they can be obtained from each other by swapping the positions of the numbers $r$ and $r+1$ for some $r$; that is, $y=(r, r+1)\circ x$.
Thus $x_i=r$ and $x_j=r+1$ become $y_i=r+1$ and $y_j=r$ for some $i$ and $j$, while $y_k = x_k$ for $k \neq i, j$.
The edge joining $x$ and $y$ is a parallel translate of the vector $e_i-e_j$.
\item (Dimension $n-2$) The $2^n-2$ facets of $\pi_I$ are given by the defining inequalities above.
\item  (Arbitrary dimension)
The $(n-k)$-dimensional faces of $\pi_I$ are in bijection with the compositions of $I$ into $k$ parts.
For each composition $(S_1,\ldots,S_k)$, where $S_i \neq \emptyset$ for all $i$ and $I = S_1 \sqcup \cdots \sqcup S_k$, the corresponding face $\pi_{S_1, \ldots, S_k}$ has as vertices the permutations $x \in \Rb I$  such that the entries $\{x_i \, : \, i \in S_1\}$ are the largest $|S_1|$ numbers in $[n]$, the numbers
$\{x_i \, : \, i \in S_2\}$ are the next largest $|S_2|$ numbers in $[n]$, and so on. This description shows that the face $\pi_{S_1, \ldots, S_k}$ is a parallel translate of the product of permutahedra $\pi_{S_1} \times \cdots \times \pi_{S_k}$ in $\Rb S_1 \times \cdots \times \Rb S_k \cong \Rb I$.
\item (Face containment) From the previous  it follows that $\pi_{S_1, \ldots, S_k} \leq \pi_{T_1, \ldots, T_l}$ if and only if $(S_1,\ldots,S_k)$ refines $(T_1,\ldots,T_l)$ in the sense that each  $T_i$ is a union of consecutive $S_j$s.
\end{itemize}


There is another convenient representation of the permutahedron. The \emph{Minkowski sum} of polytopes $\wp$ and $\wq$ in $\Rb I$ is defined to be 
\[
\wp+\wq := \{p + q \, : \, p \in \wp,\,  q \in \wq\} \subseteq \Rb I
\]
A \emph{zonotope} is a Minkowski sum of segments. 
If we let $\Delta_{ij}$ be the segment connecting $e_i$ and $e_j$ in $\Rb I$, then we can represent the standard permutahedron as the zonotope:
\begin{equation}\label{e:Pizonotope}
\pi_I =  
 \sum_{i \neq j \in I} \Delta_{ij} + \sum_{i \in I} e_i.
\end{equation}
Note that the summand $\sum_{i \in I} e_i$ is simply a translation by the vector $(1, \ldots, 1)$.

%
%
%

\subsection{{Normal fans of polytopes}}\label{ss:nf}

Let $(\Rb I)^*$ denote the dual vector space to $\Rb I$. 
We may naturally identify
\[
(\Rb I)^* = \Rb^I\ :=\ \{\text{functions $y:I\to \Rb$}\},
\]
where an element $y$ of $\Rb^I$ acts as a linear functional on $\Rb I$ by $y(\sum_{i \in I} a_i e_i) = \sum_{i \in I} a_i y(i)$ for $\sum_{i \in I} a_i e_i \in \Rb I$. We call such a $y$ a \emph{direction}. 

Let $\{\1_i \, : \, i \in I\}$ be the basis of $\Rb^I$ dual to the basis $\{e_i \, : \, i \in I\}$ of $\Rb I$. We write $\1_S = \sum_{i \in S} \1_i$ for each subset $S \subseteq I$, so
\[
\1_S(x) = \sum_{i \in S} x_i, \qquad \textrm{for } x=(x_i)_{i \in I} \in \Rb I.
\]
We also write $\1 = \1_I$. These functions will play a crucial role.


For a (possibly unbounded) polyhedron $\wp \in \Rb I$ and a direction $y \in \Rb^I$ in the dual space, we denote the maximum face of $\wp$ in the direction of $y$, or \emph{$y$-maximum face} of $\wp$, by 
\[
\wp_y := \{p \in \wp \, : \, y(p) \geq y(q) \textrm{ for all } q \in \wp\}.
\]
For each face $\wq$ of $\wp$, we define the (open and closed) \emph{normal cone}s to be 
\begin{eqnarray*}
\Nc^o_{\wp}(\wq) &:=& \{y \in \Rb^I \,: \, \wp_y = \wq\}, \\
\Nc_{\wp}(\wq) &=& \overline{\Nc^o_{\wp}(\wq)} := \{y \in \Rb^I \,: \, \wq \textrm{ is a face of } \wp_y\},
\end{eqnarray*}
respectively. Note that $\dim \Nc_{\wp}(\wq) = |I|- \dim \wq$ and that $\wq_1$ is a face of $\wq_2$ if and only if  $\Nc_{\wp}(\wq_2)$ is a face of $\Nc_{\wp}(\wq_1)$.

The \emph{normal fan} $\Nc_\wp$ of $\wp \subseteq \Rb^I$ is the polyhedral fan consisting of the normal cones $\Nc_{\wp}(\wq)$ for all faces $\wq$ of $\wp$. Its support is the cone of directions with respect to which $\wp$ is bounded above. \cite[Chapter 2]{sturmfels96}

\subsection{{The braid arrangement and generalized permutahedra}}\label{ss:gp}

The normal fan $\Nc_{\pi_I}$ of the permutahedron $\pi_I$ is the set of faces of the  \emph{braid arrangement} in $\Rb^I$, which consists of the ${n \choose 2}$ hyperplanes
\[
\Bc_I: \qquad y(i) = y(j),  \qquad i, j \in I, \, i \neq j. 
\]
The faces of the braid arrangement $\Bc_I$ are also in bijection with the  compositions of $I$. The composition $I=S_1\sqcup \cdots \sqcup S_k$ gives rise to the face $\Bc_{S_1, \ldots, S_k}$ 
of directions $y \in \Rb^I$ such that $y(i)=y(j)$ if $i, j \in S_a$ and $y(i) \geq y(j)$ if $i \in S_a, \, j \in S_b, \, a<b$. For example, for the composition $I = \{b, f\} \sqcup \{a,c,e\} \sqcup \{d\}$ of $\{a,b,c,d,e,f\}$ we have
\[
\Bc_{bf, ace, d} = \{y \in \Rb^{\{a,b,c,d,e,f\}} \, : \, y(b) = y(f) \geq y(a) = y(c) = y(e) \geq y(d)\}
\]
For a direction $y$ in the relative interior of $\Bc_{S_1, \ldots, S_k}$, the $y$-maximum face is $(\pi_I)_y = \pi_{S_1, \ldots, S_k}$.

Recall that a fan $\mathcal{G}$ is a \emph{coarsening} of $\mathcal{F}$
(or $\mathcal{F}$ is a  \emph{refinement} of  $\mathcal{G}$) if every cone of $\mathcal{F}$ is contained in a cone of $\mathcal{G}$ or, equivalently, if every cone of $\mathcal{G}$ is a union of cones of $\mathcal{F}$. We are now ready to define our main object of study. 

\begin{proposition}
A \emph{generalized permutahedron} $\wp \subseteq\Rb I$ is a polyhedron whose normal fan $\Nc_\wp$ is a coarsening of the braid arrangement $\Bc_I = \Nc_{\pi_I}$ in $\Rb^I$.
\end{proposition}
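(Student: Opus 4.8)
Read literally, the statement is a \emph{definition}, so the content worth proving is that this fan-theoretic description really does capture the objects introduced informally above as ``deformations of the permutahedron'' (equivalently, polymatroids). The plan is therefore to prove the equivalence of the two descriptions, taking as the working definition the classical one --- that $\wp$ is obtained from $\pi_I$ by translating its facet hyperplanes $\1_A(x)\le z_A$ (for $\emptyset\subsetneq A\subsetneq I$), possibly collapsing some facets --- and matching it with the condition that $\Nc_\wp$ coarsens $\Bc_I=\Nc_{\pi_I}$.

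First I would prove the easy direction: \emph{a deformation of $\pi_I$ has normal fan coarsening $\Bc_I$}. Such a $\wp$ keeps the facet normals of $\pi_I$, which are exactly the covectors $\1_A$, and only moves the hyperplanes; hence every wall (codimension-one cone) of $\Nc_\wp$ lies in a braid hyperplane $\{y_i=y_j\}$. Consequently no wall of $\Nc_\wp$ can meet the interior of an open maximal cone (chamber) of $\Bc_I$, so each such chamber lies in a single maximal cone of $\Nc_\wp$, which is the coarsening condition. Concretely this is the assertion that the $y$-maximum face $\wp_y$ depends only on the chamber of $\Bc_I$ containing $y$, i.e.\ only on the linear order that $y$ induces on $I$; this is precisely the \emph{greedy algorithm} for maximizing a linear functional over a base polytope, and I would derive the claim in that form. (Alternatively, one may invoke the zonotope/Minkowski description: every weak Minkowski summand of $\pi_I$ has normal fan coarsening $\Nc_{\pi_I}$.)

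For the converse --- \emph{if $\Nc_\wp$ coarsens $\Bc_I$ then $\wp$ is a deformation of $\pi_I$} --- the key observation concerns rays. By the paper's definition of coarsening, each cone of $\Nc_\wp$ is a union of cones of $\Bc_I$; but a one-dimensional cone of $\Nc_\wp$ can only be a union of braid cones if it is itself a braid ray, so every ray of $\Nc_\wp$ has the form $\Rb_{\ge 0}\,\1_S$ coming from a face $\Bc_{S,\,I\setminus S}$. Dually, using $\dim\Nc_\wp(\wq)=|I|-\dim\wq$ and the face-containment correspondence recalled above, the facets of $\wp$ have outer normals among the $\1_S$, so $\wp$ is cut out by inequalities $\1_S(x)\le z_S$ together with $\1_I(x)=z_I$; this exhibits $\wp$ as a deformation of $\pi_I$. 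The same ray analysis shows each edge of $\wp$ is dual to a wall contained in a braid hyperplane $\{y_i=y_j\}$ and is hence a translate of the segment in direction $e_i-e_j$, recovering the classical ``edges parallel to $e_i-e_j$'' description.

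The main obstacle is the crux of the first direction --- verifying that $\wp_y$ is constant on each braid chamber --- together with the bookkeeping needed to move cleanly between the fan language and the inequality language. One must also be careful that $\wp$ is permitted to be an unbounded polyhedron lying in a translate of the hyperplane $\1=z_I$, so the fans should be read modulo the line $\Rb\,\1$ (equivalently in $\Rb^I/\langle\1\rangle$): only there is $\Bc_I$ a complete fan, and the support of $\Nc_\wp$ records the directions in which $\wp$ is bounded above. Getting these normalizations right, rather than any deep geometric input, is where the care is required.
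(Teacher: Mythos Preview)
You correctly diagnosed the situation: in the paper this ``proposition'' carries no proof whatsoever---it is the \emph{definition} of a generalized permutahedron, despite the environment label. The paper simply states it and moves on to Definition~4.2 (the extended case). The equivalences you set out to prove---with the edge-direction description and the submodular/inequality description---are deferred to Theorems~12.3 and~12.7, where they are stated with references to Edmonds, Fujishige, Postnikov, Schrijver, and Derksen--Fink, and where a proof (for the extended setting) is given.

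Your sketch of those equivalences is sound and close in spirit to the paper's later proof. A few comparisons: for ``coarsening $\Rightarrow$ facet normals among the $\1_S$'' you argue via rays of $\Nc_\wp$, which is the dual of the paper's $2\Rightarrow 3$ argument (it works with codimension-one cones and edge directions instead); both arguments rest on the same observation that a braid-refined cone has its span cut out by braid hyperplanes. For ``inequality description $\Rightarrow$ coarsening'' you invoke the greedy algorithm, which is exactly what the paper does (citing Fujishige) in its $(3{+}4)\Rightarrow 2$ step. Your caution about working modulo $\Rb\,\1$ is warranted and matches the paper's handling of lineality.

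One small point: your claim that ``a one-dimensional cone of $\Nc_\wp$ can only be a union of braid cones if it is itself a braid ray'' is correct once you pass to the quotient by $\Rb\,\1$, but as stated in $\Rb^I$ the minimal braid cone is the line $\Rb\,\1$ itself, so the bookkeeping you flag at the end is genuinely needed to make this sentence literally true.
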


It will sometimes be useful to allow the deformations of a permutahedron to become unbounded. Equivalently, we wish to allow the support of the normal fan to be smaller than the ambient space $\Rb^I$.

\begin{definition}\label{def:GP}
An \emph{extended generalized permutahedron} $\wp \subseteq\Rb I$ is a polyhedron whose normal fan $\Nc_\wp$ is a coarsening of a subfan of the braid arrangement $\Bc_I = \Nc_{\pi_I}$ in $\Rb^I$.
\end{definition}

Informally speaking, a \emph{generalized permutahedron} is a deformation of the standard permutahedron which can be carried out in at least two equivalent ways: \\
$\bullet$ by moving the vertices while preserving the edge directions, and \\
$\bullet$ by translating the facets without allowing them to move past vertices. \\
Figure \ref{f:genperm} illustrates this informal description; for details on these points of view, see \cite{prw08}.

\begin{figure}[h]
\centering
\includegraphics[scale=.5]{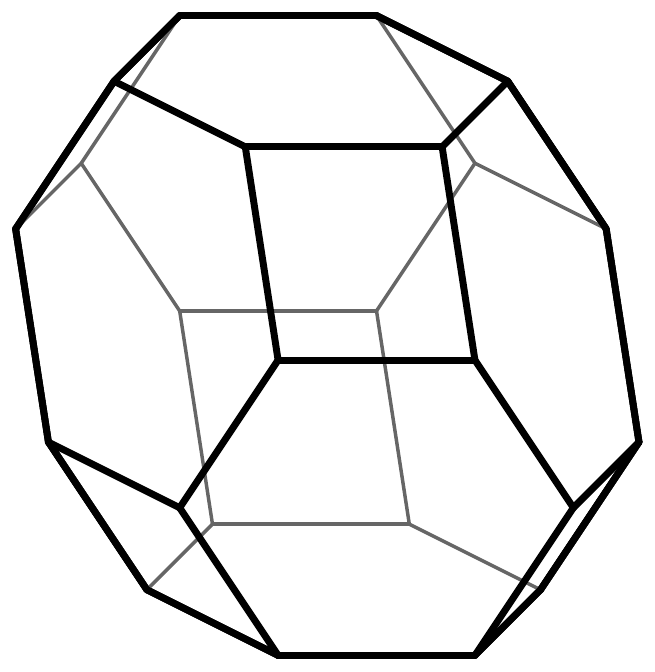} \qquad
\includegraphics[scale=.5]{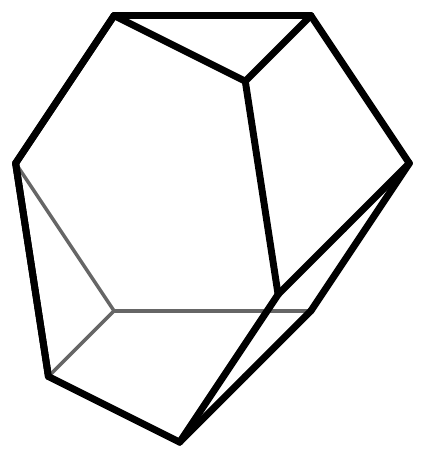}  \quad
\includegraphics[scale=.5]{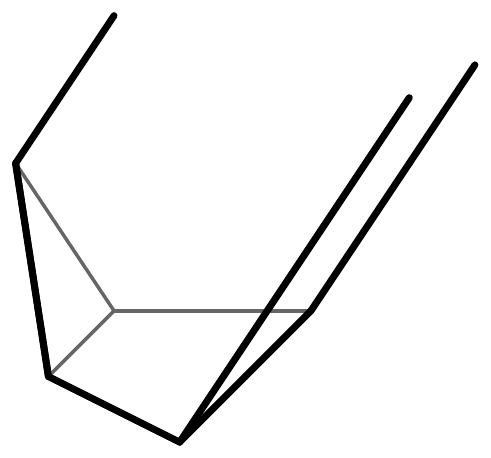} \quad
\includegraphics[scale=.5]{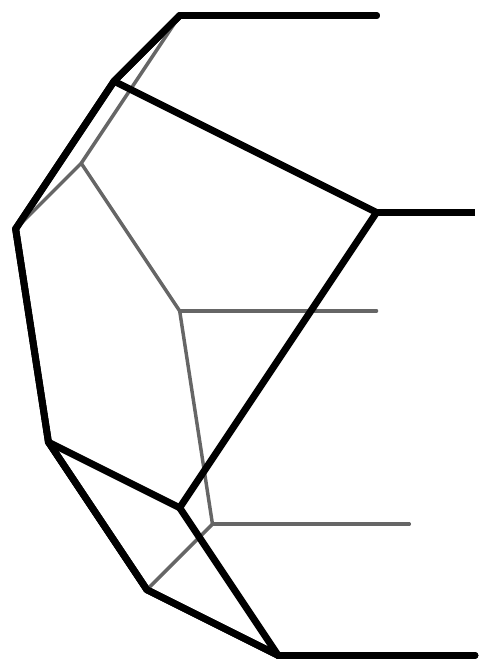} \quad
\includegraphics[scale=.5]{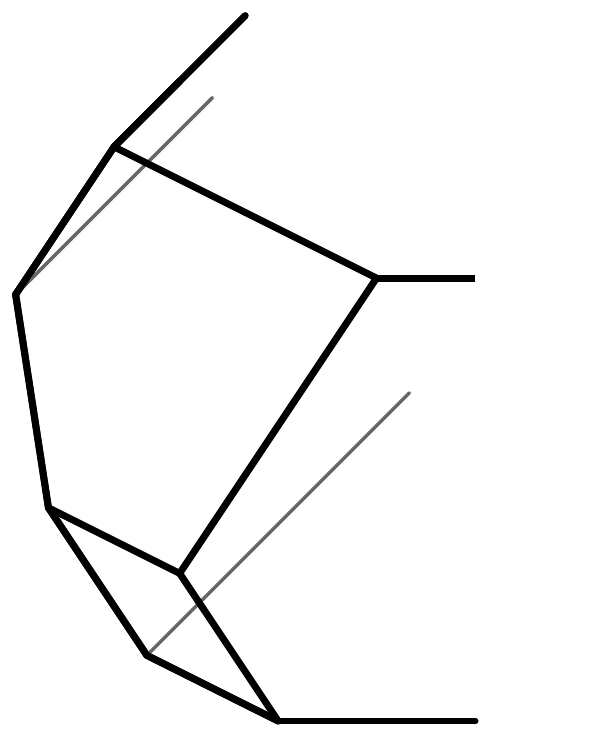} \quad
\caption{The standard permutahedron in $\Rb^4$ and four of its deformations.\label{f:genperm}}
\end{figure}

The faces of a generalized permutahedron $\wp$ in $\Rb I$ can also be labeled by compositions of $I$. 
Consider one such composition $I = S_1\sqcup \cdots \sqcup S_k$. By definition, if $\wp$ is bounded above in some direction in the open cone $\Bc^\circ_{S_1, \ldots, S_k}$, then it is bounded above in \textbf{every} direction in $y \in \Bc^\circ_{S_1, \ldots, S_k}$ and, furthermore, the $y$-maximal face is always the same. We denote that face
\[
\wp_{S_1, \ldots, S_k}:= \wp_y \,\, \textrm{ for any } \,\,  y \in \Bc^\circ_{S_1, \ldots, S_k}.
\]
Every face of $\wp$ arises in this way from a composition. Note, however, that this correspondence is no longer bijective: different compositions may lead to the same face, and some compositions may have no corresponding face.

\begin{remark}
In the language that we use, generalized permutahedra were introduced by Postnikov in \cite{postnikov09}. Up to translation, they are equivalent to \emph{polymatroids}, which were defined earlier by Edmonds. \cite{edmonds70}. As we will explain in Section \ref{s:SF}, they are also equivalent to \emph{base polyhedra of submodular functions}  \cite{edmonds70, fujishige05, schrijver03}, although that is not clear from the outset. We slightly generalize these definitions to allow for unbounded polyhedra; these were also considered by Fujishige \cite{fujishige05} and Derksen and Fink \cite{derksenfink10}. 
\end{remark}

\section{$\rGP$: {The Hopf monoid of generalized permutahedra}}\label{s:GP}

\subsection{{The Hopf monoid $\rGP$ of generalized permutahedra}}\label{ss:GP} 

We now introduce combinatorial and algebraic operations that give generalized permutahedra the structure of a Hopf monoid in set species. We focus on \emph{bounded} polytopes for the moment, and treat the unbounded case in Section \ref{ss:GP+}. 

%
%

\begin{proposition} \label{p:product}
Let $I = S \sqcup T$ be a decomposition. If $\wp \subseteq\Rb S$ and $\wq \subseteq\Rb T$ are bounded generalized permutahedra, then $\wp \times \wq \subseteq\Rb I$ is a bounded generalized permutahedron.
\end{proposition}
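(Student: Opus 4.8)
The plan is to pass to normal fans and to exploit the compatibility of the Cartesian product of polytopes with the product of fans, together with transitivity of coarsening. Throughout, identify $\Rb I = \Rb S \oplus \Rb T$, so that $\wp \times \wq \subseteq \Rb I$, and dually $\Rb^I = \Rb^S \oplus \Rb^T$. The workhorse is the standard fact that the normal fan of a product is the product of the normal fans: every nonempty face of $\wp \times \wq$ is a product $\wp' \times \wq'$ of a face $\wp' \leq \wp$ and a face $\wq' \leq \wq$, and the corresponding normal cone factors as
\[
\Nc_{\wp \times \wq}(\wp' \times \wq') = \Nc_\wp(\wp') \times \Nc_\wq(\wq') \subseteq \Rb^S \oplus \Rb^T .
\]
Hence $\Nc_{\wp \times \wq} = \Nc_\wp \times \Nc_\wq$, the fan in $\Rb^I$ whose cones are the products of a cone of $\Nc_\wp$ with a cone of $\Nc_\wq$. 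This identity requires a short verification (a direction $y = (y_S, y_T)$ maximizes over $\wp \times \wq$ exactly when $y_S$ maximizes over $\wp$ and $y_T$ over $\wq$), but it is routine.

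Next I would compare three fans in $\Rb^I$: the braid arrangement $\Bc_I$, the product arrangement $\Bc_S \times \Bc_T$, and $\Nc_{\wp \times \wq}$. Since $\wp$ and $\wq$ are generalized permutahedra, $\Nc_\wp$ coarsens $\Bc_S$ and $\Nc_\wq$ coarsens $\Bc_T$; taking products, $\Nc_{\wp \times \wq} = \Nc_\wp \times \Nc_\wq$ coarsens $\Bc_S \times \Bc_T$. The one content-bearing observation is the comparison between $\Bc_S \times \Bc_T$ and $\Bc_I$: the hyperplanes $y(i) = y(j)$ with $i,j \in S$ or $i,j \in T$ defining $\Bc_S \times \Bc_T$ form a \emph{subset} of the hyperplanes $y(i) = y(j)$, $i \neq j \in I$, defining $\Bc_I$ (the latter also contains the ``mixed'' hyperplanes with $i \in S$, $j \in T$). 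Equivalently, every total order of $I$ restricts to a total order of $S$ and of $T$, so each chamber of $\Bc_I$ lies in a unique chamber of $\Bc_S \times \Bc_T$. Thus $\Bc_S \times \Bc_T$ coarsens $\Bc_I$.

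Finally I would invoke transitivity of coarsening along the chain
\[
\Nc_{\wp \times \wq} \ \text{coarsens}\ \Bc_S \times \Bc_T \ \text{coarsens}\ \Bc_I ,
\]
to conclude that $\Nc_{\wp \times \wq}$ coarsens $\Bc_I$, which is exactly the defining condition for $\wp \times \wq$ to be a generalized permutahedron in $\Rb I$. Boundedness is immediate, since $\wp$ and $\wq$ are compact and hence so is their product. I do not expect a genuine obstacle here: the only step needing care is the product-of-normal-fans identity, and the conceptual point worth isolating is that the extra ``mixed'' braid hyperplanes make $\Bc_I$ a refinement of $\Bc_S \times \Bc_T$, so that the coarsening property is preserved rather than destroyed by the passage from $\Bc_S \times \Bc_T$ to $\Bc_I$.
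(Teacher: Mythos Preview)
Your proof is correct and follows essentially the same route as the paper: the paper's argument is the two-sentence version of yours, invoking $\Nc_{\wp \times \wq} = \Nc_\wp \times \Nc_\wq$, the fact that $\Bc_S \times \Bc_T$ refines this product fan, and that $\Bc_I$ refines $\Bc_S \times \Bc_T$. Your additional remarks on why each refinement holds and on boundedness are fine elaborations of exactly the same chain of coarsenings.
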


\begin{proof}
Since the braid arrangements $\Bc_S$ and $\Bc_T$ refine $\Nc_\wp$ and $\Nc_\wq$ respectively, their product $\Bc_S \times \Bc_T$ refine $\Nc_\wp \times \Nc_\wq = \Nc_{\wp \times \wq}$. The result then follows from the fact $\Bc_{S \sqcup T}$ refines $\Bc_S \times \Bc_T$.
%
\end{proof}

\begin{proposition}\cite[Thm. 3.15]{fujishige05}\label{p:face}
Let $\wp \subset \Rb I$ be a generalized permutahedron and $I = S \sqcup T$ a decomposition. 
By definition, the linear functional $\1_S(x) = \sum_{s \in S} x_s$ is maximized at the face $\wp_{S,T}$ of $\wp$.
Then there exist generalized permutahedra $\wp|_S \subset\Rb S$ and $\wp/_S \subset\Rb T$ such that 
\[
\wp_{S,T} = \wp|_S \times \wp/_S.
\]
We call $\wp|_S$ and $\wp/_S$ the \emph{restriction} and \emph{contraction} of $\wp$ with respect to $S$, respectively.
\end{proposition}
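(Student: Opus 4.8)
The plan is to prove the proposition by showing directly that the face $F := \wp_{S,T}$, which is the face $\wp_{\1_S}$ maximizing the functional $\1_S$, splits as a direct product across the coordinate decomposition $\Rb I = \Rb S \oplus \Rb T$; the two factors will be the desired $\wp|_S$ and $\wp/_S$. Writing $\mathrm{pr}_S : \Rb I \to \Rb S$ and $\mathrm{pr}_T : \Rb I \to \Rb T$ for the coordinate projections, I would first establish
\[
F = \mathrm{pr}_S(F) \times \mathrm{pr}_T(F),
\]
and then check that each factor is itself a generalized permutahedron. The argument has three moves: control the edge directions of $F$, deduce the product decomposition, and finally verify the factors lie in the right family.

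For the first move, recall that since $\wp$ is a generalized permutahedron its normal fan coarsens $\Bc_I$, so that every edge of $\wp$ is parallel to some $e_i - e_j$ (under a deformation of $\pi_I$ the edge directions are preserved, edges only merging or disappearing). The edges of the face $F$ are among those of $\wp$, and $F$ is contained in the hyperplane $\{x : \1_S(x) = c\}$ where $c = \max_{\wp}\1_S$. Hence any edge direction $e_i - e_j$ of $F$ satisfies $\1_S(e_i - e_j) = 0$, which forces $i$ and $j$ to lie both in $S$ or both in $T$. In other words, every edge of $F$ is parallel to the subspace $\Rb S$ or to the subspace $\Rb T$.

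The second move is a general lemma: a polytope $Q \subseteq \Rb S \oplus \Rb T$ all of whose edges are parallel to $\Rb S$ or to $\Rb T$ equals $\mathrm{pr}_S(Q) \times \mathrm{pr}_T(Q)$. I would prove this via support functions. Writing $h_Q(y) = \max_{q\in Q} y(q)$ and decomposing a direction as $y = y_S + y_T$ with $y_S \in \Rb S$ and $y_T \in \Rb T$ (each extended by zero), subadditivity of $h_Q$ gives $h_Q(y) \le h_Q(y_S) + h_Q(y_T)$. For the reverse inequality, pick a vertex $v$ of $Q$ maximizing $y$. If $v$ failed to maximize $y_S$, some edge direction $d$ at $v$ would have $y_S(d) > 0$; but $d$ is parallel to $\Rb S$ or $\Rb T$, and both cases are contradictory (if $d \parallel \Rb T$ then $y_S(d) = 0$; if $d \parallel \Rb S$ then $y_T(d) = 0$, so $y(d) = y_S(d) > 0$, contradicting maximality of $v$ for $y$). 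Thus $v$ maximizes $y_S$ and, symmetrically, $y_T$, giving $h_Q(y) = y(v) = h_Q(y_S) + h_Q(y_T)$. Since $h_Q(y_S) = h_{\mathrm{pr}_S(Q)}(y_S)$ and likewise for $T$, the support functions of $Q$ and of $\mathrm{pr}_S(Q) \times \mathrm{pr}_T(Q)$ agree, proving the lemma and hence $F = \mathrm{pr}_S(F) \times \mathrm{pr}_T(F)$.

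For the third move I set $\wp|_S := \mathrm{pr}_S(F)$ and $\wp/_S := \mathrm{pr}_T(F)$ and must check these are generalized permutahedra. I would show $\Nc_{\wp|_S}$ coarsens $\Bc_S$ by a perturbation argument: given $y \in \Rb S$ in the open braid cone of a composition $(U_1, \ldots, U_m)$ of $S$, the direction $\1_S + \varepsilon y \in \Rb I$ lies, for small $\varepsilon > 0$, in the open braid cone $\Bc^\circ_{U_1, \ldots, U_m, T}$, so that $\wp_{\1_S + \varepsilon y} = \wp_{U_1,\ldots,U_m,T}$ depends only on the composition and equals the $y$-maximal face of $F = \wp_{\1_S}$. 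Projecting, $(\wp|_S)_y$ is constant on each open braid cone of $S$, so $\Nc_{\wp|_S}$ coarsens $\Bc_S$; the analogous computation with $\1_S + \varepsilon y$ for $y \in \Rb T$ handles $\wp/_S$. The main obstacle is precisely this last bookkeeping step—keeping the normal-fan and face-of-a-face relationships straight—since the product decomposition itself is clean once the edge directions are pinned down. Alternatively, one may invoke that a face of a generalized permutahedron is again one and that the normal fan of a product is the product of normal fans, or defer to the submodular-function description of restriction and contraction, as in Fujishige \cite[Thm. 3.15]{fujishige05} and Section \ref{s:SF}.
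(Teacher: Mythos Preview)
Your proof is correct and takes a genuinely different route from the paper. The paper does not prove the proposition in place; it defers to the proof of Theorem~\ref{t:submod-gp}, where one writes $\wp = \Pc(z)$ for a submodular function $z$ and checks by chasing inequalities that the $\1_S$-maximal face equals $\Pc(z|_S) \times \Pc(z/_S)$, with $z|_S$ and $z/_S$ the restricted and contracted submodular functions. Your argument is instead purely polytopal: you use the edge-direction characterization of generalized permutahedra to see that every edge of $F = \wp_{S,T}$ lies in $\Rb S$ or $\Rb T$, then invoke a clean support-function lemma to split $F$ as a product of its projections, and finally verify each factor has normal fan coarsening the appropriate braid arrangement via the face-of-a-face relation $(\wp_{\1_S})_y = \wp_{\1_S + \varepsilon y}$.

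What each buys: your approach is self-contained at this point in the exposition and does not require the submodular machinery of Section~\ref{s:SF}, so it is arguably the more natural proof to give here. The paper's approach, on the other hand, identifies $\wp|_S$ and $\wp/_S$ explicitly as $\Pc(z|_S)$ and $\Pc(z/_S)$, which is exactly what is needed later to establish the Hopf monoid isomorphism $\rSF \cong \rGP$; your projections $\mathrm{pr}_S(F)$ and $\mathrm{pr}_T(F)$ of course coincide with these, but that identification would still need to be made separately. One minor remark: your first move implicitly uses the equivalence ``normal fan coarsens $\Bc_I$'' $\Leftrightarrow$ ``every edge is parallel to some $e_i - e_j$,'' which in the paper is only stated (not proved) as part of Theorem~\ref{t:submod-gp}; but this direction of the equivalence is the easy one and follows immediately from the fact that each maximal cone of $\Nc_\wp$ contains a maximal braid cone, so the edge normal cones are cut out by braid hyperplanes.
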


\begin{proof}
This result is known and can be proved directly; since it is a straightforward consequence of the proof of Theorem \ref{t:submod-gp}, we delay the proof until then.  
%
%
%
\end{proof}

\begin{figure}[h]
\centering
\includegraphics[scale=.7]{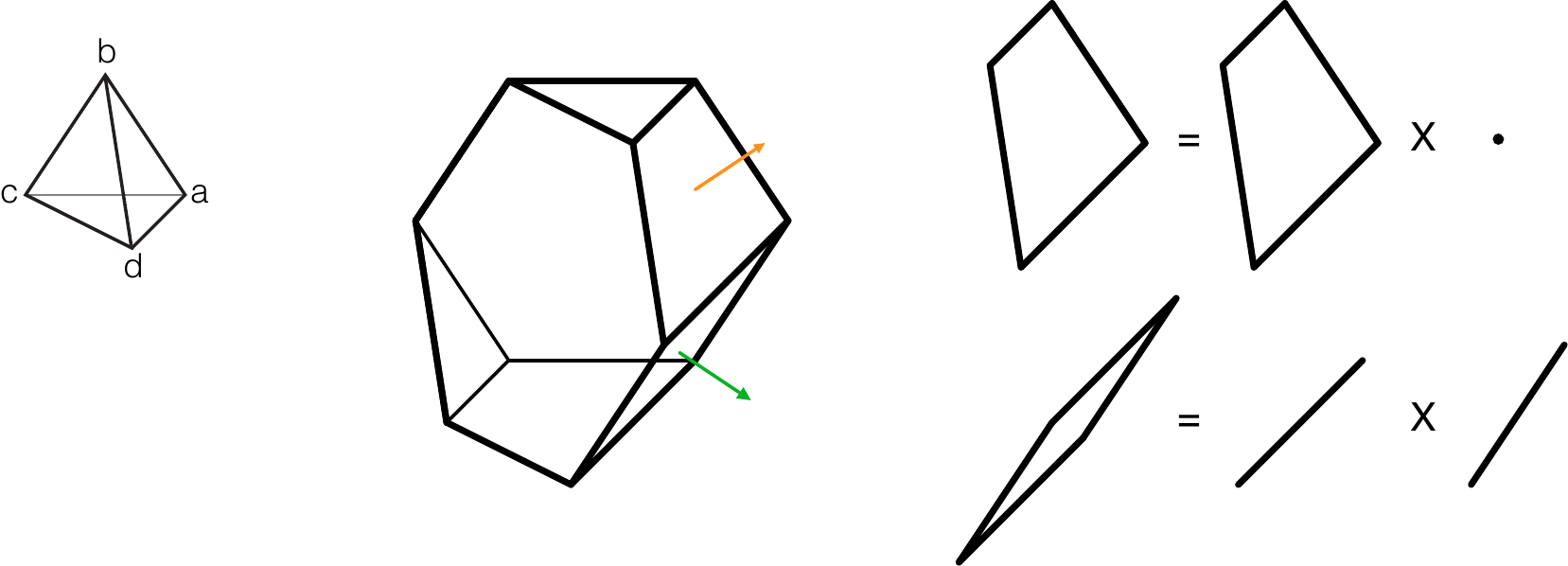}\qquad 
\caption{A generalized permutahedron $\wp$ and its faces $\wp_{abd} = \wp|_{abd} \times \wp/_{abd} \subset \Rb^{\{a,b,d\}} \times \Rb^{\{c\}}$ and $\wp_{ad} = \wp|_{ad} \times \wp/_{ad} \subset \Rb^{\{a,d\}} \times \Rb^{\{b,c\}}$. \label{f:genperm}}
\end{figure}


\begin{theorem}\label{t:GPisHopf}
Let $\rGP[I]$ denote the set of bounded generalized permutahedra on $I$. Define a product and coproduct as follows. 

Let $I = S \sqcup T$ be a decomposition.
 \\
$\bullet$
If $\wp \in \rGP[S]$ and $\wq \in \rGP[T]$, define their product to be
\[
\wp \cdot \wq := \wp \times \wq \in \rGP[I].
\]
$\bullet$
If $\wp \in \rGP[I]$ then define its coproduct to be
\[
\Delta_{S,T}(\wp) = (\wp|_S, \wp/_S),
\]
where the restriction $\wp|_S$ and contraction $\wp/_S$ are defined in Proposition \ref{p:face}.

These operations turn the set species $\rGP$ into a Hopf monoid.
\end{theorem}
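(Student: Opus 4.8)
The plan is to verify directly the four axioms of Definition~\ref{d:hopfset} using the geometry of faces, after recording one well-definedness point on which the whole argument rests. Proposition~\ref{p:product} guarantees that the product $\wp\times\wq$ is again a bounded generalized permutahedron, and Proposition~\ref{p:face} guarantees that the face factorization $\wp_{S,T}=\wp|_S\times\wp/_S$ exists. Moreover this factorization is \emph{unique}: if a polytope $P\subseteq\Rb S\times\Rb T$ can be written as $P_1\times P_2$, then $P_1$ and $P_2$ are forced to be the images of $P$ under the coordinate projections to $\Rb S$ and $\Rb T$. Hence $\wp|_S$ and $\wp/_S$ are well-defined functions of $\wp$, and the coproduct is well-defined. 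Naturality is then immediate, since relabeling by $\sigma$ merely permutes coordinates, and both the Cartesian product and the face $\wp_{S,T}$ (cut out by maximizing $\1_S$) transform in the evident way under such permutations. Unitality holds because $\wp\times\{0\}\cong\wp$, while $\wp|_I=\wp$ (as $\1_I$ is constant on $\wp$, so $\wp_{I,\emptyset}=\wp$) and $\wp/_\emptyset=\wp$ (as $\1_\emptyset=0$). Associativity of the product is trivial, the Cartesian product being associative.

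The crux is compatibility, which I would establish as follows. Fix $S\sqcup T=I=S'\sqcup T'$ with $A,B,C,D$ the four intersections of \eqref{e:4sets}, and take $x\in\rGP[S]$, $y\in\rGP[T]$. The key observation is that maximization over a Cartesian product splits: since $S'=A\sqcup C$ with $A\subseteq S$ and $C\subseteq T$, the restriction of $\1_{S'}$ to $x\times y\subseteq\Rb S\times\Rb T$ equals $\1_A+\1_C$, where $\1_A$ involves only the $\Rb S$ coordinates and $\1_C$ only the $\Rb T$ coordinates. Therefore, applying Proposition~\ref{p:face} to each factor,
\[
(x\times y)_{S',T'}\ =\ x_{A,B}\times y_{C,D}\ =\ (x|_A\times x/_A)\times(y|_C\times y/_C).
\]
Reordering the Cartesian factors from $\Rb A\times\Rb B\times\Rb C\times\Rb D$ to $(\Rb A\times\Rb C)\times(\Rb B\times\Rb D)=\Rb{S'}\times\Rb{T'}$ rewrites this as $(x|_A\times y|_C)\times(x/_A\times y/_C)$. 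Comparing with the factorization $(x\times y)_{S',T'}=(x\times y)|_{S'}\times(x\times y)/_{S'}$ and invoking the uniqueness noted above yields $(x\cdot y)|_{S'}=x|_A\cdot y|_C$ and $(x\cdot y)/_{S'}=x/_A\cdot y/_C$, which is exactly the compatibility axiom.

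Coassociativity of the coproduct I would prove with the same two tools — splitting of maximization over products and uniqueness of factorizations — together with a \emph{nested face lemma}: for a composition $I=R\sqcup S\sqcup T$, a generic direction that is constant on each of $R,S,T$ with its value on $R$ exceeding that on $S$, which exceeds that on $T$, computes $\wp_{R,S,T}$; and because the lexicographic direction $N\1_{R\sqcup S}+\1_R$ (for $N\gg0$) lies in that open braid cone, $\wp_{R,S,T}$ can be obtained by first maximizing $\1_{R\sqcup S}$ and then $\1_R$ within the resulting face. Expanding $\wp_{R,S,T}$ in two ways — grouping $R\sqcup S$ first versus grouping $R$ first, and using that $\1_R$ sees only one coordinate block so its maximization passes into a single factor — I obtain two product decompositions in $\Rb R\times\Rb S\times\Rb T$:
\[
(\wp|_{R\sqcup S})|_R\times(\wp|_{R\sqcup S})/_R\times\wp/_{R\sqcup S}\ =\ \wp|_R\times(\wp/_R)|_S\times(\wp/_R)/_S.
\]
Matching factors gives precisely the three identities $(\wp|_{R\sqcup S})|_R=\wp|_R$, $(\wp|_{R\sqcup S})/_R=(\wp/_R)|_S$, and $\wp/_{R\sqcup S}=(\wp/_R)/_S$ required in Definition~\ref{d:hopfset}.

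I expect the main obstacle to be organizational rather than conceptual. The substantive geometric content — that the face $\wp_{S,T}$ factors as a product, Proposition~\ref{p:face} — is assumed, so the remaining difficulty lies in the careful bookkeeping of Cartesian factors and coordinate reorderings, and in cleanly stating and applying the nested face lemma. Once one is comfortable that maximization over a product splits into maximization over its factors and that product decompositions are unique, both compatibility and coassociativity become formal consequences.
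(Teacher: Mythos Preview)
Your proposal is correct and follows essentially the same route as the paper: both reduce coassociativity and compatibility to the face-of-a-face identity $(\wp_v)_w=\wp_{v+\lambda w}$ for small $\lambda>0$ (your ``nested face lemma'', with $N\gg0$ in place of $\lambda$ small) together with the splitting $\1_{S'}=\1_A+\1_C$ over a Cartesian product. The only cosmetic differences are that you make the uniqueness of Cartesian factorizations explicit, whereas the paper writes the same content as a chain of equalities of polytopes.
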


\begin{proof}
The previous two propositions prove that the operations are well-defined, so it remains to check the axioms of a Hopf monoid. We verify the two non-trivial ones: the associativity of the coproduct and the compatibility of the product and coproduct.

\smallskip

\noindent \emph{Coassociativity.}
For any decomposition $I = R \sqcup S \sqcup T$ and any generalized permutahedron $\wp \subseteq \Rb I$ we need to show that
\[
(\wp|_{R\sqcup S})|_R=\wp|_R,
\qquad
(\wp|_{R\sqcup S})/_R=(\wp/_R)|_S,
\qquad
\wp/_{R\sqcup S}=(\wp/_R)/_S.
\]

Notice that
\begin{align*}
(\wp_{R \sqcup S, T})_{R, S \sqcup T} &=
(\wp|_{R \sqcup S} \times \wp/_{R \sqcup S})_{R, S \sqcup T} = 
(\wp|_{R \sqcup S})_{R,S} \times \wp/_{R \sqcup S} = 
(\wp|_{R \sqcup S})|_R \times (\wp|_{R \sqcup S})/_R \times \wp/_{R \sqcup S}
\\
(\wp_{R, S \sqcup T})_{R \sqcup S, T} &=
(\wp|_R \times \wp/_R)_{R \sqcup S, T} =
\wp|_R \times (\wp/_R)_{S, T} =
\wp|_R \times (\wp/_R)|_S \times (\wp/_R)/_S
\end{align*}
so it suffices to prove the equality of these two polytopes. To do this, we use the following general fact about polytopes \cite{gruenbaum67:_convex}: for any polytope $\wp \in \Rb I$ and any directions $v, w \in \Rb^I$, we have 
\begin{equation}\label{eq:faceofaface}
(\wp_v)_w = \wp_{v + \lambda w} \quad \textrm{ for any small enough }\lambda > 0.
\end{equation}
 Therefore we have 
\begin{eqnarray*}
(\wp_{R \sqcup S , T})_{R, S \sqcup T} &=& (\wp_{\1_{R\sqcup S}})_{\1_R} = \wp_{\1_{R \sqcup S} + \lambda \1_R} = \wp_{R,S,T}\\
(\wp_{R, S \sqcup T})_{R \sqcup S, T} &=& (\wp_{\1_R})_{\1_{R \sqcup S}} = \wp_{\1_R + \lambda \1_{R \sqcup S}} = \wp_{R,S,T}
\end{eqnarray*}
since both $\1_{R \sqcup S} + \lambda \1_R$ and $\1_R + \lambda \1_{R \sqcup S}$ are in the open face $\Bc^\circ_{R,S,T}$ of the braid arrangement.

\smallskip

\noindent \emph{Compatibility.}
Fix decompositions $S \sqcup T = I = S' \sqcup T'$ and let $A = S \cap S', B = S \cap T', C = T \cap S', D = T \cap T'$, as in \ref{e:4sets} . Let $\wp \subseteq \Rb S$ and $\wq \subseteq \Rb T$ be generalized permutahedra. We need to verify that
\[
 (\wp \cdot \wq)|_{S'}  = \wp|_A\cdot \wq|_C 
 \qand
 (\wp \cdot \wq)/_{S'} = \wp/_A\cdot \wq/_C,
\]
This follows from the following computation:
\[
 (\wp \cdot \wq)|_{S'}  \times  (\wp \cdot \wq)/_{S'} =  (\wp \cdot \wq)_{\1_{S'}} = (\wp \cdot \wq)_{\1_A + \1_C} = 
\wp_{\1_A} \cdot \wq_{\1_C} = \wp|_A \cdot \wp/_A \cdot \wq|_C \cdot \wq/_C = (\wp|_A \cdot \wq|_C) \cdot (\wp/_A \cdot \wq/_C.
\] 
The proof of compatibility is now complete.
\end{proof}

The following strengthening of Proposition \ref{p:face} will be useful. 

\begin{proposition} \label{p:face2}
Let $\wp \subset \Rb I$ be a generalized permutahedron and let $I = S_1 \sqcup \cdots \sqcup S_k$ be a composition. Let $y \in \Rb^I$ be a direction contained in the open face $F^\circ = \Bc^\circ_{S_1, \ldots, S_k}$ of the braid arrangement. Then the $y$-maximal face $\wp_y$ depends only on $F$; we also denote it by $\wp_F$ or $\wp_{S_1, \ldots S_k}$. Polytopally, we can express this face as a product 
\[
\wp_y = \wp_F = \wp_{S_1, \ldots S_k} =  \wp_1 \times \cdots \times \wp_k
\]
of generalized permutahedra $\wp_i \subset \Rb S_i$ for $1 \leq i \leq k$. Hopf-theoretically, we can express it as
\[
\wp_F = \mu_F \Delta_F (\wp)
\]
where $\mu_F = \mu_{S_1, \ldots, S_k}$ and $\Delta_F = \Delta_{S_1, \ldots, S_k}$.
\end{proposition}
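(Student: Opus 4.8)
The plan is to handle the three assertions in turn. The first claim — that $\wp_y$ depends only on the face $F=\Bc^\circ_{S_1,\ldots,S_k}$ and not on the particular $y\in F^\circ$ — is already part of the definition of a generalized permutahedron as recorded in Section~\ref{ss:gp}: since $\Nc_\wp$ coarsens the braid arrangement, the map $y\mapsto\wp_y$ is constant on every relatively open braid cone, in particular on $F^\circ$. I would therefore concentrate on the product decomposition (the polytopal claim) together with its reformulation $\wp_F=\mu_F\Delta_F(\wp)$ (the Hopf-theoretic claim), proving both at once by induction on $k$.

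The base cases are immediate: for $k=1$ the direction $\1_I$ is constant, so $\wp_{S_1}=\wp$, which matches $\mu_F\Delta_F(\wp)=\wp$ because both maps are the identity (Section~\ref{ss:high}); for $k=2$ this is exactly Proposition~\ref{p:face}. For the inductive step I would write $T=S_2\sqcup\cdots\sqcup S_k$ and first apply Proposition~\ref{p:face} to the decomposition $I=S_1\sqcup T$, obtaining $\wp_{S_1,T}=\wp|_{S_1}\times\wp/_{S_1}$. Next I would realize the finer composition by a perturbed direction: choose $z\in\Bc^\circ_{S_2,\ldots,S_k}\subseteq\Rb^T$, extend it to $z'\in\Rb^I$ by $z'|_{S_1}=0$ and $z'|_T=z$, and set $y=\1_{S_1}+\epsilon z'$. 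For small $\epsilon>0$ the functional $y$ is constant on each $S_a$ with strictly decreasing values $1>\epsilon z(S_2)>\cdots>\epsilon z(S_k)$, so $y\in\Bc^\circ_{S_1,\ldots,S_k}$. The face-of-a-face identity~\eqref{eq:faceofaface} then gives $\wp_y=(\wp_{\1_{S_1}})_{z'}=(\wp|_{S_1}\times\wp/_{S_1})_{z'}$; since a linear functional is maximized over a product coordinatewise and $z'$ is the pair $(0,z)$ under $\Rb I\cong\Rb S_1\times\Rb T$, this equals $\wp|_{S_1}\times(\wp/_{S_1})_z$. Applying the inductive hypothesis to the generalized permutahedron $\wp/_{S_1}\subset\Rb T$ and the $(k-1)$-part composition $T=S_2\sqcup\cdots\sqcup S_k$ yields $(\wp/_{S_1})_z=\wp_2\times\cdots\times\wp_k$, whence $\wp_y=\wp|_{S_1}\times\wp_2\times\cdots\times\wp_k$.

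To finish I would identify these factors with the minors computed by the higher coproduct. By the coassociativity established inside the proof of Theorem~\ref{t:GPisHopf}, the higher coproduct is well defined and satisfies $\Delta_{S_1,\ldots,S_k}(\wp)=(\wp|_{S_1},\Delta_{S_2,\ldots,S_k}(\wp/_{S_1}))$; by induction the right-hand tuple is precisely $(\wp|_{S_1},\wp_2,\ldots,\wp_k)$. Setting $\wp_1=\wp|_{S_1}$, we conclude $\wp_F=\wp_1\times\cdots\times\wp_k=\mu_F\Delta_F(\wp)$, each $\wp_i\subset\Rb S_i$ being a generalized permutahedron by Proposition~\ref{p:product}. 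The step demanding the most care — and the only place where geometry enters beyond the $k=2$ case — is the perturbation argument above: one must verify both that $y=\1_{S_1}+\epsilon z'$ genuinely lies in $\Bc^\circ_{S_1,\ldots,S_k}$ and that~\eqref{eq:faceofaface} applies for that same small $\epsilon$, so that the single geometric operation of ``peeling off $S_1$ and then refining $T$'' matches the algebraic operation of ``coproduct along $S_1$ followed by coproduct of the remainder.'' Everything else is the bookkeeping of iterated restriction and contraction already licensed by coassociativity.
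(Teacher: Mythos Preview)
Your proof is correct and follows essentially the same route as the paper's: both arguments note that $\wp_y$ depends only on $F$ because $\Nc_\wp$ coarsens the braid fan, pick a convenient $y\in F^\circ$, and then peel off one block at a time using Proposition~\ref{p:face} together with the face-of-a-face identity~\eqref{eq:faceofaface}, invoking coassociativity for the Hopf-theoretic statement. The only difference is expository: the paper chooses the explicit direction $y=\1_{S_1}+\lambda_2\1_{S_1\sqcup S_2}+\cdots+\lambda_k\1_{S_1\sqcup\cdots\sqcup S_k}$ with $1\gg\lambda_2\gg\cdots\gg\lambda_k>0$ and says ``iterate,'' whereas you unroll that iteration as a formal induction with $y=\1_{S_1}+\epsilon z'$ at each step.
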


\begin{proof}
Since $\wp$ is a generalized permutahedron, the braid arrangement $\Bc_I$ refines its normal fan $\Nc(\wp)$, so $\wp_y$ depends only on the face $F$ of $\Bc_I$ containing $y$. Therefore it suffices to compute $\wp_y$ for one such $y$; we choose $y = \1_{S_1} + \lambda_2 \1_{S_1 \sqcup S_2} + \cdots + \lambda_k \1_{S_1 \sqcup \cdots \sqcup S_k}$ for scalars $1 >> \lambda_2 >> \cdots >> \lambda_k > 0$. The
 result now follows from (\ref{eq:faceofaface}) by  iterating Proposition \ref{p:face}. The Hopf-theoretic statement then follows from the coassociativity of $\rGP$ in Theorem \ref{t:GPisHopf}, which implies that $\Delta_{S_1, \ldots, S_k}(\wp) = (\wp_1, \ldots, \wp_k)$.
\end{proof}

\begin{remark}
In the language of polymatroids, equivalent definitions of restriction and contraction were given in \cite{edmonds70} and a similar Hopf structure was defined in \cite{derksenfink10}. We emphasize the polytopal perspective, which allows us to obtain many new results.
\end{remark}

%
%
%
%
%

\subsection{{The Hopf monoids $\rbGP$ and $\rbbGP$ of equivalence classes of generalized permutahedra}}\label{s:rbbGP}

There are two quotients of the Hopf monoid $\rGP$ that are often useful to consider. Say that two generalized permutahedra $\wp$ and $\wq$ in $\Rb I$ are \emph{normally equivalent} if they have the same normal fan:
 \[
\wp\equiv\wq \iff \Nc_\wp=\Nc_\wq.
\]
Say they are \emph{quasinormally equivalent} if they are normally equivalent up to the action of the symmetric group on the underlying vector spaces: for $\wp, \wq \in \rGP[I]$
\[
\wp \equiv' \wq \iff \wp\equiv w(\wq) \textrm{ for some } w \in S_I.
\]
where $w \in S_I$ acts naturally on $\Rb I$ by $w(\sum_{i \in I} a_i e_i) = \sum_{i \in I} a_i e_{w(i)}$.\footnote{Notice that this is very close to the definition of the Hopf algebra $GP = \Kcb(\wGP)$.}
These notions of equivalence are stronger than  combinatorial equivalence, which is the property of having isomorphic face posets. 


Let $\rbGP[I]$ and $\rbbGP[I]$ denote the quotients of $\rGP[I]$ by these equivalence relations, respectively.
Since the operations of $\rGP$ are defined in terms of normal fans, the Hopf monoid structure of $\rGP$ descends to $\rbGP$ and $\rbbGP$ via morphisms:
$\rGP \onto \rbGP \onto \rbbGP$.
The sets $\rbGP[I]$ and $\rbbGP[I]$ of normal and quasinormal equivalence classes of generalized permutahedra in $\Rb^I$ is finite, since there are finitely many coarsenings of subfans of the braid arrangement.


\subsection{{The Hopf monoid $\wGP_+$  of possibly unbounded generalized permutahedra}} \label{ss:GP+}

To allow unbounded polytopes, we need to work with Hopf monoids in vector species. Propositions \ref{p:product} and (when $\wp_{S,T}$ exists) \ref{p:face} still hold, and imply the following result.

\begin{theorem}
Let $\rGP_+[I]$ be the set of extended generalized permutahedra on $I$
and let $\wGP_+[I]$ denote its linearization. Define a product and coproduct as follows. 

Let $I = S \sqcup T$ be a decomposition.\\
$\bullet$
If $\wp \in \rGP_+[S]$ and $\wq \in \rGP_+[T]$, define their product to be
\[
\wp \cdot \wq := \wp \times \wq \in \rGP_+[I]
\]
$\bullet$
If $\wp \in \rGP_+[I]$ then define its coproduct to be
\[
\Delta_{S,T}(\wp) = 
\begin{cases}
\wp|_S \otimes \wp/_S & \textrm{ if $\wp$ is bounded in the direction of $\1_S$}, \\
0 & \textrm{ otherwise},
\end{cases}
\]
where the restriction $\wp|_S$ and contraction $\wp/_S$ are defined in Proposition \ref{p:face}.

These operations turn the vector species $\wGP_+$ into a Hopf monoid.
\end{theorem}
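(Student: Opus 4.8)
The plan is to follow the proof of Theorem~\ref{t:GPisHopf} almost verbatim, the one genuinely new ingredient being a careful accounting of the boundedness conditions that now appear in the coproduct. As in the bounded case, naturality, unitality, and the associativity of the product will be immediate: the product is still the Cartesian product, which is associative, natural, and has the point as two-sided unit; and the well-definedness of the operations is exactly what Propositions~\ref{p:product} and~\ref{p:face} provide (the latter whenever the relevant face exists). So the real work will again be coassociativity and compatibility, and I expect each to reduce to the corresponding bounded-case computation once the boundedness side-conditions on the two sides of the axiom are shown to coincide.

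The key observation I would isolate first is that whether an extended generalized permutahedron $\wp$ is bounded above in a direction $y \in \Rb^I$ depends only on the open braid cone $\Bc^\circ_{S_1,\ldots,S_k}$ containing $y$, since this is precisely the condition that $y$ lie in the support of $\Nc_\wp$, and by Definition~\ref{def:GP} that support is a subfan of $\Bc_I$. Because the support is moreover a \emph{closed} subfan, boundedness in one direction of an open cone forces boundedness in its entire closure, in particular in the coarser face directions $\1_{S_1}, \1_{S_1\sqcup S_2}, \ldots$. Combined with the face identity~\eqref{eq:faceofaface}, this is the mechanism that will make the iterated coproducts well-behaved.

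For coassociativity, fix $I = R\sqcup S\sqcup T$. The two iterated coproducts $(\Delta_{R,S}\otimes\id)\circ\Delta_{R\sqcup S,T}$ and $(\id\otimes\Delta_{S,T})\circ\Delta_{R,S\sqcup T}$ are each nonzero under an \emph{a priori} different pair of boundedness conditions (``bounded in $\1_{R\sqcup S}$ and then $\1_R$'' versus ``bounded in $\1_R$ and then $\1_S$''). Using~\eqref{eq:faceofaface} to rewrite each iterated face as a face of $\wp$ in a direction such as $\1_{R\sqcup S}+\lambda\1_R \in \Bc^\circ_{R,S,T}$, together with the observation above, I would show that each of these pairs is equivalent to the single condition that $\wp$ be bounded in one (equivalently every) direction of $\Bc^\circ_{R,S,T}$. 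Hence both composites vanish on exactly the same polytopes, and when they do not vanish their common value is the triple factorization $\wp_{R,S,T} = \wp_1\times\wp_2\times\wp_3$ of Proposition~\ref{p:face2}, computed just as in Theorem~\ref{t:GPisHopf}. For compatibility, with $A,B,C,D$ as in~\eqref{e:4sets}, the point is that boundedness factors over Cartesian products: $\wp\cdot\wq = \wp\times\wq$ is bounded in direction $\1_{S'} = \1_A+\1_C$ if and only if $\wp$ is bounded in $\1_A$ and $\wq$ is bounded in $\1_C$, which is exactly the condition for $\wp|_A\cdot\wq|_C$ and $\wp/_A\cdot\wq/_C$ to be defined; when these hold the bounded-case identity $(\wp\times\wq)_{\1_{S'}} = \wp_{\1_A}\times\wq_{\1_C}$ gives the two required equalities.

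The step I expect to be the main obstacle is precisely the boundedness bookkeeping in coassociativity: one must verify that the two \emph{a priori} distinct pairs of boundedness conditions, arising from restricting and contracting in the two opposite orders, collapse to the same single condition on $\wp$. This is exactly where the closed-subfan property of the support is indispensable—without it one composite could be defined while the other vanishes, and coassociativity would fail at the level of supports before any face computation even begins.
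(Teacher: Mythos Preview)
Your proposal is correct and follows the same approach the paper intends: extend the proof of Theorem~\ref{t:GPisHopf} to the unbounded setting, with the only new ingredient being the boundedness bookkeeping in the coproduct. In fact the paper gives no proof of this theorem at all---it simply remarks that Propositions~\ref{p:product} and~\ref{p:face} still hold (the latter when $\wp_{S,T}$ exists) and that these imply the result. Your careful treatment of when the iterated coproducts vanish, using the closed-subfan property of the support of $\Nc_\wp$, supplies exactly the details the paper leaves to the reader.
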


Similarly, we let $\overline{\wGP}_+$ and $\overline{\wbGP}_+$ be the Hopf monoids of extended generalized permutahedra modulo normal equivalence and quasinormal equivalence, respectively.

\section{Universality of $\rGP$}\label{s:universality}

The previous section showed that the family of generalized permutahedra is a natural polyhedral setting for Hopf theory: it has natural product and coproduct operations that turn it into a Hopf monoid. 
In a sense, it is the only such setting: we now show that generalized permutahedra are the only polytopes for which these operations give a Hopf monoid.

\begin{theorem} \emph{(Universality Theorem)}
Suppose $\rP$ is a connected Hopf monoid on set species whose elements are polytopes, and whose operations are defined as in Theorem \ref{t:GPisHopf}; that is, for any decomposition $I = S \sqcup T$:\\
$\bullet$
If $\wp \in \rP[S]$ and $\wq \in \rP[T]$ are polytopes, their product is the polytopal product:
\[
\wp \cdot \wq := \wp \times \wq \in \rP[I]
\]
$\bullet$
If $\wp \in \rP[I]$ then its maximal face in the direction $\1_S$ factors as $\wp_{\1_S} = \wp|_S \times \wp/_S =: \wp_{S,T}$ for  polytopes $\wp|_S \in \rP[S]$ and $\wp/_S \in \rP[T]$, and the coproduct of $\wp$ is
\[
\Delta_{S,T}(\wp) = (\wp|_S, \wp/_S).
\]
Then every polytope in $\rP$ is a generalized permutahedron, and $\rP$ is a submonoid of $\rGP$.
\end{theorem}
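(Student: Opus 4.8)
The plan is to show that every $\wp \in \rP[I]$ is a generalized permutahedron by proving that each of its edges is parallel to $e_i - e_j$ for some $i \neq j$. This is exactly the condition that $\Nc_\wp$ coarsens the braid arrangement: a wall of $\Nc_\wp$ is the codimension-one cone $\Nc_\wp(e) \subseteq d^\perp$ dual to an edge $e$ of direction $d$, and it lies on a braid hyperplane $(e_i-e_j)^\perp = \{y_i = y_j\}$ precisely when $d \in \Rb(e_i - e_j)$. Once this is established, $\rP[I] \subseteq \rGP[I]$; and since the product of $\rP$ is the polytopal product and its coproduct is the face-factorization $\Delta_{S,T}(\wp) = (\wp|_S, \wp/_S)$ --- i.e.\ literally the operations of $\rGP$ from Theorem~\ref{t:GPisHopf} --- the inclusion is automatically a morphism of Hopf monoids, so $\rP$ is a submonoid of $\rGP$.

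First I would extract a normalization from the unit axiom. Unitality forces $\wp|_I = \wp$, and by the definition of restriction (taking $S = I$, so $\wp/_I \in \rP[\emptyset]$ is the unit) this means $\wp_{\1_I} = \wp$; hence $\1_I$ is constant on $\wp$, so $\wp$ lies in a hyperplane $\{\sum_{i\in I} x_i = c\}$. Consequently every edge direction $d$ satisfies $\sum_{i\in I} d_i = 0$, and it suffices to prove $|\supp d| \leq 2$: together with $\sum_i d_i = 0$ this yields $d \in \Rb(e_i - e_j)$.

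The engine of the argument combines two facts, organized by an induction on $|I|$. On one hand, the coproduct hypothesis says that for every $S$ the face $\wp_{\1_S}$ splits as a product $\wp|_S \times \wp/_S$ in $\Rb S \times \Rb T$; since an edge of a product polytope is an edge of one factor times a vertex of the other, any edge lying in $\wp_{\1_S}$ has its direction supported entirely in $S$ or in $T$. By the inductive hypothesis each proper minor $\wp|_S \in \rP[S]$ and $\wp/_S \in \rP[T]$ is a generalized permutahedron, so every proper face $\wp_{\1_S}$, being a product of generalized permutahedra, has only braid edge directions. On the other hand --- and this is where the hypothesis that $\rP$ is a genuine Hopf monoid enters --- coassociativity forces iterated restrictions and contractions to be independent of the order in which blocks are peeled. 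Reading the coassociativity computation of Theorem~\ref{t:GPisHopf} in reverse and using $(\wp_v)_w = \wp_{v + \lambda w}$, this says precisely that for every nested pair $\emptyset \neq T_1 \subsetneq T_2 \subsetneq I$ the iterated faces $(\wp_{\1_{T_1}})_{\1_{T_2}}$ and $(\wp_{\1_{T_2}})_{\1_{T_1}}$ coincide, so the $3$-block face $\wp_F$ for $F = (T_1, T_2 \setminus T_1, I \setminus T_2)$ is well defined and factors as a product of three generalized permutahedra. A short argument over maximal chains of subsets then shows: if $\sum_i d_i = 0$ and $d$ has more than one positive or more than one negative coordinate, then along some chain the partial sums take both signs, producing a nested pair $T_1 \subsetneq T_2$ with $\sum_{i \in T_1} d_i$ and $\sum_{i \in T_2} d_i$ of strictly opposite sign. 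Equivalently, the absence of any such nested pair forces at most one positive and one negative entry, i.e.\ $|\supp d| \leq 2$.

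The main step, and the principal obstacle, is to convert these ingredients into constancy of the maximal face $\wp_y$ on each open braid cone $\Bc^\circ_F$ --- equivalently, to rule out an edge whose direction $d$ has $|\supp d| \geq 3$ by exhibiting it inside a proper product face $\wp_F$ and invoking the inductive hypothesis. The delicate point is that the face-factorization hypothesis \emph{alone} is insufficient: a segment in a non-braid direction such as $(1,1,-2)$ satisfies $\wp_{\1_S} = \wp|_S \times \wp/_S$ for every $S$ (all its proper faces are points) yet is not a generalized permutahedron. What excludes it is coassociativity itself: for the two peeling orders above its iterated faces are the two \emph{distinct} endpoints of the segment, so the coassociativity identity fails. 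Thus I expect the crux to be a careful matching of the abstract coassociativity equation to an actual wall-crossing of the normal fan --- showing that the directions $\1_{T_1} + \varepsilon\1_{T_2}$ and $\1_{T_2} + \varepsilon\1_{T_1}$, which lie on strictly opposite sides of the wall $d^\perp$, must select genuinely different faces unless $d$ is a braid direction --- rather than a mere hyperplane-crossing. I anticipate this is exactly where the induction (to control the factors $\wp|_{T_1}, \wp/_{T_1}$) and the localization formula $(\wp_v)_w = \wp_{v + \lambda w}$ inside the open braid cone are needed to finish.
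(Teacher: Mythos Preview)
Your proposal correctly identifies that coassociativity is the essential ingredient---your $(1,1,-2)$ example nicely shows that the face-factorization hypothesis alone is insufficient---but the argument has a genuine gap at precisely the point you flag as ``the crux.'' You find $T_1 \subsetneq T_2$ with $\1_{T_1}(d)$ and $\1_{T_2}(d)$ of opposite sign, so that $\1_{T_1} + \varepsilon\1_{T_2}$ and $\1_{T_2} + \varepsilon\1_{T_1}$ lie on opposite sides of the hyperplane $d^\perp$. But $d^\perp$ is only the \emph{linear span} of the wall $\Nc_\wp(e)$; away from that wall it may slice harmlessly through the interior of a single maximal normal cone $\Nc_\wp(v)$ for a vertex $v$ not incident to $e$. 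So ``opposite sides of $d^\perp$'' does not force ``different faces of $\wp$,'' and the hoped-for contradiction with coassociativity does not materialize. The induction on $|I|$ buys you nothing here either: it only controls edges lying inside some proper $\wp_{\1_S}$, and you have no mechanism tying your chosen $\1_{T_1}, \1_{T_2}$ to the normal cone of the offending edge $e$.

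The paper's argument sidesteps this entirely by working on the normal-fan side rather than the edge side, and with no induction. From higher coassociativity one gets that for every composition $F = (S_1,\ldots,S_k)$ the face $\wp_F$ is well defined independently of the order of peeling, and equals $\wp_y$ for directions of the form $y = \1_{S_1 \cup \cdots \cup S_{k-1}} + \lambda_1\1_{S_1 \cup \cdots \cup S_{k-2}} + \cdots$ for \emph{any} small positive $\lambda_i$. Sending the $\lambda_i$ to $0$ one at a time places each generator $\1_{S_1}, \1_{S_1\cup S_2}, \ldots, \1_I$ in the single closed cone $\Nc_\wp(\wp_F)$; hence the whole braid cone $\Bc_F = \cone\{\1_{S_1},\ldots,\1_I\}$ lies in $\Nc_\wp(\wp_F)$. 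That is exactly the statement that $\Nc_\wp$ coarsens the braid fan. The move you are missing is this limit argument: rather than trying to \emph{separate} two directions by a wall, one shows they land in the \emph{same} closed normal cone.
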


\begin{proof}
First notice that for any polytope $\wp \in \rP[I]$ we have 
\[
\wp_{\1_I} = \wp|_I \times \wp/_I = \wp \times 1 = \wp
\]
using counitality and the connectedness of $\rP$. It follows that $\1_I(x) = \sum_{i \in I} x_i$ is constant for $x \in \wp$. Therefore $\wp$  is not full-dimensional, and the direction $\1_I$ is in the lineality space of its normal fan $\Nc_\wp$.

As observed in the proof of Theorem \ref{t:GPisHopf}, the coassociativity for $\rP$ implies that 
\[
(\wp_{R \sqcup S , T})_{R, S \sqcup T} =
(\wp_{R, S \sqcup T})_{R \sqcup S, T} 
\]
Let us call this polytope $\wp_{R,S,T}$. Then by (\ref{eq:faceofaface}) we have
\[
\wp_{R,S,T} = \wp_{\1_{R \sqcup S} + \lambda \1_R} = \wp_{\1_R + \lambda \1_{R \sqcup S}}
\]
for any small enough $\lambda > 0$. It follows that $\1_{R \sqcup S} + \lambda \1_R$ and $\1_R + \lambda \1_{R \sqcup S}$ are both in the normal cone $\Nc_{\wp}(\wp_{R,S,T})$ corresponding to the face $\wp_{R,S,T}$ of $\wp$. Since that cone is closed, we may take the limit $\lambda \rightarrow 0$ and obtain that $\1_R$ and $\1_{R \sqcup S}$ (and, as we already knew, $\1_{R \sqcup S \sqcup T}$) are in $\Nc_{\wp}(\wp_{R,S,T})$. Therefore the whole braid cone $\Bc_{R,S,T}= \cone\{\1_R, \1_{R \sqcup S}, \1_{R \sqcup S \sqcup T}\}$ is in the normal cone $\Nc_{\wp}(\wp_{R,S,T})$.

We now use higher coassociativity to carry out the same argument for any composition $I = S_1 \sqcup \cdots \sqcup S_k$. The higher coproduct $\Delta_{S_1, \ldots, S_k}(\wp) = (\wp_1, \ldots, \wp_k)$ may be computed by iterating the coproduct maps in any of $(k-1)!$ meaningful ways. If we define $\wp_{S_1, \ldots, S_k} := \wp_1 \times \cdots \times \wp_k$ then we obtain $(k-1)!$ expressions for this face of $\wp$; one of them is
\[
\wp_{S_1, \ldots, S_k} = (\cdots((\wp_{S_1 \sqcup \cdots \sqcup S_{k-1}, S_k})_{S_1 \sqcup \cdots \sqcup S_{k-2}, S_{k-1} \sqcup S_k})_{\ldots})_{S_1, S_2 \sqcup \cdots \sqcup S_k}.
\]
This implies that the direction
$\1_{S_1 \sqcup \cdots \sqcup S_{k-1}} + \lambda_1 \1_{S_1 \sqcup \cdots \sqcup S_{k-2}} + \cdots +  \lambda_{k-2} \1_{S_1}$ 
is contained in the
normal cone $\Nc_\wp(\wp_{S_1, \ldots, S_k})$ 
for any $\lambda_1 >> \lambda_2 >> \cdots >> \lambda_{k-2}>0$. By sending $\lambda_{k-2}, \lambda_{k-3}, \ldots, \lambda_1 \rightarrow 0$ in that order, we obtain $\1_{S_1 \sqcup \cdots \sqcup S_{k-1}} \in \Nc_\wp(\wp_{S_1, \ldots, S_k})$. By computing the coproduct in different orders, we similarly obtain $\1_{S_1 \sqcup \cdots \sqcup S_{j}} \in \Nc_\wp(\wp_{S_1, \ldots, S_k})$ for any $1 \leq j \leq k-1$, and we already knew that $\1_{S_1 \sqcup \cdots \sqcup S_k} = \1_I \in \Nc_{\wp}(\wp_{S_1, \ldots, S_k})
$ as well. Therefore 
\[
\Bc_{S_1, \ldots, S_k} = \cone\{\1_{S_1}, \1_{S_1 \sqcup S_2}, \ldots, \1_{S_1 \sqcup \cdots \sqcup S_k} \} \subseteq \Nc_{\wp}(\wp_{S_1, \ldots, S_k}).
\]
It follows that every cone $\Bc_{S_1, \ldots, S_k}$ of the braid arrangement is contained in a cone of the normal fan $\Nc_\wp$. By definition, this means that $\wp$ is a generalized permutahedron, as we wished to show. 

Since $\rP$ and $\rGP$ have the same product and coproduct, it now follows that $\rP$ is a Hopf submonoid of $\rGP$, as desired.
\end{proof}

Similar statements hold for Hopf monoids of possibly unbounded polytopes. We leave the details to the reader.

\section{{The antipode of $\rGP$}}\label{s:antipode}

In this section we show a remarkably simple formula for the antipode of the Hopf monoid of generalized permutahedra. This is the best possible formula since it involves no cancellation or repeated terms.
We will see throughout the paper that this formula generalizes numerous results in the literature and answers several open questions. 

\begin{theorem}\label{t:antipode}
The antipodes of the Hopf monoids $\wGP, \wGP_+$ of generalized permutahedra are given by the following \textbf{cancellation-free} and \textbf{grouping-free} formula. If $\wp \in \Rb^I$ is a generalized permutahedron, then
\[
\apode_I(\wp) = (-1)^{|I|}\sum_{\wq \leq \wp} (-1)^{\dim \wq} \, \wq,
\]
where we sum over all the nonempty faces $\wq$ of $\wp$. The same formula holds for the quotients $\wbGP, \wbGP_+, \wbbGP,$ and $\wbbGP_+$, where it is still cancellation-free. 
\end{theorem}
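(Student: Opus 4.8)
The plan is to feed Takeuchi's formula~\eqref{e:takeuchi} into Proposition~\ref{p:face2} and then reorganize the resulting alternating sum of faces of $\wp$ by grouping together all compositions that select the same face. Since $\mu_F\Delta_F(\wp)=\wp_F$ for every composition $F=(S_1,\dots,S_k)\vDash I$ by Proposition~\ref{p:face2}, Takeuchi's formula becomes
\[
\apode_I(\wp)=\sum_{F\vDash I}(-1)^{|F|}\,\wp_F ,
\]
where $|F|$ is the number of parts of $F$. Because $\wp$ is a generalized permutahedron, the braid fan refines $\Nc_\wp$, so each open braid cone $\Bc^\circ_F$ lies inside a single open normal cone; hence $\wp_F=\wq$ exactly when $\Bc^\circ_F\subseteq\Nc^\circ_\wp(\wq)=\mathrm{relint}\,\Nc_\wp(\wq)$. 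Collecting terms by face therefore gives $\apode_I(\wp)=\sum_{\wq\leq\wp}c_\wq\,\wq$ with
\[
c_\wq=\sum_{F:\,\Bc^\circ_F\subseteq\mathrm{relint}\,\Nc_\wp(\wq)}(-1)^{|F|}.
\]

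The heart of the matter, and what I expect to be the main obstacle, is evaluating $c_\wq$; this is exactly where topology must enter. Since $\dim\Bc_F=|F|$, the coefficient $c_\wq$ is the signed count $\sum(-1)^{\dim\sigma}$ over the relatively open braid cones $\sigma=\Bc^\circ_F$ contained in $\mathrm{relint}\,C$, where $C:=\Nc_\wp(\wq)$. These are precisely the cells of the braid fan (restricted to $C$, whose support is $C$) meeting $\mathrm{relint}\,C$, and they partition $\mathrm{relint}\,C$. I would compute this sum via the compactly supported Euler characteristic $\chi_c$, which is additive over locally closed decompositions and satisfies $\chi_c(\Rb^m)=(-1)^m$; thus each open cone of dimension $j$ contributes $(-1)^j=\chi_c(\Rb^j)$, so $c_\wq=\chi_c(\mathrm{relint}\,C)$. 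As $C$ is a cone (possibly with the lineality line $\Rb\1$ common to all braid and normal cones), $\mathrm{relint}\,C$ is homeomorphic to $\Rb^{\dim C}$ and $c_\wq=(-1)^{\dim C}$. (Alternatively, intersecting with a sphere reduces this to the fact that the alternating face-count of a subdivided open $m$-ball equals $(-1)^m$, which follows from $\chi(\overline{B}^m)=1$ and $\chi(S^{m-1})=1+(-1)^{m-1}$; this route treats the top face $\wq=\wp$, whose only contributing composition is $(I)$, as a separate degenerate case, whereas the $\chi_c$ formulation absorbs it automatically.) Invoking the normal-cone dimension identity $\dim\Nc_\wp(\wq)=|I|-\dim\wq$ from Section~\ref{ss:nf} then yields $c_\wq=(-1)^{|I|-\dim\wq}=(-1)^{|I|}(-1)^{\dim\wq}$, exactly the claimed coefficient.

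For the unbounded monoid $\wGP_+$ the same computation applies once attention is restricted to the support of $\Nc_\wp$: a composition $F$ contributes $\wp_F$ to Takeuchi's sum precisely when $\wp$ is bounded in the directions of $\Bc^\circ_F$ (otherwise the relevant coproduct vanishes), which is exactly the condition that $\Bc^\circ_F$ lie in $\mathrm{relint}\,\Nc_\wp(\wq)$ for an actual face $\wq$, and the faces of $\wp$ now index the normal cones covering that support. Finally, the formula passes to the quotients $\wbGP,\wbbGP$ and their $+$ versions because the quotient maps are morphisms of Hopf monoids, so antipodes commute with them by~\eqref{e:apode-mor}. The only thing to check there is that no cancellation is introduced: if two distinct faces $\wq_1\neq\wq_2$ of $\wp$ are identified under normal (or quasinormal) equivalence, then they share a normal fan up to relabelling and hence have equal dimension, so the identified terms carry the same sign $(-1)^{\dim\wq_i}$ and add rather than cancel. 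Thus the formula remains cancellation-free in every quotient, completing the argument.
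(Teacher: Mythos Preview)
Your proof is correct and follows essentially the same route as the paper: both start from Takeuchi's formula, use Proposition~\ref{p:face2} to rewrite each term as a face $\wp_F$, group by face $\wq$, and then evaluate the resulting alternating sum over braid cones in $\Nc_\wp(\wq)$ via an Euler-characteristic argument. The only difference is cosmetic: the paper intersects with a sphere and computes the reduced Euler characteristic of a ball minus that of its boundary sphere (handling $\wq=\wp$ as a degenerate case), whereas you invoke the compactly supported Euler characteristic $\chi_c$ directly on $\mathrm{relint}\,\Nc_\wp(\wq)\cong\Rb^{\dim\Nc_\wp(\wq)}$---a slightly cleaner packaging of the same computation, and indeed you note the paper's route as your parenthetical alternative.
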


\begin{proof} 
Takeuchi's formula (\ref{e:takeuchi}) gives 
\[
\apode_I(\wp) \ =\ \sum_{(S_1,\ldots,S_k) \vDash I \atop k\geq 0} (-1)^k\, \mu_{S_1,\ldots,S_k}\circ\Delta_{S_1,\ldots,S_k}(\wp)
\]
summing over all compositions of $I$. 
Let us examine each summand. 

To compute $\Delta_{S_1,\ldots,S_k}(\wp)$ we look for the face 
$\wp_{S_1, \ldots, S_k}$ that maximizes any direction $y$ in the open cone $\Bc^\circ_{S_1, \ldots, S_k}$ of the braid arrangement $\Bc_I$. If there is no such face because $\wp$ is unbounded in those directions, then $\Delta_{S_1,\ldots,S_k}(\wp) = 0$. If there is an $y$-maximum face, then by Proposition \ref{p:face} it factors as a product of $k$ generalized permutahedra 
\[
 \wp_{S_1, \ldots, S_k} = 
 \wp_1 \times \cdots \times \wp_k
\]
where 
\[ 
\wp_i = (\wp|_{S_1 \sqcup \cdots \sqcup S_{i}})/_{S_1 \sqcup \cdots \sqcup S_{i-1}} \in \wGP[S_i] 
\] 
for $1 \leq i \leq k$. 
We then have  $\Delta_{S_1,\ldots,S_k}(\wp) = (\wp_1, \ldots, \wp_k)$, and therefore
$\mu_{S_1,\ldots,S_k}\circ\Delta_{S_1,\ldots,S_k}(\wp) =  \wp_1 \times \cdots \times \wp_k = 
\wp_{S_1, \ldots, S_k}$. We conclude that

\begin{equation}\label{eq:alternating}
\apode_I(\wp) \ =\ \sum_{(S_1,\ldots,S_k) \vDash I \atop k\geq 0} (-1)^k\, \wp_{S_1,\ldots,S_k} =: \sum_{\wq \leq \wp} \alpha_{\wq} \, \wq
\end{equation}
is indeed a linear combination of the non-empty faces of $\wq \leq \wp$.

Now let $\wq$ be a face of $\wp$, and let us compute the coefficient $\alpha_{\wq}$ of $\wq$ in the right hand side of (\ref{eq:alternating}). 
For a composition $(S_1, \ldots, S_k)$ we have $\wp_{S_1,\ldots,S_k} = \wq$ if and only if $\Bc^\circ_{S_1, \ldots, S_k} \subseteq \Nc^\circ_\wp(\wq)$. Recalling that the normal fan $\Nc_\wp$ refines the braid arrangement, we define
\begin{eqnarray*}
\C_\wq &=& \{\Bc_{S_1, \ldots, S_k} \, : \, \Bc^\circ_{S_1, \ldots, S_k} \subseteq \Nc^\circ_\wp(\wq)\}\\
\overline{\C_\wq} &=& \{\Bc_{S_1, \ldots, S_k} \, : \, \Bc^\circ_{S_1, \ldots, S_k} \subseteq \Nc_\wp(\wq)\}
\end{eqnarray*}

Then the coefficient of $\wq$ in the right hand side of (\ref{eq:alternating}) is
\[
\alpha_\wq = \sum_{C \in \mathcal{\C_\wq}} (-1)^{\dim C}.
\]
We would like to interpret this as an Euler characteristic, but the set of polyhedra $\C_\wq$ is not a polyhedral complex, since it is not closed under taking faces. To remedy this, we observe that $\overline{\C_\wq}$ and $\overline{\C_\wq} - \C_\wq$ \textbf{are} polyhedral complexes, and we may rewrite the previous equation as
\[
\alpha_\wq = \sum_{C \in \overline{\mathcal{C_\wq}}} (-1)^{\dim C}
- \sum_{C \in \overline{\mathcal{C_\wq}} - \mathcal{C_\wq}} (-1)^{\dim C}.
\]








Let us intersect the cones in $\overline{\C_\wq}$ with the sphere $\mathcal{S} := \{x \in \Rb^I\, : \, \sum x_i = 0, \sum x_i^2=1\}$ to make them bounded. The resulting cells  
form a CW-decomposition of the $(\dim \Nc_\wp(\wq)-2)$-ball $\Nc_\wp(\wq) \cap \mathcal{S}$, while the cells in $\overline{\C}_\wq-\C_\wq$ form a CW-decomposition of the $(\dim \Nc_\wp(\wq)-3)$-sphere $\partial \Nc_\wp(\wq) \cap \mathcal{S}$.\footnote{The $0$-ball is a point and the $(-1)$-sphere is the empty set. Extra care is required in the trivial case that $\dim \Nc_\wp(\wq)=1$. In this case we must have that $\wq = \wp$ and $\dim \wp = |I|-1$, so $\C_\wq = \{\Bc_I\}$ and $\alpha_\wq=-1$.} 
Therefore
\begin{eqnarray*}
\alpha_\wq &=& \overline{\chi}\left(\Nc_\wp(\wq) \cap \mathcal{S}\right) - \overline{\chi}\Large(\partial \Nc_\wp(\wq) \cap \mathcal{S}\Large) \\
&=&0-(-1)^{\dim \Nc_\wp(\wq)-3} = (-1)^{|I|-\dim \wq}.
\end{eqnarray*}
where $\overline{\chi}$ denotes the reduced Euler characteristic; we must use $\overline{\chi}$ because both $\overline{\C}_\wq$ and $\overline{\C}_\wq-\C_\wq$ contain the one-dimensional ray $\Bc_I$, which becomes the empty face when we intersect it  with $\mathcal{S}$. The desired result follows.

\medskip

Combining this with (\ref{eq:alternating}) gives the desired formula, which is clearly cancellation-free and grouping-free in $\wGP$ and $\wGP_+$.

In the quotients $\wbGP$, $\wbbGP$, $\wbGP_+$ and $\wbbGP_+$, (quasi)normally equivalent faces of $\wp$ will lead to the grouping of like terms in this antipode formula.  Since (quasi)normally equivalent faces must have the same dimension, the antipode formula is still cancellation-free.
\end{proof}

\bigskip

\begin{LARGE}
\noindent 
\textsf{PART 2: Permutahedra, associahedra, and inversion. 
}
\end{LARGE}

\section{{Preliminaries 3: The group of characters of a Hopf monoid}\label{s:prelimcharacters}}

We now return to the general setting of Hopf monoids of Section~\ref{s:hopf}, to define the notion of characters on a Hopf monoid, and discuss how the characters assemble into a group. We will use this general construction  to settle  a question of Loday \cite{loday05} and a conjecture of Humpert and Martin \cite{humpert12} in Sections \ref{s:inversion} and \ref{s:G}, respectively.

\subsection{{Characters}}\label{ss:char}

\begin{definition}\label{d:char}
Let $\wH$ be a connected Hopf monoid in vector species. 
A \emph{character} $\zeta$ on $\wH$ is a collection of linear maps
\[
\zeta_I:\wH[I]\to \Kb,
\]
one for each finite set $I$, subject to the following axioms.

\begin{naturality}
For each bijection $\sigma:I\to J$ and $x\in\wH[I]$, we have $\zeta_J\bigl(\rH[\sigma](x)\bigr) = \zeta_I(x)$.
\end{naturality}

\begin{multiplicativity}
For each $I=S\sqcup T$, $x\in\wH[S]$ and $y\in\wH[T]$, we have $\zeta_I(x\cdot y) = \zeta_S(x)\zeta_T(y)$.
\end{multiplicativity}

\begin{unitality} The map
$\zeta_\emptyset:\wH[\emptyset]  \to \Kb$
sends $1\in\Kb=\wH[\emptyset]$ to $1\in\Kb$: we have $\zeta_\emptyset(1)=1$.

\end{unitality}
\end{definition}

In most examples that interest us, naturality and unitality are trivial, and we can think of characters simply as multiplicative functions. When $\wH$ is the linearization of a Hopf monoid $\rH$ over set species, the characters $\zeta$ are constructed easily: one chooses arbitrarily the value $\zeta_I(h)$ for each object $h \in \rH[I]$ that is indecomposable under multiplication, and then extend those values multiplicatively to all objects.

\subsection{{The character group}}\label{ss:char}

The characters of a connected Hopf monoid $\wH$ have the structure of a group, called the \emph{character group} $\Xb(\wH)$.

\begin{theorem}
Let $\wH$ be a connected Hopf monoid on vector species. The set  $\Xb(\wH)$ of characters of $\wH$ is a group under the convolution product, defined by 
\begin{equation}\label{e:conv}
(\varphi\psi)_I(x) = 
\sum_{I=S\sqcup T} \varphi_S(x|_S)\psi_T(x/_S) 
\end{equation}
for characters $\varphi$ and $\psi$. The identity $\epsilon$ is given by $\epsilon_I=0$ if $I\neq\emptyset$ and $\epsilon_\emptyset(1)=1.$ The inverse of a character $\zeta$ is $\zeta \circ \apode$, its composition with the antipode $\apode$ of $\wH$.
\end{theorem}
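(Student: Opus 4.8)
The plan is to first show that convolution makes $\Xb(\wH)$ into a monoid, and then to exhibit $\zeta\circ\apode$ as a two-sided inverse, so that the monoid is in fact a group. The only genuinely combinatorial point is closure: that $\varphi\psi$ is again a character whenever $\varphi,\psi$ are. Naturality and unitality are immediate from the corresponding axioms for $\varphi$ and $\psi$ together with naturality of the coproduct and $\Delta_{\emptyset,\emptyset}(1)=1\otimes 1$. For multiplicativity I would fix $I=S\sqcup T$, $x\in\wH[S]$, $y\in\wH[T]$, expand $(\varphi\psi)_I(x\cdot y)=\sum_{I=U\sqcup V}\varphi_U((x\cdot y)|_U)\psi_V((x\cdot y)/_U)$, and apply the compatibility axiom. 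Writing $A=S\cap U$, $B=S\cap V$, $C=T\cap U$, $D=T\cap V$, compatibility gives $(x\cdot y)|_U=x|_A\cdot y|_C$ and $(x\cdot y)/_U=x/_A\cdot y/_C$; multiplicativity of $\varphi$ (on $U=A\sqcup C$) and of $\psi$ (on $V=B\sqcup D$) then splits each factor, and since decompositions $I=U\sqcup V$ correspond bijectively to pairs of decompositions $S=A\sqcup B$, $T=C\sqcup D$, the sum regroups as $(\varphi\psi)_S(x)\,(\varphi\psi)_T(y)$. This is where the bialgebra compatibility axiom is essential.

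Next I would verify the monoid axioms. Associativity of convolution follows from coassociativity: both $((\varphi\psi)\chi)_I(x)$ and $(\varphi(\psi\chi))_I(x)$ reduce, using the well-definedness of the higher coproduct $\Delta_{S,T,W}$, to the common expression $\sum_{I=S\sqcup T\sqcup W}\varphi_S(x_1)\psi_T(x_2)\chi_W(x_3)$ where $(x_1,x_2,x_3)=\Delta_{S,T,W}(x)$. For the identity, the defining property $\epsilon_S=0$ unless $S=\emptyset$ annihilates every term of $(\epsilon\varphi)_I(x)$ except the one with $S=\emptyset$, which by unitality ($x|_\emptyset=1$, $x/_\emptyset=x$) and $\epsilon_\emptyset(1)=1$ equals $\varphi_I(x)$; the computation for $\varphi\epsilon$ is symmetric. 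Thus $\Xb(\wH)$ is a monoid with identity $\epsilon$.

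For the group structure I would prove $\zeta^{-1}=\zeta\circ\apode$. First, $\zeta\circ\apode$ is itself a character: naturality and unitality are clear (the antipode is natural and $\apode_\emptyset=\id$), while multiplicativity follows from \eqref{e:apode-mu} together with multiplicativity of $\zeta$ and commutativity of $\Kb$, since $\zeta_I(\apode_I(x\cdot y))=\zeta_I(\apode_T(y)\cdot\apode_S(x))=\zeta_T(\apode_T(y))\zeta_S(\apode_S(x))$. To see it inverts $\zeta$, I would compute, for $I\neq\emptyset$,
\[
(\zeta\cdot(\zeta\circ\apode))_I(x)=\sum_{I=S\sqcup T}\zeta_S(x|_S)\,\zeta_T(\apode_T(x/_S))=\zeta_I\Big(\sum_{I=S\sqcup T}\mu_{S,T}\big(x|_S\otimes\apode_T(x/_S)\big)\Big),
\]
using multiplicativity of $\zeta$ in reverse. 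The inner sum is $\mu\circ(\id\otimes\apode)\circ\Delta$ applied to $x$; it vanishes for $I\neq\emptyset$, whence the expression is $0=\epsilon_I(x)$, and at $I=\emptyset$ it is $1$. The symmetric computation with $\apode$ on the left gives $(\zeta\circ\apode)\cdot\zeta=\epsilon$, so $\zeta\circ\apode$ is the two-sided inverse of $\zeta$ in $\Xb(\wH)$.

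The main obstacle is this last step: it rests on the antipode identity $\mu\circ(\id\otimes\apode)\circ\Delta = u\epsilon = \mu\circ(\apode\otimes\id)\circ\Delta$ (unit composed with counit), which characterizes the antipode and is only referenced via \cite[Section 8.4]{am} rather than derived from Takeuchi's formula \eqref{e:takeuchi} in the excerpt. For a self-contained argument this identity must be established directly: expanding $\apode_T$ by \eqref{e:takeuchi} and invoking higher coassociativity turns the left-hand side into an alternating sum over decompositions $I=S\sqcup T_1\sqcup\cdots\sqcup T_\ell$ of the terms $\mu_{S,T_1,\ldots,T_\ell}\Delta_{S,T_1,\ldots,T_\ell}(x)$, which telescopes to zero for $I\neq\emptyset$ by a sign-cancellation argument. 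I expect packaging this cancellation cleanly, rather than checking the monoid axioms, to be the technical heart of the proof.
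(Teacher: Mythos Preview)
Your proof is correct and follows essentially the same approach as the paper. The multiplicativity verification via the compatibility axiom, using the intersections $A,B,C,D$, matches the paper's argument verbatim; the paper is simply briefer on the remaining points, declaring the identity check ``easy'' and citing \cite[Definition 1.15]{am} for the inverse rather than unfolding the antipode identity as you do.
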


\begin{proof}
We need to check that the convolution product of characters $\varphi$ and $\psi$ is indeed a character. Let $I = S \sqcup T$ be a decomposition and $z = x \cdot y$ for $x \in \wH[S]$ and $y \in \wH[T]$. Then, using the notation of \eqref{e:4sets} and the compatibility of the product and coproduct, we get
\begin{align*}
(\varphi \psi)_I(x \cdot y) &= 
\sum_{I = S' \sqcup T'} \varphi_{S'}((x \cdot y)|_{S'}) \psi_{T'}((x \cdot y)/_{S'}) = \sum_{I = S' \sqcup T'} \varphi_{S'}(x|_A \cdot y|_C) \psi_{T'}(x/_A \cdot y/_C) \\
&= \sum_{\substack{S=A \sqcup B \\ T = C \sqcup D}} \varphi_A(x|_A) \varphi_C(y|_C) \psi_B(x/_A) \psi_D(y/_C)  = (\varphi \psi)_S(x)  \cdot    (\varphi \psi)_T(y)
\end{align*}
as desired. It is easy to check that $\epsilon$ is indeed the identity, and the description of the inverse follows from \cite[Definition 1.15]{am}. 
\end{proof}

We mentioned in Section \ref{ss:antipode} that the antipode of a Hopf monoid plays the role of the inverse function in a group. The previous theorem is a concrete manifestation of that analogy. The following is another fundamental question.

\begin{problem}\label{prob:chargp}
Find an explicit description for the character group of a given Hopf monoid.
\end{problem}

We will now answer Problem \ref{prob:chargp} for two Hopf monoids of permutahedra and associahedra in Sections \ref{s:Pi} and \ref{s:F}. This will establish the connection between these Hopf monoids and the inversion of power series, as described in the introduction.

\section{$\rbPi$: {Permutahedra and the multiplication of power series}}\label{s:Pi}

In this section we consider the Hopf monoid of permutahedra, and  show that its character group is the group of formal power series under multiplication.

Recall that $\pi_I$ is the standard permutahedron in $\Rb I$. Let $\rbPi$ be the Hopf submonoid of $\rbGP$ generated by the standard permutahedra. 

\begin{lemma}\label{l:rbPi}
The coproduct of $\rbPi$ is given by 
\[
\Delta_{S,T}(\pi_I) = (\pi_S\, ,\,  \pi_T).
\]
for each decomposition $I=S \sqcup T$.
\end{lemma}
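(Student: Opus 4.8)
The plan is to compute $\Delta_{S,T}(\pi_I)$ directly from the definition of the coproduct and then read off the two factors from the explicit facial structure of the permutahedron recorded in Section~\ref{ss:p}. By Proposition~\ref{p:face}, the coproduct in $\rGP$ (and hence in the quotient containing $\rbPi$) is $\Delta_{S,T}(\pi_I) = (\pi_I|_S,\ \pi_I/_S)$, where the two factors are determined by the factorization $\pi_{S,T} = \pi_I|_S \times \pi_I/_S$ of the $\1_S$-maximal face $\pi_{S,T}$ into a polytope in $\Rb S$ times a polytope in $\Rb T$. Thus everything reduces to identifying this face and its two product factors.

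First I would identify the face $\pi_{S,T}$. The vertices of $\pi_I$ are the bijections assigning the values $[n]$ to the elements of $I$, where $n=|I|$, and the functional $\1_S(x) = \sum_{s\in S} x_s$ is maximized precisely when the $|S|$ largest values are assigned to the elements of $S$. Hence $\pi_{S,T}$ is the face associated to the two-block composition $(S,T)$ described in Section~\ref{ss:p}, whose vertices are exactly those permutations with $\{x_s : s\in S\} = \{n-|S|+1,\ldots,n\}$ and $\{x_t : t\in T\} = \{1,\ldots,|T|\}$.

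Next I would use the product description of this face. By the description of faces of arbitrary dimension in Section~\ref{ss:p}, $\pi_{S,T}$ is a parallel translate of $\pi_S \times \pi_T \subseteq \Rb S \times \Rb T$. Matching this against the factorization $\pi_{S,T} = \pi_I|_S \times \pi_I/_S$ of Proposition~\ref{p:face}---the two factorizations agree because each factor is the image of $\pi_{S,T}$ under the coordinate projection to $\Rb S$ or to $\Rb T$---I conclude that $\pi_I|_S$ is a translate of $\pi_S$ and $\pi_I/_S$ is a translate of $\pi_T$. (In fact the $T$-block receives the bottom values $\{1,\ldots,|T|\}$, so $\pi_I/_S = \pi_T$ on the nose, whereas the $S$-block receives the top values and so $\pi_I|_S$ is a nontrivial translate of $\pi_S$.)

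Finally I would pass to $\rbPi$. Since $\rbPi$ is a submonoid of $\rbGP$, its elements are normal equivalence classes, and any polytope is normally equivalent to each of its translates because translation does not change the normal fan. Therefore $\pi_I|_S = \pi_S$ and $\pi_I/_S = \pi_T$ as elements of $\rbPi$, giving $\Delta_{S,T}(\pi_I) = (\pi_S, \pi_T)$. I do not expect a genuine obstacle here: the entire content is the facial structure of the permutahedron, and the only subtlety is the harmless translation appearing in the restriction factor, which is absorbed upon passing to normal equivalence.
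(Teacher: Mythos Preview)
Your proof is correct and follows essentially the same approach as the paper: both identify the $\1_S$-maximal face $\pi_{S,T}$ via the facial description of the permutahedron in Section~\ref{ss:p}, observe that $\pi_I|_S$ is a translate of $\pi_S$ while $\pi_I/_S = \pi_T$, and absorb the translation by passing to normal equivalence in $\rbPi$. Your version is simply more detailed than the paper's two-sentence proof.
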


 \begin{proof}
From the description of the faces of permutahedron $\pi_I \subset \Rb^I$ in Section \ref{ss:p} we know that the maximal face of $\pi_I$ in the direction of $\1_S$ is $\pi_{S,T} = \pi_I|_S \times \pi_I/_S$ where
$\pi_I|_S$ is a translation of $\pi_S$ and $\pi_I/_S$ is equal to $\pi_T$. The result follows.
\end{proof}

This implies, in particular, that 
\begin{equation}\label{e:Pibasis}
\wbPi[I] = \mathrm{span} \{\pi_{S_1} \times \cdots \times \pi_{S_k} \, : \, I = S_1 \sqcup \cdots \sqcup S_k\}
\end{equation}
We can now prove the main result of this section.

\begin{theorem}\label{t:charperm}
The group of characters $\Xb(\wbPi)$ of the Hopf monoid of permutahedra is isomorphic to the group of exponential formal power series
\[
\left\{1+a_1x + a_2\frac{x^2}{2!} + a_3\frac{x^3}{3!} + \cdots \, : \, a_1, a_2, \ldots \in \Kb\right\}
\]
under multiplication.
\end{theorem}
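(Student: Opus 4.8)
The plan is to set up an explicit bijection between characters of $\wbPi$ and exponential power series with constant term $1$, and then verify that the convolution product of characters corresponds to multiplication of series.

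First I would argue that a character $\zeta \in \Xb(\wbPi)$ is completely determined by the scalars it assigns to the standard permutahedra. By \eqref{e:Pibasis}, the space $\wbPi[I]$ is spanned by the products $\pi_{S_1} \times \cdots \times \pi_{S_k}$ over decompositions $I = S_1 \sqcup \cdots \sqcup S_k$; by multiplicativity any character satisfies $\zeta_I(\pi_{S_1} \times \cdots \times \pi_{S_k}) = \prod_{i=1}^k \zeta_{S_i}(\pi_{S_i})$, and by naturality $\zeta_{S_i}(\pi_{S_i})$ depends only on $|S_i|$. Hence $\zeta$ is determined by the sequence $a_n := \zeta_{[n]}(\pi_n)$ for $n \geq 1$, with unitality forcing the ``$n=0$'' value to be $1$. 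Conversely, any sequence $(a_n)_{n \geq 1}$ in $\Kb$ defines a character: set $\zeta_I(\pi_{S_1} \times \cdots \times \pi_{S_k}) = \prod_i a_{|S_i|}$. I would then record the resulting bijection $\zeta \mapsto A(x) = 1 + \sum_{n \geq 1} a_n \frac{x^n}{n!}$ onto the set of exponential series with constant term $1$.

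The heart of the proof is checking that this bijection is a group homomorphism. Let $\varphi, \psi$ have associated sequences $(a_n), (b_n)$ and series $A, B$. Using the convolution formula \eqref{e:conv} together with Lemma \ref{l:rbPi}, which gives $\pi_I|_S = \pi_S$ and $\pi_I/_S = \pi_T$, I compute for $|I| = n$:
\[
(\varphi\psi)_I(\pi_I) = \sum_{I = S \sqcup T} \varphi_S(\pi_S)\,\psi_T(\pi_T) = \sum_{I = S \sqcup T} a_{|S|}\, b_{|T|} = \sum_{k=0}^n \binom{n}{k} a_k\, b_{n-k},
\]
with the convention $a_0 = b_0 = 1$, the last equality counting the $\binom{n}{k}$ decompositions with $|S| = k$. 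This is exactly the coefficient of $\frac{x^n}{n!}$ in the product $A(x)B(x)$ of exponential generating functions, so the series attached to $\varphi\psi$ is $AB$. Since the identity character $\epsilon$ maps to the constant series $1$, inverses are then automatically matched, yielding the desired group isomorphism.

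I expect the one genuinely delicate point to be well-definedness in the reverse direction: one must check that assigning $\prod_i a_{|S_i|}$ to $\pi_{S_1} \times \cdots \times \pi_{S_k}$ is consistent on $\wbPi$, i.e.\ constant on normal-equivalence classes. This holds because the normal fan of such a product is $\Bc_{S_1} \times \cdots \times \Bc_{S_k}$, which records only the unordered partition $\{S_1, \ldots, S_k\}$ and hence only the multiset of block sizes. Granting this, the extension is manifestly natural and multiplicative, and the homomorphism computation itself is routine once Lemma \ref{l:rbPi} is in hand; the main conceptual input is simply that the binomial convolution governing ordered set decompositions is precisely the Cauchy product rule for exponential generating functions.
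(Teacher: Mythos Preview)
Your proposal is correct and follows essentially the same argument as the paper: determine a character by its values $a_n=\zeta_{[n]}(\pi_n)$ on standard permutahedra, and then use Lemma~\ref{l:rbPi} together with the convolution formula to recognize $\sum_k\binom{n}{k}a_kb_{n-k}$ as the EGF product rule. You are slightly more explicit than the paper about the well-definedness of the inverse map on normal-equivalence classes, which the paper simply asserts; otherwise the proofs coincide.
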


\begin{proof}
Since characters are multiplicative and invariant under relabeling, a character $\zeta$ of $\wbPi$ is uniquely determined by the sequence $(1,z_1,z_2,\ldots)$ of values that it takes on the standard permutahedra of order $0, 1, 2, \ldots$. Here
 $z_n=\zeta_I(\pi_I)$ for $|I|=n$. 
 (Recall that any character has $z_0 = \zeta_\emptyset(1)=1$.) We encode this sequence in the exponential generating function $\zeta(t) =  1 + z_1t + z_2{t^2}/{2!} + z_3{t^3}/{3!} + \cdots$. Conversely, any such formal power series determines a character of $\wbPi$.

Now suppose that two characters $\varphi$, $\psi$ and their convolution product $\varphi \psi$ give rise to  sequences $(1,a_1,a_2,\ldots)$, $(1,b_1,b_2,\ldots)$, and $(1, c_1, c_2, \ldots)$, respectively. Consider any $I$ with 
$|I|=n$. By (\ref{e:conv}) we have
\[
c_n = (\varphi \psi)_I(\pi_I) = 
\sum_{I=S\sqcup T} \varphi_S(\pi_S)\psi_T(\pi_T) =
\sum_{k=0}^n {n \choose k} a_kb_{n-k}.
\]
This is equivalent to 
\[
\varphi\psi(x):= 
\sum_{n \geq 0} c_n \frac{x^n}{n!} = 
\left(\sum_{k \geq 0} a_k \frac{x^k}{k!}\right) \left(\sum_{l \geq 0} b_l \frac{x^l}{l!}\right)
 =: \varphi(x)\psi(x),
 \]
 as desired.
\end{proof}

Using Lemma \ref{l:rbPi} it is not difficult to see that the Hopf monoid of permutahedra $\rbPi$ is isomorphic to the Hopf monoid of set partitions $\rPi$. 
Theorem \ref{t:antipode} then gives us a combinatorial formula for the antipode of the Hopf monoid of set partitions $\rPi$.
We will carry out this computation in Section \ref{s:Pirevisited}, and explain why the Fock functor $\Kcb$ takes the Hopf monoid or permutahedra $\wbPi$ to the Hopf algebra  of symmetric functions $\Lambda$.

\section{$\rbbA$: {Associahedra and the composition of power series.}}\label{s:F}

In this section we consider the Hopf monoid $\rA$ of Loday associahedra, and  show that the character group of $\wbbA$ is the group of formal power series under multiplication.

\subsection{Loday's associahedron}\label{ss:Loday}

The \emph{associahedron} is ``a mythical polytope whose face structure represents the lattice of partial parenthesizations of a sequence of variables" \cite{Haiman}. Stasheff \cite{stasheff63:_homot_h} constructed it as an abstract cell complex in the context of homotopy theory and Milnor suggested that it could be realized as a polytope. There are now many different polytopal realizations due to Tamari, Stasheff, Haiman, Lee, and others; see \cite{ceballos14} for a survey. We will focus on the following construction due to Loday \cite{loday04:_assoc} and, in this formulation, to Postnikov \cite{postnikov09}.

\begin{definition}
Let $I$ be a finite set and $\ell$ be a linear order on $I$.
 \emph{Loday's associahedron} $\wa_\ell$ is the Minkowski sum
\[
\wa_\ell = \sum_{i \leq j} \Delta_{[i,j]_\ell}
\]
where $[i,j]_\ell = \{m \in I \, : \, i \leq m \leq j \textrm{ in } \ell\}$ is the interval from $i$ to $j$ for $i\leq j$ in $\ell$.
\end{definition}

We let $\wa_n$ denote the Loday associahedron for the natural order of $[n]$.
Since every linear order $\ell$ on a  finite set $|I|$ has an order-preserving bijection into $\{1, \ldots, |I|\}$, every Loday associahedron in $\Rb I$ is quasinormally equivalent to the \emph{standard} Loday associahedron $\wa_n$ for $n=|I|$.
In any case,  to make these objects into a set species, we need to consider $\wa_\ell$ for every linear order $\ell$.

We state the following theorem for completeness, but the connection between the associahedron and  parenthesizations will be irrelevant for now. We will return to this connection and its combinatorial consequences in Section \ref{s:Frevisited}. 

\begin{theorem}\label{t:loday} (\cite{loday04:_assoc, postnikov09})
Loday's associahedron $\wa_\ell$ is a simple polytope whose face poset is isomorphic to the poset of partial parenthesizations of a sequence of $n+1$ variables ordered by refinement. In particular, the number of vertices is the \emph{Catalan number} $C_n = \frac{1}{n+1} {2n \choose n}$.
\end{theorem}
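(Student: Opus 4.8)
The plan is to read the entire face structure of $\wa_\ell$ off its normal fan, exploiting that it is a Minkowski sum of simplices. First I would check that $\wa_\ell$ is a generalized permutahedron. Each summand $\Delta_{[i,j]_\ell} = \mathrm{conv}\{e_k : k \in [i,j]_\ell\}$ is a face of the standard simplex, hence a generalized permutahedron, and by a standard fact \cite{ziegler} the normal fan of a Minkowski sum is the common refinement of the normal fans of the summands. A common refinement of fans that each coarsen $\Bc_I$ again coarsens $\Bc_I$: any braid cone $\Bc_{S_1,\ldots,S_k}$ lies inside a single cone of each summand's fan, hence inside their intersection, so it lies inside a cone of $\Nc_{\wa_\ell}$. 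Thus $\Nc_{\wa_\ell}$ coarsens $\Bc_I$ and $\wa_\ell$ is a generalized permutahedron.

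Next I would identify the vertices. For a generic direction $y$, the $y$-maximal face of a Minkowski sum is the sum of the $y$-maximal faces of the summands, and for a simplex $(\Delta_{[i,j]_\ell})_y = e_{\mu(i,j)}$, where $\mu(i,j)$ is the $y$-largest element of $[i,j]_\ell$. Hence $(\wa_\ell)_y = \sum_{i\le j} e_{\mu(i,j)}$ is a single point whose $m$-th coordinate counts the intervals whose $y$-maximum is $m$. The selection $\mu$ is governed by a recursive-maximum (treap) structure: the global $y$-maximum $r$ is selected on every interval containing it and splits $\ell$ into the elements before and after $r$, on which one recurses. This produces a binary tree $T_y$ with node set $I$ whose in-order traversal is $\ell$ and along which $y$ decreases away from the root. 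The vertex $(\wa_\ell)_y$ depends only on the \emph{shape} of $T_y$, and every shape on $n$ nodes occurs, so vertices biject with binary trees on $n$ nodes, equivalently with parenthesizations of a word of $n+1$ letters, of which there are $C_n$; a count of left/right subtree sizes recovers Loday's coordinates. Simplicity then follows because $T_y$ has exactly $n-1$ edges, each rotatable to yield exactly $n-1$ neighbouring vertices, matching $\dim \wa_\ell = n-1$.

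For the full face poset I would rerun this analysis for an arbitrary $y$ in the relative interior of a cone of $\Nc_{\wa_\ell}$. Now $(\Delta_{[i,j]_\ell})_y$ is the subsimplex on the set of $y$-maximizers of $[i,j]_\ell$, so $(\wa_\ell)_y$ is a Minkowski sum of such subsimplices. The combinatorial type of this face is recorded by the planar tree with $n+1$ leaves obtained by allowing ties (internal nodes of arbitrary arity), that is, by a partial parenthesization, and a more generic direction yields a smaller face and a finer tree. I would verify that this assignment is a bijection between faces and partial parenthesizations and that inclusion of faces corresponds to refinement of parenthesizations, with the Catalan vertex count being the zero-dimensional case.

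The main obstacle is exactly this last verification: showing that distinct cones of $\Nc_{\wa_\ell}$ give distinct partial parenthesizations and that face containment matches refinement, i.e.\ that the max-selection bookkeeping on the Minkowski summands is faithful. Two routes avoid grinding through it by hand. One may invoke the theory of nestohedra for the building set of intervals of $\ell$ developed in Section \ref{s:BS}, whose faces are precisely the nested sets and hence the partial parenthesizations. Alternatively, one may use Loday's explicit facet description
\[
\wa_\ell = \left\{\, x : \1_I(x) = \binom{n+1}{2}, \ \ \1_{[i,j]_\ell}(x) \ge \binom{j-i+2}{2} \text{ for all } i \le j \,\right\},
\]
whose right-hand minima are precisely the numbers $\binom{(j-i+1)+1}{2}$ of subintervals of $[i,j]_\ell$ (computed summand by summand), so that facets correspond to proper intervals of $\ell$ and tight-interval sets encode parenthesizations directly. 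Since the statement is recorded only for completeness and is due to Loday and Postnikov, citing \cite{loday04:_assoc, postnikov09} for this combinatorial core is the most economical option.
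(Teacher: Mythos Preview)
The paper does not prove this theorem at all: it is stated with attribution to \cite{loday04:_assoc, postnikov09} and the text immediately says ``We state the following theorem for completeness, but the connection between the associahedron and parenthesizations will be irrelevant for now.'' There is no proof environment following the statement; the paper simply cites the result and moves on.

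Your sketch therefore goes well beyond what the paper does. The outline is sound: the Minkowski-sum structure gives generalized-permutahedron status, generic directions yield the treap/binary-tree description of vertices, and the general-direction analysis recovers planar trees. Your honest identification of the remaining obstacle (faithfulness of the max-selection bookkeeping) is accurate, and both of your suggested resolutions are exactly the ones available in the paper's later machinery: the nestohedron route of Section~\ref{s:BS} (where Theorem~\ref{t:nestofaces} gives the face--nested-set bijection, specializing to tubings of a path in Section~\ref{s:W}) would close the gap cleanly. Since the paper itself is content to cite the result, your concluding remark that citation is the most economical option matches the paper's choice precisely.
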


A key property of Loday's associahedron is the following.

\begin{lemma}\label{lemma:assocfaces}
Let $I$ be a finite set and $\ell$ a linear order on $I$. Let $I
=S \sqcup T$ be a decomposition and let $T = T_1 \sqcup \cdots \sqcup T_k$ be the decomposition of $T$ into maximal subintervals of $\ell$. Then 
\[
\wa_\ell\,|_S \equiv \wa_{\ell\,|S}, \qquad \wa_\ell/_S = \wa_{\ell\,|T_1} \times \cdots \times \wa_{\ell\,|T_k}
\]
where $\equiv$ denotes normal equivalence and for each subset $U \subseteq I$, $\ell\,|U$ denotes the restriction of the linear order $\ell$ to $U$. 
\end{lemma}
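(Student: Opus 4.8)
The plan is to compute the $\1_S$-maximal face of $\wa_\ell$ directly from its description as a Minkowski sum, using the standard fact that passing to faces commutes with Minkowski sums: for any direction $y$ and polytopes $\wp,\wq$ one has $(\wp+\wq)_y=\wp_y+\wq_y$. Applying this to $\wa_\ell=\sum_{i\le j}\Delta_{[i,j]_\ell}$ with $y=\1_S$, I would first analyze each simplex summand. The vertices of $\Delta_{[i,j]_\ell}$ are the $e_m$ with $m\in[i,j]_\ell$, and $\1_S(e_m)$ equals $1$ if $m\in S$ and $0$ otherwise; hence the $\1_S$-maximal face of $\Delta_{[i,j]_\ell}$ is $\Delta_{[i,j]_\ell\cap S}\subseteq\Rb S$ when $[i,j]_\ell\cap S\neq\emptyset$, and is the whole simplex $\Delta_{[i,j]_\ell}\subseteq\Rb T$ when $[i,j]_\ell\subseteq T$. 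Summing, the intervals meeting $S$ contribute a Minkowski sum living in $\Rb S$ and the intervals contained in $T$ contribute one living in $\Rb T$, so the face splits along the decomposition $\Rb I=\Rb S\oplus\Rb T$ as a product. Matching this against the factorization $\wa_\ell{}_{,\,S,T}=\wa_\ell|_S\times\wa_\ell/_S$ of Proposition \ref{p:face}, I read off
\[
\wa_\ell|_S = \sum_{[i,j]_\ell\cap S\neq\emptyset}\Delta_{[i,j]_\ell\cap S}, \qquad \wa_\ell/_S = \sum_{[i,j]_\ell\subseteq T}\Delta_{[i,j]_\ell}.
\]

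For the contraction I would argue by a multiplicity-one bijection. An interval $[i,j]_\ell\subseteq T$, being a run of $\ell$-consecutive elements all lying in $T$, must lie inside a single maximal subinterval $T_a$; conversely each interval of $\ell|T_a$ arises this way. Since $T_a$ is itself an $\ell$-interval, its subintervals coincide exactly with the intervals of $\ell|T_a$. Grouping the summands of $\wa_\ell/_S$ by the block $T_a$ containing them and recognizing a Minkowski sum over the disjoint coordinate subspaces $\Rb{T_1},\dots,\Rb{T_k}$ as a product then yields the \emph{exact} equality $\wa_\ell/_S=\wa_{\ell|T_1}\times\cdots\times\wa_{\ell|T_k}$.

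For the restriction I would track which simplices appear. Given an interval $[i,j]_\ell$ meeting $S$, if $i',j'$ are its smallest and largest $S$-elements then $[i,j]_\ell\cap S=[i',j']_{\ell|S}$, so every summand of $\wa_\ell|_S$ is a simplex $\Delta_J$ indexed by an interval $J$ of $\ell|S$; conversely every interval of $\ell|S$ is realized, since for $i',j'\in S$ the interval $[i',j']_\ell$ already contributes $\Delta_{[i',j']_{\ell|S}}$. Thus $\wa_\ell|_S=\sum_J c_J\Delta_J$ and $\wa_{\ell|S}=\sum_J\Delta_J$ have the \emph{same} set of summands, but with integer multiplicities $c_J\ge 1$ in the former. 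The key point is that the normal fan of a Minkowski sum is the common refinement of the normal fans of its summands and is insensitive to positive dilation, as $\Nc_{c\Delta_J}=\Nc_{\Delta_J}$; hence both polytopes have normal fan $\bigwedge_J\Nc_{\Delta_J}$, giving $\wa_\ell|_S\equiv\wa_{\ell|S}$.

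The main obstacle is precisely this last step: the restriction is only a normal equivalence rather than an equality because the same simplex $\Delta_J$ can be produced by several intervals $[i,j]_\ell$, inflating its Minkowski multiplicity. The work is to verify carefully that no new simplices appear and none go missing, and then to upgrade this equality of summand-sets to normal equivalence via dilation-invariance of the normal fan. By contrast, the contraction comes out as an honest equality because each interval of $T$ corresponds to a unique interval of a unique $T_a$. The remaining ingredients — the face-of-a-Minkowski-sum identity, the interval bijection for $T$, and the identification of a coordinate-split Minkowski sum with a direct product — are routine once the summands are bookkept correctly.
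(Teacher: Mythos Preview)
Your proposal is correct and follows essentially the same approach as the paper's own proof: both compute the $\1_S$-maximal face summand-by-summand via $(\wp+\wq)_y=\wp_y+\wq_y$, split the resulting sum into the $\Rb S$- and $\Rb T$-parts, identify the contraction with the exact product of smaller associahedra by grouping intervals contained in $T$ according to their maximal block $T_a$, and then argue that the restriction has the same set of simplex summands as $\wa_{\ell|S}$ but with possibly larger multiplicities, invoking dilation-invariance and common-refinement of normal fans to conclude normal equivalence. Your discussion of why the restriction yields only $\equiv$ while the contraction yields $=$ matches the paper's reasoning precisely.
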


\begin{proof}
Let us write $[i,j]$ for $[i,j]_\ell$ for simplicity.
The maximal face of a Minkowski sum $P+Q$ in direction $v$ is $(P+Q)_v = P_v + Q_v$. \cite{gruenbaum67:_convex} Therefore the $\1_S$-maximal face of $\wa_\ell$ is 
\[
(\wa_\ell)_{S,T} = 
(\wa_\ell)_{\1_S}
= \sum_{i \leq j} \left(\Delta_{[i,j]}\right)_{\1_S} = \sum_{i \leq j \, : \, [i,j] \cap S \neq \emptyset} \Delta_{[i,j] \cap S} +  \sum_{i \leq j \, : \, [i,j] \subseteq T} \Delta_{[i,j]},
\]
where the first summand lives in $\Rb S$ and the second lives in $\Rb T$, so they are $(\wa_\ell)|_S$ and $(\wa_\ell)/_S$, respectively. In $\Rb T$ we have
\[
(\wa_\ell)/_S =  \sum_{i \leq j \, : \, [i,j] \subseteq T} \Delta_{[i,j]}
= \sum_{t=1}^k \,  \sum_{i \leq j \, : \, [i,j] \subseteq T_l} \Delta_{[i,j]} 
= \sum_{t=1}^k  \wa_{\ell\,|T_t} 
= \wa_{\ell\,|T_1} \times \cdots \times \wa_{\ell\,|T_k}
\]
as desired. In $\Rb S$ we get
\[
(\wa_\ell)|_S = \sum_{i \leq j \, : \, [i,j] \cap S \neq \emptyset} \Delta_{[i,j] \cap S}.
\]
Now notice that $[i,j] \cap S$ is always a subinterval of $S$ with respect to the induced order $\ell\,|S$, and every such subinterval equals $[i,j] \cap S$ for some choice of $i \leq j$ in $\ell$. It follows that the Minkowski sum above involves the same summands as the Minkowski sum defining $\wa_{\ell\,|S}$ -- possibly with different coefficients. 

We now recall the fact that the normal fan $\N(P+Q)$ is the common refinement of $\N(P)$ and $\N(Q)$, while $\N(\lambda P) = \N(P)$ for any $\lambda > 0$ \cite{gruenbaum67:_convex}. Therefore the normal fan of a  Minkowski sum of scaled polytopes $\sum_i \lambda_i P_i$ does not depend on the scaling factors $\lambda_i$ as long as they are all positive. This implies that $(\wa_\ell)|_S \equiv \wa_{\ell\,|S}$ as desired.
\end{proof}

\begin{figure}[h]
\centering
\includegraphics[scale=.24]{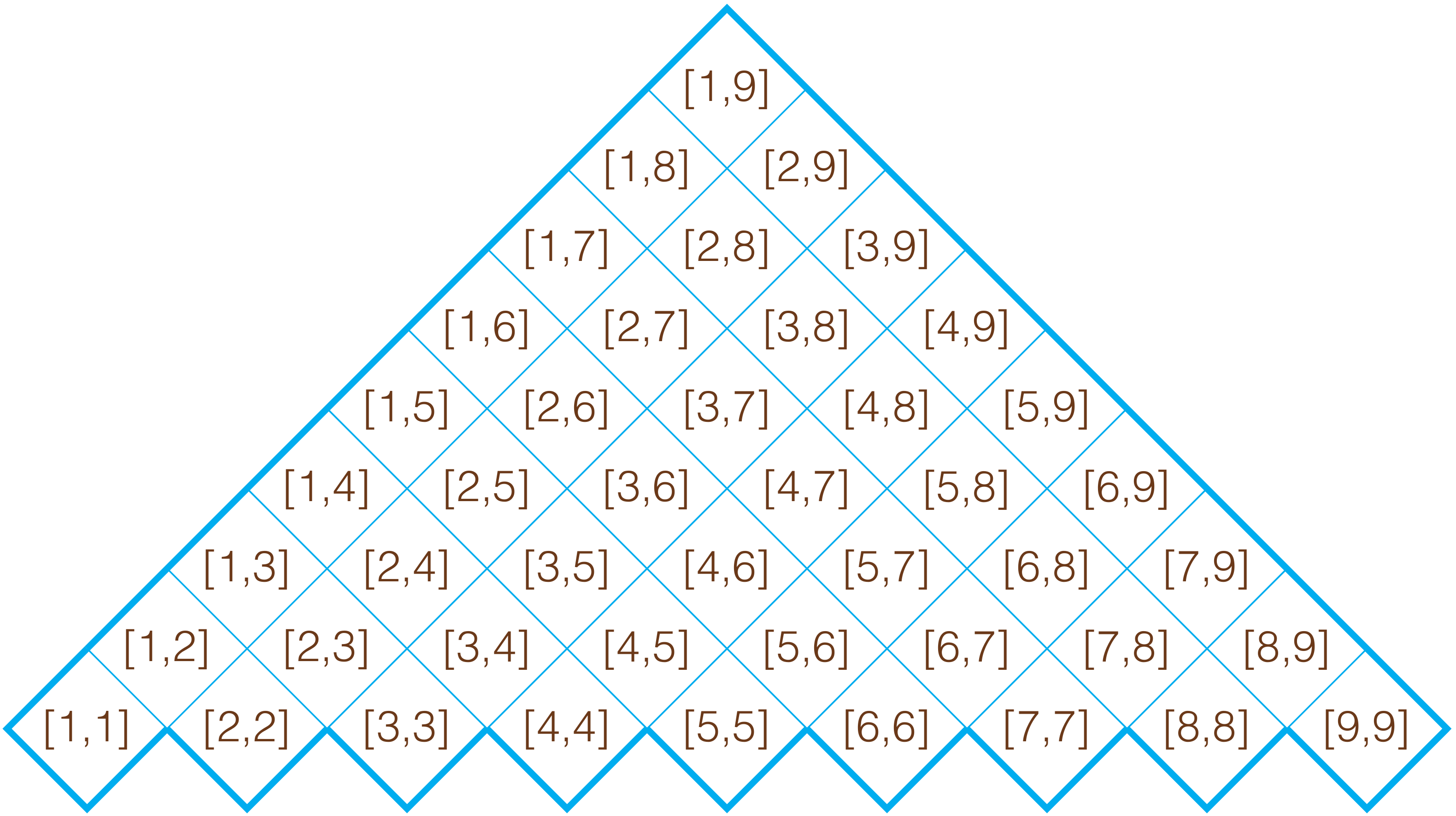}
\qquad  \includegraphics[scale=.24]{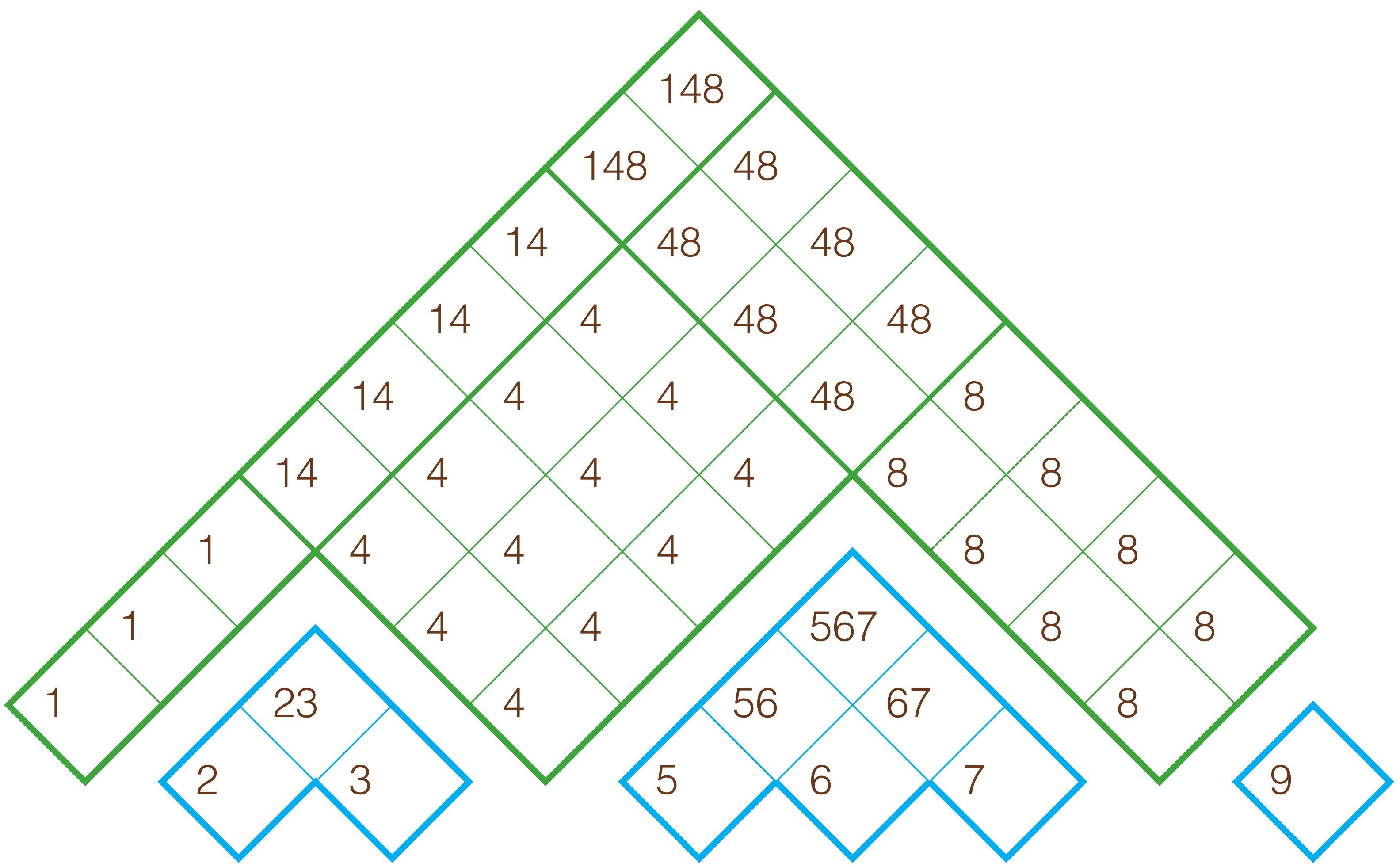}
\caption{The Minkowski sum decompositions of $\wa_9$ and $(\wa_9)_{148, 235679}$. \label{f:staircase}}
\end{figure} 

The above description of $(\wa_\ell)_{S,T} = (\wa_\ell)|_S \times (\wa_\ell)/_S$ has a nice pictorial description. It is natural to arrange the summands of  $\wa_n = \sum_{1 \leq i \leq j \leq n}\Delta_{[i,j]}$ into a staircase of size $n$, as shown in the left panel of Figure \ref{f:staircase} for $n=9$. To get the $\1_S$-maximal face $(\wa_n)_{S,T}$ we replace each summand $\Delta_{[i,j]}$ with $(\Delta_{[i,j]})_{\1_S}$. We can separate the resulting summands into a staircase above each one of the $T_i$s --  which give the associahedra $\wa_{\ell\,|T_1}, \ldots, \wa_{\ell\,|T_k}$ --  and a (fattened) staircase above $S$ which gives a polytope normally equivalent to $\wa_{\ell\,|S}$. This is illustrated in the right panel of Figure \ref{f:staircase} for the decomposition $[9] = \{1,4,8\} \sqcup \{2,3,5,6,7,9\}$.

\subsection{The Hopf monoid of associahedra and its character group}

Recall that $\rbGP$ and $\rbbGP$ are the Hopf monoids of generalized permutahedra modulo normal and quasinormal equivalence, as defined in Section \ref{s:rbbGP}.
Let $\rbA[I]$ and $\rbbA$ be the submonoids that the Loday associahedra generate in $\rbGP$  and $\rbbGP$, respectively. 
Lemma \ref{lemma:assocfaces} may be restated algebraically as follows.

\begin{corollary}\label{c:rbA}
The coproducts of $\rbA$ and $\rbbA$ are given by 
\[
\Delta_{S,T}(\wa_\ell) = (\wa_{\ell\,|S}\, ,\,  \wa_{\ell\,|T_1} \times \cdots \times \wa_{\ell\,|T_k}).
\]
for each linear order $\ell$ on $I$ and each decomposition $I=S \sqcup T$, where  $T = T_1 \sqcup \cdots \sqcup T_k$ is the decomposition of $T$ into maximal intervals of $\ell$. 
\end{corollary}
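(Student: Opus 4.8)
The plan is to read the corollary off directly from Lemma \ref{lemma:assocfaces}, the only genuine work being to track which of the two factors holds as an honest equality of polytopes and which only up to normal equivalence, and then to confirm that both collapse to equalities of classes once we pass to the quotient Hopf monoids.

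First I would recall that, by Theorem \ref{t:GPisHopf}, the coproduct of $\rGP$ is $\Delta_{S,T}(\wp) = (\wp|_S, \wp/_S)$, and that $\rbGP$ and $\rbbGP$ are the quotients of $\rGP$ by normal and quasinormal equivalence (Section \ref{s:rbbGP}). Since the operations of $\rGP$ are defined purely in terms of normal fans, the coproduct descends along the surjections $\rGP \onto \rbGP \onto \rbbGP$; hence the coproduct of a class $[\wa_\ell]$ in either quotient is obtained by applying $\Delta_{S,T}$ to the polytope $\wa_\ell$ and then passing to classes. In particular $\Delta_{S,T}(\wa_\ell)$ is governed by the factorization $(\wa_\ell)_{S,T} = \wa_\ell|_S \times \wa_\ell/_S$.

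Next I would invoke Lemma \ref{lemma:assocfaces}, which computes these two factors precisely: the contraction factor $\wa_\ell/_S = \wa_{\ell\,|T_1} \times \cdots \times \wa_{\ell\,|T_k}$ is an equality of polytopes, while the restriction factor satisfies only $\wa_\ell|_S \equiv \wa_{\ell\,|S}$ up to normal equivalence. The contraction factor therefore already yields the claimed second component, on the nose, in $\rGP$ and a fortiori in both quotients. For the restriction factor, the normal equivalence $\wa_\ell|_S \equiv \wa_{\ell\,|S}$ becomes a literal equality of normal-equivalence classes in $\rbGP$; and because quasinormal equivalence is coarser than normal equivalence, it remains an equality of classes in $\rbbGP$. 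Combining the two components gives
\[
\Delta_{S,T}(\wa_\ell) = (\wa_{\ell\,|S}, \, \wa_{\ell\,|T_1} \times \cdots \times \wa_{\ell\,|T_k})
\]
in both $\rbA$ and $\rbbA$, as desired.

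Finally, as a check that $\rbA$ and $\rbbA$ are genuinely closed under the coproduct (so that these formulas live in the submonoids generated by the associahedra), I would observe that each factor on the right — namely $\wa_{\ell\,|S}$ and each $\wa_{\ell\,|T_i}$ — is itself a Loday associahedron for the induced linear order. The main, and essentially only, obstacle is bookkeeping: distinguishing the on-the-nose equality of the contraction from the merely normal equivalence of the restriction, and verifying that both descend to equalities of classes in the quotients. All the geometric content has already been discharged in Lemma \ref{lemma:assocfaces}.
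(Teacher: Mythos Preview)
Your proposal is correct and matches the paper's approach: the paper simply states that this corollary is an algebraic restatement of Lemma~\ref{lemma:assocfaces}, without giving a separate proof. Your version spells out more carefully the bookkeeping about which factor is an equality on the nose versus only a normal equivalence, and why both become equalities after passing to the quotients $\rbGP$ and $\rbbGP$; this is exactly the content implicit in the paper's one-line justification.
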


This implies, in particular, that 
\begin{equation}\label{e:Abasis}
\wbA[I] = \mathrm{span} \{\wa_{\ell_1} \times \cdots \times \wa_{\ell_k} \, : \, \ell_i \textrm{ is a linear order on } S_i \textrm{ for } I = S_1 \sqcup \cdots \sqcup S_k\}.
\end{equation}

We can now prove the main result of this section.

\begin{theorem}\label{t:charassoc} 
The group of characters $\Xb(\wbbA)$ of the Hopf monoid of associahedra is isomorphic to the group of ordinary formal power series
\[
\left\{x+a_1x^2 + a_2 x^3 + \cdots \, : \, a_1, a_2, \ldots \in \Kb\right\}
\]
under composition.
\end{theorem}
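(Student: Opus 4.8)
The plan is to follow the template of the proof of Theorem~\ref{t:charperm} for permutahedra, replacing the binomial convolution (which there encodes multiplication of exponential series) by the interval-decomposition convolution of Corollary~\ref{c:rbA}, which will encode composition of ordinary series. First I would observe that, exactly as for permutahedra, a character $\zeta$ of $\wbbA$ is completely determined by the sequence $(z_0, z_1, z_2, \ldots)$ with $z_0 = \zeta_\emptyset(1) = 1$ and $z_n = \zeta_I(\wa_\ell)$ for any linear order $\ell$ on a set $I$ with $|I| = n$. This value is independent of the choices made: by naturality it is independent of the labelling, and in $\wbbA$ all Loday associahedra on an $n$-element set are identified, since they are quasinormally equivalent to $\wa_n$. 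By multiplicativity, $\zeta$ is then determined on the spanning set~\eqref{e:Abasis} of products of associahedra. I would encode this sequence in the \emph{ordinary} generating function $f_\zeta(x) = \sum_{n \geq 0} z_n x^{n+1} = x + z_1 x^2 + z_2 x^3 + \cdots$, and note that $\zeta \mapsto f_\zeta$ is a bijection onto the stated set of power series, with inverse given by reading off the coefficients.

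The heart of the proof is to show that this bijection turns convolution into composition. Let $\varphi, \psi$ be characters with sequences $(a_n)$ and $(b_n)$, and let $(c_n)$ be the sequence of $\varphi\psi$. Fixing the natural order $\ell$ on $[n]$ and applying the convolution formula~\eqref{e:conv} together with Corollary~\ref{c:rbA}, the restriction $\wa_\ell|_S$ is a single associahedron on $S$ while the contraction $\wa_\ell/_S$ is the product of the associahedra on the maximal $\ell$-intervals $T_1, \ldots, T_r$ of $T = [n] \setminus S$; multiplicativity then gives
\[
c_n = \sum_{[n] = S \sqcup T} a_{|S|} \prod_{i=1}^{r} b_{|T_i|}.
\]
I would reorganize this sum according to $j = |S|$: the set $S = \{s_1 < \cdots < s_j\}$ cuts $[n]$ into $j+1$ (possibly empty) gaps of lengths $g_0, \ldots, g_j \geq 0$ with $g_0 + \cdots + g_j = n - j$, and $S$ is recovered uniquely from $(g_0, \ldots, g_j)$. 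Using $b_0 = z_0 = 1$ to absorb the empty gaps, the nonempty gaps are precisely the maximal intervals $T_i$, so
\[
c_n = \sum_{j=0}^{n} a_j \sum_{\substack{g_0 + \cdots + g_j = n-j \\ g_i \geq 0}} b_{g_0} \cdots b_{g_j}.
\]

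This last expression is exactly $[x^{n+1}]\, f_\varphi(f_\psi(x))$. Indeed, writing $f_\psi(x) = \sum_{m \geq 0} b_m x^{m+1}$ one has $[x^{n+1}]\, f_\psi(x)^{j+1} = \sum_{g_0 + \cdots + g_j = n-j} b_{g_0}\cdots b_{g_j}$, and summing $a_j$ times this over $j$ recovers the coefficient of $x^{n+1}$ in $f_\varphi(f_\psi(x)) = \sum_{j \geq 0} a_j\, f_\psi(x)^{j+1}$. Hence $f_{\varphi\psi} = f_\varphi \circ f_\psi$, so $\zeta \mapsto f_\zeta$ is a group homomorphism, and by the first paragraph it is a bijection; the identity character $\epsilon$ corresponds to the identity series $x$. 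Therefore $\Xb(\wbbA)$ is isomorphic to the group of ordinary power series of the stated form under composition.

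The main obstacle is the bookkeeping in the reorganization of the middle display: one must verify that recording a factor $b_{g_i}$ for \emph{every} gap — including the empty ones, which is legitimate precisely because $z_0 = 1$ — converts the sum over subsets $S$ into the sum over compositions of $n-j$ into $j+1$ nonnegative parts that governs the $(j+1)$-st power of $f_\psi$. This is the combinatorial shadow of the fact that the associahedral contraction breaks a linear order into its maximal subintervals, mirroring how $f_\varphi(f_\psi(x)) = \sum_j a_j\, f_\psi(x)^{j+1}$ distributes copies of the inner series $f_\psi$ over the blocks cut out by the outer series $f_\varphi$. Once this identity is established, the remaining verifications — that $\zeta \mapsto f_\zeta$ is well defined, multiplicative, and bijective — are immediate.
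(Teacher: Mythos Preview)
Your proof is correct and follows essentially the same approach as the paper: both identify a character with the sequence of its values on the standard associahedra, encode it as an ordinary generating function, and then use the gap-sequence reorganization of the decomposition $[n]=S\sqcup T$ (the $j+1$ gaps cut out by $S$) to identify the convolution formula with the coefficient of $x^{n+1}$ in $f_\varphi(f_\psi(x))$. Your bookkeeping with the empty gaps and the role of $b_0=1$ is in fact spelled out a bit more explicitly than in the paper.
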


\begin{proof}
Recall from Section \ref{ss:Loday} that every Loday associahedron is quasinormally equivalent to one of the standard Loday associahedra $\wa_1, \wa_2, \wa_3, \ldots.$ Therefore, analogously to Theorem \ref{t:charperm}, a character $\zeta$ of $\wbbA$ is uniquely determined by the sequence $(1,z_1,z_2,\ldots)$ where 
$z_n=\zeta_{[n]}(\wa_n)$.
We encode that character in the formal power series $\zeta(t) =  t + z_1t^2 + z_2 t^3 + \cdots$. Conversely, any such formal power series gives a character of $\wbbA$.

Now suppose that two characters $\varphi$, $\psi$ and their convolution product $\varphi \psi$ give sequences $(1,a_1,a_2,\ldots)$, $(1,b_1,b_2,\ldots)$, and $(1, c_1, c_2, \ldots)$, respectively. By (\ref{e:conv}) and Corollary \ref{c:rbA}, 
\[
c_{n-1} = (\varphi \psi)_{[n-1]}(\wa_{n-1}) = 
\sum_{[n-1]=S\sqcup T} 
\varphi_S( \wa_S)\psi_{T_1}(\wa_{T_1}) \cdots \psi_{T_k}(\wa_{T_k}).
\]
where $T = T_1 \sqcup \cdots \sqcup T_k$ is the decomposition of $T$ into maximal subintervals of $[n-1]$, and $S, T_1, \ldots, T_k$ are listed in their standard linear order.

Each $(k-1)$-subset $S \subseteq [n-1]$ determines a ``gap sequence" $i_1, \ldots, i_k$ where $i_j=|T_j|$ is the number of elements of $[n-1]$ in the gap between the $(j-1)$th and the $j$th elements of $S$. These non-negative integers satisfy $i_1+\cdots+i_k+(k-1) = n-1$, and it is clear how to recover $S$ from them. Since $\wa_{n-1}|_S \equiv \wa_{k-1}$ and $\wa_{n-1}/_S \equiv \wa_{i_1} \times  \cdots \times \wa_{i_k}$ by Lemma \ref{lemma:assocfaces}, we may rewrite the above equation as
\[
c_{n-1} = 
\sum_{k=1}^{n} 
\sum_{i_1, \ldots, i_k \geq 0 \atop i_1+\cdots+i_k+(k-1)=n-1}
a_{k-1} b_{i_1} \cdots b_{i_k}
\]
which is equivalent to 
\[
\phi\psi(t): = \sum_{n \geq 1} c_{n-1}x^n =  
\sum_{k \geq 1} a_{k-1} \left(\sum_{i \geq 0} b_i x^{i+1}\right)^k =:
\phi(\psi(t)),
\]
as desired.
\end{proof}

A similar Hopf-theoretical result, without the connection to associahedra, is due to Doubilet, Rota, and Stanley. \cite{doubilet72}

In light of Corollary \ref{c:rbA}, 
Theorem \ref{t:antipode} gives us a combinatorial formula for the antipode of the Hopf monoid of paths $\rA$.
We will carry out this computation in Section \ref{s:Frevisited}, and explain why the Fock functor $\Kcb$ takes the Hopf monoid of associahedra $\wbbA$ to the Fa\'a di Bruno Hopf algebra $F$.








\section{{Inversion of formal power series and Loday's question}}\label{s:inversion}

In this section we will show how the formulas for multiplicative and compositional inverses of formal power series follow directly from the Hopf monoids $\rbPi$ and $\rbbA$ on permutahedra and associahedra, respectively.

\subsection{{Multiplicative Inversion Formulas}} 

As illustrated in the Introduction, the multiplicative inversion of power series is precisely given by the facial structure of permutahedra. We now explain this phenomenon.

\begin{theorem}\label{t:multinvpol} \emph{(Multiplicative Inversion, Polytopal Version)} 
The mutliplicative inverse of 
\[
A(x) = 1+a_1x + a_2\frac{x^2}{2!} + a_3\frac{x^3}{3!} + \cdots \quad \textrm{ is } \quad \frac1{A(x)} = B(x) = 1+b_1x + b_2\frac{x^2}{2!} + b_3\frac{x^3}{3!} + \cdots
\]
where
\[
b_n = \sum_{F \textrm{ face of } \pi_n} (-1)^{n - \dim F} a_F 
\]
and we write $a_F = a_{f_1}  \cdots  a_{f_k}$ for each face $F \cong \pi_{f_1} \times \cdots \times  \pi_{f_k}$ of the permutahedron $\pi_n$. 
\end{theorem}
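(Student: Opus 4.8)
The plan is to read the desired identity as an equality of characters on the Hopf monoid $\wbPi$ of permutahedra and then feed in the antipode formula of Theorem \ref{t:antipode}. By Theorem \ref{t:charperm}, the exponential series $A(x)$ corresponds to a unique character $\varphi \in \Xb(\wbPi)$, determined by $\varphi_I(\pi_I) = a_n$ for $|I| = n$ together with multiplicativity and naturality. Because that isomorphism carries the convolution product to multiplication of power series, the multiplicative inverse $B(x) = 1/A(x)$ corresponds to the convolution inverse $\varphi^{-1}$. The character group theorem identifies this inverse as $\varphi^{-1} = \varphi \circ \apode$, with $\apode$ the antipode of $\wbPi$. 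Hence the coefficients I am after are
\[
b_n = \varphi^{-1}_{[n]}(\pi_n) = \varphi_{[n]}\bigl(\apode_{[n]}(\pi_n)\bigr).
\]

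Next I would expand $\apode_{[n]}(\pi_n)$. Since $\rbPi$ is a Hopf submonoid of $\rbGP$ and Hopf morphisms preserve antipodes (equation \eqref{e:apode-mor}), the antipode of $\pi_n$ computed in $\wbPi$ agrees with the one computed in $\wbGP$; moreover every face of $\pi_n$ is, up to normal equivalence, a product of smaller standard permutahedra by the face description of Section \ref{ss:p}, so all terms stay inside $\wbPi$ by \eqref{e:Pibasis}. Theorem \ref{t:antipode} then yields
\[
\apode_{[n]}(\pi_n) = (-1)^n \sum_{F \leq \pi_n} (-1)^{\dim F}\, F,
\]
a cancellation-free alternating sum over the nonempty faces of $\pi_n$.

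I would then apply $\varphi$ termwise. Each nonempty face $F$ of $\pi_n$ is normally equivalent to a product $\pi_{f_1} \times \cdots \times \pi_{f_k}$ of standard permutahedra, so multiplicativity and naturality of $\varphi$ give $\varphi(F) = \varphi(\pi_{f_1}) \cdots \varphi(\pi_{f_k}) = a_{f_1} \cdots a_{f_k} = a_F$. Substituting and absorbing the outer sign via $(-1)^n (-1)^{\dim F} = (-1)^{n - \dim F}$, I obtain
\[
b_n = \varphi_{[n]}\bigl(\apode_{[n]}(\pi_n)\bigr) = \sum_{F \leq \pi_n} (-1)^{n - \dim F}\, a_F,
\]
which is exactly the claimed formula.

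Conceptually this is a direct corollary of the machinery already in place, so I do not anticipate a genuine obstacle; the delicate points are bookkeeping. Specifically, I must check that $\varphi(F)$ depends only on the normal-equivalence class of $F$, so that passing to the quotient $\wbPi$ is legitimate, and that multiplicativity really applies to the factorization $F \cong \pi_{f_1} \times \cdots \times \pi_{f_k}$ — both follow from the character axioms once one notes that a parallel translate of a product of permutahedra is the Hopf-theoretic product of its factors. I should also record explicitly that the isomorphism of Theorem \ref{t:charperm} sends the convolution inverse $\varphi^{-1}$ to $1/A(x)$, so that the $b_n$ are indeed the coefficients of the multiplicative inverse; this is immediate from the group isomorphism but deserves one clarifying sentence to pin down the direction.
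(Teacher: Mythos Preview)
Your proposal is correct and follows essentially the same argument as the paper: identify $A(x)$ and $B(x)$ with characters on $\wbPi$ via Theorem~\ref{t:charperm}, use that the convolution inverse is $\varphi\circ\apode$, and then evaluate using the antipode formula of Theorem~\ref{t:antipode} together with multiplicativity of the character. Your extra care in justifying that the antipode of $\pi_n$ may be computed in $\wbGP$ and that $\varphi(F)$ is well-defined on normal-equivalence classes is sound and slightly more explicit than the paper's treatment, but the substance is the same.
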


\begin{proof}
Theorem \ref{t:charperm} allows us to identify the formal power series $A(x)=\sum a_nx^n/n!$  and $1/A(x) = B(x) = \sum b_nx^n/n!$ with the characters $\alpha$ and $\beta$ of the Hopf monoid $\wbPi$ determined uniquely by
\[
\alpha_{[n]}(\pi_n) = a_n,  \qquad 
\beta_{[n]}(\pi_n) = b_n,
\]
where $\pi_n$ is the standard permutahedron in $\Rb[n]$. 
By Theorem \ref{t:charperm}, since $B(x) = 1/A(x)$,
these characters are inverses of each other in the character group $\Xb(\rbPi)$.

Recall that the inverse in  the character group of any Hopf monoid is given by $\beta = \alpha \circ \apode$ where $\apode$ is the antipode. For $\rbPi$, this antipode is given by Theorem \ref{t:antipode}. Therefore
\[
b_n = \beta_{[n]}(\pi_n) =  (\alpha \circ \apode)_{[n]}(\pi_n) =  \alpha_{[n]}\left( \sum_{F \textrm{ face of } \pi_n} (-1)^{n - \dim F} F\right)  =  \sum_{F \textrm{ face of } \pi_n} (-1)^{n - \dim F} a_F,
\]
using the multiplicativity of the character $\alpha$.
\end{proof}

\begin{theorem}\label{t:multinvenum} \emph{(Multiplicative Inversion, Enumerative Version)} 
The multiplicative inverse of 
\[
A(x) = 1+a_1x + a_2\frac{x^2}{2!} + a_3\frac{x^3}{3!} + \cdots \quad \textrm{ is } \quad \frac1{A(x)} = B(x) = 1+b_1x + b_2\frac{x^2}{2!} + b_3\frac{x^3}{3!} + \cdots,
\]
where
\[
b_n =  \sum_{\langle 1^{m_1}2^{m_2} \cdots \rangle \vdash n} (-1)^{|m|} 
{n \choose \underbrace{1, 1, \ldots}_{m_1}, \underbrace{2, 2, \ldots}_{m_2}, \ldots}
{|m| \choose  m_1, m_2, \ldots} \,
a_1^{m_1}a_2^{m_2} \cdots
\]
summing over all partitions $\langle 1^{m_1}2^{m_2} \cdots \rangle =  \underbrace{1 1 \ldots}_{m_1} \underbrace{2 2 \ldots}_{m_2}\cdots$ of $n$, 
where $|m| = m_1+m_2+\cdots$. 
\end{theorem}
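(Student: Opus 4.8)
The plan is to derive the enumerative formula directly from the polytopal formula of Theorem~\ref{t:multinvpol} by grouping the faces of $\pi_n$ according to the multiset of their block sizes. First I would translate the sum over faces into a sum over compositions. By the description of the facial structure of the permutahedron in Section~\ref{ss:p}, the faces $F$ of $\pi_n$ are in bijection with the compositions $(S_1,\ldots,S_k) \vDash [n]$, the corresponding face factors as $F \cong \pi_{S_1} \times \cdots \times \pi_{S_k}$, and its dimension is $\dim F = n-k$. Hence $(-1)^{n-\dim F} = (-1)^k$ and $a_F = a_{|S_1|} \cdots a_{|S_k|}$, so Theorem~\ref{t:multinvpol} reads
\[
b_n = \sum_{(S_1,\ldots,S_k) \vDash [n]} (-1)^k \, a_{|S_1|} \cdots a_{|S_k|}.
\]

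Next I would group these compositions by the partition $\langle 1^{m_1}2^{m_2}\cdots\rangle \vdash n$ recording how many parts have each size, where $m_i$ is the number of blocks $S_j$ with $|S_j|=i$. Every composition with this size type has $k = |m| = m_1+m_2+\cdots$ parts, contributes sign $(-1)^{|m|}$, and gives the monomial $a_1^{m_1}a_2^{m_2}\cdots$; all of these depend only on the partition and not on the individual composition. Thus the inner sum reduces to counting the number of compositions of $[n]$ with prescribed size type $\langle 1^{m_1}2^{m_2}\cdots\rangle$.

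The combinatorial heart of the argument is this count, which I would carry out in two stages. A composition with this size type is obtained by first choosing the sequence of block sizes $(s_1,\ldots,s_k)$ — an arrangement of the multiset containing $m_i$ copies of $i$, of which there are $\binom{|m|}{m_1,m_2,\ldots}$ — and then distributing the elements of $[n]$ into ordered blocks of these sizes, of which there are the multinomial $\binom{n}{s_1,\ldots,s_k} = n!/(s_1!\cdots s_k!)$. The key observation is that this last multinomial depends only on the multiset of sizes, not on their order, and equals
\[
\binom{n}{\underbrace{1,1,\ldots}_{m_1},\underbrace{2,2,\ldots}_{m_2},\ldots}.
\]
Multiplying the two counts gives the coefficient $\binom{n}{\underbrace{1,\ldots}_{m_1},\underbrace{2,\ldots}_{m_2},\ldots}\binom{|m|}{m_1,m_2,\ldots}$ appearing in the statement, and substituting it back into the grouped sum yields the claimed formula exactly. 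I expect no genuine obstacle beyond the bookkeeping of the two-stage count; the only points demanding care are the translation of $\dim F$ into the number of parts $k$ and hence into the sign, together with the observation that the distribution multinomial is invariant under reordering the block sizes.
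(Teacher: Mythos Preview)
Your proposal is correct and follows essentially the same approach as the paper: start from Theorem~\ref{t:multinvpol}, use the bijection between faces of $\pi_n$ and compositions of $[n]$ to rewrite the sum, group compositions by the partition recording block-size multiplicities, and count the compositions of each type via the same two-stage multinomial count (first order the sizes, then distribute the elements). The paper's proof is slightly terser but the logic is identical.
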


\begin{proof}
Recall from Section \ref{ss:p} that the faces of $\pi_n$ are in bijection with the compositions $(S_1, \ldots, S_k)$ of $[n]$, where the face $F=\pi_{S_1, \ldots, S_k} \cong
\pi_{S_1} \times \cdots \times  \pi_{S_k}$ corresponds to the composition $(S_1, \ldots, S_k)$. If we let 
$m_i$ be the number of $S_j$s of size $i$, then $n - \dim F = k = |m|$ and $a_F = a_1^{m_1}a_2^{m_2} \cdots$. Therefore the coefficient of this monomial is the number of compositions leading to block sizes $\langle 1^{m_1}2^{m_2} \cdots \rangle$. There are ${m_1 + m_2 + \cdots \choose m_1, m_2, \ldots}$ ways of assigning these sizes to the parts $S_1, \ldots, S_k$ in some order. Having fixed that order, there are then ${n \choose 1, 1, \ldots, 2, 2, \ldots}$ ways of partitioning the elements of $I$ into parts $S_1, \ldots, S_k$ of those respective sizes. The desired result follows.
\end{proof}

\subsection{{Compositional inversion formulas}}\label{s:compinv}
Just as the facial structure of permutahedra tells us exactly how to compute the multiplicative inverse of a formal power series, the facial structure of associahedra tell us how to compute the compositional inverse.

\begin{theorem}\label{t:Lagrinvpol} \emph{(Lagrange Inversion, polytopal version)}
The compositional inverse of  
\[
C(x) = x+c_1x^2 + c_2x^3 + \cdots \qquad \textrm{ is } \qquad C^{\langle -1 \rangle}(x) = D(x) = x+d_1x^2 + d_2x^3 + \cdots,
\]
where
\[
d_n = \sum_{F \textrm{ face of } \wa_n} (-1)^{n - \dim F} c_F 
\]
and we write $c_F = c_{f_1}  \cdots  c_{f_k}$ for each face $F \cong \wa_{f_1} \times \cdots \times  \wa_{f_k}$ of the associahedron $\wa_n$. 
\end{theorem}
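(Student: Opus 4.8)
The plan is to follow the proof of Theorem \ref{t:multinvpol} almost verbatim, with permutahedra replaced by associahedra and multiplication of power series replaced by composition. The engine is identical: the compositional inverse in the group of power series corresponds, under the isomorphism of Theorem \ref{t:charassoc}, to the inverse in the character group $\Xb(\wbbA)$, and the latter is computed by composing the character with the antipode, whose cancellation-free form is furnished by Theorem \ref{t:antipode}.

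Concretely, I would first invoke Theorem \ref{t:charassoc} to identify $C(x) = x + c_1 x^2 + \cdots$ and $D(x) = C^{\langle -1 \rangle}(x) = x + d_1 x^2 + \cdots$ with the unique characters $\gamma$ and $\delta$ of $\wbbA$ determined by $\gamma_{[n]}(\wa_n) = c_n$ and $\delta_{[n]}(\wa_n) = d_n$. Since $D$ is the compositional inverse of $C$ and the isomorphism of Theorem \ref{t:charassoc} intertwines composition with the convolution product, $\gamma$ and $\delta$ are mutually inverse in $\Xb(\wbbA)$; hence $\delta = \gamma \circ \apode$. Applying the antipode formula of Theorem \ref{t:antipode} in the quotient $\wbbA$ gives $\apode_{[n]}(\wa_n) = \sum_{F \leq \wa_n} (-1)^{n - \dim F} F$, where I have used that $(-1)^{|I| + \dim F} = (-1)^{n - \dim F}$ when $|I| = n$. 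Feeding this through $\gamma$ and using that a character is multiplicative, so that $\gamma_{[n]}(F) = c_{f_1} \cdots c_{f_k} = c_F$ for a face $F \cong \wa_{f_1} \times \cdots \times \wa_{f_k}$, yields $d_n = \delta_{[n]}(\wa_n) = \sum_{F \leq \wa_n} (-1)^{n - \dim F} c_F$, which is exactly the claim.

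The one point that needs care — and the only place where the associahedral case differs in substance from the permutahedral one — is the well-definedness of the monomial $c_F$ attached to a face $F$ of $\wa_n$. I would justify this by appealing to Theorem \ref{t:loday} together with Corollary \ref{c:rbA} (equivalently Lemma \ref{lemma:assocfaces}): every face $F = (\wa_n)_{S_1,\ldots,S_k}$ produced by the higher coproduct factors as a product of Loday associahedra, each of which is quasinormally equivalent to a standard associahedron $\wa_{f_i}$, so that $c_F = c_{f_1} \cdots c_{f_k}$ depends only on the multiset of factor dimensions and not on the labels or linear orders involved. This is precisely what makes $\gamma$ well-defined on $\wbbA$ and what licenses the multiplicativity step. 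I expect no genuine obstacle, since all the structural facts about associahedra and the applicability of the antipode formula to the quotient $\wbbA$ have already been established; the argument is a clean transcription of the multiplicative case.
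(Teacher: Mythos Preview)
Your proposal is correct and follows essentially the same approach as the paper's proof: identify $C$ and $D$ with characters $\gamma$ and $\delta$ of $\wbbA$ via Theorem \ref{t:charassoc}, use that $\delta = \gamma \circ \apode$, and apply the antipode formula of Theorem \ref{t:antipode}. Your additional remark about the well-definedness of $c_F$ via Lemma \ref{lemma:assocfaces} is a helpful clarification that the paper leaves implicit.
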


\begin{proof}
We proceed exactly as in the proof of Theorem \ref{t:multinvpol}. We identify the formal power series $C(x)=\sum c_{n-1}x^n$  and $C^{\langle -1 \rangle}(x) = D(x) = \sum d_{n-1}x^n$ with the characters $\gamma$ and $\delta$ of the Hopf monoid $\rbbA$ determined uniquely by
\[
\gamma_{[n]}(\wa_n) = c_n,  \qquad 
\delta_{[n]}(\wa_n) = d_n
\]
for the standard Loday associahedron $\wa_n$.
By Theorem \ref{t:charassoc}, since $C^{\langle -1 \rangle}(x) = D(x)$, these characters are inverses in the character group $\Xb(\rbbA)$. Therefore $\delta = \gamma \circ \apode$, and the result now follows from the antipode formula of Theorem \ref{t:antipode}. \end{proof}

\begin{theorem}\label{t:Lagrinvenum} \emph{(Lagrange Inversion, enumerative version)}
The compositional inverse of  
\[
C(x) = x+c_1x^2 + c_2x^3 + \cdots \qquad \textrm{ is } \qquad C^{\langle -1 \rangle}(x) = D(x) = x+d_1x^2 + d_2x^3 + \cdots,
\]
where
\[
d_n = \sum_{\langle 1^{m_1}2^{m_2} \cdots \rangle \vdash n} \, 
 (-1)^{|m|}\frac{(n+|m|)!} {(n+1)! \, m_1! \, m_2! \cdots} c_1^{m_1} c_2^{m_2} \cdots
\]
summing over all partitions $\langle 1^{m_1}2^{m_2} \cdots \rangle$ of $n$, 
where $|m| = m_1+m_2+\cdots$. 
\end{theorem}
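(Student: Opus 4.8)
The plan is to derive the enumerative formula directly from the polytopal one in Theorem~\ref{t:Lagrinvpol}, exactly paralleling how Theorem~\ref{t:multinvenum} was deduced from Theorem~\ref{t:multinvpol}: there one counted the faces of the permutahedron of each combinatorial type (ordered set compositions), and here one must count the faces of the associahedron of each type. The first step is bookkeeping. A face $F \cong \wa_{f_1}\times\cdots\times\wa_{f_k}$ of $\wa_n$ contributes $c_F = c_{f_1}\cdots c_{f_k}$, which depends only on the multiset $\{f_1, \ldots, f_k\}$. Recording this multiset as a partition $\langle 1^{m_1}2^{m_2}\cdots\rangle$ with $m_i = \#\{j : f_j = i\}$, we get $\sum_i i\,m_i = \sum_j f_j = n$ (so the partition is of $n$) and $c_F = c_1^{m_1}c_2^{m_2}\cdots$; moreover $\dim F = \sum_j (f_j - 1) = n - k$ with $k = |m|$, so $n - \dim F = |m|$. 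Grouping faces by type turns Theorem~\ref{t:Lagrinvpol} into $d_n = \sum_{\langle 1^{m_1}2^{m_2}\cdots\rangle \vdash n} (-1)^{|m|} N_{\langle 1^{m_1}2^{m_2}\cdots\rangle}\, c_1^{m_1}c_2^{m_2}\cdots$, where $N_{\langle\ldots\rangle}$ is the number of faces of $\wa_n$ of the given type, reducing the theorem to the combinatorial identity $N_{\langle 1^{m_1}2^{m_2}\cdots\rangle} = \frac{(n+|m|)!}{(n+1)!\,m_1!\,m_2!\cdots}$.

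To compute this face count I would use the classical combinatorial model of the faces of the associahedron supplied by Theorem~\ref{t:loday}: the faces of $\wa_n$ are the partial parenthesizations of a word of $n+1$ letters, equivalently the dissections of a convex $(n+2)$-gon by noncrossing diagonals, equivalently the plane rooted trees with $n+1$ ordered leaves whose internal nodes all have at least two children. Under this dictionary a face factoring as $\wa_{f_1}\times\cdots\times\wa_{f_k}$ corresponds to a dissection into $k$ cells, the cell with $f_j + 2$ sides yielding the factor $\wa_{f_j}$, equivalently to a tree whose node for that cell has out-degree $f_j + 1$. Hence the faces of type $\langle 1^{m_1}2^{m_2}\cdots\rangle$ are precisely the plane trees with $n+1$ leaves having exactly $m_i$ internal nodes of out-degree $i+1$ for each $i$.

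The final step is to count these trees using the classical degree-sequence enumeration (a standard consequence of the cycle lemma): the number of plane rooted trees with $n_j$ nodes of out-degree $j$ for each $j \geq 0$ is $\frac{1}{N}\binom{N}{n_0, n_1, n_2, \ldots}$ with $N = \sum_j n_j$, valid when $\sum_j j\,n_j = N - 1$. Taking $n_0 = n+1$, $n_{i+1} = m_i$, and all other $n_j = 0$ gives $N = n + 1 + |m|$ and the required constraint $\sum_i(i+1)m_i = n + |m| = N - 1$, so $N_{\langle\ldots\rangle} = \frac{1}{n+1+|m|}\cdot\frac{(n+1+|m|)!}{(n+1)!\,m_1!\,m_2!\cdots} = \frac{(n+|m|)!}{(n+1)!\,m_1!\,m_2!\cdots}$, as desired.

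The step I expect to be the main obstacle is matching the two decompositions in the middle paragraph: the Hopf-theoretic product $\wa_{f_1}\times\cdots\times\wa_{f_k}$ of a face, obtained by iterating the coproduct of Corollary~\ref{c:rbA}, must be shown to agree with the geometric decomposition of the corresponding dissection into its cells, so that a factor $\wa_{f_j}$ genuinely records a cell with $f_j + 2$ sides (an internal node of out-degree $f_j + 1$). I would prove this by induction on $k$ using Corollary~\ref{c:rbA} in the same spirit as the proof of Theorem~\ref{t:charassoc}: contracting a lower set splits the tree into the subtrees hanging below one distinguished level while the restriction records the coarser remaining tree, and iterating recovers the full cell-by-cell decomposition. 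Once this dictionary is confirmed, the remaining arguments are routine.
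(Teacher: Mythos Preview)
Your proposal is correct and follows essentially the same route as the paper: start from Theorem~\ref{t:Lagrinvpol}, group faces of $\wa_n$ by combinatorial type, invoke the bijection between faces and plane rooted trees with $n+1$ leaves, and then count such trees by degree sequence. The paper simply cites Stanley \cite[Theorem~5.3.10]{stanley99:_ec2} for the tree count and defers the face--tree dictionary to the more general nestohedron discussion in Section~\ref{s:W}, whereas you spell out both the cycle-lemma computation and the concern about matching the Hopf-theoretic factorization $\wa_{f_1}\times\cdots\times\wa_{f_k}$ with the cell decomposition of the dissection; but these are differences of detail, not of strategy.
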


\begin{proof}
This follows from Theorem \ref{t:Lagrinvpol} and the known correspondence between faces of  associahedra and trees, which we reprove in a more general setting in Section \ref{s:W}. More precisely, the $(n-|m|)$-dimensional faces of the associahedron $\wa_n$ of type $\wa_{1}^{m_1} \times \wa_2^{m_2}\times \cdots$ are in bijection with the plane rooted trees that have $n+1$ leaves and $m_i$ vertices of down-degree $i$  for each $i \geq 1$. The result then follows from the fact \cite[Theorem 5.3.10]{stanley99:_ec2}
that there are 
$(n+|m|)!/((n+1)! \, m_1! \, m_2! \cdots)$
such plane rooted trees. 
\end{proof}

\subsection{{Loday's question}}

It has long been known that Lagrange inversion is closely related to the enumeration of trees (or, equivalently, parenthesizings). In turn, this enumeration is related to the associahedron; see for example \cite{ardila2015algebraic,stanley99:_ec2}. However, in 2005, Loday \cite{loday05} asked for a direct explanation of the connection between Lagrange inversion and the associahedra:

\begin{quote}
``There exists a short operadic proof of the [Lagrange inversion] formula which explicitly involves the parenthesizings, but it would be interesting to find one which involves the topological structure of the associahedron."
\end{quote}

The associahedral statement and proof of the Lagrange inversion formula in Theorem \ref{t:Lagrinvpol} may be regarded as an answer to Loday's question. It is a combinatorics-free approach. Aside from the basic Hopf monoid architecture, it relies only on two key ingredients:

$\bullet$ our topological proof for the antipode of the associahedron (Theorem \ref{t:antipode})

$\bullet$ the structure of Loday's associahedron with respect to the $\1_S$ directions (Lemma \ref{lemma:assocfaces})

Interestingly, there are many other realizations of the associahedron as a generalized permutahedron \cite{ceballos2015many, ceballos2012realizing, hohlweg2007realizations, hohlweg2011permutahedra, 
hohlweg2017polytopal,
lange2013associahedra, pilaud2016permutrees, pilaud2012brick}.  
These have isomorphic face posets, but they lead to different Hopf structures and different character groups. Surprisingly, to answer Loday's question within this algebro-polytopal context, Loday's realization of the associahedron is precisely the one that we need!

Relatedly, in the closing remarks to his 1987 paper \cite{schmitt1987antipodes}, Schmitt wrote about the cancellation of $1$s and $-1$s that leads to his Hopf algebraic proof of the Lagrange inversion formula:

\begin{quote}
``We believe that an understanding of exactly how these cancellations take place will not only provide a direct combinatorial proof of the Lagrange inversion formula, but may well yield analogous formulas for the antipodes of [...] other [...] Hopf algebras."
\end{quote}

Schmitt's suggestion is very close to the philosophy of this project, though our approach is more geometric and topological than combinatorial.  Applying the same point of view to other families of polytopes, we will obtain optimal formulas for the antipodes of many Hopf monoids  throughout the paper.

\newpage

\begin{LARGE}
\noindent \textsf{PART 3: Characters, polynomial invariants, and reciprocity}
\end{LARGE}

\section{$\rSF$: {Submodular functions: an equivalent formulation of $\rGP$}}\label{s:SF}

Generalized permutahedra arise in a multitude of settings, and can be used to model many combinatorial objects: graphs, matroids, posets, set partitions, paths,  and many others. In this section we present one reason for the ubiquity of these polyhedra: generalized permutahedra are equivalent to submodular functions, which are central objects in optimization. These functions occur in numerous mathematical and real-world contexts, since they are characterized by a \emph{diminishing returns} property that is natural in many settings.

%
%
%

\subsection{{Boolean functions}}\label{ss:bool}
Let $2^I$ denote the collection of subsets of a finite set $I$. 
A \emph{Boolean function} on $I$ is an arbitrary function 
$z:2^I\to\Rb$
such that 
$z(\emptyset)=0$.

Let $\rBF[I]$ denote the set of Boolean functions on $I$.  
To turn the species $\rBF$ into a connected Hopf monoid, we first notice that $\rBF[\emptyset]$ is indeed a singleton.
Now fix a decomposition $I=S\sqcup T$. We make the following definitions.

\noindent $\bullet$ The product of two Boolean functions $u\in\rBF[S]$ and $v\in\rBF[T]$ is the function $u\cdot v\in\rBF[I]$ given by
\begin{equation}\label{e:SFproduct}
(u\cdot v)(E):= u(E\cap S)+v(E\cap T) \text{ for $E\subseteq I$.}
\end{equation}

\noindent 
$\bullet$ The coproduct of a Boolean function $z\in\rBF[I]$ is $(z|_S, z/_S) \in \rBF[S] \times \rBF[T]$, where
\begin{equation}\label{e:SFcoproduct}
z|_S(E):=z(E) \text{ for $E\subseteq S$}
\qand
z/_S(E):=z(E\cup S)-z(S) \text{ for $E\subseteq T$.}
\end{equation}

The Hopf monoid axioms of Definition~\ref{d:hopfset} are easily verified. To illustrate this, we check the compatibility between products and coproducts. Consider two compositions $I = S \sqcup T$ and $I = S' \sqcup T'$ as described in~\eqref{e:4sets} and illustrated below, and  choose $u\in\rBF[S]$, $v\in\rBF[T]$.

\[
\begin{picture}(170,90)(0,0)
\put(80,40){\oval(160,80)}
  \put(0,40){\dashbox{2}(38,0){}}
  \put(54,40){\dashbox{2}(58,0){}}
   \put(126,40){\dashbox{2}(34,0){}}
  \put(80,0){\dashbox{2}(0,80){}}
  \put(40,55){$A$}
  \put(115,55){$B$}
  \put(40,15){$C$}
  \put(115,15){$D$}
  \put(45,40){\oval(30,15)}
  \put(120,40){\oval(50,24)}
  \put(40,35){$E$}
  \put(115,35){$F$}
  \put(165,50){$S = A \sqcup B$}
  \put(165,20){$T = C \sqcup D$}
  \put(10,85){$S' = A \sqcup C$}
  \put(90,85){$T' = B \sqcup D$}
      \end{picture}
\]
For any $E \subseteq S'$ we have
\begin{align*}
(u\cdot v)|_{S'}(E) & =(u\cdot v)(E) = u(E \cap S)+v(E\cap T) =u(E \cap A)+v(E\cap C) \\
&=u|_A(E \cap A)+v|_C(E\cap C) 
=\bigl(u|_A \cdot v|_C\bigr)(E), 
\end{align*}
and for any $F\subseteq T'$ we have
\begin{align*}
(u\cdot v)/_{S'}(F) & =(u\cdot v)(F\cup S')-(u\cdot v)(S')\\
&=u\bigl((F\cup S')\cap S\bigr)+v\bigl((F\cup S')\cap T\bigr)
-u(S'\cap S)-v(S'\cap T)\\
&=u\bigl((F\cap B)\cup A\bigr)-u(A)+v\bigl((F\cap D)\cup C\bigr)-v(C)\\
&=(u/_A)(F\cap B)+(v/_C)(F\cap D)=\bigl((u/_A)\cdot(v/_C)\bigr)(F).
\end{align*}
Thus $(u\cdot v)|_{S'}=(u|_A)\cdot(v|_C)$
and
$(u\cdot v)/_{S'}=(u/_A)\cdot(v/_C)$, as needed.

\subsection{{Submodular functions and diminishing returns}}\label{ss:submod}

A Boolean function $z$ on $I$ is \emph{submodular} if
\begin{equation}\label{e:submod}
z(A\cup B)+z(A\cap B)\leq z(A)+z(B)
\end{equation}
for every $A,B\subseteq I$. Submodular functions arise in many contexts in mathematics and applications, partly because submodularity is equivalent to a natural \emph{diminishing returns property} that we now describe.

Suppose the Boolean function $z$ measures some quantifiable \emph{benefit} $z(A)$ associated to each subset $A \subseteq I$. Then the contraction $z/_S$ has a natural interpretation: for $e \notin S$, 
\[
z/_S(e) = z(S \cup e) - z(S) = \textrm{\emph{marginal return} of adding $e$ to $S$.}
\]

\begin{theorem} \label{t:diminishing}\cite[Theorem 44.1]{schrijver03} \emph{(Diminishing returns)} A Boolean function $z$ on $I$ is submodular if and only if for every $e \in I$ we have

\begin{equation}\label{e:diminish}
z/_S(e) \geq z/_T(e) \qquad \textrm{ for } S \subseteq T \subseteq I-e
\qquad \textit{(diminishing returns)}
\end{equation}
that is,  the marginal return $z/_S(e)$ decreases as we add more elements to $S$.
\end{theorem}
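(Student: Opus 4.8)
The plan is to prove the two implications separately. In both directions the contraction $z/_S(e) = z(S \cup e) - z(S)$ plays the role of a discrete derivative (a marginal return), and the whole argument is an exercise in rewriting differences of values of $z$.

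For the forward direction (submodular $\Rightarrow$ diminishing returns), I would simply feed the submodularity inequality \eqref{e:submod} a cleverly chosen pair of sets. Given $S \subseteq T \subseteq I - e$, set $A = S \cup e$ and $B = T$. Since $S \subseteq T$ and $e \notin T$, one computes $A \cap B = S$ and $A \cup B = T \cup e$, so \eqref{e:submod} reads $z(T \cup e) + z(S) \leq z(S \cup e) + z(T)$. Rearranging this gives exactly $z(S \cup e) - z(S) \geq z(T \cup e) - z(T)$, which is $z/_S(e) \geq z/_T(e)$.

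For the reverse direction (diminishing returns $\Rightarrow$ submodular), fix arbitrary $A, B \subseteq I$ and enumerate $A \setminus B = \{a_1, \ldots, a_k\}$. The key idea is a telescoping argument: adding the elements $a_j$ one at a time to $A \cap B$ builds up to $A$, while adding the very same elements one at a time to $B$ builds up to $A \cup B$. Writing $C_j = (A \cap B) \cup \{a_1, \ldots, a_j\}$ and $D_j = B \cup \{a_1, \ldots, a_j\}$, the two differences become telescoping sums of marginal returns, $z(A) - z(A \cap B) = \sum_{j=1}^k z/_{C_{j-1}}(a_j)$ and $z(A \cup B) - z(B) = \sum_{j=1}^k z/_{D_{j-1}}(a_j)$. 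At each step $C_{j-1} \subseteq D_{j-1} \subseteq I - a_j$ (the containment because $A \cap B \subseteq B$, and $a_j \notin D_{j-1}$ because $a_j \notin B$ and $a_j \neq a_1, \ldots, a_{j-1}$), so the hypothesis \eqref{e:diminish} applies termwise to give $z/_{C_{j-1}}(a_j) \geq z/_{D_{j-1}}(a_j)$. Summing over $j$ yields $z(A) - z(A \cap B) \geq z(A \cup B) - z(B)$, which is precisely \eqref{e:submod}.

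The only real obstacle is the bookkeeping in the reverse direction: one must set up the two chains of sets so that at every step the \emph{same} new element $a_j$ is inserted and the smaller set sits inside the larger one, so that diminishing returns can be invoked term by term. Once the chains $C_j$ and $D_j$ above are in place, everything else is a routine telescoping computation and a single summation.
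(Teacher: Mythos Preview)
Your proof is correct and is the standard argument for this well-known equivalence. Note, however, that the paper does not supply its own proof of this statement: it is quoted as \cite[Theorem 44.1]{schrijver03} and used as a black box, so there is no proof in the paper to compare against.
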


From the algebraic point of view, submodular functions have a Hopf monoid structure because they are closed under products and coproducts.

\begin{theorem}\label{t:submod} 
Let $\rSF[I]$ denote the set of submodular functions on $I$. Then $\rSF$ is a Hopf submonoid of $\rBF$, with the product and coproduct given by (\ref{e:SFproduct}) and (\ref{e:SFcoproduct}).
\end{theorem}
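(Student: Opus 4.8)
The plan is to show that the subspecies $\rSF \subseteq \rBF$ is closed under the product and coproduct inherited from $\rBF$. Since $\rBF$ is already a connected Hopf monoid and the operations on $\rSF$ are by definition the restrictions of those on $\rBF$, establishing this closure is exactly what is needed: connectedness and the (trivial) unit are inherited, naturality holds because submodularity is preserved under relabeling, and all the axioms of Definition~\ref{d:hopfset} then hold for $\rSF$ because they already hold for $\rBF$. Concretely, fixing a decomposition $I = S \sqcup T$, I must verify three things: (i) if $u \in \rSF[S]$ and $v \in \rSF[T]$ then $u \cdot v \in \rSF[I]$; (ii) if $z \in \rSF[I]$ then $z|_S \in \rSF[S]$; and (iii) if $z \in \rSF[I]$ then $z/_S \in \rSF[T]$. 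Each reduces to checking the submodularity inequality \eqref{e:submod}, and the only tools required are the distributive laws for $\cup$ and $\cap$.

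For the product (i), I would fix $A, B \subseteq I$ and use that intersecting with $S$ (resp.\ $T$) commutes with $\cup$ and $\cap$, so that $(A \cup B) \cap S = (A \cap S) \cup (B \cap S)$ and $(A \cap B) \cap S = (A \cap S) \cap (B \cap S)$, and likewise over $T$. Substituting into the definition \eqref{e:SFproduct} splits $(u \cdot v)(A \cup B) + (u \cdot v)(A \cap B)$ into an $S$-part and a $T$-part, to which the submodularity of $u$ and of $v$ apply separately; recombining the two bounds gives $(u \cdot v)(A) + (u \cdot v)(B)$, as required. The restriction (ii) is immediate, since $z|_S$ is literally $z$ evaluated on subsets of $S$, so its submodularity is a special case of that of $z$.

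The contraction (iii) is the one case that needs a small observation rather than a pure substitution. For $A, B \subseteq T$ the relevant identities are $(A \cup B) \cup S = (A \cup S) \cup (B \cup S)$ and $(A \cap B) \cup S = (A \cup S) \cap (B \cup S)$. Using the definition $z/_S(E) = z(E \cup S) - z(S)$ of \eqref{e:SFcoproduct}, these identities turn $z/_S(A \cup B) + z/_S(A \cap B)$ into $z\bigl((A \cup S) \cup (B \cup S)\bigr) + z\bigl((A \cup S) \cap (B \cup S)\bigr) - 2 z(S)$, after which submodularity of $z$ applied to the sets $A \cup S$ and $B \cup S$ yields the bound $z/_S(A) + z/_S(B)$. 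Alternatively one could deduce (iii) from the diminishing-returns criterion of Theorem~\ref{t:diminishing}, but the direct computation is the shortest route.

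In short, I do not expect a substantial obstacle: all three verifications reduce to elementary set-theoretic distributivity followed by a single application of \eqref{e:submod}. The only step that is not a verbatim substitution is the distributive identity $(A \cup S) \cap (B \cup S) = (A \cap B) \cup S$ used in the contraction case, so that is the one place where I would take care to get the set manipulations exactly right.
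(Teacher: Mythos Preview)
Your proposal is correct and follows the same overall strategy as the paper: show that submodular functions are closed under the product, restriction, and contraction of $\rBF$, after which all Hopf monoid axioms are inherited. The paper's own proof is only a sketch that cites the diminishing-returns characterization (Theorem~\ref{t:diminishing}) and leaves the details to the reader, whereas you verify the submodularity inequality~\eqref{e:submod} directly using the distributive identities for $\cup$ and $\cap$; this is a minor variation in technique rather than a genuinely different route, and in fact your direct computation is exactly the kind of detail the paper omits.
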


\begin{proof}
It suffices to show that submodular functions are closed under the product (\ref{e:SFproduct}) and coproduct (\ref{e:SFcoproduct}) of Boolean functions as defined above. This is well known \cite{oxley92:_matroid} and follows from
Theorem \ref{t:diminishing}; the details are left to the reader.
%
%
\end{proof}

%
%
%
%
%

\subsection{{Submodular functions and generalized permutahedra}}\label{ss:submod-gp}

The \emph{base polytope}  of a given Boolean function $z:2^I\to\Rb$ 
is the set\footnote{It is worth remarking that, in Postnikov's work on generalized permutahedra \cite{postnikov09}, he writes the defining inequalities as $\sum_{i\in A}x_i\geq z'(A)$. The difference is unimportant thanks to the equality $\sum_{i\in I}x_i=z(I)$. Our convention affords a cleaner connection between generalized permutahedra and submodular functions.}
\begin{equation}\label{e:submod-gp}
\Pc(z):=\{x\in\Rb I \mid  \sum_{i\in I}x_i=z(I) \, \text{ and } \sum_{i\in A}x_i\leq z(A) \text{ for all }A\subseteq I
 \}.
\end{equation}

For $x \in \Rb I$ and $A \subseteq I$, we denote
\[
x(A) = \sum_{i \in A} x_i.
\]
We say the inequality $x(A) \leq z(A)$ is \emph{optimal} for $\Pc(z)$ if $z(A)$ is the minimum value for which this inequality holds; that is, if $z(A)$ equals the maximum value of $x(A)$ over all $x$ in the polytope $\Pc(z)$.


The following theorem collects several results from the literature, and plays a central role in this paper. 


\begin{theorem}\cite{derksenfink10, fujishige05, postnikov09, schrijver03}\label{t:submod-gp} For a polytope $\wp$ in $\Rb I$, the following conditions are equivalent.
\begin{enumerate}
\item
The polytope $\wp$ is a generalized permutahedron.
\item
The normal fan $\Nc_\wp$ is a coarsening of the braid arrangement $\Bc_I$.
\item
Every edge of $\wp$ is parallel to the vector $e_i - e_j$ for some $i, j \in I$.
\item
There exists a submodular function $z:2^I\to\Rb$ such that $\wp=\Pc(z)$. 
\end{enumerate}
Furthermore, when these conditions hold, the submodular function $z$ of part 4 is unique, and every definining inequality in (\ref{e:submod-gp}) is optimal. 
\end{theorem}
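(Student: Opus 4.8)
The plan is to prove the cycle $(1)\Leftrightarrow(2)\Leftrightarrow(3)$ and $(2)\Leftrightarrow(4)$ together with the ``furthermore'' clause, treating the first three conditions as essentially formal and concentrating the real work on the passage between the polyhedral description and the submodular function. The equivalence $(1)\Leftrightarrow(2)$ is just Definition \ref{def:GP} restricted to bounded polytopes, so nothing is needed there. For $(2)\Leftrightarrow(3)$ I would invoke the standard duality between edges of $\wp$ and the codimension-one walls of its normal fan $\Nc_\wp$: an edge in direction $v$ has normal cone contained in the hyperplane $\{y : y(v)=0\}$, and the braid hyperplanes are exactly the $\{y:y(i)=y(j)\}=\{y:y(e_i-e_j)=0\}$. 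Hence every wall of $\Nc_\wp$ lies on a braid hyperplane --- equivalently $\Nc_\wp$ coarsens $\Bc_I$ --- if and only if every edge direction is parallel to some $e_i-e_j$; this is just a matter of recording the wall/edge correspondence.

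The direction $(4)\Rightarrow(2)$ is the technical heart, and I would obtain it from Edmonds' greedy algorithm. Given $y$ in the interior of the braid chamber $\Bc_{i_1,\ldots,i_n}$ (so $y(i_1)>\cdots>y(i_n)$), define the greedy point $x^*$ by the telescoping formula $x^*_{i_k}=z(\{i_1,\ldots,i_k\})-z(\{i_1,\ldots,i_{k-1}\})$. Using submodularity through the diminishing-returns reformulation of Theorem \ref{t:diminishing}, one checks that $x^*\in\Pc(z)$ and that $x^*$ is the unique $y$-maximum. Since $x^*$ depends only on the order $i_1,\ldots,i_n$, i.e. only on the chamber containing $y$, the normal fan $\Nc_{\Pc(z)}$ coarsens $\Bc_I$, which is $(2)$; boundedness of $\Pc(z)$ follows from the equality $x(I)=z(I)$ together with the singleton inequalities. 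The same computation yields optimality: choosing an order with a prescribed $A$ as a prefix, the telescoping gives $x^*(A)=z(A)$, so $\max_{x\in\Pc(z)}x(A)=z(A)$ and every defining inequality is tight.

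For $(2)\Rightarrow(4)$ and uniqueness I would set $z(A):=\max_{x\in\wp}x(A)=h_\wp(\1_A)$, which forces optimality by construction, and prove submodularity by a ``common maximizer'' trick. Because $\Nc_\wp$ coarsens $\Bc_I$, the chamber $\Bc_{i_1,\ldots,i_n}$ lies inside a single normal cone $\Nc_\wp(x^*)$; as each ray $\1_{\{i_1,\ldots,i_k\}}$ of that chamber lies in the closed cone $\Nc_\wp(x^*)$, the vertex $x^*$ simultaneously maximizes $\1_{\{i_1,\ldots,i_k\}}$ for every prefix. Given $A,B$, I choose an order listing $A\cap B$ first and $A\cup B$ as a later prefix (possible since $A\cap B\subseteq A\cup B$ is a chain); then $x^*(A\cap B)=z(A\cap B)$ and $x^*(A\cup B)=z(A\cup B)$, while trivially $x^*(A)\le z(A)$ and $x^*(B)\le z(B)$. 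Since $\1_{A\cap B}+\1_{A\cup B}=\1_A+\1_B$ as functions,
\[
z(A\cap B)+z(A\cup B)=x^*(A\cap B)+x^*(A\cup B)=x^*(A)+x^*(B)\le z(A)+z(B),
\]
which is submodularity. Now $\wp\subseteq\Pc(z)$ is immediate, and conversely the greedy vertex of $\Pc(z)$ for the order $i_1,\ldots,i_n$ has the same coordinates $z(\{i_1,\ldots,i_k\})-z(\{i_1,\ldots,i_{k-1}\})$ as the vertex $x^*$ of $\wp$ above, so (using $(4)\Rightarrow(2)$, already proved) the vertices of $\Pc(z)$ all lie in $\wp$ and $\Pc(z)\subseteq\wp$, giving $\wp=\Pc(z)$. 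Uniqueness follows because any submodular $z'$ with $\Pc(z')=\wp$ satisfies $z'(A)=\max_{x\in\Pc(z')}x(A)$ (optimality being automatic for submodular functions, by the prefix computation), hence $z'(A)=\max_{x\in\wp}x(A)=z(A)$.

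I expect the main obstacle to be the greedy step $(4)\Rightarrow(2)$: verifying that $x^*$ both lies in $\Pc(z)$ and is optimal is precisely where submodularity enters essentially, and it requires an exchange/LP-duality argument rather than a formal manipulation. The submodularity proof in $(2)\Rightarrow(4)$ is the other delicate point, but once the observation ``the prefixes of a chamber are maximized at a common vertex'' is isolated, the chain $A\cap B\subseteq A\cup B$ makes it short. Since all four conditions already appear in the cited sources, I would lean on \cite{schrijver03, fujishige05, postnikov09, derksenfink10} for the detailed verification of Edmonds' theorem and present the normal-fan/support-function packaging, together with the common-maximizer derivation of submodularity, as the streamlined contribution here.
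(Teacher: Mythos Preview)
Your argument is correct. The paper, however, does not actually prove this theorem here: it defers everything to the extended (possibly unbounded) version, Theorem~\ref{t:submod-gp+}, and there it proceeds differently. In that proof the logical skeleton is $1\Leftrightarrow 2$ by definition, $3\Leftrightarrow 4$ by direct citation to Fujishige and Derksen--Fink, $2\Rightarrow 3$ by an affine-span/normal-cone computation, and $(3{+}4)\Rightarrow 2$ via Fujishige's greedy theorem. So the paper treats $3\Leftrightarrow 4$ as a black box from the literature and passes through condition~3 rather than going straight between $2$ and $4$.

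Your route is more self-contained in two places. First, your common-maximizer derivation of submodularity for $(2)\Rightarrow(4)$ --- pick a chamber whose prefix chain contains both $A\cap B$ and $A\cup B$, use that all prefix rays lie in a single normal cone, and exploit $\1_{A\cap B}+\1_{A\cup B}=\1_A+\1_B$ --- is a clean direct argument the paper never writes out; the paper simply cites \cite{fujishige05,derksenfink10} for the whole $3\Leftrightarrow 4$ package. Second, your $(2)\Leftrightarrow(3)$ via the edge/wall duality is the natural bounded-case shortcut; the paper's version of condition~3 (every face has affine root span) is phrased for the unbounded setting and requires a separate remark to specialize. The trade-off is that the paper's organization handles the unbounded case simultaneously, while yours would need to be redone for extended generalized permutahedra. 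Either way, both approaches ultimately rest on Edmonds' greedy algorithm for the direction $(4)\Rightarrow(2)$ and for optimality.
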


We will extend this result to possibly unbounded objects in Theorem \ref{t:submod-gp+}, and provide references and a complete proof there. We are now ready to prove an important result about the Hopf monoid $\rGP$.

\begin{theorem}\label{t:submod-gp} The collection of maps
\[
\rSF[I]\to\rGP[I], \quad z\mapsto \Pc(z)
\]
is an isomorphism of Hopf monoids in set species $\rSF\cong\rGP$.
\end{theorem}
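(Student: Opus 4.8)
The plan is to verify the four things a Hopf-monoid isomorphism requires: bijectivity, naturality, compatibility with products, and compatibility with coproducts. Bijectivity is handed to us by the structural Theorem~\ref{t:submod-gp} (the equivalence of conditions (1) and (4)), which says that every bounded generalized permutahedron equals $\Pc(z)$ for a \emph{unique} submodular $z$; so $z \mapsto \Pc(z)$ is already a bijection $\rSF[I] \to \rGP[I]$. Naturality is routine: a relabeling $\sigma\colon I \to J$ carries the defining inequalities of $\Pc(z)$ to those of $\Pc(\rBF[\sigma](z))$, giving $\Pc(\rBF[\sigma](z)) = \rGP[\sigma](\Pc(z))$, and units are preserved by connectedness. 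So the real content is preservation of the two operations.

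For products, fix $I = S \sqcup T$, $u \in \rSF[S]$, $v \in \rSF[T]$, and I would show $\Pc(u\cdot v) = \Pc(u)\times\Pc(v)$ directly from \eqref{e:SFproduct}. For $\supseteq$, any $x=(x_S,x_T)$ with $x_S \in \Pc(u)$ and $x_T \in \Pc(v)$ satisfies $x(A) = x_S(A\cap S) + x_T(A\cap T) \le u(A\cap S) + v(A\cap T) = (u\cdot v)(A)$ for all $A$, with equality at $A=I$. For $\subseteq$, given $x \in \Pc(u\cdot v)$, the constraints $x(S)\le u(S)$ and $x(T)\le v(T)$ together with $x(S)+x(T)=x(I)=u(S)+v(T)$ force $x(S)=u(S)$ and $x(T)=v(T)$, whence $x_S \in \Pc(u)$ and $x_T \in \Pc(v)$. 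This step uses only the definitions, no submodularity.

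The main work, and the only place submodularity enters, is the coproduct; carrying it out simultaneously discharges the deferred proof of Proposition~\ref{p:face}. By the optimality clause of Theorem~\ref{t:submod-gp}, the maximum of $\1_S(x)=x(S)$ over $\Pc(z)$ is $z(S)$, so the $\1_S$-maximal face is $F = \{x \in \Pc(z) : x(S)=z(S)\}$, on which $x(T) = z(I)-z(S) = z/_S(T)$. I would prove $F = \Pc(z|_S)\times\Pc(z/_S)$. For $\supseteq$, take $x=(x_S,x_T)$ with $x_S \in \Pc(z|_S)$, $x_T \in \Pc(z/_S)$; for $A\subseteq I$ write $A_S=A\cap S$, $A_T=A\cap T$ and estimate
\[
x(A) = x_S(A_S) + x_T(A_T) \le z(A_S) + \bigl(z(A_T\cup S) - z(S)\bigr) \le z(A),
\]
where the last inequality is exactly submodularity applied to $A$ and $S$, whose union is $A_T\cup S$ and whose intersection is $A_S$; since also $x(S)=z(S)$, we get $x\in F$. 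For $\subseteq$, take $x\in F$: then $x_S(S)=z(S)=z|_S(S)$ and $x_S(A)=x(A)\le z(A)=z|_S(A)$ for $A\subseteq S$, so $x_S\in\Pc(z|_S)$; and $x_T(T)=z/_S(T)$ while for $B\subseteq T$ the bound $x(B\cup S)\le z(B\cup S)$ rewrites, using $x(S)=z(S)$, as $x_T(B)\le z(B\cup S)-z(S)=z/_S(B)$, so $x_T\in\Pc(z/_S)$.

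This shows the $\1_S$-maximal face of $\Pc(z)$ factors as $\Pc(z|_S)\times\Pc(z/_S)$, which is precisely Proposition~\ref{p:face} together with the identifications $\Pc(z)|_S = \Pc(z|_S)$ and $\Pc(z)/_S = \Pc(z/_S)$ (the factors of a product polytope are its projections, hence unique). Translated through the bijection $z\mapsto\Pc(z)$, this is exactly the statement that $\Pc$ intertwines the coproduct of $\rSF$ from \eqref{e:SFcoproduct} with that of $\rGP$. Combined with the product and naturality checks, $\Pc\colon\rSF\to\rGP$ is an isomorphism of Hopf monoids in set species. The only genuine obstacle is the $\supseteq$ inclusion in the face computation, where one must match the correct orientation of the submodular inequality to the polytope constraint $x(A)\le z(A)$; everything else is bookkeeping.
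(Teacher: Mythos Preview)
Your proposal is correct and follows essentially the same route as the paper: bijectivity from the structural theorem, products handled by a direct inclusion check (which the paper leaves as ``not difficult''), and the coproduct established by proving both inclusions $F \supseteq \Pc(z|_S)\times\Pc(z/_S)$ and $F \subseteq \Pc(z|_S)\times\Pc(z/_S)$ for the $\1_S$-maximal face, with submodularity of $z$ applied to the pair $A,S$ furnishing the key inequality in the $\supseteq$ direction and optimality of the defining inequalities used in the $\subseteq$ direction. Your identification of where submodularity enters and your remark that this simultaneously proves Proposition~\ref{p:face} both match the paper exactly.
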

\begin{proof}
Theorem \ref{t:submod-gp} shows that each one of those maps is bijective. It is not difficult to check that the products on $\rSF$ and $\rGP$ agree. To prove that the coproducts agree, we now check that restriction and contraction coincide in $\rSF$ and $\rGP$. 

Let $\wp = \Pc(z)$ be a generalized permutahedron in $\Rb I$ and let $I = S \sqcup T$ be a decomposition. We need to show that the maximal face in direction $\1_S$ is $\wp_{S,T} = \Pc(z|_S)  \times \Pc(z/_S)$. We prove the two inclusions.

\medskip

$\mathbf{\supseteq}$: First consider any point $x = (x_S, x_T)  \in \Pc(z|_S)  \times \Pc(z/_S)$. For any $A \subseteq I$ let $A_S = A \cap S$ and $A_T = A \cap T$, so that $A = A_S \sqcup A_T$. Then 
\begin{align*}
x(A) &= x_S(A_S) + x_T(A_T) \leq  z|_S(A_S) + z/_S(A_T)  \\
& = z(A_S) + z(A_T \cup S) - z(S) = z(A \cap S) + z(A \cup S) - z(S) \leq z(A)
\end{align*}
by submodularity. In particular, for $A=I$ we get
\[
x(I) = x_S(S) + x_T(T) = z|_S(S) + z/_S(T) = z(S) + z(T \cup S) - z(S) = z(I).
\]
Therefore $x \in \wp$. On the other hand, for $A=S$ we get
\[
x(S) = x_S(S) + x_T(\emptyset) = z|_S(S) + 0 = z(S)
\]
which, in view of (\ref{e:submod-gp}), implies that $x$ is $\1_S$-maximal in $\wp$, that is, $x \in \wp_{S,T}$.

\medskip

$\mathbf{\subseteq}$:
In the other direction, let $x \in \wp_{S,T}$. By Theorem \ref{t:submod-gp}, $x$ attains the $\1_S$-optimal value $x(S) = z(S)$. Letting $x = (x_S, x_T) \in \Rb^S \times \Rb^T$, we then have 
\begin{align*}
x_S(S) &= x(S) = z(S) = z|_S(S), \\
x_T(T) &= x(T) = x(I) - x(S) = z(I) - z(S) = z/_S(T).
\end{align*}
Furthermore, for any $A \subseteq S$ and $B \subseteq T$,
\begin{align*}
x_S(A) &= x(A) \leq z(A) = z|_S(A), \\
x_T(B) &= x(B) = x(B \cup S) - x(S) \leq z(B \cup S) - z(S) = z/_S(B).
\end{align*}
These observations imply that $x_S \in \Pc(z|_S)$ and $x_T \in  \Pc(z/_S)$ as desired.
\end{proof}

\subsection{$\wGP_+$: Extended generalized permutahedra and
extended submodular functions}\label{ss:infinity}

We now extend the previous constructions to allow for unbounded polyhedra. Most of the results of this section were obtained earlier by Fujishige \cite{fujishige05}. 

Let an \emph{extended} Boolean function be a function $z:2^I\to\Rb \cup \{\infty\}$ with $z(\emptyset) = 0$ and $z(I) \neq \infty$. We say $z$ is \emph{submodular} if 
\[
z(A \cup B) + z(A \cap B) \leq z(A) + z(B) \textrm{ whenever $z(A), z(B)$ are finite.}
\]
Extended submodular functions are also called \emph{submodular systems}. \cite{fujishige05}
The \emph{base polyhedron} of $z$ is
\begin{equation}\label{e:submod-gp+}
\Pc(z):=\{x\in\Rb I \mid  \sum_{i\in I}x_i=z(I) \, \text{ and } \sum_{i\in A}x_i\leq z(A) \text{ for all }A\subseteq I \textrm{ with $z(A)<\infty$}
 \}.
\end{equation}

Theorem  \ref{t:submod-gp} extends to this setting, providing a bijective correspondence between extended submodular functions and extended generalized permutahedra. We now survey this correspondence in Theorem \ref{t:submod-gp+}, providing proofs for some statements which we were not able to find in the literature.


Define a \emph{braid cone} to be a cone in $(\Rb I)^* = \Rb^I$ cut out by inequalities of the form $y(i) \geq y(j)$ for $i, j \in I$. Define a \emph{root subspace} of $\Rb I$ to be a subspace spanned by  vectors of the form $e_i - e_j$ for $i,j \in I$; these vectors are the \emph{roots} of the root system $A_I = \{e_i - e_j \, | \, i, j \in I\}$ in the sense of Lie theory. \cite{humphreys90:_reflec_coxet} Define an \emph{affine root subspace} of $\Rb I$ to be a translate of a root subspace.

\begin{theorem}\cite{fujishige05, postnikov09, schrijver03}\label{t:submod-gp+} For a polyhedron $\wp$ in $\Rb I$, the following are equivalent.
\begin{enumerate}
\item
The polyhedron $\wp$ is an extended generalized permutahedron.
\item
The normal fan $\Nc_\wp$ is a coarsening of $(\Bc_I)|_C$, the restriction of the braid arrangement $\Bc_I$ to some braid cone $C$.
\item
The affine span of every face of $\wp$ is an affine root subspace.
\item
There exists an extended submodular function $z:2^I\to\Rb \cup \{\infty\}$ such that $\wp=\Pc(z)$. 
\end{enumerate}
Furthermore, when these conditions hold, the extended submodular function $z$ of part 4 is unique, and every definining inequality in (\ref{e:submod-gp+}) is optimal. 
\end{theorem}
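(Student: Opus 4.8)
The plan is to prove the chain of equivalences $(2)\Leftrightarrow(1)\Leftrightarrow(3)$ by convex geometry, and then to prove $(1)\Leftrightarrow(4)$ together with uniqueness and optimality by combining the greedy algorithm with a support-function computation. Throughout I would use the identification $(\Rb I)^*=\Rb^I$ and the standard fact that, for any face $\wq$ of $\wp$, the linear span of the normal cone $\Nc_\wp(\wq)$ is the annihilator of the direction space of $\wq$; thus controlling normal cones is the same as controlling the affine spans of faces.

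For $(1)\Leftrightarrow(2)$ the only content is that the support $|\Nc_\wp|$ of the normal fan --- the cone of directions in which $\wp$ is bounded above --- is a braid cone. This support is a closed convex cone which, by Definition \ref{def:GP}, is a union of faces of $\Bc_I$; the key claim is that a convex union of braid cones is again a braid cone. I would prove this by a facet argument: each facet of $|\Nc_\wp|$ is covered by codimension-one braid cones, each of which lies on a single braid hyperplane $y(i)=y(j)$, so every facet of $|\Nc_\wp|$ lies on such a hyperplane and $|\Nc_\wp|$ is cut out by braid inequalities. Calling this cone $C$, the subfan of $\Bc_I$ that $\Nc_\wp$ coarsens is forced to be all of $(\Bc_I)|_C$, giving $(2)$; the converse is immediate since $(\Bc_I)|_C$ is a subfan of $\Bc_I$. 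For $(1)\Leftrightarrow(3)$ I would use the duality between set partitions, linear spans of braid cones, and root subspaces: the span of $\Bc_{S_1,\dots,S_k}$ is the space $L_\sigma$ of functions constant on the blocks of $\sigma=\{S_1,\dots,S_k\}$, and its annihilator is exactly the root subspace $\{x:\sum_{i\in S_a}x_i=0 \text{ for all } a\}$; every root subspace arises this way, since the connected components of its defining roots give the blocks. Assuming $(1)/(2)$, each cone of $\Nc_\wp$ is a convex union of braid cones, hence a braid cone by the facet argument, so its span is some $L_\sigma$ and the direction space of the dual face is the root subspace $L_\sigma^\perp$, giving $(3)$. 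Conversely, $(3)$ forces every edge direction of $\wp$ to be a root $e_i-e_j$, so the codimension-one walls of $\Nc_\wp$ lie on braid hyperplanes, whence $\Nc_\wp$ coarsens the braid arrangement and $(1)$ holds.

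For $(4)\Rightarrow(1)$ I would invoke the greedy algorithm of Edmonds (see \cite{fujishige05,schrijver03}): for a direction $y$ in the open braid chamber of a linear order $i_1,\dots,i_n$, the point with coordinates $x_{i_k}=z(\{i_1,\dots,i_k\})-z(\{i_1,\dots,i_{k-1}\})$ lies in $\Pc(z)$ and is $y$-optimal precisely by submodularity and LP duality, and it depends only on the chamber; when a prefix has $z=\infty$ the program is unbounded, carving out the support. Hence $\Nc_{\Pc(z)}$ coarsens $(\Bc_I)|_C$ for the braid cone $C$ of finite directions, and the same vertices show each inequality is optimal, i.e.\ $z(A)=\max_{x\in\Pc(z)}\1_A(x)$ for finite $A$, which forces $z$ to be recovered from $\wp$ and hence unique. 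For the converse $(1)\Rightarrow(4)$ I would set $z(A):=\sup_{x\in\wp}\1_A(x)\in\Rb\cup\{\infty\}$. Since $C=|\Nc_\wp|$ is a braid cone, the sets with $z(A)<\infty$ are exactly the order filters of the preorder defined by $C$, and filters are closed under $\cup$ and $\cap$, so the submodular inequality is only ever asserted between finite quantities. To prove it, given filters $A,B$ I would produce a single point $w\in\wp$ that simultaneously maximizes $\1_{A\cap B}$, $\1_A$, and $\1_{A\cup B}$: these three directions generate the braid cone $\overline{\Bc_{A\cap B,\,A\setminus B,\,B\setminus A,\,I\setminus(A\cup B)}}$ (after discarding empty parts), which lies in $C$ and, because $\wp$ is a generalized permutahedron, inside a single closed normal cone $\Nc_\wp(\wp_F)$; any $w\in\wp_F$ then maximizes all three. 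The pointwise identity $\1_{A\cup B}+\1_{A\cap B}=\1_A+\1_B$ gives $z(A\cup B)+z(A\cap B)=\1_A(w)+\1_B(w)\le z(A)+z(B)$, which is submodularity. Finally, $\wp=\Pc(z)$: the inclusion $\wp\subseteq\Pc(z)$ is immediate from the definition of $z$, while for the reverse inclusion I would use that, by $(2)/(3)$, the facet normals of $\wp$ are among the ray directions $\1_S$ of the braid fan, so $\wp$ is already cut out by inequalities $\1_S(x)\le z(S)$ together with $\1_I(x)=z(I)$, whence $\Pc(z)\subseteq\wp$.

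The main obstacle is the submodularity in $(1)\Rightarrow(4)$: it is genuinely false that $A\mapsto\max_{x\in\wp}\1_A(x)$ is submodular for an arbitrary polytope --- for example the segment $\mathrm{conv}\{(1,0,1),(0,1,0)\}$ is supermodular on $\{1,2\}$ and $\{2,3\}$ --- so the argument must use the defining property of generalized permutahedra in an essential way, which is precisely the role of the common maximizer $w$ arising from the fact that a nested chain of $\1$-directions sits in a single braid cone and hence in a single normal cone. The second delicate point is the bookkeeping in the unbounded case: one must verify that finiteness of $z$ is closed under union and intersection (the filter argument) so that the submodular inequality is meaningful, and that the support $C$ is honestly a braid cone (the facet argument), so that the extended polyhedron is truly modeled on $(\Bc_I)|_C$. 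The greedy direction $(4)\Rightarrow(1)$ is classical and I would cite it, spending effort only on the unbounded refinements and on deducing optimality and uniqueness from it.
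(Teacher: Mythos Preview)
Your overall plan is sound, and your direct argument for $(1)\Rightarrow(4)$ is a genuinely different and attractive route. The paper does not prove $(1)\Rightarrow(4)$ at all: it cites $(3)\Leftrightarrow(4)$ from \cite{derksenfink10,fujishige05} and then proves $(3)+(4)\Rightarrow(2)$, using (3) to show the support $C$ is a braid cone and the greedy algorithm from (4) to control the interior. Your construction---set $z(A)=\sup_{x\in\wp}\1_A(x)$ and prove submodularity by observing that $\1_{A\cap B},\1_A,\1_{A\cup B}$ lie in a single closed braid cone, hence in a single closed normal cone, hence share a maximizer $w$---is cleaner and self-contained; your filter argument correctly handles finiteness in the unbounded case. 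Your facet argument for $(1)\Leftrightarrow(2)$ also fills in content the paper glosses over as definitional.

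There is, however, a real gap in your $(3)\Rightarrow(1)$. You reduce it to ``every edge direction is a root'', but this is exactly the bounded-case shortcut that the remark after the theorem warns against: an unbounded polyhedron with nontrivial lineality space has \emph{no} $1$-dimensional faces (take any affine root subspace of dimension $\geq 2$), so the edge hypothesis is vacuous and you have shown nothing about the normal fan. The fix within your own framework is to use (3) on \emph{all} faces rather than just edges: for any face $\wq$ and any $\wq'\supset\wq$ with $\dim\wq'=\dim\wq+1$, condition (3) forces the associated partitions to differ by merging two blocks $S_1,S_2$, so the facet $\Nc_\wp(\wq')$ of $\Nc_\wp(\wq)$ spans $L_\sigma\cap\{y(i)=y(j)\}$ for $i\in S_1$, $j\in S_2$; thus every normal cone has all of its facets on braid hyperplanes and is itself a braid cone, which gives (1) directly. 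The paper avoids this issue by routing through (4).
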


\begin{proof} We proceed in several steps.

$\mathbf{1 \Leftrightarrow 2}$: This is Definition \ref{def:GP}.

$\mathbf{3 \Leftrightarrow 4}$: This  is anticipated by Fujishige in \cite[Thms. 3.15, 3.18, 3.22]{fujishige05} and proved explicitly by Derksen and Fink in \cite[Prop. 2.9]{derksenfink10} for \emph{megamatroids}, where the function $z$ is integral; their proof works for general $z$. In condition 3 they include the additional hypothesis that the polyhedron $\wp$ lies on a hyperplane of the form $\sum_{i \in I} x_i = r$ for some $r \in \Rb$, but this follows from the assumption that the affine span of $\wp$ is an affine root subspace.

$\mathbf{2 \Rightarrow 3}$: Assume $\wp$ satisfies $2$. Since $\1 \in \Rb^I$ is in every braid cone, it is also in $\Nc_\wp(\wp)$, so $\1(x) = \sum_{i \in I} x_i$ is constant on $\wp$. 

Now let $\wq$ be any $d$-dimensional face of $\wp$ and write $\textrm{aff}(\wq) = v + W$ for a vector $v$ and a subspace $W$. We need to show that $W$ is a root subspace.
The normal face $\Nc_\wp(\wq)$ contains a face $F$ of the braid arrangement $\Bc_I$ of its same dimension, so $\espan (\Nc_\wp(\wq)) = \espan (F)$ is the intersection of $d$ independent hyperplanes $y(i_k) = y(j_k)$ for $1 \leq k \leq d$. 
We claim that $W = \espan\{e_{i_k} - e_{j_k} \, : \, 1 \leq k \leq d\}$. Since  both of these vector spaces are $d$-dimensional, it suffices to show that $e_{i_k}-e_{j_k} \in W$ for each $k$. 

We have the following inequality description of $\Nc_\wp(\wq)$:
\[
\Nc_\wp(\wq) = \{y \in \Rb^I \, : \, 
y(q_1) = y(q_2) \,\,\, \mathrm{ for } \,\,\, q_1, q_2 \in \wq, \,\, \,
y(q) \geq y(p)\,\,\,  \mathrm{ for }\,\,\,  q \in \wq, p \in \wp\}
\]
Since $y \in \Nc_\wp(\wq)$ implies that $y(i_k)=y(j_k)$,  $e_{i_k} - e_{j_k}$ must be a linear combination of vectors of the form $q_1 - q_2$ for $q_1, q_2 \in \wq$. But every such vector is in $W$, so $e_{i_k}-e_{j_k} \in W$ as desired.

$\mathbf{(3+4) \Rightarrow 2}$: Let $\wp$ satisfy 3 and 4. 

First we  show that the support of the normal fan $\Nc_\wp$
\[
C =  \supp(\Nc_\wp) = \{ y \in \Rb^I \, : \, \max_{p \in \wp} \, y(p) \textrm{ is finite } \},
\]
is a braid cone. 
Let $D = \Nc_\wp(\wq)$ be a codimension 1 face of $\Nc_\wp$ on the boundary of $\Nc_\wp$. Say $\wq$ is $d$-dimensional, and, in light of 3, let the affine span of $\wq$ be a translate of the subspace $W=\espan\{e_{i_1} - e_{j_1}, \ldots, e_{i_d}-e_{j_d}\}$. We claim that $\espan(D)$ is the intersection of the hyperplanes $y(i_k) = y(j_k)$ for $1 \leq k \leq d$. Since both subspaces have codimension $d$, it is enough to prove one inclusion. To do that, observe that if $y \in D$, then $y(q)$ is constant for $q \in \wq$, so $y(w)=0$ for $w \in W$ and therefore $y(i_k) = y(j_k)$. The same statement is then true for any $y \in \espan(D)$.
We conclude that $C$ can be described by inequalities of the form $y(i) \geq y(j)$, as desired. 

Now that we know that $\Nc_\wp$ is supported on a braid cone $C$, we need to show that it is refined by the braid arrangement; that is, that for $y \in C$, the relative order of the coordinates of $y \in \Rb^I$ is enough to determine the maximum face $\wp_y$. But condition 4 tells us that $\wp = \Pc(z)$ for an extended submodular function $z$, and Fujishige showed that this family of functions may be optimized using the greedy algorithm, which only pays attention to the relative order of the coordinates of $y$. \cite[Thms. 3.15, 3.18]{fujishige05} The result follows.

Having proved the equivalence of 1, 2, 3, and 4, it remains to remark that the uniqueness and optimality of the defining equations (\ref{e:submod-gp+}) of $\wp$ are implicit in \cite[Section 3]{fujishige05}.
\end{proof}

\begin{remark}
When $\wp$ is bounded, Theorem \ref{t:submod-gp+} reduces to  Theorem \ref{t:submod-gp}. Condition 3 looks different in these two statements, but in this setting, the seemingly weaker condition that every edge is parallel to a root $e_i-e_j$ implies that every face spans an affine root subspace. The reason for this is that in a bounded polytope, every face is spanned by its edges. This is not true in general; some unbounded polytopes do not even have one-dimensional faces.
\end{remark}

Let $\rSF_+[I]$ be the set of extended submodular functions on $I$. To construct a connected Hopf monoid, we use essentially the same operations as in $\rBF$ and $\rSF$. The only difference is that the contraction $z/_S$ of $z \in \rSF_+[I]$ is no longer defined when $z(S) = \infty$. Therefore, we need to modify the coproduct by defining
\[
\Delta_{S,T}(z) =  \begin{cases}
z|_S\otimes z/_S & \text{if $z(S) \neq \infty$}\\
0 & \text{if $z(S) = \infty$.}
\end{cases}
\]
for a decomposition $I = S \sqcup T$. This definition forces us to work in the context of vector species. It is now straightforward to extend Theorem \ref{t:submod-gp} to this context.

\begin{theorem}\label{t:submod-gp+} The collection of maps
\[
\wSF_+[I]\to\wGP_+[I], \quad z\mapsto \Pc(z)
\]
is an isomorphism of Hopf monoids in vector species $\wSF_+ \cong\wGP_+$.
\end{theorem}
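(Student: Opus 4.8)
The plan is to follow the proof of the bounded isomorphism Theorem~\ref{t:submod-gp} almost verbatim, isolating the two genuinely new features of the extended setting: the possibility of infinite values of $z$, and the resulting modification of the coproduct that sends certain decompositions to $0$. First I would record that $z\mapsto\Pc(z)$ is a bijection $\rSF_+[I]\to\rGP_+[I]$ — this is exactly the equivalence of conditions (1) and (4), together with the uniqueness clause, in the extended correspondence theorem above — so its linearization is a linear isomorphism $\wSF_+[I]\isoto\wGP_+[I]$, and naturality under relabeling bijections $\sigma:I\to J$ is immediate from the definitions of $\Pc$ and of the two species actions. It then remains to check compatibility with products and coproducts. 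The product is unchanged from the bounded case: for $u\in\rSF_+[S]$ and $v\in\rSF_+[T]$ one has $(u\cdot v)(A)=u(A\cap S)+v(A\cap T)$, and the same inequality bookkeeping as in Theorem~\ref{t:submod-gp} gives $\Pc(u\cdot v)=\Pc(u)\times\Pc(v)$; infinite values cause no trouble here, since a constraint with right-hand side $\infty$ is simply absent on both sides.

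The crux is the coproduct, and here the new content is a finiteness-versus-boundedness dictionary. I would prove that, for a decomposition $I=S\sqcup T$,
\[
\Pc(z)\text{ is bounded above in the direction }\1_S \qquad\Longleftrightarrow\qquad z(S)<\infty .
\]
The easy direction is optimality: if $z(S)<\infty$ then $x(S)\le z(S)$ is a defining inequality of $\Pc(z)$, so $\1_S$ is bounded above, with maximum exactly $z(S)$. For the converse I would invoke Fujishige's greedy algorithm \cite{fujishige05}, already used in the proof of the correspondence theorem: the direction $\1_S$ induces the flag $\emptyset\subsetneq S\subsetneq I$, and the greedy algorithm shows $\1_S$ is bounded above on $\Pc(z)$ precisely when $z$ is finite on every member of this flag; since $z(I)<\infty$ always, this says exactly $z(S)<\infty$. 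Equivalently, $\1_S\in\supp(\Nc_{\Pc(z)})$ iff $z(S)<\infty$. With this in hand the two ``$0$'' clauses of the coproduct match up: when $z(S)=\infty$ both $\Delta_{S,T}(z)$ and $\Delta_{S,T}(\Pc(z))$ are defined to be $0$, so they agree.

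When $z(S)<\infty$, I would replay the two inclusions of Theorem~\ref{t:submod-gp} to identify $\Pc(z)_{S,T}=\Pc(z|_S)\times\Pc(z/_S)$, now tracking finiteness. For $\supseteq$, given $x=(x_S,x_T)\in\Pc(z|_S)\times\Pc(z/_S)$ and any $A$ with $z(A)<\infty$, submodularity $z(A\cup S)+z(A\cap S)\le z(A)+z(S)$ applies, since $z(A),z(S)$ finite forces $z(A\cap S),z(A\cup S)$ finite, and the bounded computation yields $x(A)\le z(A\cap S)+z(A\cup S)-z(S)\le z(A)$; inequalities indexed by sets on which $z=\infty$ are vacuous. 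For $\subseteq$, the dictionary gives $x(S)=z(S)$ for $x\in\Pc(z)_{S,T}$, whence $x_S(S)=z|_S(S)$ and $x_T(T)=z(I)-z(S)=z/_S(T)$, and the remaining inequalities transfer exactly as before. This shows the coproducts agree, so the linear isomorphism is a morphism — hence an isomorphism — of Hopf monoids in vector species. The main obstacle is the boundedness dictionary: everything else is the bounded argument with infinite values handled by treating missing constraints as vacuous, but matching the two vanishing clauses of the coproducts requires genuinely identifying the support of the normal fan with the finiteness domain of $z$, which is where Fujishige's greedy algorithm is essential.
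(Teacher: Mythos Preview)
Your proposal is correct and follows exactly the approach the paper intends: the paper says only that ``it is now straightforward to extend Theorem~\ref{t:submod-gp} to this context'' and gives no further details, so you have simply carried out that extension. Your identification of the one genuinely new ingredient --- the dictionary $z(S)<\infty \iff \Pc(z)$ bounded in direction $\1_S$, proved via optimality and Fujishige's greedy algorithm --- is the right place to focus, and your handling of the two inclusions with infinite values treated as vacuous constraints is sound.
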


%
%

%
%
%
%
%
%
%
%

\section{$\rG$: {Graphs, graphic zonotopes, and Humpert-Martin's conjecture}}\label{s:G} 

In this section we revisit the Hopf monoid of graphs of Section \ref{ss:graphs}, now taking a geometric perspective: we realize $\rG$ as a submonoid of $\rGP$. The key idea is that every graph $g$ is modeled by a generalized permutahedra $Z_g$ called its graphic zonotope, and this model respects the Hopf structure of graphs.
This geometric interpretation of the Hopf monoid $\wG$ readily gives us the optimal formula for its antipode -- obtained independently by Humpert and Martin \cite{humpert12} -- and allows us to prove their conjecture from \cite[Section 5]{humpert12}.

\subsection{{Graphic zonotopes}}\label{s:zono}
 
 Let $g$ be a graph with vertex set $I$. Given $A\subseteq I$ and an edge $e$ of $g$, we say that $e$ is \emph{incident} to $A$ if either endpoint of $e$ belongs to $A$. Consider the \emph{incidence} function
\begin{eqnarray*}
\inc_g & : & 2^I\to\Zb \\ 
 \inc_g(A)& = & \text{ number of edges and half-edges of $g$ incident to $A$}.
\end{eqnarray*}
 For example, the incidence function of the graph \,  \includegraphics[scale=0.5]{Figures/coproductgraphB.pdf} \,
 then
 \[
 \inc_g(\emptyset)=0, \quad
 \inc_g(\{x\})=3, \quad
 \inc_g(\{y\})=2, \qand
 \inc_g(\{x,y\})=3.
 \]
 
 The following result is well-known.
 
\begin{proposition}\label{p:graph-submod}
For any graph $g$, the incidence function $\inc_g$ is submodular.
\end{proposition}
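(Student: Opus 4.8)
The plan is to deduce submodularity of $\inc_g$ from the diminishing returns criterion of Theorem \ref{t:diminishing}, which is especially natural here because the marginal return $\inc_g/_S(v)$ has a transparent combinatorial meaning. First I would record that $\inc_g$ is indeed a Boolean function: $\inc_g(\emptyset)=0$, since no edge or half-edge is incident to the empty set. Then, fixing $v\in I$ and a subset $S\subseteq I-v$, I would compute the marginal return
\[
\inc_g/_S(v)=\inc_g(S\cup\{v\})-\inc_g(S).
\]
An edge or half-edge is incident to $S\cup\{v\}$ but not to $S$ precisely when it is incident to $v$ and has no endpoint in $S$; since $v\notin S$, every half-edge at $v$ qualifies, and an ordinary edge $\{v,w\}$ qualifies exactly when $w\notin S$. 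Hence $\inc_g/_S(v)$ equals the number of edges and half-edges incident to $v$ whose set of endpoints avoids $S$.

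The heart of the argument is then a one-line monotonicity observation. For $S\subseteq T\subseteq I-v$, any edge or half-edge incident to $v$ whose endpoints avoid $T$ also has its endpoints avoiding $S$; so the set counted by $\inc_g/_T(v)$ is a subset of the set counted by $\inc_g/_S(v)$. Therefore $\inc_g/_S(v)\geq \inc_g/_T(v)$, which is exactly condition \eqref{e:diminish}. By Theorem \ref{t:diminishing} this gives submodularity of $\inc_g$, completing the proof.

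I expect there to be essentially no genuine obstacle here, as this is a standard covering-type computation; the only point requiring a little care is the bookkeeping for half-edges, which must be counted as incident to $v$ regardless of a second endpoint. As a robustness check one could instead argue edge-by-edge: writing $\inc_g=\sum_e f_e$ with $f_e(A)=\mathbf{1}[S_e\cap A\neq\emptyset]$, where $S_e$ is the one- or two-element endpoint set of $e$, each $f_e$ is submodular by a short case analysis on whether $S_e$ meets $A$ and $B$ (the inequality in \eqref{e:submod} being strict only when $S_e$ meets both $A\setminus B$ and $B\setminus A$), and submodularity is preserved under sums since the defining inequality is linear. Either route yields the claim; I would present the diminishing returns version as the main argument because it reuses the machinery already developed in the excerpt.
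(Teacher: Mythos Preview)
Your proposal is correct and follows essentially the same approach as the paper: invoke the diminishing returns criterion (Theorem~\ref{t:diminishing}), interpret $(\inc_g)/_S(v)$ as the number of edges and half-edges incident to $v$ but not to $S$, and observe this count weakly decreases as $S$ grows. The paper's proof is just a terser version of your main argument; your edge-by-edge alternative is a valid second route but not the one the paper takes.
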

\begin{proof}
By Theorem \ref{t:diminishing} it suffices to observe that the marginal benefit of adding $e$ to $S$:
\[
(\inc_g)/_S(e) = \# \textrm{ of edges of $g$ incident to $e$ and not to $S$}
\]
diminishes as we add elements to $S$. 
\end{proof}
%


By Theorem \ref{t:submod-gp} and (\ref{e:submod-gp}), the submodular function $\inc_g$ gives rise to a generalized permutahedron $\Pc(\inc(g)) = Z_g$ which is called the \emph{graphic zonotope} of $g$.

%

\begin{example}\label{eg:graph-zono} 
Revisiting Example \ref{eg:antipode-graph}, if 
$g$ is the graph \, \includegraphics[scale=0.5]{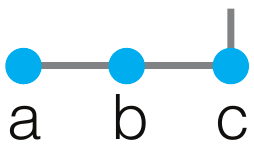} \, 
then the graphic zonotope $Z_g = \Pc(\inc_g)$ 
is given by 
\[
x_a+x_b+x_c=3, \,\,\,\,
x_a+x_b\leq 2,\,\,\,\,
x_b+x_c\leq 3,\,\,\,\,
x_a+x_c\leq 3, \,\,\,\,
x_a\leq 1, \,\,\,\,
x_b\leq 2,\,\,\,\,
x_c\leq 2.
\]
and is shown below. Note that the third and fifth inequalities are optimal but redundant. 
\[
\begin{picture}(50,140)(0,-20)
{\color{black}
 \put(0,30){\circle*{3}} \put(50,0){\circle*{3}}  \put(0,90){\circle*{3}} \put(50,60){\circle*{3}}
 \thicklines
  \put(0,30){\line(0,1){60}} \put(50,0){\line(0,1){60}} 
  \put(50,0){\line(-5,3){50}} \put(50,60){\line(-5,3){50}}
  \put(0,30){\line(0,-1){20}} \put(0,90){\line(0,1){20}}
  \put(50,0){\line(0,-1){20}} \put(50,60){\line(0,1){20}}
  \put(50,0){\line(5,-3){20}} \put(0,30){\line(-5,3){20}}
  \put(50,60){\line(5,-3){20}} \put(0,90){\line(-5,3){20}}
  \put(50,0){\line(5,3){20}}  \put(50,0){\line(-5,-3){20}}
  \put(0,90){\line(5,3){20}}  \put(0,90){\line(-5,-3){20}}
   }
    \put(-20,1){$x_a=1$} \put(38,84){$x_b+x_c=3$}
    \put(0,-24){$x_b=2$}  \put(20,105){$x_a+x_c=3$}
      \put(-75,46){$x_a+x_b=2$}   \put(70,40){$x_c=2$}
\end{picture}
\]
\end{example}

There is a useful alternative description of the zonotope of a graph. 

\begin{proposition}\label{prop:Zgzonotope} \cite[Proposition 6.3]{postnikov09}
The zonotope $Z_g \subseteq \Rb I$ of a graph $g$ on $I$ equals the Minkowski sum
\begin{equation}\label{eq:Zg}
Z_g = 
\sum_{\{i\} \textrm{ half-edge of } g}{\Delta_i}+ 
\sum_{\{i,j\} \textrm{ edge of } g}{\Delta_{\{i,j\}}}.
\end{equation}
In particular, the zonotope of the complete graph $K_I$ on the set $I$ is a translation of the standard permutahedron $\pi_I$:
\[
Z_{K_I} = \pi_I - e_I.
\]
\end{proposition}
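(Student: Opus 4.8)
The plan is to establish the Minkowski sum formula \eqref{eq:Zg} by comparing support values in the directions $\1_A$, and then to obtain the complete graph statement as a direct specialization. Write $\Delta_S = \mathrm{conv}\{e_i : i \in S\}$ for $S \subseteq I$, so that a half-edge $\{i\}$ contributes the point $\Delta_i = \{e_i\}$ and an edge $\{i,j\}$ contributes the segment $\Delta_{\{i,j\}}$, whose edge direction is the root $e_i - e_j$. Let $M$ denote the right-hand side of \eqref{eq:Zg}. Since $M$ is a Minkowski sum of segments and points, it is a zonotope all of whose edges are parallel to roots $e_i - e_j$; by Theorem \ref{t:submod-gp} it is therefore a generalized permutahedron. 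By the same theorem, a generalized permutahedron $\wp$ is recovered from the single submodular function $A \mapsto \max_{x \in \wp} x(A)$ via the optimality of the defining inequalities, and two generalized permutahedra coincide if and only if these functions agree. Thus it suffices to check that $\max_{x \in M} x(A) = \inc_g(A)$ for every $A \subseteq I$.

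First I would use the additivity of support values under Minkowski sums: since $(\wp + \wq)_y = \wp_y + \wq_y$ for any direction $y$ (as recalled in the proof of Lemma \ref{lemma:assocfaces}), maximizing $\1_A$ distributes over the summands of $M$. For a single segment summand, $\max_{x \in \Delta_{\{i,j\}}} \1_A(x) = \max\{\1_A(e_i), \1_A(e_j)\}$ equals $1$ when $i \in A$ or $j \in A$ and $0$ otherwise, that is, it is $1$ precisely when the edge $\{i,j\}$ is incident to $A$; likewise $\max_{x \in \Delta_i} \1_A(x) = \1_A(e_i)$ is $1$ precisely when the half-edge $\{i\}$ is incident to $A$. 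Summing over all edges and half-edges gives the number of edges and half-edges of $g$ incident to $A$, namely $\inc_g(A)$. Combined with the previous paragraph and Proposition \ref{p:graph-submod}, this yields $M = \Pc(\inc_g) = Z_g$.

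For the complete graph I would avoid any convention issue in \eqref{e:Pizonotope} by again comparing support values, now of the translate $\pi_I - e_I$. Since $\1_A(e_I) = |A|$, translation gives $\max_{x \in \pi_I - e_I} x(A) = \max_{x \in \pi_I} x(A) - |A|$, and the maximum over $\pi_I$ is read off from the defining inequalities in Section \ref{ss:p} as the sum $n + (n-1) + \cdots + (n - |A| + 1)$ of the $|A|$ largest entries of $[n]$. On the other hand, an edge of $K_I$ fails to be incident to $A$ exactly when both endpoints lie in $I \setminus A$, so $\inc_{K_I}(A) = \binom{n}{2} - \binom{n - |A|}{2}$. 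A short binomial computation shows these two quantities agree for every $A$, so $\pi_I - e_I$ and $Z_{K_I}$ are generalized permutahedra with the same submodular function and hence are equal.

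The main obstacle here is conceptual rather than computational: one must justify that agreement of the maxima in only the directions $\1_A$ forces equality of the polytopes. This is exactly the content of the uniqueness and optimality assertions in Theorem \ref{t:submod-gp}, and it applies only after one has verified that $M$ (respectively $\pi_I - e_I$) is itself a generalized permutahedron, which is why the zonotope/root-direction observation is the crucial first step. Once this is in place, the remaining work is the elementary support-value computations above, the only genuine calculation being the identity $\bigl(n + (n-1) + \cdots + (n - a + 1)\bigr) - a = \binom{n}{2} - \binom{n-a}{2}$, where $a = |A|$.
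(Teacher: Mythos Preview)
Your proof is correct. The paper does not supply its own argument for this proposition---it simply cites \cite[Proposition~6.3]{postnikov09}---but your approach via support values in the directions $\1_A$ is precisely the standard one, and is the graph-specialization of what the paper later records (again citing Postnikov) as Proposition~\ref{p:minkowski}: the Minkowski sum $\sum_J y(J)\Delta_J$ equals $\Pc(z)$ for $z(A)=\sum_{K\cap A\neq\emptyset} y(K)$, which here reads $z=\inc_g$. Your separate verification of $Z_{K_I}=\pi_I-e_I$ by comparing $\inc_{K_I}(A)=\binom{n}{2}-\binom{n-|A|}{2}$ against the shifted permutahedron support values is a clean way to sidestep any ambiguity in the summation convention of \eqref{e:Pizonotope}, and the binomial identity you need is indeed immediate.
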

\noindent Note that the right hand side of (\ref{eq:Zg}) may have repeated summands.

The facial structure of graphic zonotopes can be described combinatorially \cite{postnikov09, stanley73:_acycl} as we now recall. 
 A \emph{flat} $f$ of a graph $g$ is a set of edges with the property that for any cycle of $g$ consisting of edges $e_1, \ldots, e_k$, if $e_1, \ldots, e_{k-1} \in f$ then $e_k \in f$. 
 
For each flat $f$ of $g$ and each acyclic orientation $o$ of $g/f$, let $g(f,o)$ be the graph obtained from $g$ by keeping $f$ intact, and replacing each edge $\{i,j\}$ not in $f$ by the half-edge $\{i\}$ where $i \rightarrow j$ in the orientation $o$ of $g/f$. The following result is essentially known. \cite{stanley73:_acycl}.

\begin{lemma} \label{l:facesZg}
Let $g$ be a graph with vertex set $I$. The faces of the zonotope $Z_g \subset \Rb I$ are in bijection with the pairs of a flat $f$ of $g$ and an acyclic orientation $o$ of $g/f$. The face corresponding to flat $f$ and orientation $o$ is $Z_{g(f,o)}$, and it is a translation of $Z_f$.
\end{lemma}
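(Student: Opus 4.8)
The plan is to read the faces of $Z_g$ directly off its Minkowski sum presentation (\ref{eq:Zg}) and then match the data that survives in a given direction to a flat together with an acyclic orientation. Throughout I use the elementary fact that passing to the maximal face commutes with Minkowski sums, $(\wp+\wq)_v = \wp_v + \wq_v$ \cite{gruenbaum67:_convex}, already invoked in the proof of Lemma \ref{lemma:assocfaces}. Since every nonempty face of a polytope is its maximal face $Z_{g,y}$ in some direction $y \in (\Rb I)^* = \Rb^I$, it suffices to understand $Z_{g,y}$ for each $y$, and then to organize these faces.

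First I would compute $Z_{g,y}$. The maximal face of the segment $\Delta_{\{i,j\}} = \mathrm{conv}\{e_i,e_j\}$ in direction $y$ is $\{e_i\}$ if $y(i) > y(j)$, is $\{e_j\}$ if $y(i) < y(j)$, and is all of $\Delta_{\{i,j\}}$ if $y(i)=y(j)$, while the singleton $\Delta_i=\{e_i\}$ coming from a half-edge is fixed. Summing over (\ref{eq:Zg}) gives
\[
Z_{g,y} \;=\; \sum_{\{i,j\}\,:\,y(i)=y(j)} \Delta_{\{i,j\}} \;+\; \Big(\sum_{\text{half-edges }\{i\}} e_i \;+\; \sum_{\{i,j\}\,:\,y(i)>y(j)} e_i\Big).
\]
Set $f=f_y := \{\{i,j\}\text{ edge of }g : y(i)=y(j)\}$ and let $o=o_y$ orient each remaining edge as $i\to j$ whenever $y(i)>y(j)$.

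Next I would check the combinatorial data and identify the face. $f_y$ is a flat: if a cycle has every edge but $e_k$ in $f_y$, then $y$ is constant along that path, so the two ends of $e_k$ have equal $y$-value and $e_k \in f_y$. The edges outside $f_y$ join distinct level sets of $y$, hence distinct connected components of $(I,f_y)$ — that is, they are exactly the edges of $g/f_y$ — and orienting each from its higher to its lower $y$-value yields an acyclic orientation, since it refines the linear order of the level sets. Comparing the displayed formula with (\ref{eq:Zg}) applied to $g(f_y,o_y)$ — whose edges are those of $f_y$ and whose half-edges sit at the tails $i$ of the oriented edges $i\to j$ — shows $Z_{g,y} = Z_{g(f_y,o_y)}$, and that this face is the translate $Z_{f_y}+t$ with $t=\sum_{\text{half-edges}}e_i+\sum_{i\to j}e_i$. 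Thus $Z_{g,y}$ depends only on the pair $(f_y,o_y)$, proving that the faces have the asserted form. For surjectivity onto pairs, given a flat $f$ and an acyclic orientation $o$ of $g/f$, a topological sort of $g/f$ gives a linear order of the components of $(I,f)$ compatible with $o$; taking $y$ constant on each component with strictly decreasing values down this order realizes $f_y=f$ and $o_y=o$.

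Finally, injectivity is where the real content lies. From the face $Z_{g,y}=Z_f+t$ I would recover $(f,o)$: the affine hull determines $\espan\{e_i-e_j:\{i,j\}\in f\}$, hence the partition of $I$ into components of $(I,f)$, hence — by closedness of the flat — $f$ itself; and $t$, after subtracting the fixed half-edge shift, is the out-degree vector $\sum_{i\to j}e_i$ of $o$, which determines $o$ because an acyclic orientation is recovered from its out-degrees (if two acyclic orientations had equal out-degrees, the edges on which they disagree would form a subdigraph with in-degree equal to out-degree at every vertex, hence a directed cycle, contradicting acyclicity). Equivalently, and most cleanly, the whole statement is the restriction of zonotope–arrangement duality to this case: the normal fan of $Z_g$ is the graphic arrangement $\{y(i)=y(j):\{i,j\}\in g\}$, whose nonempty faces biject with the pairs $(f,o)$ above \cite{stanley73:_acycl}. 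I expect this injectivity — that distinct pairs never collapse to the same face — to be the only genuine obstacle, everything preceding it being a direct computation.
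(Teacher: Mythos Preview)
Your proof is correct and follows essentially the same approach as the paper: compute $(Z_g)_y$ from the Minkowski decomposition (\ref{eq:Zg}), extract the pair $(f_y,o_y)$, and establish the bijection. The paper's proof simply asserts that different pairs give different faces and that every pair is realized, whereas you supply the details (topological sort for surjectivity, recovery of $f$ from the affine hull and of $o$ from the out-degree vector for injectivity); so your argument is a faithful, more fleshed-out version of the same proof.
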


\begin{proof}
By (\ref{eq:Zg}), the maximal face of $Z_g$ in the direction of $y \in \Rb^I$ is 
\begin{equation}\label{eq:faceZg}
(Z_g)_y = 
\sum_{\{i\} \in g}{\Delta_i}+ 
\sum_{\{i,j\} \in g \, : \, y(i) = y(j)}\Delta_{\{i,j\}}+
\sum_{\{i,j\} \in g \, : \, y(i) > y(j)}\Delta_i+
\sum_{\{i,j\} \in g \, : \, y(i) < y(j)}\Delta_j.
\end{equation}
The vector $y$ determines a flat $f_y$ consisting of the edges $\{i,j\}$ of $g$ such that $y(i)=y(j)$. It also determines an acyclic orientation $o_y$ of $g/f$ obtained by giving the edge $\{i,j\}$ the orientation $i \rightarrow j$ if $y(i) > y(j)$ or $i \leftarrow j$ if $y(i) < y(j)$. Clearly the maximal face $(Z_g)_y$ depends only on $f_y$ and $o_y$. 
Furthermore, different choices of $f_y$ and $o_y$ determine different faces of $(Z_g)_y$, and every  choice of a flat $f$ of $g$ and an acyclic orientation $o$ of $g/f$ can be realized by some vector $y$. This proves the desired one-to-one correspondence. It follows from (\ref{eq:faceZg}) that $(Z_g)_y = Z_{g(f_y, o_y)}$ and that this is a translation of $Z_{f_y}$, as desired.
\end{proof}

\subsection{{Graphs as a submonoid of generalized permutahedra}}\label{s:GinGP}
  
 Recall that $\rG$ is the Hopf monoid of graphs, where $\rG[I]$ is the set of graphs with vertex set $I$, where repeated edges and half-edges are allowed.
For a decomposition $I=S\sqcup T$, the product of two graphs $g_1\in\rG[S]$ and $g_2\in\rG[T]$ is their disjoint union. The coproduct of $g \in\rG[S]$ is $(g|_S, g/_S) \in \rG[S] \times \rG[T]$, where the restriction $g|_S\in\rG[S]$ is the induced subgraph on $S$, while the contraction $g/_S\in\rG[T]$ is obtained by keeping all edges incident to $T$, converting each edge from $T$ to $S$ into a half-edge on $T$.
  
Let $\rG^{cop}$ be the Hopf monoid co-opposite to $\rG$, as defined at the end of Section \ref{ss:hopf-set}.

\begin{proposition}\label{p:graph-submod2}
The map $\inc:\rG^{cop} \to\rSF  \map{\cong} \rGP$ is an injective morphism of Hopf monoids.
\end{proposition}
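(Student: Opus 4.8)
The plan is to verify directly that $\inc$ satisfies the three defining conditions of a morphism of Hopf monoids in Definition \ref{d:morhopf}, together with injectivity of each component map $\inc_I$. First note the map is well defined: Proposition \ref{p:graph-submod} guarantees $\inc_g \in \rSF[I]$, and composing with the isomorphism $\rSF \cong \rGP$ of Theorem \ref{t:submod-gp} (which preserves injectivity) reduces everything to showing that $\inc : \rG^{cop} \to \rSF$ is an injective morphism. Naturality is immediate, since relabelling a graph does not change the number of edges incident to a subset of its vertices. For the product, if $g_1 \in \rG[S]$ and $g_2 \in \rG[T]$ then $g_1 \cdot g_2$ is their disjoint union, so every edge lies entirely on the $S$-side or the $T$-side; hence for $E \subseteq I$ an edge is incident to $E$ precisely when it is incident to $E \cap S$ (if it is an edge of $g_1$) or to $E \cap T$ (if it is an edge of $g_2$), giving $\inc_{g_1 \cdot g_2}(E) = \inc_{g_1}(E \cap S) + \inc_{g_2}(E \cap T)$, which is exactly the product (\ref{e:SFproduct}) in $\rSF$.

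The heart of the argument, and the reason the co-opposite appears, is the coproduct. Recall that in $\rG^{cop}$ the restriction of $g$ to $S$ is the graph-contraction $g/_T \in \rG[S]$, and the contraction of $S$ from $g$ is the graph-restriction $g|_T \in \rG[T]$. I would check the two identities
\[
\inc_{g/_T} = (\inc_g)|_S \qand \inc_{g|_T} = (\inc_g)/_S
\]
against the formulas (\ref{e:SFcoproduct}). For the first, fix $E \subseteq S$. Passing to $g/_T$ keeps every edge of $g$ incident to $S$ (turning each edge from $S$ to $T$ into a half-edge on its $S$-endpoint) and discards only edges living inside $T$, which are irrelevant to $E \subseteq S$; thus the edges of $g/_T$ incident to $E$ are exactly the edges of $g$ incident to $E$, so $\inc_{g/_T}(E) = \inc_g(E) = (\inc_g)|_S(E)$. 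This is precisely where a map out of $\rG$ itself would fail: the induced subgraph $g|_S$ forgets the edges from $E$ to $T$ that $\inc_g(E)$ still counts. For the second, fix $E \subseteq T$; by (\ref{e:SFcoproduct}) we must show $\inc_{g|_T}(E) = \inc_g(E \cup S) - \inc_g(S)$. The right-hand side counts the edges of $g$ incident to $E \cup S$ but not to $S$, that is, the edges incident to $E$ having no endpoint in $S$; these are exactly the edges and half-edges of the induced subgraph $g|_T$ incident to $E$, so both sides agree.

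Finally, I would establish injectivity by reconstructing $g$ from $\inc_g$. Writing $m_{ij}$ for the number of edges between distinct $i,j \in I$ and $h_i$ for the number of half-edges at $i$, a short count gives $m_{ij} = \inc_g(\{i\}) + \inc_g(\{j\}) - \inc_g(\{i,j\})$ and then $h_i = \inc_g(\{i\}) - \sum_{j \neq i} m_{ij}$, so $\inc_g$ determines $g$ and each $\inc_I$ is injective. I expect no genuine difficulty here: all four verifications are elementary counting arguments. The only conceptual point — and the step I would be most careful to state clearly — is the coproduct compatibility, where the passage to $\rG^{cop}$ is exactly what matches the graph-theoretic contraction (which retains boundary edges as half-edges) with the $\rSF$-restriction (which remembers those boundary edges through $\inc_g(E)$).
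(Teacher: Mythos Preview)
Your proof is correct and follows essentially the same route as the paper: verify that $\inc$ preserves products, that it exchanges graph restriction/contraction with submodular contraction/restriction (hence the $\rG^{cop}$), and recover $g$ from $\inc_g$ for injectivity. The only cosmetic difference is in the injectivity step, where the paper recovers the number of half-edges at $i$ as $\inc_g(I)-\inc_g(I\setminus\{i\})$ rather than your $\inc_g(\{i\})-\sum_{j\neq i} m_{ij}$; both formulas are valid.
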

\begin{proof}
We first check that $\inc$ is a morphism of Hopf monoids.
Let $I=S\sqcup T$. Choose $g_1\in\rG[S]$ and $g_2\in\rG[T]$.
Since there are no edges connecting $S$ to $T$ in $g_1\cdot g_2$, an
edge of $g_1\cdot g_2$ incident to $A\subseteq I$ is either incident to $A\cap S$ or to $A\cap T$, but not both. Hence,
\[
\inc_{g_1\cdot g_2}(A)=\inc_{g_1}(A\cap S)+\inc_{g_2}(A\cap T)=(\inc_{g_1}\cdot\inc_{g_2})(A).
\]
Thus, $\inc$ preserves products. 

Let us now show that $\inc$ reverses coproducts. Choose $g\in\rG[I]$. If $A\subseteq T$, then for any edge $e$ of $g$ incident to $A$ there is a corresponding edge $e'$ of $g/_S$ incident to $A$ (possibly a half-edge, if the other endpoint of $e$ belongs to $S$). Since every edge of $g/_S$ arises in this manner from an edge of $g$, we have
\[
\inc_{g/_S}(A)=\inc_g(A)=(\inc_g)|_T(A).
\]
Now, if $A\subseteq S$, notice that an edge of $g$
incident to $A\cup T$ is either incident to $T$,  
or has both endpoints in $S$ (and at least one endpoint in $A$), in which case it is an edge of $g|_S$. Therefore $\inc_g(A\cup T) = \inc_g(T) + \inc_{g|_S}(A)$, so 
\[
\inc_{g|_S}(A)=\inc_g(A\cup T)-\inc_g(T)=(\inc_g)/_T(A).
\]
It follows that $\inc$ reverses restrictions and contractions, as desired.

To prove injectivity, note that if $a$ and $b$ are two distinct vertices of a graph $g$, then the number of edges of $g$ between $a$ and $b$ is
$
\inc_g(\{a\})+\inc_g(\{b\})-\inc_g(\{a,b\}).
$
Also, the number of half-edges at $a$ is 
$
\inc_g(I)-\inc_g(I\setminus\{a\})
$. These numbers determine $g$ entirely.
\end{proof}

\begin{remark}\label{r:cut-inc}
In graph theory one also considers the \emph{cut function} $\cut_g$ defined by
\[
\cut_g(A)=\text{ the number of edges of $g$ joining $A$ to $I\setminus A$},
\]
The map $g\mapsto \cut_g$ is not a morphism of Hopf monoids $\rG\to\rSF$: neither restrictions nor contractions are preserved. 
However, we do have $\cut_g(A) = 2\cdot \inc_g(A) - \sum_{i\in A}\deg_g(i)$, where the \emph{degree} $\deg_g(i)$ is the number of edges incident to vertex $i$. It follows from this that $\cut_g$ is submodular (a known result) and its generalized permutahedron
$\Pc(\cut_g)$ 
is a scaling of $\Pc(\inc_g) = Z_g$ followed by a translation by the vector $-\deg_g \in \Rb I$. 
Therefore the map $g\mapsto \cut_g$ does give a morphism of Hopf monoids $\rG\to\rbGP$; but since $\Pc(\inc_g)$ and $\Pc(\cut_g)$ are normally equivalent, this morphism does not teach us anything new about the Hopf monoid of graphs.
\end{remark}

\subsection{{The antipode of graphs}}

In view of Proposition \ref{p:graph-submod2} and Theorem \ref{t:antipode}, the antipode of $\wG$ is given by the facial structure of graphic zonotopes, as described in Lemma \ref{l:facesZg}.

\begin{corollary}\label{c:antipodeG}
The antipode of the Hopf monoid of graphs $\wG$ is given by the following \textbf{cancellation-free} and \textbf{grouping-free} expression. If $g$ is a graph on $I$ then
\[
\apode_I(g) = \sum_{f,o} (-1)^{c(f)} g(f,o),
\]
summing over all pairs of a flat $f$ of $g$ and an acyclic orientation $o$ of $g/f$, where $c(f)$ is the number of connected components of $f$.
\end{corollary}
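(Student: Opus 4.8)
The plan is to transport the antipode formula for $\rGP$ (Theorem \ref{t:antipode}) across the morphism $\inc$ of Proposition \ref{p:graph-submod2}, and then translate the resulting geometric statement about faces of the zonotope $Z_g$ back into graph-theoretic language via Lemma \ref{l:facesZg}. The conceptual work has already been done in those two results, so this is mostly an assembly; the only subtleties are passing correctly through the co-opposite $\rG^{cop}$ and the sign/dimension bookkeeping.

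First I would record that $\wG$ is commutative, since its product is disjoint union of graphs. By Proposition \ref{p:antipode2}, a commutative Hopf monoid and its co-opposite share the same antipode, so $\apode^{\wG}=\apode^{\wG^{cop}}$. Since $\inc\colon\rG^{cop}\to\rGP$ is a morphism of Hopf monoids, the compatibility of morphisms with antipodes, equation~\eqref{e:apode-mor}, gives
\[
\inc_I\bigl(\apode_I(g)\bigr)=\apode_I\bigl(\inc_I(g)\bigr)=\apode_I(Z_g),
\]
where on the left we use the (common) antipode of $\wG$ and $\wG^{cop}$ and on the right that of $\wGP$.

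Next I would expand the right-hand side with Theorem \ref{t:antipode}, obtaining the cancellation-free sum $\apode_I(Z_g)=(-1)^{|I|}\sum_{\wq\le Z_g}(-1)^{\dim\wq}\,\wq$ over the nonempty faces of the graphic zonotope. By Lemma \ref{l:facesZg} these faces are exactly the polytopes $Z_{g(f,o)}$, as $(f,o)$ ranges over pairs of a flat $f$ of $g$ and an acyclic orientation $o$ of $g/f$, and each such face is a translation of $Z_f$. Since translation preserves dimension and $Z_f$ is the Minkowski sum of the segments $\Delta_{ij}$ over edges $\{i,j\}$ of $f$ (half-edges contribute only the points $\Delta_i$), its dimension is the rank of the graphic matroid of $f$, namely $\dim Z_f=|I|-c(f)$, where $c(f)$ counts the connected components of $(I,f)$ with isolated vertices included. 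Substituting $\dim\wq=|I|-c(f)$ collapses the sign, since $(-1)^{|I|}(-1)^{|I|-c(f)}=(-1)^{c(f)}$, so that
\[
\inc_I\bigl(\apode_I(g)\bigr)=\sum_{f,o}(-1)^{c(f)}\,Z_{g(f,o)}=\sum_{f,o}(-1)^{c(f)}\,\inc_I\bigl(g(f,o)\bigr).
\]

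Finally, $\inc$ sends distinct graphs to distinct graphic zonotopes, hence to distinct (thus linearly independent) basis elements of $\wGP[I]$; therefore the linear map $\inc_I$ is injective and can be cancelled from both sides, yielding the claimed formula in $\wG$. The bijection in Lemma \ref{l:facesZg} guarantees that the summands $g(f,o)$ are pairwise distinct, so the expression is genuinely cancellation-free and grouping-free, as asserted. The main point requiring care is the middle step — verifying $\dim Z_f=|I|-c(f)$ and that the two factors of $(-1)^{|I|}$ combine to $(-1)^{c(f)}$ — together with the bookkeeping needed to route the argument through $\rG^{cop}$ rather than $\rG$ directly; everything else follows formally from the cited results.
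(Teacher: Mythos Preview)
Your proof is correct and follows essentially the same approach as the paper, which simply writes ``This follows from Theorem~\ref{t:antipode} and Lemma~\ref{l:facesZg}, and the observation that the dimension of the zonotope $Z_f$ is $|I|-c(f)$.'' You have carefully spelled out the steps the paper leaves implicit, in particular the passage through $\rG^{cop}$ via commutativity and Proposition~\ref{p:antipode2}, and the use of injectivity of $\inc$ to pull the identity back to $\wG$.
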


\begin{proof}
This follows from Theorem \ref{t:antipode} and Lemma \ref{l:facesZg}, and the observation that the dimension of the zonotope $Z_f$ is $|I|-c(F)$.
\end{proof}

\begin{example} Let us revisit Example \ref{eg:antipode-graph}.  The formula
 \begin{figure}[H]
\centering
\includegraphics[scale=.5]{Figures/antipodegraphs.pdf} \qquad
\end{figure}
\noindent is the algebraic manifestation of the face structure of the graphic zonotope of Example \ref{eg:graph-zono} which consists of one parallelogram, four edges, and four vertices. These nine faces are the graphic zonotopes of the nine graphs occurring in the expression above.
\end{example}

\subsection{{Simple graphs}}\label{s:sgraph} 
A graph is \emph{simple} if it has no half-edges or multiple edges.
Let $\rSG[I]$ denote the set of all simple graphs with vertex set $I$.
Then $\rSG$ is a subspecies of $\rG$, but it not a Hopf submonoid because a contraction of a simple graph need not be simple. 

To remedy this situation, consider the \emph{simplification} map
\[
\rG[I]\to\rSG[I], 
\quad g\mapsto \simple{g}. 
\]
which removes half-edges and edge multiplicities: in $\simple{g}$ there is a unique edge joining two vertices $a$ and $b$ if and only if $a\neq b$ and there is at
least one edge joining $a$ and $b$ in $g$.
This defines a surjective morphism of species
\[
\rG\onto\rSG.
\]
The Hopf monoid structure of $\rG$ descends to $\rSG$ via this map,
so that $\rSG$ is a quotient Hopf monoid of $\rG$. In $\rSG$, products and
contractions have the same description as in $\rG$, while restrictions now coincide
with contractions. Therefore $\rSG$ is cocommutative.

The (linearization of the) Hopf monoid $\rSG$ appears (with different notation) in~\cite[Section~13.2]{am}. A closely related structure was first considered by
Schmitt~\cite[Example~3.3.(3)]{schmitt93:_hopf}. 

%
%

%

\begin{proposition}\label{p:graph-zono}
There is a commutative diagram of morphisms of Hopf monoids as follows.
\[
\xymatrix{
\rG^{cop} \ar@{->>}[d]  \xyinc[r] & \rGP\ar@{->>}[d] \\
\rSG^{cop}  \xyinc[r] & \rbGP
}
\]
\end{proposition}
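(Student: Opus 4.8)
The plan is to construct the bottom arrow $\rSG^{cop} \into \rbGP$ by showing that the composite $\rG^{cop} \xrightarrow{\inc} \rGP \onto \rbGP$ is constant on the fibers of the simplification surjection $\rG^{cop} \onto \rSG^{cop}$, and then to read off commutativity, the Hopf-morphism property, and injectivity directly from that factorization.

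First I would assemble the ingredients already in place. The top arrow $\inc\colon \rG^{cop}\to\rGP$, $g\mapsto Z_g=\Pc(\inc_g)$, is the injective morphism of Hopf monoids of Proposition \ref{p:graph-submod2}; the quotient $\rGP\onto\rbGP$ by normal equivalence is a morphism of Hopf monoids by Section \ref{s:rbbGP}; and the simplification $\rG\onto\rSG$ is a surjective morphism of Hopf monoids (with $\rSG$ realized as a quotient Hopf monoid of $\rG$), so its co-opposite $\rG^{cop}\onto\rSG^{cop}$ is one as well. Thus all three given arrows are Hopf-monoid morphisms.

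The heart of the argument is that the normal equivalence class of $Z_g$ depends only on the simplification $\simple{g}$. Starting from the Minkowski-sum description $Z_g = \sum_{\{i\}}\Delta_i + \sum_{\{i,j\}}\Delta_{\{i,j\}}$ of Proposition \ref{prop:Zgzonotope}, I would invoke the two standard facts that $\N(P+Q)$ is the common refinement of $\N(P)$ and $\N(Q)$ and that $\N(\lambda P)=\N(P)$ for $\lambda>0$: repeating a summand $\Delta_{\{i,j\}}$ leaves the common refinement unchanged, while each half-edge contributes a summand $\Delta_i=\{e_i\}$, i.e.\ a single point, which merely translates the Minkowski sum and so does not change the normal fan. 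Hence $\simple{g_1}=\simple{g_2}$ forces $Z_{g_1}\equiv Z_{g_2}$, so the composite $\rG^{cop}\to\rbGP$ is constant on fibers of $\rG^{cop}\onto\rSG^{cop}$ and therefore factors set-theoretically through a unique map $\rSG^{cop}\to\rbGP$. Since this factorization is through a \emph{surjective} Hopf-monoid morphism, the induced map is itself a morphism of Hopf monoids (one checks each morphism axiom on elements $q(x)$, using surjectivity of $q$ together with the fact that both $q$ and the composite are morphisms), and the square commutes by construction.

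Finally I would verify injectivity of the bottom arrow. The normal fan of $Z_{\simple{g}}$ is the graphic arrangement $\bigcup_{\{i,j\}\in\simple{g}}\{y\in\Rb^I : y(i)=y(j)\}$, whose set of hyperplanes recovers the edge set of $\simple{g}$, so normally equivalent simple graphs coincide. I do not expect a genuine obstacle here: the substantive content is the normal-fan invariance that yields the factorization, and the only point demanding care is the co-opposite bookkeeping, but since every map in the diagram is built from the same underlying maps of sets and $(-)^{cop}$ only reverses the coproduct, the morphism property transports without incident.
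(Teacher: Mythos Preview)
Your proof is correct and follows the same route as the paper: the key step in both is the Minkowski-sum/normal-fan computation showing $\N(Z_g)=\N(Z_{\simple{g}})$, from which commutativity follows. Your version is in fact more thorough than the paper's, which defines the bottom arrow directly as $\simple{g}\mapsto[Z_{\simple{g}}]$ and checks commutativity, but does not spell out the Hopf-morphism property (which you obtain cleanly via factorization through the surjection) or the injectivity of the bottom arrow (which you recover from the graphic arrangement).
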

%

\begin{proof}
Simplification gives the vertical map $\rG^{cop} \onto\rSG^{cop}$ while the map $\rGP \onto \rbGP$ identifies generalized permutahedra with the same normal fan.
The top map $\rG^{cop} \into \rGP$ is given by Proposition~\ref{p:graph-submod2}, while the bottom map $\rSG^{cop} \into \rbGP$ sends a simple graph to the normal equivalence class of its zonotope. To verify that the diagram commutes, we need to show that if $g$ is a graph and $g'$ is its simplification, then $Z_g$ and $Z_{g'}$ are normally equivalent.

By (\ref{eq:Zg}), the normal fan
$\N(Z_g)$ is the common refinement of the fans $\N({\Delta_{\{i\}}})$ for all half-edges $\{i\}$ and $\N({\Delta_{\{i,j\}}})$ for all edges $\{i,j\}$. This common refinement is unaffected by the removal of the former fans (which are trivial) and by the removal of repetitions of the latter fans. Therefore $\N(Z_g) = \N(Z_{g'})$ as desired.
\end{proof}

%
%
%

\begin{corollary}\label{c:antipodeG2}
The antipode of the Hopf monoid of simple graphs $\rSG$ is given by the following \textbf{cancellation-free} and \textbf{grouping-free} expression. If $g$ is a simple graph on $I$ then
\[
\apode_I(g) = \sum_{f \textrm{ flat of } g} (-1)^{c(f)} a(g/f) \,  f
\]
where $a(g/f)$ is the number of acyclic orientations of the contraction $g/f$ and $c(f)$ is the number of connected components of $f$.
\end{corollary}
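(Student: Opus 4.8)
The plan is to derive this formula from the antipode of $\wG$ (Corollary \ref{c:antipodeG}) by transporting it along the simplification morphism. Recall from Section \ref{s:sgraph} that simplification is a surjective morphism of Hopf monoids $s:\rG\onto\rSG$, hence also a morphism of the linearizations $\wG\onto\wSG$. By the morphism property of the antipode, equation \eqref{e:apode-mor} in Proposition \ref{p:antipode2}, the antipode commutes with $s$. Since $s_I(g)=g$ for a simple graph $g$, applying this identity gives
\[
\apode_I(g) = s_I\bigl(\apode_I(g)\bigr),
\]
where on the right $\apode_I$ is the antipode of $\wG$ and on the left that of $\wSG$.

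First I would substitute the cancellation-free formula of Corollary \ref{c:antipodeG} and apply $s_I$ termwise:
\[
s_I\bigl(\apode_I(g)\bigr) = \sum_{f,o} (-1)^{c(f)}\, s_I\bigl(g(f,o)\bigr),
\]
summing over pairs of a flat $f$ of $g$ and an acyclic orientation $o$ of $g/f$. The crucial step is to identify $s_I(g(f,o))$. By the definition of $g(f,o)$ preceding Lemma \ref{l:facesZg}, this graph keeps the edges of the flat $f$ intact and replaces every remaining edge of $g$ by a half-edge. Simplification deletes all half-edges and all repeated edges; since $g$ is simple the subgraph $f$ is itself simple, so it survives unchanged. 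Hence $s_I(g(f,o))=f$ for every orientation $o$, independently of the choice of $o$.

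It then remains to group the summands by flats. As $s_I(g(f,o))=f$ does not depend on $o$, the terms with a fixed flat $f$ all contribute $(-1)^{c(f)} f$, and there are exactly $a(g/f)$ of them --- one per acyclic orientation of $g/f$. Summing, we obtain
\[
\apode_I(g) = \sum_{f \textrm{ flat of } g} (-1)^{c(f)}\, a(g/f)\, f,
\]
as desired; this is cancellation-free and grouping-free because distinct flats $f$ yield distinct simple graphs, so no two surviving summands coincide.

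The only delicate point --- and the main thing to verify carefully --- is the identification $s_I(g(f,o))=f$, i.e.\ that simplification collapses precisely the half-edges introduced by the orientation while leaving the flat $f$ untouched; this is exactly what makes all $a(g/f)$ orientations of a fixed flat merge into the single term $f$. Geometrically it reflects the fact that the faces $Z_{g(f,o)}$ of the graphic zonotope sharing a common flat $f$ are normally equivalent translates of $Z_f$, which is consistent with the commutative square of Proposition \ref{p:graph-zono} and supplies an alternative proof via Theorem \ref{t:antipode} applied directly in $\rbGP$.
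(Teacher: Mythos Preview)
Your proof is correct and follows essentially the same approach as the paper: the paper's proof simply says ``This follows from Corollary~\ref{c:antipodeG} and the observation that when $g$ is a simple graph, the simplification of $g(f,o)$ is $f$.'' You have made the mechanism explicit by invoking the morphism property \eqref{e:apode-mor} of the antipode under simplification, but the key step---that $s_I(g(f,o))=f$ independently of $o$---is identical.
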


\begin{proof}
This follows from Corollary \ref{c:antipodeG} and the observation that when $g$ is a simple graph, the simplification of $g(f,o)$ is $f$.
\end{proof}

An equivalent formula for Hopf algebras was also obtained by Humpert and Martin \cite{humpert12} through a clever inductive argument. In the context of Hopf algebras isomorphic graphs are identified, so to find the coefficient of a particular graph $h$ in $\apode_I(g)$ one has to overcome the additional problem of identifying all flats of $g$ isomorphic to $h$. This is one reason to prefer working with Hopf monoids instead of Hopf algebras in combinatorial contexts. See also Remark \ref{r:monoid}.

\subsection{{Characters of complete graphs and Humpert and Martin's conjecture}}

For each $k \in \Cb$ let $\xi_k$ be the character on $\wG$ given by $(\xi_k)_I(g) = k^{|I|}$ for any graph $g$ on vertex set $I$. Let $\zeta$ be the character on $\wG$ where $\zeta_I(g)$ equals $1$ if $g$ has no edges and $0$ otherwise.
For each $k \in \Cb$ and $c \in \Zb$ let $\xi_k \zeta^c$ denote the convolution product of $\xi_k$ and $\zeta^c$ in $\wG$.

Recall that a \emph{derangement} of $I$ is a permutation of $I$ without fixed points, and an \emph{arrangement} is a permutation of a subset of $I$. The following formulas were conjectured by Humpert and Martin \cite{humpert12}.

\begin{theorem}\cite[Conjecture (27)]{humpert12}\label{conj:hm} Let $K_n$ be the complete graph on $n$ vertices. Then
\[
\sum_{n \geq 0} (\xi_k \zeta^c)(K_n) \frac{x^n}{n!} = e^{kx}(1+x)^c
\]
for any complex number $k$ and integer $c$. In particular,
\[
(\xi_1 \zeta^{-1})(K_n) = (-1)^n D_n
\qquad
(\xi_{-1} \zeta^{-1})(K_n) = (-1)^n A_n
\]
where $D_n$ and $A_n$ are the numbers of derangements and arrangements of 
$[n]$ respectively.
\end{theorem}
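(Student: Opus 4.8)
The plan is to package the entire statement into a single exponential generating function and to exploit the fact that both $\xi_k$ and $\zeta$ read off a graph in a way that is insensitive to half-edges. For a character $\varphi$ on $\wG$, set $\Phi(\varphi) := \sum_{n\ge 0}\varphi_{[n]}(K_n)\,x^n/n! \in \Kb[[x]]$. From the definitions one computes immediately that $\Phi(\xi_k)=\sum_n k^n x^n/n! = e^{kx}$, and, since $K_n$ has no edges precisely when $n\le 1$, that $\Phi(\zeta)=1+x$. Thus the theorem asserts exactly that $\Phi$ carries the convolution $\xi_k\zeta^c$ to the product $\Phi(\xi_k)\,\Phi(\zeta)^c$, and everything reduces to understanding how $\Phi$ interacts with the convolution product \eqref{e:conv}.

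The key step is a multiplicativity lemma for $\Phi$. I would call a character $\psi$ \emph{half-edge-blind} if $\psi_I(g)=\psi_I(g^{\circ})$, where $g^{\circ}$ is $g$ with all half-edges deleted. First I would record that $\zeta$ is half-edge-blind (deleting half-edges does not change whether a graph has a genuine edge) and that the half-edge-blind characters are closed under convolution: this follows from \eqref{e:conv} together with the observation that, under restriction and contraction, the genuine edges of $g$ and of $g^{\circ}$ behave identically, and only half-edges are ever created or forgotten. Then, for any character $\varphi$ and any half-edge-blind $\psi$, I claim $\Phi(\varphi\psi)=\Phi(\varphi)\,\Phi(\psi)$. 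Indeed, $K_n|_S=K_S$ while $(K_n)/_S$ is $K_T$ with extra half-edges attached at each vertex of $T$, so applying \eqref{e:conv}, half-edge-blindness of $\psi$, and naturality gives
\[
(\varphi\psi)_{[n]}(K_n)=\sum_{[n]=S\sqcup T}\varphi_S(K_S)\,\psi_T\bigl((K_n)/_S\bigr)=\sum_{j=0}^n\binom{n}{j}\varphi_{[j]}(K_j)\,\psi_{[n-j]}(K_{n-j}),
\]
which is precisely the coefficient identity equivalent to $\Phi(\varphi\psi)=\Phi(\varphi)\Phi(\psi)$.

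With the lemma in hand, the exponent $c$ is dispatched in two steps. For $c\ge 0$, iterating the lemma with $\psi=\zeta$ yields $\Phi(\xi_k\zeta^c)=\Phi(\xi_k)\,\Phi(\zeta)^c=e^{kx}(1+x)^c$. For $c<0$ I would avoid inverting a character directly by writing $\xi_k=(\xi_k\zeta^c)\,\zeta^{-c}$ and applying the lemma with the \emph{half-edge-blind} factor $\zeta^{-c}$ (a nonnegative convolution power of $\zeta$), obtaining $e^{kx}=\Phi(\xi_k\zeta^c)\,(1+x)^{-c}$ and hence $\Phi(\xi_k\zeta^c)=e^{kx}(1+x)^c$ for every integer $c$. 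Finally the two displayed consequences follow by matching generating functions: substituting $x\mapsto -x$ in the classical identities $\sum_n D_n x^n/n! = e^{-x}/(1-x)$ and $\sum_n A_n x^n/n!=e^{x}/(1-x)$ turns them into $\sum_n (-1)^nD_n x^n/n!=e^{x}/(1+x)$ and $\sum_n(-1)^nA_n x^n/n!=e^{-x}/(1+x)$, which are exactly $\Phi(\xi_1\zeta^{-1})$ and $\Phi(\xi_{-1}\zeta^{-1})$. I expect the main obstacle to be the bookkeeping of half-edges: one must verify carefully that restriction and contraction of complete graphs never convert a half-edge into a genuine edge, so that half-edge-blindness genuinely survives under convolution and the reduction for negative $c$ through $\zeta^{-c}$ is legitimate.
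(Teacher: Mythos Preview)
Your proof is correct and complete. The argument via half-edge-blind characters is sound: restriction of $K_n$ to $S$ is exactly $K_S$, contraction produces $K_T$ decorated only with half-edges, and your closure-under-convolution check goes through because restriction and contraction never turn a half-edge into a genuine edge. The handling of negative $c$ by rewriting $\xi_k=(\xi_k\zeta^c)\zeta^{-c}$ and applying the lemma with the half-edge-blind factor $\zeta^{-c}$ is a clean way to avoid inverting inside $\Phi$.

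The paper's proof is different in spirit. Rather than working inside $\wG$, it observes that the graphic zonotope of $K_I$ is a translate of the standard permutahedron $\pi_I$, so the Hopf submonoid generated by complete graphs is isomorphic to the Hopf monoid $\wbPi$ of permutahedra. It then invokes Theorem~\ref{t:charperm}, which identifies the character group $\Xb(\wbPi)$ with exponential power series under multiplication. In that group the identity $\Phi(\varphi\psi)=\Phi(\varphi)\Phi(\psi)$ holds automatically for \emph{all} characters, with no side condition, and the formula follows in one line.

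The two approaches are really the same idea seen from different angles: your half-edge-blindness is precisely the condition that a character on $\wG$ descends to the quotient $\wbPi\subset\wbGP$ (half-edges translate the zonotope without changing its normal fan). The paper's route buys conceptual clarity and ties the result into the broader permutahedral framework; yours is more elementary and self-contained, requiring no polytope geometry or prior structure theorems, at the cost of the ad hoc half-edge bookkeeping.
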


\begin{proof}
Since the graphic zonotope of a complete graph $K_I$ is a translation of the standard permutahedron $\pi_I$ by Proposition \ref{prop:Zgzonotope}, the Hopf submonoid of $\wG$ generated by complete graphs is isomorphic to the Hopf submonoid $\wbPi$ of $\wGP$ generated by standard permutahedra, considered in Section \ref{s:Pi}. We may then regard $\xi_k$ and $\zeta$ as characters on $\wbPi$. This allows us to carry out the required computations in the character group $\Xb(\wbPi)$, where they become straightforward.

By Theorem \ref{t:charperm}, convolution of characters of $\wbPi$ corresponds to multiplication of their exponential generating functions; therefore
\[
\sum_{n \geq 0} (\xi_k \zeta^c)(\pi_n) \frac{x^n}{n!} = \left(\sum_{n \geq 0} \xi_k(\pi_n) \frac{x^n}{n!}\right)
\left(\sum_{n \geq 0} \zeta(\pi_n) \frac{x^n}{n!}\right)^c = 
e^{kx}(1+x)^c
\]
as desired. By comparing this with the generating functions
\[
\sum_{n \geq 0} (-1)^n D_n \frac{x^n}{n!} = \sum_{n \geq 0} \left[(-1)^n \left(\sum_{i =0}^n (-1)^i \frac{n!}{i!}\right)\frac{x^n}{n!}\right] = \left(\sum_{i \geq 0} \frac{x^i}{i!}\right) \left(\sum_{j \geq 0} (-1)^j x^j\right) = e^x (1+x)^{-1}
\]
and
\[
\sum_{n \geq 0} (-1)^n A_n \frac{x^n}{n!} = \sum_{n \geq 0} \left[ (-1)^n \left(\sum_{i =0}^n \frac{n!}{i!}\right)\frac{x^n}{n!} \right]= \left(\sum_{i \geq 0} \frac{(-1)^ix^i}{i!}\right) \left(\sum_{j \geq 0} (-1)^j x^j\right) = e^{-x} (1+x)^{-1}
\]
we obtain the remaining two formulas.
\end{proof}

\section{$\rM$: {Matroids and matroid polytopes}}\label{s:M}  

Similarly to graphs, matroids also have a polyhedral model called its \emph
{matroid polytope}, due to Edmonds \cite{edmonds70}; and this model respects the Hopf-algebraic structure of matroids, introduced in 1982 by Joni and Rota~\cite{joni82:_coalg} and further studied by Schmitt~\cite{schmitt93:_hopf}. We now employ the geometric perspective to compute, for the first time, the optimal formula for the antipode of matroids.

\subsection{{Matroid polytopes}}\label{ss:edmonds}

Let $m$ be a matroid with ground set $I$. The \emph{rank} of $A\subseteq I$ in $m$, denoted $\rank_m(A)$, is the cardinality of any maximal independent set of $m$ contained in $A$. The matroid axioms guarantee that this is well-defined and moreover, that the function
\[
\rank_m:2^I\to\Nb
\]
is submodular~\cite[Lemma 1.3.1]{oxley92:_matroid}; indeed, the marginal benefit of adding $e$ to $S$
\[
(\rank_m)/_S(e) = \begin{cases}
1 & \textrm{ if $e$ is independent of $S$,} \\ 
0 & \textrm{ if $e$ is dependent on $S$} 
\end{cases}
\]
weakly decreases as we add elements to $S$.

By Theorem \ref{t:submod-gp} and (\ref{e:submod-gp}), the submodular function $\rank_m$ gives rise to a generalized permutahedron $\Pc(\rank_m) = \Pc(m)$ which is called the \emph{matroid polytope} of $m$
This polytope has an elegant vertex description.

\begin{proposition}\label{prop:matroidpolytope} \cite{edmonds70,gelfand1987combinatorial}
The matroid polytope $\Pc(\rank_m) = \Pc(m)$ of a matroid $m$ on $I$ is given by
\[
 \Pc(m) = \textrm{conv} \, \{e_{b_1} + \cdots + e_{b_r} \, : \, \{b_1, \ldots, b_r\} \textrm{ is a basis of } m\} \subset \Rb I,
 \]
 where $\{e_i \, : \, i \in I\}$ is the standard basis. Furthermore, every basis gives a vertex of $\Pc(m)$.
\end{proposition}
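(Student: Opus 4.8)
The plan is to prove the asserted equality by the two inclusions $Q \subseteq \Pc(m)$ and $\Pc(m) \subseteq Q$, where $Q$ denotes the convex hull $\textrm{conv}\{e_B : B \textrm{ a basis of } m\}$ with $e_B := \sum_{i \in B} e_i$, and then to exhibit each basis as a genuine vertex.

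First I would dispatch the easy inclusion $Q \subseteq \Pc(m)$. For any basis $B$ of $m$ the indicator vector $e_B$ satisfies $\sum_{i \in I}(e_B)_i = |B| = \rank_m(I)$, and for every $A \subseteq I$ it satisfies $\sum_{i \in A}(e_B)_i = |A \cap B| \leq \rank_m(A)$, since $A \cap B$ is an independent set (a subset of the basis $B$) contained in $A$. Hence each $e_B$ lies in $\Pc(m)$, and since $\Pc(m)$ is convex it contains $Q$.

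The substance is the reverse inclusion, which I would obtain from the vertex structure of $\Pc(m)$ as a generalized permutahedron. By Theorem \ref{t:submod-gp} the polytope $\Pc(m) = \Pc(\rank_m)$ is a bounded generalized permutahedron, hence the convex hull of its vertices; moreover each vertex is the $y$-maximal face $\Pc(m)_y$ for a direction $y$ in a full-dimensional open chamber of the braid arrangement, that is, for a $y$ inducing a strict linear order $y(i_1) > \cdots > y(i_n)$ of $I$ (this uses $\dim \Nc_{\wp}(\wq) = |I| - \dim \wq$, so a vertex has a full-dimensional normal cone containing some open chamber). For such a $y$ the maximizer of $y$ over the base polyhedron of a submodular function is computed by the greedy algorithm \cite{fujishige05}, with optimal vertex $v$ given by $v_{i_k} = \rank_m(\{i_1, \ldots, i_k\}) - \rank_m(\{i_1, \ldots, i_{k-1}\})$. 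Each of these marginal ranks lies in $\{0,1\}$, so $v = e_B$ where $B$ is the set of indices of marginal rank $1$; this $B$ is independent and has size $\rank_m(I)$, hence is a basis. Therefore every vertex of $\Pc(m)$ is a basis indicator vector, which gives $\Pc(m) \subseteq Q$ and, with the first inclusion, the equality.

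Finally, to see that every basis yields a vertex, I would run the greedy construction in reverse: given a basis $B = \{b_1, \ldots, b_r\}$, choose $y$ so that all elements of $B$ precede those of $I \setminus B$ in the induced linear order. Since every prefix of $b_1, \ldots, b_r$ is independent, the greedy marginal ranks equal $1$ throughout $B$, after which the running rank has reached $\rank_m(I) = r$ and the marginal ranks are $0$ on $I \setminus B$; thus the greedy vertex is exactly $e_B$. As the greedy point is a vertex of $\Pc(m)$ for generic $y$, each $e_B$ is a vertex, as claimed. The main obstacle is the reverse inclusion, where identifying greedy optima with basis indicator vectors rests on the greedy characterization of base polyhedra of submodular functions; the remaining steps are routine bookkeeping with the rank inequalities.
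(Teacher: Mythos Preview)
The paper does not supply its own proof of this proposition; it simply cites Edmonds and Gel'fand--Goresky--MacPherson--Serganova for the result. Your argument is correct and is essentially the standard one: the forward inclusion is immediate from the rank inequalities, and the reverse inclusion follows from the greedy/Edmonds description of the vertices of the base polytope of a submodular function, specialized to $z = \rank_m$ so that the greedy increments lie in $\{0,1\}$ and produce a basis indicator. Your appeal to Theorem~\ref{t:submod-gp} to know that every vertex is $y$-maximal for some $y$ in an open braid chamber is exactly the right bridge, and the final paragraph correctly realizes each basis as a greedy output.
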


This construction goes back to Edmonds~\cite{edmonds70} in optimization, and later to Gel'fand, Goresky, MacPherson, and Serganova~\cite{gelfand1987combinatorial} in algebraic geometry. 
In what follows, we will sometimes identify a matroid $m$ with its matroid polytope $\Pc(m)$.

\begin{example}\label{eg:matroid-poly} Revisiting Example \ref{eg:antipode-matroid}, let $m$ be the matroid of rank 2 on $\{a,b,c,d\}$ whose only non-basis is $\{c,d\}$. The matroid polytope $\Pc(m)$, shown in Figure \ref{f:matroidpolytope}, is given by the inequalities:
\[
x_a+x_b+x_c+x_d=2, \,\,\,\,
x_a+x_b, x_a+x_c, x_a+x_d,
x_b+x_c, x_b+x_d \leq 2,
x_c+x_d \leq 1, \,\,\,\,\,
x_a, x_b, x_c, x_d\leq 1
\]
\end{example}

\begin{figure}[h]
\label{f:matroidpolytope}
\centering
\includegraphics[scale=.8]  {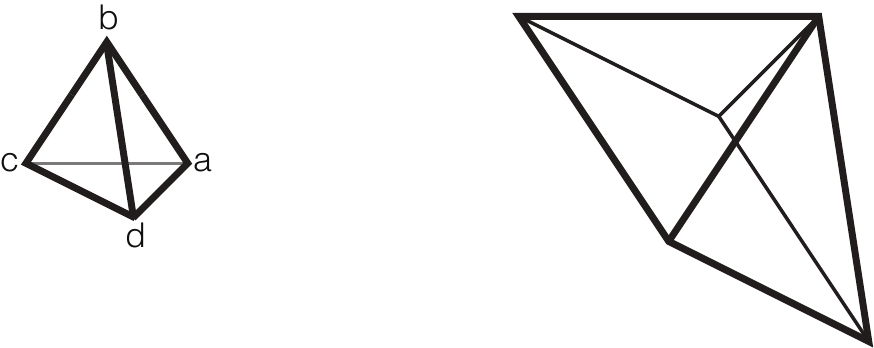} \qquad \caption{The matroid polytope of the matroid of Example \ref{eg:matroid-poly}.}
\end{figure}

There does not seem to be a simple and purely combinatorial indexing for the faces of the matroid polytope $\Pc(m)$. For a non-bijective description of these faces, see \cite[Proposition 2]{ardila2006bergman} or \cite[Problem 1.26]{borovik2003coxeter}.

\subsection{Matroids as a submonoid of generalized permutahedra}

Recall that $\rM$ is the Hopf monoid of matroids, where $\rM[I]$ is the set of graphs with ground set $I$. 
For  a decomposition $I=S\sqcup T$, the product of two matroids $m_1\in\rM[S]$ and $m_2\in\rM[T]$ is their direct sum $m_1 \oplus m_2 \in \rM[I]$. The coproduct of a matroid $m \in\rM[I]$ is $(m|_S, m/_S)$, where $m|_S \in  \rM[S]$ and $m/_S \in \rM[T] $ are the restriction and contraction of $m$ with respect to $S$, respectively.

\begin{proposition}\label{p:matroid-submod}
The map $\rank:\rM\to\rSF \map{\cong} \rGP$ is an injective morphism of Hopf monoids.
\end{proposition}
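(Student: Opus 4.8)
The plan is to verify the three defining conditions of a morphism of Hopf monoids in set species from Definition \ref{d:morhopf}, together with injectivity. Submodularity of $\rank_m$ has already been established above via the diminishing returns criterion (Theorem \ref{t:diminishing}), so the composite $m \mapsto \Pc(\rank_m)$ indeed lands in $\rGP$ by Theorem \ref{t:submod-gp}; it remains to check that $\rank$ intertwines products and coproducts, and that it is injective. Naturality (invariance under relabeling the ground set) is immediate, since ranks of subsets are unchanged by bijections.

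For products, fix a decomposition $I = S \sqcup T$ and matroids $m_1 \in \rM[S]$, $m_2 \in \rM[T]$. I would use the standard fact \cite{oxley92:_matroid} that a maximal independent subset of $A \subseteq I$ in the direct sum $m_1 \oplus m_2$ is obtained by independently choosing a maximal independent subset of $A \cap S$ in $m_1$ and of $A \cap T$ in $m_2$. This yields $\rank_{m_1 \oplus m_2}(A) = \rank_{m_1}(A \cap S) + \rank_{m_2}(A \cap T)$, which is exactly the product $(\rank_{m_1} \cdot \rank_{m_2})(A)$ of Boolean functions from \eqref{e:SFproduct}. Hence $\rank$ preserves products.

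For coproducts, I would invoke two classical rank formulas from matroid theory \cite{oxley92:_matroid}. The restriction (deletion) $m|_S = m\backslash_{I-S}$ satisfies $\rank_{m|_S}(A) = \rank_m(A)$ for all $A \subseteq S$, which matches $(\rank_m)|_S(A) = \rank_m(A)$ in \eqref{e:SFcoproduct}. The contraction satisfies $\rank_{m/_S}(A) = \rank_m(A \cup S) - \rank_m(S)$ for all $A \subseteq T$, which matches $(\rank_m)/_S(A) = \rank_m(A \cup S) - \rank_m(S)$ in \eqref{e:SFcoproduct}. Thus $\Delta_{S,T}(\rank_m) = \bigl((\rank_m)|_S,\, (\rank_m)/_S\bigr) = (\rank_{m|_S},\, \rank_{m/_S})$, so $\rank$ preserves coproducts. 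I would emphasize that, unlike the graph case of Proposition \ref{p:graph-submod2}, no co-opposite is needed here: matroid restriction and contraction correspond \emph{directly} to restriction and contraction of submodular functions, rather than being swapped.

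Finally, injectivity is immediate from the fact that a matroid is recovered from its rank function: the independent sets are precisely those $A \subseteq I$ with $\rank_m(A) = \abs{A}$, and these determine $m$ entirely. The main point requiring care is the bookkeeping behind the contraction rank formula (and, if one wants a self-contained argument, a brief justification of the direct-sum rank formula); everything else is a direct comparison of two explicit formulas, so I expect no serious obstacle.
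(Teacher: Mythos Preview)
Your proof is correct and follows essentially the same approach as the paper. The paper's own proof is a terse two-sentence version of exactly what you wrote: it cites the standard rank formulas for direct sums, restrictions, and contractions from Oxley's book and notes that the rank function determines the matroid; you have simply spelled out those formulas and matched them line by line against \eqref{e:SFproduct} and \eqref{e:SFcoproduct}.
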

\begin{proof}
The descriptions for the rank function of the direct sum, restriction and 
contraction of matroids in~\cite[Prop. 3.1.5, 3.1.7, 4.2.17]{oxley92:_matroid}
imply that $\rank$ is a morphism of Hopf monoids.
Injectivity holds since the rank function determines the matroid uniquely.
\end{proof}

\subsection{The antipode of matroids}

To our knowledge, a cancellation-free formula for the antipode of matroids was only known for the special case of uniform matroids. \cite{bucher16}
Proposition \ref{p:matroid-submod} and Theorem \ref{t:antipode} now tell us that  the antipode of $\rM$ is given by the facial structure of matroid polytopes.

Every matroid $m$ has a unique maximal decomposition as a direct sum of smaller matroids. We let $c(m)$ be the number of summands, which are called the \emph{connected components} of $m$. \cite[Section 4]{oxley92:_matroid}

\begin{theorem}\label{t:antipodeM}
The antipode of the Hopf monoid of matroids $\rM$ is given by the following \textbf{cancellation-free} and \textbf{grouping-free} formula. If $m$ is a matroid on $I$, then
\begin{equation}\label{amnmonoid}
\apode_I(m) = \sum_{n \leq m} (-1)^{c(n)} \, n,
\end{equation}
where we sum over all the nonempty faces $n$ of the matroid polytope of $m$. 
\end{theorem}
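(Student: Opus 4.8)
The plan is to obtain the formula by transporting the antipode of $\rGP$ (Theorem \ref{t:antipode}) along the embedding of matroids into generalized permutahedra, rather than attacking Takeuchi's sum directly. By Proposition \ref{p:matroid-submod}, the rank map $m \mapsto \Pc(m)$ is an injective morphism of Hopf monoids $\rM \hookrightarrow \rGP$, and morphisms of Hopf monoids commute with antipodes (equation \eqref{e:apode-mor} of Proposition \ref{p:antipode2}). So, extending $\Pc$ linearly to $\wM$, I would start from
\[
\Pc(\apode_I(m)) = \apode_I(\Pc(m)) = (-1)^{|I|}\sum_{\wq \leq \Pc(m)} (-1)^{\dim \wq}\, \wq,
\]
the last equality being Theorem \ref{t:antipode}. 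The entire problem then reduces to rewriting this sum over faces as a sum over matroids and reading off the signs.

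The next step is to identify each face of $\Pc(m)$ with a matroid. For a direction $y$ in an open braid cone $\Bc^\circ_{S_1, \ldots, S_k}$, Proposition \ref{p:face2} expresses the face $\Pc(m)_y$ as a product of generalized permutahedra, each of which is again a matroid polytope (of a minor of $m$); their product is the matroid polytope of the corresponding direct sum. Thus every nonempty face $\wq$ equals $\Pc(n)$ for a matroid $n$ on $I$, and this $n$ is unique because $\rank$ is injective. This is what lets me reindex the sum by the matroids $n \leq m$ appearing in the statement.

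It then remains to turn the sign $(-1)^{|I|}(-1)^{\dim \wq}$ into $(-1)^{c(n)}$, for which the crucial input is the dimension formula $\dim \Pc(n) = |I| - c(n)$. I would prove this from the product structure: a connected matroid on $J$ has polytope of dimension $|J|-1$ inside the hyperplane $\sum_{j} x_j = \rank$, and for a direct sum the polytope is a product, so the component dimensions add to $\sum (|I_j|-1) = |I| - c(n)$. Substituting gives $(-1)^{|I|}(-1)^{\dim \Pc(n)} = (-1)^{2|I|-c(n)} = (-1)^{c(n)}$, and injectivity of $\Pc$ finally lets me strip it off to conclude the identity in $\wM$. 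The main obstacle is the middle step: carefully checking that every face of a matroid polytope is itself a matroid polytope (so the reindexing is legitimate) and nailing down the dimension-versus-connectivity formula. Both are standard facts about matroid polytopes, and once they are in place the sign computation and the transfer of the formula are routine.
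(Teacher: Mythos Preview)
Your proposal is correct and follows essentially the same route as the paper: transport Theorem~\ref{t:antipode} along the embedding $\rM\hookrightarrow\rGP$ of Proposition~\ref{p:matroid-submod}, then convert the sign using $\dim\Pc(n)=|I|-c(n)$. The paper's proof is a one-line invocation of exactly these two ingredients; your version is more explicit about why every face of $\Pc(m)$ is again a matroid polytope and about the dimension count, but these are the same standard facts the paper is relying on.
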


\begin{proof}
This  is an immediate consequence of Theorem \ref{t:antipode}, taking into account that the dimension of a matroid polytope $\Pc(m)$ on $I$ equals $|I|-c(m)$. \cite{edmonds70}
\end{proof}

As mentioned earlier, there seems to be no simple combinatorial indexing of the faces of a matroid polytope, and hence no purely combinatorial counterpart of this formula.

The (discrete and algebraic) geometric point of view on matroids, initiated in \cite{edmonds70} and \cite{gelfand1987combinatorial}, has evolved into a central component of matroid theory thanks to the natural appearances of matroid polytopes in various settings in optimization, algebraic geometry, and tropical geometry. Theorem \ref{t:antipodeM} shows that this geometric point of view also plays an essential role here: if one wishes to fully understand the Hopf algebraic structure of matroids, it becomes indispensable to view them as polytopes.

\subsection{{The Hopf algebra of matroids.}} The Fock functor sends the Hopf monoid $\rM$ to the Hopf algebra of (isomorphism classes of) matroids defined by Joni and Rota
~\cite[Section~XVII]{joni82:_coalg} and also studied by Schmitt~\cite[Section~15]{schmitt94:_incid_hopf}. 
Theorem \ref{t:antipodeM} answers the open question of determining the optimal formula for the antipode of a matroid: it is simply the signed sum of the faces of the matroid polytope.

\begin{theorem}\label{t:antipodeM2}
In the Hopf algebra of (isomorphism classes of) matroids, the antipode of a matroid $m$ is 
\begin{equation}\label{amn}
\apode(m) = \sum_{n} (-1)^{c(n)} a(m:n) \, n,
\end{equation}
where $c(n)$ is the number of components of a matroid $n$ and $a(m:n)$ is the number of faces of the matroid polytope $\Pc(m)$ which are congruent to $\Pc(n)$.
\end{theorem}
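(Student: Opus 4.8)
The plan is to deduce this from the Hopf-monoid statement (Theorem \ref{t:antipodeM}) by applying the Fock functor $\Kcb$. Recall from the theorem of \cite{am} quoted in Section \ref{ss:Fock} that $\Kcb$ sends the Hopf monoid $\rM$ to the graded Hopf algebra of isomorphism classes of matroids, and crucially carries the antipode of $\rM$ to the antipode of $\Kcb(\rM)$. Thus I would begin by applying $\Kcb$ to the monoid formula
\[
\apode_I(m) = \sum_{n \leq m} (-1)^{c(n)}\, n,
\]
obtaining
\[
\apode([m]) = \sum_{n \leq m} (-1)^{c(n)}\, [n],
\]
where the sum still runs over the nonempty faces $n$ of the matroid polytope $\Pc(m)$ and $[n]$ denotes the isomorphism class of $n$.

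Next I would group the terms on the right by isomorphism class. The key structural fact, which follows from Proposition \ref{p:face2} together with the observation that faces of matroid polytopes are themselves matroid polytopes (of direct sums of minors of $m$), is that every face $n \leq m$ is a matroid on $I$, so the displayed sum is genuinely a sum of isomorphism classes of matroids. Two faces contribute to the same class $[n]$ precisely when the corresponding matroids are isomorphic, equivalently---by Proposition \ref{prop:matroidpolytope}, which recovers the bases from the polytope---when their matroid polytopes are congruent. Hence the number of faces of $\Pc(m)$ giving the class $[n]$ is exactly $a(m:n)$.

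Finally, I would observe that the number of connected components $c$ is an isomorphism invariant of matroids, so every face $n' \leq m$ with $[n']=[n]$ carries the same sign $(-1)^{c(n')} = (-1)^{c(n)}$. Collecting the $a(m:n)$ equal terms therefore yields
\[
\apode(m) = \sum_{n} (-1)^{c(n)}\, a(m:n)\, n,
\]
which is the desired formula, writing $m$ and $n$ for their isomorphism classes.

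The main obstacle, and the point requiring genuine care, is the identification of ``faces congruent to $\Pc(n)$'' with ``faces isomorphic as matroids to $n$.'' I would make this precise by noting that the relevant congruences of matroid polytopes are those induced by relabeling the ground set, which is exactly a matroid isomorphism; conversely, since by Proposition \ref{prop:matroidpolytope} a matroid polytope determines its matroid, congruent matroid polytopes arise from isomorphic matroids. Beyond this bookkeeping, the proof is a formal consequence of the functoriality of $\Kcb$ on antipodes and of Theorem \ref{t:antipodeM}.
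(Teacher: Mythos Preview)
Your proposal is correct and is essentially the paper's approach, just with the details unpacked: the paper's proof is a single line stating that the result is an immediate consequence of Theorem \ref{t:antipodeM}. Your write-up spells out precisely what that means---apply the Fock functor, group faces by matroid isomorphism class, and observe that $c(n)$ is constant on each class---which is exactly the intended argument.
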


\begin{proof}
This is an immediate consequence of Theorem \ref{t:antipodeM}.
\end{proof}

\begin{example} Let us revisit Example \ref{eg:antipode-matroid}. The formula
 \begin{figure}[h]
\centering
\includegraphics[scale=.5]{Figures/antipodematroids2.pdf} \qquad
\end{figure}
\noindent is the algebraic manifestation of the face structure of the corresponding matroid polytope, which is a square pyramid. It has one full-dimensional face, 5 two-dimensional faces (in matroid isomorphism classes of sizes 2, 1, 2), 8 edges (in one isomorphism class), and 5 vertices (in one isomorphism class).
\end{example}

\begin{remark}\label{r:monoid}
Theorems \ref{t:antipodeM} and \ref{t:antipodeM2}  illustrate an important advantage of working with Hopf monoids instead of Hopf algebras. 

To try to discover (\ref{amn}), we might compute a few small examples and try to find a pattern. After witnessing unexpected cancellations and unexplained groupings of equal terms, we are left with coefficients $a(m:n)$ that are very hard to  identify; in fact, we do not know any enumerative properties of these coefficients. 

If, instead, we work in the context of Hopf monoids,
a coefficient equal to 5 in (\ref{amn}) comes from a sum 1+1+1+1+1 in (\ref{amnmonoid}) where each 1 is indexed combinatorially; this additional granularity allows us to identify each term contributing to 
(\ref{amnmonoid}), and to then combine them to obtain (\ref{amn}).

However, for matroids, the geometric lens is crucial -- even in the context of Hopf monoids. It is not easy to identify the individual terms of (\ref{amnmonoid}) if one is not thinking about the matroid polytope, whose faces have no simple combinatorial description.
\end{remark}

\subsection{{Graphical matroids and another Hopf monoid of graphs}}\label{ss:cycle-matroid}

Any family of matroids which is closed under direct sums, restriction, and contraction forms a Hopf submonoid of $\rM$. Many important families of matroids satisfy these properties and have the structure of a Hopf monoid; for instance: linear matroids over a fixed field, graphical matroids, algebraic matroids over a fixed field, gammoids, and lattice path matroids. \cite{bonin2006lattice, oxley92:_matroid,white1986theory}.
In particular, the Hopf monoid of graphical matroids is closely related to a third Hopf monoid on graphs $\rGamma$, which we now describe.


For a finite set $I$, let $\rGamma[I]$ be the set of graphs with edges labeled by $I$, with unlabeled vertices, and without isolated vertices. 
To define a product and coproduct on $\rGamma$, let $I=S\sqcup T$ be a decomposition. 
The product $\gamma_1\cdot \gamma_2\in\rGamma[I]$ is
the (disjoint) union of the graphs $\gamma_1\in\rGamma[S]$ and $\gamma_2\in\rGamma[T]$.
The coproduct $\Delta_{S,T}(\gamma) = (\gamma|_S, \gamma/_S)$ is given by the standard notions of restriction and contraction from graph theory.
The restriction
$\gamma|_S\in\rGamma[S]$ is obtained from $\gamma\in\rGamma[I]$ by removing all edges in $T$ 
and all vertices not incident to $S$. 
The contraction
$\gamma/_S\in\rGamma[S]$ is obtained by 
contracting all edges in $S$ from $\gamma\in\rGamma[I]$, and removing any isolated vertices that remain. (To contract an edge, we identify the endpoints and remove the edge.) 

The two Hopf monoids of graphs $\rGamma$ and $\rG$ that we have discussed are not directly related; in fact, they differ already as species. 
Instead, we have a morphism of Hopf monoids 
\[
\rGamma \to \rM
\]
mapping each graph $\gamma$ to its graphical matroid, which is the set of spanning trees of $\gamma$. \cite{oxley92:_matroid} We do not know further properties of the Hopf monoid $\rGamma$, in particular, because we are not aware of any results on graphical matroid polytopes.

 \section{$\rP$: {Posets and poset cones}}\label{s:P}

Similarly to graphs and matroids, posets also have a polyhedral model that respects the Hopf algebra structure introduced by Schmitt in 1994. \cite{schmitt94:_incid_hopf} We use this geometric model to give an optimal combinatorial formula for the antipode of posets.

\subsection{{Poset cones}}\label{ss:posetcones}

A \emph{$\{0, \infty\}$ function} on $I$ is a Boolean function $z: 2^I \to \{0, \infty\}$ such that $z(\emptyset) = z(I) = 0$. Its \emph{support} is $\supp(z) = \{J \subseteq I \, | \, z(J)=0\}$. For a \emph{$\{0, \infty\}$ function} $z$ on $I$,
\begin{equation}\label{eq:support}
z \textrm{ is submodular } \Longleftrightarrow \textrm{if } A, B \in \supp(z) \textrm{ then } A\cup B, A \cap B \in \supp(z).
\end{equation}

For each poset $p$ on $I$ we define the \emph{lower set} function
\[
\low_p: 2^I \mapsto \Rb \cup \{\infty\}, \quad 
\low_p(J) = \left\{ \begin{array}{ll}
0 & \textrm{if $J$ is a lower set of $p$,} \\
\infty & \textrm{if $J$ is not a lower set of $p$.} \\
\end{array} \right.
\]
This is an extended submodular function since the family of lower sets of $p$ is closed under unions and intersections.

By Theorem \ref{t:submod-gp+} and (\ref{e:submod-gp+}), the submodular function $\low_p$ gives rise to an extended generalized permutahedron
\[
\Pc(p) := \Pc(\low_p) = \{x \in \Rb I \, : \, \sum_{i \in I} x_i = 0 \textrm{ and } \sum_{a \in A} x_a \leq  0 \textrm{ for every lower set $A$ of p}\}
\]
which we call the \emph{poset cone} of $p$.
This cone has an elegant description in terms of generators. Dobbertin proved an analogous result for a related polytope in \cite{dobbertin85}.

\begin{proposition}\label{prop:posetconegenerators}
The poset cone of a poset $p$ is given by
 \[ 
 \Pc(p) = \textrm{cone} \, \{e_i - e_j \, : \, i > j \textrm{ in }  p\}
 \]
 where $\{e_i \, : \, i \in I\}$ is the standard basis of $\Rb I$.
The generating rays of  $\Pc(p)$ are given by the roots $e_i-e_j$ corresponding to the cover relations $i \gtrdot j$ of $p$.
\end{proposition}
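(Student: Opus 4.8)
The statement I must prove is Proposition~\ref{prop:posetconegenerators}: the poset cone $\Pc(p)$, defined as the base polyhedron of the lower-set function $\low_p$, equals $\cone\{e_i - e_j : i > j \text{ in } p\}$, with generating rays exactly the cover relations.

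\textbf{Overall approach.} The plan is to prove the equality of two cones in $\Rb I$ by a double inclusion, working with the explicit inequality description on one side and the generator description on the other. I would set
\[
Q := \cone\{e_i - e_j : i > j \text{ in } p\}, \qquad \Pc(p) = \{x \in \Rb I : \1(x) = 0 \text{ and } \1_A(x) \leq 0 \text{ for every lower set } A\}.
\]
The key duality I want to exploit is that $\Pc(p)$ is, by definition \eqref{e:submod-gp+}, the set of $x$ satisfying a family of linear inequalities indexed by lower sets, while $Q$ is described by generators; so the natural tool is to verify that each generator of $Q$ satisfies all the defining inequalities of $\Pc(p)$ (giving $Q \subseteq \Pc(p)$), and then to argue the reverse inclusion by a dual/polar computation or by directly identifying the extreme rays.

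\textbf{Key steps.} First I would show $Q \subseteq \Pc(p)$: take a generator $e_i - e_j$ with $i > j$ in $p$, and check it lies in $\Pc(p)$. Clearly $\1(e_i - e_j) = 0$. For a lower set $A$, I need $\1_A(e_i - e_j) \leq 0$, i.e. $[i \in A] - [j \in A] \leq 0$; this holds because if $i \in A$ then, since $A$ is a lower set and $j < i$, we must have $j \in A$ too, so the expression is $\leq 0$. Since $\Pc(p)$ is a convex cone, it contains all nonnegative combinations, hence $Q \subseteq \Pc(p)$. Second, for the reverse inclusion $\Pc(p) \subseteq Q$, I would pass to the dual cone. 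The dual of $Q$ is $Q^\vee = \{y : \langle y, e_i - e_j\rangle \geq 0 \text{ for all } i > j\} = \{y : y(i) \geq y(j) \text{ whenever } i > j \text{ in } p\}$, which is precisely the \emph{braid cone} of order-preserving directions for $p$ (modulo the line $\Rb\1$, since $Q$ lives in the hyperplane $\1^\perp$). By biduality for polyhedral cones, it suffices to show $\Pc(p)^\vee \subseteq Q^\vee$ up to the lineality issue, or equivalently that $\Pc(p)$ and $Q$ have the same dual. Concretely, I would argue that the inequalities $\1_A(x) \leq 0$ for lower sets $A$ are generated as nonnegative combinations by the cover-relation inequalities; this is where the lattice structure of lower sets enters, via \eqref{eq:support}: the indicator functions $\1_A$ of lower sets span the same cone of functionals as the $\1$-constant plus the directions dual to the roots $e_i - e_j$ for order relations $i > j$. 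The cleanest route is to observe that $\low_p$ is exactly the extended submodular function whose support is the distributive lattice of lower sets, invoke Theorem~\ref{t:submod-gp+} to guarantee $\Pc(p)$ is a genuine extended generalized permutahedron with the stated affine span, and then match the normal fan of $Q$ against the braid-cone description.

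\textbf{Identifying the extreme rays.} For the final sentence — that the generating rays are exactly the roots $e_i - e_j$ corresponding to \emph{cover} relations $i \gtrdot j$ — I would show that any root $e_i - e_j$ with $i > j$ non-covering is redundant. If $i > k > j$ is an intermediate element, then $e_i - e_j = (e_i - e_k) + (e_k - e_j)$ expresses a non-cover generator as a sum of two other generators in $Q$, so it cannot be extreme. Conversely, each cover relation gives a genuine extreme ray: I would verify that $e_i - e_j$ for a cover $i \gtrdot j$ cannot be written as a nonnegative combination of the other roots, by exhibiting a supporting functional, namely a $y \in Q^\vee$ with $y(i) = y(j)$ on this ray but $y$ strictly positive on all other generators (achievable because a cover relation is isolated in the comparability structure).

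\textbf{Main obstacle.} The hardest part will be the reverse inclusion $\Pc(p) \subseteq Q$, specifically showing that the (exponentially many) lower-set inequalities are all consequences of the cover-relation inequalities and do not cut the cone $Q$ down further. I expect the clean way around this is \emph{not} to manipulate the lower-set inequalities directly, but to compute the dual cone $Q^\vee$ explicitly as the braid cone $\{y : y(i) \geq y(j) \text{ for } i > j \text{ in } p\}$ and then verify that this braid cone is exactly the support of the normal fan predicted by Theorem~\ref{t:submod-gp+} for the submodular function $\low_p$ — at which point biduality of polyhedral cones closes the argument. The delicate bookkeeping is the lineality direction $\1$: since $Q \subseteq \1^\perp$ but $\Pc(p)$ is also constrained to $\1^\perp$ by the equation $\sum_i x_i = 0$, both cones live in the same hyperplane and the duality should be carried out within $\1^\perp$ (or modulo $\Rb \1$ on the dual side) to avoid a spurious dimension mismatch.
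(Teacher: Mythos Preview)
Your duality approach is correct and genuinely different from the paper's proof. The paper establishes $\Pc(p)\subseteq Q$ by a direct ``peeling'' argument: it proves a lemma that any nonzero $x\in\Pc(p)$ can be written as $x'+\lambda(e_i-e_j)$ with $x'\in\Pc(p)$, $i\gtrdot j$, and $\lambda>0$ a linear combination of the $x_i$'s, then iterates this to reduce the number of positive entries (after restricting to rational $x$ to guarantee termination). Your route via polar cones is slicker and avoids the termination issues, but you have not nailed down the one computation that makes it work. Working in $\1^\perp$, one has $\Pc(p)^\circ=\cone\{\1_A: A\text{ a lower set}\}$ and $Q^\circ=\{y:y(i)\le y(j)\text{ whenever }i>j\}$, and the nontrivial inclusion you need is $Q^\circ\subseteq\Pc(p)^\circ$: every order-reversing $y$ is a nonnegative combination of lower-set indicators (modulo $\1$). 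This follows from the \emph{level-set decomposition}: if $y$ takes distinct values $v_1>\cdots>v_k$, then $A_r=\{i:y(i)\ge v_r\}$ is a lower set and $y=\sum_{r=1}^{k-1}(v_r-v_{r+1})\1_{A_r}+v_k\1$. You gesture at this (``the indicator functions $\1_A$ of lower sets span the same cone of functionals\ldots'') but never state it; your appeal to Theorem~\ref{t:submod-gp+} to identify the normal-fan support is heavier machinery than needed and does not by itself produce this decomposition. Also note a slip: you wrote that it suffices to show $\Pc(p)^\vee\subseteq Q^\vee$, but that is the direction already implied by $Q\subseteq\Pc(p)$; the content is in $Q^\vee\subseteq\Pc(p)^\vee$ (equivalently $Q^\circ\subseteq\Pc(p)^\circ$). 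Your treatment of the extreme rays matches the paper's.
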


\begin{proof} Recall the notation
\[
x(A) = \sum_{i \in A} x_i
\]
for $x \in \Rb I$ and $A \subseteq I$. We prove both containments:

\medskip

\noindent $\supseteq$: Let $i>j$ in $p$. Every order ideal $A$ that contains $i$ must also contain $j$, so $e_i-e_j$ satisfies $x(A) \leq 0$. This implies that $\textrm{cone}\{e_i - e_j \, : \, i > j \in p\} \subseteq \Pc(p)$.

\medskip

\noindent $\subseteq$:  We will need the following lemma.

\begin{lemma}
Let $x \in \Pc(p)$. Let $i$ be a maximal element of $p$ such that $x_i \neq 0$, and let $i_1, \ldots, i_k$ be the elements covered by $i$ in $p$. We can write
\[
x = x' + \lambda(e_i - e_j)
\]
for some $x' \in \Pc(p)$, some element $j \lessdot i$, and some number  $\lambda>0$ which is a linear combination of the $x_i$s.
\end{lemma}

\begin{proof}[Proof of Lemma 15.2]
The maximality of $i$ and the fact that $p_{\geq i}:= \{j \in p \, : \, j \geq i\}$ is an upper set imply that 
\begin{equation}\label{eq:xi>0}
x_i = \sum_{j \in p_{\geq i}} x_j = x(p_{\geq i}) > 0.
\end{equation}
Let $i_1, \ldots, i_k$ be the elements covered by $i$ in $p$. 
We claim that
\begin{equation}\label{e:somex<0}
\textrm{there exists an index $1 \leq a \leq k$ with $x(I_a) < 0$ for every lower set $I_a \ni i_a$ }. 
\end{equation}

We prove this claim by contradiction. If that was not the case, then for every $1 \leq a \leq k$ we would have a lower set $I_a \ni i_a$ such that $x(I_a) = 0$. Now, we observe that
\begin{equation}\label{e:capcup}
\textrm{ if $A$ and $B$ are lower sets with } x(A) = x(B) = 0, 
\textrm{ then } x(A \cup B) = x(A \cap B) = 0. 
\end{equation}
This observation follows from the fact that $A \cup B$ and $A \cap B$ are lower sets, so they satisfy $x(A \cup B) \leq 0$ and $x(A \cap B) \leq 0$, while also satisfying $x(A \cup B) + x(A \cap B) = x(A) + x(B) = 0$.
Applying (\ref{e:capcup}) repeatedly, we see that $I_1 \cup \cdots \cup I_k$ is a lower set with $x(I_1 \cup \cdots \cup I_k) = 0$. But then we observe that $I_1 \cup \cdots \cup I_k \cup i$ is also a lower set, so we get
\[
x_i = x(I_1 \cup \cdots \cup I_k \cup i) \leq 0,
\]
contradicting (\ref{eq:xi>0}).

Having proved (\ref{eq:xi>0}) and  (\ref{e:somex<0}), let $1 \leq a \leq k$ be  as in (\ref{e:somex<0}). Then  
\[
\lambda := \min(\{x_i\} \cup \{-x(I_a) \, : \, I_a \textrm{ is a lower set containing } i_a\}) > 0,
\]
and define $x' = x - \lambda(e_i - e_{i_a})$ as required. To conclude, it remains to prove that $y \in \Pc(p)$. To do this, let $J$ be any lower set of $p$. If $J$ contains both ${i_a}$ and $i$, or if it contains neither ${i_a}$ nor $i$, then we have $x'(J) = x(J) \leq 0$. On the other hand, if $J$ contains ${i_a}$ but not $i$, then $x'(J) = x(J) + \lambda \leq 0$
by the definition of $\lambda$, since $J$ is a lower set containing $i_a$. It follows that $x' \in \Pc(p)$, concluding the proof of the lemma.
\end{proof}

Now we need to prove that any $x \in \Pc(p)$ is a positive linear combination of vectors of the form $e_i-e_j$ such that $i < j$ in $p$. Since the rationals are a dense subset of the reals and the cones we are considering are closed, it suffices to prove this when all entries of $x$ are rational. We proceed by induction on the number of positive entries of $x$. 

Let $i$ be a maximal element of $p$ with $x_i > 0$. Write $x = x' + \lambda(e_i - e_j)$ for $\lambda > 0$ and $i \gtrdot j$ as in the lemma, and note that $x'_i < x_i$. If $x'_i > 0$, use the lemma again to write $x' = x'' + \lambda'(e_i - e_{j'})$ for $\lambda' > 0$ and $i \gtrdot j'$, and note that $x''_i < x'_i < x_i$ . We can continue applying the lemma in this way while $x_i^{'' \cdots '} > 0$. In each step, the $i$th coordinate decreases by a positive linear combination of the original $x_i$s. Since the $x_i$s are rational, the $i$th coordinate is decreasing discretely, and must reach $0$ eventually. We will then have written $x = y + c$ for a linear combination $c \in \textrm{cone}\{e_i - e_j \, : \, i > j \in p\}$ and a vector $y \in \Pc(p)$ with one fewer positive entry, since $y_i = 0$. The induction hypothesis now gives $y \in \textrm{cone}\{e_i - e_j \, : \, i > j \in p\}$, which implies $x \in \textrm{cone}\{e_i - e_j \, : \, i > j \in p\}$ as well. The desired result follows by induction.

\smallskip

Having proved that $\Pc(p)$ is generated by the vectors $e_i - e_j$ where $i>j$, let us observe that if $i>j$ then there is a sequence of  cover relations $i \gtrdot k_1 \gtrdot \cdots \gtrdot k_r \gtrdot j$, which implies that $e_i-e_j = (e_i - e_{k_1}) + (e_{k_1}-e_{k_2}) + \cdots + (e_{k_r} - e_j)$. Therefore the vectors $e_i - e_j$ with $i \gtrdot j$ generate $\Pc(p)$. By a similar argument one sees that they generate $\Pc(p)$ irredundantly.
\end{proof}

The faces of poset polytopes were described (for the cones dual to poset cones) by Postnikov-Reiner-Williams \cite[Proposition 3.5]{prw08}  (for order polytopes) by Geissinger \cite{geissinger1981face} and Stanley \cite{stanley1986two}, and (for oriented matroids) by Las Vergnas [Prop. 9.1.2]\cite{bjorner1999oriented}. Our presentation follows Las Vergnas, interpreting his general criterion in this special case.

Define a \emph{circuit} of $p$ to be a cyclic sequence $i_1, \ldots, i_n$ of elements of $p$ where every consecutive pair is comparable in $p$.
Circuits consist of \emph{up-edges} where $i_j < i_{j+1}$ in $p$ and \emph{down-edges} where $i_j > i_{j+1}$ in $p$. We will say that a subposet $q$ of $p$ is \emph{positive}\footnote{this terminology comes from the theory of oriented matroids}  if the following conditions hold for every circuit $X$: 

(1)
if all the down-edges of a circuit $X$ are in $q$, then all the up-edges of $X$ are in $q$, and

(2)
if all the up-edges of a circuit $X$ are in $q$, then all the down-edges of $X$ is in $q$.

%

\begin{lemma} \label{l:facesPp}
Let $p$ be a poset on $I$. The faces of the poset cone $\Pc(p) \subset \Rb I$ are precisely the poset cones $\Pc(q)$ as $q$ ranges over the  positive subposets of $p$.
\end{lemma}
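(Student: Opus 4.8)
The plan is to read the faces of $\Pc(p)$ directly off the presentation $\Pc(p) = \cone\{e_i - e_j : i >_p j\}$ supplied by Proposition~\ref{prop:posetconegenerators}, by maximizing linear functionals over this polyhedral cone. Call a direction $y \in \Rb^I$ \emph{$p$-compatible} if $y(i) \leq y(j)$ whenever $i >_p j$; these are exactly the directions in which $\Pc(p)$ is bounded above, with maximum value $0$. For such a $y$, the elementary fact that the $y$-maximal face of $\cone\{v_1,\dots,v_m\}$, when all $y(v_k)\le 0$, equals $\cone\{v_k : y(v_k)=0\}$ gives
\[
\Pc(p)_y = \cone\{e_i - e_j : i >_p j,\ y(i) = y(j)\}.
\]
The first step is to observe that the \emph{tight} relations, defined by $i >_q j :\Leftrightarrow (i >_p j \text{ and } y(i)=y(j))$, are transitive and so cut out a subposet $q=q_y$ of $p$; Proposition~\ref{prop:posetconegenerators} applied to $q_y$ then identifies the displayed face as $\Pc(q_y)$. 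Since every proper face of a cone is its maximal face in some direction of the polar cone, and every such direction is $p$-compatible, every face of $\Pc(p)$ has the form $\Pc(q_y)$. It thus remains to identify the subposets arising as $q_y$ with the \textbf{positive} subposets of $p$.

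For the forward direction I would use a telescoping argument. Given a $p$-compatible $y$ and a circuit $i_1,\dots,i_n$ of $p$, the identity $\sum_t \bigl(y(i_{t+1})-y(i_t)\bigr)=0$ splits into nonpositive contributions from the up-edges and nonnegative contributions from the down-edges. If all down-edges of the circuit lie in $q_y$, their contributions vanish, forcing every up-edge contribution to vanish as well; hence every up-edge also lies in $q_y$, and symmetrically. This is exactly the positivity condition, so each $q_y$ is positive.

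For the converse I would construct, from a given positive subposet $q$, a $p$-compatible direction $y$ with $q_y=q$. Partition $I$ into the connected components $B_1,\dots,B_r$ of the comparability graph of $q$, and consider the block relation $B_a \succ B_b$ whenever some $i \in B_a$, $j \in B_b$ satisfy $i >_p j$; then set $y$ constant on each block and strictly decreasing along $\succ$. Two checks are needed, and each is precisely one of the two circuit conditions. First, if $i >_p j$ lie in the same block, then a $q$-path joining them together with the edge $i >_p j$ forms a circuit whose down-edges all lie on the $q$-path (hence in $q$); positivity condition (1) then forces the up-edge $i >_p j$ into $q$, so that the tight set of $y$ is exactly $q$ rather than something larger. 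Second, a hypothetical cycle of blocks assembles into a circuit whose up-edges all lie on $q$-paths; positivity condition (2) forces the inter-block down-edges into $q$, contradicting that distinct blocks are distinct $q$-components, so $\succ$ is acyclic and a valid $y$ exists.

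I expect the converse construction to be the main obstacle: producing $y$ and verifying that positivity is exactly strong enough to guarantee both the acyclicity of the block order and the absence of spurious tight relations. Once this is settled, the correspondence is a bijection because $\Pc(q)$ recovers $q$ through its generating rays, namely the cover relations of $q$ (Proposition~\ref{prop:posetconegenerators}). Finally I would remark that the vectors $\{e_i-e_j : i>_p j\}$ form the positive part of the (regular) oriented matroid of the directed comparability graph of $p$, under which the circuits of $p$ are precisely its signed circuits and conditions (1)--(2) are Las Vergnas's criterion \cite{bjorner1999oriented} for the faces of the positive cone; the argument above simply specializes that criterion to the present setting.
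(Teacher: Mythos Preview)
Your proof is correct and takes a genuinely different route from the paper. The paper quotes Las~Vergnas's characterization of the faces of the positive cone of an acyclic oriented matroid \cite[Prop.~9.1.2]{bjoerner93:_om} as a black box: it identifies the oriented matroid of $\{e_i-e_j : i>_p j\}$ with the graphical oriented matroid of the directed comparability graph, reads off its signed circuits as the circuits of $p$, and translates ``positive flat'' into ``positive subposet''. Your argument instead reproves this special case from scratch: you compute $\Pc(p)_y$ directly from the generators, use the telescoping identity $\sum_t(y(i_{t+1})-y(i_t))=0$ to show every $q_y$ is positive, and construct the witnessing $y$ for a given positive $q$ via the acyclic block order on $q$-components.

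What each approach buys: the paper's proof is shorter once one grants the oriented-matroid result, and it situates the lemma within a general framework. Your proof is self-contained and makes visible exactly why the two circuit conditions are needed---condition~(2) guarantees the block order is acyclic (so a compatible $y$ exists), and condition~(1) guarantees that no extra relations become tight (so $q_y$ equals $q$ rather than a larger subposet). One small wording slip: in your first check, the edge $i>_p j$ is a \emph{down}-edge of the circuit as you have written it (you traverse $i\to j$ first), so you are really invoking condition~(2), or equivalently condition~(1) for the reversed traversal; since the two conditions are interchangeable under circuit reversal this does not affect correctness. Your closing remark correctly observes that all of this is the graphical specialization of Las~Vergnas's criterion, which is precisely the viewpoint the paper takes from the outset.
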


\begin{proof}
In this proof we will assume some basic facts about oriented matroid theory; see \cite{bjoerner93:_om,ardila2006positive}
for the relevant definitions. Let $\cal M$ be the (acyclic) oriented matroid of the set of vectors $\{e_i-e_j \, : \, i> j \textrm{ in } p\}$. The faces of the poset cone $\Pc(p)$ are the cones generated by the \emph{positive flats} of the Las Vergnas face lattice of $\cal M$. By \cite
[Prop. 9.1.2]{bjoerner93:_om}, these are the subsets $F$ of $\cal M$ such that for every signed circuit $X$ of $\cal M$, $X^+ \subseteq F$ implies $X^- \subseteq F$.

The oriented matroid $\cal M$ is isomorphic to the graphical oriented matroid of the graph of $p$ on $I$, whose directed edges $i \rightarrow j$ correspond to the order relations $i>j$ in $p$. Therefore the 
signed circuits of $\cal M$ correspond to the cycles of the graph; they are the sets of the form:
\[
X = \{e_{i_k} - e_{i_{k+1}} \, : \, i_1, \ldots, i_n \textrm{ is a circuit of } p\}
\]
where $i_{n+1} = i_1$. Each circuit $X$ comes with two orientations. One of them is given by $X^+ = \{e_{i_k} - e_{i_{k+1}} \, : i_k > i_{k+1} \textrm{ in } p\}$ and $X^- = \{e_{i_k} - e_{i_{k+1}} \, : i_k < i_{k+1} \textrm{ in } p\}$ 
and the other one is its reverse.


Now let $F = q \subset p$ be a subposet of $p$. In the first orientation of $X$, the condition that $X^+ \subseteq F$ implies $X^- \subseteq F$ says that if every down-edge is in $q$ then every up-edge must be in $q$. In the other orientation, this condition is reversed.
It follows that the positive flats of $\cal M$ are in bijection with the positive subposets of $p$, as desired.
\end{proof}

\begin{example} \label{ex:poset} Let $p$ be the poset on $\{a,b,c,d\}$ given by the cover relations $a<c, \, b<c, \, a<d, \,  b<d$. The poset cone of $p$ is shown 
below. The positive subposets $q \neq p$ are those which do not contain both vertical cover relations $a<c$ and $b<d$, and do not contain both diagonal cover relations $a<d$ and $b<c$. There are nine such subposets, corresponding to the nine proper faces of $\Pc(p)$.
 \begin{figure}[h]
\centering
\includegraphics[scale=.7]{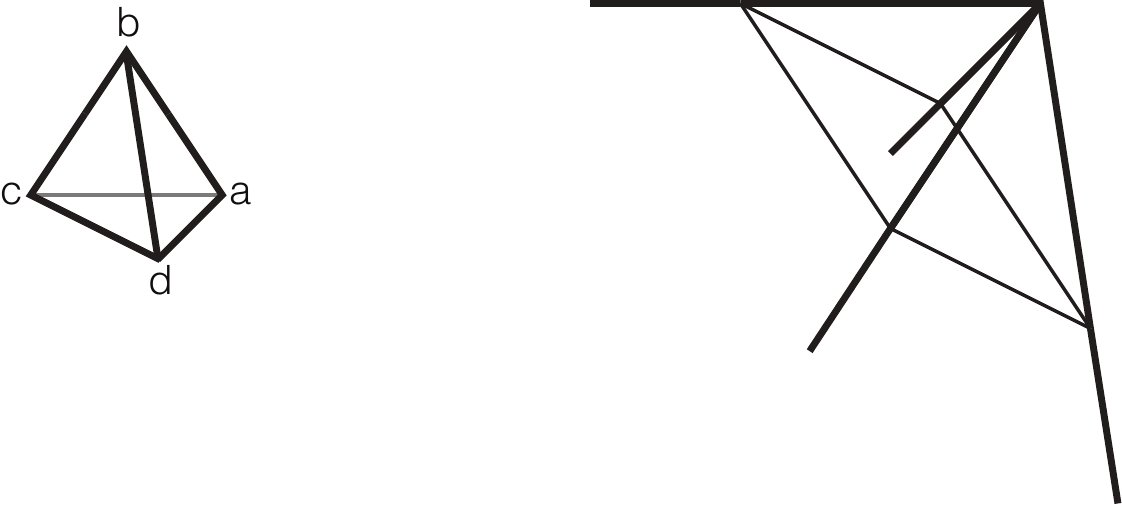}
\caption{The poset cone for the poset of Example 15.4. 
\label{f:posetcone} }
\end{figure}

\end{example}

\begin{remark}
Let us give some additional intuition for the definition of positive subposets. We will need  preposets; see Section \ref{ss:preposets} for a definition.

A \emph{poset contraction} is a preposet obtained from $p$ by successively \emph{contracting} order relations $i<j$ of $p$ and replacing them by equivalence relations $i \sim j$. Since we need to keep the preposet transitive, contracting the up-edges of a circuit forces us to also contract the down-edges, and viceversa. For instance, in Example \ref{ex:poset}, if we contract $a<c$ and $b<d$, we get the contradictory relations $a \sim c > b \sim d > a$; to remedy this, we are forced to contract $b<c$ and $a<d$ into $b \sim c$ and $a \sim d$ as well.

In conclusion, the positive subposets of $p$ are precisely the contracted subposets for the contractions of $p$.
 \end{remark}

\subsection{Posets as a submonoid of extended generalized permutahedra}  \label{ss:PtoGP}

Recall that $\wP$ is the Hopf monoid (in vector species) of posets. 
For $I=S\sqcup T$, the product of two posets $p_1$ on $S$ and $p_2$ on $T$ is their disjoint union $p_1 \sqcup p_2$ regarded as a poset on $I$. The coproduct $\Delta_{S,T}: \wP[I] \to  \wP[S] \otimes \wP[T]$ is
\[
\Delta_{S,T}(p) =  \begin{cases}
p|_S\otimes p|_T & \text{if $S$ is a lower set of $p$,}\\
0 & \text{otherwise.}
\end{cases}
\]

\begin{proposition}\label{p:PtoSF}
The map $\low:\wP \to \wSF_+ \map{\cong}\wGP_+$ is an injective morphism of Hopf monoids in vector species.
\end{proposition}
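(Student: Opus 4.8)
The plan is to verify directly that $\low$ satisfies the three defining conditions of a morphism of Hopf monoids from Definition \ref{d:morhopf}: naturality, preservation of products, and preservation of coproducts. Naturality is immediate since relabeling a poset by a bijection $\sigma$ sends lower sets to lower sets, so $\low$ commutes with the species maps. Since we already know from Theorem \ref{t:submod-gp+} that $z \mapsto \Pc(z)$ is an isomorphism $\wSF_+ \cong \wGP_+$, it suffices to show that $\low : \wP \to \wSF_+$ is an injective morphism of Hopf monoids, working entirely at the level of $\{0,\infty\}$-valued submodular functions where the operations are the explicit formulas (\ref{e:SFproduct}) and (\ref{e:SFcoproduct}).

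For the product, given posets $p_1$ on $S$ and $p_2$ on $T$, I would show $\low_{p_1 \sqcup p_2} = \low_{p_1} \cdot \low_{p_2}$ using (\ref{e:SFproduct}): a subset $E \subseteq I$ is a lower set of the disjoint union $p_1 \sqcup p_2$ if and only if $E \cap S$ is a lower set of $p_1$ and $E \cap T$ is a lower set of $p_2$ (there being no relations between $S$ and $T$). Translating to $\{0,\infty\}$-values, this says $\low_{p_1 \sqcup p_2}(E) = 0$ exactly when both $\low_{p_1}(E \cap S) = 0$ and $\low_{p_2}(E \cap T) = 0$, which is precisely the defining property of the sum in (\ref{e:SFproduct}). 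For the coproduct, fix a decomposition $I = S \sqcup T$. I must match the two cases of $\Delta_{S,T}$ in $\wP$ with the formula (\ref{e:SFcoproduct}). The key observation is that $S$ itself is a lower set of $p$ if and only if $\low_p(S) \neq \infty$, i.e.\ $\low_p(S) = 0$; this is exactly the condition that governs whether the coproduct in $\wSF_+$ is nonzero (recall the modified coproduct in Section \ref{ss:infinity} sends $z$ to $0$ when $z(S) = \infty$). When $S$ is a lower set, I would check the two factors: for $E \subseteq S$ we have $\low_p|_S(E) = \low_p(E)$, which is $0$ iff $E$ is a lower set of $p|_S$; and for $E \subseteq T$ we have $\low_p/_S(E) = \low_p(E \cup S) - \low_p(S) = \low_p(E \cup S)$, which is $0$ iff $E \cup S$ is a lower set of $p$, and (given that $S$ is already a lower set) this holds iff $E$ is a lower set of the contraction $p/_S$. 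Here I should check that the poset-theoretic contraction (which in $\wP$ is just $p|_T$, since the coproduct records $p|_S \otimes p|_T$) agrees with this: when $S$ is a lower set, $E \cup S$ is a lower set of $p$ precisely when $E$ is a lower set of $p|_T$, because no element of $S$ lies above any element of $T$ that could obstruct downward closure.

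Injectivity is the easiest part: the function $\low_p$ determines the family of lower sets of $p$, and a finite poset is recovered from its lattice of lower sets (for instance, $i \leq j$ in $p$ iff every lower set containing $j$ also contains $i$), so $\low$ is injective on objects and hence as a morphism. The main obstacle, modest as it is, will be the careful bookkeeping in the coproduct case to confirm that the contraction $\low_p/_S$ defined by (\ref{e:SFcoproduct}) corresponds exactly to the restriction $p|_T$ appearing in the poset coproduct, and in particular that the vanishing condition ``$S$ is a lower set of $p$'' lines up precisely with ``$\low_p(S) \neq \infty$'' so that both coproducts vanish simultaneously. Once this alignment is established, all verifications reduce to the elementary fact that lower sets of a poset are closed under unions and intersections, which is exactly what makes $\low_p$ an extended submodular function via (\ref{eq:support}).
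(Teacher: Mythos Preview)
Your proposal is correct and follows essentially the same approach as the paper's proof: both verify product preservation via the characterization of lower sets in a disjoint union, split the coproduct check into the two cases according to whether $S$ is a lower set (equivalently, whether $\low_p(S)$ is finite), and establish injectivity by recovering the poset from its family of lower sets. The only difference is cosmetic---you mention naturality explicitly while the paper leaves it implicit---and the paper is marginally more explicit in noting that for $R\subseteq S$ with $S$ a lower set of $p$, being a lower set of $p|_S$ is equivalent to being a lower set of $p$.
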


\begin{proof}
To check that $\low$ preserves the product, let $I = S \sqcup T$ be a decomposition. Let $p_1$ and $p_2$ be posets on $S$ and $T$, and $p_1 \sqcup p_2$ be their product. A subset $J \subseteq I$ is a lower set of $p_1 \sqcup p_2$ if and only if $J \cap S$ and $J \cap T$ are lower sets of $p_1$ and $p_2$, respectively. It follows that 
\[
\low_{p_1 \sqcup p_2}(J) = \low_{p_1}(J \cap S) + \low_{p_2}(J \cap T) = (\low_{p_1} \cdot \low_{p_2})(J),
\]
so $\low$ preserves products.

To check that $\low$ preserves the coproduct, let $I = S \sqcup T$ and let $p$ be a poset on $I$. We need to consider two cases:

1. Suppose $S$ is not a lower set of $p$. Then $\Delta_{S,T}(p) = 0$. 
In this case we also have $\low_p(S)=\infty$ so $\Delta_{S,T}(\low_p) = 0$ by the definition of the coproduct in $\wSF_+$. It follows that $\low$ trivially respects the coproduct in this case.

2. Suppose $S$ is a lower set of $p$. Then the restriction and contraction of $p$ with respect to $S$ are $p|_S$ and $p|_T$, respectively. Also $\low_p(S)=0$. To see that $\low$ is compatible with restriction,
notice that for $R \subseteq S$ we have $(\low_p)|_S(R) = \low_p(R)$, so
\[
\low_{p|_S}(R) = 
\begin{cases}
0 & \textrm{ if $R$ is a lower set of $p|_S$} \\
\infty & \textrm{ otherwise}
\end{cases},  \,\,
(\low_p)|_S(R) = 
\begin{cases}
0 & \textrm{ if $R$ is a lower set of $p$} \\
\infty & \textrm{ otherwise.}
\end{cases}
\]
Since $R$ is a lower set of $p|_S$ if and only if it is a lower set of $p$, we have $\low_{p|_S} = (\low_p)|_S$.

On the other hand, to see that $\low$ is compatible with contraction,
notice that for $R \subseteq T$ we have 
$\low_{p/_S}(R) = \low_{p|_T}(R) = \low_p(R)$ and 
$(\low_p)/_S(R) = \low_p(R \cup S)$, so
\[
\low_{p/_S}(R) = 
\begin{cases}
0 & \textrm{ if $R$ is a lower set of $p|_T$} \\
\infty & \textrm{ otherwise}
\end{cases}, \,\, 
(\low_p)/_S(R) = 
\begin{cases}
0 & \textrm{ if $R \cup S$ is a lower set of $p$} \\
\infty & \textrm{ otherwise.}
\end{cases}
\]
Since $R$ is a lower set of $p|_T$ if and only if it $R \cup S$ is a lower set of $p$, we have $\low_{p/_S} = (\low_p)/_S$.


We conclude that $\low$ is a morphism of monoids. Injectivity follows from the fact that we can recover a poset $p$ from its collection of lower sets as follows: two elements $i,j$ of $p$ satisfy $i<j$ if and only if every lower set containing $j$ also contains $i$.
\end{proof}

\subsection{{The antipode of posets}}

In view of Proposition \ref{p:PtoSF} and Theorem \ref{t:antipode}, the antipode of $\wP$ is given by the facial structure of poset polytopes, as described in Lemma \ref{l:facesPp}. This allows us to give the optimal combinatorial formula for the antipode of the Hopf monoid of posets.

Recall that the \emph{Hasse diagram} of a poset $p$ is the graph whose vertices correspond to the elements of $p$ and whose edges $x \rightarrow y$, which are always drawn with $x$ lower than $y$, correspond to the cover relations $x \lessdot y$ of $p$.

\begin{corollary}\label{c:antipodeP}
The antipode of the Hopf monoid of posets $\wP$ is given by the following \textbf{cancellation-free} and \textbf{grouping-free} expression. If $p$ is a poset on $I$ then
\[
\apode_I(p) = \sum_{q} (-1)^{c(q)} q,
\]
summing over all positive subposets $q$ of $p$, where $c(q)$ is the number of connected components of the Hasse diagram of $q$.

\end{corollary}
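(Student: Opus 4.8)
The plan is to compute this antipode by transporting the problem into the Hopf monoid of generalized permutahedra, where Theorem \ref{t:antipode} already supplies a cancellation-free formula. By Proposition \ref{p:PtoSF}, the lower-set map $\low \colon \wP \to \wSF_+ \cong \wGP_+$ is an injective morphism of Hopf monoids in vector species, sending a poset $p$ to its poset cone $\Pc(p)$. Since morphisms of Hopf monoids commute with antipodes (equation \eqref{e:apode-mor}), I would first write
\[
\low_I\bigl(\apode_I(p)\bigr) = \apode_I\bigl(\low_I(p)\bigr) = \apode_I\bigl(\Pc(p)\bigr),
\]
and then evaluate the right-hand side using the antipode formula for $\wGP_+$.

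Next, applying Theorem \ref{t:antipode} to the extended generalized permutahedron $\Pc(p)$ gives $\apode_I(\Pc(p)) = (-1)^{|I|}\sum_{\wq \leq \Pc(p)} (-1)^{\dim \wq}\,\wq$, the sum running over the nonempty faces of $\Pc(p)$. Here I would invoke Lemma \ref{l:facesPp}, which identifies these faces bijectively with the poset cones $\Pc(q)$ as $q$ ranges over the positive subposets of $p$. This converts the geometric sum into one indexed by positive subposets:
\[
\apode_I(\Pc(p)) = (-1)^{|I|}\sum_{q}(-1)^{\dim \Pc(q)}\,\Pc(q).
\]

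The remaining task --- and the only genuinely new computation --- is to pin down the sign, i.e.\ to show $(-1)^{|I|}(-1)^{\dim \Pc(q)} = (-1)^{c(q)}$. By Proposition \ref{prop:posetconegenerators}, $\Pc(q)$ is generated by the roots $e_i - e_j$ coming from the cover relations of $q$, so $\dim \Pc(q)$ equals the rank of this root set. This is a standard graphic-matroid rank: the span of $\{e_i-e_j : \{i,j\} \text{ a Hasse edge of } q\}$ has dimension $|I| - c(q)$, where $c(q)$ counts the connected components (including isolated vertices) of the Hasse diagram of $q$. Hence $(-1)^{|I|}(-1)^{\dim \Pc(q)} = (-1)^{2|I|-c(q)} = (-1)^{c(q)}$, so the displayed sum becomes $\sum_q (-1)^{c(q)}\Pc(q) = \low_I\bigl(\sum_q (-1)^{c(q)} q\bigr)$ by linearity of $\low_I$. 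Finally, since $\low$ is injective, I would cancel it from both sides to obtain $\apode_I(p) = \sum_q (-1)^{c(q)} q$. I do not anticipate a serious obstacle: the deep inputs --- the faces--subposets bijection of Lemma \ref{l:facesPp} and the Hopf-morphism property of $\low$ --- are already in hand, so the hardest step is merely the bookkeeping of the dimension that produces the correct sign $(-1)^{c(q)}$.
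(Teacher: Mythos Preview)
Your proposal is correct and follows essentially the same route as the paper: invoke the embedding $\wP \hookrightarrow \wGP_+$ via Proposition~\ref{p:PtoSF}, apply Theorem~\ref{t:antipode}, read off the faces using Lemma~\ref{l:facesPp}, and convert the dimension into $c(q)$ via $\dim \Pc(q) = |I| - c(q)$. The paper's proof is simply a one-line summary of exactly these steps, whereas you have spelled out the bookkeeping (morphisms commuting with antipodes, the graphic-matroid rank computation, and the use of injectivity to pull the formula back to $\wP$).
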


\begin{proof}
This follows from Theorem \ref{t:antipode} and Lemma \ref{l:facesPp}, and the observation that the dimension of the poset cone $\Pc(p)$ is $|I|-c(p)$.
\end{proof}

\begin{example} Let us revisit Example \ref{eg:antipode-poset}. This example takes place in the Hopf algebra of posets $P$, where isomorphic posets are identified. The formula

\begin{figure}[h]
\centering
\includegraphics[scale=.5]{Figures/antipodeposets.pdf} \qquad
\end{figure}
\noindent is the algebraic manifestation of the face structure of the corresponding poset cone, which is the cone over a square shown in Figure \ref{f:posetcone}. It has one full-dimensional face, 4 two-dimensional faces (in poset isomorphism classes of sizes 2 and 2), 4 rays (in one isomorphism class), and 1 vertex.
Combinatorially, the summands correspond to the positive subposets of the poset in question, as described in Example \ref{ex:poset}.
\end{example}

 \subsection{{Preposets and preposet cones}}\label{ss:preposets}


One may wonder whether there are other interesting submonoids of $\wGP$ consisting of cones, or (more or less equivalently) submonoids of $\wSF$ consisting of $\{0, \infty\}$ functions. In Theorem \ref{t:conespreposets} and Proposition \ref{p:PtoSF} we show that, essentially, there aren't. 
We prove that $\{0, \infty\}$ submodular functions are equivalent to the slightly larger class of preposets, which may be viewed as posets in their own right.

A \emph{preposet} on $I$ is a binary relation $q \subseteq I \times I $, denoted $\leq$, which is reflexive ($x \leq x$ for all $x \in q$) and transitive ($x \leq y$ and $y \leq z$ imply $x \leq z$  for all $x,y,z\in q$).
A preposet is not necessarily antisymmetric, and we define an equivalence relation by setting
\[
x \sim y \textrm{ when } x \leq y \textrm{ and } y \leq x.
\]
Let $p = q/\unsim$ be the set of equivalence classes of $p$. The relation $\leq$  induces a relation $\leq$ on $q/\unsim$ which is still reflexive and transitive, and is also antisymmetric; \emph{i.e.}, it defines a poset.

It follows that we may think of preposets as posets whose elements are labeled by non-empty and pairwise disjoint sets. More precisely, we may equivalently define a \emph{preposet} on $I$ to be a set partition $\pi = \{I_1, \ldots, I_k\}$ of $I$ together with a poset $p$ on $\pi$.

If $p'$ is a lower set of the poset $p=q/\unsim$, then we say $q'=\bigcup_{K \in p'} K$ is a \emph{lower set} of  the preposet $q$. As before, we define the lower set function of $q$ to be 
\[
\low_{q}: 2^I \mapsto \Rb \cup \{\infty\}, \quad 
\low_{q}(J) = \left\{ \begin{array}{ll}
0 & \textrm{if $J$ is a lower set of $q$,}\\
\infty & \textrm{otherwise.} \\
\end{array} \right.
\]

\begin{theorem}\label{t:conespreposets}
A Boolean function $z: I \rightarrow \{0, \infty\}$ is submodular if and only if $z= \low_q$ is the lower set function of a preposet $q$ on $I$.
\end{theorem}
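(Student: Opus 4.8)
The plan is to reduce everything to the combinatorial characterization of submodularity for $\{0,\infty\}$ functions recorded in~(\ref{eq:support}): a $\{0,\infty\}$ function $z$ is submodular if and only if its support $\mathcal{L} := \supp(z)$ is closed under unions and intersections. Since $z$ is a $\{0,\infty\}$ function we moreover have $\emptyset, I \in \mathcal{L}$. Thus the theorem becomes the assertion that the families $\mathcal{L} \subseteq 2^I$ containing $\emptyset$ and $I$ and closed under union and intersection are exactly the families of lower sets of preposets on $I$. This is a finite-set incarnation of Birkhoff's representation theorem, and I would prove the two implications separately.

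The easy direction ($\Leftarrow$) is to check that if $q$ is a preposet then $\supp(\low_q)$, the family of lower sets of $q$, is closed under unions and intersections (and contains $\emptyset$ and $I$). This is immediate from the definition: a union or intersection of down-sets of the underlying poset $q/\unsim$ is again a down-set, and a union of blocks is preserved under these operations. By~(\ref{eq:support}) this shows $\low_q$ is submodular.

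For the converse ($\Rightarrow$), suppose $z$ is submodular and set $\mathcal{L} = \supp(z)$. I would recover the preposet by defining
\[
i \leq_q j \iff \bigl(\text{every } K \in \mathcal{L} \text{ with } j \in K \text{ also satisfies } i \in K\bigr).
\]
This relation is visibly reflexive and transitive, hence a preposet $q$. The crux is to show that the lower sets of $q$ are precisely the members of $\mathcal{L}$, which then yields $z = \low_q$. One inclusion is direct: if $K \in \mathcal{L}$, $j \in K$ and $i \leq_q j$, then $i \in K$ by definition, so each $K \in \mathcal{L}$ is a lower set of $q$. For the reverse inclusion I would use that, because $I$ is finite and $\mathcal{L}$ is closed under intersection, each $a \in I$ has a smallest member $\langle a \rangle := \bigcap\{K \in \mathcal{L} : a \in K\} \in \mathcal{L}$; unwinding the definitions gives $\langle a \rangle = \{i : i \leq_q a\}$, the principal lower set generated by $a$. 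Then any lower set $A$ of $q$ satisfies $\langle a \rangle \subseteq A$ for every $a \in A$ and $A = \bigcup_{a \in A} \langle a \rangle$, so closure of $\mathcal{L}$ under unions forces $A \in \mathcal{L}$ (the empty set being covered by $\emptyset \in \mathcal{L}$).

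The main obstacle is precisely this last step of the converse: reconstructing an arbitrary lower set of $q$ as a member of $\mathcal{L}$. Both closure properties are essential there, intersection-closure to produce the principal ideals $\langle a \rangle$ and union-closure to reassemble them, and this is where the finiteness of $I$ and the full strength of submodularity via~(\ref{eq:support}) enter. Once the identification of $\mathcal{L}$ with the lower sets of $q$ is in place, the (unique) preposet can be read back off $\mathcal{L}$ through the displayed definition of $\leq_q$, which completes the equivalence.
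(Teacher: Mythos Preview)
Your proof is correct. Both directions are sound; in particular, the key step of the converse---showing every lower set of $q$ lies in $\mathcal{L}$ by forming principal ideals $\langle a\rangle$ via intersection-closure and reassembling via union-closure---goes through cleanly, with the nonemptiness of the intersecting family guaranteed by $I\in\mathcal{L}$.

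Your route differs from the paper's. For the forward direction, the paper invokes Birkhoff's representation theorem: it views $L=\supp(z)$ as a distributive lattice, extracts the subposet $L_{\irred}$ of join-irreducibles, and then realizes the preposet on $I$ by partitioning $I$ into the \emph{essential sets} $\ess(A)=A\setminus\bigcup_{B<A,\,B\in L_{\irred}}B$, one per join-irreducible $A$, with the order inherited from $L_{\irred}$. Verifying that these essential sets partition $I$ and that $L$ coincides with the lower sets of the resulting preposet takes two steps and an auxiliary lemma. Your approach bypasses Birkhoff entirely by defining $\leq_q$ directly as the closure relation on elements and using the $\langle a\rangle$'s to match the two lower-set families; this is shorter and more elementary. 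What the paper's construction buys is an explicit description of the equivalence classes of the preposet (namely the essential sets), making the Birkhoff correspondence visible. The two constructions agree: your $\langle a\rangle$ is the smallest join-irreducible containing $a$, and the $\sim$-classes of your $q$ are exactly the paper's essential sets.
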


\begin{proof}
The backward direction is straightforward:
If $q$ is a preposet then its collection of lower sets is closed under union and intersection. It follows from (\ref{eq:support}) that $\low_q$ is submodular.

\medskip

The forward direction will require more work. 
Suppose $z$ is a submodular $\{0, \infty\}$ function on $I$ and let 
\[
L:= \supp(z). 
\]
We need to show that $L$ is the collection of lower sets of a preposet $q$ on $I$.

Thanks to (\ref{eq:support}) we know that $L= \supp(z)$ is a lattice under the operations of union and intersection. These operations are  distributive, so Birkhoff's fundamental theorem of distributive lattices \cite[Theorem 3.4.1]{stanley11:_ec1} applies: If $L_{\irred}$ is the subposet of join-irreducible elements of $L$, and if $J(L_{\irred})$ is the poset of lower sets of $L_{\irred}$ ordered by inclusion, then 
\[
L \cong J(L_{\irred}).
\]

We reinterpret $L_{\irred}$ as a preposet on $I$ as follows. For each set $A \in L_{\irred}$ let
\[
\ess(A) = A - \bigcup_{B \in L_{\irred} \atop B < A} B.
\]
be the \emph{essential set} of $A$, consisting of the \emph{essential elements} which are in no lesser join-irreducible. Consider the collection of essential sets 
\[
q: = \{\ess(A)\, : \, A \in L_\irred\},
\]
endowed with the partial order inherited from $L_\irred$. We will now show that: 

1. $q$ is a preposet on $I$, and 

2. $L$ is the collection of lower sets of $q$. \\
These two statements will complete the proof.

Before we prove these two statements, let us illustrate this construction with an example. The left panel of Figure \ref{f:distributive} shows a distributive lattice $L$ of subsets of $I=\{a,b,c,d,e,f,g,h,i,j\}$. We only label the join-irreducible elements; the label of every other set is the union of the join-irreducibles less than it in $L$. The right hand side panel shows the subposet $L_\irred$. For each join-irreducible set $A \in L_\irred$ we have indicated its essential set $\ess(A)$ in bold. These essential sets partition $I$, allowing us to think of this object $q$ as a preposet on $I$.

\bigskip

\begin{figure}[h]
\centering
\includegraphics[scale=.65]{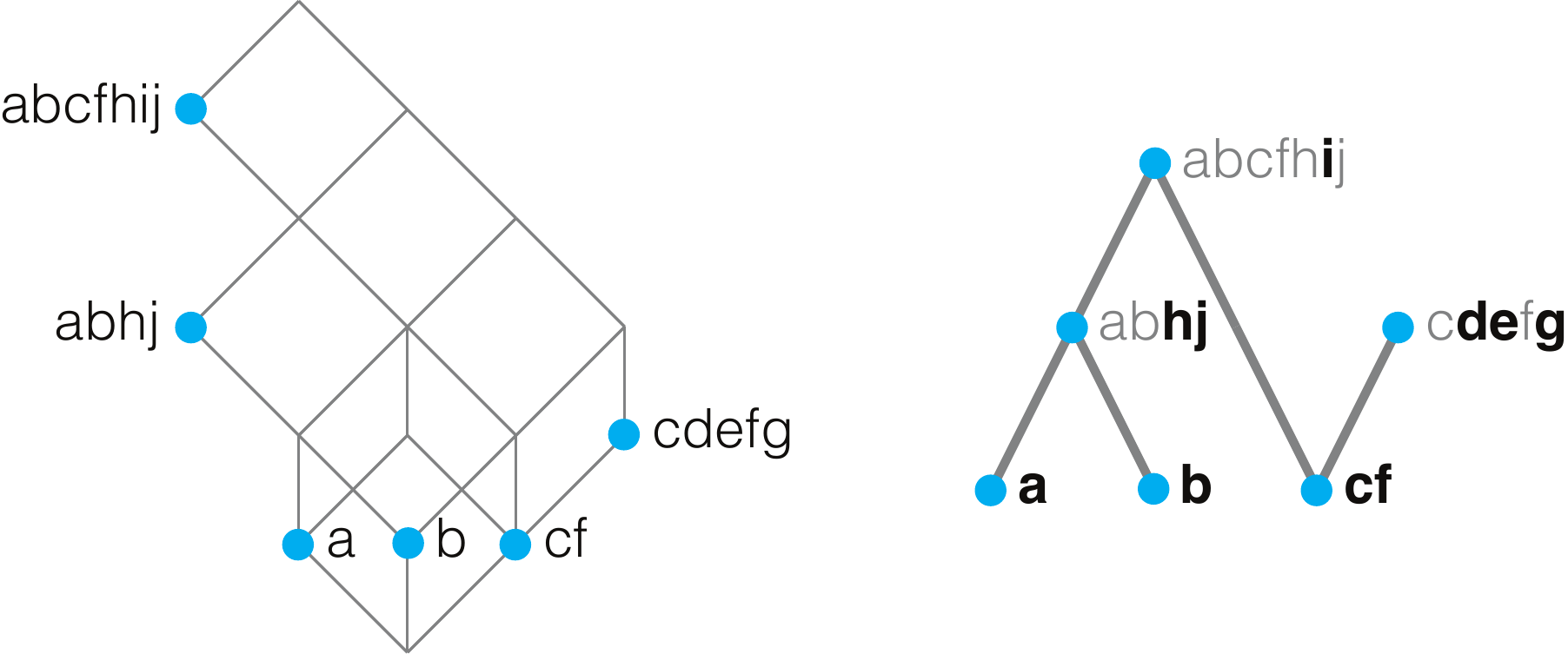} \caption{A distributive lattice $L$ of subsets of $I=\{a,b,c,d,e,f,g,h,i,j\}$ and its poset of join-irreducibles. The essential sets of $L_\irred$ are  shown in boldface; they give rise to a preposet $q$ on $I$, whose lower sets are precisely the sets in $L$. \label{f:distributive}}
\end{figure}

\medskip

\noindent 
\emph{Step 1. $q$ is a preposet on $I$}: 
We need to show that the sets in $q$ form a set partition of $I$. Each essential set $\ess(A)$ is non-empty because $A$ is join-irreducible. 
To prove that the essential sets are pairwise disjoint, assume contrariwise that $x \in \ess(A)$ and $x \in \ess(B)$ for some $A \neq B \in L_\irred$. Then $A \cap B \in L$ and $x \in A \cap B$, so $x \in C$ for some join irreducible $C \in L_\irred$ with $C \subseteq A \cap B \subsetneq A$. This contradicts the assumption that $x$ is an essential element of $A$.

The following lemma completes the proof of Step 1.

\begin{lemma} 
\label{l:esspartitions}
For all $A \in L$,
\[
A = \bigsqcup_{B \in L_{\irred} \atop B \leq A} \ess(B).
\]
In particular, $\{\ess(B) \, : \, B \in L_\irred\}$ is a partition of $I$.
\end{lemma}

\begin{proof}[Proof of Lemma \ref{l:esspartitions}]
First we prove that the lemma holds for each join-irreducible $A \in L_\irred \subseteq L $, proceeding by induction. This statement is clearly true for the minimal elements of $L_\irred$. Also, if it holds for all elements $B<A$ in $L_\irred$, then using the definition of $\ess(A)$ and the induction hypothesis,
\[
A = 
\ess (A) \sqcup \bigcup_{B \in L_{\irred} \atop B < A} B
= 
\ess (A) \sqcup \bigcup_{B \in L_{\irred} \atop B < A} \bigsqcup_{C \in L_{\irred} \atop C \leq B} \ess(C)  
= 
\bigsqcup_{C \in L_{\irred} \atop C \leq A}  \ess(C) 
\]
so the claim holds for $A$ as well. Therefore the lemma holds for all $A \in L_\irred$.

Now we can prove Lemma \ref{l:esspartitions} holds for all $A \in L$. The backward inclusion is clear. To prove the forward inclusion, let $x \in A$. Since $A$ is the union of the join-irreducibles less than it in $L$, we have $x \in C$ for some $C \in L_\irred$ with $C \leq A$. By the previous paragraph, $x \in \ess(D)$ for some $D \in L_\irred$ with $D \leq C$; but then $D \leq A$ also, so $x$ is in one of the essential sets on the right hand side. The desired result follows. 

The last statement follows by recalling that $z(I) = 0$ and applying the lemma to $A=I$, which is the maximum element of the lattice $L$. This completes the proof of Lemma \ref{l:esspartitions} and of Step 1 of this proof.
\end{proof}

\medskip

\noindent 
\emph{Step 2. $L$ is the collection of lower sets of $q$:} 
By Birkhoff's theorem and Lemma \ref{l:esspartitions}, $A \in L$ if and only if there is a down set $J \subseteq L_\irred$ with
\[
A = \bigcup_{B \in J} B = \bigsqcup_{B \in J} \ess(B);
\]
that is, if and only if $A$ is a  lower set of $q$.
\end{proof}

We now state an algebraic counterpart of 
Theorem \ref{t:conespreposets}. 
Let $\rQ[I]$ be the set of preposets on $I$. Preposets become a Hopf monoid in vector species $\wQ$ with the same operations of the Hopf monoid of posets $\wP$. 
Let $\wSF_{\{0,\infty\}}$ be the submonoid of $\wSF$ consisting of $\{0, \infty\}$ functions. Let $\wbGP_{\textrm{cone}}$ be the submonoid of $\wbGP$ consisting of cones.

\begin{proposition}\label{p:PtoSF}
The maps $\low:\wQ  \map{\cong} \wSF_{\{0,\infty\}} \map{\cong}\wbGP_{\textrm{cone}}$ are isomorphisms of Hopf monoids in vector species.
\end{proposition}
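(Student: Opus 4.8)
The plan is to assemble the proposition from two pieces, each of which is a set-level bijection that I would then upgrade to a morphism of Hopf monoids. The substantial combinatorial work has already been carried out in Theorem \ref{t:conespreposets}, which shows that $q \mapsto \low_q$ is a bijection between preposets on $I$ and submodular Boolean functions $z \colon 2^I \to \{0,\infty\}$; this gives the bijectivity of the first map $\low \colon \wQ \to \wSF_{\{0,\infty\}}$ on each finite set $I$. It then remains to check that $\low$ intertwines the products and coproducts. For this I would observe that $\wQ$ carries exactly the same product (disjoint union) and coproduct (restriction to a lower set, together with contraction) as the Hopf monoid of posets $\wP$, and that the notion of lower set of a preposet is the one used throughout; consequently the computation proving that $\low \colon \wP \to \wSF_+$ is a morphism, given earlier in this section, carries over verbatim. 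Concretely, for $I = S \sqcup T$ one checks that $J$ is a lower set of $q_1 \sqcup q_2$ iff $J \cap S$ and $J \cap T$ are lower sets, giving $\low_{q_1 \sqcup q_2} = \low_{q_1} \cdot \low_{q_2}$; and one splits into the cases ``$S$ is / is not a lower set of $q$'' to match the coproduct, using that the coproduct of $\wSF_+$ vanishes precisely when $z(S) = \infty$, i.e. when $S$ is not a lower set. Along the way one records that $\wSF_{\{0,\infty\}}$ is closed under these operations (since $0 + \infty = \infty$ and contraction by a lower set preserves $\{0,\infty\}$-values), so that it is genuinely a submonoid of $\wSF_+$.

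For the second map I would restrict the isomorphism $\wSF_+ \cong \wGP_+$ of Theorem \ref{t:submod-gp+}, given by $z \mapsto \Pc(z)$, to the subspecies $\wSF_{\{0,\infty\}}$. The point to verify is that this restriction has image exactly the cones: if $z$ takes values in $\{0,\infty\}$ then the base polyhedron $\Pc(z) = \{x : x(I) = 0,\ x(A) \le 0 \text{ for all } A \in \supp(z)\}$ is cut out by homogeneous inequalities and is therefore a polyhedral cone with apex at the origin; conversely, if $\wp$ is such a cone then for each $A$ the optimal value $z(A) = \max_{x \in \wp} x(A)$ is either $0$ (attained at the apex) or $+\infty$, so the unique submodular function representing $\wp$ furnished by Theorem \ref{t:submod-gp+} takes values in $\{0,\infty\}$. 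This identifies $\wSF_{\{0,\infty\}}$ with the submonoid of cones inside $\wGP_+$, compatibly with products and coproducts since those are inherited from $\wGP_+$.

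Finally I would pass to the normal-equivalence quotient to land in $\wbGP_{\textrm{cone}}$, and here lies the one point that needs a genuine, if short, polyhedral argument: one must check that normal equivalence is trivial on cones, so that no information is lost in the quotient. This follows from cone duality: for a cone $\wp$ with apex at the origin, the normal cone of the apex is the polar dual $\wp^\circ$, which is exactly the support of $\Nc_\wp$, and biduality $\wp = (\wp^\circ)^\circ$ recovers $\wp$ from $\supp(\Nc_\wp)$ (working modulo the lineality direction $\1$, which lies in every braid cone). Hence two cones with the same normal fan coincide, the quotient map $\wGP_+ \to \wbGP$ is injective on cones, and the composite $\wQ \to \wSF_{\{0,\infty\}} \to \wbGP_{\textrm{cone}}$ is the desired isomorphism of Hopf monoids. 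I expect this last normal-equivalence step to be the main, though minor, obstacle, since everything else is either already established in Theorem \ref{t:conespreposets} or a transcription of the poset computation; the care required is in handling the lineality space $\langle \1 \rangle$ and the possibility that support functions take the value $\infty$.
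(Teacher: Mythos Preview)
Your proposal is correct and follows essentially the same route as the paper, only with far more detail. The paper's proof is two sentences: it cites Theorem~\ref{t:conespreposets} for the first isomorphism and, for the second, simply observes that every cone is a translate of a unique cone through the origin, whose submodular function is $\{0,\infty\}$-valued. Your verification that $\low$ respects products and coproducts (by adapting the poset argument) and that $\wSF_{\{0,\infty\}}$ is closed under the operations is exactly the check the paper leaves implicit.

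The one place you diverge slightly is the normal-equivalence step: you invoke cone biduality to show two cones at the origin with the same normal fan coincide, whereas the paper just notes that each normal equivalence class of cones has a \emph{unique} translate at the origin (translates share normal fans, and the apex is determined). Both arguments are correct; the paper's is a bit more direct and also makes surjectivity onto $\wbGP_{\mathrm{cone}}$ transparent, which your write-up leaves implicit (though it is immediate: any cone translates to one at the origin).
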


\begin{proof}
The first isomorphism is an immediate consequence of Theorem \ref{t:conespreposets}. For the second one, notice that every cone $c \in \wGP$ is a translate of a unique cone $c'$ that contains the origin. The submodular function $z_c$ such that $c' = \Pc(z_c)$ is a $\{0, \infty\}$ function, and the correspondence $c \mapsto z_c$ gives the desired isomorphism. 
\end{proof}

In the correspondence between preposets and generalized permutahedra which are cones, posets on $I$ correspond to cones of the maximum possible dimension $|I|-1$. The antipode formula for preposets $\wQ$ is essentially the same as the antipode formula for posets $\wP$.

\section{{Preliminaries 4: Invariants of Hopf monoids and reciprocity.}}\label{s:pol-inv}

Once again, we set aside the combinatorial examples of earlier sections 
and return to the general setting of Hopf monoids of Section~\ref{s:hopf}. This section shows that each character on a Hopf monoid gives rise to an associated polynomial invariant. 
There are two main results. Proposition~\ref{p:pol-inv} shows that the polynomial invariant is indeed polynomial and invariant. Proposition~\ref{p:reciprocity} relates the values of the invariant on an integer and on its negative by means of the antipode of the Hopf monoid.

This abstract framework has concrete combinatorial consequences. For instance, we will see in Section \ref{s:reciprocity} that the simplest non-zero characters on the Hopf monoids $\wG, \wP, \wM$ give rise to three important combinatorial polynomials: the chromatic polynomial of a graph, the strict order polynomial of a poset, and the BJR polynomial of a matroid. Furthermore, this Hopf-theoretic framework gives immediate proofs of the celebrated reciprocity theorems for these polynomials, due to Stanley and Billera-Jia-Reiner. 
%

\subsection{{The polynomial invariant of a character}}\label{ss:inv}

Recall from Section \ref{s:prelimcharacters} the notion of a character $\zeta$ on a Hopf monoid on vector species $\wH$. In the examples that interest us, $\wH$ is a Hopf monoid coming from a family of combinatorial objects, and $\zeta$ is a multiplicative function on our objects which is invariant under relabelings of the ground set.

Throughout this section, we fix a connected Hopf monoid $\wH$ and a character $\zeta: \wH \rightarrow \Kb$. 
Define, for each element $x\in\wH[I]$ and each natural number $n\in\Nb$, 
\begin{equation}\label{e:pol-inv}
\chi_I(x)(n)\ :=
\sum_{I=S_1\sqcup \cdots \sqcup S_n} (\zeta_{S_1}\otimes\cdots\otimes\zeta_{S_n})\circ \Delta_{S_1,\ldots,S_n} (x),
\end{equation}
summing  over \emph{all} decompositions of $I$ into $n$ disjoint subsets which are allowed to be empty.
For fixed $I$ and $x$, the function $\chi_I(x)$ is defined on $\Nb$ and takes values on $\Kb$. Note that 
\begin{equation}\label{e:pol-inv2}
\chi_I(x)(0)=
\begin{cases}
\zeta_\emptyset(x) & \text{if $I=\emptyset$,} \\
 0        & \text{otherwise,}
\end{cases}
\qquad \qquad 
\chi_I(x)(1)=\zeta_I(x).
\end{equation}

\begin{proposition}\label{p:pol-inv} \emph{(Polynomial invariants)}
Let $\wH$ be a connected Hopf monoid, $\zeta: \wH \rightarrow \Kb$ be a character, and $\chi$ be defined by (\ref{e:pol-inv}). Fix a finite set $I$ and
an element $x\in\wH[I]$.

\begin{enumerate}
\item 
For each $n \in \Nb$ we have 
\[
\chi_I(x)(n)\ = \sum_{k=0}^{\abs{I}}
\chi^{(k)}_I(x)\binom{n}{k}
\]
where, for each $k=0,\ldots,\abs{I}$,
\[
\chi^{(k)}_I(x) = \sum_{(T_1,\ldots, T_k)\vDash I} (\zeta_{T_1}\otimes\cdots\otimes\zeta_{T_k})\circ \Delta_{T_1,\ldots,T_k} (x) \,\, \in \Kb.
\]
summing over all compositions $(T_1, \ldots, T_k)$ of $I$.
Therefore, 
$\chi_I(x)$ is a polynomial function of $n$ of degree at most $\abs{I}$.
\item 
Let $\sigma:I\to J$ be a bijection, $x\in\wH[I]$ and
$y:=\wH[\sigma](x)\in\wH[J]$.
Then $\chi_I(x) = \chi_J(y)$.
\end{enumerate}
\end{proposition}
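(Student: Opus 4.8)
The plan is to prove the two claims in order, starting with the polynomiality statement (part 1), from which part 2 follows almost immediately.

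\medskip

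\noindent \textbf{Part 1 (Polynomiality).} The key idea is to sort the summands in the defining expression~\eqref{e:pol-inv} according to which of the $n$ blocks are empty. A decomposition $I = S_1 \sqcup \cdots \sqcup S_n$ into $n$ possibly-empty blocks is determined by first choosing which $k$ of the $n$ indices carry the nonempty blocks, and then specifying a composition $(T_1, \ldots, T_k) \vDash I$ to fill those chosen slots in order. Because $\zeta_\emptyset$ sends $1 \mapsto 1$ and, by the unitality axiom, inserting an empty block into a higher coproduct does not change the value of the surviving tensor factors, each such decomposition contributes exactly the term $(\zeta_{T_1} \otimes \cdots \otimes \zeta_{T_k}) \circ \Delta_{T_1, \ldots, T_k}(x)$ associated to its underlying composition. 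Thus I would group the sum as
\[
\chi_I(x)(n) = \sum_{k=0}^{|I|} \binom{n}{k} \sum_{(T_1, \ldots, T_k) \vDash I} (\zeta_{T_1} \otimes \cdots \otimes \zeta_{T_k}) \circ \Delta_{T_1, \ldots, T_k}(x),
\]
where $\binom{n}{k}$ counts the ways to choose the $k$ nonempty slots among the $n$ available, and the inner sum is exactly $\chi^{(k)}_I(x)$. Since a composition of $I$ has at most $|I|$ parts, the sum over $k$ truncates at $|I|$, giving a polynomial in $n$ of degree at most $|I|$, as desired.

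\medskip

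\noindent \textbf{The main obstacle.} The step requiring the most care is verifying that an empty block genuinely contributes a harmless factor. Concretely, one must check that for a decomposition with some $S_i = \emptyset$, the map $\zeta_\emptyset \otimes (\text{rest})$ applied to $\Delta_{S_1, \ldots, S_n}(x)$ equals the value obtained from the coproduct indexed by the composition with the empty slot deleted. This follows from the unitality axiom of Definition~\ref{d:hopfset} (which identifies $x|_I = x = x/_\emptyset$) together with the fact that $\mu_{(\,)}$ and $\Delta_{(\,)}$ are the identifications $\Kb \cong \wH[\emptyset]$, so that $\zeta_\emptyset$ acts as the scalar $1$. I would state this as a short lemma or remark about how empty blocks factor through the counit before assembling the grouped sum, to keep the bookkeeping transparent.

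\medskip

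\noindent \textbf{Part 2 (Invariance).} This is a direct consequence of part 1 together with naturality. By the naturality axiom for the coproduct and the naturality axiom for the character $\zeta$, each summand $(\zeta_{T_1} \otimes \cdots \otimes \zeta_{T_k}) \circ \Delta_{T_1, \ldots, T_k}(x)$ is unchanged when $x$ is relabeled by a bijection $\sigma: I \to J$: applying $\wH[\sigma]$ transports the composition $(T_1, \ldots, T_k)$ of $I$ to the composition $(\sigma(T_1), \ldots, \sigma(T_k))$ of $J$, and $\zeta_{\sigma(T_i)}(\wH[\sigma|_{T_i}](\cdot)) = \zeta_{T_i}(\cdot)$ by naturality of the character. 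As $\sigma$ induces a bijection between compositions of $I$ and compositions of $J$, the coefficients satisfy $\chi^{(k)}_I(x) = \chi^{(k)}_J(y)$ for every $k$, and hence $\chi_I(x)(n) = \chi_J(y)(n)$ for all $n$. Since both sides are polynomials in $n$ agreeing at all natural numbers, they are equal as polynomials, giving $\chi_I(x) = \chi_J(y)$.
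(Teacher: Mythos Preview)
Your proposal is correct and follows essentially the same approach as the paper: group decompositions by their underlying composition (obtained by deleting empty blocks), use unitality of $\Delta$ and $\zeta$ to see that empty blocks contribute a factor of $1$, and count the $\binom{n}{k}$ ways to insert empty slots. Your treatment of part~2 via naturality of $\Delta$ and $\zeta$ is likewise exactly what the paper does, though you spell out the bijection on compositions more explicitly than the paper bothers to.
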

\begin{proof}
1. Given a decomposition $I=S_1\sqcup \cdots \sqcup S_n$, let $(T_1,\ldots,T_k)$
be the composition of $I$ obtained by removing the empty $S_i$s and keeping
the remaining ones in order. In view of unitality of $\Delta$ and $\zeta$, we have
\[
(\zeta_{S_1}\otimes\cdots\otimes\zeta_{S_n})\circ \Delta_{S_1,\ldots,S_n} (x)
= (\zeta_{T_1}\otimes\cdots\otimes\zeta_{T_k})\circ \Delta_{T_1,\ldots,T_k} (x).
\]
Note that $k \leq |I|$ and 
the number of decompositions $I=S_1\sqcup \cdots \sqcup S_n$ which give rise to a given composition $(T_1,\ldots,T_k)$ is
$\binom{n}{k}$. It follows that
\[
\chi_I(x)(n)\ = \sum_{k=0}^{\abs{I}}
\Biggl(\sum_{(T_1,\ldots T_k)\vDash I}  (\zeta_{T_1}\otimes\cdots\otimes\zeta_{T_k})\circ \Delta_{T_1,\ldots,T_k} (x)\Biggr)\,\binom{n}{k}.
\]
as desired.
Since each $\binom{n}{k}$ is a polynomial function of $n$ of degree $k$, $\chi_I(x)$ is polynomial of degree at most $|I|$.

2. This follows from the naturality of $\Delta$ and $\zeta$.
\end{proof}

Let $\Kb[t]$ denote the polynomial algebra.
Proposition~\ref{p:pol-inv} states that each character $\zeta$ gives rise to a family of 
 polynomials
$\chi_I(x)\in\Kb[t]$ associated to each structure $x\in\wH[I]$,
whose values on nonnegative integers $n$
are given by~\eqref{e:pol-inv}. Furthermore, it says that two isomorphic structures have the same associated polynomial. Thus, the function $\chi_I(x)$ is
a \emph{polynomial invariant} of the structure $x$ (canonically associated to 
the Hopf monoid $\wH$ and the character $\zeta$).

\subsection{{Properties of the polynomial invariant of a character}}\label{ss:propsinv}

We now collect some useful properties of these polynomial invariants.


\begin{proposition}\label{p:pol-inv3}
Let $\wH$ be a connected Hopf monoid, $\zeta: \wH \rightarrow \Kb$ be a character, and $\chi$ be the associated polynomial invariant, 
defined by (\ref{e:pol-inv}).
 Let $I$ be a finite set.
\begin{itemize}
\item[(i)] $\chi_I$ is a linear map from $\wH[I]$ to $\Kb[t]$. \item[(ii)] Let $I=S\sqcup T$ be a decomposition. For any $x\in\wH[S]$ and $y\in\wH[T]$, we have the equality of polynomials
\[
\chi_I(x\cdot y) = \chi_S(x)\chi_T(y)
\]
\item[(iii)] $\chi_\emptyset(1)=1$, the constant polynomial.
\item[(iv)] For any $x\in\wH[I]$ and scalars $n$ and $m$,
\[
\chi_I(x)(n+m) =\sum_{I=S\sqcup T} \chi_S(x|_S)(n)\chi_T(x/_S)(m).
\]
\end{itemize}
\end{proposition}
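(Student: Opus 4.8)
The plan is to recognize the values of the invariant as convolution powers of $\zeta$ in the character group $\Xb(\wH)$, and then to read off (i)--(iv) from the group structure established in Section~\ref{s:prelimcharacters}. The key observation is that iterating the convolution formula \eqref{e:conv} gives, for each integer $n\geq 1$,
\[
\zeta^{*n}_I(x)=\sum_{I=S_1\sqcup\cdots\sqcup S_n}(\zeta_{S_1}\otimes\cdots\otimes\zeta_{S_n})\circ\Delta_{S_1,\ldots,S_n}(x),
\]
where $\zeta^{*n}$ denotes the $n$-fold convolution product of $\zeta$ with itself, with the convention $\zeta^{*0}:=\epsilon$. Comparing with \eqref{e:pol-inv}, this says precisely that $\chi_I(x)(n)=\zeta^{*n}_I(x)$ for every $n\in\Nb$; the $n=0$ case matches \eqref{e:pol-inv2}. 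First I would record this identity as a lemma, proved by a straightforward induction on $n$ using the associativity of the convolution product together with the higher-coproduct conventions of Section~\ref{ss:high}.

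With this lemma in hand, each part follows. For (i), the map $x\mapsto\zeta^{*n}_I(x)$ is a composition of the linear higher coproduct $\Delta_{S_1,\ldots,S_n}$ with the linear functionals $\zeta_{S_j}$, hence is linear in $x$ for each $n$; since a polynomial with coefficients in $\Kb$ is determined by its values on $\Nb$ (we work in characteristic $0$), linearity of $\chi_I$ into $\Kb[t]$ follows, and one may alternatively read it off from the coefficient formula in Proposition~\ref{p:pol-inv}(1). For (ii) and (iii), I would invoke the theorem of Section~\ref{s:prelimcharacters} that the convolution product of characters is again a character: thus each $\zeta^{*n}$ is a character, so $\zeta^{*n}_I(x\cdot y)=\zeta^{*n}_S(x)\,\zeta^{*n}_T(y)$ and $\zeta^{*n}_\emptyset(1)=1$ for all $n$. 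These are equalities of the values of the relevant polynomials at every $n\in\Nb$, hence equalities of polynomials, giving $\chi_I(x\cdot y)=\chi_S(x)\chi_T(y)$ and $\chi_\emptyset(1)=1$.

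For (iv), the point is the associativity of convolution: $\zeta^{*(n+m)}=\zeta^{*n}*\zeta^{*m}$ in $\Xb(\wH)$. Unfolding the right-hand side with \eqref{e:conv} gives
\[
\zeta^{*(n+m)}_I(x)=\sum_{I=S\sqcup T}\zeta^{*n}_S(x|_S)\,\zeta^{*m}_T(x/_S)=\sum_{I=S\sqcup T}\chi_S(x|_S)(n)\,\chi_T(x/_S)(m),
\]
read in the same Sweedler-suppressed notation as \eqref{e:conv}. This is an identity of the two-variable polynomial functions of $(n,m)$ evaluated at every point of $\Nb^2$, so the polynomial identity in (iv) follows since $\Nb^2$ is Zariski dense in $\Kb^2$.

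I expect the main obstacle to be purely bookkeeping: verifying the lemma $\chi_I(x)(n)=\zeta^{*n}_I(x)$ carefully, staying consistent with the Sweedler-suppressed notation used in \eqref{e:conv} and \eqref{e:pol-inv}, and justifying each passage from ``holds for all $n\in\Nb$ (or $(n,m)\in\Nb^2$)'' to ``holds as polynomials'' via the characteristic-$0$ hypothesis. Everything substantive---namely multiplicativity and the higher compatibility of product and coproduct---is already packaged in the statement that the convolution of characters is a character, so no new Hopf-monoid computation is required.
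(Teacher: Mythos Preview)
Your proof is correct and takes a genuinely different, more conceptual route than the paper. The paper proves each item by direct computation: for (ii) it unpacks $\chi_I(x\cdot y)(n)$ using the bijection between decompositions of $I$ into $n$ parts and pairs of decompositions of $S$ and $T$ into $n$ parts, together with the compatibility axiom and multiplicativity of $\zeta$; for (iv) it similarly uses the bijection between decompositions of $I$ into $n+m$ parts and tuples $(S,S_1,\ldots,S_n,T,T_1,\ldots,T_m)$, together with coassociativity. You instead package everything into the identity $\chi_I(x)(n)=\zeta^{*n}_I(x)$ and then read off (ii)--(iii) from the fact that convolution powers of a character are characters, and (iv) from associativity $\zeta^{*(n+m)}=\zeta^{*n}*\zeta^{*m}$. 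Your approach is cleaner and avoids redoing, in disguise, the compatibility computation already carried out in the proof that convolution of characters is a character; the paper's approach is more self-contained in that it does not pause to formulate and prove the lemma $\chi_I(x)(n)=\zeta^{*n}_I(x)$.
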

\begin{proof} Property (i) follows from the linearity of $\Delta$ and $\zeta$.

Property (ii) follows from the compatibility between $\mu$ and $\Delta$ and the multiplicativity of $\zeta$. We provide the details. First, decompositions 
$I=I_1\sqcup\cdots\sqcup I_n$
into $n$ parts are in bijection with  pairs of decompositions $S=S_1\sqcup\cdots\sqcup S_n$ and $T=T_1\sqcup\cdots\sqcup T_n$, where $S_i=I_i\cap S$ and
$T_i=I_i\cap T$.
\[
\begin{picture}(100,90)(20,0)
  \put(50,40){\oval(100,80)}
  \put(0,40){\dashbox{2}(100,0){}}
  \put(45,55){$S$}
  \put(45,15){$T$}
\end{picture}
\quad
\begin{picture}(100,90)(10,0)
  \put(50,40){\oval(100,80)}
  \put(20,0){\dashbox{2}(0,80){}}
   \put(80,0){\dashbox{2}(0,80){}}
  \put(5,35){$I_1$}
  \put(45,35){$\cdots$}
  \put(85,35){$I_n$}
\end{picture}
\quad
\begin{picture}(100,90)(0,0)
\put(50,40){\oval(100,80)}
  \put(0,40){\dashbox{2}(100,0){}}
   \put(20,0){\dashbox{2}(0,80){}}
   \put(80,0){\dashbox{2}(0,80){}}
    \put(45,55){$\cdots$}
     \put(45,15){$\cdots$}
  \put(5,55){$S_1$}
  \put(85,55){$S_n$}
  \put(5,15){$T_1$}
  \put(85,15){$T_n$}
\end{picture}
\]
The compatibility between $\mu$ and $\Delta$ and the associativity of the latter
imply that if we write
\[
\Delta_{S_1,\ldots,S_n}(x)=\sum x_1\otimes\cdots\otimes x_n
\qand
\Delta_{T_1,\ldots,T_n}(y)=\sum y_1\otimes\cdots\otimes y_n,
\]
in Sweedler's notation, as described in Section \ref{ss:Hopfmonoidinvectorspecies}, 
then
\[
\Delta_{I_1,\ldots,I_n}(x\cdot y) = 
\sum (x_1\cdot y_1)\otimes\cdots\otimes (x_n\cdot y_n)
\]
The above, together with the multiplicativity of $\zeta$, yield that $\chi_I(x\cdot y)(n)$ equals
\begin{align*}
& \hspace{.6cm} \sum_{I=I_1\sqcup \cdots \sqcup I_n} (\zeta_{I_1}\otimes\cdots\otimes\zeta_{I_n})\circ \Delta_{I_1,\ldots,I_n} (x\cdot y)\\
& =\sum_{S=S_1\sqcup \cdots \sqcup S_n \atop T=T_1\sqcup\cdots\sqcup T_n} 
\sum \zeta_{I_1}(x_1\cdot y_1) \cdots \zeta_{I_n}(x_n\cdot y_n) =\sum_{S=S_1\sqcup \cdots \sqcup S_n \atop T=T_1\sqcup\cdots\sqcup T_n} 
\sum \zeta_{S_1}(x_1)\zeta_{T_1}(y_1)\cdots \zeta_{S_n}(x_n)\zeta_{T_n}(y_n)\\
& =\Bigl(\sum_{S=S_1\sqcup \cdots \sqcup S_n} (\zeta_{S_1}\otimes\cdots\otimes\zeta_{S_n})\circ \Delta_{S_1,\ldots,S_n} (x)\Bigr)
\Bigl(\sum_{T=T_1\sqcup \cdots \sqcup T_n} (\zeta_{T_1}\otimes\cdots\otimes\zeta_{T_n})\circ \Delta_{T_1,\ldots,T_n} (y)\Bigr)\\
& =\chi_S(x)(n)\,\chi_T(y)(n).
\end{align*}
Thus $\chi_I(x\cdot y) = \chi_S(x)\chi_T(y)$ as polynomials, since they agree at every natural number $n$.

Property (iii) follows from unitality of $\Delta$ and $\zeta$.

For property (iv), note that decompositions of $I$ into $n+m$ parts are
in bijection with tuples 
\[
(S,S_1,\ldots,S_n,T,T_1,\ldots,T_m)
\]
where $I=S\sqcup T$, $S=S_1\sqcup\cdots\sqcup S_n$, and $T=T_1\sqcup\cdots\sqcup T_m$. In addition, associativity of $\Delta$ implies that
\[
\Delta_{S_1,\ldots,S_n,T_1,\ldots,T_m} =
\bigl(\Delta_{S_1,\ldots,S_n}\otimes\Delta_{T_1,\ldots,T_m}\bigr)\circ \Delta_{S,T}.
\]
Therefore, $\chi_I(x)(n+m)$ is equal to 
\begin{align*}
& \hspace{.6cm} \sum_{I=S_1\sqcup \cdots \sqcup S_n\sqcup T_1\sqcup\cdots\sqcup T_m} (\zeta_{S_1}\otimes\cdots\otimes\zeta_{S_n}\otimes\zeta_{T_1}\otimes\cdots\otimes\zeta_{T_m})\circ \Delta_{S_1,\ldots,S_n,T_1,\ldots,T_m} (x)\\
&=\sum_{I=S\sqcup T}\sum_{S=S_1\sqcup \cdots \sqcup S_n \atop T=T_1\sqcup\cdots\sqcup T_m} 
(\zeta_{S_1}\otimes\cdots\otimes\zeta_{S_n}\otimes\zeta_{T_1}\otimes\cdots\otimes\zeta_{T_m})\circ \bigl(\Delta_{S_1,\ldots,S_n}\otimes\Delta_{T_1,\ldots,T_m}\bigr)\circ \Delta_{S,T}(x)\\
&=\sum_{I=S\sqcup T}\sum_{S=S_1\sqcup \cdots \sqcup S_n \atop T=T_1\sqcup\cdots\sqcup T_m} 
\Bigl((\zeta_{S_1}\otimes\cdots\otimes\zeta_{S_n})\circ\Delta_{S_1,\ldots,S_n}(x|_S)\Bigr) \Bigl(
(\zeta_{T_1}\otimes\cdots\otimes\zeta_{T_m})\circ \Delta_{T_1,\ldots,T_m}(x/_S)\Bigr)\\
&=\sum_{I=S\sqcup T} \chi_S(x|_S)(n)\, \chi_T(x/_S)(m).
\end{align*}

The above yields the desired equality when $n$ and $m$ are nonnegative integers. Since both sides of the equation are polynomial functions of $(n,m)$ in view of Proposition~\ref{p:pol-inv}, the result then follows for arbitrary scalars $n$ and $m$. 
\end{proof}

The following result states that if two characters are related by a morphism of Hopf monoids, then the same relation holds for the corresponding polynomial invariants.

\begin{proposition}\label{p:pol-inv4}
Let $\wH$ and $\wK$ be two Hopf monoids. Suppose $\zeta^{\wH}$ is a character on $\wH$, $\zeta^{\wK}$ is a character on $\wK$, and $f:\wH\to\wK$ is a morphism
of Hopf monoids such that
\[
\zeta^{\wK}_I\bigl(f_I(x)\bigr) = \zeta^{\wH}_I(x)
\]
for every $I$ and $x\in\wH[I]$. 
Let $\chi^{\wH}$ and $\chi^{\wK}$ be the polynomial invariants corresponding to $\zeta^{\wH}$ and $\zeta^{\wK}$, respectively. Then
\[
\chi^{\wK}_I\bigl(f_I(x)\bigr) = \chi^{\wH}_I(x)
\]
for every $I$ and $x\in\wH[I]$.
\end{proposition}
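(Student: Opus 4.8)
The plan is to prove this by a direct computation, unwinding the definition of the polynomial invariant in equation (\ref{e:pol-inv}) and using the fact that $f$ is a morphism of Hopf monoids that intertwines the two characters. The statement is essentially a formal consequence of compatibility: the polynomial invariant $\chi$ is built entirely out of iterated coproducts and applications of the character, and $f$ commutes with both of these structures.

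First I would fix a finite set $I$, a natural number $n \in \Nb$, and an element $x \in \wH[I]$, and aim to show that $\chi^{\wK}_I(f_I(x))(n) = \chi^{\wH}_I(x)(n)$ for every $n \in \Nb$. Since both sides are polynomials in $n$ by Proposition~\ref{p:pol-inv}, agreement on all nonnegative integers $n$ forces equality of the polynomials. This reduces the problem to the defining sum (\ref{e:pol-inv}) indexed by decompositions $I = S_1 \sqcup \cdots \sqcup S_n$.

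The key step is to establish that $f$ commutes with the higher coproducts, namely that
\[
\Delta^{\wK}_{S_1,\ldots,S_n}\bigl(f_I(x)\bigr) = (f_{S_1} \otimes \cdots \otimes f_{S_n})\circ \Delta^{\wH}_{S_1,\ldots,S_n}(x).
\]
This follows by iterating the coproduct-preservation axiom in the definition of a morphism (Definition~\ref{d:morhopf}), exactly as the higher coproducts of (\ref{e:iterated-mu}) are obtained by iterating $\Delta$; the iteration is well-defined by coassociativity. Granting this, I would compute
\begin{align*}
\chi^{\wK}_I\bigl(f_I(x)\bigr)(n)
&= \sum_{I=S_1\sqcup \cdots \sqcup S_n} (\zeta^{\wK}_{S_1}\otimes\cdots\otimes\zeta^{\wK}_{S_n})\circ \Delta^{\wK}_{S_1,\ldots,S_n}\bigl(f_I(x)\bigr)\\
&= \sum_{I=S_1\sqcup \cdots \sqcup S_n} (\zeta^{\wK}_{S_1}\otimes\cdots\otimes\zeta^{\wK}_{S_n})\circ (f_{S_1} \otimes \cdots \otimes f_{S_n})\circ \Delta^{\wH}_{S_1,\ldots,S_n}(x)\\
&= \sum_{I=S_1\sqcup \cdots \sqcup S_n} (\zeta^{\wH}_{S_1}\otimes\cdots\otimes\zeta^{\wH}_{S_n})\circ \Delta^{\wH}_{S_1,\ldots,S_n}(x) = \chi^{\wH}_I(x)(n),
\end{align*}
where the third equality uses the hypothesis $\zeta^{\wK}_{S_i}\circ f_{S_i} = \zeta^{\wH}_{S_i}$ applied in each tensor factor.

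I do not expect any serious obstacle here; the only point requiring a little care is the passage from the two-fold coproduct-preservation axiom to the $n$-fold statement. This is routine induction on $n$ using coassociativity, entirely parallel to how the higher (co)products in Section~\ref{ss:high} are defined by iteration, so I would either state it briefly or invoke it as immediate. The rest is formal manipulation of the defining sum.
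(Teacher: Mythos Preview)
Your proposal is correct and follows essentially the same approach as the paper: the paper's proof simply notes that $f$ preserves coproducts, hence iterated coproducts, and that this together with the hypothesis $\zeta^{\wK}_I\circ f_I=\zeta^{\wH}_I$ gives the result. Your write-up spells out this computation in more detail, but the argument is the same.
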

\begin{proof} Since $f$ preserves coproducts, we have 
$\Delta_{S,T}\bigl(f_I(x)\bigr)=(f_S\otimes f_T)\bigr(\Delta_{S,T}(x)\bigr)$
and a similar fact for iterated coproducts. This and the hypothesis give the result.
\end{proof}

\begin{remark} Most of the results in this section hold under weaker hypotheses
(different ones for each result). For instance, Proposition~\ref{p:pol-inv} holds
for any collection of linear maps $\zeta_I:\wH[I]\to\Kb$ which is unital (with the
same proof). If $n$ and $m$ are nonnegative integers,
statement (iv) in Proposition~\ref{p:pol-inv3} holds for any collection of linear maps $\zeta_I:\wH[I]\to\Kb$. Proposition~\ref{p:pol-inv4} holds for any
morphism of comonoids which preserves the characters.
\end{remark}

\subsection{{From Hopf monoids to reciprocity theorems}}\label{ss:rec}

For a character $\zeta$ on a Hopf monoid $\wH$, the construction of
Section~\ref{ss:inv} produces
a polynomial invariant $\chi$ whose values on natural numbers 
are well understood in terms of $\wH$ and $\zeta$.
What about the values on negative integers?
The antipode provides an answer to this question.

 \begin{proposition} \emph{(Reciprocity for polynomial invariants)}
\label{p:reciprocity} 
Let $\wH$ be a connected Hopf monoid, $\zeta: \wH \rightarrow \Kb$ be a character, and $\chi$ be the associated polynomial invariant, 
defined by (\ref{e:pol-inv}).
 Let $\apode$ be
 the antipode of $\wH$. Then
 \begin{equation}\label{e:reciprocity}
 \chi_I(x)(-1)=\zeta_I\bigl(\apode_I(x)\bigr).
 \end{equation}
 More generally, for every scalar $n$,
 \begin{equation}\label{e:reciprocity2}
 \chi_I(x)(-n)=\chi_I\bigl(\apode_I(x)\bigr)(n).
 \end{equation}
\end{proposition}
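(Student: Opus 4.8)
The plan is to prove the two reciprocity statements \eqref{e:reciprocity} and \eqref{e:reciprocity2} by leveraging the explicit formulas for $\chi$ and $\apode$ together with the properties already established. First I would handle \eqref{e:reciprocity}, the special case $n=1$, since \eqref{e:reciprocity2} should then follow by a clean reduction using Proposition~\ref{p:pol-inv3}(iv). To prove \eqref{e:reciprocity}, I would take the polynomial formula from Proposition~\ref{p:pol-inv}(1),
\[
\chi_I(x)(n) = \sum_{k=0}^{|I|} \chi_I^{(k)}(x) \binom{n}{k},
\qquad
\chi_I^{(k)}(x) = \sum_{(T_1,\ldots,T_k)\vDash I} (\zeta_{T_1}\otimes\cdots\otimes\zeta_{T_k})\circ\Delta_{T_1,\ldots,T_k}(x),
\]
and evaluate it at $n=-1$. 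The key elementary fact is $\binom{-1}{k} = (-1)^k$, so that
\[
\chi_I(x)(-1) = \sum_{k=0}^{|I|} (-1)^k \chi_I^{(k)}(x)
= \sum_{k\geq 0}\ \sum_{(T_1,\ldots,T_k)\vDash I} (-1)^k (\zeta_{T_1}\otimes\cdots\otimes\zeta_{T_k})\circ\Delta_{T_1,\ldots,T_k}(x).
\]

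The next step is to recognize the right-hand side as $\zeta_I$ applied to Takeuchi's formula \eqref{e:takeuchi} for the antipode. Since $\zeta$ is multiplicative (the multiplicativity axiom of Definition~\ref{d:char}), for each composition $(T_1,\ldots,T_k)$ we have
\[
\zeta_I\bigl(\mu_{T_1,\ldots,T_k}\circ\Delta_{T_1,\ldots,T_k}(x)\bigr)
= (\zeta_{T_1}\otimes\cdots\otimes\zeta_{T_k})\circ\Delta_{T_1,\ldots,T_k}(x),
\]
because applying $\zeta_I$ to a product $x_1\cdots x_k$ returns $\zeta_{T_1}(x_1)\cdots\zeta_{T_k}(x_k)$. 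Summing against $(-1)^k$ over all compositions and comparing with \eqref{e:takeuchi} gives exactly $\zeta_I(\apode_I(x))$, which establishes \eqref{e:reciprocity}. I should also dispatch the $I=\emptyset$ base case separately, where both sides equal $\zeta_\emptyset(x)$ by unitality.

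For the general statement \eqref{e:reciprocity2}, the idea is to reduce to the case just proved. I would fix a scalar $n$ and regard both sides as polynomial functions of $n$; by Proposition~\ref{p:pol-inv} both are polynomials, so it suffices to verify the identity on positive integers $n$. Applying Proposition~\ref{p:pol-inv3}(iv) iteratively (decomposing $-n = (-1) + \cdots + (-1)$ is the wrong move; instead I use the additive property at the level of the \emph{invariant} of the antipode) I would expand $\chi_I(x)(-n)$ via repeated coproducts and match it against $\chi_I(\apode_I(x))(n)$ using \eqref{e:reciprocity} applied to each minor together with the fact, from Proposition~\ref{p:antipode1.5}, that the antipode interacts predictably with higher coproducts. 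Concretely, I expect to write $\chi_I(x)(-n)$ as a sum over decompositions $I=S_1\sqcup\cdots\sqcup S_n$ of products $\prod_i \chi_{S_i}(x_i)(-1) = \prod_i \zeta_{S_i}(\apode_{S_i}(x_i))$, and then recognize this as $\chi_I(\apode_I(x))(n)$ by the coproduct formula for $\Delta(\apode_I(x))$ from \eqref{e:apode-delta} (and its higher-coproduct iterate). The main obstacle I anticipate is bookkeeping in this last step: correctly tracking how the antipode reverses and distributes over the higher coproduct $\Delta_{S_1,\ldots,S_n}$, so that the switch maps $\sw$ in Proposition~\ref{p:antipode1.5} do not spoil the symmetry. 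Because $\zeta$ is a character (invariant and multiplicative), the reversal of factors introduced by $\sw$ is harmless, and I expect the identity to close up cleanly once this compatibility is made precise.
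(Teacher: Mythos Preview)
Your proof of \eqref{e:reciprocity} is exactly the paper's: evaluate at $n=-1$ using $\binom{-1}{k}=(-1)^k$, then recognize the resulting alternating sum as $\zeta_I$ applied to Takeuchi's formula via multiplicativity of $\zeta$.

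For \eqref{e:reciprocity2} your plan is correct but organized slightly differently from the paper's. The paper proceeds by induction on $n\in\Nb$: it splits $-n=(-n+1)+(-1)$ via Proposition~\ref{p:pol-inv3}(iv), applies the induction hypothesis and the $n=1$ case to the two factors, swaps the roles of $S$ and $T$, and then reassembles using Proposition~\ref{p:pol-inv3}(iv) applied to $\apode_I(x)$ together with \eqref{e:apode-delta}. You instead iterate Proposition~\ref{p:pol-inv3}(iv) all at once to write $\chi_I(x)(-n)$ as a sum over $n$-fold decompositions of products $\prod_i \zeta_{S_i}(\apode_{S_i}(x_i))$, and match this against $\chi_I(\apode_I(x))(n)$ using the higher-coproduct identity $\Delta_F\apode_I=\apode_F\,\sw_{-F}\,\Delta_{-F}$ from Proposition~\ref{p:antipode1.5}. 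Your anticipated bookkeeping with the switch map indeed washes out: the product of scalars is commutative and the sum ranges over all ordered decompositions, so reindexing by the reversal $(S_1,\ldots,S_n)\mapsto(S_n,\ldots,S_1)$ gives the match. One small confusion: your parenthetical claims that writing $-n=(-1)+\cdots+(-1)$ is ``the wrong move,'' but that is precisely what your concrete plan does (and it works fine). The two approaches are really the same argument, with the paper's induction doing one step at a time what you do in a single shot.
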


\begin{proof}
Since $\binom{-1}{k}=(-1)^k$, Proposition~\ref{p:pol-inv} implies
\[
\chi_I(x)(-1)\ = \sum_{k=0}^{\abs{I}}
\Biggl(\sum_{(T_1,\ldots T_k)\vDash I}  (\zeta_{T_1}\otimes\cdots\otimes\zeta_{T_k})\circ \Delta_{T_1,\ldots,T_k} (x)\Biggr)\,(-1)^k.
\]
Using multiplicativity of $\zeta$ and Takeuchi's formula~\eqref{e:takeuchi}, this may be rewritten as
\begin{align*}
\chi_I(x)(-1)\ &= \sum_{k=0}^{\abs{I}}
\Biggl(\sum_{(T_1,\ldots T_k)\vDash I}  \zeta_I\circ(\mu_{T_1}\otimes\cdots\otimes\mu_{T_k})\circ \Delta_{T_1,\ldots,T_k} (x)\Biggr)\,(-1)^k\\
&= \zeta_I\Biggl(\sum_{k\geq 0} \,(-1)^k\sum_{(T_1,\ldots T_k)\vDash I} \mu_{T_1,\ldots,T_k}\circ \Delta_{T_1,\ldots,T_k} (x)\Biggr) =\zeta_I\bigl(\apode_I(x)\bigr),
\end{align*}
which proves~\eqref{e:reciprocity}.

To prove~\eqref{e:reciprocity2} one may assume that the scalar $n$ is a nonnegative integer, since both sides are polynomial functions of $n$.
We make this assumption and proceed by induction on $n\in\Nb$.

When $n=0$ the result holds in view of~\eqref{e:pol-inv2} and the fact that $\apode_\emptyset = \id$. When $n=1$ it follows from~\eqref{e:pol-inv2}
and~\eqref{e:reciprocity}. For $n\geq 2$ we apply Proposition~\ref{p:pol-inv3}(iv)  as follows:
\[
\chi_I(x)(-n) = \chi_I(x)(-n+1-1)
=\sum_{I=S\sqcup T} \chi_S(x|_S)(-n+1)\,\chi_T(x/_S)(-1).
\] 
Using the induction hypothesis, and then reversing the roles of $S$ and $T$, this equals
\[
\sum_{I=S\sqcup T} \chi_S\bigl(\apode_S(x|_S)\bigr)(n-1) \, \chi_T\bigl(\apode_T(x/_S)\bigr)(1) = \sum_{I=S\sqcup T}  \chi_S\bigl(\apode_S(x/_T)\bigr)(1)\, \chi_T\bigl(\apode_T(x|_T)\bigr)(n-1).
\]
Applying Proposition~\ref{p:pol-inv3}(iv) to $\apode_I(x)$, and using the fact~\eqref{e:apode-delta} that the antipode reverses coproducts, we see that this equals
\[
\chi_I\bigl(\apode_I(x)\bigr)(1+n-1)=\chi_I\bigl(\apode_I(x)\bigr)(n),
\]
as needed.
\end{proof}

Formulas~\eqref{e:reciprocity} and~\eqref{e:reciprocity2} are \emph{reciprocity results} of a very general nature. They gives us another reason to be interested in an explicit antipode formula: such a formula allows for knowledge of the values of \emph{all}
polynomial invariants at negative integers. 
The antipode acts as a universal link between the values of the
invariants at positive and negative integers. We now apply this approach to $\wGP$ in Section \ref{s:basic}. This will allow us to unify several important reciprocity results in combinatorics and to obtain new ones in Section \ref{s:reciprocity}.

\section{The basic character and the basic invariant of $\wGP$} \label{s:basic}


In this section we return to specifics, focusing on the Hopf monoids of generalized permutahedra $\wGP$ and $\wGP_+$. We will prove the results in this section for $\wGP$ but they also hold in $\wGP_+$; see Remark \ref{r:basicGP_+}. 

We introduce the (almost trivial) \emph{basic character} $\beta$  and its associated \emph{basic invariant} $\chi$ on the Hopf monoid of generalized permutahedra $\wGP$. We use the algebraic structure of  $\wGP$ and $\beta$ to obtain combinatorial formulas for $\chi(n)$ and $\chi(-n)$ for $n \in \Nb$ in Propositions \ref{prop:basicinv} and \ref{prop:basicrecip}; these were also obtained in \cite{billera06}.
In Section \ref{s:reciprocity} we will see that
several important combinatorial facts about graphs, posets, and matroids are straightforward consequences of this setup.


\begin{definition}\label{d:basic}
The \emph{basic character} $\beta$ of $\wGP$ is given by
\[
\beta_I(\wp) = 
\begin{cases}
1 & \textrm{ if $\wp$ is a point}\\
0 & \textrm{ otherwise}.
\end{cases}
\]
for a generalized permutahedron $\wp \in \Rb I$. 
The \emph{basic invariant} $\chi$ of $\wGP$ is the polynomial invariant associated to $\beta$ by Proposition \ref{p:pol-inv} and (\ref{e:pol-inv}).
\end{definition}

Note that $\beta$ is indeed a character because the product of two polytopes $\wp \times \wq$ is a point if and only if both $\wp$ and $\wq$ are points.

\subsection{A lemma on directionally generic faces.}

Given a generalized permutahedron $\wp \subset \Rb I$ and a linear functional $y \in \Rb^I$, say $\wp$ is \emph{directionally generic in the direction of $y$} if the $y$-maximal face $\wp_y$ is a point. If this is the case, we will also say that $y$ is $\wp$-generic and that $\wp$ is \emph{$y$-generic}. 

\begin{figure}[h]
\centering
\includegraphics[scale=.55]{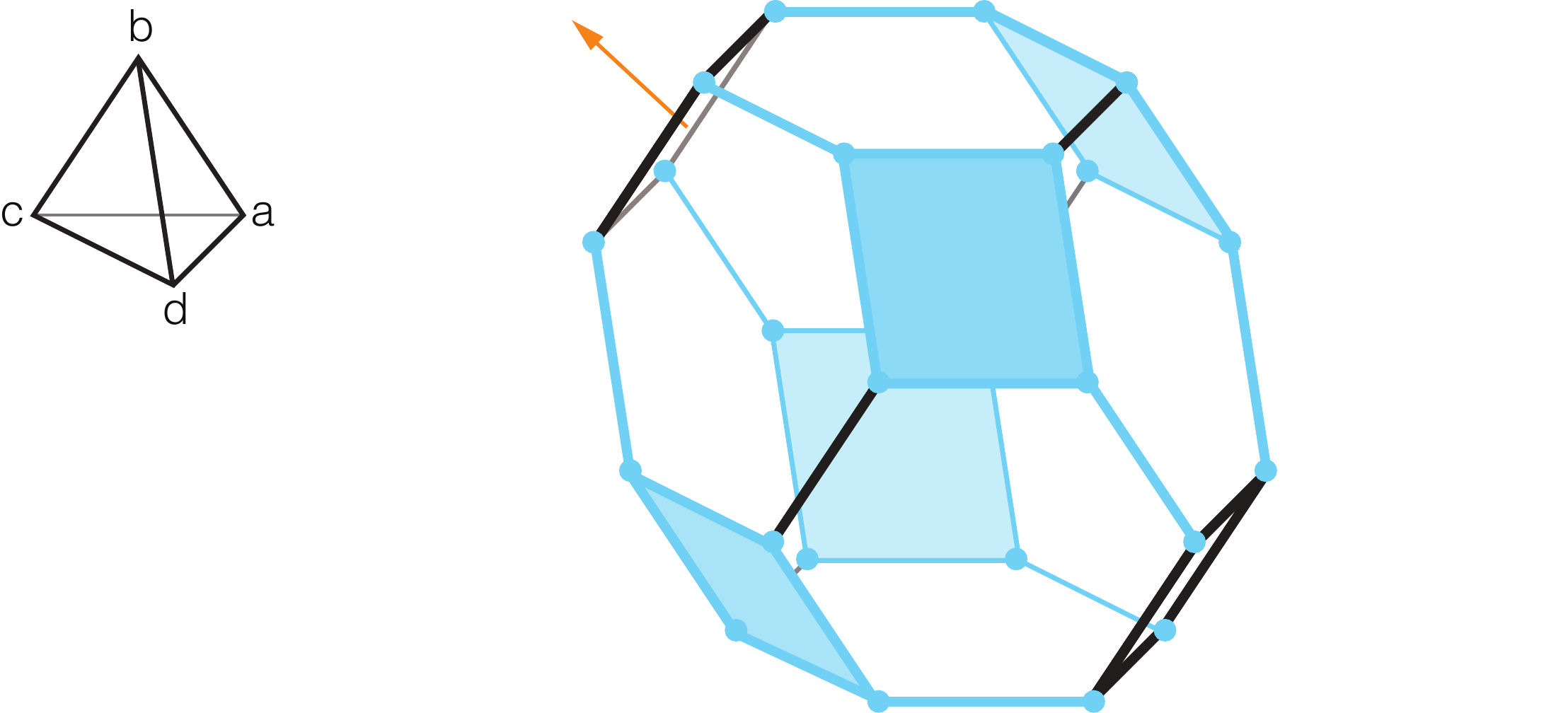} 
\caption{The $y$-generic faces of the permutahedron $\pi_4$ for $y=(0,1,1,0)$ are shaded.}
\label{f:hypergraphic}
\end{figure}

We will need the following technical lemma about directionally generic faces.

\begin{lemma}\label{lemma:wgenericfaces}
For any generalized permutahedron  $\wp \subset \Rb I$ and linear functional $y \in \Rb^I$, the following equations hold.
\begin{enumerate}
\item
\[
\sum_{\wq \leq \wp} (-1)^{\dim \wq} \wq_y =
\sum_{\wq \leq \wp_{-y}} (-1)^{\dim \wq} \wq
\]
\item
\[
\sum_{\wq \leq \wp:  \atop y \textrm{ is $\wq$-generic}}(-1)^{\dim \, \wq}
 = (-1)^{|I|} \textrm{ (number of vertices of  $\wp_{-y}$)}.
\]
\end{enumerate}
\end{lemma}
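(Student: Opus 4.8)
The plan is to establish the first identity geometrically and then obtain the second by evaluating the basic character $\beta$ on both sides. For the first identity, I would fix a face $\wr \leq \wp$ and compute the coefficient
\[
\alpha_\wr = \sum_{\wq \leq \wp \, : \, \wq_y = \wr} (-1)^{\dim \wq}
\]
with which $\wr$ occurs on the left-hand side, the goal being $\alpha_\wr = (-1)^{\dim \wr}$ when $\wr \leq \wp_{-y}$ and $\alpha_\wr = 0$ otherwise; summing $\alpha_\wr\,\wr$ over all faces then produces the right-hand side. The faces $\wq$ with $\wq_y = \wr$ are exactly those $\wq \geq \wr$ on which $y$ is maximized along $\wr$. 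Translating through the order-reversing correspondence $\wq \mapsto G := \Nc_\wp(\wq)$ between faces $\wq \geq \wr$ and faces $G$ of the normal cone $K := \Nc_\wp(\wr)$, and using $\dim \wq = |I| - \dim G$, the condition $\wq_y = \wr$ becomes $y \in \operatorname{relint}(K) + \espan(G)$. The set of such $G$ is an up-set in the face lattice of $K$ with maximum element $K$, and it is nonempty only when $y \in \espan(K)$, i.e.\ when $y$ is constant on $\wr$ — a necessary condition for $\wr \leq \wp_{-y}$, since $\wr \leq \wp_{-y}$ is equivalent to $-y \in K$.

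The core of the argument is then a localized reduced-Euler-characteristic computation over the face lattice of the cone $K$, closely parallel to the proof of Theorem \ref{t:antipode}. Intersecting the cones $\espan(G)$, for $G$ in the up-set and in its complementary down-set, with a small sphere turns these collections into CW-decompositions of a ball and its boundary sphere; their reduced Euler characteristics yield $\alpha_\wr = (-1)^{\dim \wr}$ exactly when $-y \in K$ (that is, $\wr \leq \wp_{-y}$) and $\alpha_\wr = 0$ otherwise, as desired.

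As a cleaner route for intuition (and possibly for the write-up), I would first treat a direction $y$ generic enough that $\wp_{-y}$ is a vertex and $y$ is nonconstant on every edge of $\wp$. There the first identity reduces to the classical fact that a generic linear functional collapses the signed sum of top faces to the unique bottom vertex. The general case would then follow by perturbing $y \rightsquigarrow y + \epsilon z$ with $z$ generic and $\epsilon$ small, invoking the face-of-a-face identity \eqref{eq:faceofaface} in the forms $\wq_{y + \epsilon z} = (\wq_y)_z$ and $\wp_{-(y+\epsilon z)} = (\wp_{-y})_{-z}$, and recovering the unperturbed identity from its images under all small generic $(\cdot)_z$. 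With the first identity in hand, the second follows by applying the basic character $\beta$ of Definition \ref{d:basic}: since $\beta(\wq_y) = 1$ precisely when $\wq_y$ is a point, i.e.\ when $y$ is $\wq$-generic, the left-hand side becomes $\sum_{\wq \,:\, y \text{ is } \wq\text{-generic}} (-1)^{\dim \wq}$, while on the right-hand side the only faces $\wr \leq \wp_{-y}$ with $\beta(\wr) \neq 0$ are the vertices of $\wp_{-y}$, so the sum collapses to a signed count of those vertices, giving the stated formula once the dimension normalization is tracked.

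The step I expect to be the main obstacle is the localized Euler-characteristic bookkeeping in the middle: correctly identifying the up-set $\{G \leq K : y \in \operatorname{relint}(K) + \espan(G)\}$, checking that it and its complement become a ball and a sphere after intersecting with the unit sphere, and pinning down that the jump between $\alpha_\wr = 0$ and $\alpha_\wr = \pm 1$ is governed exactly by the membership $-y \in K = \Nc_\wp(\wr)$. The perturbation alternative merely relocates this difficulty into justifying that the unperturbed identity is determined by its images under the maps $(\cdot)_z$, which needs its own (elementary) argument.
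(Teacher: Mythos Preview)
Your approach is correct in outline, but it is genuinely different from the paper's. You attack part~1 geometrically, computing the coefficient $\alpha_\wr$ of each face $\wr$ on the left via a localized Euler-characteristic argument on the face lattice of the normal cone $K = \Nc_\wp(\wr)$, mirroring the proof of Theorem~\ref{t:antipode}. The paper instead gives a short Hopf-theoretic proof: if $F$ is the braid face containing $y$, then $\wq_y = \mu_F\Delta_F(\wq)$ for every $\wq$ (Proposition~\ref{p:face2}), so the left side is $(-1)^{|I|}\mu_F\Delta_F\apode_I(\wp)$ by Theorem~\ref{t:antipode}; likewise $\wp_{-y} = \mu_{-F}\Delta_{-F}(\wp)$, so the right side is $(-1)^{|I|}\apode_I\mu_{-F}\Delta_{-F}(\wp)$. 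The identity then follows immediately from the general fact (Proposition~\ref{p:antipode1.5}) that the antipode reverses higher products and coproducts, which yields $\mu_F\Delta_F\apode_I = \apode_I\mu_{-F}\Delta_{-F}$.

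What each buys: the paper's argument is a few lines and makes transparent that the lemma is really just ``$\apode$ is an antihomomorphism'' specialized to $\wGP$; it uses the antipode formula as a black box rather than reopening its proof. Your route is more self-contained geometrically and would work without the Hopf machinery, but it effectively reproves a localized version of Theorem~\ref{t:antipode}, and the bookkeeping you flag (identifying the up-set $\{G \leq K : y \in \operatorname{relint}(K) + \espan(G)\}$ and matching its Euler characteristic to the condition $-y \in K$) is real work. Your perturbation alternative is also viable but, as you note, merely relocates the difficulty. For part~2, you and the paper agree: apply the basic character to both sides of part~1.
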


\begin{proof}
1. Let us express both sides of the equation Hopf-theoretically. 
Let $F$ be the face of the braid arrangement that $y$ belongs to, and say it corresponds to the decomposition $I = S_1 \sqcup \cdots \sqcup S_k$, as described in Section \ref{ss:gp}. 
Also recall from Section \ref{ss:antipodeproperties} that we denote
$\mu_F = \mu_{S_1,\ldots, S_k}$, 
$\Delta_F = \Delta_{S_1,\ldots, S_k}$, and
$\apode_F = \apode_{S_1} \otimes \cdots \otimes \apode_{S_k}$.

For any generalized permutahedron $\wr \subset \Rb I$ we have  $\wr_y = \wr_F = \mu_F \Delta_F(\wr)$ by Proposition \ref{p:face2}. 
 It then follows from the formula for the antipode of $\wGP$ in Theorem \ref{t:antipode} that 
\[
(-1)^{|I|}\mu_F\Delta_F\apode_I(\wp) =
\sum_{\wq \leq \wp} (-1)^{\dim \wq} \wq_y. 
\]
Now let $-F$ be the opposite face of $F$, corresponding to the decomposition $I = S_k \sqcup \cdots \sqcup S_1$. Then
$-F$ contains $-y$ so  $\mu_{-F}\Delta_{-F}(\wp) = \wp_{-y}$, and
\[
(-1)^{|I|} \apode_I \mu_{-F}\Delta_{-F}(\wp) = 
\sum_{\wq \leq \wp_{-y}} (-1)^{\dim \wq} \wq. 
\]
Now recall Proposition \ref{p:antipode1.5}, which holds for any Hopf monoid in vector species:
\[
\apode_I \mu_F = \mu_{-F} \apode_{-F} \sw_F , \qquad 
\Delta_F \apode_I = \apode_F \sw_{-F}  \Delta_{-F}. 
\]
Applying the second equation to $F$ and then the first equation to $-F$, we obtain
\[
\mu_F \Delta_F \apode_I = \mu_F \apode_F \sw_{-F} \Delta_{-F} = \apode_I \mu_{-F} \Delta_{-F},
\]
which gives the desired result.

\noindent 
2. This follows by applying the character $\chi_I$ to both sides of the equation of part 1.
%
\end{proof}

%
%


\subsection{{The basic invariant and the basic reciprocity theorem of $\wGP$}}


Recall that the basic invariant $\chi$ of $\wGP$ is the polynomial invariant that Proposition \ref{p:pol-inv} associates to the basic character $\beta$ of Definition \ref{d:basic}.

\begin{proposition} \label{prop:basicinv} \cite[Def. 2.3, Thm 9.2.(v)]{billera06}
At a natural number $n$, the \emph{basic invariant} $\chi$ of a generalized permutahedron $\wp \subset \Rb I$ is given by
\[
\chi_I(\wp)(n) = 
\textrm{ (number of $\wp$-generic functions $y: I \rightarrow [n])$.}
\]
\end{proposition}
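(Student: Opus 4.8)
The plan is to unwind the definition of $\chi$ directly and to recognize each summand as the indicator that a face of $\wp$ is a vertex. Starting from \eqref{e:pol-inv} with $\zeta = \beta$ the basic character of Definition \ref{d:basic}, I would write
\[
\chi_I(\wp)(n) = \sum_{I = S_1 \sqcup \cdots \sqcup S_n} (\beta_{S_1} \otimes \cdots \otimes \beta_{S_n}) \circ \Delta_{S_1, \ldots, S_n}(\wp),
\]
summing over all ordered decompositions of $I$ into $n$ possibly empty blocks. Since $\beta$ is multiplicative and $\mu_{S_1,\ldots,S_n}\Delta_{S_1,\ldots,S_n}(\wp) = \wp_{S_1,\ldots,S_n}$ by Proposition \ref{p:face2}, each summand equals $\beta_I(\wp_{S_1,\ldots,S_n})$, which is $1$ if the face $\wp_{S_1,\ldots,S_n}$ is a point and $0$ otherwise. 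Thus $\chi_I(\wp)(n)$ counts exactly the decompositions whose associated face of $\wp$ is a vertex.

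Next I would set up the bijection between ordered decompositions $I = S_1 \sqcup \cdots \sqcup S_n$ into $n$ possibly empty blocks and functions $y \colon I \to [n]$, given by $S_j = y^{-1}(n+1-j)$. The shift $j \mapsto n+1-j$ is chosen to respect the orientation convention of Section \ref{ss:gp}, under which the blocks of a composition are ordered by strictly decreasing value of a maximizing direction. With this convention, if $(T_1, \ldots, T_k)$ is the composition obtained by discarding the empty blocks of $(S_1,\ldots,S_n)$, then $y$ lies in the open braid cone $\Bc^\circ_{T_1,\ldots,T_k}$, so that $\wp_y = \wp_{T_1,\ldots,T_k} = \wp_{S_1,\ldots,S_n}$, where the empty blocks contribute the trivial factor by unitality of $\beta$ and $\Delta$. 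Hence the summand indexed by $(S_1,\ldots,S_n)$ equals $1$ precisely when $\wp_y$ is a point, i.e. when $y$ is $\wp$-generic.

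Combining these two observations, the bijection carries the decompositions counted by $\chi_I(\wp)(n)$ exactly onto the $\wp$-generic functions $y \colon I \to [n]$, which yields the claimed formula. The only delicate point — and the step I would be most careful about — is the bookkeeping that aligns the order convention: I must verify that the reversal $j \mapsto n+1-j$ together with the passage from a possibly-empty decomposition to its underlying composition really makes $\wp_y$ equal to the face $\wp_{S_1,\ldots,S_n}$ dictated by $\beta$, and that $\wp_y$ genuinely depends only on the braid cone containing $y$. This last fact is exactly the defining property of a generalized permutahedron, already packaged in Proposition \ref{p:face2}, so no new geometric input is needed. Everything else reduces to routine use of the multiplicativity and unitality of $\beta$ within the coproduct formalism established earlier.
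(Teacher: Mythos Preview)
Your proof is correct and takes essentially the same approach as the paper's: bijectively pair decompositions with functions $y:I\to[n]$ and note that each summand equals $\beta_I(\wp_y)$. You are in fact more careful than the paper about the orientation convention --- the paper sets $y(i)=k$ for $i\in S_k$ without any reversal, which still works at the level of counting, but your shift $j\mapsto n+1-j$ makes the identification $\wp_y=\wp_{S_1,\ldots,S_n}$ hold on the nose.
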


\begin{proof}
First notice that each summand in (\ref{e:pol-inv}) comes from a decomposition $I=S_1\sqcup \cdots \sqcup S_n$, which bijectively corresponds to a function $y: I \rightarrow [n]$ defined  by $y(i)=k$ for each $i \in S_k$. The corresponding summand for $\chi_I(\wp)(n)$ is
\[
(\zeta_{S_1}\otimes\cdots\otimes\zeta_{S_n})\circ \Delta_{S_1,\ldots,S_n} (\wp) = \zeta_{S_1}(\wp_1) \cdots \zeta_{S_n}(\wp_n)
\]
where the $y$-maximal face $\wp_y$ factors as $\wp_y = \wp_1 \times \cdots \times \wp_n$ for $\wp_i \in \Rb{S_i}$. This term contributes to the sum if and only if every $\wp_i$ is a point, that is, if and only if $\wp_y$ is a point; and in that case, it contributes $1$. The desired result follows.
\end{proof}

\begin{proposition}\label{prop:basicrecip}
\cite[Thm. 6.3, Thm 9.2.(v)]{billera06} (Basic invariant reciprocity.) 
At a negative integer $-n$, the basic invariant $\chi$ of a generalized permutahedron $\wp \subset \Rb I$ is given by
\[
(-1)^{|I|}\chi_I(\wp)(-n) = \sum_{y: I \rightarrow [n]} \textrm{ (number of vertices of  $\wp_y$)}
\]
where $\wp_y$ is the $y$-maximum face of $p$.
\end{proposition}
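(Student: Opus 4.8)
The plan is to deduce this from the general reciprocity principle of Proposition~\ref{p:reciprocity}, feeding in the explicit antipode of $\wGP$ (Theorem~\ref{t:antipode}) and the combinatorial reading of the basic invariant on positive integers (Proposition~\ref{prop:basicinv}). First I would specialize equation~\eqref{e:reciprocity2} to the basic invariant, obtaining $\chi_I(\wp)(-n) = \chi_I\bigl(\apode_I(\wp)\bigr)(n)$. Substituting the antipode formula $\apode_I(\wp) = (-1)^{|I|}\sum_{\wq \leq \wp}(-1)^{\dim \wq}\,\wq$ and using the linearity of $\chi_I$ (Proposition~\ref{p:pol-inv3}(i)) to pass the sum through, this becomes
\[
\chi_I(\wp)(-n) = (-1)^{|I|}\sum_{\wq \leq \wp}(-1)^{\dim \wq}\,\chi_I(\wq)(n).
\]
Each face $\wq$ of $\wp$ is itself a generalized permutahedron in $\Rb^I$, so Proposition~\ref{prop:basicinv} identifies $\chi_I(\wq)(n)$ with the number of functions $y : I \to [n]$ for which $\wq_y$ is a point, i.e.\ for which $y$ is $\wq$-generic.

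Next I would interchange the order of summation, organizing the terms by the direction $y$ instead of the face $\wq$:
\[
\chi_I(\wp)(-n) = (-1)^{|I|}\sum_{y : I \to [n]}\ \sum_{\substack{\wq \leq \wp \\ y \text{ is } \wq\text{-generic}}}(-1)^{\dim \wq}.
\]
The inner sum is precisely what Lemma~\ref{lemma:wgenericfaces} controls. I would evaluate it by applying the basic character $\beta_I$ to both sides of the identity in Lemma~\ref{lemma:wgenericfaces}(1): the left-hand side becomes $\sum_{\wq \leq \wp,\ \wq_y \text{ a point}}(-1)^{\dim \wq}$, which is exactly the inner sum, while the right-hand side collapses to the number of zero-dimensional faces of $\wp_{-y}$, that is, the number of vertices of $\wp_{-y}$. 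Feeding this back and multiplying through by $(-1)^{|I|}$ yields
\[
(-1)^{|I|}\chi_I(\wp)(-n) = \sum_{y : I \to [n]}\bigl(\text{number of vertices of } \wp_{-y}\bigr).
\]

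The final step is a reindexing that replaces $\wp_{-y}$ by $\wp_y$. Here I would use that $\wp$ is a generalized permutahedron, so its normal fan coarsens the braid arrangement and the maximal face $\wp_z$ depends only on the relatively open braid cone containing $z$ -- equivalently, only on the weak order that $z$ induces on $I$. Since $-y$ and $n+1-y$ induce the same weak order, we have $\wp_{-y} = \wp_{\,n+1-y}$; and as $y$ ranges over all functions $I \to [n]$, so does $n+1-y$. This converts the right-hand side into $\sum_{y}\bigl(\text{number of vertices of }\wp_y\bigr)$, which is the claimed identity.

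The delicate part will be the sign bookkeeping: one must correctly combine the $(-1)^{|I|}$ from the antipode with the sign contributed by Lemma~\ref{lemma:wgenericfaces}, and take care of the degenerate low-dimensional faces, where the reduced-Euler-characteristic argument underlying that lemma needs the extra attention already flagged in the proof of Theorem~\ref{t:antipode} (the $0$-ball/$(-1)$-sphere conventions). The reindexing is conceptually easy but rests essentially on the defining property of generalized permutahedra -- constancy of the maximal face along each braid cone -- rather than on any feature of $\wp$ itself.
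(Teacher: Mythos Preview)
Your proposal is correct and follows essentially the same route as the paper: apply the general reciprocity formula~\eqref{e:reciprocity2}, substitute the antipode of Theorem~\ref{t:antipode}, expand $\chi_I(\wq)(n)$ via Proposition~\ref{prop:basicinv}, swap the sums, evaluate the inner sum using Lemma~\ref{lemma:wgenericfaces}, and reindex $y\mapsto (n{+}1)-y$. Your choice to apply $\beta_I$ directly to part~(1) of Lemma~\ref{lemma:wgenericfaces} rather than quoting part~(2) is exactly how the paper derives part~(2), so there is no substantive difference.
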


\begin{proof}
Using the general reciprocity formula for characters of Proposition \ref{p:reciprocity} and the formula for the antipode of Theorem \ref{t:antipode} of $\rGP$ we obtain

\[
\chi_I(\wp)(-n) = \chi_I(\apode_I(\wp))(n) 
= (-1)^{|I|} \sum_{\wq \leq \wp} (-1)^{\dim \wq} \chi_I(\wq)(n). 
\]

Proposition \ref{prop:basicinv} and Lemma \ref{lemma:wgenericfaces} then give
\begin{eqnarray*}
\chi_I(\wp)(-n) 
&=& (-1)^{|I|} \sum_{\wq \leq \wp} (-1)^{\dim \wq} (\#  \textrm{ of $\wq$-generic functions $y:I \rightarrow [n]$}) \\
&=& (-1)^{|I|} \sum_{y:I \rightarrow [n]} 
\sum_{\wq \leq \wp: \atop y \textrm{ is $\wq$-generic}}(-1)^{\dim \, \wq} 
= 
\sum_{y:I \rightarrow [n]} 
(\textrm{number of vertices of }\wp_{-y}).
\end{eqnarray*}
This gives the desired result since $\wp_{-y} = \wp_{(n+1, \ldots, n+1) - y}$, and $(n+1, \ldots, n+1) - y$ maps $I$ to $[n]$ if and only if $y$ maps $I$ to $[n]$.
\end{proof}

\begin{remark}
Propositions \ref{prop:basicinv} and \ref{prop:basicrecip} were also obtained by Billera, Jia, and Reiner in \cite{billera06}; their proof of the basic invariant reciprocity of Proposition \ref{prop:basicrecip} relies on Stanley's combinatorial reciprocity theorem for $P$-partitions. 
Our approach is different: we choose to give Hopf-theoretic proofs of these results. This will allow us to  give straightforward derivations of 
various combinatorial reciprocity theorems,
using only the Hopf-theoretic structure of $\wGP$; we do this in the following section.
\end{remark}

\begin{remark}\label{r:basicGP_+} The results of this section also hold for 
the Hopf monoid $\wGP_+$ of possibly unbounded generalized permutahedra. In that setting, we must set $\wp_y=0$ whenever the polyhedron $\wp$ is unbounded above in the direction of $y$. For a linear functional $y$ to be $\wp$-generic, we must require that the polyhedron $\wp$ is bounded above in the direction of $y$, and that $\wp_y$ is a point. 
\end{remark}

\section{Combinatorial reciprocity theorems for graphs, matroids, and posets}
\label{s:reciprocity}


We now show how characters on Hopf monoids naturally give rise to numerous reciprocity theorems in combinatorics; some old, some new. We would like to emphasize one benefit of this approach: this algebraic framework allows us to discover and prove reciprocity theorems automatically. All we have to do is define a character on a Hopf monoid, and the general theory will produce a polynomial invariant and a reciprocity theorem satisfied by it. In this section we will use some of the simplest possible characters to obtain several theorems of interest. 

This section is closely related to combinatorial Hopf algebras  \cite{absottile:_comb_hopf} and to \cite{billera06}.

\subsection{{The basic invariant of graphs is the chromatic polynomial}}

Given a graph $g$, an $n$-coloring of the vertices of $g$ is an assignment of a color in $[n]$ to each vertex of $g$. A coloring is 
\emph{proper} if any two vertices connected by an edge have different colors.

\begin{proposition}
Let $\zeta$ be the character on the Hopf monoid of graphs $\wG$ defined by
\[
\zeta_I(g) = 
\begin{cases}
$1$ &  \textrm{ if $g$ has no edges, and} \\
$0$ & \textrm{ otherwise.}
\end{cases}
\]
The corresponding polynomial invariant is the \emph{chromatic polynomial}, which equals
\[
\chi_I(g)(n)\ = \textrm{ number of proper colorings of $g$ with $n$ colors.}
\]
for $n \in \Nb$. 
\end{proposition}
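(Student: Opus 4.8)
The plan is to unwind the definition of the polynomial invariant in~(\ref{e:pol-inv}) and match it term by term with proper colorings. First I would check that $\zeta$ really is a character: since the product in $\rG$ is disjoint union, a graph $g_1\sqcup g_2$ has a cardinality-two edge exactly when $g_1$ or $g_2$ does, so multiplicativity $\zeta_I(g_1\cdot g_2)=\zeta_S(g_1)\zeta_T(g_2)$ holds, while naturality and unitality are immediate. By Proposition~\ref{p:pol-inv}, $\chi_I(g)$ is then a genuine polynomial of degree at most $|I|$ whose value at $n\in\Nb$ is
\[
\chi_I(g)(n)=\sum_{I=S_1\sqcup\cdots\sqcup S_n}\zeta_{S_1}(g_1)\cdots\zeta_{S_n}(g_n),
\]
where $(g_1,\ldots,g_n)=\Delta_{S_1,\ldots,S_n}(g)$ and the sum runs over all decompositions into $n$ possibly-empty blocks.

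The key step is the bijection between such decompositions and colorings $c\colon I\to[n]$, given by $S_k=c^{-1}(k)$, together with an explicit description of the minors. Using coassociativity I would write the $i$-th minor as $g_i=(g|_{S_1\sqcup\cdots\sqcup S_i})/_{S_1\sqcup\cdots\sqcup S_{i-1}}$, a graph on the color class $S_i$, and then track edges through this restriction-then-contraction. The outcome is that the \emph{cardinality-two} edges of $g_i$ are exactly the edges of $g$ with both endpoints in $S_i$, i.e.\ the monochromatic edges of color $i$; an edge joining $S_i$ to an earlier class degenerates to a half-edge, and an edge touching a later class is deleted in the restriction. Consequently $\zeta_{S_i}(g_i)=1$ precisely when color class $i$ spans no edge. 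Each summand above is therefore $0$ or $1$, and the product equals $1$ iff every color class is edge-free, which is exactly the condition that $c$ be proper. Summing, $\chi_I(g)(n)$ equals the number of proper $n$-colorings, as claimed.

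The main obstacle is this minor computation: carefully deciding which edges of $g$ survive as full edges, degenerate to half-edges, or vanish under the iterated coproduct. This is also where a convention must be pinned down, since the minors $g_i$ generically acquire half-edges from cross-class edges even when $g$ is simple; for the count to match the chromatic polynomial, the character must detect only cardinality-two edges, half-edges being immaterial (one checks on a single-edge graph that counting half-edges would undercount the $n(n-1)$ proper colorings). As a cross-check I would invoke the geometric picture: under the morphism $\inc\colon\rG^{cop}\to\rGP$ of Proposition~\ref{p:graph-submod2}, $\zeta$ pulls back the basic character $\beta$, because the zonotope $Z_g$ is a point iff $g$ has no cardinality-two edge. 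By Proposition~\ref{p:pol-inv4} the resulting invariant then coincides with the basic invariant $\chi_I(Z_g)$, whose value at $n$ counts the $Z_g$-generic functions $I\to[n]$ by Proposition~\ref{prop:basicinv}; and by Lemma~\ref{l:facesZg} a function $y$ is $Z_g$-generic iff no edge is monochromatic, i.e.\ iff it is a proper coloring. This independently confirms the formula and reconciles the $\rG$ versus $\rG^{cop}$ orientation, since reversing a decomposition is a bijection and the symmetric character $\zeta^{\otimes n}$ is insensitive to it.
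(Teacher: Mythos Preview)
Your proposal is correct and actually contains two proofs. Your primary argument is a direct combinatorial unwinding of~\eqref{e:pol-inv}: identify decompositions with colorings, compute the minors $g_i$ explicitly, and observe that the product of $\zeta$-values detects whether every color class is edge-free. This is more elementary than the paper's route and is self-contained within $\wG$, never invoking polytopes. The paper instead proves the result purely geometrically: it observes that $\zeta$ is the pullback of the basic character $\beta$ along $\inc:\rG^{cop}\to\rGP$ (since $Z_g$ is a point iff $g$ has no cardinality-two edges), applies Proposition~\ref{p:pol-inv4} to identify $\chi_I(g)$ with the basic invariant of $Z_g$, and then reads off Proposition~\ref{prop:basicinv} together with the face description~\eqref{eq:faceZg}. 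This is exactly what you present as your ``cross-check,'' so you have in fact reproduced the paper's proof as well.

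Your direct approach has the virtue of being immediate and making the half-edge convention issue explicit (you are right that $\zeta$ must ignore half-edges for the statement to hold, and the paper's proof confirms this since half-edges contribute only point summands to $Z_g$). The paper's geometric approach buys uniformity: the same template yields the matroid and poset invariants in the subsequent subsections without reanalyzing minors case by case. Your remark reconciling $\rG$ versus $\rG^{cop}$ via the reversal bijection is also correct and worth keeping.
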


\begin{proof}
The zonotope $Z_g$ is a point if and only if $g$ has no edges. Therefore, thanks to the inclusion $\wG^{cop}  \xhookrightarrow{} \wGP$ of Proposition \ref{p:graph-submod2},
when we restrict the basic character $\beta$ of $\wGP$ to graphic zonotopes, we obtain the character $\zeta$ of graphs. It follows that  $\chi_I(g)$ is the basic invariant of the graphic zonotope $Z_g$, and
 Proposition \ref{prop:basicinv} then tells us that $\chi_I(g)(n)$ is the number of $Z_g$-generic functions $y: I \rightarrow [n]$. By (\ref{eq:faceZg}), a function $y: I \rightarrow [n]$ is $Z_g$-generic if and only if $y(i) \neq y(j)$ whenever $\{i,j\}$ is an edge of $g$; that is, if and only if $y$ is a proper coloring of $g$. The result follows.
\end{proof}

We say that an $n$-coloring $y$ of $g$ and an acyclic orientation $o$ of the edges of $g$ are \emph{compatible} if we have $y(i) \geq y(j)$ for every directed edge $i \rightarrow j$ in the orientation $o$.

\begin{corollary} \label{cor:coloringrecip} \emph{(Stanley's reciprocity theorem for graphs \cite{stanley73:_acycl})}
Let $g$ be a graph on vertex set $I$, and $n \in \Nb$. Then $(-1)^{|I|} \chi_I(g)(-n)$ equals the number of compatible pairs of an $n$-coloring and an acyclic orientation of $g$. In particular,  $(-1)^{|I|} \chi_I(g)(-1)$ is the number of acyclic orientations of $g$.
\end{corollary}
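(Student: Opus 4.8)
The plan is to apply the general reciprocity machinery of Proposition \ref{p:reciprocity} together with the explicit antipode of graphs from Corollary \ref{c:antipodeG}, exactly as the basic reciprocity of $\wGP$ (Proposition \ref{prop:basicrecip}) specializes under the inclusion $\wG^{cop}\hookrightarrow\wGP$. Concretely, I would first record that, since the chromatic polynomial $\chi_I(g)$ is the basic invariant of the graphic zonotope $Z_g$, Proposition \ref{prop:basicrecip} gives
\[
(-1)^{|I|}\chi_I(g)(-n) = \sum_{y:I\to[n]} \bigl(\text{number of vertices of } (Z_g)_y\bigr).
\]
So the entire task reduces to identifying, combinatorially, what the right-hand side counts.

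The key step is to interpret the summand $\bigl(\text{number of vertices of }(Z_g)_y\bigr)$ for a fixed coloring $y:I\to[n]$. Using the Minkowski-sum face formula \eqref{eq:faceZg} from the proof of Lemma \ref{l:facesZg}, the face $(Z_g)_y$ is itself a graphic zonotope: it is $Z_{g(f_y,o_y)}$, where $f_y$ is the flat of edges $\{i,j\}$ with $y(i)=y(j)$ and $o_y$ is the partial orientation recording $y(i)>y(j)$ on the remaining edges. Its vertices are in bijection with the acyclic orientations of the graph $g(f_y,o_y)$; equivalently, with the acyclic orientations $o$ of $g$ that are compatible with $y$ in the sense defined just before the corollary (orient each edge from higher to equal-or-lower $y$-value, breaking ties within the flat $f_y$ in any acyclic way). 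Thus each pair $(y,o)$ of a coloring and a compatible acyclic orientation is counted exactly once, and summing over all $y:I\to[n]$ gives precisely the number of compatible pairs. This yields the first assertion.

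For the final sentence, I would set $n=1$: there is a unique coloring $y\equiv 1$ (every vertex the same color), the associated flat $f_y$ is all of $g$, and every acyclic orientation of $g$ is trivially compatible with the constant coloring, so the count of compatible pairs collapses to the number of acyclic orientations of $g$. Hence $(-1)^{|I|}\chi_I(g)(-1)$ equals the number of acyclic orientations of $g$.

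I anticipate the only genuine obstacle is the bookkeeping in the key step: translating "vertices of the face $(Z_g)_y$" into "acyclic orientations of $g$ compatible with $y$" cleanly, being careful that the tie-breaking orientations within the flat $f_y$ correspond bijectively to the vertices of the zonotope $Z_{f_y}$ (the vertices of a graphic zonotope $Z_h$ are exactly the acyclic orientations of $h$, by Lemma \ref{l:facesZg} applied with trivial flat). Everything else is a direct substitution into Proposition \ref{prop:basicrecip}, so once this identification is pinned down the proof is short.
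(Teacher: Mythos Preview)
Your proposal is correct and follows essentially the same route as the paper: specialize Proposition~\ref{prop:basicrecip} to the graphic zonotope $Z_g$, then use \eqref{eq:faceZg} to identify the vertices of $(Z_g)_y$ with the acyclic orientations of $g$ that extend the partial orientation $o_y$, i.e., those compatible with $y$. The paper's proof is just a terser version of exactly this argument.
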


\begin{proof}
This result is a special case of Proposition \ref{prop:basicrecip}. To see this, regard an $n$-coloring $y$ of $g$ as a linear functional $y: I \rightarrow [n]$ on the zonotope $Z_g$. This coloring induces a partial orientation $o_y$ of the edges of $g$, assigning an edge $\{i,j\}$ the direction $i \rightarrow j$ whenever $y(i) > y(j)$. By (\ref{eq:faceZg}), the vertices of $(Z_g)_y$ correspond to the acyclic orientations that extend $o_y$; these are precisely the acyclic orientations of $g$ compatible with $y$. 
\end{proof}

\subsection{{The basic invariant of matroids is the Billera-Jia-Reiner polynomial.}}
Given a matroid $m$ on $I$, say a function $y:I \rightarrow [n]$ is \emph{$m$-generic} if $m$ has a unique \emph{$y$-maximum basis} $\{b_1, \ldots, b_r\}$ maximizing $y(b_1) + \cdots + y(b_r)$.

\begin{proposition}
Let $\zeta$ be the character on the Hopf monoid of matroids $\wM$ defined by
\[
\zeta_I(m) = 
\begin{cases}
$1$ &  \textrm{ if $m$ has only one basis, and} \\
$0$ & \textrm{ otherwise.}
\end{cases}
\]
The corresponding polynomial invariant is the \emph{Billera-Jia-Reiner polynomial} of a matroid, which equals
\[
\chi_I(m)(n)\ := \textrm{ number of $m$-generic functions $y: I \rightarrow [n]$}
\]
for $n \in \Nb$. 
\end{proposition}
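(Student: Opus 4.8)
The plan is to mirror exactly the treatment of the chromatic polynomial in the previous subsection, transporting the whole computation to $\wGP$ via the matroid polytope. The first step is the elementary observation that links $\zeta$ to the basic character $\beta$ of Definition \ref{d:basic}. By Proposition \ref{prop:matroidpolytope} the vertices of $\Pc(m)$ are precisely the indicator vectors $e_{b_1} + \cdots + e_{b_r}$ of the bases of $m$, so $\Pc(m)$ is a single point if and only if $m$ has exactly one basis. Consequently $\beta_I(\Pc(m)) = \zeta_I(m)$ for every matroid $m$ on $I$.

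Next I would invoke the injective morphism of Hopf monoids $\rank:\wM \to \wSF \cong \wGP$, which sends $m$ to $\Pc(m)$, established in Proposition \ref{p:matroid-submod}. Since the previous paragraph shows $\beta_I \circ \rank = \zeta_I$, the hypotheses of Proposition \ref{p:pol-inv4} are satisfied with $\wH = \wM$, $\wK = \wGP$, $f = \rank$, and the characters $\zeta$ and $\beta$. That proposition then yields the equality of polynomials $\chi_I(m) = \chi^{\wGP}_I(\Pc(m))$, where $\chi^{\wGP}$ denotes the basic invariant of $\wGP$. In other words, the polynomial invariant attached to $\zeta$ is nothing but the basic invariant of the matroid polytope.

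At this point Proposition \ref{prop:basicinv} computes the right-hand side directly: for $n \in \Nb$, the value $\chi^{\wGP}_I(\Pc(m))(n)$ equals the number of functions $y : I \to [n]$ that are $\Pc(m)$-generic, i.e.\ for which the $y$-maximal face $\Pc(m)_y$ is a point. The last step, the only place any matroid-specific content enters, is to identify $\Pc(m)$-generic functions with $m$-generic functions. Given $y : I \to [n]$, the vertices of the face $\Pc(m)_y$ are exactly those vertices of $\Pc(m)$ on which the functional $y$ is maximized; by the vertex description of Proposition \ref{prop:matroidpolytope} these are the indicator vectors of the bases $\{b_1, \ldots, b_r\}$ maximizing $y(b_1) + \cdots + y(b_r)$. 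Hence $\Pc(m)_y$ is a single point precisely when the $y$-maximum basis is unique, that is, precisely when $y$ is $m$-generic. Combining this identification with the two displayed equalities above gives the claimed formula.

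I do not expect a genuine obstacle: the argument is a faithful transcription of the chromatic-polynomial case, and the only subtlety is stating cleanly the linear-programming characterization of $\Pc(m)_y$, namely that its vertices are exactly the $y$-maximum bases of $m$. This is immediate from the vertex description of the matroid polytope together with the standard fact that the vertices of a face $\wp_y$ are the vertices of $\wp$ at which $y$ attains its maximum, so the bookkeeping should be routine.
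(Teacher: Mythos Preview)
Your proof is correct and follows essentially the same approach as the paper: observe that $\Pc(m)$ is a point iff $m$ has a unique basis, transport the computation to $\wGP$ via the morphism of Proposition~\ref{p:matroid-submod}, and apply Proposition~\ref{prop:basicinv}. You are slightly more explicit than the paper in two places---you invoke Proposition~\ref{p:pol-inv4} by name to justify the transfer of polynomial invariants, and you spell out the identification of $\Pc(m)$-generic with $m$-generic functions via the vertex description---but these are elaborations of the same argument, not a different route.
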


\begin{proof}
The matroid polytope of $m$ is a point if and only if $m$ has only one basis. Therefore, thanks to the  inclusion $\wM   \xhookrightarrow{} \wGP$ of Proposition \ref{p:matroid-submod}, 
when we restrict the basic character $\beta$ of $\wGP$ to matroid polytopes, we obtain the character $\zeta$ of matroids. It follows that $\chi_I(m)$ The result now follows by applying Proposition \ref{prop:basicinv} to matroid polytopes.
\end{proof}

\begin{corollary} \label{cor:orderrecip} 
\emph{(Billera-Jia-Reiner's reciprocity theorem for matroids \cite{billera06})}
Let $m$ be a matroid on $I$ and $n \in \Nb$. Then 
\[
(-1)^{|I|} \chi_I(m)(-n)\ = \sum_{y: I \rightarrow [n]} 
 \textrm{ (number of $y$-maximum bases of $m$)}.
\]
\end{corollary}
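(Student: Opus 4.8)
The plan is to recognize this corollary as a direct application of the general reciprocity machinery, exactly parallel to the graph case just completed. The statement to be proved is that $(-1)^{|I|}\chi_I(m)(-n) = \sum_{y:I\rightarrow[n]} (\text{number of } y\text{-maximum bases of } m)$ for the Billera--Jia--Reiner polynomial of a matroid $m$.

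First I would invoke the inclusion $\wM \hookrightarrow \wGP$ of Proposition \ref{p:matroid-submod}, under which the character $\zeta$ on matroids restricts from the basic character $\beta$ of $\wGP$ (established in the preceding proposition, since a matroid polytope is a point exactly when $m$ has a single basis). By Proposition \ref{p:pol-inv4}, this means the Billera--Jia--Reiner polynomial $\chi_I(m)$ coincides with the basic invariant $\chi_I(\Pc(m))$ of the matroid polytope. Then I would apply the basic invariant reciprocity of Proposition \ref{prop:basicrecip} directly to $\wp = \Pc(m)$, which gives
\[
(-1)^{|I|}\chi_I(\Pc(m))(-n) = \sum_{y:I\rightarrow[n]} (\text{number of vertices of } \Pc(m)_y).
\]
Second, I would identify the right-hand side combinatorially. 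By the vertex description of the matroid polytope in Proposition \ref{prop:matroidpolytope}, the vertices of $\Pc(m)$ are exactly the indicator vectors of the bases of $m$. The $y$-maximum face $\Pc(m)_y$ is the face on which the linear functional $y$ is maximized, so its vertices are precisely the bases $\{b_1,\ldots,b_r\}$ maximizing $y(b_1)+\cdots+y(b_r)$ — that is, the $y$-maximum bases of $m$. Substituting this interpretation yields the claimed formula.

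I expect this proof to be almost entirely a matter of assembling already-established pieces, so there is no serious obstacle; the only point requiring a moment of care is the bookkeeping identifying ``vertices of $\Pc(m)_y$'' with ``$y$-maximum bases of $m$,'' which relies on the fact that the vertices of any face of a polytope are exactly those vertices of the whole polytope lying on that face, combined with the basis-to-vertex correspondence of Proposition \ref{prop:matroidpolytope}. This is the same structural argument used in the graph reciprocity corollary, where vertices of $(Z_g)_y$ corresponded to compatible acyclic orientations; here the correspondence is even cleaner because the vertices of the matroid polytope are canonically the bases. Thus the entire content is to chain Propositions \ref{p:matroid-submod}, \ref{p:pol-inv4}, \ref{prop:basicrecip}, and \ref{prop:matroidpolytope} together.
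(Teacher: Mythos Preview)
Your proposal is correct and matches the paper's approach exactly; the paper's own proof is the single sentence ``This is the result of applying Proposition \ref{prop:basicrecip} to matroid polytopes,'' and you have simply unpacked the implicit steps (the inclusion $\wM\hookrightarrow\wGP$, the identification of $\zeta$ with the restriction of $\beta$, and the vertex-basis correspondence from Proposition \ref{prop:matroidpolytope}) that make this application go through.
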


\begin{proof}
This is the result of applying Proposition \ref{prop:basicrecip} to matroid polytopes.
\end{proof}

\subsection{{The basic invariant of posets is the strict order polynomial}}

Given a poset $p$, say a map $y:p \rightarrow [n]$ is \emph{order-preserving} if $y(i) \leq y(j)$ whenever $i < j$ in $p$. Say $y$ is \emph{strictly order-preserving} if $y(i) < y(j)$ whenever $i < j$ in $p$.

\begin{proposition}
Let $\zeta$ be the character on the Hopf monoid of posets $\wP$ defined by
\[
\zeta_I(p) = 
\begin{cases}
$1$ &  \textrm{ if $p$ is an antichain, and} \\
$0$ & \textrm{ otherwise.}
\end{cases}
\]
The corresponding polynomial invariant is the \emph{strict order polynomial}, which equals
\[
\chi_I(p)(n)\ := \textrm{ number of strictly order-preserving maps $p \rightarrow [n]$.}
\]
for $n \in \Nb$. 
\end{proposition}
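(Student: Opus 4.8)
The plan is to mirror the proofs for graphs and matroids, realizing the character $\zeta$ as a restriction of the basic character $\beta$ of Definition~\ref{d:basic} through the embedding $\low\colon\wP\hookrightarrow\wSF_+\map{\cong}\wGP_+$ of Proposition~\ref{p:PtoSF}. The first step is the observation that the poset cone $\Pc(p)=\cone\{e_i-e_j : i>j\textrm{ in }p\}$ of Proposition~\ref{prop:posetconegenerators} reduces to the single point $\{0\}$ exactly when $p$ has no cover relations, that is, when $p$ is an antichain. Hence $\beta_I(\Pc(p))=\zeta_I(p)$ for every poset $p$, so $\beta$ and $\zeta$ are compatible across the morphism $\low$. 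Proposition~\ref{p:pol-inv4} then identifies $\chi_I(p)$ with the basic invariant of the poset cone $\Pc(p)$, and Remark~\ref{r:basicGP_+} lets us apply the $\wGP_+$ version of Proposition~\ref{prop:basicinv} to conclude that $\chi_I(p)(n)$ counts the $\Pc(p)$-generic functions $y\colon I\to[n]$.

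It then remains to identify the $\Pc(p)$-generic functions combinatorially. Since $\Pc(p)$ is a pointed cone with apex at the origin, a direction $y$ is bounded above on $\Pc(p)$ precisely when $y(e_i-e_j)\leq 0$ on every generating ray, and when this holds the $y$-maximum face is $\Pc(p)_y=\{x\in\Pc(p) : y(x)=0\}$. By Remark~\ref{r:basicGP_+}, $y$ is $\Pc(p)$-generic iff it is bounded above and $\Pc(p)_y$ is a point, which happens iff $y$ is strictly negative on every extreme ray; by Proposition~\ref{prop:posetconegenerators} these rays are the roots $e_i-e_j$ corresponding to the cover relations $i\gtrdot j$. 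Thus $y$ is $\Pc(p)$-generic iff $y(i)<y(j)$ for every cover relation, and, by transitivity along chains, iff $y(i)<y(j)$ for all $i>j$ in $p$.

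Finally, the condition $y(i)<y(j)$ for all $i>j$ says exactly that $y$ is a strictly order-reversing map $p\to[n]$. The relabeling $y\mapsto(n+1)-y$ is an involution of the set of functions $I\to[n]$ that carries strictly order-reversing maps bijectively onto strictly order-preserving maps, so the two families have the same cardinality for each $n$. Therefore $\chi_I(p)(n)$ equals the number of strictly order-preserving maps $p\to[n]$, as claimed. I expect the one delicate point to be the generic-face analysis of the second step: one must handle the unboundedness of the cone carefully, invoking the conventions of Remark~\ref{r:basicGP_+} for when a direction counts as generic, and keep track of the order-reversing versus order-preserving sign flip rather than assuming the generic functions are order-preserving on the nose.
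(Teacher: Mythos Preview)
Your proof is correct and follows essentially the same route as the paper: both reduce $\zeta$ to the basic character via the embedding $\wP\hookrightarrow\wGP_+$, invoke Proposition~\ref{prop:basicinv} (extended as in Remark~\ref{r:basicGP_+}) to count $\Pc(p)$-generic directions, identify these as the strictly order-reversing maps using the ray description of $\Pc(p)$, and finish with the bijection to strictly order-preserving maps. Your write-up is in fact a bit more careful than the paper's, spelling out the boundedness issue and giving the explicit involution $y\mapsto(n+1)-y$.
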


\begin{proof}
The poset cone $\Pc(p)$ is a point if and only if $p$ is an antichain.
Therefore, thanks to the  inclusion $\wP   \xhookrightarrow{} \wGP_+$ of Proposition \ref{p:PtoSF} (see Remark \ref{r:basicGP_+}), 
when we restrict the basic character $\beta$ of $\wGP_+$ to poset cones, we obtain the character $\zeta$ of posets. It follows that $\chi_I(p)(n)$ is the number of $\Pc(p)$-generic functions $y: p \rightarrow [n]$.
Now, thanks to Proposition \ref{prop:posetconegenerators}, the normal fan to $\Pc(p)$ is a single cone cut out by the inequalities $y(i) \leq y(j)$ for $i > j$ in $p$, so the $p$-generic functions are precisely the strictly order-reversing maps. It remains to note that there is a natural bijection between order-reversing maps $I \rightarrow [n]$ and order-preserving maps $I \rightarrow [n]$.
\end{proof}

\begin{corollary} \label{cor:orderrecip} \emph{(Stanley's reciprocity theorem for posets \cite{stanley1970chromatic})}
Let $p$ be a poset on $I$ and $n \in \Nb$. Then $(-1)^{|I|} \chi_I(p)(-n)$ is the \emph{order polynomial} of $p$, that is,
\[
(-1)^{|I|} \chi_I(p)(-n)\ = \textrm{ number of order-preserving maps $p \rightarrow [n]$.}
\]
\end{corollary}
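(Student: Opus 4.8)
The plan is to recognize this as a direct application of the general reciprocity machinery already developed, namely Proposition \ref{p:reciprocity}, together with the concrete basic reciprocity theorem for generalized permutahedra in Proposition \ref{prop:basicrecip}, transported to posets via the inclusion $\low: \wP \hookrightarrow \wGP_+$ of Proposition \ref{p:PtoSF}. Since the statement of this corollary claims to be a special case of Proposition \ref{prop:basicrecip} (just as the graph and matroid reciprocity corollaries were), the entire burden of the proof is to interpret the two sides of Proposition \ref{prop:basicrecip} combinatorially for the poset cone $\Pc(p)$.

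First I would invoke Proposition \ref{prop:basicrecip} applied to the poset cone $\Pc(p) \subset \Rb I$, keeping in mind Remark \ref{r:basicGP_+} since poset cones are unbounded. This gives
\[
(-1)^{|I|}\chi_I(p)(-n) = \sum_{y:I \rightarrow [n]} \textrm{ (number of vertices of } \Pc(p)_y).
\]
The next step is to analyze $\Pc(p)_y$. By Proposition \ref{prop:posetconegenerators}, $\Pc(p)$ is a pointed cone with apex at the origin (its lineality space is only the line spanned by $\1$, on which $y$ is constant after projecting to the hyperplane $\sum x_i = 0$), so its unique vertex is the origin. The key observation is that for the $y$-maximal face $\Pc(p)_y$ to be nonempty and to have a vertex at all, the functional $y$ must be bounded above on $\Pc(p)$; and when it is bounded, $\Pc(p)_y$ is itself a pointed cone with a single vertex (the origin). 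Thus each summand contributes either $0$ (when $y$ is unbounded above on $\Pc(p)$) or $1$ (when $y$ is bounded above), so the right-hand side simply counts the functions $y:I \rightarrow [n]$ that are bounded above on $\Pc(p)$.

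I would then translate the boundedness condition into a poset condition. Using the generator description $\Pc(p) = \textrm{cone}\{e_i - e_j : i > j \textrm{ in } p\}$, a functional $y$ is bounded above on $\Pc(p)$ exactly when $y(e_i - e_j) \leq 0$ for every generator, i.e. $y(i) \leq y(j)$ whenever $i > j$, which says $y$ is an order-reversing map $I \rightarrow [n]$. The final step is the elementary bijection between order-reversing maps and order-preserving maps $p \rightarrow [n]$ (send $y$ to $(n+1) - y$), so the count equals the number of order-preserving maps $p \rightarrow [n]$, which is by definition the order polynomial $\Omega_p(n)$. This yields the claimed formula.

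The main obstacle, and the only point requiring genuine care, is the boundedness/vertex-counting analysis of $\Pc(p)_y$ for an unbounded polyhedron: one must verify that the "number of vertices of $\Pc(p)_y$" is precisely the indicator of boundedness of $y$. This is where the unbounded setting (Remark \ref{r:basicGP_+}) and the pointedness of $\Pc(p)$ after quotienting by its lineality line must be handled correctly — in particular confirming that a bounded-above functional yields a face that is a pointed cone with exactly one vertex, rather than a higher-dimensional unbounded face with no vertices. Everything else is a routine unwinding of Proposition \ref{prop:posetconegenerators} and the standard $y \mapsto (n+1)-y$ bijection, so no detailed calculation beyond this structural point is needed.
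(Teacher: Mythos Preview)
Your proposal is correct and follows essentially the same approach as the paper: apply Proposition~\ref{prop:basicrecip} to the poset cone $\Pc(p)$, observe that $\Pc(p)$ has a unique vertex (the origin) which lies in every face, so each $y$ contributes $1$ when $\Pc(p)$ is bounded above in direction $y$ and $0$ otherwise, and then identify the bounded directions as the order-reversing maps via the generator description of $\Pc(p)$. The paper's proof is terser about the vertex-counting step (it simply notes that the unique vertex of $\Pc(p)$ lies in $\Pc(p)_y$ whenever the latter exists) and leaves the order-reversing/order-preserving bijection implicit, but the substance is the same.
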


\begin{proof}
This is a consequence of Proposition \ref{prop:basicrecip} and the following observations.
The poset cone $\Pc(p)$ only has one vertex, namely, the origin. If $y: p \rightarrow [n]$ is order-reversing, then there is a $y$-maximum face $\Pc(p)_y$, and it contains that single vertex. If $y$ is not order-reversing, then $\Pc(p)$ is not bounded above in the direction of $y$. 
\end{proof}

\subsection{{The Bergman polynomial of a matroid.}}\label{ss:Bergmanpoly}

A \emph{loop} in a matroid is an element which is not contained in any basis. 

\begin{definition}
The \emph{Bergman character} $\gamma$ of the Hopf monoid of matroids $\wM$ is given by
\[
\gamma_I(m) = 
\begin{cases}
1 & \textrm{ if $m$ has no loops}\\
0 & \textrm{ otherwise}.
\end{cases}
\]
for a matroid $m$ on $I$. 
The \emph{Bergman polynomial} $B(m)$ of a matroid $m$ is the  invariant associated to $\gamma$ by Proposition \ref{p:pol-inv} and (\ref{e:pol-inv}).
\end{definition}

Note that $\gamma$ is indeed a character, because a direct sum of matroids $m \oplus n$ is loopless if and only if $m$ and $n$ are both loopless.
To study the Bergman polynomial, we need some definitions. 
A \emph{flat} is a set $F$ of elements such that $r(F \cup i) > r(F)$ for every $i \notin F$. When $m$ is the matroid of a collection of vectors $A$ in a vector space $V$, the flats correspond to the subspaces of $V$ spanned by subsets of $A$. The flats form a lattice $L$ under inclusion, and the M\"obius number $\mu_L(\widehat{0}, \widehat{1})$ of this lattice (see \cite{ardila2015algebraic}, \cite[Chapter 3]{stanley11:_ec1}) is also called the \emph{M\"obius number of the matroid} $\mu(m)$.

We call $B(m)$ the \emph{Bergman polynomial} because it is related to the \emph{Bergman fan} 
\[
\Bc(m) = \{y \in \Rb^I \, : \, m_y \textrm{ has no loops}\}
\]
where $m_y$ is the matroid  whose bases are the $y$-maximum bases of $m$. Notice that the matroid polytope of $m_y$ is the $y$-maximum face of the matroid polytope of $m$; that is, $\Pc(m_y) = \Pc(m)_y$. Therefore $\Bc(m)$ is a polyhedral fan: it is a subfan of the normal fan of the matroid polytope $\Pc(m)$, consisting of the faces $\Nc_m(n)$ normal to the loopless faces $n$ of $m$.

Note also that $\Bc(m)$ is invariant under translation by $\1$ and under scaling by a positive constant. Therefore, nothing is lost by intersecting it with the hyperplane $\sum_i x_i=0$ and the sphere $\sum_i x_i^2 = 1$, to obtain the \emph{Bergman complex} $\widetilde{\Bc}(m)$. 

Bergman fans of matroids are central objects in tropical geometry, because they are the tropical analog of linear spaces. \cite{ardila2006bergman,sturmfels02} Two central results are the following combinatorial and topological descriptions.

\begin{theorem} \label{t:Bergman} \cite{ardila2006bergman}
Let $m$ be a matroid of rank $r$ on $I$. The Bergman fan $\Bc(m)$ has a triangulation into cones of the braid arrangement $\Bc_I$, consisting of the cones $\Bc_{S_1, \ldots, S_r}$ such that $S_1 \sqcup  \cdots \sqcup S_i$ is a flat of $m$ for $i = 1, \ldots, r$.
\end{theorem}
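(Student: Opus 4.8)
The plan is to determine exactly which cones of the braid arrangement lie inside $\Bc(m)$, exploiting the fact already established that $\Bc(m)$ is a subfan of the normal fan $\Nc_{\Pc(m)}$ and that $\Bc_I$ refines $\Nc_{\Pc(m)}$ because $\Pc(m)$ is a generalized permutahedron. It therefore suffices to decide, for each composition $I = S_1 \sqcup \cdots \sqcup S_k$, whether the open cone $\Bc^\circ_{S_1,\ldots,S_k}$ is contained in $\Bc(m)$; writing $F_i := S_1 \sqcup \cdots \sqcup S_i$, this is exactly the question of when $m_y$ is loopless for $y$ in that open cone. Since $\Bc(m)$ is empty as soon as $m$ has a loop (a loop of $m$ is a loop of every $m_y$), I may assume throughout that $m$ is loopless, so that $F_0 = \emptyset$ is a flat.

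The first and most substantial step is to identify $m_y$ explicitly. For $y \in \Bc^\circ_{S_1,\ldots,S_k}$ we have $\Pc(m)_y = \Pc(m)_{S_1,\ldots,S_k}$, and by Proposition \ref{p:face2} together with the embedding $\rank \colon \rM \hookrightarrow \rGP$ of Proposition \ref{p:matroid-submod}, this face factors as a product of the matroid polytopes of the minors $(m/_{F_{i-1}})|_{S_i}$. As a face of a matroid polytope is itself the matroid polytope of $m_y$, this yields the direct sum decomposition
\[
m_y \;\cong\; \bigoplus_{i=1}^{k} (m/_{F_{i-1}})|_{S_i},
\]
which is the Hopf-theoretic incarnation of the greedy algorithm (a basis $B$ is $y$-maximum iff $|B \cap F_i| = \rank_m(F_i)$ for all $i$). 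Consequently $m_y$ is loopless iff each summand is loopless, i.e. iff $\rank_m(F_{i-1}\cup e) > \rank_m(F_{i-1})$ for every $i$ and every $e \in S_i$; call this condition $(\ast)$.

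Next I would translate $(\ast)$ into the language of flats, proving that $(\ast)$ holds iff each $F_i$ is a flat. The backward implication is immediate: if $F_{i-1}$ is a flat then, since $e \in S_i$ is not in $F_{i-1}$, the defining property of a flat gives $\rank_m(F_{i-1}\cup e) > \rank_m(F_{i-1})$. For the forward implication, suppose some $F_i$ is not a flat, so there is $e \notin F_i$ with $\rank_m(F_i \cup e) = \rank_m(F_i)$; then $e \in S_j$ for some $j \geq i+1$, and since $e$ lies in the closure of $F_i \subseteq F_{j-1}$ we get $\rank_m(F_{j-1}\cup e) = \rank_m(F_{j-1})$, contradicting $(\ast)$ at index $j$. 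Hence $\Bc^\circ_{S_1,\ldots,S_k} \subseteq \Bc(m)$ precisely when $F_1, \ldots, F_{k-1}$ are all flats (recall $F_k = I$ is automatically a flat).

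Finally I would assemble these cones into a triangulation. The admissible compositions are exactly the ordered chains of flats $\emptyset = F_0 \subsetneq F_1 \subsetneq \cdots \subsetneq F_k = I$; merging two adjacent blocks simply deletes one flat from the chain and again produces an admissible cone, so the admissible cones are closed under passing to faces and form a subfan of $\Bc_I$. Because every chain of flats extends to a maximal one and the rank increases by exactly one at each step of a maximal chain, the top-dimensional admissible cones have $k = r$ with $\rank_m(F_i) = i$; these are precisely the cones $\Bc_{S_1,\ldots,S_r}$ of the statement, and each such braid cone is simplicial. Thus the admissible subfan is a triangulation of $\Bc(m)$, as claimed. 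I expect the genuine obstacle to be the first step — pinning down $m_y$ as the direct sum of minors — since that is where the geometry of the matroid polytope (equivalently, the greedy optimality of $\1_S$-faces) does the real work; the translation to flats and the triangulation argument are then combinatorial bookkeeping.
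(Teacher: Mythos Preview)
The paper does not give its own proof of this theorem; it is stated with a citation to \cite{ardila2006bergman} and used as a black box in the proof of Proposition~\ref{prop:Bergmaninv}. So there is no ``paper's proof'' to compare against.

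Your argument is correct and, pleasantly, it supplies a proof entirely internal to the paper's framework. The identification $m_y \cong \bigoplus_i (m/_{F_{i-1}})|_{S_i}$ is exactly Proposition~\ref{p:face2} transported through the embedding $\rank:\rM\hookrightarrow\rGP$ of Proposition~\ref{p:matroid-submod}, together with the observation (stated in the paper just before Theorem~\ref{t:Bergman}) that $\Pc(m_y)=\Pc(m)_y$. The equivalence between looplessness of each minor and the flat condition on the $F_i$ is argued cleanly; your forward direction correctly uses that the closure operator is monotone. The final paragraph is also fine: braid cones are simplicial, closure under faces is immediate from dropping flats in the chain, and maximal chains of flats have length $r$, giving the top-dimensional cones $\Bc_{S_1,\ldots,S_r}$ of the statement. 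One small cosmetic point: the condition you actually need is that $F_0,\ldots,F_{k-1}$ are flats (your backward implication uses $F_{i-1}$, not $F_i$), and you handle $F_0=\emptyset$ by assuming $m$ loopless up front; the statement in the paper phrases it as $F_1,\ldots,F_r$ being flats, which is equivalent since $F_r=I$ is always a flat and $F_0=\emptyset$ is a flat exactly when $m$ is loopless.
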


\begin{theorem} \label{t:topBergman} \cite{ardila2006bergman}
The Bergman complex of a matroid $m$ of rank $r$ is homeomorphic to a wedge of $(-1)^{r} \mu(m)$ spheres of dimension $r-2$, where $\mu(m)$ is the M\"obius number of $m$.
\end{theorem}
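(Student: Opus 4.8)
The plan is to deduce the topological statement from the combinatorial triangulation of Theorem \ref{t:Bergman} together with standard tools: the relationship between M\"obius numbers and reduced Euler characteristics (via the order complex), and a shellability or discrete-Morse-type argument to promote a homology computation to an actual homeomorphism type. First I would pass from the fan $\Bc(m)$ to the Bergman complex $\widetilde{\Bc}(m)$ by intersecting with the sphere $\sum_i x_i^2 = 1$ in the hyperplane $\sum_i x_i = 0$; since $\Bc(m)$ is invariant under the lineality direction $\1$ and under positive scaling, this loses no information and turns the $(r-1)$-dimensional fan into an $(r-2)$-dimensional polyhedral complex. Theorem \ref{t:Bergman} tells me that the braid cones $\Bc_{S_1, \ldots, S_r}$ with each $S_1 \sqcup \cdots \sqcup S_i$ a flat triangulate this complex; intersecting these cones with the sphere, the resulting simplicial complex is precisely the order complex of the proper part of the lattice of flats $L$, because a chain of flats $\emptyset \subsetneq F_1 \subsetneq \cdots \subsetneq F_{r-1} \subsetneq I$ corresponds exactly to such a composition $(S_1, \ldots, S_r)$ with $F_i = S_1 \sqcup \cdots \sqcup S_i$.

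The next step is to recall that the geometric lattice of flats of a matroid is well-studied: the proper part $\overline{L} = L \setminus \{\widehat{0}, \widehat{1}\}$ has an order complex $\Delta(\overline{L})$ whose reduced homology is concentrated in the top degree $r-2$, with Betti number equal to $|\mu_L(\widehat{0},\widehat{1})| = (-1)^r \mu(m)$. This is a classical consequence of the fact that geometric lattices are shellable (indeed, the flags of flats admit an $EL$-labeling), so the order complex is homotopy equivalent to a wedge of spheres of dimension $r-2$, the number of spheres being the absolute value of the M\"obius number by the standard identification of the reduced Euler characteristic of $\Delta(\overline{L})$ with $\mu_L(\widehat{0}, \widehat{1})$. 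Thus I would assemble the argument as: $\widetilde{\Bc}(m) \cong \Delta(\overline{L})$ as simplicial complexes, and $\Delta(\overline{L})$ is a wedge of $(-1)^r \mu(m)$ spheres of dimension $r-2$.

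The main obstacle is the requirement of a \emph{homeomorphism} rather than merely a homotopy equivalence, and more precisely identifying the homeomorphism type on the nose as a wedge of spheres. A wedge of spheres of dimension $r-2$ is not a manifold when there is more than one sphere, so ``homeomorphic to a wedge of spheres" must be read carefully; the cleanest route is to invoke shellability of $\Delta(\overline{L})$ directly, since a shellable $(r-2)$-dimensional complex that is not a sphere or ball is homotopy equivalent to a wedge of top-dimensional spheres, and for order complexes of geometric lattices this wedge structure can be realized at the level of the underlying space via the shelling. I would therefore devote the most care to justifying that the identification $\widetilde{\Bc}(m) \cong \Delta(\overline{L})$ respects the simplicial structure exactly (so that loopless faces $m_y$ correspond to chains of flats, using $\Pc(m_y) = \Pc(m)_y$ to match $y$-maximum faces with flats), and then cite the shellability of geometric lattices \cite{stanley11:_ec1} to conclude the wedge-of-spheres description together with the M\"obius-number count. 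The remaining verifications---that the triangulating cones of Theorem \ref{t:Bergman} glue to a subcomplex of the braid-arrangement sphere, and that the empty and full flats correspond to the removed bounding rays---are routine.
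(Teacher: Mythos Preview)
The paper does not prove this theorem; it is simply quoted from \cite{ardila2006bergman} and used as a black box in the proof of Proposition~\ref{prop:Bergmanrecip}. Your outline is essentially the argument given in that reference: Theorem~\ref{t:Bergman} identifies the Bergman complex with the order complex of the proper part of the lattice of flats, and then Bj\"orner's shellability of geometric lattices gives the wedge-of-spheres description with the M\"obius number counting the spheres.

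One point worth flagging, which you already noticed: the statement as written says ``homeomorphic,'' but what shellability actually delivers is a \emph{homotopy equivalence} to a wedge of spheres. The order complex of a geometric lattice is not literally homeomorphic to a wedge of spheres in general (a wedge of two or more $(r-2)$-spheres is not even a pseudomanifold for $r\geq 3$, while the order complex is pure and typically has more intricate incidence structure). The original source states homotopy equivalence, and that is all that is needed downstream in this paper, since Proposition~\ref{prop:Bergmanrecip} only uses the reduced Euler characteristic. So your instinct to be cautious here is right; you should state and prove homotopy equivalence, not homeomorphism.
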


We now describe some of  the combinatorial properties of the Bergman polynomial. The first one is essentially equivalent to \cite[Example 4.15]{breuer2016scheduling}. 
Define a \emph{flag of flats} of $m$ to be an increasing chain of flats under containment $\emptyset = F_0 \subsetneq F_1 \subsetneq F_2 \subsetneq \cdots \subsetneq F_{n-1} \subsetneq F_n = \widehat{1}$. We call $n$ the length of the flag. Similarly, a \emph{weak flag of flats} to be a weakly increasing chain of flats.

\begin{proposition} \label{prop:Bergmaninv} 
At a natural number $n$, the \emph{Bergman polynomial} $B(m)$ of a matroid $m$ is given by
\[
B(m)(n) = \textrm{number of weak flags of flats of $m$ of length $n$} 
= \sum_{k=0}^r c_d {n \choose d},
\]
where $c_d$ is the number of flags of flats of $m$ of length $d$. Its degree is the rank $r$ of $m$.
\end{proposition}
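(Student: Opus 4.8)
The plan is to recognize $B(m)(n)$ as a polynomial invariant of the character $\gamma$ and unwind the definition~\eqref{e:pol-inv} combinatorially. First I would apply Proposition~\ref{prop:basicinv}'s method: the value $\chi_I(m)(n)$ is a sum over decompositions $I = S_1 \sqcup \cdots \sqcup S_n$ (equivalently, functions $y:I\to[n]$) of the product $\gamma_{S_1}(m_1)\cdots\gamma_{S_n}(m_n)$, where $\Delta_{S_1,\ldots,S_n}(m) = (m_1,\ldots,m_n)$ is the tuple of minors and $m_y = m_1 \times \cdots \times m_n$ is the $y$-maximal face of the matroid polytope. Each factor $\gamma_{S_i}(m_i)$ contributes $1$ precisely when $m_i$ is loopless, and $0$ otherwise, so the whole term survives exactly when $m_y$ is loopless, i.e.\ when $y \in \Bc(m)$ is in the Bergman fan. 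Thus $B(m)(n)$ counts the functions $y:I\to[n]$ lying in the Bergman fan.

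Next I would translate this count into a count of weak flags of flats. The key tool is Theorem~\ref{t:Bergman}: the Bergman fan $\Bc(m)$ is triangulated by the braid cones $\Bc_{S_1,\ldots,S_r}$ for which each initial union $S_1 \sqcup \cdots \sqcup S_i$ is a flat. A lattice point $y:I\to[n]$ lies in $\Bc(m)$ iff the ordered set partition it induces (grouping $I$ by level sets and ordering by decreasing $y$-value) refines to such a flag of flats; concretely, the function $y$ is $\Bc(m)$-generic/lies in $\Bc(m)$ iff the sequence of ``superlevel sets'' $\{i : y(i) \geq c\}$ for $c = n, n-1, \ldots, 1$ is a weakly increasing chain of flats $\emptyset = F_0 \subseteq F_1 \subseteq \cdots \subseteq F_n = I$. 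This establishes a bijection between functions $y:I\to[n]$ in the Bergman fan and weak flags of flats of length $n$, giving $B(m)(n) = (\text{number of weak flags of flats of length } n)$.

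Finally I would extract the binomial expansion. Applying Proposition~\ref{p:pol-inv}(1) to the character $\gamma$, we have $B(m)(n) = \sum_{d=0}^{|I|} \chi_I^{(d)}(m)\binom{n}{d}$, where $\chi_I^{(d)}(m)$ sums the contributions of \emph{compositions} $(T_1,\ldots,T_d) \vDash I$ into $d$ nonempty parts with $m_{T_1,\ldots,T_d}$ loopless. Under the same bijection, these nonempty-part contributions correspond exactly to \emph{strict} flags of flats of length $d$, so $\chi_I^{(d)}(m) = c_d$ and the middle equality $B(m)(n) = \sum_d c_d \binom{n}{d}$ follows. For the degree statement, the top term occurs at $d = r$: a maximal strict chain of flats $\emptyset = F_0 \subsetneq \cdots \subsetneq F_r = I$ has length exactly the rank $r$ (flats strictly increase in rank along a chain), so $c_r \neq 0$ while $c_d = 0$ for $d > r$, and hence $\deg B(m) = r$.

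The main obstacle I anticipate is the careful verification of the bijection in the second paragraph — specifically, matching the condition ``$m_y$ is loopless'' with ``all superlevel sets are flats.'' This requires knowing that the $y$-maximal minor $m_y$ has a loop precisely when some superlevel set $\{i : y(i) \geq c\}$ fails to be a flat, which in turn rests on the greedy-algorithm description of $m_y$ and the correspondence between faces of the matroid polytope and flags of flats implicit in Theorem~\ref{t:Bergman}. I would prove this by observing that $m_y$ is loopless iff every element lies in some $y$-maximum basis, and relate this directly to the flat condition via the triangulation of $\Bc(m)$; the weak-versus-strict distinction (repeated flats in the weak flag corresponding to empty parts $S_i$) then matches the $\binom{n}{d}$ bookkeeping of Proposition~\ref{p:pol-inv} cleanly.
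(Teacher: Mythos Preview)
Your proposal is correct and follows essentially the same route as the paper: unwind~\eqref{e:pol-inv}, observe that a decomposition contributes exactly when the associated face $m_y$ is loopless, invoke Theorem~\ref{t:Bergman} to convert this into the condition that the successive unions form a flag of flats, and then use the $\binom{n}{d}$ bookkeeping from Proposition~\ref{p:pol-inv} to separate strict from weak flags. Your ``main obstacle'' is not really an obstacle---the paper handles it in one line by citing Theorem~\ref{t:Bergman} directly (the braid cone $\Bc_{T_1,\ldots,T_d}$ lies in the Bergman fan iff each partial union is a flat), so you need not re-derive the loop/flat correspondence from scratch.
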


\begin{proof}
We use the inclusion  $\wM  \xhookrightarrow{} \wGP$ to proceed geometrically. Let $\wp = \Pc(m)$ be the matroid polytope of $m$. 
The summand of $B_I(\wp)(n)$ in (\ref{e:pol-inv}) corresponding to a decomposition $I=S_1\sqcup \cdots \sqcup S_n$ equals
\[
(\gamma_{S_1}\otimes\cdots\otimes\gamma_{S_n})\circ \Delta_{S_1,\ldots,S_n} (\wp) = \gamma_{S_1}(\wp_1) \cdots \gamma_{S_n}(\wp_n) = \gamma_I(\wp_{T_1, \cdots, T_d})
\]
where $I = T_1 \sqcup \cdots \sqcup T_d$ is the composition obtained by removing all empty parts, and $\wp_{T_1, \cdots, T_d}$ is the $y$-maximal face of $\wp$ for any  $y \in \Bc_{T_1, \ldots, T_d}$. This term contributes $1$ to the sum if $\wp_{T_1, \cdots, T_d}$ is loopless and $0$ otherwise.

By Theorem \ref{t:Bergman}, 
$\wp_{T_1, \cdots, T_d}$ is loopless if and only if $\Bc_{T_1, \ldots, T_d}$ is in the Bergman fan of $m$, and 
 this is the case if and only if $\emptyset \subsetneq T_1  \subsetneq T_1 \cup T_2 \subsetneq \cdots \subsetneq T_1 \cup \cdots \cup T_d=I$ is a flag of flats. For fixed $n$ and $d$ there are $c_d$ choices for that flag of flats, and ${n \choose d}$ ways to enlarge the resulting composition $I = T_1 \sqcup \cdots \sqcup T_d$ into a decomposition $I = S_1 \sqcup \cdots \sqcup S_n$ by adding empty parts. This results in a weak flag of flats $\emptyset \subseteq S_1  \subseteq S_1 \cup S_2 \subseteq \cdots \subseteq S_1 \cup \cdots \cup S_n=I$ of length $n$. 
 
Since ${n \choose d}$ is a polynomial in $n$ of degree $d$, the degree of $B(m)$ is the largest possible length of a flag of flats of $m$, which is the rank $r$ of $m$.
 \end{proof}




\begin{proposition}\label{prop:Bergmanrecip} \emph{(Bergman polynomial reciprocity.)} 
The Bergman invariant  of a matroid $m$ of rank $r$ satisfies
\[
B(m)(-1) = (-1)^{r} \mu(m)
\]
where $\mu(m)$ is the M\"obius number of $m$.
\end{proposition}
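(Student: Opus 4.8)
The plan is to apply the general reciprocity theorem (Proposition~\ref{p:reciprocity}) to the Bergman character $\gamma$, reducing the evaluation $B(m)(-1)$ to a single value of $\gamma$ on the antipode of the matroid polytope. By~\eqref{e:reciprocity} we have
\[
B(m)(-1) = \gamma_I\bigl(\apode_I(\Pc(m))\bigr).
\]
First I would invoke the antipode formula for $\wGP$ from Theorem~\ref{t:antipode}, which gives $\apode_I(\Pc(m)) = (-1)^{|I|}\sum_{\wq \leq \Pc(m)} (-1)^{\dim \wq}\,\wq$, summing over nonempty faces of the matroid polytope. Applying the linear map $\gamma_I$ term by term, and recalling that each face $\wq = \Pc(m_y)$ is itself a matroid polytope (so $\gamma_I(\wq)$ is $1$ when $m_y$ is loopless and $0$ otherwise), I obtain
\[
B(m)(-1) = (-1)^{|I|}\sum_{\wq \leq \Pc(m),\ \wq \text{ loopless}} (-1)^{\dim \wq}.
\]

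The next step is to translate this alternating sum over loopless faces into an Euler-characteristic computation on the Bergman fan. A face $\wq = \Pc(m_y)$ of the matroid polytope is loopless precisely when its normal cone $\Nc_m(\wq)$ lies in the Bergman fan $\Bc(m)$, and under this correspondence $\dim \Nc_m(\wq) = |I| - \dim \wq$, so $\dim \wq = |I| - \dim \Nc_m(\wq)$. Passing to the Bergman complex $\widetilde{\Bc}(m)$ (intersecting with the hyperplane $\sum_i x_i = 0$ and the unit sphere), the cells are the loopless cones intersected with the sphere, and a cone of dimension $d$ becomes a cell of dimension $d-2$. I would therefore rewrite the signed face-count as a reduced Euler characteristic of $\widetilde{\Bc}(m)$, carefully tracking the dimension shifts and the sign conventions — exactly parallel to the Euler-characteristic bookkeeping already carried out in the proof of Theorem~\ref{t:antipode}, where the ray $\Bc_I$ (the lineality direction $\1$) contributes and the reduced Euler characteristic $\overline{\chi}$ must be used.

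Finally, I would apply the topological description of the Bergman complex from Theorem~\ref{t:topBergman}: $\widetilde{\Bc}(m)$ is homotopy equivalent to a wedge of $(-1)^r \mu(m)$ spheres of dimension $r-2$. The reduced Euler characteristic of such a wedge is $(-1)^{r-2}\cdot (-1)^r \mu(m) = \mu(m)$ (a wedge of $N$ spheres of dimension $r-2$ has $\overline{\chi} = (-1)^{r-2}N$). Combining the resulting value with the accumulated sign factors should collapse everything to $B(m)(-1) = (-1)^r \mu(m)$, as claimed.

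The main obstacle I anticipate is the careful management of the dimension shifts and signs in the Euler-characteristic step: one must correctly relate $\dim \wq$, $\dim \Nc_m(\wq)$, and the dimension of the corresponding cell in $\widetilde{\Bc}(m)$, and handle the reduced-versus-unreduced Euler characteristic together with the lineality ray $\1$ (just as the footnote and trivial-case discussion in the proof of Theorem~\ref{t:antipode} had to). A secondary subtlety is confirming that the rank $r$ of the matroid (hence the dimension $r-2$ of the Bergman spheres) enters the sign exactly as needed; getting the parity of $(-1)^{|I|}$, $(-1)^{\dim\wq}$, and $(-1)^{r-2}$ to conspire into the single clean factor $(-1)^r$ is where an off-by-one error would most likely hide, so I would verify it against the smallest cases (e.g.\ a rank-$1$ matroid, where $\mu(m) = -1$ and $B(m)(-1)$ should be $-1$).
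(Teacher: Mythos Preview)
Your proposal is correct and follows essentially the same route as the paper: apply the reciprocity formula \eqref{e:reciprocity} together with the antipode formula of Theorem~\ref{t:antipode}, restrict the resulting alternating sum to the loopless faces (those whose normal cones lie in the Bergman fan), recognize this as the reduced Euler characteristic of the Bergman complex $\widetilde{\Bc}(m)$, and finish with Theorem~\ref{t:topBergman}. Your caution about the sign bookkeeping is well placed---that is exactly where the computation is delicate, and the paper handles it in the same terse way you outline.
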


\begin{proof}
Using Proposition \ref{p:reciprocity} and Theorem \ref{t:antipode} we get 
\begin{eqnarray*}
B(m)(-1) &=& \gamma_I(\apode_I(m)) = \sum_{n \textrm{ face of } m} (-1)^{|I| - \dim n} \gamma(n) \\
&=& \sum_{n \textrm{ face of } m \atop n \textrm{ loopless }} (-1)^{|I| - \dim n} 
= \sum_{F=\Nc_m(n) \atop \textrm{ face of } \Bc(m)} (-1)^{\dim F} = \overline{\chi}(\widetilde{\Bc}(m)), 
\end{eqnarray*}
the reduced Euler characteristic of the Bergman complex of $m$. The result now follows from Theorem \ref{t:topBergman}.
\end{proof}

\bigskip\bigskip
\begin{LARGE}
\noindent \textsf{PART 4: Hypergraphs and hypergraphic polytopes}
\end{LARGE}

\section{$\rHGP$: Minkowski sums of simplices, hypergraphs, Rota's question} \label{s:HGP}

In this section we focus on a large family of generalized permutahedra which we call \emph{hypergraphic polytopes} or \emph{Minkowski sums of simplices}. The polytopes in this family conserve the Hopf algebraic structure of $\rGP$ while featuring additional combinatorial structure, which makes them very useful for combinatorial applications, as we will see in Sections \ref{s:HG}, \ref{s:SC}, \ref{s:BS}, \ref{s:W},  \ref{s:Pirevisited}, and \ref{s:Frevisited}. In fact, $\rHGP$ is a useful source of old and new Hopf monoids: 
we start with some important subfamilies of generalized permutahedra --  namely hypergraphic polytopes, graphic zonotopes, simplicial complex polytopes, nestohedra, graph associahedra, permutahedra, and associahedra -- 
and we let them give rise to several interesting (and mostly new)  Hopf monoids of a more combinatorial nature, denoted  $\rHG, \rSHG, \rG, \rSC, \rBS, \rWBS, \rW, \rPi, \rF$, which consist of hypergraphs, simple hypergraphs, graphs, simplicial complexes, building sets, graphical building sets, simple graphs, set partitions, and paths, respectively. As we will see in the upcoming sections, these Hopf monoids are related as follows:\\ 

\[
\xymatrix{
\rPi \ar[rd] \xyinc[rd] &  & \rG^{cop} \ar[r] \xyinc[r] & \rHG^{cop} \ar@{->>}[rd]^s  \ar[r]^{\cong} & \rHGP   \ar[r] \xyinc[r]  \ar@{->>}[d]^{\supp} & \rGP    \\
& \rW^{cop}  \ar[r]^{\cong} & \rWBS^{cop}\ar[r]  \xyinc[r] &  \rBS^{cop} \ar[r] \xyinc[r]  &  \rSHG^{cop}  \\
\rF \ar[ur] \xyinc[ur] & &  & \rSC^{cop} \ar[ru] \xyinc[ru]  
}
\]

%
%


\subsection{{Minkowski sums of simplices}}\label{ss:minkowski}

%

We briefly mentioned in earlier sections that permutahedra, Loday's associahedra, and graphic zonotopes may be expressed as Minkowski sums of simplices. We now place these statements into a broader context, following Postnikov \cite{postnikov09}. 

Recall that the \emph{Minkowski sum} of two polytopes $P$ and $Q\subseteq \Rb I$
is
\[
P+Q := \{p+q \mid p \in P,\ q \in Q\} \subset  \Rb I
\]
Normal fans of polytopes behave well under scaling and Minkowski sums: the polytopes $P$ and $\lambda P$ have the same normal fan for $\lambda >0$, while the normal fan of $P+Q$ (and hence of $\lambda P+ \mu Q$ for $\lambda, \mu >0$) is the coarsest common refinement of the normal fans of $P$ and $Q$~\cite{sturmfels02}. 
It follows that
 if $P$ and $Q$ are generalized permutahedra, then so is $\lambda P+ \mu Q$ for $\lambda, \mu \geq 0$. 

Recalling from Theorem \ref{t:submod-gp} that every generalized permutahedron $\wp$ is associated to a unique  submodular function $z$ such that $\wp = \Pc(z)$, the previous statement has the following counterpart. 
If $z$ and $z'$ are submodular functions, then so is $\lambda z+\mu z'$ for $\lambda, \mu \geq 0$, and
\begin{equation} \label{eq:z+z'}
 \lambda \Pc(z) + \mu \Pc(z') = \Pc(\lambda z + \mu z').
\end{equation}


Let $\Delta_I = \text{conv}\{e_i\, :\,  i \in I\}$
be the \emph{standard simplex} in $\Rb I$. Let 
\[
\Delta_J = \text{conv}\{e_i \, : \, i \in J\} \qquad \textrm{ for } J \subseteq I
\]
be the faces of $\Delta_I$; note that the face $\Delta_J$ is itself the standard simplex in $\Rb J$. The following proposition is a consequence of (\ref{eq:z+z'}). 

\begin{proposition} \label{p:minkowski} (\cite[Proposition~6.3]{postnikov09})
If $y: 2^I \to \Rb_{\geq 0}$ is a non-negative Boolean function then the Minkowski sum $\sum_{J \subseteq I} y(J) \Delta_J$ of dilations of faces of the standard simplex in $\Rb I$ is a generalized permutohedron. We have
\begin{equation} \label{e:y-positive}
\sum_{J \subseteq I} y(J)\Delta_J = \Pc(z),
\end{equation}
where $z$ is the submodular function given by 
\[
z(J) = \sum_{K \cap J \neq \emptyset} y(K) \quad \textrm{ for each } J \subseteq I.
\]
Furthermore, if a polytope can be written in the form (\ref{e:y-positive}), then there is a unique choice of $y$ that makes this equation hold.\footnote{In fact, every generalized permutahedron can be expressed uniquely as a \emph{signed Minkowski sum} $\sum_{J \subseteq I} y(J)\Delta_J$ where $y(J)$ is allowed to be negative, but the definitions become more subtle. We will not pursue this point of view here; for more information, see \cite[Proposition~2.3]{abd10}.}
\end{proposition}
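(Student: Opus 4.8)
\textbf{Proof strategy for Proposition \ref{p:minkowski}.}

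The plan is to prove this by reducing it to the correspondence between generalized permutahedra and submodular functions established in Theorem \ref{t:submod-gp}, which guarantees that the submodular function $z$ representing a given generalized permutahedron $\wp = \Pc(z)$ is unique. The key computational step is to verify the identity
\[
\sum_{J \subseteq I} y(J)\Delta_J = \Pc(z), \qquad z(J) = \sum_{K \cap J \neq \emptyset} y(K),
\]
after which the two assertions (that the Minkowski sum is a generalized permutahedron, and that $y$ is uniquely determined) follow almost formally.

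First I would establish the base case and then leverage (\ref{eq:z+z'}). The single scaled simplex $y(K)\Delta_K$ is itself a generalized permutahedron: indeed $\Delta_K = \Pc(z_K)$ where $z_K(J) = 1$ if $J \cap K \neq \emptyset$ and $z_K(J) = 0$ otherwise, which is readily checked to be submodular (it is, up to scaling, the incidence-type function attached to $K$). One verifies directly from the definition (\ref{e:submod-gp}) that $\Pc(z_K)$ is precisely the convex hull $\mathrm{conv}\{e_i : i \in K\}$: the constraint $\sum_{i \in I} x_i = z_K(I) = 1$ together with $x(J) \leq z_K(J)$ forces $x$ to be a convex combination of the $e_i$ with $i \in K$. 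Then, applying (\ref{eq:z+z'}) repeatedly (or by induction on the number of nonzero values of $y$), the Minkowski sum $\sum_{J} y(J)\Delta_J$ equals $\Pc\bigl(\sum_J y(J) z_J\bigr)$, and the sum of submodular functions $\sum_J y(J) z_J$ is again submodular since each $y(J) \geq 0$. Computing this combined submodular function at a set $J \subseteq I$ gives $\sum_K y(K) z_K(J) = \sum_{K \cap J \neq \emptyset} y(K) = z(J)$, which is exactly the claimed formula. This simultaneously shows that the Minkowski sum is a generalized permutahedron and identifies its submodular function.

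For uniqueness of $y$, I would argue that the linear map $y \mapsto z$ given by $z(J) = \sum_{K \cap J \neq \emptyset} y(K)$ is invertible. Since $\{z(J)\}_{J \subseteq I}$ is uniquely determined by $\wp$ via Theorem \ref{t:submod-gp}, it suffices to show that $y$ is recoverable from $z$. The cleanest route is to pass to complements: writing $w(J) = z(I) - z(I \setminus J) = \sum_{K \subseteq J} y(K)$ (using that $K \cap (I \setminus J)^c = K \cap J$ and $K \not\subseteq J$ contributes to $z(I \setminus J)$), one sees that $w$ is the zeta-transform (subset-sum) of $y$, which is inverted by Möbius inversion on the Boolean lattice $2^I$: $y(J) = \sum_{K \subseteq J} (-1)^{|J| - |K|} w(K)$. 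Thus $y$ is a fixed linear function of $z$, hence unique. The main obstacle I anticipate is purely bookkeeping: getting the relation between $z$ and the subset-sum transform $w$ exactly right, including the role of the top value $z(I)$, so that the Möbius inversion is applied to the correct transform. The substantive mathematics is entirely carried by Theorem \ref{t:submod-gp} and equation (\ref{eq:z+z'}); no new geometry is required.
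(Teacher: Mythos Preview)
Your proposal is correct and matches the paper's approach: the paper simply states that Proposition~\ref{p:minkowski} ``is a consequence of~(\ref{eq:z+z'})'' without further detail, and your argument is exactly the natural unpacking of that remark---check $\Delta_K = \Pc(z_K)$ for the indicator-type function $z_K$, apply~(\ref{eq:z+z'}) to sum, and invert the linear map $y \mapsto z$ via M\"obius inversion on $2^I$ (the same inversion the paper carries out later in the proof of Proposition~\ref{p:relation}).
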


\begin{definition} A generalized permutahedron $\wp$ is \emph{$y$-positive} if it is given by (\ref{e:y-positive}) for a non-negative Boolean function  $y: 2^I \to \Rb_{\geq 0}$. 
If, additionally, $y(J)$ is an integer for all $J \subseteq I$, we call $\wp$ a \emph{Minkowski sum of simplices} or a \emph{hypergraphic polytope}.
\end{definition}

We should say a word about this nomenclature. A \emph{hypergraph} $\H$ on $I$ is a collection of (possibly repeated) subsets of $I$, called the \emph{multiedges} of $\H$. Our convention will be that the empty set appears exactly once in $\H$. Then there is a natural bijection between  hypergraphs and hypergraphic polytopes: to a hypergraph $\H$ on $I$ containing $y(J)$ copies of the subset $J \subseteq I$, we associate the hypergraphic polytope $\Delta_\H = \sum_{H \in \H} \Delta_H  = \sum_{J \subseteq I} y(J) \Delta_J$.

\begin{remark}\label{r:y-positive}
We saw in Theorem \ref{t:submod-gp} that there is a one-to-one correspondence between generalized permutahedra in $\Rb^n$ and submodular functions, which naturally form a polyhedral cone in $\Rb^{2^n-1}$. The $y$-positive generalized permutahedra form a polyhedral subcone of this submodular cone, which is full-dimensional since it is parameterized by $2^n-1$ independent parameters. The inequalities defining this subcone will be given  in Proposition \ref{p:relation}.2. It would be interesting to compute the probability that a  generalized permutahedron in $\Rb^n$ is $y$-positive under a suitable probability measure, and to describe how that probability varies with $n$. 
\end{remark}

Many polytopes of interest are hypergraphic, although that is not always apparent at the outset. For example, graphic zonotopes, permutahedra, and associahedra turn out to be hypergraphic, but this is not clear from their definitions. We will see many other examples in the upcoming sections.



\subsection{{Relations, hypergraphic polytopes, and Rota's question.}}\label{ss:relations}

A relation $R \subseteq I \times J$ gives rise to a function $f_R : 2^I \rightarrow \mathbb{N}$ defined by
\[
f_R(A) = |R(A)| = |\{b \in B  \, | \, (a,b) \in R \textrm{ for some } a \in A\}| \qquad \textrm{ for } A \subseteq I.
\]
Let us call such a function \emph{relational}. One may verify that every relational function is submodular, and 
Rota \cite[Problem 2.4.1(d)]{rota09} asked for a characterization of these \emph{relational submodular functions}:

\begin{quote}
There is an interesting open question which ought to have been worked out, and that I ought to have worked out, but I haven't:  Characterize those submodular set functions that come from a relation in this way. \cite[Exercise 18.1]{rota18.315}
\end{quote}

It seems clear that Rota knew how to do this, and it is quite possible that others have carried out this computation, but we have not been able to find a precise statement in the literature. We offer the following characterizations.

\begin{proposition}\label{p:relation}
A submodular function $f: 2^I \rightarrow \Rb$ is relational if and only if either of the following conditions hold:
\begin{enumerate}
\item 
Its associated polytope $\Pc(f)$ is hypergraphic.
\item
$f(\emptyset) = 0$ and for all $A \subseteq I$ we have $f(A) \in \Zb$ and
\[
\sum_{K \supseteq A} (-1)^{|K-A|}f(K) \leq 0.
\]
\end{enumerate}
\end{proposition}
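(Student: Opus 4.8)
The plan is to establish the equivalence of the three conditions (relational, hypergraphic, and the inequality criterion) by first pinning down the dictionary between a relation $R \subseteq I \times J$ and the Minkowski-sum data of Proposition \ref{p:minkowski}. Recall that $f_R(A) = |R(A)|$ counts the elements of $J$ hit by $A$. The key observation is that each column of the relation contributes a simplex: for each $b \in J$, let $N_b = \{a \in I \, : \, (a,b) \in R\}$ be the set of rows related to $b$. Then $b$ contributes $1$ to $f_R(A)$ precisely when $N_b \cap A \neq \emptyset$, so I would write
\[
f_R(A) = \sum_{b \in J} [\, N_b \cap A \neq \emptyset \,] = \sum_{\emptyset \neq K \subseteq I} y(K) \cdot [\, K \cap A \neq \emptyset\,],
\]
where $y(K)$ is the number of columns $b$ with $N_b = K$. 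Comparing with the submodular function in (\ref{e:y-positive}), this is exactly the $z$ associated to the hypergraph with $y(K)$ copies of $K$. This immediately gives $\mathbf{1 \Rightarrow}$ relational (build a relation by taking $y(K)$ columns with neighborhood $K$) and relational $\mathbf{\Rightarrow 1}$ (the computation above exhibits $\Pc(f_R)$ as a hypergraphic polytope, after checking the empty columns $N_b = \emptyset$ contribute $0$ and can be discarded). So the content of part 1 is essentially the unpacking of Proposition \ref{p:minkowski}.

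\textbf{The Möbius inversion step.} The heart of the matter is condition 2, and the natural tool is Möbius inversion on the Boolean lattice $2^I$. By Proposition \ref{p:minkowski}, $f = \Pc(f)$'s submodular function has the form $f(A) = \sum_{K \cap A \neq \emptyset} y(K)$ for a \emph{unique} Boolean function $y$, and $f$ is hypergraphic exactly when this unique $y$ is non-negative and integer-valued. So I need to solve for $y(K)$ in terms of $f$ and check that the non-negativity/integrality of $y$ is equivalent to the stated inequality. The plan is to rewrite $f(A) = \sum_{K} y(K) - \sum_{K \subseteq I - A} y(K) = f(I) - g(I-A)$ where $g(B) := \sum_{K \subseteq B} y(K)$. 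Since $g$ is the zeta-transform of $y$ over the Boolean lattice, standard Möbius inversion gives $y(K) = \sum_{L \subseteq K} (-1)^{|K - L|} g(L)$. Substituting $g(L) = f(I) - f(I - L)$ and simplifying (the $f(I)$ terms cancel for $K \neq \emptyset$ since $\sum_{L \subseteq K}(-1)^{|K-L|} = 0$), I expect to obtain
\[
y(K) = -\sum_{L \subseteq K} (-1)^{|K-L|} f(I - L) = -\sum_{M \supseteq I - K} (-1)^{|M - (I-K)|} f(M),
\]
after the change of variables $M = I - L$. Setting $A = I - K$, this is precisely $y(I - A) = -\sum_{M \supseteq A}(-1)^{|M - A|} f(M)$, so $y \geq 0$ becomes exactly the inequality $\sum_{K \supseteq A}(-1)^{|K - A|} f(K) \leq 0$ of condition 2. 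Integrality of $y$ is equivalent to integrality of $f$ on all subsets (the $y(K)$ are integer combinations of $f$-values, and conversely). This closes the loop $\mathbf{1 \Leftrightarrow 2}$.

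\textbf{Anticipated obstacle.} The main obstacle is bookkeeping the sign conventions and the role of the empty set carefully, since the correspondence in Proposition \ref{p:minkowski} runs through $z(J) = \sum_{K \cap J \neq \emptyset} y(K)$ and one must confirm that the \emph{unique} $y$ extracted from a general submodular $f$ by Möbius inversion is the same $y$ whose non-negativity governs hypergraphicity. I would take care to verify the boundary cases $A = \emptyset$ (which should recover $f(\emptyset) = 0$, automatically satisfied, rather than imposing a constraint) and $A = I$ (giving $y(\emptyset)$, which is irrelevant by our convention that $\emptyset$ appears once). A secondary subtlety is that a \emph{relation} allows repeated and empty columns freely, so going from a non-negative integer $y$ back to an actual relation $R$ is unobstructed; I would just note explicitly that non-negative integrality of $y$ is exactly what is needed to realize $y$ as column-multiplicities, completing relational $\Leftrightarrow 1$. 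Once the inversion formula is pinned down with correct signs, the rest is routine.
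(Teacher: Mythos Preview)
Your proposal is correct and follows essentially the same route as the paper: you identify columns of the relation with hyperedges (the paper's $h_j$, your $N_b$), invoke Proposition~\ref{p:minkowski} to get the equivalence with hypergraphicity, and then perform the same M\"obius inversion on the Boolean lattice via the complement substitution $A \leftrightarrow I-K$ to extract $y(I-A) = -\sum_{K \supseteq A}(-1)^{|K-A|}f(K)$. The only cosmetic difference is that the paper writes the intermediate step as $f_R(A) = |J| - \sum_{K \subseteq I-A} y_R(K)$ rather than introducing your auxiliary $g$, but the inversion and the resulting formula are identical.
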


\begin{proof}
1. A relation $R \subseteq I \times J$ naturally gives rise to a hypergraph $\H_R$ on $I$ whose hyperedges $h_j = \{i \, : \, (i,j) \in R\}$ for $j \in J$ are given by the columns of $R$. Clearly any hypergraph on $I$ arises in this way from a relation. If $y_R(K)$ is the multiplicity of hyperedge $K$ in $\H_R$ then 
%
\begin{equation}\label{e:ytof}
f_R(A) = \sum_{K \cap A \neq \emptyset} y_R(K)
\end{equation}
for all $A \subseteq I$. Proposition \ref{p:minkowski} then gives
\[
\Pc(f_R) = \sum_{J \subseteq I} y_R(K)\Delta_K.
\]
which is a Minkowski sum of simplices. Conversely, given such a Minkowski sum, we can use its coefficients as the multiplicities of a hypergraph which gives rise to the desired relation.

%

\medskip

\noindent 2. The submodular function of a relation $R$ clearly satisfies $f_R(\emptyset) = 0$. We rewrite (\ref{e:ytof}) as
$
f_R(A) =  |J| - \sum_{K \subseteq I-A} y_R(K)
$
and use the inclusion-exclusion formula to obtain
\[
y_R(B) = \sum_{K \subseteq B} (-1)^{|B-K|}(|J| - f_R(I-K)) = - \sum_{K \subseteq B} (-1)^{|B-K|}f_R(I-K)
\]
for $B \neq \emptyset$. 
Therefore
\begin{equation}\label{e:ftoy}
y_R(I-A) = - \sum_{K \supseteq A} (-1)^{|K-A|} f_R(K) \geq 0
\end{equation}
Conversely, for any integral function $f$ satisfying the given inequalities, (\ref{e:ftoy}) gives us a non-negative function $y: 2^I \rightarrow \Zb$. We then construct the desired relation $R \subseteq I \times J$ as in part 1: for each $K \subseteq I$ we include $y(K)$ elements $j$ in $J$ such that $h_j=K$.
%
\end{proof}


We wish to study these objects further, following the yoga of Joni and Rota's paper \cite{joni82:_coalg}: we will  describe their Hopf algebraic structure in Sections \ref{ss:HopfHGP} and \ref{s:HG}. This will turn out to be a crucial ingredient for the rest of the paper. 


\subsection{The Hopf monoid of hypergraphic polytopes} \label{ss:HopfHGP}

\begin{proposition}\label{p:HGP}
The hypergraphic polytopes form a submonoid $\rHGP$ of the Hopf monoid of generalized permutahedra $\rGP$.
\end{proposition}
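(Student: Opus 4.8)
The plan is to show that the family of hypergraphic polytopes is closed under the product and coproduct of $\rGP$, and that it contains the unit; since $\rHGP$ inherits the associativity, compatibility, and other axioms from the ambient Hopf monoid $\rGP$, closure is all that is required to conclude that $\rHGP$ is a submonoid. First I would check the unit and the product. The unit of $\rGP[\emptyset]$ is the single point, which is the empty Minkowski sum of simplices, hence trivially hypergraphic. For the product, suppose $\wp \in \rHGP[S]$ and $\wq \in \rHGP[T]$ with $\wp = \sum_{J \subseteq S} y(J)\Delta_J$ and $\wq = \sum_{K \subseteq T} y'(K)\Delta_K$, for non-negative integer Boolean functions $y, y'$. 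The product in $\rGP$ is the polytopal product $\wp \times \wq \subseteq \Rb S \times \Rb T = \Rb I$. Since $\Delta_J \subseteq \Rb S$ and $\Delta_K \subseteq \Rb T$ sit in complementary coordinate subspaces, the Cartesian product equals the Minkowski sum $\wp \times \wq = \sum_{J \subseteq S} y(J)\Delta_J + \sum_{K \subseteq T} y'(K)\Delta_K$ inside $\Rb I$, where now each $\Delta_J$ and $\Delta_K$ is viewed as a face $\Delta_{J}$, $\Delta_{K}$ of $\Delta_I$. This is again a non-negative integer combination of faces of $\Delta_I$, so $\wp \times \wq$ is hypergraphic.

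The real content is closure under the coproduct; that is, showing that each restriction $\wp|_S$ and contraction $\wp/_S$ of a hypergraphic polytope $\wp \in \rHGP[I]$ is again hypergraphic. I would do this by computing the $\1_S$-maximal face of a Minkowski sum directly, using the standard fact (already invoked in the proof of Lemma \ref{lemma:assocfaces}) that $(P+Q)_v = P_v + Q_v$. Writing $\wp = \sum_{J \subseteq I} y(J) \Delta_J$, we have
\[
\wp_{S,T} = (\wp)_{\1_S} = \sum_{J \subseteq I} y(J) \, (\Delta_J)_{\1_S}.
\]
Now $(\Delta_J)_{\1_S}$ is computed easily: if $J \cap S \neq \emptyset$, then $\1_S$ is maximized exactly on the face $\Delta_{J \cap S} \subseteq \Rb S$; if $J \cap S = \emptyset$, then $\1_S$ is constant (zero) on $\Delta_J \subseteq \Rb T$, so $(\Delta_J)_{\1_S} = \Delta_J$. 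Splitting the sum according to whether $J$ meets $S$ gives
\[
\wp_{S,T} = \Bigl(\sum_{J \, : \, J \cap S \neq \emptyset} y(J)\, \Delta_{J \cap S}\Bigr) + \Bigl(\sum_{J \subseteq T} y(J)\, \Delta_{J}\Bigr),
\]
where the first summand lives in $\Rb S$ and the second in $\Rb T$. By Proposition \ref{p:face}, this factorization as $\wp_{S,T} = \wp|_S \times \wp/_S$ forces
\[
\wp|_S = \sum_{J \, : \, J \cap S \neq \emptyset} y(J)\, \Delta_{J \cap S}, \qquad \wp/_S = \sum_{J \subseteq T} y(J)\, \Delta_{J}.
\]
Both are non-negative integer Minkowski sums of simplices (after collecting terms with equal index $J \cap S$ in the first sum), hence both $\wp|_S$ and $\wp/_S$ are hypergraphic.

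The main obstacle—though it is more a matter of care than of difficulty—is verifying that the computation of $(\Delta_J)_{\1_S}$ and the subsequent splitting genuinely produce the restriction and contraction as defined in Proposition \ref{p:face}, rather than merely some factorization. Here I would lean on the uniqueness in Proposition \ref{p:face}: the face $\wp_{S,T}$ factors as a product $\wp|_S \times \wp/_S$ of generalized permutahedra in $\Rb S$ and $\Rb T$, and since the two summands above already live in the complementary subspaces $\Rb S$ and $\Rb T$ respectively, they must coincide with $\wp|_S$ and $\wp/_S$. (Strictly, $\wp|_S$ agrees with the first summand up to a translation, exactly as in Lemma \ref{l:rbPi} and Lemma \ref{lemma:assocfaces}; this does not affect hypergraphicity, since a translate of a Minkowski sum of simplices by an appropriate lattice vector is again such a sum, and in any case the normal-fan level statements used elsewhere are unaffected.) This combinatorial description of restriction and contraction on the level of hypergraphs—$y \mapsto (J \mapsto \sum_{J' \cap S \neq \emptyset, \, J' \cap S = J} y(J'))$ and $y \mapsto y|_{2^T}$—is precisely the structure that will be unpacked into the Hopf monoid of hypergraphs $\rHG$ in Section \ref{s:HG}, so beyond establishing the submonoid claim I would note that these formulas already foreshadow the isomorphism $\rHG^{cop} \cong \rHGP$ recorded in the diagram above.
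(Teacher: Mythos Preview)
Your proof is correct and follows essentially the same approach as the paper: closure under products via $\wp \times \wq = \wp + \wq$ in complementary coordinate subspaces, and closure under coproducts via the face-of-a-Minkowski-sum identity $(P+Q)_v = P_v + Q_v$ together with the explicit computation of $(\Delta_J)_{\1_S}$. One small point: your parenthetical worry about a translation is unnecessary here---the two summands you obtain live exactly in $\Rb S$ and $\Rb T$, so by the uniqueness in Proposition~\ref{p:face} they \emph{are} $\wp|_S$ and $\wp/_S$ on the nose, not merely up to translation.
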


\begin{proof}
Let $I = S \sqcup T$ be a decomposition. To prove $\rHGP$ is a submonoid of $\rGP$ we need to prove two things:

\noindent $\bullet$ If polytopes $\wp$ and $\wq$ are hypergraphic in $\Rb S$ and $\Rb T$, then $\wp \times \wq$ is hypergraphic in $\Rb I$.

\noindent $\bullet$ If $\wp$ is hypergraphic in $\Rb I$,
then $\wp|_S$
and $\wp/_S$ are hypergraphic in $\Rb S$ and $\Rb T$, respectively.

For the first statement, if $\wp = \sum_{J \subseteq S} z_1(J) \Delta_J \subset \Rb S$ and  $\wq = \sum_{K \subseteq T} z_2(K) \Delta_K \subset \Rb T$ are Minkowski sums of simplices, then 
\begin{equation}\label{eq:y-possum}
\wp \times \wq = \wp + \wq =  \sum_{J \subseteq S} z_1(J) \Delta_J +  \sum_{K \subseteq T} z_2(K) \Delta_K \subset \Rb I
\end{equation}
is also a Minkowski sum of simplices.

For the second one, we use that $(P+Q)_v = P_v+Q_v$ for any polytopes $P, Q \subseteq\Rb I$ and any linear functional $v \in \Rb^I$. Now, the maximal face of the simplex $\Delta_J$ in direction $\1_S$ is
\[
(\Delta_J)_{S,T}=
\begin{cases}
\Delta_{J \cap S} & \textrm{ if } J \cap S \neq \emptyset \\
\Delta_{J} & \textrm{ if } J \cap S = \emptyset.
\end{cases}
\]
Therefore if
$\wp = \sum_{J \subseteq I} y(J)\Delta_J \subset \Rb I $ is a hypergraphic polytope, then its 
$\1_S$-maximal face is $\wp_{S,T} = \wp|_S + \wp/_S$ where
\begin{equation}\label{eq:y-pos}
\wp|_S = \sum_{J \cap S \neq \emptyset} y(J)\Delta_{J \cap S} \subset \Rb S, \qquad 
\wp/_S = \sum_{J \cap S = \emptyset} y(J)\Delta_J \subset \Rb T.
\end{equation}
Therefore $\wp|_S$ and $\wp/_S$ are hypergraphic, as desired.
\end{proof}

Since $\rHGP$ is a Hopf submonoid of $\rGP$, Theorem \ref{t:antipode} gives us a formula for the antipode of $\wHGP$. We write it down in Theorem \ref{t:antipodeHG} in terms of hypergraphs.

\section{$\rHG$: Hypergraphs}\label{s:HG}

Recall that a \emph{hypergraph}  with vertex set $I$ is a collection $\H$ of (possibly repeated) subsets of $I$. We will use the convention that there is always a single copy of $\emptyset$ in $\H$.\footnote{This is the opposite of the usual convention that $\emptyset \notin \H$.} We can think of each subset $H$ in $\H$ as a \emph{multiedge} which can now connect any number of vertices.

\subsection{{The Hopf monoid of hypergraphs}}\label{ss:HG}

Let $HG[I]$ be the set of all hypergraphs with vertex set $I$. Clearly $\rHG$ is a species, which we now turn into a Hopf monoid. 

Let $I = S \sqcup T$ be a decomposition. 

\noindent $\bullet$ For $\H_1 \in HG[S]$ and $\H_2 \in HG[T]$, define their product $\H_1 \cdot \H_2 \in HG[I]$ to be the disjoint union $\H_1 \sqcup \H_2$ as a hypergraph on $I$. 

\noindent $\bullet$ The coproduct of $\H \in HG[I]$ is $(\H|_S, \H/_S)$, where the restriction and contraction of $\H$ with respect to $S$ are the multisets
\begin{eqnarray*}
\H|_S & := &  \{H :\,  \, H \in \H, \,  H \subseteq S\} \\
\H/_S & := & \{H \cap T \, : \, H \in \H, \, H \nsubseteq S\} \cup \{\emptyset\}. 
\end{eqnarray*}
\noindent Each multiedge $H_S$ of $\H|_S$ has the same multiplicity that it had in $\H$, while the multiplicity of a non-empty multiedge $H_T$ of $\H/_S$ is the sum of the multiplicities of the edges $H \in \H$ such that $H \cap T = H_T$. 

\noindent The Hopf monoid axioms are easily verified.

\begin{example}
For the hypergraph $\H = \{\emptyset, 1, 2, 3, 12, 23, 123\}$ on $I=[3]$, we have 
\begin{eqnarray*}
\H|_{13} = \{\emptyset, 1, 3\},&& \qquad \H/_{13} = \{\emptyset, 2, 2, 2, 2\}, \\
\H|_{2} = \{\emptyset, 2\},&& \qquad \H/_{2} = \{\emptyset, 1, 1, 3, 3, 13\}.
\end{eqnarray*}
We  omit the brackets from the individual multiedges in $\H$ for clarity. 
\end{example}

\subsection{{Hypergraphs as a submonoid of generalized permutahedra}}\label{ss:HGPtoHG}

Recall that the \emph{hypergraphic polytope} of a hypergraph $\H$ on $I$ is the Minkowski sum
\[
\Delta_\H = \sum_{H \in \H} \Delta_H
\]
where $\Delta_H$ is the standard simplex in $\Rb H \subseteq \Rb I$.

\begin{example}\label{ex:123}
The hypergraphic polytope for the hypergraph $\H = \{\emptyset, 1, 2, 3, 12, 23, 123\}$ is $\Delta_\H = \Delta_1 + \Delta_2 + \Delta_3 + \Delta_{12} + \Delta_{23} + \Delta_{123}$, as shown in Figure \ref{f:hypergraphic}. 
\end{example}

\begin{figure}[h]
\centering
\includegraphics[scale=.45]{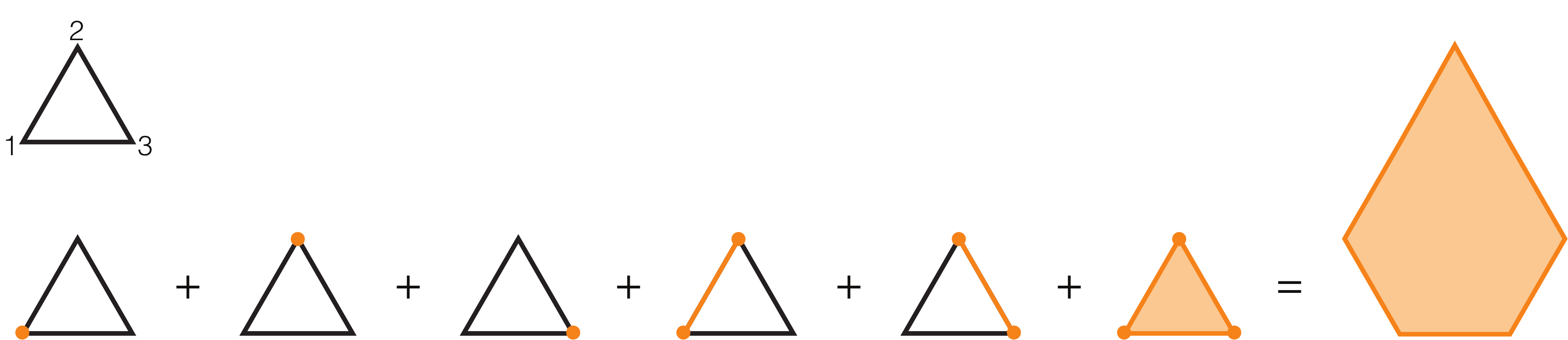} 
\caption{The hypergraphic polytope of the hypergraph $\H = \{\emptyset, 1, 2, 3, 12, 23, 123\}$.}
\label{f:hypergraphic}
\end{figure}


\noindent Let $\rHG^{cop}$ be co-opposite to the Hopf monoid  of hypergraphs $\rHG$, as defined in Section \ref{ss:hopf-set}.

\begin{proposition}\label{p:HGPtoHG}
The map $\H \mapsto \Delta_H$ gives an isomorphism $\rHG^\textrm{cop}  \map{\cong}  \rHGP$ between $\rHG^\textrm{cop}$ and the Hopf monoid of hypergraphic polytopes $\rHGP$.
\end{proposition}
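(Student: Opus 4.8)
The plan is to verify that the bijection $\H \mapsto \Delta_\H$ between hypergraphs on $I$ and hypergraphic polytopes in $\Rb I$, established in Proposition \ref{p:minkowski}, is in fact a morphism of Hopf monoids between $\rHG^{\mathrm{cop}}$ and $\rHGP$. Since Proposition \ref{p:minkowski} already guarantees that this map is a bijection on each $I$ (the coefficient function $y$ is unique), it remains only to check that it respects the product and the (co-opposite) coproduct, together with the naturality axiom that is immediate from the relabeling action.

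First I would treat the product. Fix a decomposition $I = S \sqcup T$. For hypergraphs $\H_1$ on $S$ and $\H_2$ on $T$, the product in $\rHG$ is the disjoint union $\H_1 \sqcup \H_2$. Since every multiedge of $\H_1$ lies in $\Rb S$ and every multiedge of $\H_2$ lies in $\Rb T$, the associated polytope is
\[
\Delta_{\H_1 \sqcup \H_2} = \sum_{H \in \H_1} \Delta_H + \sum_{H \in \H_2} \Delta_H = \Delta_{\H_1} + \Delta_{\H_2} = \Delta_{\H_1} \times \Delta_{\H_2},
\]
which matches exactly equation (\ref{eq:y-possum}) from the proof of Proposition \ref{p:HGP}. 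So the map preserves products, and there is no co-opposite subtlety here since products are unaffected by passing to $\rHG^{\mathrm{cop}}$.

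The main point is the coproduct, and this is where the co-opposite appears. In $\rHGP$, the coproduct sends $\Delta_\H$ to $(\Delta_\H|_S,\, \Delta_\H/_S)$, and equation (\ref{eq:y-pos}) in the proof of Proposition \ref{p:HGP} computes these two factors explicitly as Minkowski sums: $\Delta_\H|_S = \sum_{H \cap S \neq \emptyset} \Delta_{H \cap S}$ and $\Delta_\H/_S = \sum_{H \cap S = \emptyset} \Delta_H$. I would now compare these against the polytopes of the combinatorial restriction and contraction of the hypergraph $\H$. The key observation is that the geometric operation $\Delta_\H|_S$ corresponds to the combinatorial \emph{contraction} $\H/_S$, while $\Delta_\H/_S$ corresponds to the combinatorial \emph{restriction} $\H|_S$ --- precisely the swap encoded by passing to $\rHG^{\mathrm{cop}}$. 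Concretely, the summands $\Delta_{H \cap S}$ for $H$ with $H \cap S \neq \emptyset$ are exactly the simplices indexed by the nonempty multiedges of $\H/_S$ (matching multiplicities, since $H \mapsto H \cap S$ groups edges by intersection), giving $\Delta_\H|_S = \Delta_{\H/_S}$; and the summands $\Delta_H$ for $H \subseteq T$ (equivalently $H \cap S = \emptyset$, viewing $H$ as living in $\Rb T$) are exactly the simplices of the restriction $\H|_T$ in the hypergraph on $T$, giving $\Delta_\H/_S = \Delta_{\H|_T}$. One must be slightly careful with the empty-set convention: the single copy of $\emptyset$ contributes the point $\Delta_\emptyset$, which is the identity for Minkowski sum and thus harmless. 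Matching this against Definition \ref{d:morhopf} of a morphism for the co-opposite structure $\rHG^{\mathrm{cop}}$, where $\Delta_{S,T}$ outputs $(z/_T, z|_T)$-type data, confirms the map is a coproduct morphism. Since it is a bijective morphism of Hopf monoids, it is an isomorphism, as desired. The main obstacle is purely bookkeeping: tracking which geometric factor (restriction vs.\ contraction of the polytope) corresponds to which combinatorial operation on $\H$, and confirming the multiplicities agree under the grouping $H \mapsto H \cap T$; there is no hard inequality or topology here, only careful identification of indexing sets against equations (\ref{eq:y-possum}) and (\ref{eq:y-pos}).
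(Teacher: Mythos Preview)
Your approach is essentially identical to the paper's: bijectivity from Proposition~\ref{p:minkowski}, product preservation from~(\ref{eq:y-possum}), and coproduct reversal from~(\ref{eq:y-pos}). However, you have an index error in the coproduct check. You write $(\Delta_\H)|_S = \Delta_{\H/_S}$, but $\H/_S$ is a hypergraph on $T$ (its multiedges are $H\cap T$ for $H\nsubseteq S$), so $\Delta_{\H/_S}$ lives in $\Rb T$, not $\Rb S$. Your own prose gets it right: the summands $\Delta_{H\cap S}$ for $H\cap S\neq\emptyset$ are the simplices of $\H/_T$, not $\H/_S$. The correct identities, as in the paper, are $(\Delta_\H)|_S = \Delta_{\H/_T}$ and $(\Delta_\H)/_S = \Delta_{\H|_T}$; these match $\Delta_{S,T}^{\mathrm{cop}}(\H) = (\H/_T,\H|_T)$ exactly. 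Fix the subscripts and the argument is complete.
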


\begin{proof}
We know that the map is bijective. The equation (\ref{eq:y-possum}) says that the map preserves the product and (\ref{eq:y-pos}), which may be rewritten as $(\Delta_\H)|_S = \Delta_{\H/_T}$ and $(\Delta_\H)/_S = \Delta_{\H|_T}$, says that the map reverses the coproduct. 
\end{proof}

%
%
%
%
%
%
%

\begin{example}\label{ex:123b}
For the hypergraphic polytope of Example \ref{ex:123} and Figure \ref{f:hypergraphic}, 
the northwest edge and southwest vertex are described by
\begin{eqnarray*}
(\Delta_\H)_{13,2} &=& (\Delta_1 +  \Delta_2 + \Delta_3 + \Delta_1 + \Delta_3 + \Delta_{13}) = \Delta_{\{\emptyset, 1, 1, 3, 3, 13\}}\times \Delta_{\{\emptyset, 2\}} = 
 \Delta_{\H/_{2}} \times \Delta_{\H|_{2}}\\
  (\Delta_\H)_{2,13} &=& (\Delta_1 +  \Delta_2 + \Delta_3 + \Delta_2 + \Delta_2 +\Delta_2) =  \Delta_{\{\emptyset, 2,2,2,2\}} \times  \Delta_{\{\emptyset, 1, 3\}}=
\Delta_{\H/_{13}} \times \Delta_{\H|_{13}},
\end{eqnarray*}
in (co-opposite) agreement with Example \ref{ex:123}.
\end{example}

\begin{theorem}\label{t:antipodeHG}
The antipode of the Hopf monoid of hypergraphs $\wHG$ is given by the following \textbf{cancellation-free} and \textbf{grouping-free} expression. If $\H$ is a hypergraph on $I$ then
\[
\apode_I(\H) = \sum_{\Delta_\G \leq \Delta_\H} (-1)^{c(\G)} \G,
\]
summing over all faces $\Delta_\G$ of the hypergraphic polytope $\Delta_\H$ of $\H$, where $c(\G)$ is the number of connected components of the hypergraph $\G$.
\end{theorem}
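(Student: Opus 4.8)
The plan is to apply the master antipode formula of Theorem \ref{t:antipode} to the Hopf monoid $\wHGP$ and then transport the result across the isomorphism $\rHG^{\mathrm{cop}} \cong \rHGP$ of Proposition \ref{p:HGPtoHG}. Since Proposition \ref{p:HGP} establishes that $\rHGP$ is a Hopf submonoid of $\rGP$, the antipode of $\wHGP$ is simply the restriction of the antipode of $\wGP$. Thus for a hypergraph $\H$ on $I$, the identity $\apode_I(\Delta_\H) = (-1)^{|I|}\sum_{\wq \leq \Delta_\H}(-1)^{\dim \wq}\,\wq$ holds directly, where the sum runs over all nonempty faces $\wq$ of the hypergraphic polytope $\Delta_\H$.

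Next I would translate the signs and the faces into hypergraph language. Each nonempty face $\wq$ of $\Delta_\H$ is itself a hypergraphic polytope $\Delta_\G$ for a unique hypergraph $\G$, by the description of faces of Minkowski sums of simplices (the computation in the proof of Proposition \ref{p:HGP}, equation (\ref{eq:y-pos}), shows that restrictions, contractions, and hence all iterated faces of a hypergraphic polytope are again hypergraphic). The key numerical step is to identify $(-1)^{|I|}(-1)^{\dim \Delta_\G}$ with $(-1)^{c(\G)}$. For this I would use the fact that a hypergraphic polytope $\Delta_\G = \sum_{H \in \G}\Delta_H$ decomposes as a product over the connected components of $\G$ — where two vertices are connected if some multiedge contains both — and that the dimension of the simplex-sum over a single connected component on a vertex set $V$ is $|V| - 1$. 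Summing over the $c(\G)$ components gives $\dim \Delta_\G = |I| - c(\G)$, so that $|I| - \dim \Delta_\G = c(\G)$ and the sign becomes exactly $(-1)^{c(\G)}$, as claimed.

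Finally I would invoke the antipode's interaction with the co-opposite construction. Since Proposition \ref{p:HGPtoHG} gives an isomorphism $\rHG^{\mathrm{cop}} \cong \rHGP$, and since a Hopf monoid and its co-opposite share the same antipode when the monoid is commutative (Proposition \ref{p:antipode2}, using that $\rGP$ and hence $\rHGP$ is commutative), the formula for $\apode_I$ on $\wHGP$ transports to the stated formula on $\wHG$ with no change of sign or reindexing. Putting these pieces together yields
\[
\apode_I(\H) = \sum_{\Delta_\G \leq \Delta_\H} (-1)^{c(\G)}\,\G,
\]
which is manifestly cancellation-free and grouping-free because Theorem \ref{t:antipode} already guarantees these properties for $\wGP$ and the identification of faces with hypergraphs is a bijection.

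The main obstacle I anticipate is the dimension count $\dim \Delta_\G = |I| - c(\G)$, and more precisely the verification that the correspondence between faces $\wq \leq \Delta_\H$ and hypergraphs $\G$ is a genuine bijection that respects connected-component counting. One must be careful that the hypergraph $\G$ recording a face is built from the appropriate restrictions and contractions of multiedges (as in (\ref{eq:y-pos})), and that isolated vertices — vertices lying in no nonsingleton multiedge — are counted correctly as singleton components, since these contribute to $c(\G)$ but not to the "interesting" part of the polytope. Handling these degenerate multiedges and the empty multiedge convention carefully is where the bookkeeping, rather than any deep idea, will demand attention.
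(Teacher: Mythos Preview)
Your proposal is correct and follows essentially the same route as the paper: apply Theorem~\ref{t:antipode} to $\wHGP \subset \wGP$, translate faces to hypergraphs via the bijection of Proposition~\ref{p:HGPtoHG}, and convert the sign using $\dim \Delta_\G = |I| - c(\G)$. The paper's proof is terser but identical in substance; your explicit invocation of Proposition~\ref{p:antipode2} to handle the co-opposite passage and your remark that cancellation-freeness follows from the injectivity $\Delta_\G = \Delta_{\G'} \Rightarrow \G = \G'$ are exactly the points the paper relies on.
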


\begin{proof}
This is the result of applying Theorem \ref{t:antipode} to the submonoid $\wHGP$ of $\wGP$, taking into account the identification of $\wHGP$ and $\wHG$ of Proposition \ref{p:HGPtoHG} and the observation that $\dim \Delta_\G = |I| - c(\G)$. There is no cancellation or grouping in the right hand side of this equation because $\Delta_\G = \Delta_{\G'}$ implies $\G = \G'$.
\end{proof}

\begin{example}\label{e:antipodeHG}
The antipode of the hypergraph $\H= \{\emptyset, 1, 2, 3, 12, 23, 123\}$ in $\wHG$ is given by the hypergraphic polytope of Figure \ref{f:hypergraphic}, namely:
\[
\begin{array}{rl}
\apode_{[3]}(\H) =&
  \{\emptyset,1,2,3,12,23,123\} - \{\emptyset,1,2,3,1,23,1\} - \{\emptyset,1,2,3,1,3,13\} \\
& - \{\emptyset,1,2,3,12,3,3\} - \{\emptyset,1,2,3,2,23,23\}- \{\emptyset,1,2,3,12,2,12\}  \\
& + \{\emptyset,1,2,3,1,2,1\} + \{\emptyset,1,2,3,1,3,1\} + \{\emptyset,1,2,3,1,3,3\}\\
& + \{\emptyset,1,2,3,2,3,3\} + \{\emptyset,1,2,3,2,2,2\}.
\end{array}
\]
\end{example}

\subsection{{Graphs, revisited.}} \label{ss:graphsrev} 

We now give another explanation of the inclusion of $\rG^{cop}$ into $\rGP$ shown in Proposition \ref{p:graph-submod2}.

\begin{proposition}\label{p:GtoGP}
The map $g \mapsto Z_g$ is an injective morphism of Hopf monoids $\rG^{cop}  \into \rGP$.
\end{proposition}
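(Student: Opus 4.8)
The plan is to exhibit the map $g \mapsto Z_g$ as a special case of the hypergraph embedding just constructed, thereby recovering Proposition \ref{p:graph-submod2} essentially for free. The key observation is that a graph is nothing but a hypergraph whose multiedges all have cardinality one or two: a half-edge $\{i\}$ is the singleton hyperedge $\{i\}$, and an ordinary edge $\{i,j\}$ is the two-element hyperedge $\{i,j\}$. Under this identification $\rG$ becomes a subspecies of $\rHG$. First I would check that this inclusion $\rG \into \rHG$ is compatible with the product and coproduct, so that it is a morphism of Hopf monoids. The product in both is disjoint union, which clearly agrees. For the coproduct, one compares the hypergraph restriction and contraction of Section \ref{ss:HG} with the graph restriction and contraction of Section \ref{ss:graphs}: for a graph $g$ viewed as a hypergraph, $\H|_S$ keeps exactly the edges and half-edges contained in $S$, which is precisely the induced subgraph $g|_S$; and $\H/_S$ replaces each edge $H \nsubseteq S$ by $H \cap T$ and discards edges inside $S$, which sends an edge $\{t,s\}$ with $t \in T, s \in S$ to the half-edge $\{t\}$ and deletes edges within $S$ — exactly the graph contraction $g/_S$. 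Hence $\rG \into \rHG$ is a morphism of Hopf monoids, and consequently so is $\rG^{cop} \into \rHG^{cop}$.

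Next I would invoke Proposition \ref{p:HGPtoHG}, which gives the isomorphism $\rHG^{cop} \map{\cong} \rHGP$ via $\H \mapsto \Delta_\H$, together with the inclusion $\rHGP \into \rGP$ of Proposition \ref{p:HGP}. Composing these with the morphism $\rG^{cop} \into \rHG^{cop}$ from the previous paragraph yields a morphism of Hopf monoids $\rG^{cop} \into \rGP$. It remains to identify the composite map explicitly. The hypergraphic polytope of the hypergraph associated to $g$ is
\[
\Delta_\H = \sum_{\{i\} \textrm{ half-edge of } g} \Delta_{\{i\}} + \sum_{\{i,j\} \textrm{ edge of } g} \Delta_{\{i,j\}},
\]
which is exactly the Minkowski sum description of the graphic zonotope $Z_g$ from Proposition \ref{prop:Zgzonotope}. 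Thus the composite sends $g$ to $Z_g$, as claimed.

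Finally, injectivity follows either by the same counting argument given in Proposition \ref{p:graph-submod2} — recovering edge multiplicities via $\inc_g(\{a\}) + \inc_g(\{b\}) - \inc_g(\{a,b\})$ and half-edge counts via $\inc_g(I) - \inc_g(I \setminus \{a\})$ — or, more directly in this setting, from the fact that the Minkowski summands $\Delta_{\{i\}}$ and $\Delta_{\{i,j\}}$ are linearly independent generators, so that the multiset of simplices (and hence $g$) is recoverable from $Z_g = \Delta_\H$ via the uniqueness clause of Proposition \ref{p:minkowski}. The one point requiring care — and the step I expect to be the mild obstacle — is the bookkeeping of the co-opposite structure: one must confirm that reversing the coproduct is handled consistently across all three arrows, since Proposition \ref{p:HGPtoHG} already absorbs a $cop$ into the isomorphism $\rHG^{cop} \cong \rHGP$, while the graph inclusion is stated on the $cop$ side. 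Tracking these reversals to see that they compose to the single $cop$ appearing in the statement is the only genuinely delicate verification; everything else is a routine matching of combinatorial definitions with Minkowski-sum identities already established.
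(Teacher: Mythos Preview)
Your proposal is correct and follows essentially the same route as the paper: factor $g \mapsto Z_g$ through the inclusion $\rG \into \rHG$ (graphs as hypergraphs with multiedges of size $\leq 2$), pass to co-opposites, apply the isomorphism $\rHG^{cop} \cong \rHGP$ of Proposition~\ref{p:HGPtoHG}, and identify the composite via Proposition~\ref{prop:Zgzonotope}. You supply more detail than the paper on matching the coproducts and on injectivity, but the architecture is identical.
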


\begin{proof}
Since the graph operations of $\rG$ defined in Section \ref{ss:graphs} are special cases of the hypergraph operations of $\rHG$ defined in Section \ref{ss:hyper}, we have an inclusion of Hopf monoids, $\rG \into \rHG$, which gives an inclusion $\rG^{cop} \into \rHG^{cop}$. Proposition \ref{p:HGPtoHG} tells us that the map $\H \mapsto \Delta_\H$
is an isomorphism $\rHG^{cop} \cong \rHGP$. By Proposition \ref{prop:Zgzonotope}, the composition of these maps is the map $\rG^{cop} \rightarrow \rHGP \into \rGP$ given by  $g \mapsto Z_g$. 
\end{proof}



\subsection{{Simple hypergraphs and simplification.}}\label{ss:hyper} A hypergraph is \emph{simple} if it has no repeated multiedges.\footnote{We allow simple hypergraphs to contain singletons, slightly against the usual convention.}
In the applications we have in mind, we are only interested in simple hypergraphs. Unfortunately, simple hypergraphs are not closed under the contraction map of $\rHG$, so the Hopf structure that we define on them requires a slightly different contraction map. 
Let $\rSHG[I]$ be the set of all simple hypergraphs with vertex set $I$.

\newpage
Let $I = S \sqcup T$ be a decomposition.

\noindent $\bullet$ The product of $\H_1 \in \rSHG[S]$ and $\H_2 \in \rSHG[T]$ is their disjoint union $\H_1 \sqcup \H_2$.

\noindent $\bullet$ The coproduct of $\H \in \rSHG[I]$ is $(\H|_S, \H/_S)$, where the restriction and contraction of $\H$ with respect to $S$ are:
\begin{eqnarray*}
\H|_S & := &  \{H \, : \, H \in \H,\,  H \subseteq S\} \\
\H/_S & := & \{H \cap T \, : \, H \in \H,\,  H \nsubseteq S\} \cup \{\emptyset\} =  \{ B \subseteq T \, : \, A \sqcup B \in \H \text{ for some } A \subseteq S\},
\end{eqnarray*}
now regarded as sets without repetition.

One easily verifies that the  \emph{simplification} maps, which remove any repetitions of multiedges in a hypergraph, give a morphism of Hopf monoids $s: \rHG \onto \rSHG$. We now show that this map behaves reasonably well with respect to the corresponding polytopes.
Define $\rbHGP \subseteq \rbGP$ to be the quotient of $\rHGP$ obtained by identifying hypergraphic polytopes with the same normal fan. 

\begin{proposition}\label{p:HGtoSHG}
We have a commutative diagram of Hopf monoids as follows.
\[
\xymatrix{
\rHG^{cop} \ar@{->>}[d]_{s}  \ar@{<->}[r]^-{\cong} & \rHGP \ar@{->>}[d] \\
\rSHG^{cop} \ar@{->>}[r]  & \rbHGP
}
\]
\end{proposition}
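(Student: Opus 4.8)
The four maps in the square are, with one exception, already available. The top isomorphism $\rHG^{cop} \isoto \rHGP$, $\H \mapsto \Delta_\H$, is Proposition~\ref{p:HGPtoHG}; the right vertical surjection $\rHGP \onto \rbHGP$ is the restriction to the submonoid $\rHGP$ of the canonical quotient $\rGP \onto \rbGP$ of Section~\ref{s:rbbGP}, and is therefore a morphism of Hopf monoids; and the left vertical surjection is the simplification morphism $s\colon \rHG^{cop} \onto \rSHG^{cop}$ established earlier. Thus the only thing to construct is the bottom map, and the plan is to obtain it by factoring the composite through $s$, so that commutativity holds by construction.

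The key step is a normal-equivalence lemma, exactly parallel to Proposition~\ref{p:graph-zono}: if $\H' = s(\H)$ is the simplification of a hypergraph $\H$, then $\Delta_\H$ and $\Delta_{\H'}$ are normally equivalent. To prove it, recall that $\Delta_\H = \sum_{H \in \H} \Delta_H$, so that $\N(\Delta_\H)$ is the common refinement of the normal fans $\N(\Delta_H)$ taken over the multiedges $H$ of $\H$. This common refinement is unaffected by repeating a summand, since refining a fan by a second copy of itself does nothing; it is likewise unaffected by the point summands $\Delta_\emptyset$ and $\Delta_{\{i\}}$, whose normal fans are trivial. Since $\H$ and $\H' = s(\H)$ carry exactly the same underlying set of distinct multiedges, the two collections of fans being refined coincide, whence $\N(\Delta_\H) = \N(\Delta_{\H'})$, i.e.\ $\Delta_\H \equiv \Delta_{\H'}$.

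With the lemma in hand the factoring is automatic. Let $f\colon \rHG^{cop} \isoto \rHGP \onto \rbHGP$ be the surjective morphism of Hopf monoids obtained by going across the top and down the right. The lemma says precisely that $f$ is constant on the fibers of $s$: if $s(\H_1) = s(\H_2)$ then $\H_1$ and $\H_2$ have the same distinct multiedges, so $\Delta_{\H_1} \equiv \Delta_{\H_2}$ and the two have the same image in $\rbHGP$. Because $s$ is a surjective morphism of Hopf monoids and $\rSHG^{cop}$ carries the quotient Hopf structure it induces, $f$ factors uniquely as $f = \bar f \circ s$ through a morphism of Hopf monoids $\bar f\colon \rSHG^{cop} \to \rbHGP$ (preservation of products and coproducts for $\bar f$ is checked by lifting arguments along the surjection $s$ and invoking that $f$ is a morphism). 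This $\bar f$ is the desired bottom map; it is surjective since $f$ is, and the square commutes by construction.

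The main obstacle is entirely concentrated in the normal-equivalence lemma; the rest is a formal consequence of the universal property of the quotient $s$. Even the lemma is routine given Postnikov's Minkowski-sum description and the standard behavior of normal fans under Minkowski sums, so I expect no serious difficulty. The one point I would state with care is that simplification deletes only \emph{repeated} summands (a simple hypergraph still retains its singletons and the single empty edge), and that deleting repeated summands leaves the common refinement, and hence the normal fan, unchanged.
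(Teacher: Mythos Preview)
Your proof is correct and follows essentially the same approach as the paper: the key observation in both is that $\N(\Delta_\H)$ is the common refinement of the fans $\N(\Delta_H)$ over $H\in\H$, which depends only on the simplification of $\H$, so the bottom map is well-defined. Your version is more explicit about the factoring through the quotient $s$, and your closing remark correctly identifies that only repeated summands are removed (so the aside about point summands is unnecessary but harmless).
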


\begin{proof}
The two vertical maps are defined in the previous paragraph, while the top map is $\H \mapsto \Delta_\H$. It remains to verify that the bottom map that makes this diagram commute is well-defined: if $\H$ is a hypergraph, the normal fan $\Nc_{\Delta_{\H}}$ is the common refinement of $\Nc_{\Delta_H}$ as we range over all $H \in \H$; this only depends on the simplification of $\H$.
\end{proof}

\begin{remark}\label{r:normalfan}
The bottom map  $\rSHG^{cop}  \xhookrightarrow{} \rbHGP$ 
of Proposition \ref{p:HGtoSHG} is not an isomorphism. For example, 
 $\Delta_{\{\emptyset, 12, 13, 23\}}$ and $\Delta_{\{\emptyset, 12, 13, 23, 123\}}$
are hexagons with the same normal fan. More generally, 
for any simple hypergraph $\H$ on $I$ containing all pairs $\{i.j\}$ with $i,j \in I$, the hypergraphic polytope $\Delta_\H$ is normally equivalent to the standard permutahedron $\pi_I$. To see this, notice that the normal fan of $\Delta_{\H}$ coarsens the braid arrangement (since $\Delta_\H$ is a generalized permutahedron) and refines the braid arrangement (since it has $\pi_I = \sum_{\{i, j\}\subseteq I} \Delta_{\{i,j\}}$ as a Minkowski summand).
\end{remark}

\subsection{{The support maps.}} \label{ss:support} 

The \emph{support maps} $\supp_I: \rHGP[I] \to \rSHG[I]$ will be an important tool in what follows; they take a hypergraphic polytope $\wp = \Delta_\H = \sum_{J \subseteq I} y(J) \Delta_J \subseteq \Rb I$ to the simple hypergraph supporting it:
\[
\text{supp}_I (\wp) := \{J \subseteq I \, : \, y(J) > 0\} \cup \{\emptyset\}.
\]
Under the isomorphism $\rHGP \cong \rHG^{cop}$ of Proposition \ref{p:HGPtoHG} which identifies $\wp$ with its corresponding hypergraph $\H$, the support $\text{supp}_I (\wp)$ is the \emph{simplification} of $\H$.

\begin{theorem}\label{t:HGPtoSHG}
The support maps $\supp_I: \rHGP[I] \to \rSHG[I]$ give a surjective morphism of Hopf monoids $\text{supp}: \rHGP \onto \rSHG^{cop}$.
\end{theorem}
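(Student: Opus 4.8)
The plan is to verify directly that the family of maps $\supp_I$ satisfies the three conditions in Definition~\ref{d:morhopf} for a morphism of Hopf monoids, where the target is the \emph{co-opposite} monoid $\rSHG^{cop}$, and then to observe surjectivity. Since both $\rHGP$ and $\rSHG$ come from set species, I would work at the level of set species and check naturality, preservation of products, and preservation of (co-opposite) coproducts. The key computational input is the explicit description of restriction and contraction of hypergraphic polytopes recorded in equation~(\ref{eq:y-pos}) of Proposition~\ref{p:HGP}, together with the description of the coproduct of simple hypergraphs in Section~\ref{ss:hyper}. In fact the cleanest route is to reduce everything to the already-established isomorphism $\rHG^{cop} \cong \rHGP$ of Proposition~\ref{p:HGPtoHG} and the simplification morphism $s\colon \rHG \onto \rSHG$ of Section~\ref{ss:hyper}.

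Concretely, here is the reduction I would carry out. Under the isomorphism $\Delta\colon \rHG^{cop} \isoto \rHGP$ of Proposition~\ref{p:HGPtoHG}, a hypergraphic polytope $\wp = \Delta_\H$ corresponds to the hypergraph $\H \in \rHG[I]$, and the remark closing Section~\ref{ss:support} already notes that $\supp_I(\wp)$ is exactly the simplification of $\H$. Thus the support map fits into the diagram
\[
\xymatrix{
\rHGP \ar[r]^-{\cong} \ar[rd]_{\supp} & \rHG^{cop} \ar[d]^{s} \\
& \rSHG^{cop},
}
\]
where the horizontal map is the inverse of the isomorphism in Proposition~\ref{p:HGPtoHG} and $s$ is simplification. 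Since $s\colon \rHG \onto \rSHG$ is a morphism of Hopf monoids, it induces a morphism $s\colon \rHG^{cop} \onto \rSHG^{cop}$ of the co-opposite monoids (taking co-opposites is functorial: it preserves the product and reverses the coproduct on both source and target simultaneously, so a morphism stays a morphism). Composing a morphism of Hopf monoids with an isomorphism of Hopf monoids again yields a morphism of Hopf monoids, so $\supp = s \circ \Delta^{-1}$ is a morphism $\rHGP \to \rSHG^{cop}$, as claimed.

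For surjectivity, I would note that every simple hypergraph $\H \in \rSHG[I]$ is already a hypergraph with all multiplicities equal to $1$, hence lies in the image of $s$, and it equals $\supp_I(\Delta_\H)$ by the definition of the support map (its underlying simplification is $\H$ itself). Therefore $\supp_I$ is surjective for every $I$, and $\supp\colon \rHGP \onto \rSHG^{cop}$ is a surjective morphism of Hopf monoids.

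I do not expect a genuine obstacle here, since all the structural work was done in Propositions~\ref{p:HGP} and~\ref{p:HGPtoHG} and in the construction of $s$; the one point that requires a little care is the bookkeeping of the co-opposite, namely confirming that the support map really reverses coproducts rather than preserving them. The mildly delicate step is the compatibility of simplification with \emph{contraction}: in $\rHG$ the contraction $\H/_S$ adds multiplicities of edges with the same trace on $T$, so two distinct weighted hypergraphs can simplify to the same simple hypergraph only if their supports already agree, and one must check that $s(\H/_S)$ depends only on $s(\H)$ — equivalently that the bottom map in Proposition~\ref{p:HGtoSHG} is well defined. That verification, however, is exactly the content of Proposition~\ref{p:HGtoSHG} (and of the remark that $\Nc_{\Delta_\H}$ depends only on the simplification of $\H$), so it may simply be invoked. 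Hence the proof is essentially a diagram chase assembling Propositions~\ref{p:HGPtoHG} and~\ref{p:HGtoSHG}.
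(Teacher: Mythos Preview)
Your proposal is correct and matches the paper's own proof essentially verbatim: the paper also observes that $\supp$ is the composition of the inverse of the isomorphism $\rHG^{cop}\cong\rHGP$ from Proposition~\ref{p:HGPtoHG} with the simplification map $s$ from Proposition~\ref{p:HGtoSHG}. Your additional remarks on functoriality of co-opposites and on surjectivity are fine elaborations but not needed beyond what the paper already states.
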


\begin{proof}
This morphism is the composition of the top isomorphism with the simplification map $s$ in Proposition \ref{p:HGtoSHG}. 
%
\end{proof}

\begin{theorem}\label{t:antipodeSHG}
The antipode of the Hopf monoid of simple hypergraphs $\wSHG$ is given by the following \textbf{cancellation-free} expression. If $\H$ is a simple hypergraph on $I$ then
\[
\apode_I(\H) = \sum_{F \leq \Delta_\H} (-1)^{c(F)} \supp_I(F),
\]
summing over all faces $F$ of the hypergraphic polytope $\Delta_\H$ of $\H$, where $c(F) = |I|-\dim F$ is the number of connected components of the hypergraph $\supp_I(F)$.
\end{theorem}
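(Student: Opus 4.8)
The plan is to derive Theorem~\ref{t:antipodeSHG} directly from the antipode formula for hypergraphs in Theorem~\ref{t:antipodeHG} by pushing it forward along the support morphism $\supp: \wHGP \onto \wSHG^{cop}$ of Theorem~\ref{t:HGPtoSHG}. The key structural fact I would use is the morphism property of the antipode, equation~\eqref{e:apode-mor} in Proposition~\ref{p:antipode2}, which says that for any morphism of Hopf monoids $f$ we have $f_I(\apode_I(x)) = \apode_I(f_I(x))$. Applying this to $f = \supp$ and to a hypergraphic polytope $\Delta_\H$ gives $\supp_I(\apode_I(\Delta_\H)) = \apode_I(\supp_I(\Delta_\H)) = \apode_I(\H)$, where on the right $\H$ is viewed as a simple hypergraph (its own support). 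This immediately reduces the problem to applying $\supp_I$ termwise to the known cancellation-free expansion of $\apode_I(\Delta_\H)$.

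First I would invoke Theorem~\ref{t:antipodeHG}, which expresses $\apode_I(\Delta_\H) = \sum_{\Delta_\G \le \Delta_\H} (-1)^{c(\G)}\, \G$, the sum ranging over all faces $\Delta_\G$ of the hypergraphic polytope $\Delta_\H$. Here each face $\Delta_\G$ of $\Delta_\H$ is itself a hypergraphic polytope, i.e.\ a Minkowski sum of simplices, and the faces are indexed by the compositions $(S_1,\dots,S_k)$ via $\Delta_\G = (\Delta_\H)_{S_1,\dots,S_k}$, as in Theorem~\ref{t:antipode}; the formula~\eqref{eq:y-pos} for restriction and contraction of hypergraphic polytopes makes each such face explicit. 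Applying $\supp_I$ to the equation and using the morphism property, I would obtain
\[
\apode_I(\H) = \sum_{F \le \Delta_\H} (-1)^{c(F)} \supp_I(F),
\]
where $F$ ranges over the faces of $\Delta_\H$, since $\supp_I(\Delta_\G) = \supp_I(F)$ is precisely the simple hypergraph supporting the face $F = \Delta_\G$. The sign is correct because $c(\G)$, the number of connected components of $\G$, equals $|I| - \dim \Delta_\G = |I| - \dim F = c(F)$; this dimension count is exactly the one already recorded in the statement, and it follows from the fact that a hypergraphic polytope on a connected hypergraph is full-dimensional in its ambient root subspace.

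The one genuinely substantive point to address is that the right-hand side is \textbf{cancellation-free}, which is a weaker claim than for $\wHG$ because $\supp_I$ is not injective: distinct faces $F \ne F'$ of $\Delta_\H$ can have the same support, so equal terms $\supp_I(F) = \supp_I(F')$ may be grouped. I would argue that no \emph{sign} cancellation occurs by observing that $\supp_I$ is a morphism onto $\wSHG^{cop}$ and that normally equivalent faces (and more to the point, faces sharing a support) must have the same dimension: the support of a face $F$ records which simplices $\Delta_J$ appear with positive coefficient in $F$, and by~\eqref{eq:y-pos} this combinatorial data determines the normal fan, hence the dimension, of $F$. Therefore all faces contributing a given simple hypergraph $\G'$ carry the same sign $(-1)^{c(\G')}$, and the grouping of like terms never produces a $+\,\G'$ against a $-\,\G'$. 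This is exactly the same phenomenon already handled for the quotients $\wbGP$ and $\wbbGP$ at the end of the proof of Theorem~\ref{t:antipode}, and I would point to that argument. This is the step I expect to require the most care; everything else is a formal consequence of the morphism property and Theorem~\ref{t:antipodeHG}.
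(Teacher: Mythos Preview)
Your proposal is correct and matches the paper's proof essentially line for line: push Theorem~\ref{t:antipodeHG} forward along the support morphism using~\eqref{e:apode-mor}, then observe that faces with the same support have the same dimension (equivalently, faces of different dimension have different support), so no sign cancellation occurs. The paper's proof is just a terser version of what you wrote; your elaboration of the cancellation-free step via the normal-fan argument (Proposition~\ref{p:HGtoSHG}) is a welcome spelling-out of what the paper leaves implicit.
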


\begin{proof}
Thanks to Proposition \ref{p:antipode2}, the surjective maps $\supp$ turn Theorem \ref{t:antipodeHG}, our formula  for the antipode of $\wHG^{cop} \cong \wHGP$, into a formula for the  antipode of $\wSHG$.
The formula is cancellation free because faces of different dimension must have different support.
\end{proof}

\begin{example}\label{e:antipodeSHG}
The antipode of the hypergraph $\H= \{\emptyset, 1, 2, 3, 12, 23, 123\}$ in $\wSHG$ is also given by the hypergraphic polytope of Figure \ref{f:hypergraphic}, but the result is now the simplification of the one in  Example \ref{e:antipodeHG}:
\[
\begin{array}{rl}
\apode_{[3]}(\H) =&
\H - 2 \{\emptyset,1,2,3,23\} - 2\{\emptyset,1,2,3,12\} - \{\emptyset,1,2,3,12\} + 5 \{\emptyset, 1, 2, 3\}
  \end{array}
\]
\end{example}

As in the case of matroids, we have no simple combinatorial labeling of the faces of a general hypergraphic polytope, so we do not have a way of simplifying the formula of Theorem \ref{t:antipodeSHG}. This shows that hypergraphic polytopes are fundamental in the Hopf structure of hypergraphs.

However, we do know a few families of hypergraphic polytopes whose combinatorial structure we can describe more explicitly; they give rise to interesting combinatorial families which inherit Hopf monoid structures  from their polytopes. In the remaining sections of the paper, we will describe the resulting Hopf monoids and use Theorem \ref{t:antipodeSHG} to describe their antipodes.

%
%

%
%

\section{$\rSC$: {Simplicial complexes, graphs, and Benedetti et al.'s formula}} \label{s:SC} 

Benedetti, Hallam, and Machacek \cite{benedetti2016combinatorial} constructed a combinatorial Hopf algebra of simplicial complexes, and obtained a formula for its antipode through a clever combinatorial argument. Surprisingly, the formula is almost identical to Humpert and Martin's formula for the antipode of the Hopf algebra of graphs \cite{humpert12}. In this section, by modeling simplicial complexes polytopally, we are able to offer a simple geometric explanation of this phenomenon.

A(n abstract) \emph{simplicial complex} on a finite set $I$ is a collection $\C$ of subsets of $I$, called \emph{faces}, such that any subset of a face is a face; that is, if $J \in \C$ and $K \subseteq J$ then $J \in \C$. 
For a subset $J \subseteq I$, the \emph{induced simplicial complex} $\C|_J$ consists of the faces of $\C$ which are subsets of $J$.

\subsection{The Hopf monoid of simplicial complexes} \label{ss:SC}

Let $\rSC[I]$ denote the set of all simplicial complexes on $I$. 
We turn the set species $\rSC$  into a commutative and cocommutative Hopf monoid with the following structure. 

Let $I=S\sqcup T$ be a decomposition. 

\noindent $\bullet$
The product of two simplicial complexes $\C_1\in\rSC[S]$ and $\C_2\in\rSC[T]$ is their disjoint union. 

\noindent $\bullet$
The coproduct of a simplicial complex $\C \in \rSC[I]$ is $(\C|_S, \C|_T)$. 

\noindent The Hopf monoid axioms are easily verified.

At first sight, this Hopf monoid -- which is cocommutative -- does not seem related to the Hopf monoids of hypergraphs -- which are not cocommutative. However, it turns out that $\rSC$ lives inside the cocommutative part of $\rSHG$.

\begin{proposition} \label{p:SHGtoSC}
The Hopf monoid of simplicial complexes $\rSC$ is a submonoid of the Hopf monoid of simple hypergraphs $\rSHG$.
\end{proposition}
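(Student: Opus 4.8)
The plan is to exhibit $\rSC$ as a subspecies of $\rSHG$ closed under the hypergraph operations, and to check that those operations restrict to the simplicial-complex operations of Section \ref{ss:SC}. First I would observe that a simplicial complex $\C$ on $I$ is literally a simple hypergraph: it is a collection of subsets of $I$ with no repetitions, and (adopting the convention that $\C$ always contains the empty face, matching the hypergraph convention that $\emptyset$ appears once) we have $\rSC[I]\subseteq\rSHG[I]$. Naturality under relabelings $\sigma\colon I\to J$ is immediate, and the product is equally transparent: for $I=S\sqcup T$ the product of $\C_1\in\rSC[S]$ and $\C_2\in\rSC[T]$ in both $\rSC$ and $\rSHG$ is the disjoint union $\C_1\sqcup\C_2$, which is again a simplicial complex. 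So the only substantive point is the coproduct.

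The core of the argument is to show that, for a simplicial complex $\C$ on $I=S\sqcup T$, the hypergraph coproduct $\Delta_{S,T}(\C)=(\C|_S,\C/_S)$ from Section \ref{ss:hyper} agrees with the simplicial coproduct $(\C|_S,\C|_T)$. The restriction matches on the nose, since the hypergraph restriction $\C|_S=\{H\in\C:H\subseteq S\}$ is exactly the induced subcomplex. The key identity to verify is
\[
\C/_S \;=\; \{H\cap T: H\in\C,\ H\nsubseteq S\}\cup\{\emptyset\} \;=\; \C|_T.
\]
For the forward inclusion I would use downward-closure: if $H\in\C$ and $H\nsubseteq S$, then $H\cap T\subseteq H$ is again a face of $\C$ lying in $T$, so $H\cap T\in\C|_T$. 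For the reverse inclusion, any nonempty $K\in\C|_T$ satisfies $K\subseteq T$, hence $K\nsubseteq S$ and $K\cap T=K$, so $K$ is realized as $H\cap T$ with $H=K$; the empty face is supplied by the $\cup\{\emptyset\}$. Since $\C|_S$ and $\C|_T$ are themselves simplicial complexes, this shows $\rSC[I]$ is closed under the coproduct of $\rSHG$ and that the two coproducts coincide, completing the proof that $\rSC$ is a Hopf submonoid.

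I do not expect a serious obstacle here; the whole content is the displayed identity, whose only delicate point is the bookkeeping of the empty face and the reliance on closure under subsets (which is precisely what distinguishes simplicial complexes among simple hypergraphs). As a worthwhile remark, I would note that the identity $\C/_S=\C|_T$ is exactly the cocommutativity condition $z/_S=z|_T$ from Definition \ref{d:comm}: thus simplicial complexes land in the cocommutative part of $\rSHG$, where $\rSHG$ and $\rSHG^{cop}$ agree. This explains both why $\rSC$ is cocommutative and why it appears as $\rSC^{cop}\hookrightarrow\rSHG^{cop}$ in the diagram of Section \ref{s:HGP}, and it sets up the geometric comparison with the graph antipode in the sections that follow.
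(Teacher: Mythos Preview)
Your proposal is correct and follows exactly the approach of the paper: identify simplicial complexes as simple hypergraphs, note that products and restrictions agree trivially, and verify the key identity $\C/_S=\C|_T$ using downward-closure. The paper leaves this last verification as ``one may verify,'' whereas you spell out both inclusions and add the observation about cocommutativity, which the paper makes just before the proposition rather than in the proof itself.
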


\begin{proof}
Simplicial complexes are simple hypergraphs, and the product and restriction operations for these two families coincide. The contraction operations are defined slightly differently. However, if $\C$ is a simplicial complex and  $I=S \sqcup T$ is a decomposition, one may verify that the contraction $\C/_S$ in the sense of simple hypergraphs coincides with the restriction $\C|_T$ in the sense of simplicial complexes. 
\end{proof}

\subsection{Simplicial complex polytopes} \label{ss:SCpolytopes} 

Each simplicial complex $\C$, being a hypergraph, has a corresponding hypergraphic polytope $\Delta_\C := \sum_{C \in \C} \Delta_C$. Unlike
general hypergraphic polytopes, this family of polytopes have a simple combinatorial facial structure.

Recall that the \emph{one-skeleton} $\C^{(1)}$ of a simplicial complex on $I$ is the graph on $I$ whose edges are the sets in $\C$ of size $2$.

\begin{lemma}\label{l:SCGP}
For any simplicial complex $\C$, the hypergraphic polytope $\Delta_\C$ is normally equivalent to the graphic zonotope $Z_{\C^{(1)}}$ of its one-skeleton $\C^{(1)}$.
\end{lemma}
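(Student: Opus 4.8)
The plan is to show the two polytopes have the same normal fan by showing they have the same collection of Minkowski summands up to the trivial summands (points) and to multiplicity, and then invoke the standard fact that the normal fan of a Minkowski sum of positively-scaled polytopes depends only on which polytopes appear (not on their multiplicities). This is exactly the argument already used in the proof of Lemma \ref{l:facesZg} and in Proposition \ref{p:graph-zono}, so the structure should be directly parallel.

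First I would recall the defining Minkowski-sum expressions. The simplicial complex polytope is $\Delta_\C = \sum_{C \in \C} \Delta_C$, where the sum runs over all faces $C$ of $\C$. The graphic zonotope of the one-skeleton is, by Proposition \ref{prop:Zgzonotope}, $Z_{\C^{(1)}} = \sum_{\{i\} \in \C^{(1)}} \Delta_i + \sum_{\{i,j\} \in \C^{(1)}} \Delta_{\{i,j\}}$, where $\C^{(1)}$ has an edge $\{i,j\}$ exactly when $\{i,j\} \in \C$ and (since $\C$ is a complex containing all vertices of its edges) has the relevant half-edges/singletons. Now I would use the fundamental fact, cited from \cite{gruenbaum67:_convex} in the proof of Lemma \ref{lemma:assocfaces}, that the normal fan $\N(P+Q)$ is the common refinement of $\N(P)$ and $\N(Q)$, and that $\N(\lambda P) = \N(P)$ for $\lambda > 0$. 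Consequently, the normal fan of a Minkowski sum of simplices $\sum_J \Delta_J$ is the common refinement of the fans $\N(\Delta_J)$, and this refinement is unaffected by (i) repeated summands and (ii) summands $\Delta_J$ that are points, i.e.\ with $|J|=1$, whose normal fan is the trivial fan $\{\Rb^I\}$.

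The key observation is that the normal fan of a single simplex $\Delta_J$ with $|J| \geq 2$ is the common refinement of the normal fans $\N(\Delta_{\{i,j\}})$ over all pairs $\{i,j\} \subseteq J$; this is because $\Delta_J = \mathrm{conv}\{e_i : i \in J\}$ and the braid-type hyperplane conditions $y(i) = y(j)$ cutting out the boundary of $\N(\Delta_J)$ are exactly those arising from its edges $\Delta_{\{i,j\}}$. Equivalently, $\Delta_J$ and $\sum_{\{i,j\} \subseteq J} \Delta_{\{i,j\}}$ are normally equivalent. Therefore, in computing $\N(\Delta_\C)$, each summand $\Delta_C$ may be replaced by its edge-pairs $\Delta_{\{i,j\}}$ for $\{i,j\} \subseteq C$ without changing the common refinement; and a pair $\{i,j\}$ appears among these replacements precisely when $i,j$ lie in a common face of $\C$, which (since $\C$ is a complex) happens exactly when $\{i,j\} \in \C$, i.e.\ when $\{i,j\}$ is an edge of $\C^{(1)}$. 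Thus $\N(\Delta_\C)$ is the common refinement of $\N(\Delta_{\{i,j\}})$ over the edges $\{i,j\}$ of $\C^{(1)}$, which is exactly $\N(Z_{\C^{(1)}})$, ignoring the trivial point summands $\Delta_i$.

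I expect the main obstacle to be proving cleanly the normal equivalence of a single simplex $\Delta_J$ with the sum of its edge-simplices $\sum_{\{i,j\} \subseteq J} \Delta_{\{i,j\}}$, since everything else is a formal manipulation of common refinements. This sub-claim is essentially the observation, already invoked in Remark \ref{r:normalfan}, that a Minkowski sum containing all pairs $\{i,j\} \subseteq J$ as summands is normally equivalent to the permutahedron $\pi_J$ on $J$, combined with the fact that $\Delta_J$ is itself a generalized permutahedron on $J$ whose normal fan is the coarsest nontrivial coarsening of $\Bc_J$. I would verify this directly: the face of $\Delta_J$ maximal in direction $y$ is $\Delta_{J'}$, where $J' = \{i \in J : y(i) \text{ is maximal}\}$, so the normal cones of $\Delta_J$ depend only on the relative order of the coordinates of $y$ restricted to $J$ through ties among the top coordinates—precisely the data recorded by the pairwise comparisons $y(i)$ versus $y(j)$ for $\{i,j\} \subseteq J$. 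This matches the refinement given by $\sum_{\{i,j\}\subseteq J} \Delta_{\{i,j\}}$, completing the argument.
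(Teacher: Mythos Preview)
Your overall strategy is the right one and is close to the paper's, but your ``key observation'' is false as stated. You claim that $\Delta_J$ and $\sum_{\{i,j\}\subseteq J}\Delta_{\{i,j\}}$ are normally equivalent, i.e.\ that $\Nc(\Delta_J)$ equals the common refinement of the edge fans. For $|J|\geq 3$ this fails: $\Delta_{\{1,2,3\}}$ is a triangle whose normal fan has three maximal cones, while $\sum_{\{i,j\}\subseteq\{1,2,3\}}\Delta_{\{i,j\}}$ is a hexagon (a translate of $\pi_3$) whose normal fan is the full braid arrangement with six chambers. Your own last paragraph almost catches this---you note the face of $\Delta_J$ depends only on ``ties among the top coordinates,'' which is strictly less data than the full relative order recorded by the braid arrangement---but then you assert the two fans match anyway.

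The fix is that you only need the weaker statement that $\Nc(\Delta_J)$ is \emph{coarsened} by $\bigwedge_{\{i,j\}\subseteq J}\Nc(\Delta_{\{i,j\}})$, which is immediate since $\Delta_J$ is a generalized permutahedron on $J$. The crucial point you must then use explicitly is that $\C$ is a simplicial complex: for each $C\in\C$ with $|C|\geq 2$, every pair $\{i,j\}\subseteq C$ is already a face of $\C$, so those edge summands $\Delta_{\{i,j\}}$ are already present in $\Delta_\C$. Hence in the common refinement $\Nc(\Delta_\C)=\bigwedge_{C\in\C}\Nc(\Delta_C)$, each $\Nc(\Delta_C)$ with $|C|\geq 3$ is redundant (being coarser than the refinement of edge fans already included), and one is left with exactly $\bigwedge_{\{i,j\}\in\C}\Nc(\Delta_{\{i,j\}})=\Nc(Z_{\C^{(1)}})$. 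The paper's proof follows the same outline but routes the coarsening step through permutahedra: it replaces $\Delta_\C$ by the same-support sum $\sum_{G\in\C}\pi'_G$, uses Remark~\ref{r:normalfan} to swap each $\pi'_G$ for $\pi_G=\sum_{\{i,j\}\subseteq G}\Delta_{\{i,j\}}$, and then observes this has the same support as $Z_{\C^{(1)}}$.
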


\begin{proof}
We use the central fact from Proposition \ref{p:HGtoSHG} that the normal equivalence class of a hypergraphic polytope $\Delta_\H$ depends only on the support $\supp (\Delta_\H)$.

Let $\C$ be a simplicial complex on $I$. 
Since they have the same support, the simplicial complex polytope 
$\Delta_\C = \sum_{F \in \C} \Delta_F$ is normally equivalent to the polytope
\[
P_1 = 
\sum_{G \in \C} \sum_{F \subseteq G} \Delta_F =
\sum_{G \in \C} \pi'_G.
\]
where we define $\pi'_G := \sum_{F \subseteq G} \Delta_F$ for each set $G \in \C$. By Remark \ref{r:normalfan}, $\pi'_G$ is normally equivalent to the standard permutahedron $\pi_G$ in $\Rb I$. Therefore the polytope $P_1$ is normally equivalent to
\[
P_2 = \sum_{G \in \C} \pi_G = 
\sum_{G \in \C} \sum_{\{i,j\} \subseteq G} \Delta_{\{i,j\}}
\]
using (\ref{e:Pizonotope}). In turn, $P_2$ is normally equivalent to $
Z_{\C^{(1)}} = \sum_{\{i,j\} \in \C} \Delta_{\{i,j\}}
$
since they have the same support.
\end{proof}

As a consequence, the simplicial complex polytope $\Delta_\C$ has the same facial structure as the zonotope $Z_g$ for $g=\C^{(1)}$, as described by Lemma \ref{l:facesZg}.
It would be interesting to further study these simplicial complex polytopes.

\subsection{The antipode of simplicial complexes} \label{ss:antipodeSC} 

Since simplicial complexes form a submonoid of simple hypergraphs by Proposition \ref{p:SHGtoSC}, we may use Theorem \ref{t:antipodeSHG} to compute the antipode of $\wSC$, thus recovering the formula of Benedetti, Hallam, and Machacek. \cite{benedetti2016combinatorial} We now carry this out.

%
%
%

Let $\C$ be a simplicial complex $\C$ on $I$ and let $f$ be a flat of the $1$-skeleton $\C^{(1)}$ of $\C$. The flat $f$ is a subgraph of $\C^{(1)}$, and its connected components form a partition $\pi = \{\pi_1, \ldots, \pi_k\}$ of its vertex set $I$. As before, we let $c(f)=k$ denote the number of connected components of $f$. We define $\C(f) = \C|_{\pi_1}  \sqcup \cdots \sqcup \C|_{\pi_k}$ 
to be the subcomplex of $\C$ consisting of the faces which are contained in a connected component of $f$.

\begin{corollary}\cite{benedetti2016combinatorial} \label{c:antipodeSC}
The antipode of the Hopf monoid of simplicial complexes $\wSC$ is given by the following \textbf{cancellation-free} and \textbf{grouping-free} expression. If $\C$ is a simplicial complex on $I$ then
\[
\apode_I(\C) = \sum_{f} (-1)^{c(f)} a(g/f) \, \C(f),
\]
summing over all flats $f$ of the $1$-skeleton $g = \C^{(1)}$ of $\C$, where $a(g/f)$ is the number of acyclic orientations of the contraction $g/f$. 
\end{corollary}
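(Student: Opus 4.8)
The plan is to derive Corollary \ref{c:antipodeSC} as a specialization of the simple-hypergraph antipode formula of Theorem \ref{t:antipodeSHG}, using the fact (Proposition \ref{p:SHGtoSC}) that $\rSC$ is a submonoid of $\rSHG$ and the geometric fact (Lemma \ref{l:SCGP}) that $\Delta_\C$ is normally equivalent to the graphic zonotope $Z_g$ of its one-skeleton $g = \C^{(1)}$. Since normally equivalent polytopes have the same face lattice, and since the antipode of a submonoid is computed in the ambient monoid by Proposition \ref{p:antipode2} (equation \eqref{e:apode-mor}), I can read off the antipode of $\C$ from the faces of $Z_g$, which are explicitly indexed in Lemma \ref{l:facesZg} by pairs $(f,o)$ consisting of a flat $f$ of $g$ and an acyclic orientation $o$ of $g/f$.

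First I would apply Theorem \ref{t:antipodeSHG} to $\C$, writing
\[
\apode_I(\C) = \sum_{F \leq \Delta_\C} (-1)^{c(F)} \supp_I(F),
\]
where the sum is over nonempty faces $F$ of $\Delta_\C$ and $c(F) = |I| - \dim F$. Using Lemma \ref{l:SCGP}, the faces of $\Delta_\C$ are in dimension-preserving bijection with the faces of $Z_g$, hence with the pairs $(f,o)$ of Lemma \ref{l:facesZg}; moreover the face indexed by $(f,o)$ has $c(f)$ connected components, so $(-1)^{c(F)} = (-1)^{c(f)}$. The two remaining tasks are then (i) to identify the support $\supp_I(F)$ of the face $F$ corresponding to $(f,o)$, and (ii) to carry out the sum over the orientations $o$ for each fixed flat $f$.

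For (i) I would argue that the support of the face $F$ depends only on the flat $f$ and not on the orientation $o$, and in fact equals the subcomplex $\C(f) = \C|_{\pi_1} \sqcup \cdots \sqcup \C|_{\pi_k}$, where $\pi_1, \ldots, \pi_k$ are the connected components of $f$. The key point is that the $\1$-maximal behavior that selects a face of $\Delta_\C = \sum_{C \in \C}\Delta_C$ in a direction $y$ keeps intact precisely those simplices $\Delta_C$ whose vertex set $C$ is monochromatic for $y$, i.e.\ lies within a single block $\pi_i$, while every other simplex is pushed to a lower-dimensional face contributing only a shifted singleton. Since a face $C$ of $\C$ is contained in a component of $f$ exactly when its edges all lie in $f$, the surviving simplices are exactly the faces of $\C(f)$, giving $\supp_I(F) = \C(f)$ independently of $o$. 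For (ii), once the support is seen to be independent of $o$, the orientations $o$ of $g/f$ simply count: there are $a(g/f)$ of them, all contributing the same term $(-1)^{c(f)}\C(f)$, so summing over $o$ produces the coefficient $a(g/f)$. Combining (i) and (ii) collapses the double sum over $(f,o)$ into the single sum over flats $f$ with coefficient $(-1)^{c(f)} a(g/f)$, which is exactly the claimed formula. The formula remains cancellation-free and grouping-free because distinct flats $f$ of $g$ yield distinct subcomplexes $\C(f)$ (the flat $f$ is recoverable as the one-skeleton of $\C(f)$ together with its component structure), so no two terms coincide.

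The main obstacle I anticipate is step (i): carefully verifying that $\supp_I(F) = \C(f)$ and in particular that the support is genuinely independent of the orientation $o$. This requires tracking the normal-equivalence identification of Lemma \ref{l:SCGP} at the level of supports rather than just normal fans, and confirming that the simplices of $\Delta_\C$ surviving into the $y$-maximal face are exactly those indexed by faces of $\C(f)$. The cleanest route is probably to compute $(\Delta_\C)_y$ directly via $(P+Q)_y = P_y + Q_y$ applied to $\Delta_\C = \sum_{C \in \C}\Delta_C$, using that $(\Delta_C)_y = \Delta_{C}$ when $C$ is $y$-monochromatic and a lower simplex (ultimately a point, up to the identification used for supports) otherwise, mirroring the computation in \eqref{eq:faceZg} for zonotopes; this makes the orientation-independence of the support transparent.
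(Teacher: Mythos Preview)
Your proposal is correct and follows essentially the same route as the paper: apply Theorem \ref{t:antipodeSHG}, transfer the face enumeration to $Z_g$ via Lemma \ref{l:SCGP} and Lemma \ref{l:facesZg}, and verify $\supp_I((\Delta_\C)_y) = \C(f)$ by expanding $(\Delta_\C)_y = \sum_{C\in\C}(\Delta_C)_y$. One small correction to your intuition in (i): for a non-monochromatic $C$ the face $(\Delta_C)_y = \Delta_{C_{\max}}$ is generally not a point but a smaller simplex; the argument still goes through because $C_{\max}\in\C$ (simplicial closure) and $y$ is constant on $C_{\max}$, so $\Delta_{C_{\max}}$ already appears among the summands of $\Delta_{\C(f)}$ --- exactly the computation the paper carries out in \eqref{e:DeltaC}.
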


\begin{proof}
By Theorem \ref{t:antipodeSHG}, the antipode of $\C$ is given by the face structure of the polytope $\Delta_\C$, which is equivalent to the face structure of the zonotope $Z_g$ by Lemma \ref{l:SCGP}. 
Lemma \ref{l:facesZg} tells us that the faces of these polytopes are in bijection with the pairs of a flat $f$ of $g$ and an acyclic orientation $o$ of $g/f$. 
Recall from that proof that the maximal face $(\Delta_\C)_y$ in a direction $y \in \Rb^I$ depends only on a flat $f=f_y$ of $g$ and an orientation $o=o_y$ of $g/f$ determined by $y$. 
The flat $f$ of $g$ consists of the edges $ij$ such that $y(i)=y(j)$;
the acyclic orientation $o$ of $g/f$ will be irrelevant here. 

The corollary will now follow from the claim that the support of the $(|I|-c)$-dimensional face $(\Delta_\C)_y$ equals $\C(f)$, independently of the choice of $o$.
To prove this claim, we will use the following expressions:
\begin{equation}\label{e:DeltaC}
(\Delta_\C)_y = \sum_{C \in \C} (\Delta_C)_y, \qquad
\Delta_{\C(f)} = \sum_{C \in \C \, : \atop y \textrm{ is constant on } C} \Delta_C.
\end{equation}
We will show that they have the same summands, possibly with different multiplicities.
 
\noindent $\longrightarrow$: 
For each $C \in \C$ we have $(\Delta_C)_y = \Delta_{C_{max}}$ where $C_{max} = \{c \in C \, | \, y(c) \textrm{ is maximum}\}$. Clearly $y$ is constant on $C_{max}$, so this is a summand of $\Delta_{\C(f)}$. 
 
\noindent $\longleftarrow$: 
For any summand $\Delta_C$ of $\Delta_{\C(f)}$, $C$ is a face of the simplicial complex $\C$ where $y$ is constant, so $\Delta_C = \Delta_{C_{max}} = (\Delta_C)_y$ is a summand of $(\Delta_{\C})_y$.
 
\noindent  
This proves the claim that $\supp (\Delta_\C)_y = \C(f)$, and the desired result follows.
\end{proof}

The proof above gives a simple geometric explanation for the striking similarity between the antipode formulas for the Hopf algebra of graphs $\wG$ and the Hopf algebra of simplicial complexes $\wSC$: these formulas have the same combinatorial structure because they  are controlled by polytopes that are normally equivalent.

\section{$\rBS$: {Building sets and nestohedra}}\label{s:BS}

In this section we study \emph{building sets}, a second family of hypergraphs whose hypergraphic polytope has an elegant combinatorial structure. This allows us to describe the Hopf theoretic structure of building sets very explicitly.

Building sets were introduced independently and almost simultaneously in two very different contexts by De Concini and Procesi \cite{deconcini95} in their construction of the wonderful compactification of a hyperplane arrangement, and by Schmitt \cite{schmitt95:_hopf} (who called them \emph{Whitney systems}) in an effort to abstract the notion of connectedness. We follow \cite{postnikov09}; see also  \cite{feichtner04, feichtner05,grujic2012hopf}.

\begin{definition} \label{d:buildingset}  A collection $\B$ of subsets of a set $I$ is a \emph{building set} on $I$ if it satisfies the following conditions:

\noindent $\bullet$
If $J, K \in \B$ and $J \cap K \neq \emptyset$ then $J \cup K \in \B$

\noindent $\bullet$
For all $i \in I$, $\{i\} \in \B$.

\noindent We call the sets in $\B$ \emph{connected}.
\end{definition}

We call the maximal sets of a building set $\B$ its \emph{connected components}; one may show that they form a partition of $I$. If $I \in \B$ then we say $\B$ is \emph{connected}.

One prototypical example of a building set comes from a graph $w$ on vertex set $I$. The connected sets are the subsets $J \subseteq I$ for which the induced subgraph of $w$ on $J$ is connected. This family of \emph{graphical building sets} is the subject of Section \ref{s:W}. 

\vspace{-.5cm}

\begin{example} \label{e:buildingset}
The graphical building set for the path $\wzero$ on $[3]$ 
is the hypergraph $\{\emptyset, 1, 2, 3, 12, 23, 123\}$ of Example \ref{ex:123}.
\end{example}

Another example of a building set comes from a matroid $m$ on $I$. The connected sets of $m$ form a building set on $I$. We recall that a subset $J \subseteq I$ of a matroid is \emph{connected} if for every pair of elements $x, y \in J$ there exists a circuit $C$ (a minimal set with $r(C) < |C|$) such that $\{x,y\} \subseteq C \subseteq J$.

\subsection{The Hopf monoid of building sets} \label{ss:B}

Let $\rBS[I]$ denote the species of buldling sets on $I$. The species $\rBS$ becomes a Hopf monoid with the following additional structure. 

Let $I = S \sqcup T$ be a decomposition. 

\noindent $\bullet$ 
The product of two building sets $\B_1\in\rBS[S]$ and $\B_2\in\rBS[T]$ is their disjoint union.

\noindent $\bullet$ 
The coproduct of a building set $\B \in \rBS[I]$ is $(\B|_S, \B/_S) \in \rBS[S] \times \rBS[T]$, where the restriction and contraction of $\B$ with respect to $S$ are defined as
 \begin{eqnarray*}
 \B|_S & = & \{B \, : \, B \in \B, \, B \subseteq S\} \\
 \B/_S & = & \{B \subseteq T \, : \, A \sqcup B \in \B \textrm{ for some } A \subseteq S\}.  
 \end{eqnarray*}

One may check that these two collections are indeed building sets, and that the operations defined above satisfy the axioms of Hopf monoid.

\begin{proposition}\label{p:BSintoSHG}
The Hopf monoid of building sets $\rBS$ is a submonoid of the Hopf monoid of simple hypergraphs $\rSHG$.
\end{proposition}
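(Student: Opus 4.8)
The plan is to observe that building sets are, in particular, simple hypergraphs, and that the two Hopf monoid structures are given by literally the same formulas; the only real content is to verify that $\rBS$ is closed under the operations of $\rSHG$. First I would record the inclusion of species: every building set $\B$ on $I$ is a repetition-free collection of subsets of $I$, so (adjoining $\emptyset$ in accordance with our convention for hypergraphs) it is an element of $\rSHG[I]$, and this inclusion is manifestly natural. The product in both $\rBS$ and $\rSHG$ is disjoint union, and the restriction is $\{B \in \B : B \subseteq S\}$ in both. Moreover, the excerpt already records that the contraction of a simple hypergraph admits the description $\H/_S = \{B \subseteq T : A \sqcup B \in \H \text{ for some } A \subseteq S\}$, which is exactly the formula defining $\B/_S$; note also that for $S \neq \emptyset$ the element $\emptyset$ lies in $\B/_S$ via $\{i\} = \emptyset \sqcup \{i\} \in \B$ with $i \in S$, matching the $\rSHG$ convention. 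Hence the structure maps of $\rSHG$, applied to building sets, reproduce those of $\rBS$, and it remains only to check that the outputs are again building sets.

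I would then verify the three closure statements, each a short check against Definition \ref{d:buildingset}. For the product, if $\B_1$ on $S$ and $\B_2$ on $T$ are building sets, then $\B_1 \sqcup \B_2$ contains every singleton, and any two of its members meeting in a nonempty set must lie in the same factor (members of distinct factors are disjoint), so their union stays in that factor. For the restriction $\B|_S$, every singleton $\{i\}$ with $i \in S$ survives, and if $J, K \in \B|_S$ satisfy $J \cap K \neq \emptyset$ then $J \cup K \in \B$ with $J \cup K \subseteq S$, so $J \cup K \in \B|_S$.

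For the contraction $\B/_S$, the singleton $\{t\}$ with $t \in T$ arises from $\emptyset \sqcup \{t\} = \{t\} \in \B$; and if $B_1, B_2 \in \B/_S$ satisfy $B_1 \cap B_2 \neq \emptyset$, I would choose $A_1, A_2 \subseteq S$ with $A_i \sqcup B_i \in \B$. Since $(A_1 \sqcup B_1) \cap (A_2 \sqcup B_2) \supseteq B_1 \cap B_2 \neq \emptyset$, the building-set axiom gives $(A_1 \cup A_2) \sqcup (B_1 \cup B_2) \in \B$, whence $B_1 \cup B_2 \in \B/_S$. This is the one step that uses the union axiom essentially, and the one I expect to need the most care: because the witness $A$ certifying membership in $\B/_S$ is not canonical, one must combine the two witnesses $A_1, A_2$ and exploit that $B_1 \cap B_2$ alone already forces the two connected sets to meet. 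With these three checks, $\rBS$ is a subspecies of $\rSHG$ closed under product, restriction, and contraction, so the Hopf axioms are inherited and $\rBS$ is a Hopf submonoid of $\rSHG$, as claimed.
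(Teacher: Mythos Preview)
Your proof is correct and follows the same approach as the paper: observe that building sets are simple hypergraphs and that the product, restriction, and contraction in $\rBS$ and $\rSHG$ are given by identical formulas. The paper's proof is a single sentence relying on an earlier ``one may check'' that $\B|_S$ and $\B/_S$ are again building sets; you have simply filled in those closure verifications explicitly, and your argument for the contraction case (combining two witnesses $A_1, A_2$) is exactly the right one.
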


\begin{proof}
Building sets are simple hypergraphs, and the product, restriction, and contraction operations for these two families are defined identically.
\end{proof}

Note that this Hopf structure is essentially the same as the one defined by Gruji{\'c}
in \cite{grujic2014quasisymmetric}, but different from the (cocommutative) Hopf algebras of building sets defined in \cite{grujic2012hopf,schmitt95:_hopf}.

\subsection{Nestohedra} \label{ss:nestohedra}

Since each building set $\B$ is a hypergraph, we can model it polytopally using its hypergraphic polytope, which is called the \emph{nestohedron}
\[
\Delta_\B = \sum_{J \in \B} \Delta_J.
\]
Unlike general hypergraphic polytopes, there is an explicit combinatorial description of the faces of the nestohedron $\Delta_\B$; they are in bijection with the \emph{nested sets} for $\B$ and with the \emph{$\B$-forests}, two equivalent families of objects which we now define.

\begin{definition} \label{d:nestedset} \cite{feichtner05, postnikov09}
A \emph{nested set} $\N$ for a building set $\B$ is a subset $\N \subseteq \B$ such that: \\
(N1) If $J,K \in \N$ then $J \subseteq K$ or $K \subseteq J$ or $J \cap K = \emptyset$. \\
(N2) If $J_1, \ldots, J_k \in \N$ are pairwise incomparable and $k \geq 2$ then $J_1 \cup \cdots \cup J_k \notin \B$. \\
(N3) All connected components of $\B$ are in $\N$. \\ 
The nested sets of $\B$ form a simplicial complex, called the \emph{nested set complex} of $\B$.
\end{definition}

\begin{example}
The collection $\N=\{3,4,6,7,379,48,135679,123456789\}$ is a nested set for the graphical building set of the graph shown in Figure \ref{f:B-tree}(a); see also Figure \ref{f:tubing}. 
\end{example}

\begin{figure}[h]
\centering
\includegraphics[scale=.2]{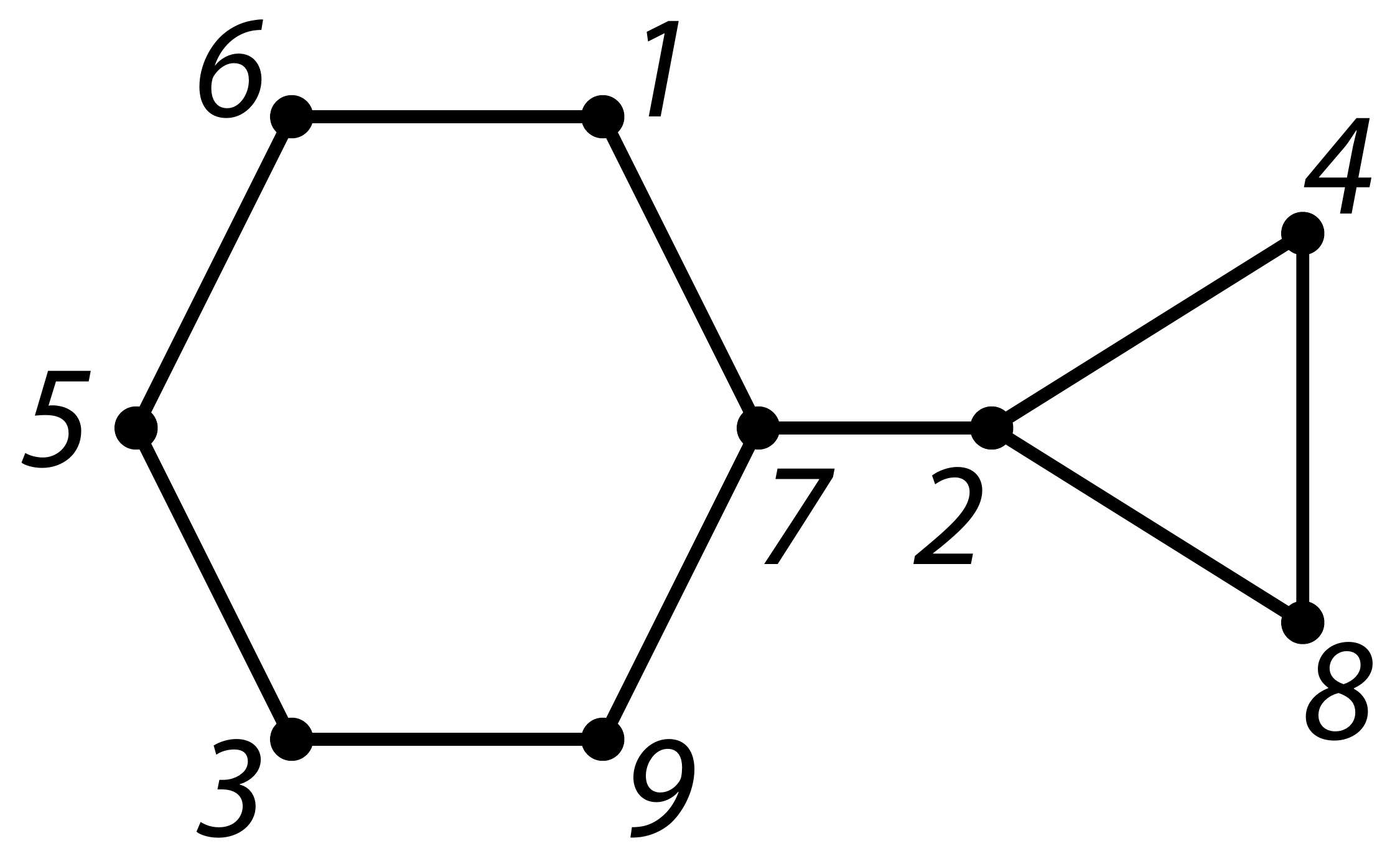}  \qquad  \qquad 
\includegraphics[scale=.25]{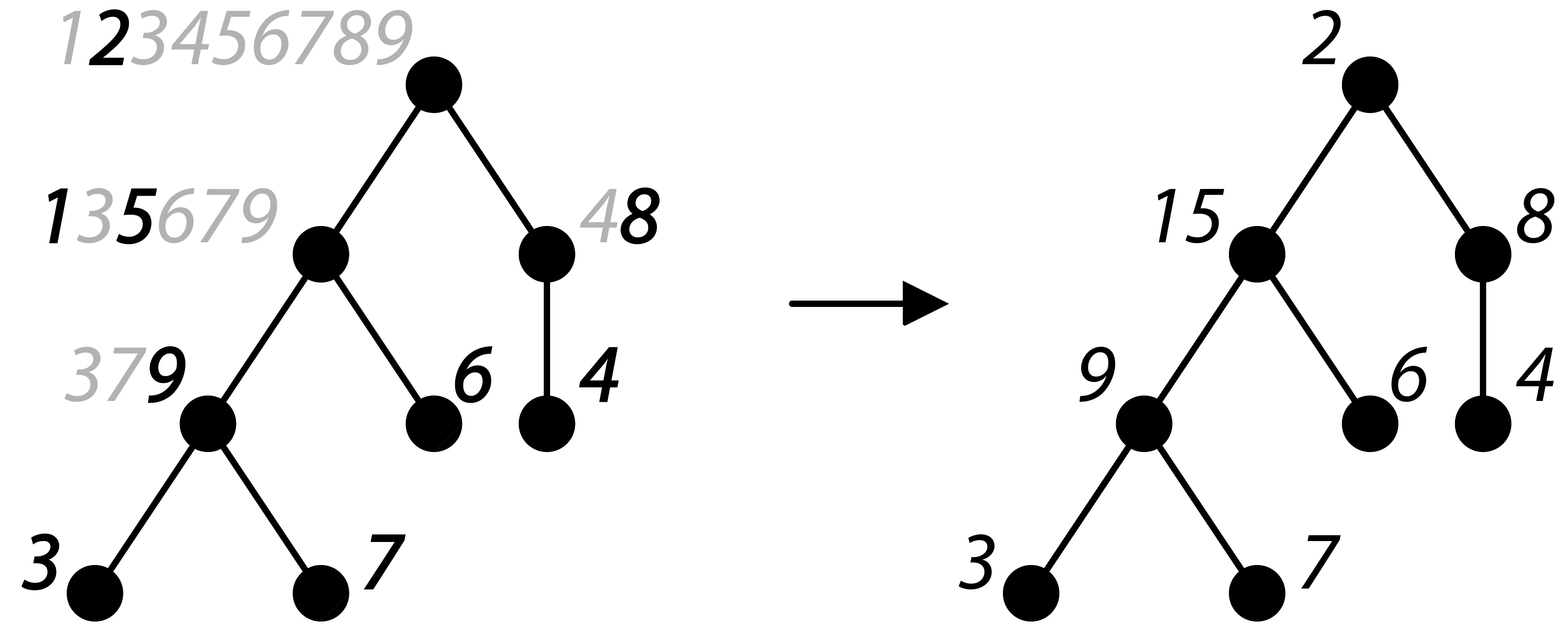} 
\caption{(a) A graph $w$. (b) A nested set for the graphical building set $\B$ of $w$ and the corresponding $\B$-forest.}\label{f:B-tree}
\end{figure} 

As shown in \cite{feichtner05, postnikov09} and 
illustrated in Figure \ref{f:B-tree}(b), nested sets for $\B$ are in bijection with a family of objects called $\B$-forests, as follows. We may regard a nested set $\N$ as a poset ordered by containment. We then  relabel each node by removing all elements which appear in nodes below it; the result is the corresponding $\B$-forest. We now define these objects more precisely.

\begin{definition} \cite{feichtner05, postnikov09}
Given a building set $\B$ on $I$, a \emph{$\B$-forest} $\N$ is a rooted forest whose vertices are labeled with non-empty sets 
partitioning $I$ such that: \\
(F1) For any node $S$,  $\N_{\leq S} \in \B$. \\
(F2) If $S_1, \ldots, S_k$ are pairwise incomparable and $k \geq 2$, $\bigcup_{i=1}^k \N_{\leq S_i} \notin \B$. \\
(F3) If $R_1, \ldots, R_r$ are the roots of $\N$, then the sets $\N_{\leq R_1}, \ldots, \N_{\leq R_r}$ are precisely the connected components of $\B$. 
\end{definition}

Here $\leq$ denotes the partial order on the nodes of the forest where all branches are directed up towards the roots. Also we denote  $\N_{\leq S} := \bigsqcup_{T \leq S} T$. 

\begin{proposition} \cite{feichtner05, postnikov09}
For any building set $\B$ on $I$, there is a bijection between the nested sets for $\B$ and the $\B$-forests.
\end{proposition}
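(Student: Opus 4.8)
The plan is to exhibit the bijection explicitly in both directions and verify that the two maps are mutually inverse, with all the substantive work concentrated in checking that each map lands in the correct family of objects. Write $\Phi$ for the map from nested sets to $\B$-forests and $\Psi$ for its proposed inverse.

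I would define $\Phi$ first. Given a nested set $\N$, axiom (N1) says that any two of its members are nested or disjoint, so $(\N, \subseteq)$ is a forest poset. Relabel each node $S \in \N$ by $\ell(S) := S \setminus \bigcup_{T \in \N,\, T \subsetneq S} T$, which equals $S$ minus the union of the immediate children of $S$ in $\N$. First I would check that every $\ell(S)$ is nonempty: if $S$ had a single child $T$ then $S = T$, which is impossible, and if $S$ equalled the disjoint union of $k \geq 2$ pairwise incomparable children, then that union would lie in $\B$, contradicting (N2). A short laminarity argument then shows the sets $\{\ell(S)\}_{S \in \N}$ are pairwise disjoint, and (N3) together with the fact that the components of a building set partition $I$ shows they cover $I$; hence they form a set partition of $I$, so $\Phi(\N)$ is a labeled rooted forest.

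Next I would verify the forest axioms. The key computation is that for each node $S$ one has $(\Phi\N)_{\leq S} = \bigsqcup_{T \leq S} \ell(T) = S$, since relabeling merely redistributes the elements of $S$ among $S$ and its descendants. Then (F1) is immediate from $S \in \N \subseteq \B$; (F2) follows because incomparable members of $\N$ are disjoint (by laminarity) while (N2) forbids their union from lying in $\B$; and (F3) follows once one checks that the maximal elements of $\N$ are exactly the connected components of $\B$, using (N3) and the fact that every member of $\B$ sits inside a component.

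For the inverse, define $\Psi(\N) := \{\N_{\leq S} : S \text{ a node of } \N\}$ for a $\B$-forest $\N$. Each $\N_{\leq S}$ lies in $\B$ by (F1); laminarity of down-sets in a forest gives (N1); (F2) gives (N2); and (F3) gives (N3), so $\Psi(\N)$ is a nested set. Since the labels are nonempty and partition $I$, the assignment $S \mapsto \N_{\leq S}$ is injective and order-preserving, so $\Psi(\N)$ carries the same poset as the forest. Finally I would check $\Psi \circ \Phi = \id$ using the identity $(\Phi\N)_{\leq S} = S$ above, and $\Phi \circ \Psi = \id$ by observing that relabeling $\N_{\leq S}$ through removal of $\bigcup_{T \lessdot S} \N_{\leq T}$ returns the original label of $S$. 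The only genuinely delicate point is the nonemptiness of the labels $\ell(S)$, which is precisely where axiom (N2) is essential; everything else is routine bookkeeping with laminar families.
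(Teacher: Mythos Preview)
Your proposal is correct and follows exactly the bijection the paper describes just before the proposition: regard $\N$ as a poset under containment and relabel each node by removing the elements appearing strictly below it. The paper does not actually prove this proposition but simply cites \cite{feichtner05, postnikov09}; your write-up supplies the routine verifications (nonemptiness of labels via (N2), the identity $(\Phi\N)_{\leq S}=S$, and the mutual inversion) that the paper leaves to those references.
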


As the notation suggests, we will make no distinction between a nested set and its corresponding $\B$-forest.

Each $\B$-forest $\N$ gives rise to a building set 
\begin{equation}\label{e:B(N)}
\B(\N) := \bigsqcup_{S \textrm{ node of } \N} \B[\N_{< S}, \N_{\leq S}]
\end{equation}
where for $X \subseteq Y \subseteq I$ we define $\B[X, Y] := (\B|_{Y})/X = (\B/X)|_{Y-X}$
on $Y-X$.

\begin{theorem} \label{t:nestofaces} \cite{feichtner05, postnikov09}
Let $\B$ be a building set. There is an order-reversing bijection between the faces of the nestohedron $\Delta_\B$ and the nested sets of $\B$. If $\N$ is a nested set of $\B$ and $F_\N$ is the corresponding face of $\Delta_\B$, then $\dim F_\N = |I| - |\N|$ and $\supp_I(F_\N) = \B(\N)$.
\end{theorem}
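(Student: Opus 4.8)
The plan is to establish the bijection between faces of $\Delta_\B$ and nested sets of $\B$ by leveraging the general theory already developed: the antipode/face structure is controlled by the support map $\supp$ and by Proposition \ref{p:HGtoSHG}, which tells us that the normal equivalence class of a hypergraphic polytope $\Delta_\H$ depends only on its support. First I would recall the description of the faces of a Minkowski sum: for a direction $y \in \Rb^I$, the $y$-maximal face is $(\Delta_\B)_y = \sum_{J \in \B} (\Delta_J)_y = \sum_{J \in \B} \Delta_{J_{\max}}$, where $J_{\max} = \{j \in J : y(j) \textrm{ is maximal on } J\}$. The key observation is that, since $\Delta_\B$ is a generalized permutahedron, $(\Delta_\B)_y$ depends only on the face $\Bc_{S_1, \ldots, S_k}$ of the braid arrangement containing $y$, i.e.\ on the ordered set partition induced by the relative order of the coordinates of $y$. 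The combinatorial heart of the argument is then to show that the distinct faces obtained this way are precisely indexed by nested sets, or equivalently by $\B$-forests.

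The cleanest route is to set up the correspondence directly through $\B$-forests. Given a generic-enough direction $y$ inducing an ordered set partition, I would show that the resulting face determines, and is determined by, a $\B$-forest as follows: the level sets of $y$ on each connected set, together with the nesting structure forced by conditions (N1)--(N3) (equivalently (F1)--(F3)), recover exactly the tree structure. Concretely, a face of $\Delta_\B$ corresponds to choosing, compatibly, a ``direction of maximization'' on each connected set; the nested set $\N$ records which subsets $J \in \B$ appear as $J = J_{\max}$ for the chosen direction, and the $\B$-forest records how these are nested. I would verify that $\N$ satisfies (N1)--(N3): (N1) and (N2) follow because incomparable maximal sets whose union is connected would force a contradiction with the maximal-coordinate structure, and (N3) is automatic since each connected component contributes its full simplex. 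The dimension count $\dim F_\N = |I| - |\N|$ should follow from the fact that each node of the $\B$-forest reduces the dimension by contributing one independent defining equation, exactly as the connected components of a graph do for the graphic zonotope in Lemma \ref{l:facesZg}; more precisely, $\dim \Delta_\B = |I| - c(\B)$ where $c(\B)$ is the number of connected components, and each face corresponds to a refinement recorded by the forest.

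For the support statement $\supp_I(F_\N) = \B(\N)$, I would compute the support of $(\Delta_\B)_y$ explicitly. By the Minkowski-sum formula, the support of $(\Delta_\B)_y$ consists of the sets $J_{\max}$ as $J$ ranges over $\B$. I would match this against the definition $\B(\N) = \bigsqcup_{S \textrm{ node of } \N} \B[\N_{<S}, \N_{\leq S}]$ in (\ref{e:B(N)}): each $J \in \B$ has its maximal-coordinate set $J_{\max}$ lying in a single node block $S$ (namely the top node through which $J$ passes), and the restriction-contraction $\B[\N_{<S}, \N_{\leq S}] = (\B/\N_{<S})|_{\N_{\leq S} - \N_{<S}}$ precisely records the connected sets that become the maximizing faces within that block. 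This is essentially a bookkeeping argument using the definitions of restriction and contraction of building sets from Section \ref{ss:B}, combined with the explicit face formula.

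The main obstacle will be the combinatorial verification that the map from directions (ordered set partitions) to $\B$-forests is well-defined and bijective onto nested sets --- in particular, checking conditions (N1)--(N3) rigorously and confirming that two directions give the same face of $\Delta_\B$ if and only if they induce the same $\B$-forest. This is exactly where the building-set axioms (closure under union of intersecting members) are essential, and where one must be careful that the nesting condition (N2), forbidding unions of incomparable nested sets from being connected, corresponds precisely to the geometric fact that such a union would collapse the face to a lower-dimensional one or fail to be a genuine maximizing face. Since this bijection is the content of the cited results \cite{feichtner05, postnikov09}, I would either reprove it in this polytopal language or, more economically, invoke those references for the bijection and nested-set/forest equivalence, and supply only the identifications $\dim F_\N = |I| - |\N|$ and $\supp_I(F_\N) = \B(\N)$, which follow from the Minkowski-sum description together with our support formalism. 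I expect the support identity to be the most delicate new computation, requiring a careful tracking of how $J_{\max}$ distributes across the nodes of the forest.
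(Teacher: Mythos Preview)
The paper's proof is a single sentence: it simply states that the result is implicit in \cite[Proposition 3.5]{ardila2006bergmanCoxeter} and \cite[Theorem 7.4, 7.5]{postnikov09}, with no further argument. Your proposal sketches a genuine self-contained proof via the Minkowski-sum face formula $(\Delta_\B)_y = \sum_{J \in \B} \Delta_{J_{\max}}$ and a direct combinatorial matching with $\B$-forests, which is exactly the content of those cited proofs; you correctly anticipate this at the end when you note that one could ``more economically, invoke those references.'' That economical route is precisely what the paper does.

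Your detailed plan is sound and would work, but it is substantially more than the paper provides. The one part you flag as ``the most delicate new computation'' --- the support identity $\supp_I(F_\N) = \B(\N)$ --- is in fact the only assertion here that is not quite explicit in the standard references (the bijection and dimension count are), and the paper still defers it to being ``implicit'' there rather than writing it out. So your instinct that this piece deserves the most care is correct, but the paper does not supply that care either.
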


\begin{proof}
This is implicit in the proofs of \cite[Proposition 3.5]{ardila2006bergmanCoxeter} and \cite[Theorem 7.4, 7.5]{postnikov09}.
\end{proof}

In other words, the nestohedron $\Delta_\B$ is a simple polytope whose dual simplicial complex is isomorphic to the nested set complex of $\B$. An example is illustrated in Figure \ref{f:nesto}.

\begin{figure}[h]
\centering
\includegraphics[scale=.6]{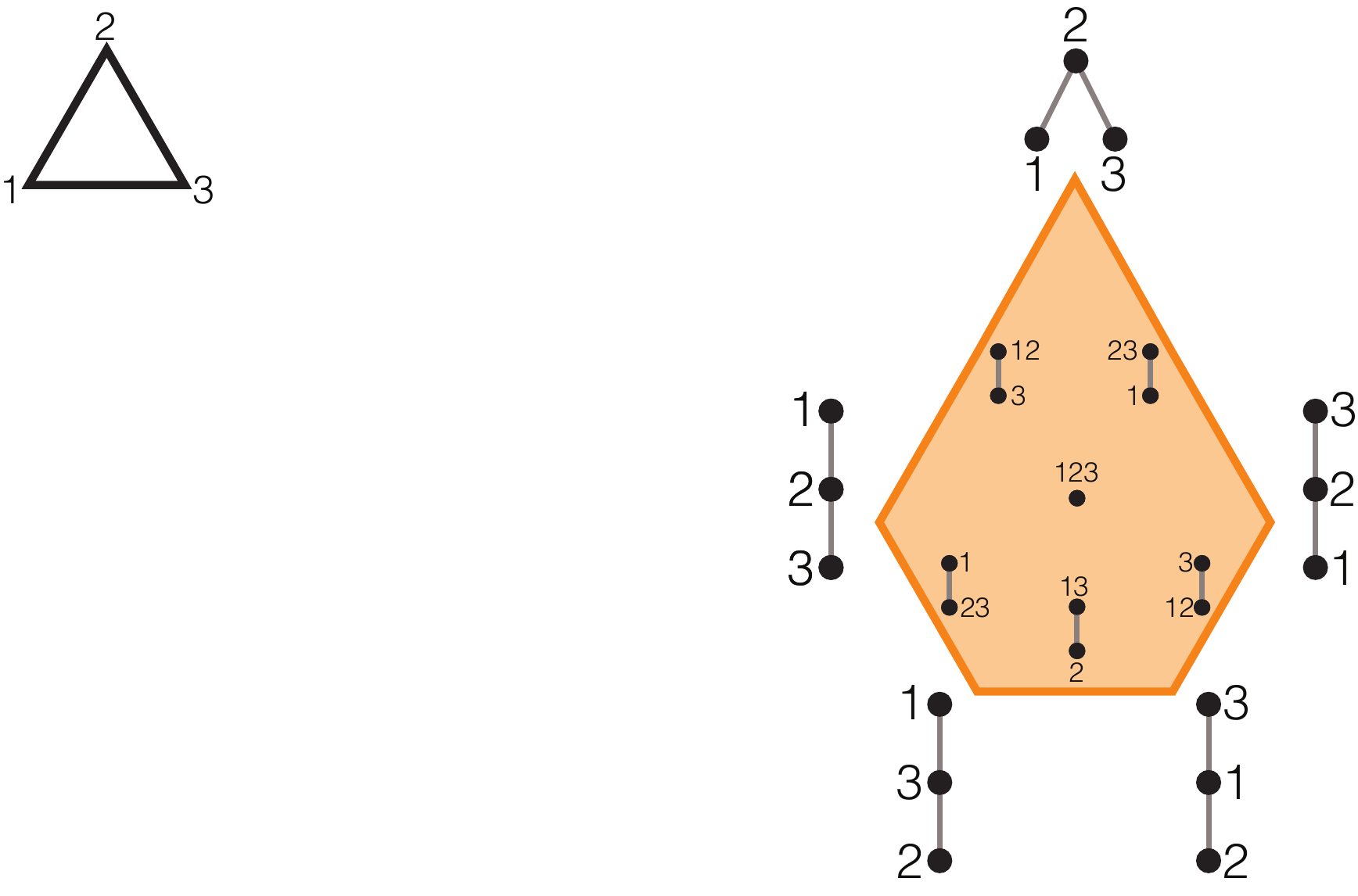} \hfill 
\caption{The hypergraphic polytope of Figure \ref{f:hypergraphic} is the nestohedron for the building set $\B=\{\emptyset, 1,2,3,12,23,123\}$; its faces are labeled by the $\B$-forests.}\label{f:nesto}
\end{figure}

\subsection{The antipode of building sets} \label{ss:antipodeBS}

Since building sets form a submonoid of simple hypergraphs by Proposition \ref{p:BSintoSHG}, we may use Theorem \ref{t:antipodeSHG} to compute the antipode of $\wBS$.

\begin{corollary}\label{c:antipodeB}
The antipode of the Hopf monoid of building sets $\wBS$ is given by the following \textbf{cancellation-free} expression. If $\B$ is a building set on $I$ then
\[
s_I(\B) = \sum_{\B-\textrm{forests } \N} (-1)^{|\N|} \B(\N)
\]
where for each $\B$-forest $\N$, $|\N|$ is the number of vertices of $\N$ and $\B(\N)$ is defined in (\ref{e:B(N)}).
\end{corollary}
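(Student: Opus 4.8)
The plan is to derive Corollary~\ref{c:antipodeB} as a direct specialization of Theorem~\ref{t:antipodeSHG}, using the combinatorial description of the faces of the nestohedron provided by Theorem~\ref{t:nestofaces}. Since building sets form a submonoid of simple hypergraphs by Proposition~\ref{p:BSintoSHG}, the antipode of $\wBS$ is computed by restricting the antipode formula for $\wSHG$. Theorem~\ref{t:antipodeSHG} states that
\[
\apode_I(\B) = \sum_{F \leq \Delta_\B} (-1)^{c(F)} \supp_I(F),
\]
summing over all nonempty faces $F$ of the hypergraphic polytope $\Delta_\B$, which in this case is the nestohedron.

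The key step is to translate this sum over faces into a sum over $\B$-forests. First I would invoke Theorem~\ref{t:nestofaces}, which gives an order-reversing bijection between the faces $F$ of the nestohedron $\Delta_\B$ and the nested sets $\N$ of $\B$ (equivalently, the $\B$-forests). Under this bijection, the face $F_\N$ corresponding to a $\B$-forest $\N$ satisfies the two identities we need: $\dim F_\N = |I| - |\N|$ and $\supp_I(F_\N) = \B(\N)$, where $\B(\N)$ is the building set defined in~\eqref{e:B(N)}. The first identity lets me rewrite the sign: recall from Theorem~\ref{t:antipodeSHG} that $c(F) = |I| - \dim F$ is the number of connected components of $\supp_I(F)$, so $c(F_\N) = |I| - \dim F_\N = |\N|$. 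The second identity identifies the summand $\supp_I(F_\N)$ with $\B(\N)$.

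Substituting these into the formula of Theorem~\ref{t:antipodeSHG}, and reindexing the sum over faces $F$ by the corresponding $\B$-forests $\N$, I obtain
\[
\apode_I(\B) = \sum_{\B\textrm{-forests } \N} (-1)^{|\N|} \B(\N),
\]
which is exactly the claimed formula. The expression is cancellation-free for the same reason as in Theorem~\ref{t:antipodeSHG}: distinct faces of $\Delta_\B$ of different dimensions have supports (hence building sets $\B(\N)$) that cannot coincide, and the bijection with $\B$-forests guarantees each term is indexed exactly once.

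I do not expect any serious obstacle here, since all the heavy lifting has already been done: the topological antipode computation lives in Theorem~\ref{t:antipode}, its transport to simple hypergraphs in Theorem~\ref{t:antipodeSHG}, and the face enumeration of the nestohedron in Theorem~\ref{t:nestofaces}. The only point requiring mild care is bookkeeping of the sign and the correct matching of the dimension-to-cardinality correspondence $\dim F_\N = |I| - |\N|$, and confirming that the support map sends $F_\N$ to precisely $\B(\N)$ as recorded in Theorem~\ref{t:nestofaces}; both are quoted directly from that theorem, so the proof reduces to assembling these ingredients.
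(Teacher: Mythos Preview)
Your proposal is correct and follows essentially the same approach as the paper: invoke Theorem~\ref{t:antipodeSHG} (via the inclusion $\rBS \hookrightarrow \rSHG$), then use Theorem~\ref{t:nestofaces} to reindex the sum over faces of $\Delta_\B$ by $\B$-forests, matching $c(F_\N)=|\N|$ and $\supp_I(F_\N)=\B(\N)$, with cancellation-freeness following because faces of different dimension have different supports. Your write-up is more detailed than the paper's, but the argument is the same.
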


\begin{proof}
By Theorem \ref{t:antipodeSHG}, the antipode of $\wBS$ is given by the face structure of the nestohedron $\Delta_\B$. 
It remains to invoke Theorem \ref{t:nestofaces} which tells us the dimension and the  building set supporting  each face of $\Delta_\B$.
The formula is cancellation-free since faces of different dimensions have different supports.
\end{proof}

Note that the formula of Corollary \ref{c:antipodeB} is not grouping-free. For example, all vertices of $\Delta_\B$ map to the trivial building set $\{\{i\}, i \in I\} \cup \{\emptyset\}$.

\begin{example}
Let us return to the building set $\{\emptyset, 1, 2, 3, 12, 23, 123\}$ of Example \ref{e:buildingset}. We computed  its antipode in Example \ref{e:antipodeSHG}:
\[
\begin{array}{rl}
\apode_{[3]}(\H) =&
\H - 2 \{\emptyset,1,2,3,23\} - 2\{\emptyset,1,2,3,12\} - \{\emptyset,1,2,3,12\} + 5 \{\emptyset, 1, 2, 3\}
  \end{array}
\]
and we now encourage the reader to compare this with the expression in Corollary \ref{c:antipodeB}.
\end{example}

\section{$\rW$: {Simple graphs, ripping and sewing, and graph associahedra}}\label{s:W}

In Section \ref{s:BS} we briefly mentioned how connectivity in graphs was one of the motivations to study building sets. In this section we focus on the \emph{graphical building sets} that arise in this way, which give rise to a new Hopf monoid $\rW$ on graphs. This \emph{ripping and sewing} Hopf monoid should not be confused with the monoids $\rG$, $\rSG$, and $\rGamma$ of Sections \ref{s:G} and \ref{ss:cycle-matroid}.

\begin{definition}
Let $w$ be a simple graph whose vertex set is $I$.
A subset $J \subset I$ is a \emph{tube} if the induced subgraph of $w$ on $J$ is connected. The set of tubes of $w$ is a building set; we denote it $\tubes(w)$ and call it the \emph{graphical building set of $w$}.
\end{definition}

Let $\rWBS[I]$ be the set of graphical building sets on $I$. We will see in Proposition \ref{p:WintoWB} that graphical building sets form a submonoid of $\rBS$, which we now describe directly in terms of the graphs.


\subsection{The ripping and sewing Hopf monoid of simple graphs}

\begin{definition}
Given a simple graph $w$ whose vertex set is $I$, and a partition $I = S \sqcup T$,  an \emph{$S$-thread} is a path in $w$ whose initial and final vertices are in $T$, and all of whose intermediate vertices (if any) are in $S$. 

Define the operations of \emph{ripping and sewing} as follows.

\noindent $\bullet$
 \emph{ripping out $T$}: $w|_S$ is the induced subgraph on $S$, obtained by ``ripping out"  every vertex of $T$ and every edge incident to $T$. 

\noindent $\bullet$
\emph{sewing through $S$}: $w/_S$ is the simple graph on $T$ where we add or ``sew in" an edge $uv$ between vertices $u,v \in T$ if the graph $w$ contains an \emph{$S$-thread} from $u$ to $v$.
Note that this includes all edges of $w|_T$.
\end{definition}


For example, let
$I=\{a,b,c,d,e,f,g\}, S=\{a,b,c,d\},$ and $T=\{e,f,g\}$.
 \begin{figure}[h]
\centering
If $w = $
\includegraphics[scale=.5]{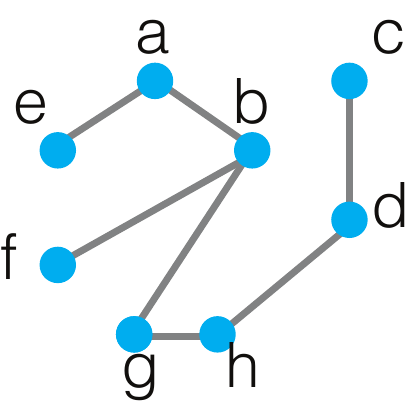} 
\qquad then \qquad $w|_S = $ 
\includegraphics[scale=.5]{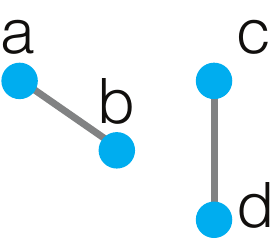}
\qquad and \qquad $w/_S = $
\includegraphics[scale=.5]{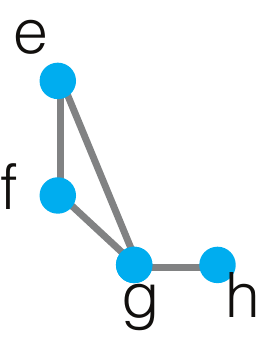}
\,\, .
\end{figure}


Let $\rW[I]$ be the set of simple graphs on vertex set $I$. We turn the species $\rW$ into the \emph{ripping and sewing  Hopf monoid} with the following operations.

Let $I = S \sqcup T$ be a decomposition.

\noindent $\bullet$ The product of two simple graphs $w_1 \in \rW[S]$ and $w_2 \in \rW[T]$ is their disjoint union.

\noindent $\bullet$ The coproduct of a simple graph $w \in \rW[I]$ is $(w|_S, w/_S) \in \rW[S] \times \rW[T]$ where $w|_S$ and $w/_S$ are obtained from $w$ by ripping out $T$ and sewing through $S$, respectively.

\noindent One easily checks that this is indeed a Hopf monoid.

%
%

\begin{proposition}\label{p:WintoWB} 
The species $\rWBS$ of graphical building sets is a submonoid of the Hopf monoid of building sets.
Furthermore, the tube maps $w \mapsto \tubes(w)$ give an isomorphism of Hopf monoids $\rW \cong \rWBS \into \rBS$.
\end{proposition}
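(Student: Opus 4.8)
The proof has two parts: showing that graphical building sets form a submonoid of $\rBS$, and showing that the tube map $w \mapsto \tubes(w)$ is an isomorphism $\rW \cong \rWBS$. Since a submonoid of a Hopf monoid is automatically a Hopf monoid, and an isomorphism transports the Hopf structure, the crux is to verify that the abstract ripping-and-sewing operations on $\rW$ match exactly the restriction and contraction of building sets on $\rWBS$. The plan is to prove the second assertion carefully, as the first will fall out of it.

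First I would check that the tube map is a well-defined bijection onto its image. The map $w \mapsto \tubes(w)$ is injective because the graph $w$ can be recovered from $\tubes(w)$: an edge $\{i,j\}$ belongs to $w$ if and only if $\{i,j\} \in \tubes(w)$, i.e.\ the two-element tubes are exactly the edges. By definition $\rWBS[I]$ is the image of this map, so it is a bijection $\rW[I] \to \rWBS[I]$, natural in $I$.

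The heart of the argument is to show this bijection intertwines the operations. Fix a decomposition $I = S \sqcup T$. For the product, the graphical building set of a disjoint union $w_1 \sqcup w_2$ is the disjoint union $\tubes(w_1) \sqcup \tubes(w_2)$, since a tube cannot meet both $S$ and $T$; this matches the product in $\rBS$. For restriction I would verify that $\tubes(w)|_S = \tubes(w|_S)$: a subset $J \subseteq S$ is connected in $w$ if and only if it is connected in the induced subgraph $w|_S$, which is exactly the restriction $\{B \in \tubes(w) : B \subseteq S\}$ of the building set. For contraction, the key identity to establish is $\tubes(w)/_S = \tubes(w/_S)$. Unwinding the definition of building-set contraction, $\tubes(w)/_S = \{B \subseteq T : A \sqcup B \in \tubes(w) \text{ for some } A \subseteq S\}$, i.e.\ the nonempty subsets $B \subseteq T$ such that $B$ together with some set of $S$-vertices induces a connected subgraph of $w$. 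I would argue that this collection is precisely the set of tubes of the sewn graph $w/_S$: adding to $B$ a minimal connecting set $A \subseteq S$ and contracting the $S$-vertices corresponds exactly to replacing each $S$-thread by a sewn edge, so $B \subseteq T$ is connected using $S$-vertices in $w$ if and only if $B$ is connected in $w/_S$ (where $S$-threads have become edges). This equivalence is the main obstacle, requiring a clean induction or path-surgery argument showing that a path in $w$ through $S$ between two $T$-vertices corresponds to an edge in $w/_S$, and conversely.

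Having matched all three operations, I conclude that $\tubes$ is an isomorphism of Hopf monoids $\rW \cong \rWBS$. For the first assertion, I would observe that $\rWBS[I] \subseteq \rBS[I]$ is a subspecies, and the computations just performed show that the product and coproduct of $\rBS$, restricted to graphical building sets, stay within $\rWBS$ and agree with the transported operations; hence $\rWBS$ is a Hopf submonoid of $\rBS$, and the composite $\rW \cong \rWBS \hookrightarrow \rBS$ is the desired inclusion. I expect the product and restriction checks to be routine, with essentially all the content concentrated in the contraction identity relating $S$-threads to sewn edges.
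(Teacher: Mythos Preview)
Your proposal is correct and follows essentially the same approach as the paper: the paper too verifies that $\tubes$ preserves products (via disjoint union), restrictions (trivially), and contractions (the key step, proved by the same two-way path/$S$-thread argument you sketch), then observes injectivity because edges are exactly the size-$2$ tubes. The only difference is cosmetic ordering---the paper establishes the morphism property first and injectivity last, while you do the reverse---but the content and the identification of the contraction identity as the main obstacle are identical.
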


\begin{proof}
We first prove that the map $\tubes:\rW \rightarrow \rBS$ is a morphism of Hopf monoids. 
We do know that the set $\tubes(w)$ is a building set for any $w$.
Also $\tubes$ preserves products because  $\tubes(w_1 \sqcup w_2) = \tubes(w_1) \sqcup \tubes(w_2)$ for  $w_1 \in \rW[S]$ and $w_2 \in \rW[T]$. It remains to check that the map $\tubes$ preserves coproducts; that is, 
\[
\tubes(w)|_S = \tubes(w|_S), \qquad \tubes(w)/_S = \tubes(w/_S) \qquad
\]
 for any simple graph $w$ on $I$ and any subset $S \subseteq I$.
 
The first statement is clear: the connected sets in $w$ which are subsets of $S$ are precisely the connected sets in $w|_S$, the induced subgraph on $S$. Let us prove the second one.
%

$\subseteq:$ Suppose $B \in \tubes(w)/_S$, so $A \sqcup B$ is a tube of $w$ for some subset $A \subseteq S$. To show $B \in \tubes(w/_S)$, we need to show that for any $u,v \in B$ there is a path from $u$ to $v$ in $w/_S$.

We do have a path $P$ from $u$ to $v$ inside the induced subgraph $A \sqcup B$ of $w$, since this is a tube in $w$. This path may contain vertices of $S$ and $T$; let $u=t_0, t_1, \ldots, t_{k-1}, t_k=v$ be the vertices of $T$ that it visits, in that order. Now, for each $0 \leq i \leq k-1$, the path $P$ contains an  $S$-thread $t_i s_1 \ldots s_l t_{i+1}$ from $t_i$ to $t_{i+1}$ for some $l \geq 0$, so $t_it_{i+1}$ is an edge of  $w/_S$. It follows that $t_0 t_1 \ldots t_{k-1} t_k$ is our desired path from $u$ to $v$ in $w/_S$. We conclude that $B \in \tubes(w/_S)$. 

$\supseteq:$ 
Conversely, suppose $B \in \tubes(w/_S)$. For each edge $uv$ in $w/_S$, choose an $S$-thread from $u$ to $v$; let $S_{uv}\subseteq S$ be the set of vertices on that $S$-thread other than $u$ and $v$. Let $A \subseteq S$ be the union of the sets $S_{uv}$ as we range over all edges $uv$ of $w/_S$. We claim that $A \sqcup B$ is a tube in $w$. To show this, first note that any two vertices $u, v$ of $B$ are connected by an $S$-thread inside $A \sqcup B$ by construction. Furthermore, any vertex of $A$ belongs to the set $S_{uv}$ for some $u,v \in B$, and hence is connected to $u$ and $v$  by a path in $A \sqcup B$.  It follows that $A \sqcup B$ is a tube of $w$ and  $B \in \tubes(w)/_S$ as desired. 

\medskip

Thus we have proved that $\tubes: \rW \rightarrow \rBS$ is a morphism of Hopf monoids, and hence that its image $\rWBS$ is a submonoid of $\rBS$. It remains to prove that the surjective map $\tubes:\rW \onto \rWBS$ is also injective. To see this, notice that we can easily recover a simple graph $w \in \rW[I]$ from its graphical building set $\tubes(w)$: the edges of $w$ are precisely the tubes of size $2$.
\end{proof}

\subsection{Graph associahedra}

For a simple graph $w$ on $I$ we define the \emph{graph associahedron} $\Delta_w \subset \Rb I$ to be
\[
\Delta_w := \sum_{\tau \in \tubes(w)} \Delta_\tau.
\]
Graph associahedra are the nestohedra corresponding to graphical building sets. Let us recall their combinatorial structure, as described in \cite{carr2006coxeter, postnikov09}.

\begin{definition} \label{d:tubing} Let $w$ be a simple graph. A \emph{tubing} is a set $t$ of tubes such that:

\noindent $\bullet$ any two tubes $\tau_1$ and $\tau_2$ in $t$ are disjoint or nested: we have $\tau_1 \subseteq \tau_2, \tau_1 \supseteq \tau_2, $ or $\tau_1 \cap \tau_2 = \emptyset$.

\noindent $\bullet$ if $\tau_1, \ldots, \tau_k$ are pairwise disjoint tubes in $t$, then $\tau_1 \cup \cdots \cup \tau_k$ is not a tube of $w$.

\noindent $\bullet$ every connected component of $w$ is a tube in $t$.
%
\end{definition}

\noindent Comparing this with Definition \ref{d:nestedset} we see that the tubings of $w$ are precisely the nested sets for the graphical building set $\tubes(w)$. An example is shown in Figure \ref{f:tubing}.

\begin{figure}[h]
\centering
\includegraphics[scale=.2]{Figures/graph.pdf}  \qquad \qquad \includegraphics[scale=.2]{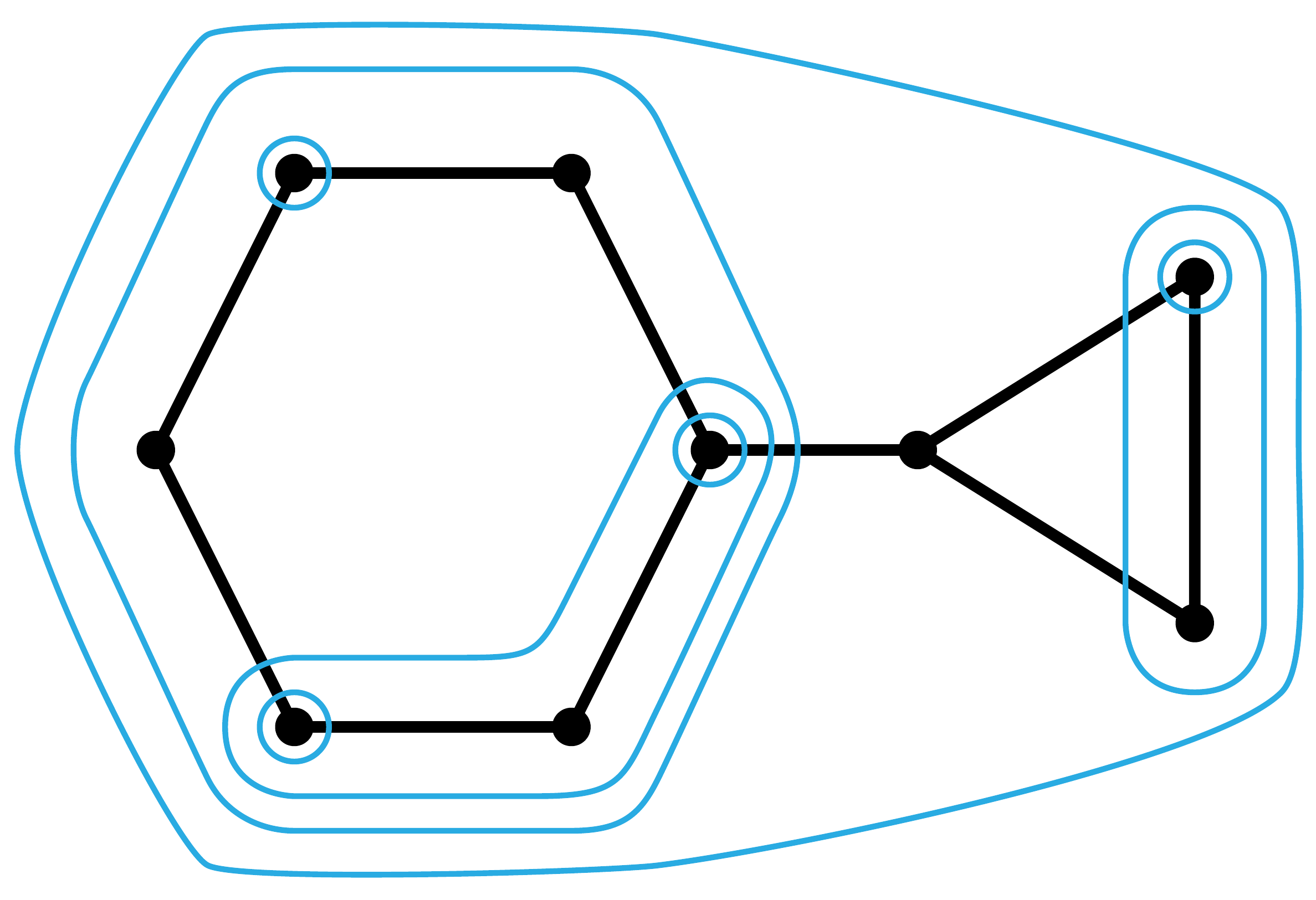}  
\caption{The nested set $\N=\{3,4,6,7,379,48,135679,123456789\}$ of Figure \ref{f:B-tree}, now drawn as a tubing.}\label{f:tubing}
\end{figure}

For each tube $\tau$ in a tubing $t$, let $t_{<\tau}$ be the union of the tubes of $t$ that are strictly contained in $\tau$, and let the \emph{essential set} of $\tau$ be $\ess(\tau) = \tau - t_{<\tau}$. As $\tau$ ranges over the tubes of $t$, the essential sets $\ess(\tau)$ partition $I$.

Each tubing $t$ of $w$ gives rise to a simple graph  
\begin{equation}\label{e:w(t)}
w(t) := \bigsqcup_{\tau \textrm{ tube of } t} 
w[t_{<\tau},\,  \tau], 
\end{equation}
where $w[t_{<\tau}, \, \tau] := (w|_\tau)/_{t_{<\tau}}$ is the simple graph on $\ess(\tau)$ obtained by restricting $w$ to $\tau$ and then sewing through the tubes strictly inside of $\tau$. Since the essential sets of $\tau$ partition $I$, $w(t)$ is a simple graph on $I$.

\begin{theorem} \label{t:graphassofaces} 
\cite{carr2006coxeter, postnikov09}
Let $w$ be a simple graph. There is an order-reversing bijection between the faces of the graph associahedron $\Delta_w$ and the tubings of $w$. If $t$ is a tubing of $w$ and $F_t$ is the corresponding face of $\Delta_w$, then $\dim F_t = |I|-|t|$ and $\supp_I(F_t) = w(t)$.
\end{theorem}

\begin{proof}
This is the result of specializing Theorem \ref{t:nestofaces} to graphical building sets and graph associahedra.
\end{proof}

An example is illustrated in Figure \ref{f:graphasso}.

\begin{figure}[h]
\centering
\includegraphics[scale=.58]{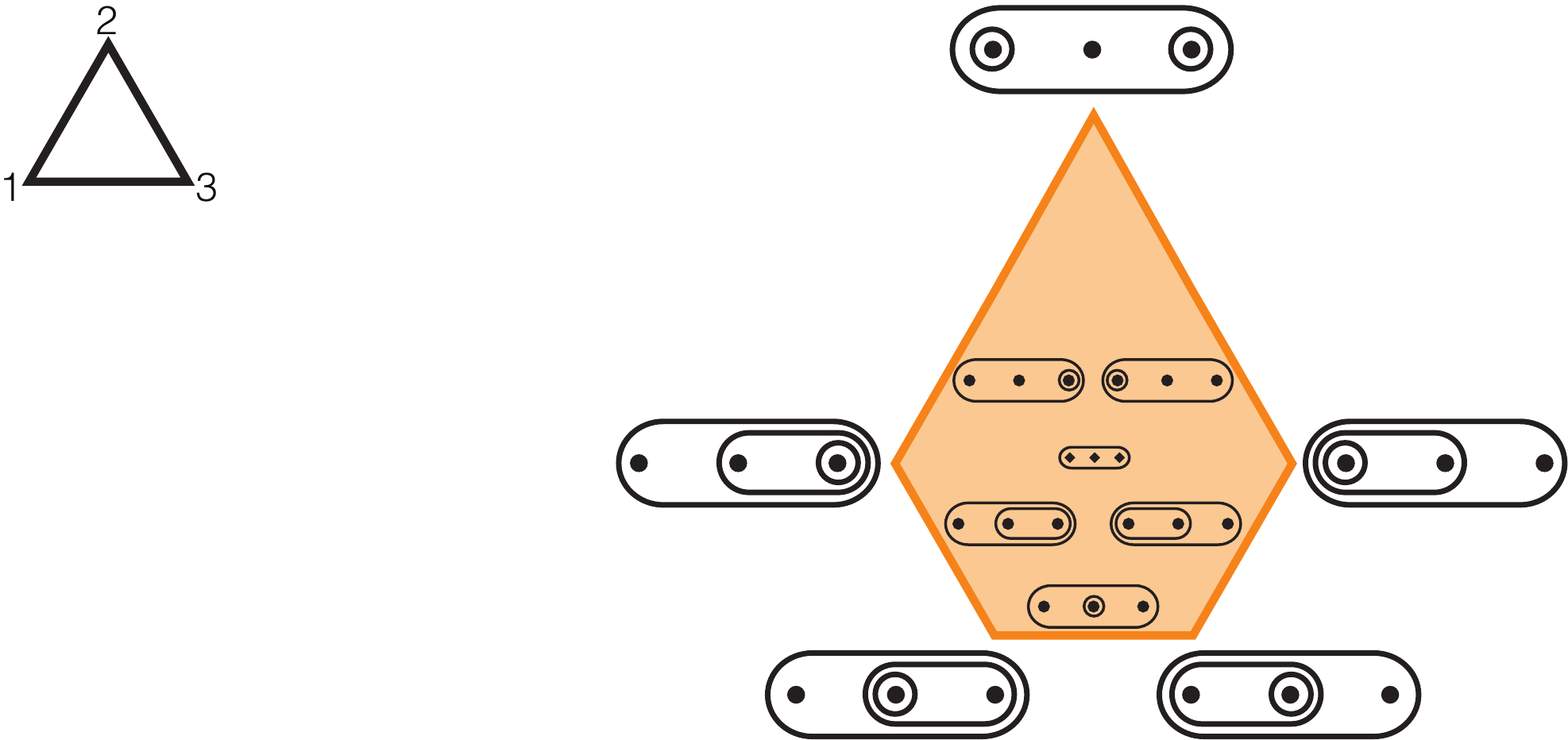}  \hfill 
\caption{The nestohedron of Figures \ref{f:hypergraphic} and \ref{f:nesto} is the graph associahedron for the path of length 3; its faces are labeled by the tubings of the path.} \label{f:graphasso}
\end{figure}

\subsection{The antipode of the ripping and sewing Hopf monoid} \label{ss:antipodeW}

\begin{theorem}\label{t:antipodeW}
The antipode of the ripping and sewing Hopf monoid of simple graphs $\wW$ is given by the following \textbf{cancellation-free} expression. If $w$ is a simple graph on $I$ then:
\[
s_I(w) = \sum_{t \textrm{ tubing}} (-1)^{|t|} w(t)
\]
where $|t|$ is the number of tubes of $t$ and $w(t)$ is defined in (\ref{e:w(t)}).
\end{theorem}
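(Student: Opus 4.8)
The statement is an immediate specialization of the general framework already developed. The plan is to observe that Theorem \ref{t:antipodeW} follows from Theorem \ref{t:antipodeSHG} together with Theorem \ref{t:graphassofaces}, in exactly the same way that Corollary \ref{c:antipodeB} followed from Theorem \ref{t:antipodeSHG} and Theorem \ref{t:nestofaces}. Since Proposition \ref{p:WintoWB} identifies $\rW$ with the submonoid $\rWBS$ of $\rBS \subseteq \rSHG$ via the tube map $w \mapsto \tubes(w)$, the antipode of $\wW$ is computed by the face structure of the graph associahedron $\Delta_w$, which is the nestohedron of $\tubes(w)$.

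First I would invoke Proposition \ref{p:WintoWB} to regard $w$ as the graphical building set $\tubes(w)$, so that the hypergraphic polytope $\Delta_{\tubes(w)}$ is exactly the graph associahedron $\Delta_w$. Then I would apply Theorem \ref{t:antipodeSHG}, which expresses the antipode as
\[
\apode_I(w) = \sum_{F \leq \Delta_w} (-1)^{c(F)} \supp_I(F),
\]
summing over the nonempty faces $F$ of $\Delta_w$, where $c(F) = |I| - \dim F$. The final step is to translate this geometric sum into a combinatorial one using Theorem \ref{t:graphassofaces}: the faces $F$ of $\Delta_w$ are in order-reversing bijection with the tubings $t$ of $w$, and for the face $F_t$ corresponding to $t$ we have $\dim F_t = |I| - |t|$ and $\supp_I(F_t) = w(t)$. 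Hence $c(F_t) = |I| - \dim F_t = |t|$, and substituting gives the asserted formula
\[
s_I(w) = \sum_{t \textrm{ tubing}} (-1)^{|t|} w(t).
\]

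Finally I would note that the expression is cancellation-free for the same reason given in Theorem \ref{t:antipodeSHG} and Corollary \ref{c:antipodeB}: distinct faces of distinct dimensions have distinct supports, so no two terms $w(t)$ of differing sign coincide. I do not expect any genuine obstacle here, since every ingredient is already in place; the only point requiring a moment's care is the bookkeeping identity $c(F_t) = |t|$ relating the number of connected components of the supporting graph to the number of tubes in the tubing, which is immediate from the dimension formula in Theorem \ref{t:graphassofaces}. The proof is therefore a direct application of the general machinery, and I would keep it to a few lines.
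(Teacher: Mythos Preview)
Your proposal is correct and follows essentially the same approach as the paper: identify $\wW$ with $\wWBS \subseteq \wSHG$ via Proposition \ref{p:WintoWB}, apply Theorem \ref{t:antipodeSHG}, and then use Theorem \ref{t:graphassofaces} to convert faces of $\Delta_w$ into tubings, with the cancellation-free claim following because faces of different dimension have different supports. The paper's proof is exactly this, stated in two sentences.
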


\begin{proof}
Since $\wW$ is 
isomorphic to the Hopf monoid of graphical building sets $\wWBS$, which is a submonoid of the Hopf monoid of simple hypergraphs $\wSHG$, its antipode is given by Theorem \ref{t:antipodeSHG}. It remains to invoke Theorem \ref{t:graphassofaces}, and to remark again that faces of different dimension map to different supports.
\end{proof}

Note that the formula above is not grouping-free. For example, for every maximal tubing $t$, $w(t)$ is the graph with no edges.

\begin{example}\label{e:antipodeW}
The antipode of the path of length $3$ in $\wW$ is dictated by its graph associahedron, which again is the polytope of Figures \ref{f:hypergraphic}, \ref{f:nesto}, and \ref{f:graphasso}. The result is now: 
\begin{figure}[h]
\centering
\includegraphics[scale=.6]{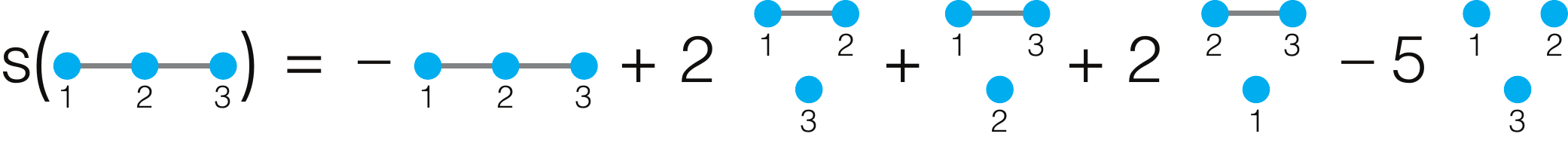}  
\caption{The antipode of a path of length 3 in $\wW$.}\label{f:antipodepath}
\end{figure} 
 \end{example}


\section{$\rPi$: {Set partitions and permutahedra, revisited.}}\label{s:Pirevisited}

\begin{definition}
A \emph{clique} is a complete graph. 
A \emph{cliquey graph} is a disjoint union of complete graphs. 
\end{definition}

Let $\rK[I]$ be the set of cliquey graphs on $I$.
There is a natural bijection between cliquey graphs on $I$ and set partitions of $I$: the cliquey graph $w$ on $I$ corresponds to the set partition $\pi(w)$ formed by its connected components.


\begin{proposition}\label{p:cliques}
The species $\rK$ of cliquey graphs is a submonoid of the ripping and sewing Hopf monoid of simple graphs $\rW$. Furthermore, $\rK$ is isomorphic to the Hopf monoid of set partitions $\rPi$.
\end{proposition}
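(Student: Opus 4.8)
The plan is to prove everything through the natural bijection $w \mapsto \pi(w)$ that sends a cliquey graph to the set partition formed by its connected components, whose inverse sends a set partition $\pi$ to the graph that is the disjoint union of complete graphs on the blocks of $\pi$. First I would record that this is a bijection of species, natural under relabeling, so it remains only to check that the product and coproduct of $\rW$ send cliquey graphs to cliquey graphs, and that under the bijection they match the operations of $\rPi$ described in Section \ref{ss:Pi}. Verifying this simultaneously shows that $\rK$ is a submonoid of $\rW$ and that $\rK \cong \rPi$.

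For the product, if $w_1 \in \rK[S]$ and $w_2 \in \rK[T]$, their product in $\rW$ is the disjoint union $w_1 \sqcup w_2$, which is again a disjoint union of complete graphs, hence cliquey; its connected components are those of $w_1$ together with those of $w_2$, so $\pi(w_1 \sqcup w_2) = \pi(w_1) \cup \pi(w_2) = \mu_{S,T}(\pi(w_1), \pi(w_2))$. Thus the bijection preserves products. For the coproduct, I would fix $w \in \rK[I]$ and a decomposition $I = S \sqcup T$. The restriction $w|_S$ is the induced subgraph on $S$, and intersecting each clique of $w$ with $S$ again yields a disjoint union of cliques, so $w|_S$ is cliquey with $\pi(w|_S) = \pi(w)|_S$.

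The actual content of the argument lies in the contraction $w/_S$, so that is the step I would treat most carefully. Here I would use the $S$-thread description: for $u, v \in T$ the graph $w/_S$ has an edge $uv$ precisely when $w$ contains a path from $u$ to $v$ all of whose intermediate vertices lie in $S$. Since $w$ is cliquey, any two distinct vertices in the same block $B$ of $\pi(w)$ are joined by a direct edge, which is an $S$-thread with no intermediate vertices, while vertices in distinct blocks lie in different connected components and so admit no path at all, let alone an $S$-thread. Hence $u, v \in T$ are adjacent in $w/_S$ if and only if they lie in the same block of $w$, which shows $w/_S$ is the cliquey graph whose blocks are the nonempty sets $B \cap T$, and therefore $\pi(w/_S) = \pi(w)|_T = \pi(w)/_S$.

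Combining these computations, the coproduct $(w|_S, w/_S)$ of $\rW$ restricts to cliquey graphs and corresponds under $w \mapsto \pi(w)$ to $(\pi(w)|_S, \pi(w)|_T) = \Delta_{S,T}(\pi(w))$ in $\rPi$; together with closure under products, this proves $\rK$ is a Hopf submonoid of $\rW$ and that the bijection is an isomorphism $\rK \cong \rPi$. The only step requiring any thought is the contraction, and even there the cliquey hypothesis trivializes the $S$-thread condition (the direct edge always serves); the expected obstacle is merely being careful that no spurious adjacencies between $T$-vertices of different blocks are produced by sewing, which cannot occur precisely because such vertices are disconnected in $w$.
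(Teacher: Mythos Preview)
Your proof is correct and follows essentially the same approach as the paper: both use the bijection $w \mapsto \pi(w)$ and verify that the product and coproduct of $\rW$ preserve cliquey graphs and correspond to those of $\rPi$. The paper's proof is slightly terser, reducing to the case of a single clique $K_I$ (noting $(K_I)|_S = K_S$ and $(K_I)/_S = K_T$ and extending multiplicatively), whereas you treat a general cliquey graph directly and spell out the $S$-thread argument for the contraction; but the content is the same.
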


\begin{proof}
Since the disjoint union of cliquey graphs is cliquey, $\rK$ is closed under multiplication. Also if $K_I$ is the clique on $I$ then $(K_I)|_S = K_S$ and $(K_I)/_S = K_T$, so $\rK$ is also closed under comultiplication, proving the first assertion. The map $\pi: \rK \rightarrow \rPi$ sending a cliquey graph $w$ to $\pi(w)$ gives the desired isomorphism; it clearly preserves products, and since
\begin{align*}
& \pi(K_I)|_S= \{I\}|_S = \{S\} = \pi(K_S) = \pi(K_I|_S) \textrm{ and } \\
&\pi(K_I)/_S= \{I\}/_S = \{T\} = \pi(K_T) = \pi(K_I/_S),
\end{align*}
it also preserves coproducts.
%
%
%
\end{proof}

Since $\Pi$ is cocommutative, we also have $\Pi \cong K \into W^{cop}$, as shown in the commutative diagram at the beginning of Section \ref{s:HGP}.

\subsection{The antipode of set partitions}

\begin{theorem}\cite{am}\label{t:antipodePi}
The antipode of the Hopf monoid of set partitions $\wPi$ is given by the following \textbf{cancellation-free} and \textbf{grouping-free} expression. If $\pi$ is a set partition on $I$, 
\[
s_I(\pi) = \sum_{\rho \, : \, \pi \leq \rho} (-1)^{b(\rho)} (\pi : \rho)! \,  \rho
\]
summing over all partitions $\rho$ that refine $\pi$. Here $b(\rho)$ denotes the number of blocks of $\rho$, and $(\pi:\rho)! = \prod_{p_i \in \pi} n_{i}!$ where $n_{i}$ is the number of blocks of $\rho$ that partition the block $p_i$ of $\pi$. 

\end{theorem}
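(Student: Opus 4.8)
The plan is to transport the computation to the Hopf monoid of permutahedra $\wbPi$, using the isomorphism $\rPi \cong \rbPi$ recorded after Theorem~\ref{t:charperm} (a consequence of Lemma~\ref{l:rbPi}), and then to read the antipode off from the face structure of permutahedra via Theorem~\ref{t:antipode}. Under this isomorphism a set partition $\pi = \{P_1, \ldots, P_m\}$ of $I$ corresponds to the normal equivalence class of the polytope $\wp_\pi := \pi_{P_1} \times \cdots \times \pi_{P_m} \subseteq \Rb I$; identifying $\Rb I = \prod_{P \in \pi} \Rb P$, this product is the same subset of $\Rb I$ no matter how its factors are ordered, so $\wp_\pi$ is well defined and depends only on $\pi$, and $\wp_\rho$ corresponds to $\rho$ for every partition $\rho$.

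The heart of the argument is to enumerate the faces of $\wp_\pi$ and organize them by partition. A face of $\wp_\pi$ is a product $F_1 \times \cdots \times F_m$ of faces $F_i \leq \pi_{P_i}$, and by the description of the faces of the permutahedron in Section~\ref{ss:p} each $F_i$ is (a translate of) $\pi_{Q^i_1} \times \cdots \times \pi_{Q^i_{j_i}}$ for a composition $(Q^i_1, \ldots, Q^i_{j_i})$ of $P_i$. Thus each face determines a refinement $\rho$ of $\pi$ (that is, $\pi \leq \rho$), whose blocks inside $P_i$ are the sets $Q^i_l$, together with a linear ordering of those blocks within each $P_i$; conversely every such datum arises from a face. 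I would then record two bookkeeping facts: the dimension of this face is $\sum_i(|P_i| - j_i) = |I| - b(\rho)$, and the number of faces mapping to a fixed $\rho$ under the normal equivalence quotient is $\prod_i j_i! = (\pi:\rho)!$, since within each $\pi$-block $P_i$ its $n_i = j_i$ sub-blocks of $\rho$ may be ordered in $n_i!$ ways while the normal class of the product forgets this order. Feeding this into Theorem~\ref{t:antipode} and grouping by $\rho$ gives
\[
\apode_I(\wp_\pi) = (-1)^{|I|} \sum_{F \leq \wp_\pi} (-1)^{\dim F}\, F = \sum_{\pi \leq \rho} (-1)^{|I|}(-1)^{|I|-b(\rho)}\,(\pi:\rho)!\, \wp_\rho = \sum_{\pi \leq \rho}(-1)^{b(\rho)}(\pi:\rho)!\,\wp_\rho,
\]
which transports back to the claimed formula.

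The main obstacle is precisely this grouping step: the formula of Theorem~\ref{t:antipode} is cancellation-free and grouping-free in $\wGP$, where distinct compositions of the blocks of $\pi$ index distinct faces of $\wp_\pi$; the factorial coefficients $(\pi:\rho)!$ appear only upon passing to the normal equivalence classes of $\wbPi$, where reorderings of the blocks of $\rho$ within each block of $\pi$ collapse to a single term. Getting this count and the accompanying sign exactly right is the only substantive point. If one prefers to avoid the face count of a product directly, an equivalent route first reduces to a single block: since $\wPi$ is commutative the antipode is multiplicative by \eqref{e:apode-mu}, so $\apode_I(\pi) = \prod_{P \in \pi} \apode_P(\{P\})$, and one needs only the one-block case $\apode_P(\{P\}) = \sum_{\sigma} (-1)^{b(\sigma)} b(\sigma)!\,\sigma$ (summing over set partitions $\sigma$ of $P$), which is the analysis above applied to the single permutahedron $\pi_P$. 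Multiplying these out recovers the coefficient $(-1)^{b(\rho)}\prod_P b(\rho|_P)! = (-1)^{b(\rho)}(\pi:\rho)!$. Alternatively, since Proposition~\ref{p:cliques} identifies $\rPi$ with the cliquey-graph submonoid of $\rW$ and the graph associahedron of the clique $K_P$ is normally equivalent to the permutahedron $\pi_P$, the same formula follows from Theorem~\ref{t:antipodeW}.
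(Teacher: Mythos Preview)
Your proposal is correct. Your primary route works directly through the isomorphism $\rPi\cong\rbPi$ and the face structure of the product of permutahedra $\pi_{P_1}\times\cdots\times\pi_{P_m}$, invoking Theorem~\ref{t:antipode} in $\wbGP$ and counting that each refinement $\rho$ arises from $(\pi:\rho)!$ faces (one for each ordering of the blocks of $\rho$ within each block of $\pi$). The paper instead uses Proposition~\ref{p:cliques} to identify $\rPi$ with the submonoid of cliquey graphs inside $\rW$ and applies Theorem~\ref{t:antipodeW}: a tubing of a disjoint union of cliques is a flag of subsets in each block $p_i$, equivalently a composition of $p_i$, and the count of tubings yielding a fixed $\rho$ is again $\prod_i n_i!$. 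These are the same enumeration dressed differently---indeed the paper remarks immediately after its proof that the controlling polytope is (normally equivalent to) $\pi_{p_1}\times\cdots\times\pi_{p_k}$, which is exactly your starting point. Your approach is slightly more direct, bypassing the detour through $\rW$ and tubings; the paper's approach fits its narrative of deriving $\rPi$ as a special case of the ripping-and-sewing monoid. You even list the paper's route as your final alternative, so you have both covered.
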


\begin{proof}
Let $w$ be a cliquey graph and $\pi=\{p_1, \ldots, p_k\}$ be the corresponding set partition. A tube on $w$ is a subset of one of the parts $p_i$. A tubing $t$ on $w$ cannot contain two disjoint subsets of the same $p_i$; thus $t$ consists of a flag $t^i_\bullet$ of subsets $\emptyset = \tau^i_0 \subset \cdots \subset \tau^i_{n_i} = p_i$ for each part $p_i$. The flag $t^i_\bullet$ gives rise to a composition $p_i = \rho^i_1 \sqcup \cdots \sqcup  \rho^i_{n_i}$ where $\rho^i_j = \tau^i_j - \tau^i_{j-1}$. If we let $\rho(t)=\{\rho^i_j \, | \, 1 \leq i \leq k, 1 \leq j \leq n_i\}$ as an unordered set partition, then $\rho(t)$ is the partition corresponding to the graph $w(t)$ of (\ref{e:w(t)}). Clearly $\rho(t) \geq \pi$ and $|t| = b(\rho(t))$. 

It remains to observe that the map from a tubing $t$ to the partition $\rho(t)$ is a $(\pi:\rho)!$-to-1 map, because there are $n_i$! linear orders for the partition $\{\rho^i_1, \cdots,   \rho^i_{n_i}\}$ of $p_i$ for $1 \leq i \leq k$, which give rise to different choices of the tubing $t$.
\end{proof}

As an example, let us revisit the cancellation-free formula for the antipode of the set partition $\{ab,cde\}$ shown in the introduction.

 \begin{figure}[h]
\centering
\includegraphics[scale=.5]{Figures/antipodepartitions.pdf} 
\end{figure}

As should be clear by now, our derivation of Theorem \ref{t:antipodePi} is controlled by a polytope; for the set partition $\pi$ with blocks $p_1, \ldots, p_k$, it is the graph associahedron
\[
\Delta_\pi = \pi'_{p_1} \times \cdots \pi'_{p_k} \equiv \pi_{p_1} \times \cdots \pi_{p_k},
\]
where $\pi'_I := \sum_{J \subseteq I} \Delta_J$  is normally equivalent to the standard permutahedron $\pi_I$.

Thus the antipode of $\pi = \{ab,cde\}$ is an algebraic shadow of the face structure of the hexagonal prism $\pi'_{\{a,b\}} \times \pi'_{\{c,d,e\}}$: it has one 3-face, eight 2-faces (in normal equivalence classes of size 2, 2, 2, 2), eighteen edges (in equivalence classes of sizes 6, 4, 4, 4) and twelve vertices (in one equivalence class of size 12).

\begin{figure}[h]
\centering
\qquad 
\qquad
\includegraphics[scale=.4]{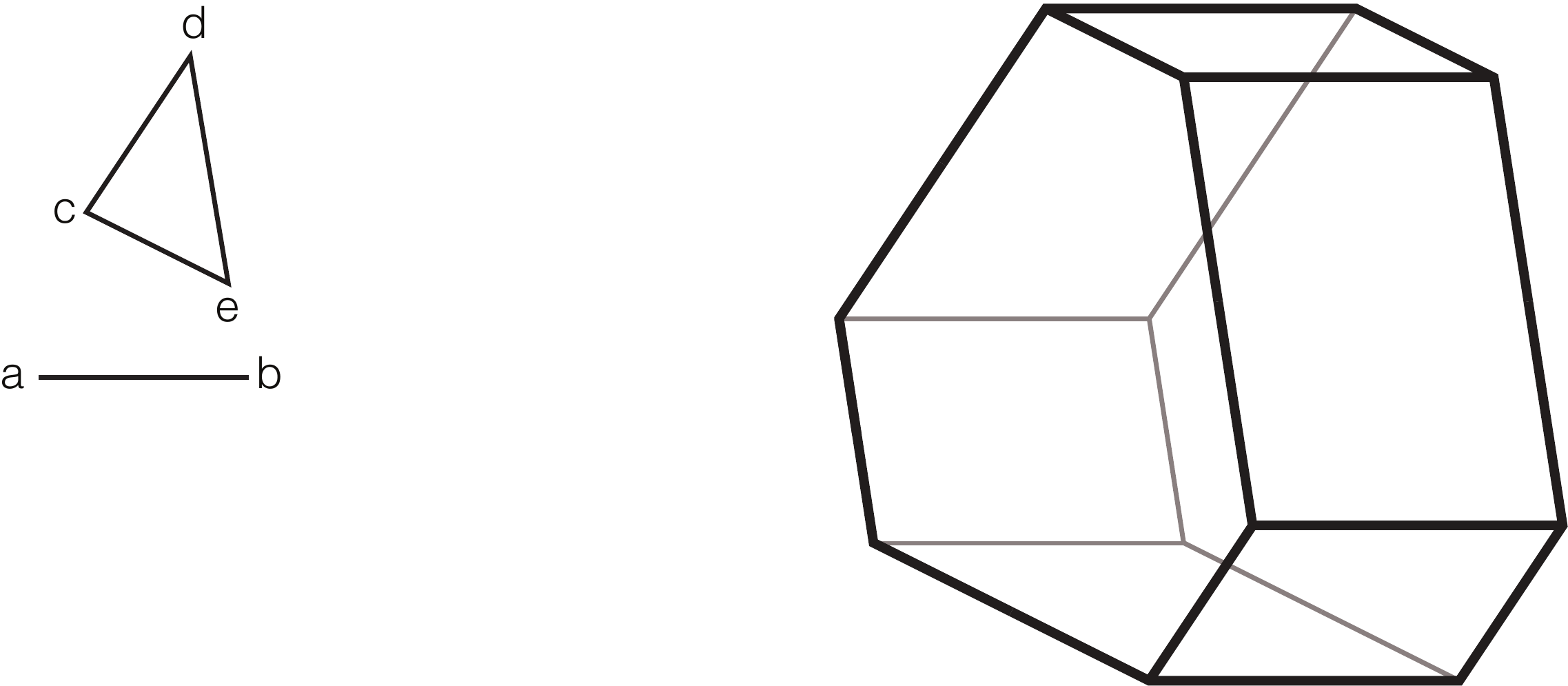} \hfill
\caption{The product $\pi'_{\{a,b\}} \times \pi'_{\{c,d,e\}}$ in $\Rb^{\{a,b,c,d,e\}}$.}
\end{figure}

\subsection{Permutahedra, set partitions, and the Hopf algebra of symmetric functions}\label{ss:sym}

We conclude this section by precisely stating  connections between permutahedra, set partitions, and symmetric functions

\begin{proposition}\label{prop:PibPi}
The Hopf monoid of permutahedra $\rbPi$ is isomorphic to the Hopf monoid of set partitions $\rPi$.
\end{proposition}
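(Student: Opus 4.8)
The plan is to write down the evident bijection between set partitions and products of standard permutahedra and verify that it is a morphism of Hopf monoids. By~(\ref{e:Pibasis}) together with Lemma~\ref{l:rbPi}, every element of $\rbPi[I]$ is a product $\pi_{S_1}\times\cdots\times\pi_{S_k}$ for some composition $I=S_1\sqcup\cdots\sqcup S_k$; since the polytopal product is commutative, such an element depends only on the unordered partition $\{S_1,\ldots,S_k\}$. I would therefore define $\Phi_I\colon\rPi[I]\to\rbPi[I]$ by $\Phi_I(\{S_1,\ldots,S_k\})=\pi_{S_1}\times\cdots\times\pi_{S_k}$ and first check that it is a bijection. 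Surjectivity is exactly the generation statement just recalled; for injectivity I would recover the partition from the normal equivalence class, using that the normal fan of $\pi_{S_1}\times\cdots\times\pi_{S_k}$ is the product $\Bc_{S_1}\times\cdots\times\Bc_{S_k}$, whose walls are the hyperplanes $y_i=y_j$ with $i,j$ in a common block. Thus two elements $i,j\in I$ lie in the same block if and only if $y_i=y_j$ is a wall of $\Nc_\wp$, so the partition is determined by $\Nc_\wp$ and $\Phi_I$ is injective. Naturality of $\Phi$ under relabeling bijections is immediate.

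It then remains to check that $\Phi$ preserves products and coproducts. Products are straightforward: for $I=S\sqcup T$, the product of $\pi_{S_1}\times\cdots\times\pi_{S_k}$ on $S$ and $\pi_{T_1}\times\cdots\times\pi_{T_l}$ on $T$ is the product over all factors, which corresponds to the partition $\{S_1,\ldots,S_k,T_1,\ldots,T_l\}$, i.e.\ the disjoint union of the two partitions, matching the product in $\rPi$. The coproduct is the one computation with any content. Fix $\wp=\pi_{S_1}\times\cdots\times\pi_{S_k}$, with $\pi=\{S_1,\ldots,S_k\}$, and a decomposition $I=S\sqcup T$; set $A_i=S\cap S_i$ and $B_i=T\cap S_i$. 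Since $\1_S$ restricts to $\1_{A_i}$ on each factor $\Rb S_i$, the $\1_S$-maximal face of the product factors as $\wp_{\1_S}=(\pi_{S_1})_{\1_{A_1}}\times\cdots\times(\pi_{S_k})_{\1_{A_k}}$. Applying Lemma~\ref{l:rbPi} to each factor gives $(\pi_{S_i})_{\1_{A_i}}\equiv\pi_{A_i}\times\pi_{B_i}$, and regrouping the factors living in $\Rb S$ and in $\Rb T$ yields $\wp|_S\equiv\prod_i\pi_{A_i}$ and $\wp/_S\equiv\prod_i\pi_{B_i}$. Discarding the point factors $\pi_\emptyset$, these are exactly $\Phi_S(\pi|_S)$ and $\Phi_T(\pi|_T)$, so $\Delta_{S,T}(\Phi_I(\pi))=(\Phi_S(\pi|_S),\Phi_T(\pi|_T))$, matching the coproduct of $\rPi$.

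The only genuinely delicate point is the injectivity of $\Phi_I$, i.e.\ that normally inequivalent products correspond to distinct partitions; this is handled by reconstructing the block structure from the walls of the normal fan as above. Everything else reduces to Lemma~\ref{l:rbPi} and the elementary fact that the maximal face of a polytopal product in a direction $\1_S$ is the product of the maximal faces of the factors. I would also remark that the computed coproduct is symmetric in $S$ and $T$, so $\rbPi$ is cocommutative, consistent with the cocommutativity of $\rPi$; hence the isomorphism is with $\rPi$ itself rather than with its co-opposite.
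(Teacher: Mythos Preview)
Your proposal is correct and takes essentially the same approach as the paper: define the obvious bijection $\{S_1,\ldots,S_k\}\mapsto \pi_{S_1}\times\cdots\times\pi_{S_k}$ and verify it respects products and coproducts, with Lemma~\ref{l:rbPi} doing the real work. The paper's proof is a terse two-sentence version of yours; your injectivity argument via the walls of the normal fan and your explicit coproduct computation on a general element are details the paper leaves implicit.
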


\begin{proof}
The Hopf monoid $\rbPi$ is generated multiplicatively by the standard permutahedra $\pi_I$, with coproduct given by $\Delta_{S,T}(\pi_I) = (\pi_S, \pi_T)$ as observed in Lemma \ref{l:rbPi}. Comparing this with the definition of the Hopf monoid $\rPi$ gives the isomorphism.
\end{proof}

Recall that $\Kcb$ is the Fock functor that associates a Hopf algebra $\Kcb(\wH)$ to any Hopf monoid on vector species $\wH$.

\begin{proposition}
The Hopf algebra of permutahedra $\Kcb(\wbPi)$ is isomorphic to the Hopf algebra of symmetric functions $\Lambda$.
\end{proposition}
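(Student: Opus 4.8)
The plan is to transport the statement to the Hopf monoid of set partitions, where the Fock functor is easy to compute, and then invoke the structure theory of connected graded Hopf algebras. By Proposition~\ref{prop:PibPi} there is an isomorphism of Hopf monoids $\rbPi \cong \rPi$, and since $\Kcb$ is a functor it carries isomorphisms to isomorphisms; hence $\Kcb(\wbPi) \cong \Kcb(\wPi)$, and it suffices to prove $\Kcb(\wPi) \cong \Lambda$. First I would read off the algebra structure of $\Kcb(\wPi)$ from Section~\ref{ss:Fock}. Isomorphism classes of set partitions of $[n]$ are indexed by the integer partitions $\lambda \vdash n$ recording the multiset of block sizes, so $\{x_\lambda\}$, as $\lambda$ ranges over all integer partitions, is a linear basis of $\Kcb(\wPi)$. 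The Fock product $[h_1]\cdot[h_2]=[h_1\cdot h_2^{+k_1}]$ juxtaposes the (shifted) blocks of two set partitions, so on block-size types it is multiset union, $x_\lambda\cdot x_\mu = x_{\lambda\cup\mu}$. Writing $x_{(n)}$ for the class of the one-block partition of $[n]$, this exhibits $\Kcb(\wPi)=\Kb[x_{(1)},x_{(2)},\dots]$ as the polynomial algebra on one generator $x_{(n)}$ in each positive degree $n$, matching $\Lambda=\Kb[p_1,p_2,\dots]$ as graded algebras.

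To upgrade this to an isomorphism of Hopf algebras I would use the Milnor--Moore theorem. Both $\Kcb(\wPi)$ and $\Lambda$ are connected graded Hopf algebras over the characteristic-zero field $\Kb$; they are commutative (their products are symmetric) and cocommutative (the Fock functor preserves the cocommutativity of $\wPi$ noted in Section~\ref{ss:Pi}, and $\Lambda$ is cocommutative). Over a field of characteristic zero, a connected graded commutative cocommutative Hopf algebra is the free commutative (polynomial) algebra on its space of primitives, regarded as a primitively generated Hopf algebra. The Hilbert series of $\Kcb(\wPi)$ equals $\prod_{n\ge 1}(1-t^n)^{-1}$, the partition generating function, which forces the space of primitives to be one-dimensional in each positive degree; the same holds for $\Lambda$, whose primitives are spanned by the power sums $p_n$. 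Choosing a primitive generator in each degree on both sides and matching them yields an algebra isomorphism which, agreeing with the coproducts on these (primitive) generators, is automatically a coalgebra map; hence a Hopf algebra isomorphism $\Kcb(\wPi)\cong\Lambda$.

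The point that needs care — and the reason a hands-on argument is awkward — is the coproduct. A direct computation from the Fock coproduct $\Delta([h])=\sum_{[n]=S\sqcup T}[h|_S]\otimes[h/_S]$ gives
\[
\Delta\bigl(x_{(n)}\bigr)=\sum_{k=0}^{n}\binom{n}{k}\,x_{(k)}\otimes x_{(n-k)},
\]
so the polynomial generators $x_{(n)}$ are \emph{not} primitive for $n\ge 2$, and they do not coincide with any of the standard multiplicative bases ($p_\lambda$, $e_\lambda$, $h_\lambda$, $m_\lambda$) of $\Lambda$ on the nose. Thus the main obstacle is that there is no canonical basis-to-basis identification; the structure theorem circumvents this by reducing the comparison to the one-dimensional spaces of primitives, where the matching is forced. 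As an alternative, I would note that the identification $\Kcb(\wPi)\cong\Lambda$ is already recorded in~\cite{am}, which could be cited directly once Proposition~\ref{prop:PibPi} has reduced the claim to set partitions.
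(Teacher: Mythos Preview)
Your argument is correct, but it takes a different route from the paper. The paper does not invoke Milnor--Moore at all: it simply rescales your generators. Writing $x_{(n)} = n!\,g_n$, your coproduct formula
\[
\Delta(x_{(n)})=\sum_{k=0}^{n}\binom{n}{k}\,x_{(k)}\otimes x_{(n-k)}
\]
becomes $\Delta(g_n)=\sum_{i+j=n} g_i\otimes g_j$, which is exactly the coproduct of the complete homogeneous symmetric function $h_n$ in $\Lambda$. So the map $g_n\mapsto h_n$ is already an explicit isomorphism of bialgebras (and hence of Hopf algebras), with no structure theory needed. Your remark that ``there is no canonical basis-to-basis identification'' is thus too pessimistic: the divided-power rescaling does the job, and this is precisely what the paper's normalization $\pi_I\mapsto n!\,g_n$ encodes. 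Your approach has the virtue of being basis-free and of highlighting why the isomorphism is inevitable, while the paper's approach is more elementary and yields the explicit map that is then used to compare antipode formulas and recover the $e$-to-$h$ transition.
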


\begin{proof}
This proof requires some basic facts about symmetric functions; see \cite{macdonald95:_symmet_hall} and \cite[Section 7]{stanley99:_ec2}. 
The Hopf algebra of symmetric functions $\Lambda = \Kb[x_1, x_2, \ldots]^{S_\infty}$ is most easily described in terms of the \emph{homogeneous} and \emph{elementary} symmetric functions:
\[
h_n = \sum_{i_1 \leq \cdots \leq i_n} x_{i_1}\cdots x_{i_n}, \qquad
e_n = \sum_{i_1 < \cdots < i_n} x_{i_1}\cdots x_{i_n}
\]
As an algebra, $\Lambda= \Kb[h_1, h_2, \ldots]$ 
is simply the polynomial algebra on the $h_i$, while the coproduct and antipode of $\Lambda$ are
\[
\Delta(h_n) = \sum_{i+j=n} h_i \otimes h_j, \qquad 
\apode(h_n) = (-1)^n e_n.
\]
for $n \geq 0$, where $h_0=1$.

The Fock functor $\Kcb$ maps $\wbPi$ to the graded Hopf algebra $\Kcb(\wbPi)$; let it take the permutahedron $\pi_I \in \wbPi[I]$ to the element $n!g_n \in \Pi_n$ where $n=|I|$. Then (\ref{e:Pibasis}) tells us that as an algebra $\Kcb(\wbPi) = \Kb[g_1, g_2, \ldots]$
while Lemma \ref{l:rbPi} tells us that the coproduct of $\Kcb(\wbPi)$ is given by
\[
\Delta(g_n) = \sum_{i+j=n} g_i \otimes g_j.
\]
It follows that the map $g_n \mapsto h_n$ preserves the product and coproduct. Since the antipode of a graded Hopf algebra is unique, this map also also preserves the antipode. This gives the desired isomorphism $\Kcb(\wbPi) \cong \Lambda$. 
\end{proof}

It is instructive to compare the antipodes of  $\Kcb(\wbPi)$ and $\Lambda$. In $\wbPi$ the antipode of $n! g_n$ is given by the face structure of the permutahedron $\pi_n$, as described in Section \ref{ss:p}:
\[
\apode(g_n) = \sum_{\lambda_1 + \cdots + \lambda_k = n} (-1)^k g_{\lambda_1}\cdots g_{\lambda_k},
\]
while the antipode of $\Lambda$ is given by $\apode(h_n) = (-1)^n e_n.$ Comparing these expressions, we obtain a polyhedral algebraic proof of the expression of the elementary symmetric function $e_n$ in the homogenous basis:
\[
e_n = 
\sum_{\lambda_1 + \cdots + \lambda_k = n} (-1)^{n-k} h_{\lambda_1}\cdots h_{\lambda_k}.
\]

\section{$\rF$: {Paths and associahedra, revisited.}}\label{s:Frevisited}

Recall that a \emph{set of paths} on $I$ is a graph whose connected components are paths, and $\rF[I]$ denotes the collection of sets of paths on $I$. Recall the Hopf monoid $\rF$ defined in Section \ref{ss:F}. The product of two sets of paths is their disjoint union. If $s$ is a path and $I=S \sqcup T$ is a decomposition, then $s|_S$ is the path on $S$ with the order inherited from $s$, whereas $s/_S$ is the induced subgraph on $T$.

\begin{proposition}\label{prop:FintoW}
The Hopf monoid $\rF$ of paths is a submonoid of the co-opposite $\rW^{cop}$ of the ripping and sewing Hopf monoid $\rW$. 
\end{proposition}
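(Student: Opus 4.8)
The plan is to exhibit $\rF$ as a subspecies of $\rW$ that is closed under the product and the co-opposite coproduct, and to check that these operations literally coincide with those of $\rF$. Every set of paths is in particular a simple graph, so $\rF$ is a subspecies of $\rW$; and since a disjoint union of sets of paths is again a set of paths, while the product in both $\rF$ and $\rW^{cop}$ is disjoint union, the products agree and preserve the class. The whole content of the proposition therefore lies in the coproduct: I must show that for a decomposition $I = S \sqcup T$ and a set of paths $\alpha$, the pair $\Delta_{S,T}(\alpha)$ computed in $\rW^{cop}$ is again a pair of sets of paths, and equals the pair $(\alpha|_S, \alpha/_S)$ computed in $\rF$.

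Recall from the definition of the co-opposite monoid that in $\rW^{cop}$ one has $\Delta_{S,T}(\alpha) = (\alpha/_T, \alpha|_T)$, where the two operations on the right are those of $\rW$. Thus the $\rW^{cop}$-restriction to $S$ is the $\rW$-contraction $\alpha/_T$ obtained by \emph{sewing through} $T$, and the $\rW^{cop}$-contraction from $S$ is the $\rW$-restriction $\alpha|_T$ obtained by \emph{ripping out} $S$, i.e. the induced subgraph on $T$. The contraction is the easy half: $\alpha|_T$ is obtained from $\alpha$ by deleting every vertex of $S$, and deleting vertices from a disjoint union of paths breaks each component into a disjoint union of subpaths. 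Hence $\alpha|_T$ is again a set of paths, and as the induced subgraph on $T$ it is exactly the $\rF$-contraction $\alpha/_S$.

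The main step will be the restriction, where I must identify the sewn graph $\alpha/_T$ with the $\rF$-restriction $\alpha|_S$ (the path on the $S$-vertices of each component, with inherited order). I would argue component by component. Fix a component path $s = v_1 v_2 \cdots v_m$ of $\alpha$, and let $v_{i_1}, \ldots, v_{i_r}$ be its vertices lying in $S$, listed in the order in which they occur along $s$. Two consecutive such vertices $v_{i_j}, v_{i_{j+1}}$ are the endpoints of the subpath $v_{i_j} v_{i_j + 1} \cdots v_{i_{j+1}}$, all of whose interior vertices lie in $T$; this is a $T$-thread, so sewing through $T$ adds the edge $v_{i_j} v_{i_{j+1}}$. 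Conversely, no $T$-thread joins two non-consecutive $S$-vertices of $s$, because inside a path the only walk between them is the subpath, which passes through an intervening vertex of $S$; and $S$-vertices lying in distinct components of $\alpha$ admit no thread at all. Therefore the edges introduced by sewing are precisely $v_{i_1} v_{i_2}, v_{i_2} v_{i_3}, \ldots, v_{i_{r-1}} v_{i_r}$, so on the component $s$ the graph $\alpha/_T$ is the path $v_{i_1} v_{i_2} \cdots v_{i_r}$ on the $S$-vertices in inherited order. Taking the disjoint union over all components $s$, this is exactly $\alpha|_S$ in the sense of $\rF$, which in particular is a set of paths.

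Combining the two halves shows that the coproduct of $\rW^{cop}$ restricts to that of $\rF$ and preserves the class of sets of paths; together with the agreement of the products, this proves that $\rF$ is a Hopf submonoid of $\rW^{cop}$. The only point that will require genuine care is the restriction computation above — specifically the assertion that sewing through $T$ creates an edge between two $S$-vertices exactly when they are consecutive $S$-vertices along a common component — which is precisely where the linear structure of the path components is essential.
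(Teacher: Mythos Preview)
Your proof is correct and follows the same approach as the paper, which simply states that ``the product operations on $\rF$ and $\rW$ coincide, while the coproducts are co-opposite.'' You have spelled out in full the verification that the paper leaves to the reader: that ripping out $S$ (the $\rW$-restriction to $T$) gives the $\rF$-contraction $\alpha/_S$, and that sewing through $T$ (the $\rW$-contraction) gives the $\rF$-restriction $\alpha|_S$.
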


\begin{proof}
This follows readily from the observation that the product operations on $\rF$ and $\rW$ coincide, while the coproducts are co-opposite.
\end{proof}

In light of this statement and the fact that $\rW$ and $\rW^{cop}$ share the same antipode by Proposition \ref{p:antipode2}, Theorem \ref{t:antipodeW} immediately gives us a combinatorial formula for the antipode of the Hopf monoid of path $\rF$. This formula has several interesting combinatorial variants, which we explore in the remaining sections.


\subsection{The antipode of paths}\label{ss:antipodeF}

If $l$ is a linear graph and $t$ is a tubing of $l$, define the \emph{linear graph of $t$}, denoted $l(t)$, as follows. Each tube $\tau$ of $t$ gives a path $l(\tau)$ consisting of the vertices which are in $\tau$ and in no smaller tube of $t$, in the order they appear in $\tau$. The union of these paths is $l(t)$. This procedure is illustrated in Figure \ref{f:tubingtolineargraph}. 

\begin{figure}[h]
\centering
\includegraphics[scale=.6]{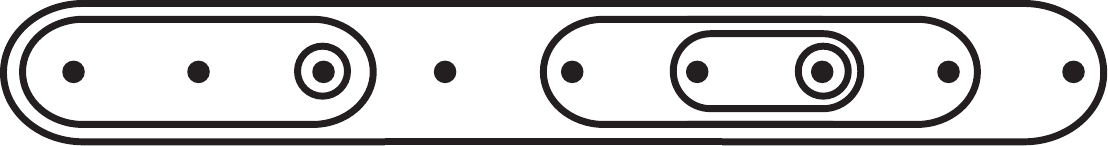} 
\caption{A tubing $t$ of the path $123456789$; its linear graph is $l(t) = 12|3|49|58|6|7$. The labels and edges of the path have been omitted for clarity.\label{f:tubingtolineargraph}}
\end{figure}

\begin{proposition}\label{p:antipodeF1}
The antipode of the Hopf monoid of paths $\wF$ is given by the following \textbf{cancellation-free} expression. If $l$ is a linear graph on $I$ then
\[
s_I(l) = \sum_{t \textrm{ tubing}} (-1)^{|t|} \, l(t)
\]
summing over all tubings $t$ of $l$, where $l(t)$ is the linear graph of $t$.
\end{proposition}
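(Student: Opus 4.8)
The plan is to derive this statement as an immediate specialization of the antipode formula for the ripping and sewing Hopf monoid $\wW$ (Theorem \ref{t:antipodeW}), using the inclusion $\rF \hookrightarrow \rW^{cop}$ established in Proposition \ref{prop:FintoW}. The essential point is that a path $l$, viewed as a simple graph, is a special case of a simple graph, so its antipode is controlled by the graph associahedron $\Delta_l$ whose faces are indexed by tubings. All the combinatorial work is therefore bundled into verifying that the general graph-theoretic formula $w(t)$ from \eqref{e:w(t)} specializes to the linear graph $l(t)$ when $w = l$ is a path.

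First I would invoke Proposition \ref{prop:FintoW}, which tells us $\rF$ is a submonoid of $\rW^{cop}$, together with Proposition \ref{p:antipode2}, which guarantees that $\rW$ and $\rW^{cop}$ share the same antipode. Consequently the antipode of an element $l \in \rF[I]$ computed in $\wF$ agrees with the antipode of $l$ computed in $\wW$ (via the morphism property \eqref{e:apode-mor} of Proposition \ref{p:antipode2}, since morphisms of Hopf monoids commute with antipodes). It therefore suffices to apply Theorem \ref{t:antipodeW} to the path $l$, yielding
\[
\apode_I(l) = \sum_{t \textrm{ tubing}} (-1)^{|t|} \, l(t),
\]
where the sum is over tubings $t$ of $l$ and the summand is the graph $l(t) = \bigsqcup_{\tau} l[t_{<\tau}, \tau]$ built from the local pieces $l[t_{<\tau}, \tau] = (l|_\tau)/_{t_{<\tau}}$ defined in \eqref{e:w(t)}.

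The core verification is that, for a path $l$, the graph $l(t)$ produced by \eqref{e:w(t)} is precisely the set of paths described in the statement of the proposition. I would argue this tube by tube: for each tube $\tau$ of $t$, the essential set $\ess(\tau) = \tau - t_{<\tau}$ consists of the vertices in $\tau$ lying in no smaller tube, and the local graph $l[t_{<\tau}, \tau]$ is obtained by restricting $l$ to the (sub)path on $\tau$ and then sewing through the vertices of the smaller tubes $t_{<\tau}$. Sewing through an interior vertex of a path simply reconnects its two neighbors, so sewing through all the vertices of $t_{<\tau}$ within $\tau$ produces exactly the path on $\ess(\tau)$ in which the vertices appear in the same relative order as in $\tau$; this is the path $l(\tau)$ of the statement. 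Since $\tau$ is a connected subgraph of a path, it is itself a subpath, so this induced restriction-and-sewing really does yield a single path rather than a more complicated graph. Taking the disjoint union over all tubes $\tau$ of $t$ recovers $l(t)$ as the union of these paths, matching the definition given.

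The main obstacle is purely bookkeeping: confirming that the abstract operation ``restrict to $\tau$ then sew through $t_{<\tau}$'' on a path always returns a linear graph whose vertex order is inherited from $l$, and that this matches the direct combinatorial description $l(\tau)$. This requires checking that the $S$-threads used in the sewing operation of $\rW$, when $S = t_{<\tau}$ consists of interior vertices of a subpath, connect precisely the consecutive surviving vertices and introduce no extra edges. Once this local claim is established, the cancellation-free property is inherited directly from Theorem \ref{t:antipodeW}, since faces of $\Delta_l$ of different dimensions have different supports; no additional cancellation analysis is needed.
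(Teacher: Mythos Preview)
Your proposal is correct and follows essentially the same approach as the paper: the paper's proof is a one-line appeal to Theorem~\ref{t:antipodeW}, noting that for a linear graph $w=l$ the graph $w(t)$ of~\eqref{e:w(t)} coincides with $l(t)$. You have simply unpacked the supporting logic (the inclusion $\rF \hookrightarrow \rW^{cop}$ from Proposition~\ref{prop:FintoW}, the shared antipode of $\rW$ and $\rW^{cop}$ from Proposition~\ref{p:antipode2}, and the tube-by-tube verification that sewing through interior vertices of a subpath yields the inherited path on the essential set) that the paper leaves implicit in the surrounding text.
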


\begin{proof}
This is a direct consequence of Theorem \ref{t:antipodeW} because for a linear graph $w=l$, the graph $w(t)$ given by (\ref{e:w(t)}) is the linear graph $l(t)$.
\end{proof}
%
%
%
%

There are natural bijections between tubings on a path $p_n$ of length $n$, valid parenthesizations of the expression $x_0x_1\cdots x_n$, and plane rooted trees with $n+1$ unlabeled leaves.
\cite{carr2006coxeter} \cite[Chapter 6]{stanley99:_ec2} This bijection allows us to state Proposition \ref{p:antipodeF1} in terms of parenthesizations or plane rooted trees as well. We leave the details to the interested reader.

\medskip

We can obtain a more useful formula by grouping equal terms in Proposition \ref{p:antipodeF1} as follows. 
As we range over the tubes $\tau$ of a tubing $t$, the components of the linear graph $l(t)$ form a set partition of $I$, which we call $\pi=\pi(t)$. We also write $l(\pi) = l(t)$.
 
Notice that $\pi=\pi(t)$ is a \emph{noncrossing partition} of $l$; that is, if we let $<$ denote (either of) the (two) linear order(s) on $I$ imposed by $l$, then $\pi$ does not contain blocks $p_i \neq p_j$ and elements $a<b<c<d$ such that $a,c \in p_i$ and $b,d \in p_j$. 
It remains to describe the coefficient of $l(\pi)$ 
for each noncrossing partition $\pi$ in the expression of Proposition \ref{p:antipodeF1}. 

Let $NC(l)$ be the set of noncrossing partitions of $l$. If $|l|=n$, then
\[
|NC(l)| = C_n = \frac1{n+1} {2n \choose n}
\]
is the $n$-th \emph{Catalan number}. \cite{kreweras1972partitions}. We define
the \emph{linear graph of a noncrossing partition} $\pi \in NC(l)$ to be the graph on $I$ containing one path for each part of $\pi$ with the order induced by $l$.

To simplify the discussion we let $I = [n]$ and $l$ be the path $12\cdots n$.
For a noncrossing partition $\pi$ of $I$, let the \emph{adjacent closure} $\overline{\pi}$ be the  partition obtained from $\pi$ by successively merging any two \emph{adjacent} blocks $S_1$ and $S_2$ such that $\max S_1 = b$ and $\min S_2 = b+1$ for some $b$. 

\begin{example}
The adjacent closure of the noncrossing  partition $\pi = 1|26|3|45|78$ in $NC(8)$ is $\overline{\pi} = 12678|345$.
\end{example}

\begin{theorem}\label{t:antipodeF2}
The antipode of the Hopf monoid of paths $\rF$ is given by the following \textbf{cancellation-free} and \textbf{grouping-free} expression. If $l$ is a path on $I$,
\[
s_I(l) = \sum_{\pi \in NC(l)} (-1)^{|\pi|} C_{(\overline{\pi} : \pi)} \, l(\pi)
\]
summing over all the noncrossing partitions $\pi$ of $l$. 
Here $l(\pi)$ denotes the linear graph of $\pi$, $\overline{\pi}=\{p_1, \ldots, p_k\}$ is the adjacent closure of $\pi$, and $C_{(\overline{\pi}:\pi)} = \prod_{p_i \in \overline{\pi}} C_{n_i}$ where $n_i$ is the number of blocks of $\pi$ refining block $p_i$ of $\overline{\pi}$.
\end{theorem}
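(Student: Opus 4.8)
The plan is to start from the cancellation-free formula of Proposition \ref{p:antipodeF1}, namely
\[
s_I(l) = \sum_{t \textrm{ tubing of } l} (-1)^{|t|} \, l(t),
\]
and to group the summands according to the set partition $\pi = \pi(t)$ formed by the connected components of $l(t)$. As observed just before the statement, every such $\pi$ is a noncrossing partition of $l$, and $l(t)$ depends only on $\pi$ (it is the linear graph $l(\pi)$ whose paths are the blocks of $\pi$ in the order induced by $l$); moreover $|t| = |\pi|$ since each block of $l(t)$ comes from exactly one tube $\tau$ whose essential set is that block. Therefore grouping the sum by $\pi$ yields
\[
s_I(l) = \sum_{\pi \in NC(l)} (-1)^{|\pi|} \, N(\pi) \, l(\pi),
\]
where $N(\pi)$ counts the tubings $t$ with $\pi(t) = \pi$. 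The entire content of the theorem then reduces to the enumerative identity $N(\pi) = C_{(\overline{\pi}:\pi)}$, together with the remark that the formula is cancellation-free and grouping-free because distinct noncrossing partitions $\pi$ give distinct linear graphs $l(\pi)$.

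The heart of the proof is thus to compute $N(\pi)$, the number of tubings of the path $l$ whose associated partition is exactly $\pi$. First I would reduce to a single block of the adjacent closure. Given $\pi \in NC(l)$ with adjacent closure $\overline{\pi} = \{p_1, \ldots, p_k\}$, a tubing $t$ with $\pi(t) = \pi$ decomposes into independent tubing data on each $p_i$: indeed, since the $p_i$ form a partition of $[n]$ into consecutive intervals of the linear order (this is exactly what the adjacent closure achieves), any tube of $l$ is contained in some $p_i$, the tubes lying in different blocks never interact with the nesting/disjointness conditions across blocks, and the requirement that $\pi(t) = \pi$ splits as the requirement that the restricted tubing on $p_i$ induces the restriction $\pi|_{p_i}$. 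Hence $N(\pi) = \prod_{i} N_i$, where $N_i$ counts tubings of the subpath on $p_i$ inducing $\pi|_{p_i}$, and it remains to show $N_i = C_{n_i}$ where $n_i = |\pi|_{p_i}|$ is the number of blocks of $\pi$ refining $p_i$.

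The key combinatorial claim is therefore: if $p$ is a path (a single consecutive interval) and $\sigma$ is a noncrossing partition of $p$ whose adjacent closure is the single block $p$ itself (equivalently, $\sigma$ has no two adjacent blocks that could be merged), then the number of tubings $t$ of $p$ with $\pi(t) = \sigma$ equals $C_{|\sigma|}$, the Catalan number indexed by the number of blocks. I would prove this by setting up a bijection between such tubings and a Catalan family on $|\sigma|$ objects. The natural candidate is the following: contract each block of $\sigma$ to a point, obtaining an abstract path on $|\sigma|$ vertices (here the ``no mergeable adjacent blocks'' condition guarantees the contracted object is still a genuine path, so its tubings are counted by $C_{|\sigma|}$ via Theorem \ref{t:loday} or the standard tubing--parenthesization bijection), and show that the tubings $t$ of $p$ with $\pi(t) = \sigma$ are in bijection with the tubings of this contracted path. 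For the forward direction, a tubing $t$ with $\pi(t) = \sigma$ must have, for each tube $\tau$, an essential set equal to a block of $\sigma$, so the nesting structure of $t$ descends to a tubing of the contracted path. For the reverse direction, one lifts each tube of the contracted path to the union of the corresponding blocks of $\sigma$ (which is a tube of $p$ precisely because $\sigma$ is noncrossing), and checks that this lift produces a valid tubing inducing $\sigma$.

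The main obstacle I anticipate is the careful verification of that last bijection, in particular confirming that the ``adjacency'' bookkeeping is exactly right: one must check that a noncrossing partition $\sigma$ of a single interval whose adjacent closure is the whole interval corresponds, after contracting blocks, to a tubing problem on an honest path (not on a more general graph), and that nothing is lost or double-counted when the essential-set condition $\pi(t) = \sigma$ is imposed. There is a delicate point in ensuring the adjacent closure is precisely the right equivalence: blocks of $\pi$ that are adjacent in the linear order can be built by a tube that swallows both consecutively, and the factor of $C_{n_i}$ rather than $\prod$ of smaller Catalan numbers is exactly what records this freedom. Once this single-interval count is established as $C_{|\sigma|}$ and multiplied over the blocks $p_i$ of $\overline{\pi}$, we obtain $N(\pi) = \prod_i C_{n_i} = C_{(\overline{\pi}:\pi)}$, completing the proof; the cancellation-free and grouping-free assertions are then immediate from the injectivity of $\pi \mapsto l(\pi)$ on $NC(l)$.
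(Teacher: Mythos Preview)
Your overall strategy---group Proposition~\ref{p:antipodeF1} by $\pi(t)$ and reduce to the count $N(\pi)$---matches the paper, but your computation of $N(\pi)$ has a genuine gap. The claim that ``the $p_i$ form a partition of $[n]$ into consecutive intervals of the linear order (this is exactly what the adjacent closure achieves)'' is false: the blocks of $\overline{\pi}$ need not be intervals. For $\pi=\{14,23\}$ on $[4]$ (noncrossing, since $\{2,3\}$ is nested inside $\{1,4\}$) one checks that no two blocks are adjacent, so $\overline{\pi}=\{14,23\}$, and $\{1,4\}$ is not an interval. Consequently tubes of $l$ are not confined to a single $p_i$, the ``independent subpath'' picture collapses, and your contraction-to-a-path bijection is not even defined on $p_1=\{1,4\}$.

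The paper's argument avoids this by working with intervals rather than with the blocks of $\overline{\pi}$ themselves. To each block $p_i$ of $\pi$ it assigns the tube $\tau_i=[\min p_i,\max p_i]$; noncrossingness forces these intervals to be pairwise nested or disjoint. Merging adjacent ones produces a genuine tubing $\overline{t}$ whose tubes are in bijection with the blocks of $\overline{\pi}$, and the tube $\overline{\tau_i}$ corresponding to $p_i\in\overline{\pi}$ contains exactly $n_i$ of the original $\tau_j$'s as ``compartments''. Any tubing $u$ with $\pi(u)=\pi$ is then obtained by choosing, for each $\overline{\tau_i}$, one of the $C_{n_i}$ ways to nest its $n_i$ compartments. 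This is where the product $\prod_i C_{n_i}$ really comes from: the factorization is over the nested tubes of $\overline{t}$, not over the (possibly non-interval) blocks of $\overline{\pi}$. In the example above, $\overline{t}=\{[1,4],[2,3]\}$ with $n_1=n_2=1$ and $N(\pi)=C_1C_1=1$, which you can verify directly.
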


\begin{proof}
For a noncrossing partition $\pi$, the coefficient of $l(\pi)$
in the expression of Proposition \ref{p:antipodeF1} is equal to the number of tubings $u$ of $l$ with $\pi(u)=\pi$. We claim that this number equals $C_{(\overline{\pi}:\pi)}$.

Let $\pi$ be a noncrossing partition of $[n]$, and consider the set $t$ of tubes $\tau_i = [\min p_i, \max p_i]$ for all blocks $p_i$ of $\pi$. Notice that $\tau_i \subset \tau_j$, $\tau_i \supset \tau_j$, or $\tau_i \cap \tau_j = \emptyset$ for $i \neq j$; if that were not the case, without loss of generality we would have  $\min p_i < \min p_j < \max p_i  < \max p_j$, which would contradict the assumption that $\pi$ is noncrossing. However, $t$ is not necessarily a tubing because it may contain adjacent tubes. 

Let $\overline{t}$ be the tubing obtained from $t$ by successively merging any two adjacent tubes of the form  $[a,b]$ and $[b+1,c]$. It follows from the definitions that the noncrossing partition associated to $\overline{t}$ is $\overline{\pi}$.

For each tube of $\overline{t}$, let us remember the tubes in $t$ that constituted it by drawing vertical dotted lines separating them. This process is shown in Figure \ref{f:noncrossingtubes}. Notice that if part $p_i$ of $\overline{\pi}$ contains $n_i$ parts of $\pi$, then the corresponding tube $\overline{t_i}$ of $\overline{t}$ contains  $n_i$ tubes of $t$.

\begin{figure}[h]
\centering
\includegraphics[scale=.55]{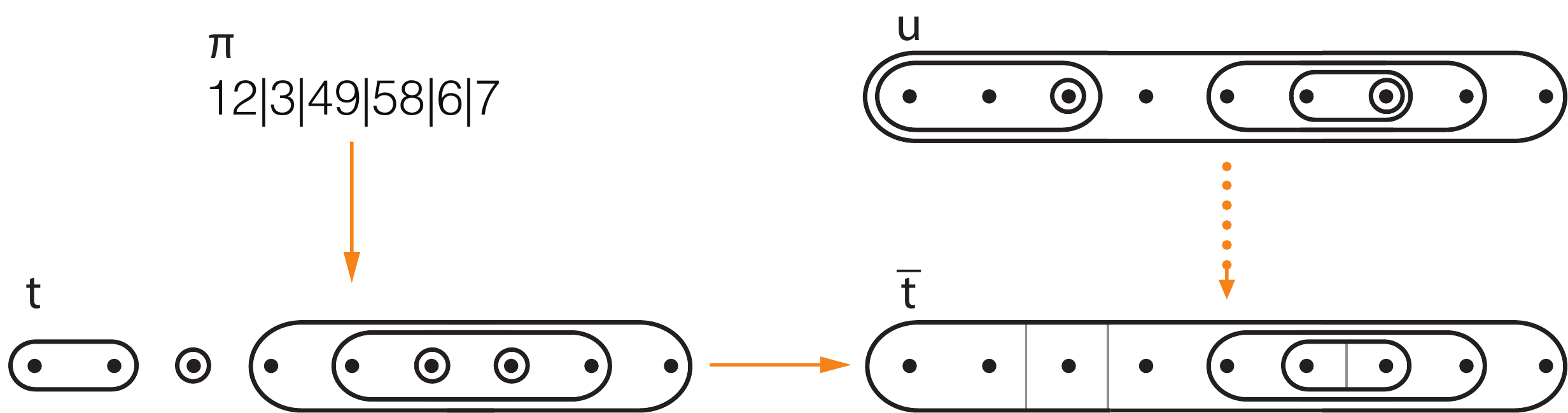} 
\caption{The process to go from a noncrossing partition $\pi=12|3|49|58|6|7$ to a tubing $u$ such that $\pi(u) = \pi$. The step $\pi \mapsto t$ is bijective and the map $t \mapsto t'$ is defined uniquely; we draw the vertical lines in $t'$ are a visual aid, but they are not part of $\overline{t}$. The partial tubing $\overline{t}$ has  $\prod_{p_i \in \overline{\pi}} C_{n_i} = C_3C_2 = 10$ possible preimages $u$, corresponding to resolving the two tubes having $3$ and $2$ vertical compartments, respectively. \label{f:noncrossingtubes}
}
\end{figure}

Any tubing $u$ such that $\pi(u)=\pi$ is obtained from the set $t$ of tubes -- which is usually not a tubing -- by ``resolving" any maximal sequence of adjacent tubes, making them nested. To do this, we consider each tube $\overline{\tau_i}$ of $\overline{t}$, treat the $n_i$ tubes of $t$ that it contains as singletons, and replace them with a maximal tubing of size $n_i$; there are $C_{n_i}$ such tubings for each $i$. This explains why there are $C_{(\overline{\pi}:\pi)}$ tubings $u$ of $l$ with $\pi(u)=\pi$, completing the proof.
\end{proof}

Since $\rF$ is commutative, its antipode is multiplicative. This gives a similar cancellation-free and grouping-free formula for $\apode_I(\alpha)$ for any set of paths $\alpha$ on $I$.

\newpage 

\begin{example} For the path $abcd$, Theorem \ref{t:antipodeF2} gives the formula from the introduction:

 \begin{figure}[H]
\flushleft
\includegraphics[scale=.5]{Figures/antipodefaa.pdf} \qquad
\end{figure}
\end{example}


Theorem \ref{t:antipodeF2} explains the double appearance of Catalan numbers in the formula for the antipode of a linear graph: each coefficient is a products of Catalan numbers, and the number of terms (14 in this case) is the number of noncrossing partitions, which is also a Catalan number.

\subsection{Associahedra and paths}\label{ss:assopaths}

As we have already anticipated, our formulas for the antipode of the Hopf monoid of paths $\rF$ are controlled by Loday's associahedra. We now make this connection precise. 

We begin with a technical lemma. Recall that the Loday associahedron $\wa_\ell$ of a linear order $\ell$ of $I$ is the Minkowski sum $\wa_\ell = \sum_{J}\Delta_J$, where we sum over all the intervals $J$ of the linear order $\ell$. 

\begin{lemma}\label{l:associahedra}
If $\ell_1 \neq \ell_2$ are linear orders on $I$, then $\wa_{\ell_1}$ and $\wa_{\ell_2}$ are normally equivalent if and only if $\ell_2$ is the reversal of $\ell_1$.
\end{lemma}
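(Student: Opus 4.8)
The normal equivalence $\wa_{\ell_1} \equiv \wa_{\ell_2}$ means $\Nc_{\wa_{\ell_1}} = \Nc_{\wa_{\ell_2}}$. The backward implication is immediate: if $\ell_2$ is the reversal of $\ell_1$, then an interval $[i,j]_{\ell_1}$ is the same subset of $I$ as the reversed interval $[j,i]_{\ell_2}$, so $\ell_1$ and $\ell_2$ have exactly the same intervals and therefore $\wa_{\ell_1} = \sum_{J} \Delta_J = \wa_{\ell_2}$ as polytopes -- in particular they are normally equivalent. The content of the lemma is the forward implication, for which the plan is to recover the linear order $\ell$, up to reversal, from the normal fan $\Nc_{\wa_\ell}$.

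The key observation is that the intervals of $\ell$ are detectable from $\Nc_{\wa_\ell}$ as facet normals. Indeed, the Loday associahedron $\wa_\ell = \sum_{i \le j} \Delta_{[i,j]_\ell}$ is exactly the graph associahedron $\Delta_w$ of the path $w$ whose vertices are linearly ordered by $\ell$, since the tubes of such a path are precisely the intervals of $\ell$. By Theorem~\ref{t:graphassofaces} (or Theorem~\ref{t:nestofaces}), the facets of $\wa_\ell$ are in bijection with the proper tubes of $w$, i.e.\ the proper intervals $J$ of $\ell$; moreover, writing $\wa_\ell = \Pc(z)$ as in Theorem~\ref{t:submod-gp}, the facet corresponding to $J$ is cut out by the optimal inequality $\sum_{i \in J} x_i \le z(J)$ and hence has outer normal $\1_J$. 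The outer facet normals of a polytope are exactly the rays of its normal fan, so the collection $\{\1_J : J \text{ a proper interval of } \ell\}$ of rays is determined by $\Nc_{\wa_\ell}$; and since distinct proper subsets $J$ span distinct rays (modulo the lineality $\Rb\1$ of the fan), the set of proper intervals of $\ell$ is itself determined by $\Nc_{\wa_\ell}$.

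It remains to recover $\ell$, up to reversal, from its set of intervals; for this it suffices to use the two-element intervals. A two-element interval of $\ell$ is exactly a pair $\{a,b\}$ of consecutive elements in $\ell$, so the two-element intervals are the edges of a path graph on the vertex set $I$. A path graph on $n \ge 2$ vertices has a unique Hamiltonian path up to reversal, namely the order reading off its two degree-one vertices and the forced order between them. Hence the set of two-element intervals determines $\ell$ up to reversal. Combining the two steps: if $\Nc_{\wa_{\ell_1}} = \Nc_{\wa_{\ell_2}}$, then $\ell_1$ and $\ell_2$ have the same intervals, hence the same two-element intervals, hence coincide up to reversal; since $\ell_1 \ne \ell_2$, $\ell_2$ is the reversal of $\ell_1$. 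The main subtlety is the middle paragraph -- verifying that the set of intervals (equivalently, the facet normals $\1_J$) is genuinely an invariant of the normal fan rather than of the particular polytope $\wa_\ell$; once that is in place, the combinatorial recovery is routine.
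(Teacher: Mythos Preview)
Your overall strategy---recover the adjacency structure of $\ell$ from $\Nc_{\wa_\ell}$ and then note that a path graph determines its linear order up to reversal---is sound and is exactly the paper's plan. But the middle paragraph contains a concrete error: in this paper's convention $\Pc(z)=\{x:\sum_{i\in A}x_i\le z(A)\}$, the facet of $\wa_\ell$ indexed by a proper interval $J$ has outer normal $\1_{I\setminus J}$, not $\1_J$. For instance, on the path $1\text{--}2\text{--}3$ the direction $\1_{\{2\}}$ is maximized at a single vertex (every summand $\Delta_K$ with $2\in K$ collapses to $e_2$), while $\1_{\{1,3\}}$ is maximized along an edge. So the rays of $\Nc_{\wa_\ell}$ are $\{\1_{I\setminus J}:J\text{ a proper interval of }\ell\}$, not $\{\1_J\}$. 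Your argument is easily repaired by complementing, but as written the identification of the facet-defining inequalities is wrong.

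The paper avoids this bookkeeping by working at the dual end of the normal fan: instead of rays (facets), it looks at which braid hyperplanes $y(i)=y(j)$ lie in the codimension-$1$ skeleton of $\Nc_{\wa_\ell}$, i.e., which edge directions $e_i-e_j$ occur in $\wa_\ell$. One checks that the full hyperplane $y(i)=y(j)$ appears iff $\Delta_{\{i,j\}}$ is a Minkowski summand, which happens iff $i$ and $j$ are adjacent in $\ell$; indeed, if some $k$ lies strictly between $i$ and $j$ in $\ell$, then a generic $y$ with $y(k)>y(i)=y(j)$ collapses every interval containing both $i$ and $j$ to $e_k$, giving a vertex rather than an edge. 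This recovers the adjacent pairs directly, without passing through the facet description of Theorem~\ref{t:graphassofaces} and without a complement step.
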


\begin{proof} 
If $\ell_2$ is the reversal of $\ell_1$ then $\ell_1$ and $\ell_2$ have the same intervals, so $\wa_{\ell_1} = \wa_{\ell_2}$. 

Conversely, suppose we know the normal fan  $\Nc := \Nc(\wa_{\ell})$ of the associahedron of a linear order $\ell$. Then we know which hyperplanes of the form $y(i) = y(j)$ for $i, j \in I$ are contained in (the codimension 1 subcomplex of) $\Nc$. The hyperplane $y(i)=y(j)$ can only arise if $\wa_\ell$ has $\Delta_{ij}$ as a Minkowski summand. In turn, that summand appears if and only if $i$ and $j$ are adjacent in the linear order $\ell$. It follows that $\Nc$ determines the set of adjacent pairs of $\ell$, and these completely determine the linear order $\ell$ up to reversal. The desired result follows.
\end{proof}



\begin{proposition}\label{prop:AbA}
The Hopf monoid  of sets of paths $\rF$ is isomorphic to the 
Hopf monoid of associahedra $\rbA$.
\end{proposition}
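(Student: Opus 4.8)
The plan is to construct an explicit isomorphism $\rF \map{\cong} \rbA$ by sending each set of paths to the product of Loday associahedra indexed by its constituent paths, and then check that this map respects the product, the coproduct, and is bijective. Concretely, I would define the map $\Phi: \rF[I] \to \rbA[I]$ on a single path $s$ on a set $J$ (written as a linear order $\ell_s$, i.e. a word) by $\Phi(s) = \wa_{\ell_s}$, the Loday associahedron of that linear order, and extend it to a general set of paths $\alpha = s_1 | \cdots | s_k$ multiplicatively, $\Phi(\alpha) = \wa_{\ell_{s_1}} \times \cdots \times \wa_{\ell_{s_k}}$. Because $\rF$ records a path only up to reversal (a word and its reverse are the same path), and because Lemma \ref{l:associahedra} shows that $\wa_{\ell_1} \equiv \wa_{\ell_2}$ in $\rbA$ precisely when $\ell_2$ is the reversal of $\ell_1$, this map is well-defined on $\rF$ and, crucially, \emph{injective} on single paths; together with the unique factorization of elements of $\rbA$ recorded in (\ref{e:Abasis}), this will give bijectivity.

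The key steps, in order, are as follows. First I would verify that $\Phi$ is well-defined: a set of paths is an unordered collection of paths-up-to-reversal, and by Lemma \ref{l:associahedra} the normal-equivalence class $\wa_{\ell_s}$ depends exactly on $s$ as a path (not on which of its two words we chose), while the polytopal product is commutative up to the relabeling built into $\rbA$, matching the fact that the order of the paths in $\alpha$ is irrelevant. Second, $\Phi$ preserves products: this is immediate since both the product in $\rF$ (disjoint union of sets of paths) and the product in $\rbA$ (polytopal product) are the operation ``place the two structures side by side,'' so $\Phi(\alpha_1 \sqcup \alpha_2) = \Phi(\alpha_1) \times \Phi(\alpha_2)$. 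Third, and most substantively, $\Phi$ preserves coproducts. Here I would compare the coproduct of $\rF$ described in Section \ref{ss:F} with the coproduct of $\rbA$ computed in Corollary \ref{c:rbA}. For a single path $s$ on $J$ and a decomposition $J = (J\cap S) \sqcup (J \cap T)$, the restriction $s|_{J\cap S}$ is the path with the inherited order, matching $\wa_{\ell_s}|_{J\cap S} \equiv \wa_{\ell_s | (J\cap S)}$ from Corollary \ref{c:rbA}; and the contraction $s/_{J\cap S}$ breaks $s$ into the connected components of its restriction to $T$, i.e.\ the maximal subintervals $T_1, \ldots, T_m$ of $T$ in $\ell_s$, matching $\wa_{\ell_s}/_{J\cap S} = \wa_{\ell_s|T_1} \times \cdots \times \wa_{\ell_s|T_m}$ exactly. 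Assembling these across all paths of $\alpha$ and across $I = S \sqcup T$ gives $\Delta_{S,T}(\Phi(\alpha)) = (\Phi(\alpha|_S), \Phi(\alpha/_S))$.

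I expect the main obstacle to be bookkeeping rather than conceptual: one must check that the decomposition of an arbitrary $T \subseteq I$ into the maximal $\ell_s$-subintervals produced by the associahedral contraction coincides, block for block, with the connected-components description of the path contraction in $\rF$, and that the multiplicities and orderings agree when several paths of $\alpha$ simultaneously contribute. The one genuinely delicate point is ensuring well-definedness on the nose — that is, that passing to normal-equivalence classes in $\rbA$ exactly quotients out the reversal ambiguity of paths and no more — which is precisely what Lemma \ref{l:associahedra} supplies, so I would invoke it explicitly to conclude that $\Phi$ is a bijection and hence an isomorphism of Hopf monoids. As an immediate corollary, combining this isomorphism with Theorem \ref{t:antipode} (applied to $\rbA \subseteq \rbbGP$) recovers and explains the antipode formulas for $\rF$ obtained in Section \ref{ss:antipodeF}, and under the Fock functor $\Kcb$ it sends $\wbbA$ to the Fa\`a di Bruno Hopf algebra.
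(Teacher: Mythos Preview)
Your proposal is correct and arrives at the same map and the same injectivity argument (via Lemma \ref{l:associahedra}) as the paper. The only difference is in how the Hopf-morphism property is established: you verify product and coproduct compatibility directly from Corollary \ref{c:rbA}, whereas the paper obtains it for free by composing the already-proven morphisms $\rF \into \rW^{cop} \cong \rWBS^{cop} \into \rBS^{cop} \into \rSHG^{cop} \onto \rbHGP$, observing that the composite sends a path $l$ to $\wa_l$ and lands in $\rbA$. Your route is more self-contained; the paper's route leverages the infrastructure built in Sections \ref{s:HG}--\ref{s:W} and avoids rechecking the coproduct by hand.
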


\begin{proof} The injective maps $\rF \into \rW^{cop} \cong \rWBS^{cop} \into \rBS^{cop} \into \rSHG^{cop}$ of Propositions \ref{prop:FintoW}, \ref{p:WintoWB}, and \ref{p:BSintoSHG}  
allow us to identify a path $l \in \rF[I]$ with the set $\tubes(l) \in \rSHG[I]$.
Together with the surjection $\rSHG^{cop} \onto \rbHGP$ of Proposition \ref{p:HGtoSHG}, this gives a map $\wa: \rF \rightarrow \rbHGP$ which sends a path $l$ to the associahedron $\wa_l$. The image of this map  is $\rbA \subset \rbHGP$. Furthermore, 
$\wa$ is injective thanks to Lemma \ref{l:associahedra}, keeping in mind that a path and its reverse are identified in $\rF$. The desired result follows.
\end{proof}

\subsection{Associahedra and Fa\`a di Bruno}\label{ss:assoFaa}

The \emph{Fa\`a di Bruno Hopf algebra} $\Fc$, introduced by Joni and Rota \cite{joni82:_coalg} but anticipated by many others, appears naturally in several areas of mathematics and physics \cite{ebrahimi2015faa, figueroa05}. 
In this section we show that the Fock functor relates the Hopf monoid of associahedra $\rbA$
(or equivalently the Hopf monoid of paths $\rF$) to the Fa\`a Bruno Hopf algebra $\Fc$.

As an algebra, the Fa\`a di Bruno Hopf algebra $\Fc$ is freely generated as a graded commutative algebra by $\{x_2, x_3,  \ldots\}$ with $\deg x_n = n-1$. It is convenient to write $x_1=1$. The coproduct is given by
\[
\Delta(x_n) = \sum_{k=1}^n \sum_\lambda 
\frac{n!}{\lambda_1! \lambda_2! \cdots 1!^{\lambda_1} 2!^{\lambda_2} \cdots} x_1^{\lambda_1} x_2^{\lambda_2} \cdots \otimes x_k
\]
summing over all sequences $\lambda = (1, 1, \ldots; 2, 2, \ldots; \ldots) = (1^{\lambda_1}, 2^{\lambda_2}, \ldots)$ of length $k$ and total sum $n$, so $
\lambda_1 + \lambda_2 + \lambda_3 + \cdots = k$ and $\lambda_1 + 2\lambda_2 + 3 \lambda_3 +  \cdots = n.$

The grading and the formulas are cleaner when we present $F$ in terms of the generators $a_{n-1} = x_n/n!$; it is useful to write $a_0 = 1$. Then we have

\[
\Delta(a_{n-1}) = \sum_{k=1}^n \sum_\mu
{k \choose \mu_0, \mu_1, \mu_2, \ldots} a_1^{\mu_1} a_2^{\mu_2} \cdots \otimes a_{k-1}
\]
summing over all sequences $\mu = (0, 0, \ldots;  1, 1, \ldots; 2, 2, \ldots; \ldots) = (0^{\mu_0}, 1^{\mu_1}, 2^{\mu_2}, \ldots)$ of length $k$ and total sum $n-k$, so
$\mu_0 + \mu_1+ \mu_2 + \mu_3 + \cdots = k$ and $\mu_1 + 2 \mu_2 + 3 \mu_3 + \cdots = n-k.$

\begin{proposition}\label{p:FtoF}
The Fock functor $\Kcb$ maps the co-opposite $\rbA^{cop}$ of  the Hopf monoid of associahedra $\rbA$ to the Fa\`a di Bruno Hopf algebra $\Fc$.
\end{proposition}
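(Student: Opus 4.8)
The plan is to compute both sides explicitly and match generators, coproducts, and gradings. First I would recall that the Fock functor $\Kcb$ sends the Hopf monoid $\rbA$ to the graded Hopf algebra whose degree-$n$ component is spanned by isomorphism classes of elements of $\rbA[I]$ for $|I|=n$. Since every Loday associahedron on an $n$-element set is quasinormally equivalent to the standard $\wa_n$ (as recalled in Section \ref{ss:Loday}), and by Lemma \ref{l:associahedra} distinct linear orders give normally equivalent associahedra only when one is the reversal of the other, the isomorphism classes in degree $n$ of $\rbA$ are indexed by a single generator, which I would name to match the Fa\`a di Bruno generator $a_{n-1}$. Concretely, I would set the image of the class $[\wa_n]$ to be $a_{n-1}$ (with $a_0=1$ corresponding to the point $\wa_1$), so that as an algebra $\Kcb(\rbA^{cop})$ is freely generated by $a_1, a_2, \ldots$, matching $\Fc = \Kb[a_1, a_2, \ldots]$ since the product in $\rbA$ is the disjoint union / Cartesian product of associahedra.

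The heart of the argument is the coproduct computation. Using the description of the coproduct on the Fock quotient (Section \ref{ss:Fock}), $\Delta([\wa_n]) = \sum_{[n]=S\sqcup T} [\wa_n|_S] \otimes [\wa_n/_S]$, and Corollary \ref{c:rbA} tells us exactly how to evaluate each summand: $\wa_n|_S \equiv \wa_{|S|}$ in the ripped order, while $\wa_n/_S \cong \wa_{|T_1|} \times \cdots \times \wa_{|T_k|}$ where $T=T_1 \sqcup \cdots \sqcup T_k$ is the decomposition of $T$ into maximal intervals. Passing to isomorphism classes and recording the block sizes, I would organize the sum by the sizes $\mu = (0^{\mu_0}, 1^{\mu_1}, 2^{\mu_2}, \ldots)$ of the intervals $T_i$ (where an empty interval between consecutive elements of $S$ contributes a factor $a_0=1$). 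Here $k = |S|+1$ plays the role of the length parameter and the multinomial coefficient $\binom{k}{\mu_0, \mu_1, \mu_2, \ldots}$ counts the number of subsets $S$ producing a given gap profile — exactly as in the gap-sequence analysis already carried out in the proof of Theorem \ref{t:charassoc}. This reproduces the stated coproduct formula for $\Delta(a_{n-1})$ in $\Fc$ term by term, where the tensor factor $a_{k-1}$ comes from the restriction $[\wa_n|_S]$ and the product $a_1^{\mu_1} a_2^{\mu_2}\cdots$ comes from the contraction. Note that taking $\rbA^{cop}$ (rather than $\rbA$) is what places the restriction factor on the right and the contraction factor on the left, matching the conventional form of $\Delta$ in $\Fc$.

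The main obstacle, and the step requiring the most care, is the combinatorial bookkeeping that converts the decomposition-indexed sum of Corollary \ref{c:rbA} into the $\mu$-indexed sum defining $\Fc$: one must verify that the number of pairs $(S, \text{ordering of intervals})$ yielding a fixed multiset of interval sizes is precisely the Fa\`a di Bruno multinomial coefficient, and that the length and sum constraints $\mu_0+\mu_1+\cdots = k$ and $\mu_1 + 2\mu_2 + \cdots = n-k$ emerge correctly. This is essentially the same enumeration underlying the compositional-inversion formula, so I would lean on the bijective reasoning from Theorem \ref{t:charassoc}. Once the product and coproduct are shown to agree under $[\wa_n] \mapsto a_{n-1}$, the grading matches ($\deg a_{n-1} = n-1$ on both sides), and since the antipode of a connected graded Hopf algebra is determined by the product and coproduct, the two antipodes automatically coincide; this gives the desired isomorphism $\Kcb(\rbA^{cop}) \cong \Fc$.
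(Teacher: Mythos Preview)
Your proposal is correct and follows essentially the same route as the paper: identify the algebra generators via the Fock functor, compute the coproduct using Corollary~\ref{c:rbA}, and reorganize the decomposition-indexed sum by gap profile to produce the Fa\`a di Bruno multinomial coefficients (exactly as in the proof of Theorem~\ref{t:charassoc}). One small indexing slip: to preserve the grading you should set $[\wa_n]\mapsto a_n$ (so the point $\wa_0$ corresponds to $a_0=1$), not $[\wa_n]\mapsto a_{n-1}$, since $[\wa_n]$ lives in degree $n$ of $\Kcb(\rbA)$ and $\deg a_m = m$ in $\Fc$; with that adjustment your computation of $\Delta(a_{n-1})$ over decompositions of $[n-1]$ lines up with the paper's verbatim.
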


\begin{proof}
Let the Fock functor $\Kcb$ take the associahedron $\wa_\ell$ to the element $a_n$ where $n=|\ell|$. Then (\ref{e:Abasis}) tells us that as an algebra $\Kcb(\rbA^{cop}) = \Kb[a_0, a_1, \ldots]$ while Lemma \ref{c:rbA} tells us that the coproduct of $\Kcb(\rbA^{cop})$ is given by
\[
\Delta(a_{n-1}) = \sum_{[n-1] = S \sqcup T} a_{|T_1|} \cdots a_{|T_k|} \otimes a_{|S|}
\]
where if $S = \{s_1, \ldots, s_{k-1}\}$ then $T_i$ is the interval of integers strictly between $s_i$ and $s_{i+1}$, with the convention that $s_0=0$ and $s_k=n$.

A decomposition $[n-1] = S \sqcup T$ contributes to the term $a_1^{\mu_1} a_2^{\mu_2} \cdots \otimes a_{k-1}$ in $\Delta(a_{n-1})$ when $|S|=k-1$ and the $k$ gaps $|T_1|, \ldots, |T_k|$ between consecutive elements of $S$, including the initial and final gap, have sizes $0, 0, \ldots$ ($\mu_0$ times), $1, 1, \ldots$ ($\mu_1$ times), $2, 2, \ldots$ ($\mu_2$ times), etcetera.
For example, for the decomposition $[12] = \{1,2,4,7,8,12\} \sqcup \{3,4,5,9,10,11\}$, the gaps between consecutive elements of $S = \{1,2,4,7,8,12\}$ have sizes $0,0,1,2,0,3,0$ in that order.

Now it remains to observe that there are ${k \choose \mu_0, \mu_1, \mu_2, \ldots}$ different ways of assigning the gap sizes $0, 0, \ldots$ ($\mu_0$ times), $1, 1, \ldots$ ($\mu_1$ times), $2, 2, \ldots$ ($\mu_2$ times), etcetera to their $k$ slots accordingly. Furthermore, these determine the possible choices for $S$ and $T$ that contribute to the term $a_1^{\mu_1} a_2^{\mu_2} \cdots \otimes a_{k-1}$ in $\Delta(a_{n-1})$, as desired.
\end{proof}

\subsection{Three antipode formulas for the associahedron}\label{ss:asso}

At this point we have given formulas for the antipode of Loday's associahedron $\wa_n$ in three different Hopf monoids: $\wGP, \wbGP,$ and $\wbbGP$.

In $\wGP$ Theorem \ref{t:antipode} gives
\begin{equation}\label{e:antipodeassoc1}
\apode(\wa_n) = \sum_{F \textrm{ face of } \wa_n} \, 
 (-1)^{n - \dim F} F
\end{equation}
where every face $F$ of $\wa_n$ is normally equivalent to a product of Loday associahedra.

In $\wbA \subset \wbGP$, thanks to the isomorphism $\rF \cong \rbA$, Theorem \ref{t:antipodeF2}  gives
\begin{equation}\label{e:antipodeassoc2}
\apode(\wa_n) = 
\sum_{\pi \in NC(n)} (-1)^{|\pi|} C_{(\overline{\pi} : \pi)} \, \wa_{p_1} \cdots \wa_{p_k}
\end{equation}
summing over the noncrossing partitions $\pi$ of $[n]$; here $\overline{\pi} = \{p_1, \ldots, p_l\}$ is the adjacent closure of $\pi$, and $C_{(\overline{\pi}:\pi)} = C_{n_1}\cdots C_{n_l}$
where $n_i$ is the number of blocks of $\pi$ refining block $p_i$ of $\overline{\pi}$.

In $\wbbA \subset \wbbGP$ the proofs of Theorems \ref{t:Lagrinvpol} and \ref{t:Lagrinvenum} give
\begin{equation}\label{e:antipodeassoc3}
\apode(\wa_n) = \sum_{\langle 1^{m_1}2^{m_2} \cdots \rangle \vdash n} \, 
 (-1)^{|m|}\frac{(n+|m|)!} {(n+1)! \, m_1! \, m_2! \cdots} \wa_1^{m_1} \wa_2^{m_2} \cdots
\end{equation}
summing over all partitions $\langle 1^{m_1}2^{m_2} \cdots \rangle$ of $n$, 
where $|m| = m_1+m_2+\cdots$.

\begin{figure}[h]
\centering
\qquad \qquad
\includegraphics[scale=.3]{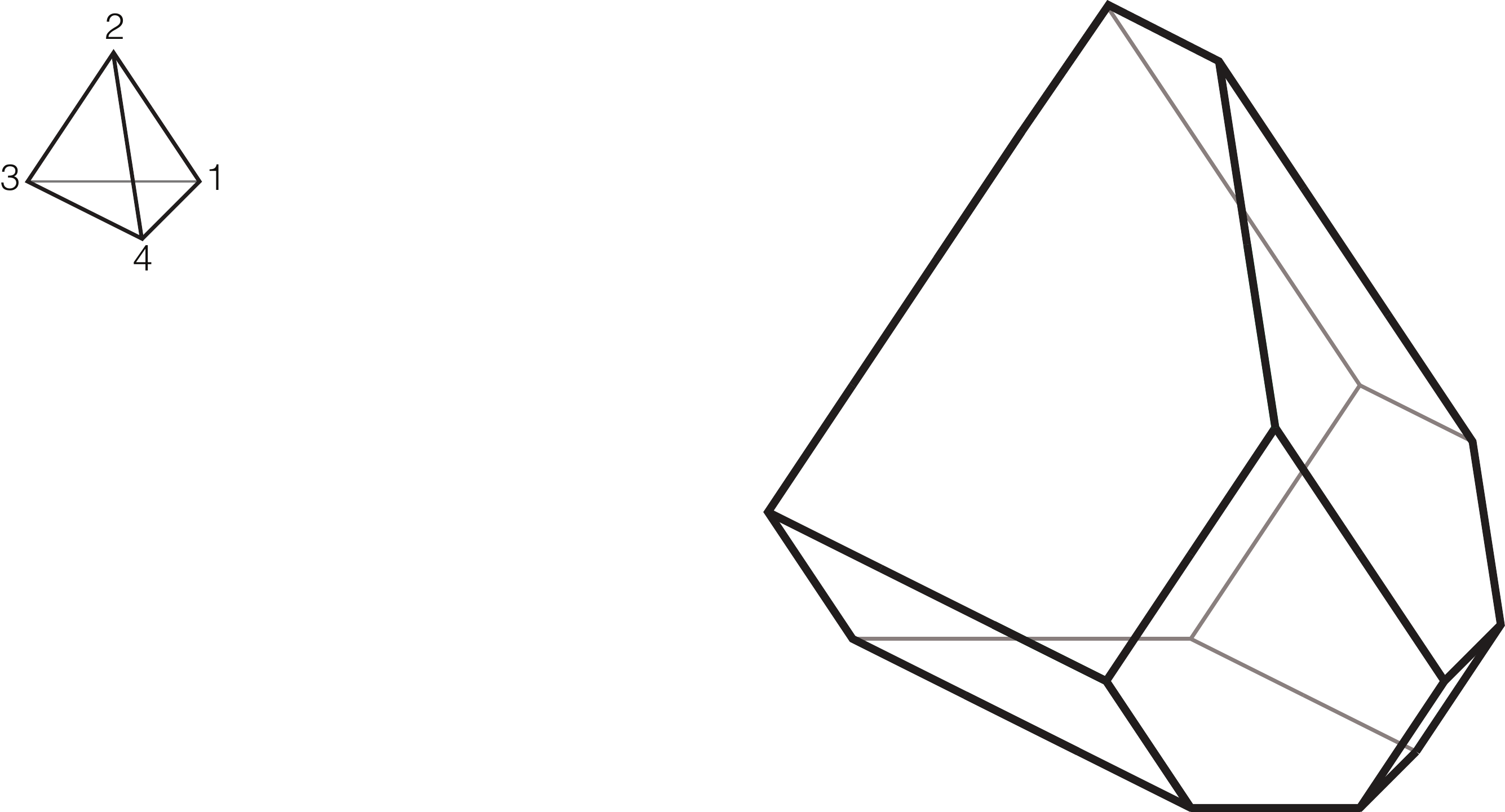} \hfill
\caption{The three-dimensional associahedron $\wa_4$.\label{f:assoc3D}}
\end{figure} 

Each formula coarsens the previous one under the projection maps $\wGP \onto \wbGP \onto \wbbGP$. In the first formula all faces of the associahedron are distinct. In the second formula, faces of the associahedron are grouped together according to their normal equivalence classes, which in turn correspond  to their combinatorial type and position with respect to the axes. In the third formula, faces of the associahedron are grouped according to their quasinormal equivalence classes, which correspond to their combinatorial type.

\begin{example}
Let us consider the contribution of the $6$ pentagonal faces of the associahedron $\wa_4$ to the three versions of the antipode $\apode(\wa_4)$:\\
$\bullet$ In $\wGP$, each one of these six pentagonal faces is a separate term of $\apode(\wa_4)$.\\
$\bullet$ In $\wbGP$, these six faces group into four normal equivalence classes:
 the noncrossing partitions $\{123,4\}$ and $\{1, 234\}$ contribute two pentagons each, 
 while the noncrossing partitions $\{134,2\}$ and $\{124,3\}$ contribute one pentagon each.\\
$\bullet$ In $\wbbGP$, these six faces are all grouped together into the coefficient of $\wa_3\wa_1$, which equals $(-1)^2(4+2)!/(4+1)!1!1! = 6$. 
\end{example}


These observations have two interesting enumerative corollaries. 

\begin{corollary}
The number of normal equivalence classes of faces of Loday's associahedron $\wa_n$ is the Catalan number $C_n$. 
\end{corollary}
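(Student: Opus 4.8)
The plan is to extract this enumerative statement directly from the antipode formula \eqref{e:antipodeassoc2}, which expresses $\apode(\wa_n)$ in $\wbA$ as a cancellation-free and grouping-free sum over normal equivalence classes of faces of $\wa_n$. Since that formula is grouping-free, each normal equivalence class of faces contributes exactly one term, and by the derivation in Theorem \ref{t:antipodeF2} (via the isomorphism $\rF \cong \rbA$ of Proposition \ref{prop:AbA}), these terms are indexed precisely by the noncrossing partitions $\pi \in NC(n)$. Therefore the number of normal equivalence classes of faces of $\wa_n$ equals $|NC(n)| = C_n$, the $n$-th Catalan number.

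More carefully, the key step is to justify the claim that noncrossing partitions $\pi$ of $[n]$ are in bijection with normal equivalence classes of faces of the associahedron $\wa_n$. This follows by combining Theorem \ref{t:antipode} (formula \eqref{e:antipodeassoc1} in $\wGP$, where \emph{all} faces are distinct) with the projection $\wGP \onto \wbGP$ that groups faces into normal equivalence classes. The proof of Theorem \ref{t:antipodeF2} shows that under the identification of paths with associahedra, the tubings of the path $l = 12\cdots n$ (which index the faces of $\wa_n$ by Theorem \ref{t:graphassofaces}) map to noncrossing partitions via $t \mapsto \pi(t)$, and two tubings give normally equivalent faces exactly when they yield the same noncrossing partition. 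Thus the normal equivalence classes biject with $NC(n)$.

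First I would recall that $\wa_n$ is normally equivalent to the graph associahedron of the path $l$ on $[n]$, so its faces are indexed by tubings $t$ of $l$, with $\supp(F_t) = l(t)$ by Theorem \ref{t:graphassofaces}. Next I would observe that two faces $F_{t_1}, F_{t_2}$ are normally equivalent if and only if they have the same support, which is the case if and only if $\pi(t_1) = \pi(t_2)$; this is precisely the grouping performed in passing from Proposition \ref{p:antipodeF1} to Theorem \ref{t:antipodeF2}. Hence the map $t \mapsto \pi(t)$ descends to a bijection between normal equivalence classes of faces and noncrossing partitions of $[n]$. Finally I would invoke the classical count $|NC(n)| = C_n = \frac{1}{n+1}\binom{2n}{n}$ \cite{kreweras1972partitions} to conclude.

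The main obstacle is not computational but one of bookkeeping: carefully verifying that normal equivalence of faces of $\wa_n$ corresponds exactly to equality of the associated noncrossing partition, rather than to some finer or coarser relation. This requires checking that the support $l(\pi)$ of a face determines its normal equivalence class within $\wa_n$ (an instance of the general principle from Proposition \ref{p:HGtoSHG} that the normal fan of a hypergraphic polytope depends only on its support) and, conversely, that distinct noncrossing partitions yield non-normally-equivalent faces. Both directions are already implicit in the grouping-free nature of Theorem \ref{t:antipodeF2}, so the corollary is essentially a restatement of that theorem's indexing set, and the proof is short once this correspondence is made explicit.
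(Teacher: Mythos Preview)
Your proposal is correct and follows essentially the same approach as the paper: the paper's proof also invokes the projection $\wGP \onto \wbGP$ taking \eqref{e:antipodeassoc1} to \eqref{e:antipodeassoc2}, observes that this groups faces into normal equivalence classes, and then appeals to the fact that the terms of \eqref{e:antipodeassoc2} are indexed by noncrossing partitions of $[n]$, which are counted by $C_n$. Your version simply unpacks more of the bookkeeping behind the grouping-free claim.
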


\begin{proof}
The projection $\wGP \onto \wbGP$ takes (\ref{e:antipodeassoc1}) to (\ref{e:antipodeassoc2}), mapping the faces of $\wa_n$ onto their normal equivalence classes. The result follows from the fact that 
the terms of (\ref{e:antipodeassoc2}) are in bijection with the noncrossing partitions of $[n]$ which are counted by the Catalan number $C_n$.
\end{proof}

\begin{corollary}
Let $\mu = \langle 1^{m_1}2^{m_2} \cdots \rangle$ be a partition of $n$ and write $|m| = m_1 + m_2 + \cdots$. Let $NC(\mu)$ be the set of noncrossing partitions of $n$ having type $\mu$; that is, having $m_i$ blocks of size $i$ for $i=1,2,\ldots$. Then, in the notation of Theorem \ref{t:antipodeF2},
\[
\sum_{\pi \in NC(\mu)} C_{(\overline{\pi} : \pi)} =  \frac{(n+|m|)!}{(n+1)! \, m_1! \, m_2! \, \cdots}.
\]
\end{corollary}

\begin{proof}
The projection $\wbGP \onto \wbbGP$ takes (\ref{e:antipodeassoc2}) to (\ref{e:antipodeassoc3}). It maps each normal equivalence class of faces, which is labeled by a noncrossing partition of $[n]$, to its combinatorial type, which is the corresponding partition of $n$. It then remains to observe that the noncrossing partitions of type $\mu$ are the ones that map to the partition $\mu$, so their contributions to (\ref{e:antipodeassoc2}) must add up to the contribution of $\mu$ to (\ref{e:antipodeassoc3}).
\end{proof}

\appendix

\section{Future directions and open questions}

This project suggest several research directions which will be the subject of upcoming papers.

\begin{itemize}
\item The formula for the antipode of $\wGP$ is reminiscent of McMullen's polytope algebra, where the alternating sum of the faces of a polytope $\wp$ is equal to its relative interior $\wp^\circ$. Clarify the relationship between these two algebraic structures on polytopes.

\item Motivated by Brion's theorem, which expresses the lattice point enumerator of a polytope in terms of those of its vertex cones, there is a \emph{Brion map} of Hopf monoids  $\wGP \rightarrow \wP$. Explore the consequences of this map.

\item
For $\rGP$ or some of its interesting submonoids $\rH$:
\begin{itemize}
\item Describe the character group $\Xb(\rH)$.

\item Describe the Lie monoid $\Pc(\wH^{cop})$ of primitive elements of $\wH^{cop}$, which determines the Hopf monoid $\wH$ via a variant of the Cartier--Milnor--Moore theorem \cite[Prop. 11.45]{am} since $\wH^{cop}$ is cocommutative.

\item The Brion map of Hopf monoids $\wH \rightarrow \wP$ gives rise to a dual Brion map of Lie monoids  $B^{cop}: \mathcal{P}(\wP^{cop}) \rightarrow \mathcal{P}(\wH^{cop})$. Describe this map explicitly.
\end{itemize}

\item Extend the results of this paper to \emph{generalized Coxeter permutahedra}, the deformations of the Coxeter permutahedra $\pi_W$ corresponding to a finite reflection group $W$. Connect them to Zaslavsky's theory of signed graphs \cite{zaslavsky1982signed}, Borovik-Gelfand-Serganova-White's theory of Coxeter matroids \cite{borovik2003coxeter}, Reiner's theory of signed posets \cite{reiner93:_signed}, and Fomin and Zelevinsky's Coxeter associahedra. \cite{fomin2003systems}
This will involve an extension of  the theory of Hopf monoids -- which is inherent to the Coxeter group $W=S_n$ -- to any finite Coxeter group $W$, which is being developed by Aguiar and Mahajan. 

\item More generally, extend the results of this paper to the deformations of any simple polytope. This will involve a further extension of the theory of Hopf monoids to any hyperplane arrangement, which is also being developed by Aguiar and Mahajan. 
\end{itemize}

\noindent 
The following are a few more questions raised by this project which may be of interest:

\begin{itemize}


\item
Further study the connection between the Hopf monoid $\rGP$ and the valuative invariants on generalized permutahedra, which were described by Derksen and Fink. \cite{derksenfink10}

\item
Is there a Hopf algebraic answer to Rota's question of Section \ref{ss:relations}? Is there an intrinsic characterization of the Hopf monoid $\wHG \cong \wHGP$ of hypergraphs and hypergraphic polytopes?

\item
As explained in Remark \ref{r:y-positive}, under the suitable probability measure, an integer generalized permutahedra in $\Rb I$ is hypergraphic with positive probability. What is that probability, and how does it behave as the dimension of the ambient space goes to infinity?

\item
It would be interesting to study the combinatorial consequences (characters, invariants, reciprocity theorems) of our results on the Hopf monoids of hypergraphs, simplicial complexes, building sets, simple graphs, and paths. For some progress in these directions, see \cite{benedetti2016combinatorial,grujic2014quasisymmetric,grujic2017counting}.

\item
It would be interesting to further study the simplicial complex polytopes of Section \ref{ss:SCpolytopes}. For instance, are there formulas for their volumes or Ehrhart polynomials, at least in some special cases?

\end{itemize}

\section{Acknowledgments}

The main constructions in this paper were discovered in 2008 and announced in 2009. \cite{ardilaAMS} Throughout these years we have benefitted greatly from conversations with Carolina Benedetti, Lou Billera, Laura Escobar,  Alex Fink, Rafael Gonz\'alez d'Le\'on, Carly Klivans, Swapneel Mahajan, Jeremy Martin, Alex Postnikov, and Vic Reiner, among others. 
In the meantime, some of the results in Sections \ref{s:GP}, \ref{s:G}, \ref{ss:Bergmanpoly}, \ref{s:SC}, and \ref{ss:B} were discovered independently in 
\cite{derksenfink10,humpert12,breuer2016scheduling, benedetti2016combinatorial
grujic2014quasisymmetric}, respectively. We thank the authors of these papers for their patience while we published this work, and for their open communication with us, which has strengthened our understanding of this project. 

We do not thank the Portland, OR thieves who set the project back in 2013 by stealing a folder containing five years of work: results, proofs, writeups, pictures, examples, and counterexamples. We do thank them for not publishing our results in their name. 

\textsf{FA}: I would like to thank Gian-Carlo Rota for generously encouraging me to publish Proposition \ref{p:relation} when I was a first-semester graduate student in 1998. I did not understand the significance of this result at the time and did not publish it, but this project has served as yet another reminder that I still have much to learn from the brief but influential lessons I received from Rota. I was happily surprised to see his question reappear in a central role in this project, much of which was motivated by his ideas. I am glad to finally keep my word.

\makeatletter
\renewcommand{\@biblabel}[1]{\hfil#1.}
\makeatother
\bibliographystyle{amsplain}
\bibliography{gp}

\end{document}